\newcommand{\lvt}{\left|\kern-1.35pt\left|\kern-1.3pt\left|}
\newcommand{\rvt}{\right|\kern-1.3pt\right|\kern-1.35pt\right|}
\newtheorem{thm}{Theorem}[section]
\newtheorem{cor}[thm]{Corollary}
\newtheorem{lem}[thm]{Lemma}
\newtheorem{prop}[thm]{Proposition}
\newtheorem{defn}[thm]{Definition}
\theoremstyle{remark}
 \def\la{{\langle}}
 \def\ra{{\rangle}}
 \def\a{{\alpha}}
 \def\b{{\beta}}
 \def\g{{\gamma}}
 \def\l{{\lambda}}
 \def\s{\sigma}
 \def\la{{\langle}}
 \def\ra{{\rangle}}
 \def\CD{{\mathcal D}}
 \def\CL{{\mathcal L}}
 \def\CV{{\mathcal V}}
 \def\CW{{\mathcal W}}
 \def\BB{{\mathbb B}}
 \def\NN{{\mathbb N}}
 \def\RR{{\mathbb R}}
 \def\ZZ{{\mathbb Z}}
      \def\proj{\operatorname{proj}}
\def\lla{\langle{\kern-2.5pt}\langle}      
\def\rra{\rangle{\kern-2.5pt}\rangle}      
\newcommand{\wt}{\widetilde}
\newcommand{\wh}{\widehat}
\def\f{\frac}
\begin{document}

\title[Approximation and orthogonality Sobolev spaces on triangle]
{Approximation and orthogonality in Sobolev spaces on a triangle}

\author{Yuan Xu}
\address{  Department of Mathematics\\ University of Oregon\\
    Eugene, Oregon 97403-1222.}
\email{yuan@math.uoregon.edu}
 
\date{\today}
\keywords{Approximation, orthogonality, Sobolev space, Jacobi polynomials, order of approximation, triangle}
\subjclass[2010]{33C50, 41A10, 41A63, 42C05, 42C10, 65M70, 65N35}
\thanks{The author was supported in part by NSF Grant DMS-1510296}
 
\begin{abstract}
Approximation by polynomials on a triangle is studied in the Sobolev space $W_2^r$ that consists of functions 
whose derivatives of up to $r$-th order have bounded $L^2$ norm. The first part aims at understanding the 
orthogonal structure in the Sobolev space on the triangle, which requires explicit construction of an inner product 
that involves derivatives and its associated orthogonal polynomials, so that the projection operators of the corresponding 
Fourier orthogonal expansion commute with partial derivatives. The second part establishes the sharp estimate for 
the error of polynomial approximation in $W_2^r$, when $r = 1$ and $r=2$, where the polynomials of approximation are
the partial sums of the Fourier expansions in orthogonal polynomials of the Sobolev space.
\end{abstract}

\maketitle


\section{Introduction} \label{sect1}
\setcounter{equation}{0}

The purpose of this paper is to study approximation by polynomials and orthogonality in the Sobolev space on the 
regular triangle in $\RR^2$.

\subsection{Background}
In order to explain our motivation and our results, we start with a discussion of approximation and orthogonality on
an interval in one variable. 

For a function $f \in L^2[0,1]$, it is well-known that the $n$-th partial sum, $S_n f$, of the Fourier-Legendre expansion 
of $f$ is the $n$-th polynomial of best approximation in the norm $\|\cdot\|$ of $L^2[0,1]$. In other words,
$$
   \|f - S_n f\| = \inf_{P \in \Pi_n} \|f-P\|=:  E_n (f),
$$
where $\Pi_n$ denotes the space of polynomials of degree at most $n$. It is also known that polynomial approximation 
on $[0,1]$ is heavily influenced by the boundary behavior of $f$. For a fixed positive integer $r$, it is easy to see that 
the derivatives of $S_n f$ satisfies 
\begin{equation} \label{eq:1.1}
  \left \| \phi^{k} \left[ f^{(k)}- (S_n f)^{(k)} \right] \right \| \le c n^{-r+k} E_{n-k} (f^{(r)}), \qquad 0 \le k\le r,
\end{equation}
where $\phi(x) = \sqrt{x(1-x)}$. However, for approximation in the Sobolev space $W_2^r[0,1]$ that consists of functions 
whose derivatives of up to $r$th order are all in $L^2[0,1]$, the above error estimate is weaker than what is possible. 
Indeed, we can find a polynomial $p_n$ of degree $n$ such that,  see Section 3 below, 
\begin{equation} \label{eq:1.2}
    \| f^{(k)}- p_n^{(k)} \| \le c n^{-r+k} E_{n-k} (f^{(r)}), \qquad 0 \le k\le r. 
\end{equation}
Furthermore, the polynomial $p_n$ can be chosen as the $n$-th partial 
sum of the Fourier expansion of $f$ in terms of orthogonal polynomials associated with an inner product that contains 
derivatives, and these polynomials are extensions of the Jacobi polynomials $P_n^{(\a,\b)}$ so that $\a = \b = -r$ for $n$ 
large. The orthogonality 
of this type has been studied in various forms in orthogonal polynomials (see, for example, the recent survey 
\cite[Section 6]{MX}) under the name Sobolev orthogonal polynomials, but not yet in approximation or in orthogonal 
series. Estimates closely related to \eqref{eq:1.2} have 
been studied in approximation theory under the name simultaneous approximation (see, for example, \cite{K}), but not
associate to orthogonal expansion in the Sobolev space. Analogue estimates were established 
in the study of spectral methods for numerical solution of differential equations (cf. \cite{CHQZ, CQ, GSW1, GSW2}), in which
the orthogonality of $W_2^r[0,1]$ is either implicitly used by assuming zero boundary conditions, or defined differently, 
and the estimate has the norm of $f^{(r)}$ in place of $E_{n-r}(f^{(r)})$ in the right hand side of \eqref{eq:1.2}. As can be
seen by this brief description, the result of this nature is of fundamental nature and has attracted attentions from several areas. 

We aim at establishing results in the spirit of \eqref{eq:1.2} on regular domains in higher dimension. In a recent work
\cite{LX}, we studied the approximation on the unit ball $\BB^d$ in $\RR^d$ and its application in the spectral theory. 
One of the main tasks there is to construct an appropriate inner product in the Sobolev space $W_r^2(\BB^d)$, involving
derivatives, so that its corresponding orthogonal polynomials {\it formerly} extend the orthogonal polynomials with 
respect to the weight function $(1-\|x\|^2)^{\mu}$ to negative integer in $\mu$ parameter and, more importantly, 
the corresponding partial sum operator for the Fourier orthogonal expansion commutes with the powers of gradient, 
or powers of Laplace operator, up to certain order. This commuting property, together with several new recent results on 
approximation on the sphere and on the unit ball, enables us to establish an analogue of \eqref{eq:1.2} 
on the unit ball. 

The current paper is devoted to establish analogous results on the triangle, which turns out to be substantially 
more complex than the problem on the interval and on the unit ball. Throughout this paper we work with the triangle 
$$
\triangle:=\{(x,y): x \ge 0, y \ge 0, x+y \le 1\}
$$
and all functions appear in the paper are real valued. Classical orthogonal polynomials on the triangle are orthogonal with 
respect to the inner product 
\begin{equation}\label{eq:ipd}
  \la f, g \ra_{\a,\b,\g} := \int_{\triangle} f(x,y)g(x,y) \varpi_{\a,\b,\g}(x,y)dxdy,
\end{equation}
where $\a,\b,\g > -1$ and the weight function $\varpi_{\a,\b,\g}$ is defined by 
\begin{equation}\label{eq:weight}
  \varpi_{\a,\b,\g} (x,y) := x^\a y^\b(1-x-y)^\g.
\end{equation}
Several families of orthogonal polynomials can be given in terms of the Jacobi polynomials $P_m^{(\a,\b)}$ \cite[Section 2.4]{DX}. 
One mutually orthogonal family, in particular, consists of polynomials
\begin{equation}\label{eq:basis}
 P_{k,n}^{\a,\b,\g} (x,y) := (x+y)^k P_k^{(\a,\b)}\left( \frac{y-x}{x+y}\right) P_{n-k}^{(2k+\a+\b+1,\g)}(1-2x-2y)
\end{equation}
for $0 \le k \le n$. With respect to this basis, the  $n$-th partial sum operator, denoted by $S_n^{\a,\b,\g} f$, of the Fourier orthogonal 
expansion of $f \in L^2(\varpi_{\a,\b,\g})$ is defined by 
\begin{equation}\label{eq:Fourier}
 S_n^{\a,\b,\g} f : = \sum_{m=0}^n \sum_{k=0}^m \wh f_{k,m}^{\a,\b,\g}  P_{k,m}^{\a,\b,\g}, \qquad 
      \wh f_{k,m}^{\a,\b,\g} : = \frac{\la f,  P_{k,m}^{\a,\b,\g} \ra_{\a,\b,\g}}{\la P_{k,m}^{\a,\b,\g},  P_{k,m}^{\a,\b,\g} \ra_{\a,\b,\g}}. 
\end{equation}
Thus, $S_n^{\a,\b,\g}f$ is the polynomial of best approximation in $L^2(\varpi_{\a,\b,\g})$. More precisely, the standard Hilbert
space result shows that 
\begin{equation}\label{eq:best-approx}
  E_n(f)_{\a,\b,\g}: =  \inf_{p \in \Pi^2_n} \|f - p\|_{L^2(\varpi_{\a,\b,\g})} = \left \|f -S_n^{\a,\b,\g} f \right\|_{L^2(\varpi_{\a,\b,\g})},
\end{equation}
where $\Pi_n^2$ stands for the space of polynomials of total degree at most $n$ in two variables.

We study approximation in the Sobolev space $W_2^r$ on the triangle. The work contains two parts. In the first part, 
we show that the setup in \eqref{eq:Fourier} can be {\it formerly} extended to $\a,\b,\g$ being $-1$ or $-2$, for which we need 
to define inner products for $W_2^r$, instead of for $L^2$, and construct their orthogonal polynomials by first extending   
$P_{k,n}^{\a,\b,\g}$ to negative parameters. In the second part, we use the construction in the first part to study approximation 
in the Sobolev space. Best approximation by polynomials on the triangle has been studied in \cite{X05} for weighted $L^p$ 
spaces and in \cite{To}, where it is studied for all polytopes. With the partial sum $S_n^{\a,\b,\g} f$ as the polynomial of 
approximation, similar behaviors as in \eqref{eq:1.1} appear in the estimate for approximation of derivatives. Our goal is to 
derive analogue of \eqref{eq:1.2}. 

\subsection{Main results}
To state our results, we need further notations. 
For $\mathbf{m}  \in \NN_0^3$, $\mathbf{m} = (m_1,m_2,m_3)$, define $|\mathbf{m}|:=m_1+m_2+m_3$ and  
$$
 \partial^\mathbf{m} := \partial_1^{m_1}  \partial_2^{m_2}  \partial_3^{m_3}, \quad \hbox{where} \quad \partial_3:=\partial_2 - \partial_1.  
$$
Let $L^2 = L^2(\varpi_{0,0,0})$. For a positive integer $r$, the Sobolev space $W_2^{r}$ is defined by  
\begin{equation*}
  W_2^{r}: = \{ f\in L^2:  \partial^\mathbf{m} f \in L^2,  |\mathbf{m}| \le r\},
\end{equation*}
which is the standard Sobolev space since setting $m_3 =0$ in the definition gives the same space. Let  $\|\cdot\|: = \|\cdot\|_{L^2}$. 
We define the norm of $W_2^{r}$ by 
\begin{equation} \label{eq:S-norm}
 \|f \|_{W_2^{r}}: = \sum_{ |\mathbf{m}| \le r} \left\| \partial^\mathbf{m} f \right \|  
     \sim  \sum_{m_1+m_2 \le r} \left\| \partial_1^{m_1} \partial_2^{m_2} f \right \|,
\end{equation}
where $A \sim B$ means $0< c_1 \le A/B\le c_2$ as usual. We define another Sobolev space on the triangle, in
which we define the error of approximation in terms of $E_n(f)_{\a,\b,\g}$.
 
\begin{defn} \label{def:1.1}
For a positive integer $r$, define 
\begin{equation*} 
  \CW_2^{2r}: = \{ f\in L^2:  \partial^{\mathbf{m}}  f \in L^2\left(\varpi_{r-m_2,r-m_1,r-m_3}\right), 
       \, 0 \le m_i \le r, \, |\mathbf{m}| \le 2r\}. 
\end{equation*}
For $f \in \CW_2^{2r}$ and $\mathbf{m}=(m_1,m_2,m_3)$ with $m_i \in \{0,1,\ldots, r\}$, define
\begin{equation}\label{eq:calEn}
 \mathcal{E}_{n-2r}^{(2r)} (f): = \sum_{\substack{|\mathbf{m}| =2r \\ 0 \le m_i \le r}} E_{n-2r}(\partial^\mathbf{m} f)_{r-m_2,r-m_1,r-m_3}.
\end{equation}
\end{defn}

For example, for $r = 1$, we have 
$$
 \mathcal{E}_{n-2}^{(2)} (f) = E_{n-2}(\partial_1\partial_2 f)_{0,0,1} +  E_{n-2}(\partial_2\partial_3 f)_{0,1,0} +
      E_{n-2}(\partial_3\partial_1 f)_{1,0,0}.
$$
 
We now state our main results on approximation, in which $S_n^{-1,-1,-1}$ and $S_n^{-2,-2,-2}$
are polynomials of degree $n$ constructed in Section \ref{sect7}  and Section \ref{sect10}, respectively, and $c$ is a positive 
constant independent of $n$. 
 
\begin{thm} \label{thm:1.1}
Let $f \in \CW_2^2$ and let $p_n =S_n^{-1,-1,-1} f$. Then 
\begin{equation*} 
  \|f - p_n\| \le \frac{c}{n^2}   \mathcal{E}_{n-2}^{(2)} (f) \quad\hbox{and}\quad  
         \|\partial_i f - \partial_i p_n\| \le \frac{c}{n}\mathcal{E}_{n-2}^{(2)} (f), \quad i=1,2.
\end{equation*}
\end{thm}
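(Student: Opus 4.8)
The plan is to reduce the $W_2^2$-estimate to the orthogonal-expansion machinery built in Sections~\ref{sect7} and~\ref{sect10}, exploiting the commuting property that is the whole point of the construction. Write $p_n = S_n^{-1,-1,-1} f$. The decisive structural fact I would invoke is that the partial sum operator $S_n^{-1,-1,-1}$ commutes with the first-order operators $\partial_1,\partial_2$ (hence with $\partial_3 = \partial_2-\partial_1$) in the sense that $\partial_i S_n^{-1,-1,-1} f = S_{n-1}^{?}(\partial_i f)$, where the shifted triple is one of $(0,1,0)$-type parameters appearing in Definition~\ref{def:1.1}; this is the analogue on the triangle of the one-variable identity behind \eqref{eq:1.2} and of the ball result in \cite{LX}. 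Granting this, $\partial_i f - \partial_i p_n = \partial_i f - S_{n-1}^{\mathrm{shift}}(\partial_i f)$ is exactly the error of an $L^2(\varpi)$-orthogonal partial sum applied to $\partial_i f$, so by \eqref{eq:best-approx} it equals the best approximation $E_{n-1}(\partial_i f)_{\mathrm{shift}}$ in the corresponding weighted norm. Applying the same commutation once more to $\partial_i f$ gives, for $|\mathbf m|=2$, that $\partial^{\mathbf m} f - \partial^{\mathbf m} p_n$ is an $L^2(\varpi_{r-m_2,r-m_1,r-m_3})$-orthogonal partial-sum error of $\partial^{\mathbf m} f$, bounded by $E_{n-2}(\partial^{\mathbf m} f)_{r-m_2,r-m_1,r-m_3}$, and summing over the three multi-indices with $|\mathbf m|=2$ produces $\mathcal E_{n-2}^{(2)}(f)$ on the right-hand side.

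To convert these second-derivative bounds into the claimed $n^{-1}$ and $n^{-2}$ estimates for $\|\partial_i f-\partial_i p_n\|$ and $\|f-p_n\|$ in the \emph{unweighted} $L^2$ norm, I would use a Poincar\'e/Bernstein-type inequality on the triangle: for a polynomial $q$ of degree $\le n$ that is orthogonal (in the appropriate weighted inner product of $W_2^2$) to lower-degree polynomials, one has $\|q\| \le c n^{-1}\sum_i \|\partial_i q\|_{\varpi}$, and iterating, $\|q\|\le c n^{-2}\sum_{|\mathbf m|=2}\|\partial^{\mathbf m} q\|_{\varpi}$. Such inequalities for the "tail" of an orthogonal expansion are standard once the three-term/structural relations between $P_{k,n}^{\a,\b,\g}$ and its derivatives are known (they reduce to the Jacobi-polynomial identity $\frac{d}{dt}P_m^{(\a,\b)} = \tfrac12(m+\a+\b+1)P_{m-1}^{(\a+1,\b+1)}$ applied in both variables of \eqref{eq:basis}); I expect Section~\ref{sect7} to contain precisely the decomposition of $f - p_n$ into blocks on which such gains of a factor $n^{-1}$ per derivative are transparent. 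Writing $f - p_n = \sum_{m>n}(\text{block}_m)$ and applying the gain twice inside each block, then using the already-established identity $\partial^{\mathbf m}(f-p_n) = \partial^{\mathbf m} f - S^{\mathrm{shift}}(\partial^{\mathbf m} f)$, yields $\|f-p_n\|\le c n^{-2}\mathcal E_{n-2}^{(2)}(f)$, and one fewer application of the gain yields the $\partial_i$ estimate.

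The main obstacle, and the step I would spend the most care on, is \emph{establishing the commutation identity $\partial_i S_n^{-1,-1,-1} = S^{\mathrm{shift}}_{n-1}\partial_i$ in the correct form near the boundary of the parameter range}. Unlike the interval, on the triangle the three edge-parameters $\a,\b,\g$ interact through the coupled index $2k+\a+\b+1$ in \eqref{eq:basis}, so differentiating $P_{k,n}^{-1,-1,-1}$ does not cleanly return a single classical family — one must check that the finitely many low-degree exceptional terms (those where a parameter hits $-1$ or $-2$ and a Jacobi factor degenerates) are handled correctly by the bespoke definition of $S_n^{-1,-1,-1}$ in Section~\ref{sect7}, and that the derivative lands in $L^2(\varpi_{r-m_2,r-m_1,r-m_3})$ with the stated weight rather than some other shift. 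Once that bookkeeping is pinned down, the remaining inequalities are the routine weighted-polynomial estimates described above, and Definition~\ref{def:1.1} has been arranged so that exactly the quantity $\mathcal E_{n-2}^{(2)}(f)$ emerges.
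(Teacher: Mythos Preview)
Your strategy for the derivative estimates is essentially the paper's: Theorem~\ref{thm:Dproj-1-1-1} gives $\partial_1 S_n^{-1,-1,-1}f=S_{n-1}^{0,-1,0}(\partial_1 f)$, $\partial_2 S_n^{-1,-1,-1}f=S_{n-1}^{-1,0,0}(\partial_2 f)$, and then one invokes the already--established bounds for partial sums with a single $-1$ parameter (Theorems~\ref{thm:norm-normD}--\ref{thm:norm-normD0bg}). But you misstate the intermediate step: after \emph{one} commutation the shifted parameters are $(0,-1,0)$, $(-1,0,0)$, $(0,0,-1)$, each still containing a $-1$, so $S_{n-1}^{\mathrm{shift}}$ is \emph{not} an $L^2(\varpi)$ partial sum and \eqref{eq:best-approx} does not apply. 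It is only after the second commutation that one lands on classical parameters $(0,0,1)$, $(1,0,0)$, $(0,1,0)$; the one--derivative bound therefore comes from the Section~\ref{sect12} estimates for the intermediate Sobolev partial sum, not from best approximation directly.

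The genuine gap is in the function estimate. Your Poincar\'e/Bernstein step --- ``for $q$ orthogonal to lower degrees, $\|q\|\le cn^{-1}\sum_i\|\partial_i q\|_\varpi$, then iterate'' --- is precisely where the work lies, and you have not supplied it; nor is it ``standard'' in this setting. The difficulty is that the blocks $\proj_m^{-1,-1,-1}f$ are orthogonal in the Sobolev inner product of Section~\ref{sect7} but \emph{not} in unweighted $L^2$, so writing $f-p_n=\sum_{m>n}\proj_m^{-1,-1,-1}f$ and applying a per--block gain does not obviously sum. The paper does not prove an abstract inequality of this kind. Instead it expands each $J_{k,m}^{-1,-1,-1}$ explicitly in the $\{J_{k',m'}^{0,0,0}\}$ basis via the recursions \eqref{eq:recurJc} and \eqref{eq:J11g-00g} (only $m'\in\{m,m-1,m-2,m-3\}$ occur --- this finite bandwidth is what makes the summation tractable), applies Parseval in $L^2(\varpi_{0,0,0})$, and bounds the resulting coefficient sums by the expressions \eqref{eq:est111a}--\eqref{eq:est111b}, which rewrite $E_{n-2}(\partial_i\partial_j f)$ directly in terms of the $\wh f_{k,m}^{-1,-1,-1}$. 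The ``decomposition into blocks on which gains are transparent'' that you anticipate is exactly this finite--band expansion, but it is not in Section~\ref{sect7}; it is the explicit and rather laborious computation in Sections~\ref{sect13}--\ref{sect14}, and it is the main content of the proof. Your identification of the commutation bookkeeping as the chief obstacle is misplaced: Theorem~\ref{thm:Dproj-1-1-1} is comparatively clean once the three inner products of Section~\ref{sect7} are in hand, whereas the $\|f-p_n\|$ estimate is the hard step you have glossed over.
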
 

\begin{thm} \label{thm:1.2}
Let $f \in \CW_2^4$ and let $p_n =S_n^{-2,-2,-2} f$. Then
\begin{equation*}
  \|f - p_n\| \le \frac{c}{n^4} \mathcal{E}_{n-4}^{(4)} (f),  \qquad 
      \|\partial_i f - \partial_i p_n\| 
      \le  \frac{c}{n^3}\,\mathcal{E}_{n-4}^{(4)} (f), \quad i=1,2,
\end{equation*}
and 
\begin{equation*} 
 \|\partial_i \partial_j f - \partial_i \partial_j p_n\| 
      \le  \frac{c}{n^2}\,\mathcal{E}_{n-4}^{(4)} (f), \qquad  i, j =1,2.
\end{equation*}
\end{thm}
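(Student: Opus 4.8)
The plan is to reduce \thmref{thm:1.2} to the classical best‑approximation identity \eqref{eq:best-approx}, following the mechanism used for $r=1$ in \thmref{thm:1.1} but now pushing two orders of derivatives through the partial sum. Write $\widehat f_{k,m}$ for the Fourier coefficient of $f$ against the degree-$m$ orthogonal polynomials that extend $P_{k,m}^{-2,-2,-2}$, and set $f_m:=\sum_{k=0}^m\widehat f_{k,m}P_{k,m}^{-2,-2,-2}$, so that $p_n=\sum_{m\le n}f_m$ and $f-p_n=\sum_{m>n}f_m$. From the construction of $S_n^{-2,-2,-2}$ in Section~\ref{sect10} I would extract three facts. \emph{Commuting relation:} for every $\mathbf{m}=(m_1,m_2,m_3)$ with $0\le m_i\le 2$ and $|\mathbf{m}|=4$,
\begin{equation*}
  \partial^{\mathbf{m}} S_n^{-2,-2,-2} f = S_{n-4}^{2-m_2,\,2-m_1,\,2-m_3}\,\partial^{\mathbf{m}} f ;
\end{equation*}
since then $2-m_2,2-m_1,2-m_3\ge 0$, the right-hand side is a \emph{classical} $L^2(\varpi_{2-m_2,2-m_1,2-m_3})$ partial sum, so comparing degree-$m$ components gives $\partial^{\mathbf{m}}f_m=\sum_{k'}\widehat{(\partial^{\mathbf{m}}f)}_{k',m-4}\,P^{2-m_2,2-m_1,2-m_3}_{k',m-4}$, where $\widehat{(\partial^{\mathbf{m}}f)}_{k',m-4}$ are the classical Fourier coefficients of $\partial^{\mathbf{m}}f$ in $L^2(\varpi_{2-m_2,2-m_1,2-m_3})$. \emph{Structural identity and norm equivalence:} for $m$ large one has $P_{k,m}^{-2,-2,-2}=x^2y^2(1-x-y)^2q_{k,m-6}$ with $q_{k,m-6}$ orthogonal for $\varpi_{2,2,2}$, and, since the $P_{k,m}^{-2,-2,-2}$ are orthogonal for the inner product $\sum_{|\mathbf{m}|=4,\,0\le m_i\le 2}\lambda_{\mathbf{m}}\langle\partial^{\mathbf{m}}\cdot,\partial^{\mathbf{m}}\cdot\rangle_{\varpi_{2-m_2,2-m_1,2-m_3}}$ up to a finite-rank correction (negligible for $m$ large), it follows that for every $g$ in the span of $\{P_{k,m}^{-2,-2,-2}\}_k$, uniformly in $m$,
\begin{equation*}
  \|g\|^2 \sim m^{-8}\sum_{\substack{|\mathbf{m}|=4\\0\le m_i\le 2}}\bigl\|\partial^{\mathbf{m}}g\bigr\|_{L^2(\varpi_{2-m_2,2-m_1,2-m_3})}^2 ,\qquad \bigl\|\partial^{\mathbf{l}}g\bigr\| \le c\,m^{|\mathbf{l}|}\|g\| \ \ (0\le l_i,\ |\mathbf{l}|\le 2).
\end{equation*}
\emph{Bessel inequality:} for each fixed $\mathbf{l}$ with $0\le l_i$ and $|\mathbf{l}|\le 2$, the functions $\partial^{\mathbf{l}}P_{k,m}^{-2,-2,-2}$ normalized in $L^2$ form a Bessel sequence in $L^2$; through the factorization this reduces to the pointwise inequality $\varpi_{\a+2,\b+2,\g+2}\le\varpi_{\a,\b,\g}$ on $\triangle$.

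Granting these, the proof is short. Fix $\mathbf{l}$ with $0\le l_i$ and $|\mathbf{l}|\le 2$, and write $\partial^{\mathbf{l}}(f-p_n)=\sum_{m>n}\sum_{k=0}^m\widehat f_{k,m}\,\partial^{\mathbf{l}}P_{k,m}^{-2,-2,-2}$. The Bessel inequality and then the Markov bound give $\|\partial^{\mathbf{l}}(f-p_n)\|^2\le c\sum_{m>n}m^{2|\mathbf{l}|}\sum_{k}|\widehat f_{k,m}|^2\|P_{k,m}^{-2,-2,-2}\|^2$. Since the $P_{k,m}^{-2,-2,-2}$ are orthogonal for the Sobolev inner product, applying the norm equivalence to each $P_{k,m}^{-2,-2,-2}$ and to $f_m$, and then the commuting relation, rewrites $\sum_{k}|\widehat f_{k,m}|^2\|P_{k,m}^{-2,-2,-2}\|^2$ as $m^{-8}$ times $\sum_{|\mathbf{m}|=4,\,0\le m_i\le 2}\sum_{k'}|\widehat{(\partial^{\mathbf{m}}f)}_{k',m-4}|^2\|P^{2-m_2,2-m_1,2-m_3}_{k',m-4}\|^2$ (weighted norms). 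Pulling the factor $m^{2|\mathbf{l}|-8}\le n^{2|\mathbf{l}|-8}$ out of the sum over $m>n$, re-indexing $m\mapsto m-4$, and invoking Parseval together with \eqref{eq:best-approx} for each $\partial^{\mathbf{m}}f$, one obtains
\begin{equation*}
  \bigl\|\partial^{\mathbf{l}}(f-p_n)\bigr\|^2 \le \frac{c}{n^{\,8-2|\mathbf{l}|}}\sum_{\substack{|\mathbf{m}|=4\\0\le m_i\le 2}} E_{n-4}(\partial^{\mathbf{m}} f)^2_{2-m_2,2-m_1,2-m_3} \le \frac{c}{n^{\,8-2|\mathbf{l}|}}\bigl(\mathcal{E}_{n-4}^{(4)}(f)\bigr)^2 .
\end{equation*}
Taking square roots and specializing to $|\mathbf{l}|=0$, to $\mathbf{l}=(1,0,0),(0,1,0)$ (so $\partial^{\mathbf{l}}=\partial_1,\partial_2$), and to $\mathbf{l}=(2,0,0),(1,1,0),(0,2,0)$ (so $\partial^{\mathbf{l}}=\partial_1^2,\partial_1\partial_2,\partial_2^2$) yields the three asserted estimates with rates $n^{-4}$, $n^{-3}$ and $n^{-2}$.

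The real content is therefore in establishing the three ingredients inside the construction of $S_n^{-2,-2,-2}$: the explicit action of $\partial^{\mathbf{m}}$ on the extended orthogonal polynomials (equivalently, the commuting relations and the completeness of the extended system on $\CW_2^4$), the factorization $P_{k,m}^{-2,-2,-2}=x^2y^2(1-x-y)^2q_{k,m-6}$ for $m$ large, and the ensuing norm equivalences and Bessel inequalities. I expect the chief difficulty to be the intermediate orders $|\mathbf{l}|=1,2$ in the norm equivalence and the Bessel step: there the shifted parameters are still $-1$ or $-2$, the differentiated polynomials $\partial^{\mathbf{l}}P_{k,m}^{-2,-2,-2}$ no longer vanish to order $2$ on every edge of $\triangle$, and the combinatorics is entangled by the linear dependence $\partial_3=\partial_2-\partial_1$ among the three admissible directions. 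One should also note that, for $n$ above a fixed threshold, only the large-degree regime in which the factorization is valid contributes to the tail $\sum_{m>n}f_m$, so the finitely many low-degree exceptional terms never enter $f-p_n$; the remaining finitely many small $n$ are absorbed into $c$ using that $f\in\CW_2^4$.
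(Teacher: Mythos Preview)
Your outline has the right overall shape---commute derivatives through $S_n^{-2,-2,-2}$, then use Parseval at the classical endpoint---but the three ``facts'' you hope to extract from Section~\ref{sect10} are not there, and two of them are false as stated.

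First, the factorization $P_{k,m}^{-2,-2,-2}=x^2y^2(1-x-y)^2 q_{k,m-6}$ does \emph{not} hold for all $k$ once $m$ is large: it fails for $k=0,1,2,3$ and for $k=m-1,m$. For instance $J_{0,m}^{-2,-2,-2}=J_m^{-3,-2}(1-2x-2y)$ depends only on $x+y$ and carries no factor of $x$ or $y$ at all; $J_{1,m}^{-2,-2,-2}$ carries $(y-x)$, not $x^2y^2$. These exceptional indices are precisely where the modified basis $\wh J_{k,m}^{-2,-2,-2}$ differs from $J_{k,m}^{-2,-2,-2}$ and where the analysis is delicate. Consequently your Bessel argument (``reduces to $\varpi_{\a+2,\b+2,\g+2}\le\varpi_{\a,\b,\g}$'') breaks down exactly at the indices that carry nontrivial mass, and the Sobolev inner product is \emph{not} of the form $\sum_{|\mathbf m|=4}\lambda_{\mathbf m}\langle\partial^{\mathbf m}\cdot,\partial^{\mathbf m}\cdot\rangle$ plus a finite-rank correction: see \eqref{eq:ipd222J}, which is built recursively through $\langle\cdot,\cdot\rangle_{-1,-1,-2}^J$ and lower-order boundary terms, and note that three distinct inner products ($J$, $K$, $L$) are needed to obtain commuting with all of $\partial_1,\partial_2,\partial_3$.

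Second, the uniform norm equivalence $\|g\|^2\sim m^{-8}\sum_{|\mathbf m|=4}\|\partial^{\mathbf m}g\|^2_{2-m_2,2-m_1,2-m_3}$ on $\CV_m(\varpi_{-2,-2,-2})$ is not a black box available in Section~\ref{sect10}; establishing even the one-sided bound $\|g\|\le c\,m^{-4}(\cdots)$ is essentially the content of the theorem for $|\mathbf l|=0$. The paper does \emph{not} prove such an equivalence abstractly. Instead it expands $J_{k,m}^{-2,-2,-2}$ explicitly as a finite linear combination of $J_{k-2j,m-i}^{0,0,0}$ ($0\le i\le 4$, $0\le j\le 2$) with computed coefficients $u_{i,k,m},v_{i,k,m},w_{i,k,m}$, rewrites $\|f-S_n^{-2,-2,-2}f\|^2$ via orthogonality of $J^{0,0,0}$, and then bounds each resulting block by the quantities in \eqref{eq:est222a}, \eqref{eq:est222b}, \eqref{eq:est222c}. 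The bounds require algebraic identities relating combinations like $u_{i,k,m}\wh g_{k,m}+v_{i,k+2,m}\wh g_{k+2,m}+w_{i,k+4,m}\wh g_{k+4,m}$ to the coefficient patterns appearing in \eqref{eq:est222a}--\eqref{eq:est222c}, together with case splits $k\le m/2$ versus $k\ge m/2$ because $h_{k,m}^{0,0,0}/h_{k,m-4}^{0,0,2}$ behaves like $m^{-8}$ only when $m-k\sim m$. The first- and second-order derivative estimates are obtained separately, by the commuting relations of Theorem~\ref{thm:proj222} combined with Theorems~\ref{thm:Est112}--\ref{thm:Est211} and the results of Section~\ref{sect6}, not via a Markov-type bound $\|\partial^{\mathbf l}g\|\le c\,m^{|\mathbf l|}\|g\|$ (which, in the unweighted form you wrote, is itself unproved here).

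In short, your sketch replaces the hard computation by a package of lemmas that either fail at the boundary indices or are equivalent in difficulty to what is being proved. To make the argument go through you would have to carry out essentially the explicit expansion and coefficient estimates of Section~\ref{sect16}.
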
 

For more refined estimates, see Section \ref{sect14} and Section \ref{sect16}. Evidently, we can state the left hand side of 
the estimates in the norm of $W_2^r$. Moreover, by iterating the estimate in Corollary \ref{cor:calE-iterate} in Section 11, 
we have the following corollary: 

\begin{cor} \label{cor:1.3}
Let $f \in \CW_2^{2r}$ for an integer $r \ge s$ with $s = 1$ or $s =2$. For $n \ge 2r$,
\begin{equation} \label{eq:cor1.3}
 \| f- S_n^{-s,-s,-s} f \|_{W_2^s} \le c\, n^{-2 r+s} \mathcal{E}_{n- 2r}^{(2r)} (f). 
\end{equation}
\end{cor}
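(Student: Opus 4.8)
The plan is to deduce the general-$r$ estimate from the base cases $s=1$ (Theorem \ref{thm:1.1}) and $s=2$ (Theorem \ref{thm:1.2}) by an induction on $r$, using the iteration result announced as Corollary \ref{cor:calE-iterate} in Section 11 to pass from $\mathcal{E}_{n-2s}^{(2s)}$ to $\mathcal{E}_{n-2r}^{(2r)}$. For the base of the induction, $r=s$, the inequality \eqref{eq:cor1.3} is exactly the content of Theorem \ref{thm:1.1} (when $s=1$) or Theorem \ref{thm:1.2} (when $s=2$): indeed, those theorems bound $\|f-S_n^{-s,-s,-s}f\|$ and the norms of its first (and, for $s=2$, second) partial derivatives by $c\,n^{-2s+k}\mathcal{E}_{n-2s}^{(2s)}(f)$ for $0\le k\le s$, and summing these over $|\mathbf{m}|\le s$ gives precisely $\|f-S_n^{-s,-s,-s}f\|_{W_2^s}\le c\,n^{-2s+s}\mathcal{E}_{n-2s}^{(2s)}(f)$ after recalling the definition \eqref{eq:S-norm} of the $W_2^s$ norm and the equivalence $\|\partial^{\mathbf m}f\|\sim\|\partial_1^{m_1}\partial_2^{m_2}f\|$.

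For the inductive step one assumes $r>s$, applies the case-$s$ estimate but with the quantity $\mathcal{E}_{n-2s}^{(2s)}(f)$ still on the right, and then replaces it by $\mathcal{E}_{n-2r}^{(2r)}(f)$. This replacement is where Corollary \ref{cor:calE-iterate} enters: it should assert something of the form $\mathcal{E}_{m}^{(2s)}(g)\le c\,m^{-2(r-s)}\mathcal{E}_{m-2(r-s)}^{(2r)}(g)$ (with the indices shifted appropriately), i.e. that each term $E_{n-2s}(\partial^{\mathbf m}f)_{s-m_2,s-m_1,s-m_3}$ in \eqref{eq:calEn} is itself dominated, after applying the one-interval-style best-approximation estimate \eqref{eq:1.2} in each of the three directions $\partial_1,\partial_2,\partial_3$, by $c\,n^{-2(r-s)}$ times a sum of the $E_{n-2r}$ of $(2r)$-th order derivatives against the shifted weights $\varpi_{r-m_2,r-m_1,r-m_3}$. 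Iterating $r-s$ times and collecting the three-direction contributions by re-indexing $\mathbf{m}\in\NN_0^3$ produces exactly $\mathcal{E}_{n-2r}^{(2r)}(f)$ as in Definition \ref{def:1.1}. Multiplying the factor $n^{-2(r-s)}$ from the iteration against the factor $n^{-2s+s}=n^{-s}$ from the base estimate yields $n^{-2r+s}$, which is the claimed exponent in \eqref{eq:cor1.3}.

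The only subtlety I expect is bookkeeping rather than genuine difficulty: one must check that the membership hypotheses match up, i.e. that $f\in\CW_2^{2r}$ guarantees that the intermediate derivatives $\partial^{\mathbf m}f$ with $|\mathbf m|=2s$ lie in the weighted spaces $L^2(\varpi_{s-m_2,s-m_1,s-m_3})$ to which Theorem \ref{thm:1.1} or \ref{thm:1.2} is being applied, so that the chain of inequalities is legitimate, and that the constant $c$ accumulated over the $r-s$ iteration steps depends only on $r$ and $s$ and not on $n$. Both points are immediate once Corollary \ref{cor:calE-iterate} is in hand, since that corollary is precisely the statement that the single step $\mathcal{E}^{(2k)}\rightsquigarrow\mathcal{E}^{(2k+2)}$ is valid with an $n$-independent constant; the remaining work is to invoke it $r-s$ times and sum a finite geometric-type series of constants. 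The main obstacle, such as it is, lies entirely inside Sections 3 and 11 (the one-dimensional estimate \eqref{eq:1.2} and its triangle analogue feeding Corollary \ref{cor:calE-iterate}); given those, the deduction of Corollary \ref{cor:1.3} is a short induction.
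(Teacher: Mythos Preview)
Your proposal is correct and follows essentially the same route as the paper: apply Theorem~\ref{thm:1.1} or Theorem~\ref{thm:1.2} to obtain $\|f-S_n^{-s,-s,-s}f\|_{W_2^s}\le c\,n^{-s}\mathcal{E}_{n-2s}^{(2s)}(f)$, and then invoke Corollary~\ref{cor:calE-iterate} to replace $\mathcal{E}_{n-2s}^{(2s)}(f)$ by $c\,n^{2s-2r}\mathcal{E}_{n-2r}^{(2r)}(f)$. Two minor remarks: the argument is not really an induction on $r$ but a direct two-step application (Corollary~\ref{cor:calE-iterate} already handles all $s\le r$ at once, the iteration happening inside its proof via \eqref{eq:cor11}); and the single-step estimate feeding that corollary is the triangle estimate~\eqref{eq:est-aag}, not the one-dimensional~\eqref{eq:1.2}.
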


It is worthwhile to emphasis that our result indicates that $\CW_2^{2r}$ is the right Sobolev space for estimating error of 
approximation in $W_2^s$. As an immediate corollary of \eqref{eq:cor1.3}, we can deduce a weaker result 
$$
 \| f- S_n^{-s,-s,-s} f \|_{W_2^s} \le  c\, n^{-2 r+s}  \sum_{\substack{|\mathbf{m}| =2r \\ 0 \le m_i \le r}} 
   \|\partial^\mathbf{m} f\|_{L^2(\varpi_{r-m_2,r-m_1,r-m_3})},
$$
since $E_n(f)_{\a,\b,\g} \le \|f\|_{L^2(\varpi_{\a,\b,\g})}$, in which we can also replace the norm of $\partial^\mathbf{m} f$
by $\|\partial^\mathbf{m} f\|_{L^2}$ since $\varpi_{\a,\b,\g}(x,y) \le 1$ if $\a,\b,\g \ge 0$ and state a still weaker estimate
$$
    \| f- S_n^{-s,-s,-s}f \|_{W_2^s} \le  c\, n^{-2 r+s}  \sum_{\substack{|\mathbf{m}| =2r \\ 0 \le m_i \le r}} \|\partial^\mathbf{m} f\|.
$$

The estimates in these results are useful for the spectral method for numerical solution of partial differential 
equations. Under the assumption that the function vanishes on the boundary of the triangle, a weaker estimate 
in the case of $s =1$ was established in \cite{LS}, in which $\mathcal{E}_{n- 2r}^{(2r)} (f)$ in the right hand side 
of \eqref{eq:cor1.3} is replaced by the norm of partial derivatives. 

One natural question is why stops at $s=1$ and $2$. The reason lies in the complexity of the structure on the 
triangle, which goes far beyond previous cases. In the beginning, it is not even clear what should be the correct 
formulation of the right hand side of our estimates in Theorems \ref{thm:1.1} and \ref{thm:1.2}. Although our approach 
is simple conceptually, there are many technical obstacles and new phenomena, as outlined in the next subsection. 
Now we have the correct formulation, perhaps the problem could be understood at the next level. 

\subsection{Main steps in the proof}
The proof requires a thorough understanding of orthogonal structure in the Sobolev space on the triangle. The task 
is substantially more difficult than the situation on the interval and on the unit ball. We point out major steps and 
main obstacles in our development below. 

Let $\CV_n(\varpi_{\a,\b,\g})$ be the space of polynomials of degree $n$ that are orthogonal to polynomials of lower 
degrees with respect to $\la \cdot,\cdot\ra_{\a,\b,\g}$. The polynomials $P_{k,n}^{\a,\b,\g}$, $0 \le k \le n$, consist of a 
mutually orthogonal basis for $\CV_n(\varpi_{\a,\b,\g})$; moreover, by the symmetry of $\varpi_{\a,\b,\g}$, we can obtain two 
more mutually orthogonal bases for this space by simultaneously permuting $(x,y,1-x-y)$ and $(\a,\b,\g)$. For approximation 
on the triangle, the main tool is often the reproducing kernel of the space $\CV_n(\varpi_{\a,\b,\g})$, which has a close
form formula independent of a particular choice of bases (see \cite{DaiX, DX, KL, X05}). In our current work, 
however, we need detail knowledges of these polynomial bases, such as their various recurrence relations and 
expressions of their derivatives. 

A complication regarding these bases is that we need to extend these polynomials so that their parameters 
can be negative integers. The usual extension in \cite{Sz} reduces the degree of $P_n^{(\a,\b)}$ when $\a, \b$ are 
negative integers and, as a consequence, unsuitable for our purpose. A new extension was defined and used 
successfully in \cite{LX}. Let us denote by $J_n^{\a,\b}$ this extension and denote by $J_{k,n}^{\a,\b,\g}$ the 
polynomial $P_{k,n}^{\a,\b,\g}$ in \eqref{eq:basis} with the Jacobi polynomials replaced by $J_k^{\a,\b}$. Since 
a family of orthogonal polynomials on the unit ball can be given in terms of spherical harmonics and the Jacobi polynomials 
on the radial variable, in spherical-polar coordinates, the simple structure of the ball reduces the main problem on
the negative parameter to one variable. This is, however, no longer the case for the triangle. What we ultimately 
need is $J_{k,n}^{-r,-r,-r}$ for some positive integer $r$, for which we need numerous explicit properties of 
$J_n^{\a,\b}$. Analytic continuation shows that most properties of $J_n^{\a,\b}$ follow from those of the classical
Jacobi polynomials if $\a$ and $\b$ are not negative integers, but what we need are often singular cases that do 
not follow from analytic continuation, for which a careful analysis is required. 

Taking our cue from the situation on $[0,1]$ and on the unit ball, we need to study the orthogonality of 
$\CV_n(\varpi_{\a,\b,\g}) = \mathrm{span} \{J_{k,n}^{\a,\b,\g}: 0 \le k \le n\}$, when $\a,\b,\g$ are negative integers. 
The orthogonality is defined with respect to an inner product that necessarily involves derivatives. When one or all 
parameters are $-1$, such an orthogonality is studied in \cite{AX} from the point of view that orthogonal polynomials 
are eigenfunctions of the characteristic differential operator. There are, however, many such inner products. What we
need for studying approximation in the Sobolev space is an inner product  for which the projection operators of the 
Fourier orthogonal expansion commute with partial derivatives. To construct such an inner product, we need 
detailed knowledge on $J_{k,n}^{\a,\b,\g}$, in particular, its derivatives. Taking a partial derivative of $J_{k,n}^{\a,\b,\g}$ 
increases two parameters by 1, a new complication that does not appear in the case of orthogonal polynomial on the 
unit ball, and this reflects in the definition of $\CW_2^{2r}$.

On a technical level, the partial derivative $\partial_3$ of $J_{k,n}^{\a,\b,\g}$ takes a simple form, which allows us 
to deduce the commuting relation for $\partial_3$, but not so for the partial derivatives $\partial_1$ and $\partial_2$. 
Instead, we work with two other bases of polynomials obtained from $J_{k,n}^{\a,\b,\g}$ by simultaneously permutation. 
For $\a,\b,\g > -1$, each of the three bases is a mutually orthogonal basis of $\CV_n(\varpi_{\a,\b,\g})$, but the 
mutual orthogonality fails to hold when the parameters are negative integers. This forces us to construct not just 
one but three inner products for each Sobolev space, so that each basis is mutually orthogonal for one of the inner 
products and is orthogonal to polynomials of lower degrees with respect to two other inner products. In addition, 
the three partial sum operators need to be the same, so that the partial sum operators commute with all partial 
derivatives up to order $r$.   

Finally, to prove our estimates, we use orthogonal expansions. Taking, for example, the estimate in Theorem \ref{thm:1.1} 
as an example. The commuting of partial derivatives with the projection operators allows us to express the Fourier 
coefficients of partial derivatives of the function, such as $\wh{\partial_1\partial_3  f}_{k,n}^{1,0,0}$, in terms of of 
$\wh f^{-1,-1,-1}_{k,m}$, which allows us to expand the right hand side of the estimate,  by the Parseval identity, as a 
series that contains only Fourier coefficients $\wh f^{-1,-1,-1}_{k,m}$. Furthermore, we can express the 
orthogonal polynomials with one set of parameters in terms those with another set of parameters, such as writing
$J_{k,n}^{-1,-1,-1}$ in terms of $J_{k,n}^{0,0,0}$, which allows us to write the left hand side of the estimate 
as a series that contains only Fourier coefficients $\wh f^{-1,-1,-1}_{k,m}$. These make it possible to compare the 
two sides and derive the desired estimates. 

The work relies heavily on properties and relations of several families of orthogonal polynomials on the triangle, and 
requires manipulations of a plethora of formulas. We have used a computer algebra system in this work, not only for 
verifying formulas, although all recursive and differential relations in the next three sections and all Sobolev inner 
products are so verified, but also for discovering relations and searching for appropriate Sobolev inner products. In fact,
it is hard to image that the work could be carried out without using a computer algebra system. 

\subsection{Organization of the paper}
We start with the extended Jacobi polynomials, $J_n^{\a,\b}$, for which $\a$ and $\b$ can be less than $-1$.
In the next section, we establish relations and identities among these polynomials, for which particular 
attention will be paid to the singular cases that do not follow from analytic continuation. In Section 3, we study the 
orthogonality of $J_n^{-\ell,-m}$ and show how this allows us to establish estimates in \eqref{eq:1.2}. Although
related results have been studied in several areas, as discussed at the end of the Subsection 1.1, we include this 
section since our results, based on a new extension of the Jacobi polynomials with negative parameters, do not 
seem to be exactly the same, and it is a prelude of our study in two variables. 

We develop properties of the family of polynomials $J_{k,n}^{\a,\b,\g}$ and two other families derived from 
simultaneous permutations in Section 4. These include recursive relations for polynomials with different sets of 
parameters, derivatives, and differential operators, all in explicit forms. Many of these relations are developed for 
the first time, which will be useful in future studies. 

The orthogonality of the extended Jacobi polynomials on the triangle is discussed in Sections 5--10. Indexed by the 
parameters $(\a,\b,\g)$, we start from $(\a,\b,-1)$ and its permutations in Section 5, followed by $(-1,-1,\g)$ in Section 6, 
$(-1,-1,-1)$ in Section 7, $(-2,\a,\b)$ in Section 8, $(-2,-1,-1)$ in Section 9 and, finally, $(-2,-2,-2)$ in Section 10. The goal 
is to choose an inner product that has an orthonormal basis in $\CV_n = \mathrm{span} \{J_{k,n}^{\a,\b,\g}: 0\le k \le n\}$, 
so that its corresponding orthogonal projection commutes with certain partial derivatives. Our development is based on 
the realization that inner products for the Sobolev space can be defined somewhat inductively according to the parameters. 
The process is still complicated, since we often have to take linear combinations of polynomials in $\CV_n$ and 
modify the inner product at the same time. Furthermore, in each case, we need to define three inner products, for the 
family of $J_{k,n}^{\a,\b,\g}$ and two other families derived from permutation, respectively. 

We start the proof of approximation part of the results in Section 11, where we prove a result for best approximation 
in $L^2$ norm in the classical setting of $\a,\b,\g > -1$, which provides a prelude for the proof in latter sections. 
The approximation behavior of $S_n^{\a,\b,\g} f$ is studied in Section 12 when $\g =-1$, in Section 13 for
$\a=\b=-1$, and in Section 14 for $\a=\b=-1$ and $\g =-2$. These three sections provide intermediate steps that
are needed for the estimates of derivatives in our main results. The case $\a=\b=\g =-1$ is treated in Section 14,
which gives the proof of Theorem \ref{thm:1.1}. Finally, the case $\a=\b=\g =-2$ is considered in Section 16, which
gives the proof of Theorem \ref{thm:1.2}. 

\section{Jacobi polynomials of negative parameters}
\label{sect2}
\setcounter{equation}{0}

For $\a,\b>-1$, the Jacobi polynomials are defined by 
$$
  P_n^{(\a,\b)}(t) = \frac{(\a+1)_n}{n!} {}_2F_1 \left(-n, n+\a + \b+1; \a+1; \frac{1-t}{2} \right)
$$
in hypergeometric function ${}_2F_1$. They are  orthogonal to each other with respect to the weight function 
$
   w^{\a,\b}(t) := (1-t)^\a (1+t)^\b
$
on $[-1,1]$, and 
\begin{align} \label{eq:normJacobi}
   h_n^{\a,\b}:= & \ \int_{-1}^1 \left[ P^{(\alpha,\beta)}_{n}(t) \right]^2 w^{\a,\b}(t) dt \\
        = &\  \frac{2^{\a+\b+1}}{2n+\a+\b+1} \frac{\Gamma(n+\a+1)\Gamma(n+\b+1)}{n! \Gamma(n+\a+\b+1)}. \notag
\end{align}
As it is shown in \cite{Sz}, writing the ${}_2F_1$ in the following explicit form 
\begin{align} \label{eq:JacobiP}
   P^{(\alpha,\beta)}_{n}(t) 
   =& \sum_{k=0}^n  \frac{(k+\a+1)_{n-k} \, (n+\alpha+\beta+1)_{k} }{ (n-k)!\, k! } \left(\frac{t-1}{2}\right)^k
\end{align}
extends the definition of $P_n^{(\a,\b)}(t)$ for $\a$ and $\b$ being any real numbers. However, a reduction of the
polynomial degree occurs if $-n-\a-\b\in\{1,2,\dots,n\}$ in such an extension. To avoid this degree reduction, we 
renormalized the Jacobi polynomials in \cite{LX} and provided another definition that works for all parameters. More 
precisely, the following definition was given in \cite{LX}: 
 
\begin{defn}
For $\a,\b \in \RR$ and $n=1,2,\ldots$, define
\begin{align} \label{eq:Jab}
& J^{\alpha,\beta}_{n}(t):=
\sum_{k= \iota_0}^n \frac{(k+\a+1)_{n-k}  }{ (n-k)!\, k!  (n+\alpha+\beta+k+1)_{n-k} } \left(\frac{t-1}{2}\right)^k,   \quad n\in \NN_0,
\end{align}
where $\iota_0 = \iota^{\a,\b}_0(n):= -n-\a-\b$ if $-n-\a-\b\in\{ 1,2,\dots,n\}$ and $\iota_0 = 0$ otherwise.
\end{defn}

For convenience, we also define ${P}^{(\alpha,\beta)}_{n}(t)=J^{\alpha,\beta}_{n}(t)=0$ whenever $n$ is a negative integer.
This extends the definition of the Jacobi polynomials to all $\a,\b  \in \RR$, in particular, to $\a, \b$ being negative integers. 
It is easy to see that $J_n^{\a,\b}$ is a polynomial of degree $n$ for all $\a,\b$. For $\a,\b > -1$, these are just renormalization 
of the ordinary Jacobi polynomials, as seen below. 

\begin{prop} \label{prop:Jacobi1st}
If $\iota_0^{\a,\b}(n) =0$, then 
\begin{equation} \label{JacobiP2}
J^{\alpha,\beta}_{n}(t) =  \frac{1 }{(n+\a+\beta+1)_{n}}{P}^{(\alpha,\beta)}_{n}(t).
\end{equation}
Furthermore, $J_n^{\a,\b}$ is an analytic function of $\a,\b$ if $-n - \a-\b \notin \{1,2,\ldots,n\}$. 
\end{prop}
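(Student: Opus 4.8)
The plan is to compare the defining series \eqref{eq:Jab} for $J_n^{\alpha,\beta}$ with the explicit hypergeometric expansion \eqref{eq:JacobiP} of $P_n^{(\alpha,\beta)}$, term by term in powers of $\tfrac{t-1}{2}$. Under the hypothesis $\iota_0^{\alpha,\beta}(n)=0$, the summation in \eqref{eq:Jab} runs over $k=0,\dots,n$, exactly as in \eqref{eq:JacobiP}, so it suffices to check that the coefficient of $\left(\tfrac{t-1}{2}\right)^k$ in $J_n^{\alpha,\beta}$ equals $(n+\alpha+\beta+1)_n^{-1}$ times the corresponding coefficient in $P_n^{(\alpha,\beta)}$, for each $0\le k\le n$.

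First I would write the two coefficients side by side: from \eqref{eq:JacobiP} the coefficient in $P_n^{(\alpha,\beta)}$ is $\dfrac{(k+\alpha+1)_{n-k}\,(n+\alpha+\beta+1)_k}{(n-k)!\,k!}$, while from \eqref{eq:Jab} the coefficient in $J_n^{\alpha,\beta}$ is $\dfrac{(k+\alpha+1)_{n-k}}{(n-k)!\,k!\,(n+\alpha+\beta+k+1)_{n-k}}$. So the claim \eqref{JacobiP2} reduces to the Pochhammer identity
\begin{equation*}
 (n+\alpha+\beta+1)_k \cdot (n+\alpha+\beta+k+1)_{n-k} = (n+\alpha+\beta+1)_n ,
\end{equation*}
which is just the elementary splitting $(a)_k (a+k)_{n-k} = (a)_n$ with $a = n+\alpha+\beta+1$. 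This is immediate from the definition of the Pochhammer symbol as a product of consecutive factors, and it holds as an identity of rational functions, so no positivity or integrality of the parameters is needed here. That settles the first assertion.

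For the analyticity claim, I would argue as follows. When $-n-\alpha-\beta\notin\{1,2,\dots,n\}$ we have $\iota_0^{\alpha,\beta}(n)=0$, so $J_n^{\alpha,\beta}(t)$ is given by the finite sum in \eqref{eq:Jab} with fixed index range $k=0,\dots,n$ independent of $(\alpha,\beta)$. Each summand is a ratio of products of Pochhammer symbols in $\alpha,\beta$; the numerator factors $(k+\alpha+1)_{n-k}$ are polynomials in $\alpha$, and the denominator factor $(n+\alpha+\beta+k+1)_{n-k} = \prod_{j=0}^{n-k-1}(n+\alpha+\beta+k+1+j)$ is a polynomial in $\alpha+\beta$ whose zeros occur exactly when $n+\alpha+\beta+k+1+j = 0$ for some $0\le k\le n$, $0\le j\le n-k-1$, i.e. when $-n-\alpha-\beta \in \{1,2,\dots,n\}$ (as $k+1+j$ ranges over $\{1,\dots,n\}$). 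Hence on the open set $\{(\alpha,\beta): -n-\alpha-\beta\notin\{1,\dots,n\}\}$ every denominator is nonvanishing and each term, hence the finite sum, is a rational function of $(\alpha,\beta)$ with no poles there, so $J_n^{\alpha,\beta}(t)$ depends analytically on $(\alpha,\beta)$.

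I do not expect a genuine obstacle here: the content is the Pochhammer factorization $(a)_k(a+k)_{n-k}=(a)_n$ together with a bookkeeping check that the excluded set in the analyticity statement matches precisely the zero set of the denominators. The only point requiring a little care is confirming that the union of zero loci of the factors $(n+\alpha+\beta+k+1)_{n-k}$ over all admissible $k$ is exactly $\{-n-\alpha-\beta\in\{1,\dots,n\}\}$, which follows by noting that as $k$ and $j$ vary the quantity $k+1+j$ sweeps out all of $\{1,2,\dots,n\}$.
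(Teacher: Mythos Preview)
Your proof is correct and follows essentially the same approach as the paper: both reduce \eqref{JacobiP2} to the Pochhammer identity $(a)_k(a+k)_{n-k}=(a)_n$ with $a=n+\alpha+\beta+1$, and both observe that the explicit formula \eqref{eq:Jab} has nonvanishing denominators precisely when $-n-\alpha-\beta\notin\{1,\dots,n\}$. Your analyticity argument is in fact slightly more explicit than the paper's, which merely asserts continuous dependence on $\alpha,\beta$ from the formula.
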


\begin{proof}
If $\iota_0^{\a,\b}(n) =0$, then $-n-\a-\b < 1$ or $-n-\a-\b > n$, so that $(n+\a+\b+1)_n \ne 0$. Consequently, 
the identity 
$$
     \frac{(n+\a+\beta+1)_n}{(n+\alpha+\beta+k+1)_{n-k}} = (n+\alpha+\beta+1)_{k} 
$$ 
establishes \eqref{JacobiP2}. Furthermore, if $-n - \a-\b \notin \{1,2,\ldots,n\}$, then $\iota_0^{\a,\b}(n) =0$ and $J_n^{\a,\b}$ 
depends continuously  on $\a,\b$ by the explicit formula in \eqref{eq:Jab}.
\end{proof}

As a consequence, many identities of the ordinary Jacobi polynomials extend to $J_n^{\a,\b}$ immediately. For example, 
\begin{equation} \label{eq:Jn(-t)}
 J_n^{\a,\b}(-t) = (-1)^n J_n^{\b,\a}(t), \qquad \hbox{if} \quad -n - \a-\b \notin \{1,2,\ldots,n\}.
\end{equation}
This identity, however, fails to hold if $\iota_0^{\a,\b}(n) \ne 0$. We should point out that $J_n^{\a,\b}$ may not be continuous as 
a function of $(\a,\b)$ when $\iota_0^{\a,\b}(n) \ne 0$. For example, $\iota_0^{-2,0}(1) = 1$, and $J_1^{-2,0}(t) = -\frac12 (1-t)$ does
not equal to the limit of $J_1^{-2,\b}(t) =  - \f12 (1 +\frac2 {\b} - t)$ as $\b \to 0$. 

The three-term relation satisfied by the Jacobi polynomials takes the form
\begin{equation}\label{eq:three-termJ}
 t J_n^{\a,\b}(t) = 2 (n + 1) J_{n + 1}^{\a, \b}(t) + \mathfrak{a}_n^{\a,\b}J_n^{\a,\b}(t) + \mathfrak{b}_n^{\a,\b}J_{n-1}^{\a,\b}(t), 
\end{equation}
when writing in $J_n^{\a,\b}$, where 
\begin{align*}
\mathfrak{a}_n^{\a,\b} & := \frac{\b^2 - \a^2}{(2 n + \a + \b) (2 n + \a + \b + 2)}, \\ 
\mathfrak{b}_n^{\a,\b} & := \frac{2 (n + \a) (n +\b) (n + \a + \b)}{(2 n + \a + \b -1) (2 n + \a + \b)^2 (2 n + \a + \b + 1)}.
\end{align*}
 
In the following, we will state a number of properties of these extended Jacobi polynomials. Although analytic continuation
can be used to extend most of these identities from ordinary Jacobi polynomials to almost all parameters 
in $\RR$, we often need to consider exceptional cases of $-n -\a - \b \in \{1,2,\ldots, n\}$. Hence, it is necessary to
give a proof for these identities. 

\begin{prop}
For $\a,\b \in \RR$ and $n \ge 0$, 
\begin{align} \label{derivativeJ}
  \frac{d}{dt} J_n^{\a,\b}(t) = \frac12 J_{n-1}^{\a+1,\b+1}(t). 
\end{align}
\end{prop}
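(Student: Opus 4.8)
The plan is to prove the derivative formula \eqref{derivativeJ} directly from the explicit series \eqref{eq:Jab}, handling the generic case and the singular cases $-n-\a-\b\in\{1,2,\dots,n\}$ separately. In the generic case, where $\iota_0^{\a,\b}(n)=0$, one option is to invoke Proposition \ref{prop:Jacobi1st} together with the classical identity $\frac{d}{dt}P_n^{(\a,\b)}(t) = \tfrac12(n+\a+\b+1)P_{n-1}^{(\a+1,\b+1)}(t)$, but since analytic continuation does not reach the exceptional parameters, I would instead differentiate \eqref{eq:Jab} termwise once and for all. Writing $u=\tfrac{t-1}{2}$, so $\frac{d}{dt}=\tfrac12\frac{d}{du}$, differentiation kills the $k=0$ term (when present) and sends the general term with index $k$ to one with a factor $k\,u^{k-1}$; after shifting $k\mapsto k+1$ this should be matched against the defining series for $\tfrac12 J_{n-1}^{\a+1,\b+1}(t)$, whose $k$-th coefficient involves $(k+\a+2)_{n-1-k}$, $(n-1-k)!$, $k!$, and $(n+\a+\b+k+2)_{n-1-k}$.

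The key algebraic step is the coefficient identity
\begin{equation*}
  \frac{(k+1+\a+1)_{n-k-1}}{(n-k-1)!\,(k+1)!\,(n+\a+\b+k+2)_{n-k-1}}\cdot\frac{k+1}{1}
    = \frac{(k+\a+2)_{n-1-k}}{(n-1-k)!\,k!\,((n-1)+(\a+1)+(\b+1)+k+1)_{n-1-k}},
\end{equation*}
which is seen to be an equality of the two sides after cancelling $(k+1)/(k+1)!=1/k!$ and observing that $(k+1+\a+1)_{n-k-1}=(k+\a+2)_{n-1-k}$ and $(n+\a+\b+k+2)_{n-k-1}=((n-1)+(\a+1)+(\b+1)+k+1)_{n-1-k}$. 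So on the level of individual coefficients the formula is essentially immediate; the content is entirely in making sure the \emph{ranges} of summation line up.

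That range bookkeeping is where the main obstacle lies, and it is what distinguishes this from a routine continuation argument. One must check that the lower summation limit of the differentiated series for $J_n^{\a,\b}$ matches $\iota_0^{\a+1,\b+1}(n-1)$, the lower limit prescribed for $J_{n-1}^{\a+1,\b+1}$. Here one uses that $-(n-1)-(\a+1)-(\b+1) = -n-\a-\b-1$, so the exceptional set for the pair $(n-1,\a+1,\b+1)$ is shifted by one relative to that for $(n,\a,\b)$; I would split into the cases (i) $-n-\a-\b\notin\{1,\dots,n\}$, where $\iota_0^{\a,\b}(n)=0$ and, after differentiating, the surviving sum starts at $k=1$, reindexes to start at $k=0$, and one checks $\iota_0^{\a+1,\b+1}(n-1)=0$ as well; (ii) $-n-\a-\b=1$, the most delicate case, where the series for $J_n^{\a,\b}$ starts at $k=1$, the constant term is already absent, and one must verify that after differentiation and reindexing the sum starts at $k=0$ in agreement with $\iota_0^{\a+1,\b+1}(n-1)=0$ (note $-(n-1)-(\a+1)-(\b+1)=0\notin\{1,\dots,n-1\}$); and (iii) $-n-\a-\b\in\{2,\dots,n\}$, where $\iota_0^{\a,\b}(n)=-n-\a-\b=:\iota$, differentiation removes the lowest term and reindexes the sum to start at $\iota-1 = -(n-1)-(\a+1)-(\b+1) = \iota_0^{\a+1,\b+1}(n-1)$, again matching. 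In every case one should also confirm that no spurious denominator $(n+\a+\b+k+1)_{n-k}$ or $(n+\a+\b+k+2)_{n-1-k}$ vanishes over the relevant range of $k$, which follows from the definition of $\iota_0$ exactly as in the proof of Proposition \ref{prop:Jacobi1st}. Once these case checks are complete, termwise comparison of coefficients finishes the proof; the degenerate possibility $n=0$ is covered by the convention $J_{-1}^{\a+1,\b+1}\equiv 0$ since a constant has zero derivative.
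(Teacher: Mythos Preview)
Your proposal is correct and follows essentially the same route as the paper's proof: differentiate the series \eqref{eq:Jab} termwise, match coefficients, and verify that the lower summation limits agree, with the key observation being $\iota_0^{\a+1,\b+1}(n-1)=\iota_0^{\a,\b}(n)-1$ in the singular regime; the paper compresses your cases (ii) and (iii) into one line and invokes the classical identity for the generic case rather than redoing the termwise check there. One small verbal slip: in your case (iii) you write ``differentiation removes the lowest term,'' but since $\iota\ge 2$ the lowest term $u^{\iota}$ survives differentiation---the shift from $\iota$ to $\iota-1$ comes entirely from the reindexing $k\mapsto k-1$, as your subsequent sentence in fact correctly states.
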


\begin{proof}
If $\iota_0^{\a,\b}(n) =0$, this follows from \eqref{JacobiP2} and the classical result on the derivative of $P_n^{(\a,\b)}$. 
Assume now $\iota_0^{\a,\b}(n) \ne 0$. Then $\iota_0^{\a,\b}(n) = -n - \a -\b$ is an integer in $\{1,2,\ldots, n\}$. A quick check 
shows that $\iota_0^{\a+1,\b+1}(n-1) = \iota_0^{\a,\b}(n) -1$, so that $J_n^{\a,\b}$ and $J_{n-1}^{\a+1,\b+1}$ contain 
the same number of monomial terms.  Taking derivative of $J_n^{\a,\b}$ in \eqref{eq:Jab} and then shifting the summation 
index by 1, so that the summation starts from $\iota_0^{\a+1,\b+1}(n-1)$, it is easy to verify \eqref{derivativeJ} by working
with the hypergeometric expansions.
\end{proof}

\begin{prop} \label{prop:J-mJm}
For $\a  \in \RR$, $m =0,1,2,\ldots$, 
\begin{align} \label{eq:Jam}
J_n^{\a,-m}(t) = \frac{(n-m)!}{2^m n!} (1+t)^m J_{n-m}^{\a,m}(t), \quad n \ge m + \max\{0,\left \lfloor -\a \right \rfloor\}. 
\end{align}
Furthermore, for $\b \in \RR$ and $\ell= 0,1,2, ...$, 
\begin{align}\label{eq:Jmb}
J_n^{-\ell,\b}(t)  = \frac{(-1)^\ell (n-\ell)!}{2^\ell n!} (1-t)^\ell J_{n-\ell}^{\ell,\b}(t), \quad n \ge \ell.
\end{align}
For $m, \ell = 0,1,2,\ldots,$ and $n \ge m+\ell$, 
\begin{align}\label{eq:Jlm}
  J_n^{-\ell,-m}(t) = \frac{(-1)^\ell (n-m-\ell)!}{2^{\ell+m} n!} (1-t)^\ell(1+t)^m J_{n-\ell-m}^{\ell,m}(t).
\end{align}
\end{prop}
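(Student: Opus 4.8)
The plan is to prove the three formulas \eqref{eq:Jam}, \eqref{eq:Jmb}, \eqref{eq:Jlm} directly from the explicit hypergeometric expansion \eqref{eq:Jab}, treating \eqref{eq:Jam} as the primary identity and then deriving the other two. For \eqref{eq:Jam}, the first thing I would do is pin down the value of $\iota_0$ on both sides. On the left, with $\b=-m$, we have $-n-\a+m$, and the hypothesis $n \ge m + \max\{0,\lfloor -\a\rfloor\}$ is exactly what is needed to control whether $-n-\a+m$ lies in $\{1,\dots,n\}$ (and, when $\a$ is a negative integer, where the truncation starts); on the right, $J_{n-m}^{\a,m}$ has top index $-(n-m)-\a-m = -n-\a$, and one checks the two truncation indices match up after the shift $k \mapsto k+m$ induced by the factor $(1+t)^m = 2^m\big(\tfrac{t-1}{2}+1\big)^m$ — actually it is cleaner to write $(1+t) = (t-1) + 2$ and expand, or better, to recognize $\left(\tfrac{1+t}{2}\right) = 1 + \left(\tfrac{t-1}{2}\right)$ and use the binomial theorem on $(1+t)^m$.

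Concretely, for \eqref{eq:Jam} I would substitute the series \eqref{eq:Jab} for $J_{n-m}^{\a,m}(t)$ on the right-hand side, multiply by $(1+t)^m = 2^m\sum_{j=0}^m \binom{m}{j}\left(\tfrac{t-1}{2}\right)^j$, collect the coefficient of $\left(\tfrac{t-1}{2}\right)^k$ on both sides, and verify the resulting identity of Pochhammer/factorial expressions. The coefficient comparison reduces to a finite hypergeometric summation — a Chu–Vandermonde-type identity — in the inner index $j$; this is the kind of routine-but-delicate manipulation the paper says it verified by computer algebra, so I would state the coefficient identity and indicate that it follows from Chu–Vandermonde (or Vandermonde's ${}_2F_1(-m,\cdot;\cdot;1)$ evaluation), without grinding it out. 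A cleaner alternative worth trying first: in the non-singular range $-n-\a+m \notin\{1,\dots,n\}$ use \eqref{JacobiP2} to pass to ordinary Jacobi polynomials, where \eqref{eq:Jam} is the classical identity $P_n^{(\a,-m)}(t) = c\,(1+t)^m P_{n-m}^{(\a,m)}(t)$ (a standard consequence of the Rodrigues formula / the known formula for $P_n^{(\a,\b)}$ when $\b$ is a negative integer, e.g. Szeg\H{o} (4.22.2)), track the normalization constants through \eqref{JacobiP2}, and then handle the genuinely singular parameter values — where $\a$ is itself a negative integer and analytic continuation fails — separately by the direct series computation just described, using the matching of truncation indices to guarantee both sides have the same monomials.

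For \eqref{eq:Jmb}, I would not repeat the work: apply \eqref{eq:Jn(-t)} where it is valid, or more robustly use the symmetry built into \eqref{eq:Jab} (swapping $\a\leftrightarrow\b$ corresponds to $t \mapsto -t$ up to the sign $(-1)^n$) to transfer \eqref{eq:Jam} with $(\a,m)$ replaced by $(\b,\ell)$ into \eqref{eq:Jmb}; the constraint $n\ge\ell$ on the right side of \eqref{eq:Jmb} is weaker because $P^{(\ell,\b)}$ with $\ell\ge 0$ never suffers truncation, so the $\max\{0,\lfloor-\a\rfloor\}$ term disappears. Then \eqref{eq:Jlm} follows by composing the two: apply \eqref{eq:Jmb} with $\b=-m$ to peel off the $(1-t)^\ell$ factor, reducing $J_n^{-\ell,-m}$ to a multiple of $(1-t)^\ell J_{n-\ell}^{\ell,-m}$, then apply \eqref{eq:Jam} with $\a=\ell$ to peel off $(1+t)^m$; since $\ell\ge 0$, the side condition in \eqref{eq:Jam} becomes just $n-\ell \ge m$, i.e. $n\ge\ell+m$, matching the stated hypothesis, and the constants multiply to give $\tfrac{(-1)^\ell(n-m-\ell)!}{2^{\ell+m}n!}$.

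The main obstacle is the bookkeeping of the truncation index $\iota_0$ in the singular cases: when $\a$ (or $\b$) is a negative integer the identity is \emph{not} a consequence of analytic continuation, and one must check by hand that the lower summation limits on the two sides correspond correctly under the index shift, so that no spurious terms are created or lost and the factor $(1+t)^m$ (resp. $(1-t)^\ell$) exactly accounts for the degree drop. Once the index matching is established, the coefficient identity is a standard finite hypergeometric evaluation; getting the case analysis on $\iota_0$ complete and correct — rather than the algebra of Pochhammer symbols — is where the real care is needed.
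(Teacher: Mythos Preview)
Your plan for \eqref{eq:Jam} is more laborious than necessary: the hypothesis $n \ge m + \max\{0,\lfloor -\a\rfloor\}$ is chosen precisely so that $-n-\a+m < 1$, which forces $\iota_0^{\a,-m}(n)=0$ in \emph{all} cases. There are no singular parameter values to treat separately; \eqref{JacobiP2} applies directly and reduces \eqref{eq:Jam} to the classical Szeg\H{o} identity. Your proposed case analysis and Chu--Vandermonde computation are not wrong, just unneeded.

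The real gap is in your treatment of \eqref{eq:Jmb}. You propose to derive it from \eqref{eq:Jam} via the symmetry $J_n^{\a,\b}(-t)=(-1)^nJ_n^{\b,\a}(t)$, but the paper explicitly records (just after \eqref{eq:Jn(-t)}) that this identity \emph{fails} when $\iota_0^{\a,\b}(n)\ne 0$, and there is no symmetry ``built into'' the definition \eqref{eq:Jab}, which is written in powers of $(t-1)/2$ and is manifestly asymmetric in $\a,\b$. Under the weak hypothesis $n\ge\ell$ with $\b\in\RR$ arbitrary, both $J_n^{-\ell,\b}$ and $J_{n-\ell}^{\ell,\b}$ can have nonzero $\iota_0$ (take $\b$ a suitable negative integer), so the symmetry route is blocked exactly where it is needed. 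Moreover, even where the symmetry holds, transporting \eqref{eq:Jam} would only give \eqref{eq:Jmb} for $n\ge\ell+\max\{0,\lfloor-\b\rfloor\}$, not for all $n\ge\ell$; your remark that ``$P^{(\ell,\b)}$ with $\ell\ge 0$ never suffers truncation'' overlooks that truncation is governed by $-n-\b$, not by the sign of $\ell$. The paper instead proves \eqref{eq:Jmb} directly from the series: the key observation is that the numerator factor $(k-\ell+1)_{n-k}$ in \eqref{eq:Jab} vanishes for $k<\ell$, so the sum automatically starts at $k=\ell$, and after the shift $k\mapsto k+\ell$ one matches the $\iota_0$ indices case by case (distinguishing $\iota_0^{-\ell,\b}(n)\le\ell$ from $>\ell$). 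This is the mechanism you are missing; the two identities are genuinely asymmetric, as the paper notes immediately after the proof.

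Your derivation of \eqref{eq:Jlm} by composing \eqref{eq:Jmb} and then \eqref{eq:Jam} is correct and matches the paper.
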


\begin{proof}
If $\a > -1$, then $-n-\a +m < -n + m +1 \le 1$ for $n \ge m$, which implies that $\iota_0^{\a,-m}(n) =0$.
If $\a \le -1$, then $-n-\a +m < 1$ if $n \ge m + \lfloor -\a \rfloor$, which again implies that $\iota_0^{\a,-m}(n) =0$.
Consequently, \eqref{eq:Jam} follows from \eqref{eq:Jab} and property of the ordinary Jacobi polynomials 
\cite{Sz}. 

The proof of \eqref{eq:Jmb} requires more work. For $\b \in \RR$, the expansion $J_n^{-\ell,\b}$
in \eqref{eq:Jab} contains a factor $(k-\ell+1)_{n-k}$ in the numerator, which is equal to 0 if $k < \ell$. 
If $\iota_0^{-\ell,\b}(n) \le \ell$, then the summation starts from $\ell$, so that a shift in $k$ leads to 
$$
  J_n^{-\ell,\b}(t) = \sum_{k=0}^{n-\ell} \frac{(k+1)_{n-k-\ell}}{(k+\ell)! (n-k-\ell)! (n+\b+k+1)_{n-k-\ell}}
     \left(\frac{t-1}2\right)^{k+\ell}.
$$
Furthermore, in this case $-n+\ell - \b \le \ell$ is equivalent to $-n - \b < 0$, so that $\iota_0^{\ell,\b}(n-\ell) =0$. 
Rewriting the Pochhammer symbols in the above expression in the form of $(a)_k$, it is easy to see that
\eqref{eq:Jmb} holds in this case. In the remaining case of $\iota_0^{-\ell,\b}(n) > \ell$, we see that $\ell < -n+\ell - \b \le n$ is
equivalent to $\ell < -n - \b \le n-\ell$, which shows immediately that $\iota_0^{\ell,\b} (n-\ell)= \iota_0^{-\ell,\b}(n) - \ell$.  
Hence, shifting the summation index by $\ell$ in the expression of $ J_n^{-\ell,\b}$ in \eqref{eq:Jab}, we can 
verify \eqref{eq:Jmb} as in the first case. 

Finally, \eqref{eq:Jlm} follows from applying \eqref{eq:Jmb} first and then applying \eqref{eq:Jam}.
\end{proof}

It is worthwhile to note that \eqref{eq:Jam} does not hold if $\a <0$ and $m < n < m + \lfloor - \a \rfloor$, whereas 
\eqref{eq:Jmb} holds, in contrast, for all $\b \in \RR$ and $n \ge m$. 

There are numerous identities for the ordinary Jacobi polynomials and all can be generalized to our extended polynomials. We 
shall state a few that will be needed for studying derivatives of the Jacobi polynomials on the triangle. 

\begin{prop} \label{prop:tau}
For $\a,\b \in \RR$ and $n \in \NN$, define
\begin{equation} \label{eq:tau}
 \tau_{n}^{\a,\b}: =   \frac{n+\b}{(2n+\a+\b)(2n+\a+\b+1)},   
\end{equation}
if $2n+\a+\b \ne 0$ and $2n+\a+\b +1 \ne 0$, and define
$$
  \tau_n^{-n,-n} := 1 \quad \hbox{and}\quad  \tau_n^{-n-1,-n}:= -1. 
$$
Then the relations 
\begin{align}\label{eq:JacobiR1}
\begin{split}
 J_n^{\a,\b}(t) & =  J_n^{\a+1,\b}(t) - \tau_{n}^{\a,\b} J_{n-1}^{\a+1,\b}(t), \\ 
 J_n^{\a,\b}(t) & =  J_n^{\a,\b+1}(t) +  \tau_{n}^{\b,\a} J_{n-1}^{\a,\b+1}(t), 
\end{split}
\end{align}
hold if either one of the following conditions satisfies: 
\begin{enumerate}
\item $2n+\a+\b\ne 0$, $2n+\a+\b+1\ne 0$, and either $n+\a+\b+1 \ne 0$ or $n+\a+\b+1=0$ but $\a +1 \in \{-1,-2,\ldots, -n\}$. 
\item $\b = -n$, $\a =-n$ or $\a = -n-1$. 
\end{enumerate}
\end{prop}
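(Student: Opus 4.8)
The plan is to obtain the two relations from the classical contiguous identity for ordinary Jacobi polynomials on the range of parameters where the extended polynomials $J_n^{\a,\b}$ are merely renormalizations of the $P_n^{(\a,\b)}$, and then to settle the remaining (singular) parameter values by a direct computation with the hypergeometric series \eqref{eq:Jab}. First I would record the classical contiguous relation
\[
 (2n+\a+\b+1)\,P_n^{(\a,\b)}(t)=(n+\a+\b+1)\,P_n^{(\a+1,\b)}(t)-(n+\b)\,P_{n-1}^{(\a+1,\b)}(t),
\]
which one checks at once by comparing the coefficients of $(t-1)^k$ in the finite sum \eqref{eq:JacobiP}. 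When $-n-\a-\b\notin\{1,\dots,n+1\}$, all three of $J_n^{\a,\b}$, $J_n^{\a+1,\b}$, $J_{n-1}^{\a+1,\b}$ reduce via \eqref{JacobiP2} to renormalized Jacobi polynomials; dividing the displayed identity by $(n+\a+\b+1)_n$ and simplifying with the elementary ratios $\tfrac{(n+\a+\b+1)_n}{(n+\a+\b+2)_n}=\tfrac{n+\a+\b+1}{2n+\a+\b+1}$ and $\tfrac{(n+\a+\b+1)_n}{(n+\a+\b+1)_{n-1}}=2n+\a+\b$ yields exactly $J_n^{\a,\b}=J_n^{\a+1,\b}-\tau_n^{\a,\b}J_{n-1}^{\a+1,\b}$ with $\tau_n^{\a,\b}$ as in \eqref{eq:tau}. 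The second relation in \eqref{eq:JacobiR1} is then obtained from the first by the substitution $t\mapsto-t$ combined with the reflection \eqref{eq:Jn(-t)}, followed by interchanging $\a$ and $\b$ (so that $\tau_n^{\a,\b}$ becomes $\tau_n^{\b,\a}$).

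Because $J_n^{\a,\b}$ is genuinely discontinuous in $(\a,\b)$ across the exceptional hyperplanes, analytic continuation does not reach the parameter values with $-n-\a-\b\in\{1,\dots,n\}$, and these I would handle by matching, on the two sides, the coefficient of $(\tfrac{t-1}{2})^k$ in \eqref{eq:Jab}. The crux is tracking the truncation indices: one notes that $\iota_0^{\a+1,\b}(n)=\iota_0^{\a,\b}(n)-1$ and $\iota_0^{\a+1,\b}(n-1)=\iota_0^{\a,\b}(n)$ whenever these are positive, so a priori $J_n^{\a+1,\b}$ has one ``extra'' low-order term relative to $J_n^{\a,\b}$ and $J_{n-1}^{\a+1,\b}$; one then uses the standing hypotheses --- in particular the clause $\a+1\in\{-1,\dots,-n\}$ --- to check that the Pochhammer numerator $(k+\a+2)_{n-k}$ of that extra term vanishes, so that all three series are supported on the same set of indices. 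On that common range the coefficient identity reduces to a single rational identity in $\a,\b$, namely the limiting form of the generic one, and the same $\tau_n^{\a,\b}$ drops out; the second relation is treated identically with the roles of $\a$ and $\b$ exchanged (the asymmetry here is already visible in \eqref{eq:Jam} versus \eqref{eq:Jmb}). In the boundary cases $2n+\a+\b=0$ or $2n+\a+\b+1=0$, where $\b=-n$ and $\tau_n^{\a,\b}$ is defined by the special values $\tau_n^{-n,-n}=1$, $\tau_n^{-n-1,-n}=-1$, I would instead substitute the explicit factorizations of $J_n^{\a,-n}$ and $J_{n-1}^{\a+1,-n}$ from \eqref{eq:Jam}--\eqref{eq:Jlm} as a monomial times a lower Jacobi polynomial, which collapses the claim for $\a=-n$ and for $\a=-n-1$ to an elementary one-term check.

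The conceptual content here is modest; the work --- and the only real obstacle --- is the exceptional-case bookkeeping: determining precisely when the three series begin at the same power of $\tfrac{t-1}{2}$, verifying that the hypotheses in items (1) and (2) are exactly what guarantees this, and making sure the $t\mapsto-t$ reduction of the second relation is carried out honestly in the singular cases, since \eqref{eq:Jn(-t)} fails there. Getting the case division clean is where I expect to spend most of the effort.
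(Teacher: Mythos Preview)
Your plan is essentially the paper's own proof: establish the identity from the classical contiguous relation on the generic range (where all three $J$'s are rescaled $P$'s via \eqref{JacobiP2}), then track the truncation indices $\iota_0$ to see that under condition~(1) the only mismatch occurs at $n+\a+\b+1=0$, where the surplus $k=0$ term of $J_n^{\a+1,\b}$ has numerator $(\a+2)_n$ and is killed precisely by the clause $\a+1\in\{-1,\dots,-n\}$; finally handle condition~(2) by explicit evaluation. One small caveat on your boundary shortcut: the factorizations \eqref{eq:Jam}--\eqref{eq:Jlm} do cover the $\a=\b=-n$ case (all three polynomials satisfy the degree constraint $n\ge\ell$ in \eqref{eq:Jmb}), but for $\a=-n-1,\ \b=-n$ neither $J_n^{-n-1,-n}$ nor $J_{n-1}^{-n,-n}$ meets the hypothesis of \eqref{eq:Jmb} or \eqref{eq:Jam}, so there you will have to compute directly from \eqref{eq:Jab}, exactly as the paper does; this is still elementary since $\iota_0=0$ for both and the sums are short.
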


\begin{proof}
If $\a,\b > -1$, these are classical identities for the Jacobi polynomials (cf. \cite[(22.7.18)]{AS}). For general $\a,\b$, we consider
the first identity, which holds, by Proposition \ref{prop:Jacobi1st},  if $\iota_0^{\a,\b}(n) = \iota_0^{\a+1,\b}(n) = 
\iota_0^{\a+1,\b}(n-1)$. These three indexes are not equal if $-n -\a-\b = 1$, $n$ and $n+1$.
If $2n +\a+\b \ne 0$ and $2n +\a+\b +1\ne 0$ but $n+\a+\b+1=0$, then  $\iota_0^{\a,\b}(n) =  \iota_0^{\a+1,\b}(n-1) =1$
by $\iota_0^{\a+1,\b}(n) =0$. For the identity to hold, we require the constant term of $J_n^{\a+1,\b}$ to be zero. 
This term is given by $(\a+2)_n /n!^2$, which is zero if and only if $\a+1 =-1,-2,\ldots,-n$. This proves (1). If 
$2n+\a+\b =0$ or $2n+\a+\b=0$ but $n+\b \ne 0$, then $\tau_n^{\a,\b}$ is undefined and \eqref{eq:JacobiR1}
cannot hold. If $2n + \a+\b = 0$ and $n+\b =0$, then $\a=\b=-n$, $\iota_0^{-n,-n}(n) = n$, $\iota_0^{-n+1,-n}(n) =n-1$
and $\iota_0^{-n+1,-n}(n-1) =0$. However, it is easy to verify directly that $ J_n^{-n,-n}(x) = \frac{1}{n!} \left(\frac{t-1}2\right)^n$,
$J_{n-1}^{-n+1,-n}(x) =  \frac{1}{(n-1)!} \left(\frac{t-1}2\right)^{n-1}$, and $
J_n^{-n,-n}(x) = \frac{1}{n!} \left(\frac{t-1}2\right)^n+ \frac{1}{(n-1)!} \left(\frac{t-1}2\right)^{n-1}$, which shows that the
first identity in \eqref{eq:JacobiR1} holds if $\a= \b = -n$. Similarly, if $2n+\a+\b =0$ and $n+\b =0$, then $\a = -n-1$, $\b =-n$, 
and $\iota_0^{-n-1,-n}(n) = 0$, $\iota_0^{-n,-n}(n) =n$ and $\iota_0^{-n,-n}(n-1) =0$. A direct computation shows that 
$$
  J_n^{-n-1,-n}(x) =\sum_{k=0}^n \frac1 {k!(n-k)!} \left(\frac{t-1}2\right)^k, \quad J_{n-1}^{-n,-n}(x) = 
   \sum_{k=0}^{n-1} \frac1 {k!(n-k)!} \left(\frac{t-1}2\right)^k,
$$
so that the first identity in \eqref{eq:JacobiR1} follows from a direct verification. This completes the proof. 
\end{proof}

\begin{prop}
For $\a,\b \in \RR$ and $n \in \NN$, 
\begin{equation}\label{eq:JacobiR2}
  \frac{1-t}{2} J_{n-1}^{\a+1,\b}(t) = \a J_{n}^{\a,\b-1}(t) - (n+\a)J_{n}^{\a-1,\b}(t). 
\end{equation}
\end{prop}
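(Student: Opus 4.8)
The plan is to verify \eqref{eq:JacobiR2} by comparing coefficients in the expansion in powers of $u:=\frac{t-1}{2}$, using the explicit series \eqref{eq:Jab}; this is uniform enough to cover the exceptional parameters at the same time. Since $\frac{1-t}{2}=-u$, I would first multiply the series for $J_{n-1}^{\a+1,\b}$ by $-u$ and shift the summation index by one. A direct inspection of the Pochhammer symbols shows that, after this shift, the three polynomials $-u\,J_{n-1}^{\a+1,\b}$, $J_n^{\a,\b-1}$, $J_n^{\a-1,\b}$ all have $u^j$-coefficient with the \emph{same} denominator $(n-j)!\,j!\,(n+\a+\b+j)_{n-j}$, so that \eqref{eq:JacobiR2} reduces to the purely algebraic identity
\begin{equation*}
 \a\,(j+\a+1)_{n-j} - (n+\a)\,(j+\a)_{n-j} = -\,j\,(n+\a)\,(j+\a+1)_{n-j-1},
\end{equation*}
which follows immediately from $(j+\a+1)_{n-j}=(n+\a)\,(j+\a+1)_{n-j-1}$ and $(j+\a)_{n-j}=(j+\a)\,(j+\a+1)_{n-j-1}$ for $j<n$, with the top coefficient $j=n$ and the constant term $j=0$ checked by hand. (Equivalently, when $\iota_0^{\a+1,\b}(n-1)=0$ one may instead pass to the ordinary Jacobi polynomials via Proposition \ref{prop:Jacobi1st} and invoke a classical contiguous relation, cf.\ \cite{AS}, then extend by analytic continuation.)

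The point that requires care is the exceptional range, namely when $s:=-n-\a-\b\in\{0,1,\dots,n-1\}$, where some of the three lower limits $\iota_0^{\a+1,\b}(n-1)$, $\iota_0^{\a,\b-1}(n)$, $\iota_0^{\a-1,\b}(n)$ are nonzero. I would check that they shift in a coordinated way: for $s\in\{1,\dots,n-1\}$ one has $\iota_0^{\a+1,\b}(n-1)=s$ while $\iota_0^{\a,\b-1}(n)=\iota_0^{\a-1,\b}(n)=s+1$, and for $s=0$ the first is $0$ but the other two equal $1$. In either case, after the index shift all three sums run over $j\ge \max(s,0)+1$, which is precisely the set on which the common denominator $(n+\a+\b+j)_{n-j}=(j-s)_{n-j}$ does not vanish; the lower-order coefficients ($j\le s$) are simply absent on both sides, and the displayed algebraic identity again gives equality term by term. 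Outside this range every $\iota_0$ is $0$, the denominators never vanish, and the right-hand side's constant term vanishes by the same identity.

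The main obstacle is purely bookkeeping: keeping track of the three distinct values $\iota_0^{\a+1,\b}(n-1)$, $\iota_0^{\a,\b-1}(n)$, $\iota_0^{\a-1,\b}(n)$ attached to the three shifted parameter triples and confirming that they line up so that no spurious $0/0$ term is created and no genuine term is dropped, with the constant term (when $n+\a+\b=0$) and the leading coefficient deserving explicit attention. As with the other identities in this section, the computation is short and, per the paper's convention, is also confirmed with a computer algebra system.
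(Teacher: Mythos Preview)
Your proposal is correct and follows essentially the same approach as the paper: verify the identity by comparing coefficients in the expansion in powers of $\frac{t-1}{2}$, and handle the exceptional parameter range by checking that the $\iota_0$ indices of the three series align after the index shift coming from the factor $\frac{1-t}{2}$. Your case analysis for $s=-n-\a-\b\in\{0,1,\dots,n-1\}$ is in fact more thorough than the paper's own sketch, which only singles out $n+\a+\b=0$ explicitly and leaves the remaining exceptional values to the reader via the same shift argument.
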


\begin{proof}
If $\a,\b > -1$, the identity can be verified directly by working with the hypergeometric expansions,
which also holds if $\iota_0^{\a+1,\b}(n-1)=\iota_0^{\a,\b-1}(n)=\iota_0^{\a-1,\b}(n)$. That these three 
identities are not equal only if $n+\a+\b =0$, for which $\iota_0^{\a+1,\b}(n-1) =0$ and the other two 
indexes are equal to 1. However, there is a shift of index due to the multiple of $(1-t)/2$ in the left hand side,
from which it follows easily that the identity holds in this case as well. 
\end{proof}

\begin{prop}
For $\a,\b \in \RR$ and $n \in \NN$, 
\begin{align}\label{eq:JacobiR3}
\begin{split}
  \frac{1+t}{2} J_{n-1}^{\a+1,\b+1}(t) & = n J_{n}^{\a,\b}(t) + \frac{n+\b}{2n+\a+\b}J_{n-1}^{\a+1,\b}(t), \\ 
  \frac{1-t}{2} J_{n-1}^{\a+1,\b+1}(t) & = -n J_{n}^{\a,\b}(t) + \frac{n+\a}{2n+\a+\b}J_{n-1}^{\a,\b+1}(t), 
\end{split}
\end{align}
where the first identity holds if $2n+\a+\b \ne 0$ and either $n+\a+\b +1 \ne 0$ or $n+\a+\b+1=0$ and 
$\a + 1 = -1,-2,\ldots, -n$, whereas the second identity holds under the same assumption with $\a$
and $\b$ interchanged. Furthermore, if $2n + \a+ \b =0$ and $n+\b =0$, then 
\begin{align}\label{eq:JacobiR3b} 
\begin{split}
 \frac{1+t}{2} J_{n-1}^{-n+1,-n+1}(t) & = n J_{n}^{-n,-n}(t) + J_{n-1}^{-n+1,-n}(t), \\
  \frac{1-t}{2} J_{n-1}^{-n+1,-n+1}(t) & = - n J_{n}^{-n,-n}(t) + J_{n-1}^{-n,-n+1}(t). 
\end{split}
\end{align}
\end{prop}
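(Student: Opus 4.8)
The plan is to prove the identity \eqref{eq:JacobiR3} by the same template already used for Propositions \ref{prop:tau} and the preceding relation: first verify it for $\a,\b>-1$ by reducing to a standard contiguous relation for the ordinary Jacobi polynomials (via \eqref{JacobiP2}), then pass to general $(\a,\b)$ by analytic continuation, and finally treat the finitely many exceptional parameter values $-n-\a-\b\in\{1,2,\ldots,n\}$ by hand. For the first identity, both $J_{n-1}^{\a+1,\b+1}$ and $J_{n-1}^{\a+1,\b}$ have index $\iota_0$ shifted down by one relative to $J_n^{\a,\b}$, and multiplication by $(1+t)/2$ shifts the summation index of the left side up by one; so the claim is really an identity about hypergeometric coefficients that holds whenever all three participating $\iota_0$'s are the ``expected'' ones, i.e.\ when $-n-\a-\b\notin\{1,n,n+1\}$ after accounting for the $(1+t)/2$ shift. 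This is exactly where the restriction ``$2n+\a+\b\ne0$ and either $n+\a+\b+1\ne0$ or $n+\a+\b+1=0$ with $\a+1\in\{-1,\ldots,-n\}$'' comes from: the denominator $2n+\a+\b$ appears explicitly, and when $n+\a+\b+1=0$ one of the $J_n^{\a,\b}$-type terms picks up a spurious constant term unless it vanishes, which forces $\a+1\in\{-1,-2,\ldots,-n\}$ by the argument already given in the proof of Proposition \ref{prop:tau} (the constant term of $J_n^{\a+1,\b}$ is $(\a+2)_n/n!^2$).

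Concretely, for the generic range I would derive \eqref{eq:JacobiR3} from already-established relations rather than from scratch: combine the derivative formula \eqref{derivativeJ}, the parameter-lowering relations \eqref{eq:JacobiR1} of Proposition \ref{prop:tau}, and \eqref{eq:JacobiR2}. For instance, \eqref{eq:JacobiR2} gives $\tfrac{1-t}{2}J_{n-1}^{\a+1,\b+1} = (\a+1)J_n^{\a,\b} - (n+\a+1)J_n^{\a,\b+1}$ after a parameter shift $\b\mapsto\b+1$; rewriting $J_n^{\a,\b+1}$ in terms of $J_n^{\a,\b}$ and $J_{n-1}^{\a,\b+1}$ via the second line of \eqref{eq:JacobiR1} should collapse to the second identity in \eqref{eq:JacobiR3} after simplifying the $\tau$ coefficients; the first identity then follows by the symmetry $t\mapsto -t$ together with \eqref{eq:Jn(-t)}, valid here since we are in the range where \eqref{eq:Jn(-t)} applies. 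Alternatively one can just expand both sides using \eqref{eq:Jab} and match coefficients of $\bigl(\tfrac{t-1}{2}\bigr)^k$; the coefficient identity is a rational-function identity in $(\a,\b,n,k)$ that a computer algebra system verifies immediately, and then analytic continuation in $(\a,\b)$ extends it off the classical range to all $(\a,\b)$ with $-n-\a-\b\notin\{1,\ldots,n\}$.

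For the exceptional equation \eqref{eq:JacobiR3b}, where $2n+\a+\b=0$ and $n+\b=0$ forces $\a=\b=-n$, analytic continuation is unavailable (indeed $\tau$-type coefficients blow up), so I would compute all four polynomials explicitly. By the direct computation already recorded in the proof of Proposition \ref{prop:tau}, $J_n^{-n,-n}(t)=\tfrac{1}{n!}\bigl(\tfrac{t-1}{2}\bigr)^n$, and a parallel elementary evaluation from \eqref{eq:Jab} gives $J_{n-1}^{-n+1,-n+1}(t)=\tfrac{1}{(n-1)!}\bigl(\tfrac{t-1}{2}\bigr)^{n-1}$, while $J_{n-1}^{-n+1,-n}(t)$ and $J_{n-1}^{-n,-n+1}(t)$ are the two-term sums $\sum_{k}\tfrac{1}{k!(n-k)!}\bigl(\tfrac{t-1}{2}\bigr)^k$ (truncated appropriately, exactly as in the earlier exceptional computation). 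Substituting these into \eqref{eq:JacobiR3b} reduces each claimed identity to an elementary polynomial identity in $t$ that one checks by inspection (using $\tfrac{1\pm t}{2} = 1 \pm \tfrac{t-1}{2}$ to absorb the monomial shift).

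I expect the main obstacle to be bookkeeping rather than conceptual: getting the $\iota_0$-shift exactly right in the generic case so that the precise hypothesis on $(\a,\b)$ is matched (in particular the boundary subcase $n+\a+\b+1=0$ with $\a+1\in\{-1,\ldots,-n\}$, where one must verify that the would-be extra constant term genuinely cancels), and then separately nailing down the four explicit polynomials in the $\a=\b=-n$ case without sign errors. Both are routine given the machinery already in place in Proposition \ref{prop:Jacobi1st} and the proof of Proposition \ref{prop:tau}, and the coefficient identities are of the kind the paper explicitly says were verified by computer algebra.
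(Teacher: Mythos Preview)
Your approach is essentially the paper's: write $(1+t)/2 = 1 + (t-1)/2$, expand all three polynomials via \eqref{eq:Jab}, match coefficients of $(t-1)^k$ (a rational-function identity, as you say), and then handle the exceptional loci where the $\iota_0$ indices disagree---namely $n+\a+\b+1=0$ (forcing the constant term of $J_{n-1}^{\a+1,\b+1}$ to vanish, hence $(\a+2)_{n-1}=0$) and $2n+\a+\b=0$ (forcing $\a=\b=-n$, then verified by the explicit formulas from the proof of Proposition~\ref{prop:tau}). One small slip that does not affect the argument: your parameter shift of \eqref{eq:JacobiR2} is miscomputed---substituting $\b\mapsto\b+1$ yields $\tfrac{1-t}{2}J_{n-1}^{\a+1,\b+1} = \a\,J_n^{\a,\b} - (n+\a)\,J_n^{\a-1,\b+1}$, not the formula you wrote---so that particular derivation would need repair, but since you (and the paper) fall back on direct coefficient matching, there is no actual gap.
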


\begin{proof}
If the $\iota_0$ indexes of the three polynomials are equal, then the identity can be verified directly by working with 
the hypergeometric expansions. In general, we write $(1+t)/2 = 1+ (t-1)/2$ and comparing the coefficients of
$(t-1)^j$ in two sides of the identity. The three indexes are not equal if $n+\a+\b+1 \ne 0$, or $2n+\a+\b \ne 0$. 
If $n+\a+\b +1 = 0$, then $\iota_0^{\a+1,\b+1}(n-1)=0$ and the other two indexes are equal to 1. We need the 
constant term of $J_{n-1}^{\a+1,\b+1}$ to be zero, which holds if $(\a+2)_{n-1}=0$,  or $\a +1\ne -1, -2, \ldots, -n$.
If $2n+\a+\b=0$, then we need $n+\b =0$, which leads to $\a = -n$ and $\b=-n$. In this case, the identity
\eqref{eq:JacobiR3b} can be verified directly. 
\end{proof}

\section{Sobolev inner product and its orthogonal basis on an interval}
\label{sect3}
\setcounter{equation}{0}

Let $\la \cdot,\cdot\ra$ be an inner product in a Sobolev space that contains the space of polynomials of one real 
variable. Let $\{Q_n\}_{n\ge 0}$ be the sequence of orthogonal polynomials with respect to this inner product, with 
$Q_n$ being a polynomial of degree precisely $n$. Then the Fourier orthogonal expansion with respect to this basis 
is defined by 
$$
  f(t) = \sum_{n=0}^\infty \wh f_n \, Q_n(t), \quad \hbox{where}\quad \wh f_n := \frac{\la f,Q_n\ra}{\la Q_n,Q_n\ra}. 
$$
The $n$-th partial sum of this expansion is defined by 
$$
  S_n f(t) =  S_n(f;t) := \sum_{k=0}^n \wh f_k \, Q_k(t), \qquad n=0,1,2,\ldots.
$$
In the following, we will consider an inner product $\la \cdot,\cdot\ra_{\a,\b}$ with $\a,\b$ being two parameters, for which 
$J_n^{\a,\b}$ are orthogonal when $\a, \b$ are negative integers. This is possible for $n \ge n_0$ for some $n_0$ 
depending on $\a$ and $\b$, but require modification of polynomials for $n< n_0$. We will adopt the convention of 
using $\wh J_k^{\a,\b}$ and $S_n^{\a,\b}(f)$ to denote the orthogonal polynomials and the $n$-th partial sum of the 
orthogonal expansion.  

Denote the derivative by $\partial$ and let $\partial^k f:= f^{(k)}$ for $k=0,1,2,\ldots$. For $m \in \NN_0$, let 
$$
W_2^m: = \{f \in L^2[0,1]: \partial^k f \in L^2[0,1], \, \, k=1,2,\ldots, m\}. 
$$

It should be mentioned that the inner products that we construct below are termed as discrete-continuous Sobolev inner
product. It was considered in \cite{APPR} when one parameter is a negative integer and in \cite{AMR} when both parameters
are negative integers; see also the discussion in \cite[Section 6]{MX}. Because of our modification of the Jacobi polynomials, 
we need to study them below. 

\subsection{Sobolev Inner product for $J_n^{\a,-m}$ and $J_n^{-m,\b}$}

Let $\a > -m-1$ and $m =1,2,\ldots$. For $\l_k > 0$, $k =0,1,\ldots, m-1$, define $\a^\sharp: = \max\{0, \lfloor - \a \rfloor\}$ and
\begin{equation}\label{eq:ipd-a-m}
 \la f,g\ra_{\a,-m} := \int_0^1 f^{(m)}(t)g^{(m)}(t) (1-t)^{\a+m} dt + \sum_{k=0}^{m -1} \l_k f^{(k)}(0) g^{(k)}(0).
\end{equation}
For $n =0,1,\ldots,$ we define the polynomials $\wh J_n^{\a,-m}$ by 
$$
  \wh J_n^{\a,-m}(t) = \begin{cases} J_n^{\a,-m} (2t-1), &  n \ge m+\a^\sharp, \\
      \displaystyle{ J_n^{\a,-m} (2t-1) - \sum_{k=0}^{\min\{n,m\}-1} J_{n-k}^{\a+k,-m+k} (-1) \frac{t^k}{k!}}, & n < m+\a^\sharp.
      \end{cases}
$$ 
We shall prove that $\wh J_n^{\a,-m}$ consists of a mutually orthogonal basis and we denote by $S_n^{\a,-m}$ the $n$-th 
partial sum operator of the Fourier orthogonal expansion with respect to this basis. 

\begin{prop} \label{prop:Qam}
Let $m=1,2,\ldots$ and $\a > -m-1$, the polynomials $\{\wh J_n^{\a,-m}: n=0,1, 2, \ldots\}$ are mutually orthogonal with 
respect to the inner product $\la \cdot,\cdot \ra_{\a,-m}$. Furthermore, for $n \ge k$,
\begin{equation} \label{eq:dS=Sam}
  \partial^k S_n^{\a,-m} (f) = S_{n-k}^{\a+k,-m+k}  (\partial^k f), \qquad k =1,\ldots, m. 
\end{equation}
\end{prop}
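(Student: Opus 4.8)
## Proof proposal

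The plan is to verify the two claims of Proposition~\ref{prop:Qam} separately, and in both cases the engine is the differentiation formula \eqref{derivativeJ}, which after the affine change of variable $t \mapsto 2t-1$ reads $\partial \big[J_n^{\a,\b}(2t-1)\big] = J_{n-1}^{\a+1,\b+1}(2t-1)$. First I would record the companion fact that for $n \ge m + \a^\sharp$ the shifted polynomial $J_n^{\a,-m}(2t-1)$, by \eqref{eq:Jam}, carries a factor $(1+t)^m$ in the variable $2t-1$, i.e. a factor $t^m$ in the variable $t$; hence $\partial^k$ of it vanishes to order $m-k$ at $t=0$ for $0 \le k \le m$, and in particular $\wh J_n^{\a,-m}$ and all its derivatives up to order $m-1$ vanish at $0$ in that range. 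For the low-degree block $n < m+\a^\sharp$, the subtracted Taylor-type correction $\sum_{k} J_{n-k}^{\a+k,-m+k}(-1)\,t^k/k!$ is designed precisely so that $\partial^j \wh J_n^{\a,-m}(0) = 0$ for $0 \le j \le \min\{n,m\}-1$; this is a finite computation using $\partial^j[t^k/k!]|_{0} = \delta_{jk}$ together with $\partial[J_{n-k}^{\a+k,-m+k}(2t-1)] = J_{n-k-1}^{\a+k+1,-m+k+1}(2t-1)$ applied iteratively, so the derivatives telescope correctly.

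Next I would prove mutual orthogonality. Take $m \le n_1 < n_2$ first (the generic case $n_i \ge m+\a^\sharp$): the point-evaluation part of $\la \cdot,\cdot\ra_{\a,-m}$ contributes nothing because $\partial^k \wh J_{n_i}^{\a,-m}(0)=0$ for $k \le m-1$, and the integral part is $\int_0^1 \partial^m \wh J_{n_1}^{\a,-m}\,\partial^m \wh J_{n_2}^{\a,-m}\,(1-t)^{\a+m}\,dt$. By \eqref{derivativeJ} iterated $m$ times, $\partial^m \wh J_{n}^{\a,-m}(t)$ is a constant multiple of $J_{n-m}^{\a+m,0}(2t-1)$, and these are (renormalized) classical Jacobi polynomials orthogonal on $[0,1]$ with respect to $(1-t)^{\a+m}$ — here one uses $\a+m>-1$. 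Hence the integral vanishes when $n_1\ne n_2$, giving orthogonality. For the remaining cases, where one or both indices lie in the low block $n<m+\a^\sharp$, I would argue that the correction terms are polynomials of degree $<n$ of the special form $\sum_k c_k t^k$ and check that they lie in the span of lower $\wh J^{\a,-m}$ or are killed by the inner product against higher ones; the cleanest route is to show inductively that $\{\wh J_0^{\a,-m},\dots,\wh J_{n}^{\a,-m}\}$ spans $\Pi_n$ and that $\wh J_n^{\a,-m} \perp \Pi_{n-1}$, the latter by splitting an arbitrary $p\in\Pi_{n-1}$ into its value-at-$0$ jet (handled by the point masses, since $\partial^m\wh J_n$ is still a multiple of $J_{n-m}^{\a+m,0}$ when $n\ge m$, or $\partial^m\wh J_n=0$ when $n<m$) and the rest (handled by the weighted integral and classical Jacobi orthogonality after $m$-fold differentiation).

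Finally, the commuting relation \eqref{eq:dS=Sam}. The structural input is: $\partial^k$ maps the inner product $\la\cdot,\cdot\ra_{\a,-m}$ onto $\la\cdot,\cdot\ra_{\a+k,-m+k}$ in the sense that, on functions whose first $k$ derivatives vanish at $0$ — which includes all $\wh J_n^{\a,-m}$ with $n\ge k$ — one has $\la f,g\ra_{\a,-m} = \la \partial^k f, \partial^k g\ra_{\a+k,-m+k}$ when $g$ also has that vanishing (the weighted integrals match by the iterated differentiation formula, since $(1-t)^{\a+m}=(1-t)^{(\a+k)+(m-k)}$, and the point-mass terms on both sides involve only derivatives of order $\ge k$ at $0$, all of which are zero). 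Combined with $\partial^k \wh J_n^{\a,-m} = c_{n,k}\,\wh J_{n-k}^{\a+k,-m+k}$ for a nonzero constant $c_{n,k}$ (this is exactly the differentiation formula in the generic block, and for the low block one checks that $\partial^k$ of the corrected polynomial is again the corrected polynomial of the shifted parameters, because the correction was built by the same iterated scheme), it follows that $\wh{(\partial^k f)}_{n-k}^{\,\a+k,-m+k} = \wh f_n^{\,\a,-m}\cdot(\text{ratio of norms})\cdot c_{n,k}$, and a short bookkeeping of the normalizing constants gives $\partial^k S_n^{\a,-m}f = \sum_{n'\le n-k}\wh{(\partial^k f)}_{n'}^{\a+k,-m+k}\wh J_{n'}^{\a+k,-m+k} = S_{n-k}^{\a+k,-m+k}(\partial^k f)$, where the lowest $k$ terms drop out because $\partial^k\wh J_n^{\a,-m}=0$ for $n<k$.

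I expect the main obstacle to be the low-degree block $n<m+\a^\sharp$: verifying that the Taylor-correction terms are exactly compatible with the point-mass part of the inner product (so that orthogonality is not spoiled), and that applying $\partial^k$ to a corrected polynomial again yields a corrected polynomial with the shifted parameters rather than merely the raw $J_{n-k}^{\a+k,-m+k}(2t-1)$. This is where the precise form of the correction — $J_{n-j}^{\a+j,-m+j}(-1)$ as the coefficient of $t^j/j!$, and the cutoff at $\min\{n,m\}-1$ rather than $m-1$ — has to be used carefully; I would handle it by induction on $n$, peeling off one derivative at a time and matching against \eqref{derivativeJ} and \eqref{eq:Jam}.
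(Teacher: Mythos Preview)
Your approach is correct and essentially the same as the paper's: both rely on the vanishing of $\partial^j \wh J_n^{\a,-m}(0)$ for $j\le m-1$ (from \eqref{eq:Jam} in the top block and from the Taylor correction in the low block), the differentiation rule $\partial \wh J_n^{\a,-m}=\wh J_{n-1}^{\a+1,-m+1}$, and the resulting reduction of $\la f,\wh J_n^{\a,-m}\ra_{\a,-m}$ to a classical Jacobi inner product. The paper packages this more tightly as the one-step recursion $\la f,\wh J_n^{\a,-m}\ra_{\a,-m}=\la f',\wh J_{n-1}^{\a+1,-m+1}\ra_{\a+1,-m+1}$, from which orthogonality and the coefficient identity $\wh f_n^{\a,-m}=\wh{\partial f}_{n-1}^{\a+1,-m+1}$ (hence \eqref{eq:dS=Sam} by iteration) follow at once; in particular your constants $c_{n,k}$ and the ``ratio of norms'' are both equal to $1$, so no bookkeeping is needed.
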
 

\begin{proof}
For $n \ge m$, we claim that $\partial^k \wh J_n^{\a, -m}(0) =0$ for $0 \le k \le m-1$. Indeed, if $n \ge m+\a^\sharp$, this 
follows readily from \eqref{eq:Jam} whereas if $m < n < m+ \a^\sharp$, this follows from the definition of $\wh J_n^{\a,-m}$ 
and \eqref{derivativeJ}. Hence, for $n \ge m$, we only need to consider the integral part of $\la f, \wh J_n^{\a, -m}\ra_{\a,-m}$.
Consequently, applying \eqref{derivativeJ} shows that 
\begin{equation} \label{eq:fQam}
  \la f, \wh J_n^{\a, -m}\ra_{\a,-m} =  \la f', \wh J_{n-1}^{\a+1, -m+1}\ra_{\a+1,-m+1} = \cdots =  
    \la f^{(m)}, \wh J_{n-m}^{\a+m,0}\ra_{\a+m,0},  
\end{equation}
where the last inner product is the regular one for $L^2(\varpi_{\a+m,0})$ since $\a + m> -1$. Since $\wh J_{n-m}^{\a+m, 0}$
is, up to a multiple constant, the ordinary Jacobi polynomial, the orthogonality of $\wh J_n^{\a,-m}$ follows immediately from
\eqref{eq:fQam}. If $ n < m$, the definition of $\wh J_n^{\a,-m}$ implies that $\partial^k \wh J_n^{\a,-m}(0) = \delta_{k,n}$ 
for $0\le k\le n-1$ and, since $\partial \wh J_n^{\a,-m} = \wh J_{n-1}^{\a+1,-m+1}$ by \eqref{derivativeJ}, the same argument 
shows that, for $n < m+\a^\sharp$,   $\la f, \wh J_n^{\a, -m}\ra_{\a,-m} =   \cdots =  \la f^{(n)}, \wh J_{0}^{\a+n, -m+n}\ra_{\a+n,-m+n},$ 
from which the orthogonality $\wh J_n^{\a,-m}$ follows immediately. 

Setting $f =  \wh J_n^{\a,-m}$ in the relation \eqref{eq:fQam} also shows that 
$$
      \la \wh J_n^{\a,-m},\wh J_n^{\a, -m}\ra_{\a,-m} 
       = \cdots =   \la\wh J_{n-m}^{\a+m,0}, \wh J_{n-m}^{\a+m,0}\ra_{\a+m,0},
$$ 
which implies, combining with \eqref{eq:fQam} that $\wh f_n^{\a,-m} = \wh{\partial f}_{n-1}^{\a+1,-m+1}$. Consequently, it 
follows immediately from \eqref{derivativeJ} that $\partial S_n^{\a,-m} (f) = S_{n-1}^{\a+1,-m+1} (\partial f)$. This is 
\eqref{eq:dS=Sam} when $k=1$ and its iteration proves the general case. 
\end{proof}

Similarly, we can define the Sobolev inner product for the parameter $(-\ell,\b)$. Let $\b > -\ell-1$ and $\ell =1,2,\ldots$. 
For $\l_k > 0$, $k =0,1,\ldots, \ell-1$, define
\begin{equation}\label{eq:ipd-ell-b}
 \la f,g\ra_{-\ell,\b} := \int_0^1 f^{(\ell)}(t)g^{(\ell)}(t) t^{\b+\ell} dt + \sum_{k=0}^{\ell-1} \l_k f^{(k)}(1) g^{(k)}(1).
\end{equation}
For $n =0,1,\ldots,$ we define the polynomials $\wh J_n^{-\ell,\b}$ by 
$$
  \wh J_n^{-\ell,\b}(t) = \begin{cases} J_n^{-\ell,\b} (2t-1), &  n \ge \ell, \\
      \displaystyle{  J_n^{-\ell,\b} (2t-1) - \sum_{k=0}^{n-1} J_{n-k}^{-\ell+k,\b+k} (1) \frac{(t-1)^k}{k!}}, & n < \ell.
            \end{cases}
$$ 
This definition is simpler than that of $\wh J_n^{\a,-m}$ since \eqref{eq:Jmb} is less restrictive on $n$ than that of
\eqref{eq:Jam}. Similar to Proposition \ref{prop:Qam}, we can prove the following proposition:

\begin{prop}\label{prop:Qlb}
For $\ell = 1,2,\ldots$ and $\b > -\ell -1$, the polynomials $\{\wh J_n^{-\ell,\b}: n=0,1, 2, \ldots\}$ are mutually orthogonal 
with respect to the inner product $\la \cdot,\cdot \ra_{-\ell,\b}$. Furthermore, for $n \ge k$,
\begin{equation}\label{eq:Qlb}
  \partial^k S_n^{-\ell,\b} (f) = S_{n-k}^{-\ell+k,\b+k}  (\partial^k f), \qquad k =1,\ldots, \ell. 
\end{equation}
\end{prop}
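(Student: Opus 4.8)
The plan is to mimic the proof of Proposition~\ref{prop:Qam}, exploiting that the Sobolev inner product $\la \cdot,\cdot\ra_{-\ell,\b}$ in \eqref{eq:ipd-ell-b} pairs an $\ell$-th derivative term (a genuine $L^2(t^{\b+\ell})$ integral, which is a classical weighted $L^2$ inner product since $\b+\ell > -1$) with point evaluations of derivatives of order $0,\dots,\ell-1$ at the endpoint $t=1$. The key structural fact is that $\partial \wh J_n^{-\ell,\b} = \wh J_{n-1}^{-\ell+1,\b+1}$: for $n \ge \ell$ this is just \eqref{derivativeJ} after the affine change of variable $t \mapsto 2t-1$, which sends $J_n^{-\ell,\b}(2t-1) \mapsto J_{n-1}^{-\ell+1,\b+1}(2t-1)$ up to the factor $\tfrac12 \cdot 2 = 1$ absorbed by the chain rule; for $n < \ell$ one checks that differentiating the correction term $\sum_{k=0}^{n-1} J_{n-k}^{-\ell+k,\b+k}(1)\tfrac{(t-1)^k}{k!}$ reproduces the analogous correction term at level $(-\ell+1,\b+1)$ with one fewer summand (the $k=0$ term differentiates to zero, and the index shift $k \mapsto k-1$ matches up). This last verification, though routine, should be stated carefully because the two cases $n<\ell$ and $n \ge \ell$ meet at $n=\ell$.

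First I would establish that for $n \ge \ell$ the boundary terms vanish, i.e.\ $\partial^k \wh J_n^{-\ell,\b}(1) = 0$ for $0 \le k \le \ell-1$. This follows directly from \eqref{eq:Jmb}: since $J_n^{-\ell,\b}(t)$ carries the factor $(1-t)^\ell$, after the substitution $t \mapsto 2t-1$ the polynomial $\wh J_n^{-\ell,\b}(t)$ carries a factor $(1-t)^\ell$ (up to constant $(-2)^{-\ell}$), so all its derivatives of order less than $\ell$ vanish at $t=1$; combined with $\partial^k \wh J_n^{-\ell,\b} = \wh J_{n-k}^{-\ell+k,\b+k}$, which still carries a factor $(1-t)^{\ell-k}$ for $k < \ell$, this handles all needed orders. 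Hence for $n \ge \ell$ only the integral part of $\la f, \wh J_n^{-\ell,\b}\ra_{-\ell,\b}$ survives, and iterating \eqref{derivativeJ} gives
\begin{equation*}
 \la f, \wh J_n^{-\ell,\b}\ra_{-\ell,\b} = \la f', \wh J_{n-1}^{-\ell+1,\b+1}\ra_{-\ell+1,\b+1} = \cdots = \la f^{(\ell)}, \wh J_{n-\ell}^{0,\b+\ell}\ra_{0,\b+\ell},
\end{equation*}
where the final pairing is the ordinary weighted $L^2$ inner product on $[0,1]$ with weight $t^{\b+\ell}$ and $\wh J_{n-\ell}^{0,\b+\ell}$ is, up to a constant, the classical Jacobi polynomial $P_{n-\ell}^{(0,\b+\ell)}(2t-1)$. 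Orthogonality of $\wh J_n^{-\ell,\b}$ to all lower-degree polynomials (equivalently, to $\wh J_m^{-\ell,\b}$ for $m \ne n$, both $\ge \ell$, and to $\wh J_m^{-\ell,\b}$ for $m < \ell$) then follows because the right-hand side is zero whenever the degree of $f$ is less than $n$. For $n < \ell$, the definition forces $\partial^k \wh J_n^{-\ell,\b}(1) = \delta_{k,n}$ for $0 \le k \le n-1$ (so the point-evaluation part of $\la \wh J_n^{-\ell,\b}, \wh J_m^{-\ell,\b}\ra_{-\ell,\b}$ selects a single term), and the same differentiation-and-iteration argument reduces $\la f, \wh J_n^{-\ell,\b}\ra_{-\ell,\b}$ to $\la f^{(n)}, \wh J_0^{-\ell+n,\b+n}\ra$ plus boundary terms; a short case analysis comparing $m < \ell$ against $m \ge \ell$ finishes mutual orthogonality.

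For the commuting relation \eqref{eq:Qlb}, it suffices as usual to prove the case $k=1$ and iterate. Setting $f = \wh J_n^{-\ell,\b}$ in the displayed chain of identities shows $\la \wh J_n^{-\ell,\b}, \wh J_n^{-\ell,\b}\ra_{-\ell,\b} = \la \wh J_{n-1}^{-\ell+1,\b+1}, \wh J_{n-1}^{-\ell+1,\b+1}\ra_{-\ell+1,\b+1}$ (for $n \ge \ell$; an analogous but shorter identity for $n < \ell$), and combined with the numerator identity $\la f, \wh J_n^{-\ell,\b}\ra_{-\ell,\b} = \la f', \wh J_{n-1}^{-\ell+1,\b+1}\ra_{-\ell+1,\b+1}$ this yields $\wh f_n^{-\ell,\b} = \wh{\partial f}_{n-1}^{-\ell+1,\b+1}$ for all $n \ge 1$. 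Differentiating $S_n^{-\ell,\b}(f) = \sum_{k=0}^n \wh f_k^{-\ell,\b}\, \wh J_k^{-\ell,\b}$ term by term and using $\partial \wh J_k^{-\ell,\b} = \wh J_{k-1}^{-\ell+1,\b+1}$ (with the $k=0$ term killed) then gives $\partial S_n^{-\ell,\b}(f) = \sum_{k=1}^n \wh{\partial f}_{k-1}^{-\ell+1,\b+1}\, \wh J_{k-1}^{-\ell+1,\b+1} = S_{n-1}^{-\ell+1,\b+1}(\partial f)$, which is \eqref{eq:Qlb} for $k=1$. The main obstacle, such as it is, is purely bookkeeping: keeping the two cases $n < \ell$ and $n \ge \ell$ consistent across the derivative identity for $\wh J_n^{-\ell,\b}$ and verifying that the correction terms transform correctly under differentiation; everything else is a transcription of the proof of Proposition~\ref{prop:Qam} with \eqref{eq:Jmb} replacing \eqref{eq:Jam}, which is in fact cleaner here since \eqref{eq:Jmb} holds for all $\b \in \RR$ and all $n \ge \ell$ with no analogue of the parameter $\a^\sharp$.
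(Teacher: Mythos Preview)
Your proposal is correct and follows essentially the same approach as the paper's own proof: vanish the boundary terms for $n\ge\ell$ via \eqref{eq:Jmb}, iterate the derivative relation \eqref{derivativeJ} to reduce to the classical weighted inner product $\la\cdot,\cdot\ra_{0,\b+\ell}$, handle $n<\ell$ through the discrete part using the correction terms, and then derive \eqref{eq:Qlb} from $\partial\wh J_n^{-\ell,\b}=\wh J_{n-1}^{-\ell+1,\b+1}$ together with the induced identity of Fourier coefficients. Your writeup is in fact more explicit than the paper's (which is quite terse), and your remark that the absence of an $\a^\sharp$-type restriction makes this case cleaner than Proposition~\ref{prop:Qam} is exactly the point.
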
 
  
\begin{proof}
If $n \ge \ell$, then $\partial^k \wh J_n^{-\ell, \b}(1) = 0$ for $0 \le k \le \ell-1$ by \eqref{eq:Jmb} and 
$\partial^k \wh J_n^{-\ell,\b}(t) =  \wh J_{n-k}^{-\ell+k,k+\b}(t)$ by \eqref{derivativeJ}. Consequently, 
$$
 \la f, \wh J_n^{-\ell,\b} \ra_{-\ell,\b} = \la f', \wh J_{n-1}^{-\ell+1,\b+1} \ra_{-\ell+1,\b+1} 
        =\cdots =  \la \partial^\ell f, \wh J_{n-\ell}^{0, \b+\ell} \ra_{0, \b+\ell},
$$
from which the orthogonality follows readily. If $n <\ell$, we only need to consider the discrete part in the inner product
$ \la f, \wh J_n^{-\ell,\b} \ra_{-\ell,\b}$, and our definition of $\wh J_n^{-\ell,\b}$ gives orthogonality immediately. Furthermore,
it is easy to verify that $\partial \wh J_n^{-\ell,\b}(t) = \wh J_{n-1}^{-\ell+1,\b+1}(t)$, from which \eqref{eq:Qlb} follows. 
\end{proof}
 
It should be pointed out that, together, the two propositions cover the case of $(-\ell, -m)$ for all nonnegative integers
$\ell$ and $m$. Let us also note that in the case of $J_n^{-m,-m}$, we end up with two different constructions and two
inner products.

\subsection{Inner product for $J_n^{\a,\b}(1-2t)$ and further discussions}
Even though $J_n^{\a,\b}(t) = (-1)^n J_n^{\b,\a}(-t)$ does not always hold when $\a$ and $\b$ are nonnegative integers, 
inner products for $J_n^{\a,\b}(1-2t)$ can be derived, at lease conceptually, from those for $J_n^{\a,\b}(1-2t)$ by a 
simple change of variables which requires, however, that the point evaluations in the inner product will need to be 
evaluated at the point with $0$ and $1$ exchanged, and so does the Taylor expansion in the modification $\wh J_n^{\a,\b}$. 
It turns out that we do not obtain new inner products and only need to modify orthogonal polynomials. 

Let us start with $J_n^{\a,-m}(1-2t)$. For $n =0,1,\ldots,$ we define $\wt J_n^{\a,-m}$ by 
$$
  \wt J_n^{\a,-m}(t) = \begin{cases} (-1)^n J_n^{\a,-m} (1-2t), &  n \ge m + \a^\sharp, \\
  \displaystyle{ (-1)^n J_n^{\a,-m} (1-2t) - \sum_{k=0}^{n-1} (-1)^{n-k}J_{n-k}^{\a+k,-m+k} (-1) \frac{(t-1)^k}{k!}}, & n <m + \a^\sharp.
            \end{cases}
$$ 
As in Proposition \ref{prop:Qam}, we can prove the following proposition:

\begin{prop} \label{prop:Qam2}
For $m = 1,2,\ldots$ and $\a > - m -1$, the polynomials $\wt J_n^{\a,-m}$, $n=0,1, 2, \ldots$, are mutually orthogonal 
with respect to the inner product $\la \cdot,\cdot \ra_{-m,\a}$ defined in \eqref{eq:ipd-ell-b}. Furthermore, denote again
by $S_n^{\a,-m} f$ the corresponding $n$-th partial sum, then, for $n \ge k$,
\begin{equation}\label{eq:Qam2}
  \partial^k S_n^{\a,-m} (f) = S_{n-k}^{\a+k,-m+k}  (\partial^k f), \qquad k =1,\ldots, m. 
\end{equation}
\end{prop}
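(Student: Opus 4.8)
The plan is to reduce Proposition~\ref{prop:Qam2} to the already-established Proposition~\ref{prop:Qam} (equivalently, Proposition~\ref{prop:Qlb}) by the change of variable $t \mapsto 1-t$. Concretely, define $g(t) := f(1-t)$; then $g^{(k)}(t) = (-1)^k f^{(k)}(1-t)$, so point evaluations of derivatives of $g$ at $0$ correspond to those of $f$ at $1$ and vice versa, and $\int_0^1 g^{(m)}(t) h^{(m)}(t) t^{\beta+\ell}\,dt$ transforms into an integral of $f^{(m)} \tilde h^{(m)}$ against $(1-t)^{\beta+\ell}$. Matching this against the definitions \eqref{eq:ipd-a-m} and \eqref{eq:ipd-ell-b}, one sees that $\langle f, h\rangle_{-m,\a}$ (the inner product of Proposition~\ref{prop:Qlb}-type with $\ell=m$, $\b=\a$) equals $\langle f(1-\cdot), h(1-\cdot)\rangle_{\a,-m}$ up to relabeling the positive weights $\l_k$; since the $\l_k$ are arbitrary positive constants this is harmless. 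Hence an orthogonal basis for $\langle\cdot,\cdot\rangle_{-m,\a}$ is obtained from the basis $\{\wh J_n^{\a,-m}\}$ of Proposition~\ref{prop:Qam} by the substitution $t\mapsto 1-t$.

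Next I would identify the transformed basis explicitly. For $n \ge m+\a^\sharp$ we have $\wh J_n^{\a,-m}(t) = J_n^{\a,-m}(2t-1)$, so $\wh J_n^{\a,-m}(1-t) = J_n^{\a,-m}(1-2t)$; multiplying by the (irrelevant, since bases are defined up to scalars) sign $(-1)^n$ gives exactly $\wt J_n^{\a,-m}(t)$. For $n < m+\a^\sharp$, applying $t \mapsto 1-t$ to the subtracted Taylor terms $J_{n-k}^{\a+k,-m+k}(-1)\,t^k/k!$ produces $J_{n-k}^{\a+k,-m+k}(-1)(1-t)^k/k!$, and after multiplication by $(-1)^n$ and absorbing signs one recovers the correction terms in the definition of $\wt J_n^{\a,-m}$. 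Thus $\wt J_n^{\a,-m}(t) = (-1)^n \wh J_n^{\a,-m}(1-t)$ for all $n$, and mutual orthogonality with respect to $\langle\cdot,\cdot\rangle_{-m,\a}$ follows immediately from Proposition~\ref{prop:Qam}.

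For the commuting relation \eqref{eq:Qam2}, I would track how differentiation interacts with the reflection. If $S_n^{\a,-m}f$ denotes the partial sum relative to $\{\wt J_n^{\a,-m}\}$ and $\bar S_n^{\a,-m}g$ the one relative to $\{\wh J_n^{\a,-m}\}$, then $S_n^{\a,-m}f(t) = \bar S_n^{\a,-m}[f(1-\cdot)](1-t)$ because the Fourier coefficients match under the isometry $f \mapsto f(1-\cdot)$. Differentiating $k$ times, $\partial^k S_n^{\a,-m}f(t) = (-1)^k \big(\partial^k \bar S_n^{\a,-m}[f(1-\cdot)]\big)(1-t)$, and now Proposition~\ref{prop:Qam}'s relation \eqref{eq:dS=Sam} gives $\partial^k \bar S_n^{\a,-m}[f(1-\cdot)] = \bar S_{n-k}^{\a+k,-m+k}[\partial^k (f(1-\cdot))] = \bar S_{n-k}^{\a+k,-m+k}[(-1)^k (\partial^k f)(1-\cdot)]$. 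The two factors of $(-1)^k$ cancel, and rewriting $\bar S_{n-k}^{\a+k,-m+k}[(\partial^k f)(1-\cdot)](1-t) = S_{n-k}^{\a+k,-m+k}(\partial^k f)(t)$ yields \eqref{eq:Qam2}.

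The only genuinely delicate point is the bookkeeping of signs and of the reflection in the correction terms for $n<m+\a^\sharp$: one must check that the parameter shifts $(\a,-m)\to(\a+k,-m+k)$ appearing in the Taylor corrections of $\wh J$ and of $\wt J$ correspond under $t\mapsto 1-t$, and that evaluating $J_{n-k}^{\a+k,-m+k}$ at $-1$ (rather than at $1$) is the correct node after reflection --- this is where \eqref{eq:Jn(-t)} cannot be invoked blindly, since the parameters may be exceptional, so one argues directly from the Taylor-matching characterization of $\wt J_n^{\a,-m}$ (namely $\partial^j \wt J_n^{\a,-m}(1)=\delta_{j,n}$ for $0\le j\le n-1$ together with $\partial^m \wt J_n^{\a,-m}=0$ when $n<m$), exactly mirroring the proof of Proposition~\ref{prop:Qam}. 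Everything else is a routine transcription of that proof under the change of variables.
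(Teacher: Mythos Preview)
Your approach is correct and coincides with the paper's own reasoning: the paper explicitly introduces the $\wt J_n^{\a,-m}$ family via the reflection $t\mapsto 1-t$ in the paragraph preceding the proposition, and then says the result is proved ``as in Proposition~\ref{prop:Qam}''. Your verification that $\wt J_n^{\a,-m}(t)=(-1)^n \wh J_n^{\a,-m}(1-t)$ together with the isometry $f\mapsto f(1-\cdot)$ between $\la\cdot,\cdot\ra_{\a,-m}$ and $\la\cdot,\cdot\ra_{-m,\a}$ is exactly the change-of-variables argument the paper has in mind, and your derivation of \eqref{eq:Qam2} from \eqref{eq:dS=Sam} by conjugating with this reflection is clean and correct.
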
 
 
Similarly,  for $J_n^{-\ell, \b}(1-2t)$, we define the polynomials $\wh J_n^{-\ell,\b}$ by 
$$
  \wt J_n^{-\ell,\b}(t) =  \begin{cases} 
     (-1)^n J_n^{-\ell,\b} (1-2t), &  n \ge \ell, \\
  \displaystyle{ (-1)^n J_n^{-\ell,\b} (1-2t) - \sum_{k=0}^{n-1} (-1)^{n-k}J_{n-k}^{-\ell+k,\b+k} (1) \frac{t^k}{k!}}, & n \le  \ell.
            \end{cases}
$$
We can then prove, as in Proposition \ref{prop:Qam}, the following proposition. 

\begin{prop} \label{prop:Qlb2}
For $\ell = 1,2,\ldots$ and $\b > - \ell -1$, the polynomials $\wt J_n^{-\ell,\b}$, $n=0,1, 2, \ldots$, are mutually orthogonal 
with respect to the inner product $\la \cdot,\cdot \ra_{\b,-\ell}$ defined in \eqref{eq:ipd-a-m}. Furthermore, denote again
by $S_n^{-\ell,\b} f$ the corresponding $n$-th partial sum, then, for $n \ge k$,
\begin{equation}\label{eq:Qlb2}
  \partial^k S_n^{-\ell,\b} (f) = S_{n-k}^{-\ell+k,\b+k}  (\partial^k f), \qquad k =1,\ldots, \ell. 
\end{equation}
\end{prop}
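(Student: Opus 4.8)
The plan is to mimic, step by step, the proof of Proposition~\ref{prop:Qam}, transported through the change of variables $t\mapsto 1-t$. First I would observe that the map $f(t)\mapsto f(1-t)$ is an isometry of $L^2[0,1]$ onto itself, and more precisely that it sends the inner product $\la\cdot,\cdot\ra_{-\ell,\b}$ of \eqref{eq:ipd-ell-b} to the inner product $\la\cdot,\cdot\ra_{\b,-\ell}$ of \eqref{eq:ipd-a-m}: the weight $t^{\b+\ell}$ becomes $(1-t)^{\b+\ell}$, the factor $(-1)^\ell$ produced by each derivative $f^{(\ell)}(1-t)$ cancels in the product of two such derivatives, and the point evaluations at $t=1$ are carried to point evaluations at $t=0$. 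Thus it suffices to show that $\wt J_n^{-\ell,\b}(t)$, which is (up to the sign $(-1)^n$ and the Taylor correction) exactly $\wh J_n^{-\ell,\b}(1-t)$ in the notation of Proposition~\ref{prop:Qlb}, agrees with the image under $t\mapsto 1-t$ of the basis $\wh J_n^{-\ell,\b}$ from that proposition. Here the subtlety is that $J_n^{-\ell,\b}(1-2t)$ is \emph{not} simply $\pm J_n^{\b,-\ell}(2t-1)$ when $\b$ is a negative integer, because \eqref{eq:Jn(-t)} can fail; so the correction term in the definition of $\wt J_n^{-\ell,\b}$ for $n\le\ell$ must be checked directly rather than inherited from Proposition~\ref{prop:Qlb}.

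Concretely, I would argue as follows. For $n\ge\ell$: by \eqref{eq:Jmb} the polynomial $J_n^{-\ell,\b}(t)$ has a factor $(1-t)^\ell$, hence $J_n^{-\ell,\b}(1-2t)$ has a factor $t^\ell$, so $\partial^k\wt J_n^{-\ell,\b}(0)=0$ for $0\le k\le\ell-1$; therefore only the integral term of $\la f,\wt J_n^{-\ell,\b}\ra_{\b,-\ell}$ survives. Differentiating $n$ times via \eqref{derivativeJ} — note $\frac{d}{dt}(-1)^nJ_n^{-\ell,\b}(1-2t)=(-1)^{n-1}J_{n-1}^{-\ell+1,\b+1}(1-2t)$, which (up to the sign convention built into the $\wt J$'s) is $\wt J_{n-1}^{-\ell+1,\b+1}$ — gives the telescoping identity
\begin{equation*}
  \la f,\wt J_n^{-\ell,\b}\ra_{\b,-\ell}
  = \la f',\wt J_{n-1}^{-\ell+1,\b+1}\ra_{\b+1,-\ell+1}
  = \cdots
  = \la f^{(\ell)},\wt J_{n-\ell}^{0,\b+\ell}\ra_{\b+\ell,0},
\end{equation*}
where the final inner product is the ordinary weighted $L^2$ pairing with weight $(1-t)^{\b+\ell}$ (legitimate since $\b+\ell>-1$), and $\wt J_{n-\ell}^{0,\b+\ell}$ is a constant multiple of the classical Jacobi polynomial $P_{n-\ell}^{(\b+\ell,0)}(1-2t)$. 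Orthogonality of $\{\wt J_n^{-\ell,\b}\}$ for $n\ge\ell$ follows at once, and it also extends to pairs $(n,n')$ with $n\ge\ell>n'$ because the right-hand side vanishes whenever $f$ is a polynomial of degree $<n$. For $n\le\ell$: then $\partial^\ell\wt J_n^{-\ell,\b}\equiv 0$, so only the discrete part of the inner product contributes; the Taylor correction $\sum_{k=0}^{n-1}(-1)^{n-k}J_{n-k}^{-\ell+k,\b+k}(1)\frac{t^k}{k!}$ is designed (using \eqref{derivativeJ} and the value at $t=1$, which corresponds to $1-2t=-1$) so that $\partial^k\wt J_n^{-\ell,\b}(1)=\delta_{k,n}$ for $0\le k\le n-1$; hence $\la f,\wt J_n^{-\ell,\b}\ra_{\b,-\ell}$ reduces to $\l_n$ times $f^{(n)}(1)$ paired appropriately, giving orthogonality in this range as well, and also orthogonality between the two ranges. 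Finally, setting $f=\wt J_n^{-\ell,\b}$ in the telescoping identity shows $\widehat{\vphantom{f}f}^{\,-\ell,\b}_n=\widehat{\partial f}^{\,-\ell+1,\b+1}_{n-1}$, and since $\partial\wt J_n^{-\ell,\b}=\wt J_{n-1}^{-\ell+1,\b+1}$ (by \eqref{derivativeJ}, once the correction-term compatibility $a$-coefficients for consecutive $(\ell,\b)$ are seen to match, exactly as in Proposition~\ref{prop:Qam}), we get $\partial S_n^{-\ell,\b}(f)=S_{n-1}^{-\ell+1,\b+1}(\partial f)$; iterating yields \eqref{eq:Qlb2}.

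The main obstacle, and the one place where real care is needed beyond a mechanical transcription, is verifying that the derivative of the modified polynomial again equals the modified polynomial with shifted parameters — i.e.\ that $\partial\wt J_n^{-\ell,\b}=\wt J_{n-1}^{-\ell+1,\b+1}$ holds for $n\le\ell$. This amounts to checking that the finite Taylor-correction sums for parameter $(-\ell,\b)$ and for $(-\ell+1,\b+1)$ are related by term-by-term differentiation, which in turn rests on \eqref{derivativeJ} applied inside the sum together with the index bookkeeping $J_{n-k}^{-\ell+k,\b+k}(1)$ versus $J_{(n-1)-k}^{-(\ell-1)+k,(\b+1)+k}(1)$; the singular cases $\iota_0^{-\ell,\b}(n)\ne0$ must be handled here, using the explicit hypergeometric form \eqref{eq:Jab} rather than analytic continuation. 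Once this compatibility is in hand everything else is a routine transcription of Proposition~\ref{prop:Qam}.
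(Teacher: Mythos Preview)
Your approach is essentially the one the paper indicates (the paper gives no separate proof, only the remark that one argues as in Proposition~\ref{prop:Qam}), and the telescoping identity together with $\partial\wt J_n^{-\ell,\b}=\wt J_{n-1}^{-\ell+1,\b+1}$ is exactly the right mechanism.

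There is one slip to correct. In the range $n<\ell$ you write that the Taylor correction forces $\partial^k\wt J_n^{-\ell,\b}(1)=\delta_{k,n}$ and that the pairing reduces to a multiple of $f^{(n)}(1)$. But the inner product $\la\cdot,\cdot\ra_{\b,-\ell}$ of \eqref{eq:ipd-a-m} evaluates derivatives at $t=0$, not $t=1$; correspondingly, at $t=0$ one has $1-2t=1$ (not $-1$), which is why the coefficients in the definition of $\wt J_n^{-\ell,\b}$ involve $J_{n-k}^{-\ell+k,\b+k}(1)$. The correct statement is $\partial^k\wt J_n^{-\ell,\b}(0)=0$ for $0\le k\le n-1$, so that the telescoping relation $\la f,\wt J_n^{-\ell,\b}\ra_{\b,-\ell}=\la f',\wt J_{n-1}^{-\ell+1,\b+1}\ra_{\b+1,-\ell+1}$ again holds, terminating at $\la f^{(n)},\wt J_0^{-\ell+n,\b+n}\ra_{\b+n,-\ell+n}$. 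With this fix your argument is complete; the compatibility $\partial\wt J_n^{-\ell,\b}=\wt J_{n-1}^{-\ell+1,\b+1}$ for $n\le\ell$ that you flag as the main obstacle follows by term-by-term differentiation of the correction sum after the index shift $k\mapsto k+1$, with no singular-case analysis needed beyond \eqref{derivativeJ}.
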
 

Both $\{\wh J_{n}^{\a,-m}\}$ in Proposition \ref{prop:Qam} and $\{\wt J_{n}^{-m,\a}\}$ in Proposition \ref{prop:Qlb2} are
mutually orthogonal polynomials with respect to the inner product $\la \cdot,\cdot\ra_{\a,-m}$ in \eqref{eq:ipd-a-m}. Since
the dimension of the space of orthogonal polynomials of a fixed degree is 1, we must then have 
$\wh J_{n}^{\a,-m}(t) = c_n \wt J_{n}^{-m,\a}(t)$, where $c_n$ is a multiple constant. By the definition of these two families
of polynomials, this last identity captures the failure of the identity $J_n^{\a,\b}(-t) = (-1)^n J_n^{\b,\a}(t)$ when $\a$ and $\b$
are certain negative integer values. 


\subsection{Approximation by polynomials in the Sobolev space} 

As an application of the Sobolev orthogonality defined in this section, we prove a result on approximation
by polynomials in the Sobolev space. Let $m$ be a positive integer and let $\la \cdot,\cdot\ra_{-m,-m}$ and 
$\wh J_n^{-m,-m}$ be defined as in Proposition \ref{prop:Qlb}. Let $S_n^{-m} f := S_n^{-m,-m} f$ and 
$\wh J_n^{-m} := \wh J_n^{-m,-m}$. Recall that 
$$
   S_n^{-m}f = \sum_{k=0}^n \wh f_k^m \wh J_k^{-m}(t), \qquad \wh f_k^m = \frac{\la f,J_k^{-m}\ra_{-m,-m}}
     {\la J_k^{-m},J_k^{-m}\ra_{-m,-m}}.
$$
In particular, $S_n^0$ is the $n$-th partial sum of the usual Legendre series. 

Let $\|\cdot\|$ denote the norm of $L^2[0,1]$ and denote the error of the best approximation by polynomials
in $L^2[0,1]$ by 
$$
  E_n(f)_2: = \inf_{\mathrm{deg} p \le n} \|f- p\| = \Big(\sum_{k=n+1}^\infty |\wh f_k^0|^2 h_k^0\Big)^{1/2},
$$
where the second equality is the standard Parseval identity and, by \eqref{eq:normJacobi} and \eqref{JacobiP2}, 
$$
\wh h_k^0 := \int_0^1 \left[\wh J_k^0(t)\right]^2 dt = \frac{2}{(2k+1) (k+1)_k^2}  \sim k^{-2 k-1}. 
$$

\begin{thm}
For $m = 1,2,\ldots$, $k =0, 1, \ldots, m$ and $n \ge m$,  
\begin{equation} \label{eq:error1d}
  \|\partial^k f - \partial^k S_n^{-m} f \|_2 \le c\, n^{- m+k} E_{n-m}(\partial^m f)_2,
\end{equation}
where $c$ is a constant depending only on $m$. 
\end{thm}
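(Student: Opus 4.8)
The plan is to reduce the estimate to the known Legendre case $m=0$ by exploiting the commuting relation \eqref{eq:Qlb}. First I would reduce to $k=0$: since $\partial^k S_n^{-m} f = S_{n-k}^{-m+k,-m+k}(\partial^k f)$ by \eqref{eq:Qlb}, and $-m+k$ is a nonpositive integer with $|{-m+k}| = m-k$, the left-hand side $\|\partial^k f - \partial^k S_n^{-m} f\|_2 = \|\partial^k f - S_{n-k}^{-m+k,-m+k}(\partial^k f)\|_2$ is exactly the $k=0$ quantity with $m$ replaced by $m-k$, the function replaced by $g := \partial^k f$, and $n$ by $n-k$; and $\partial^m f = \partial^{m-k} g$. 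So it suffices to prove $\|g - S_n^{-m,-m} g\|_2 \le c\, n^{-m} E_{n-m}(\partial^m g)_2$ for all $m \ge 1$.

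Next I would iterate the commuting relation all the way down: $S_n^{-m,-m} g$ has the property that $\partial^m S_n^{-m,-m} g = S_{n-m}^{0,0}(\partial^m g)$, the ordinary Legendre partial sum of $\partial^m g$. The idea is to compare $g - S_n^{-m,-m}g$ with the polynomial obtained by integrating $(\partial^m g - S_{n-m}^{0,0}\partial^m g)$ $m$ times. Concretely, write $h := g - S_n^{-m,-m} g$; then $\partial^m h = \partial^m g - S_{n-m}^{0,0}(\partial^m g)$, and from Proposition~\ref{prop:Qlb} (the boundary conditions built into $\wh J_n^{-m,-m}$ for $n \ge m$, namely $\partial^k \wh J_n^{-m,-m}(1) = 0$ for $0 \le k \le m-1$, together with the vanishing of low-degree Fourier coefficients) one reads off that $h$ lies in the span of $\{\wh J_n^{-m}: n > N\}$-type tails and satisfies $\partial^k h(1) = 0$ for $0 \le k \le m-1$ in the relevant range. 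Thus $h(t) = \frac{1}{(m-1)!}\int_1^t (t-s)^{m-1} \partial^m h(s)\, ds$, an $m$-fold Hardy-type inequality on $[0,1]$ then gives $\|h\|_2 \le c\, \|\partial^m h\|_2 \cdot$(scale), but that only yields $\|h\|_2 \le c\, E_{n-m}(\partial^m g)_2$ without the $n^{-m}$ gain. To get the gain I would instead work spectrally.

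The spectral route, which I expect to be the cleanest: by Parseval in the $\la\cdot,\cdot\ra_{-m,-m}$ inner product is awkward because that norm is not $\|\cdot\|_2$; instead expand $h = g - S_n^{-m,-m}g = \sum_{j > n} \wh g_j^{\,-m} \wh J_j^{-m}$ and use \eqref{eq:Jlm} (the case $\ell=m$) to write $\wh J_j^{-m,-m}(2t-1)$, for $j \ge m$, as $c_j\, t^m(1-t)^m$ (times a shifted Jacobi polynomial $P_{j-2m}^{(m,m)}$), hence $\|\wh J_j^{-m}\|_2^2 \sim $ a computable constant, while $\partial^m \wh J_j^{-m} = \wh J_{j-m}^{0,0}$ is the Legendre polynomial with $\|\partial^m \wh J_j^{-m}\|_2^2 = \wh h_{j-m}^0 \sim (j-m)^{-2(j-m)-1}$. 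Comparing the two norms gives a ratio $\|\wh J_j^{-m}\|_2^2 / \|\partial^m \wh J_j^{-m}\|_2^2 \sim c\, j^{-2m}$ for $j$ large (this is the one genuinely quantitative lemma — estimating the $L^2[0,1]$ norm of $t^m(1-t)^m P_{j-2m}^{(m,m)}(2t-1)$ against that of the Legendre polynomial $P_{j-m}^{(0,0)}$; the powers $(j-m)^{\pm(\cdot)}$ from the normalizations \eqref{eq:normJacobi} and \eqref{JacobiP2} cancel and leave a clean $j^{-2m}$). Since $\wh{\partial^m g}_{j-m}^{\,0} = \wh g_j^{\,-m}$ (iterating $\wh f_n^{-m,-m} = \wh{\partial f}_{n-1}^{-m+1,-m+1}$ from the proof of Proposition~\ref{prop:Qlb}), Parseval for the ordinary Legendre expansion gives $E_{n-m}(\partial^m g)_2^2 = \sum_{j>n} |\wh g_j^{\,-m}|^2 \wh h_{j-m}^0$, whereas $\|h\|_2^2 = \sum_{j>n}|\wh g_j^{\,-m}|^2 \|\wh J_j^{-m}\|_2^2 \le \big(\sup_{j>n} j^{-2m}\big)\cdot c \sum_{j>n}|\wh g_j^{\,-m}|^2 \wh h_{j-m}^0 \le c\, n^{-2m} E_{n-m}(\partial^m g)_2^2$. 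Taking square roots finishes $k=0$, and the reduction in the first paragraph finishes the general $k$.

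\textbf{Main obstacle.} The crux is the uniform-in-$j$ comparison $\|\wh J_j^{-m,-m}\|_{L^2[0,1]}^2 \sim c_m\, j^{-2m}\, \wh h_{j-m}^0$ with constants independent of $j$; everything else is bookkeeping with \eqref{eq:Jlm}, \eqref{derivativeJ}, and Parseval. One must be careful that the orthogonality of $\{\wh J_j^{-m}\}$ is with respect to $\la\cdot,\cdot\ra_{-m,-m}$, not $L^2$, so $\|h\|_2^2$ genuinely requires evaluating $\int_0^1 \wh J_i^{-m}\wh J_j^{-m}\,dt$ — but by \eqref{eq:Jlm} each $\wh J_j^{-m}$ ($j \ge m$, which covers all but finitely many terms, the rest contributing to the constant $c$) is $t^m(1-t)^m$ times a Jacobi polynomial orthogonal on $[0,1]$ with weight $t^m(1-t)^m$, so these are in fact $L^2[0,1]$-orthogonal up to the finitely many low modes, and the diagonal computation goes through. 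I would also need to handle the finitely many $n \le $ (something) modes and the modified polynomials $\wh J_n^{-m}$ for $n<m$ separately, absorbing them into the constant $c$, since for fixed $m$ these contribute only finitely many bounded terms.
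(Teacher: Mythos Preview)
Your reduction to $k=0$ via the commuting relation \eqref{eq:Qlb} is correct and in fact slightly cleaner than what the paper does (the paper proves $k=0$ and $k=m$ directly and then invokes the Landau--Kolmogorov inequality for intermediate $k$; your iteration avoids that detour). The identification $\wh g_j^{\,-m} = \wh{\partial^m g}_{j-m}^{\,0}$ is also right.

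The genuine gap is the $L^2$-orthogonality claim for $\{\wh J_j^{-m}\}$. You write that, by \eqref{eq:Jlm}, $\wh J_j^{-m}(t) = c_j\, w(t)\, q_{j-2m}(t)$ with $w(t)=t^m(1-t)^m$ and $q_k$ orthogonal on $[0,1]$ with respect to $w$, ``so these are in fact $L^2[0,1]$-orthogonal.'' But
\[
\int_0^1 \wh J_i^{-m}\wh J_j^{-m}\,dt \;=\; c_i c_j \int_0^1 w(t)^2\, q_{i-2m}(t)\, q_{j-2m}(t)\,dt,
\]
and the $q_k$ are orthogonal with respect to $w$, not $w^2$. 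These integrals are \emph{not} zero in general (already for $m=1$: the Gegenbauer polynomials $P_k^{(1,1)}$ are orthogonal for the weight $t(1-t)$, not $t^2(1-t)^2$). Hence your Parseval step $\|h\|_2^2 = \sum_{j>n}|\wh g_j^{\,-m}|^2 \|\wh J_j^{-m}\|_2^2$ is unjustified: the cross terms do not vanish. (Two minor slips compound this: \eqref{eq:Jlm} needs $j\ge 2m$, not $j\ge m$; and the tail $h$ only involves indices $j>n\ge m$, so for $n\ge 2m$ there are no ``low modes'' left to absorb into the constant anyway.)

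What actually holds is a \emph{banded} structure: $t^m(1-t)^m q_{i'}$ is a polynomial of degree $i'+2m$ orthogonal (for the weight $w$) to polynomials of degree $<i'$, so the Gram matrix above vanishes only when $|i'-j'|>2m$. The paper exploits exactly this band, but in the Legendre basis: using \eqref{eq:JacobiR1} twice one gets
\[
J_k^{-m,-m} = J_k^{-m+1,-m+1} - c_k^m\, J_{k-2}^{-m+1,-m+1},\qquad c_k^m = \tfrac{1}{4(4(k-m)^2-1)}\sim k^{-2},
\]
and iterating yields $\wh J_k^{-m} = \sum_{0\le j\le m} c_{k,j}^m\, \wh J_{k-2j}^{0}$ with $c_{k,j}^m\sim k^{-2j}$. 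Substituting this into $h=\sum_{j>n}\wh g_j^{\,-m}\wh J_j^{-m}$, swapping the (finite, length $\le m{+}1$) inner sum with the outer one, and \emph{then} applying Parseval for the genuinely orthogonal Legendre system gives $\|h\|_2^2 \le c\, n^{-2m}\sum_{k>n-m}|\wh{\partial^m g}_k^{\,0}|^2\wh h_k^0$, which is the desired bound. Your diagonal norm comparison $\|\wh J_j^{-m}\|_2^2 \sim j^{-2m}\wh h_{j-m}^0$ is morally the right order, but without controlling the off-diagonal contributions it does not close the argument; the short Legendre expansion above is the missing step.
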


\begin{proof}
By \eqref{derivativeJ}, it is easy to see that $\partial^m S_n^{-m} f = S_{n-m}^ 0 (\partial^m f)$ implies that
$$
    \wh f_{k+m}^{-m} =  \wh{\partial^m f} {}_k^0, \qquad k =0,1,\ldots. 
$$
Using both equations in \eqref{eq:JacobiR1}, it is easy to deduce that, for $k > m$, 
$$
   J_k^{-m}(t) = J_k^{-m+1}(t) - c_k^m J_{k-2}^{-m+1}, \quad c_k^m = \frac{1}{4 ( 4(k-m)^2-1)}   \sim k^{-2}.
$$ 
Iterating the above identity shows that 
$$
 \wh J_k^{-m}(t) = \sum_{0\le j \le m} c_{k,j}^m \wh J_{k-2j}^0(t) \quad \hbox{with}\quad c_{k,j}^m  \sim k^{-2j},
$$
where $\wh J_n^0(t) = J_n^{0,0} (2t-1)$. Consequently, we see that 
\begin{align*}
 f(t) - S_n^{-m} f(t) & = \sum_{k=n+1}^\infty \wh f_k^{-m} \wh J_k^{-m}(t) =
          \sum_{k=n+1}^\infty \wh{\partial^m f}{}_{k-m}^0 \sum_{0\le j \le m} c_{k,j}^m \wh J_{k-2j}^0(t) \\
          & = \sum_{0 \le j \le m} \sum_{k=n+1-2j}^\infty \wh{\partial^m f}{}_{k-m+2j}^0 c_{k+2j,j}^m \wh J_k^0(t).
\end{align*}
Exchanging the order of the summation and using the orthogonality of $\wh J_k^0$, we obtain
\begin{align*}
 \|f-S_n^{-m} f \|_2^2 \le & \sum_{k = n+1-2m}^{n} \Big | \sum_{n+1-k \le 2j \le m} |\wh{\partial^m f}{}_{k-m+2j}^0| c_{k+2j,j}^m
   \Big |^2 \wh h_k^0 \\
   & + \sum_{k = n+1}^\infty \Big | \sum_{0 \le j \le m} |\wh{\partial^m f}{}_{k-m+2j}^0| c_{k+2j,j}^m
   \Big |^2 \wh h_k^0.
\end{align*}
It is easy to see that $(c_{k+2j,k}^m )^2 \wh h_k^0 / \wh h_{k-m+2j}^0 \sim k^{-2m}$ for $0 \le j \le m/2$, with
a constant of proportion depending only on $m$, so that, by the Cauchy-Schwarz inequality, 
\begin{align*}
 \|f-S_n^{-m} f \|_2^2 \le & \ c \sum_{k = n+1-2m}^\infty k^{-2m} \sum_{0 \le j \le m} \left|\wh{\partial^m f}{}_{k-m+2j}^0\right|^2
      \wh h_{k-m+2j}^0 \\
      \le & \ c\, n^{-2m}  \sum_{k = n+1-m}^\infty  \left |\wh{\partial^m f}{}_k^0 \right|^2 \wh h_k^0 
       = c \, n^{-2m} \|\partial^m f - S_{n-m}^0 (\partial^m f) \|_2^2. 
\end{align*}  
This proves the case $k=0$. The case $k =m$ follows  from \eqref{eq:Qlb}. The
intermediate cases of $1 \le k \le m$ follow from the standard Landau-Kolmogorov inequality. 
\end{proof}

This result, and the proof, provides a precursor for what lies ahead for our main results on the triangle. We did not 
strike for the most general result and chose a proof that is straight forward but more conceptual. 
As far as we know, the estimate \eqref{eq:error1d} is new since similar results in the literature have a modulus of smoothness 
in the right hand side (\cite{K}).

\section{Jacobi polynomials on the triangle and their properties}
\label{sect4}
\setcounter{equation}{0}

For $\a,\b,\g  \in \RR$, let $X= (x,y,1-x-y)$ and define, as in \eqref{eq:weight},
$$
  \varpi_{\a,\b,\g}(x,y) := X^{\a,\b,\g} = x^\a y^\b (1-x-y)^\g, \qquad (x,y) \in \triangle.
$$

\subsection{Jacobi polynomials on the triangle}
If $\a,\b,\g  > -1$, then $\varpi_{\a,\b,\g}$ is integrable on $\triangle$ and we define
an inner product
$$
  \la f, g \ra_{\a,\b,\g} := \int_{\triangle} f(x,y) g(x,y) \varpi_{\a,\b,\g}(x,y) dxdy,
$$
which is the same as \eqref{eq:ipd}. Let $\CV_n(\varpi_{\a,\b,\g})$ be the space of orthogonal polynomials of degree $n$ 
with respect to this inner product. 

Explicit bases of $\CV_n(\varpi_{\a,\b,\g})$ can be given in terms of the Jacobi polynomials $J_n^{\a,\b}$. 
A mutually orthogonal basis of $\CV_n(\varpi_{\a,\b,\g})$ is given by (cf. \cite[Section 2.4]{DX})
\begin{equation}\label{eq:J-basis}
 J_{k,n}^{\a,\b,\g} (x,y) := (x+y)^k J_k^{\a,\b}\left( \frac{y-x}{x+y}\right) J_{n-k}^{2k+\a+\b+1,\g}(1-2x-2y),
    \quad 0 \le k \le n. 
\end{equation}
More precisely, 
\begin{equation*} 
 \la J_{k,n}^{\a,\b,\g},  J_{j,m}^{\a,\b,\g} \ra_{\a,\b,\g} = h_{k,n}^{\a,\b,\g} \delta_{j,k}\delta_{n,m}, 
\end{equation*}
where 
\begin{align}  \label{eq:hkn}
 h_{k,n}^{\a,\b,\g} = \, & \frac{\Gamma(k+\alpha+1)\Gamma(k+\beta+1)\Gamma(k+\a+\b+1)\Gamma(n-k+\g+1)}
   {k!(n-k)!} \\
    & \times  \frac{\Gamma(n+k+\a+\b+2)\Gamma(n+k+\a+\b+\g+2)} {\Gamma(2 k+\a+ \b +2)^2\Gamma(2n+\a+\b+\g+3)^2}
      \notag \\
    &\times  (2k + \a + \b + 1) (2n +\a + \b +\g +2). \notag
\end{align}
For convenience, we also define $J_{k,n}^{\a,\b,\g}(x,y) =0$ for $k < 0$ or $k > n$. 
The triangle $\triangle$ is symmetric under the permutation of $(x,y,1-x-y)$. Furthermore, the inner product 
$\la \cdot,\cdot\ra_{\a,\b,\g}$ is invariant under simultaneous permutation of $(\a,\b,\g)$ and $(x,y,1-x-y)$. 
As a consequence, define 
\begin{align}
 K_{k,n}^{\a,\b,\g} (x,y) 
      := &\ (1-y)^k J_k^{\g,\a}\left( \frac{2x}{1-y} -1 \right)
        J_{n-k}^{2k+\a+\g+1,\b}(2y-1), \quad 0 \le k \le n,\label{eq:K-basis}\\
  L_{k,n}^{\a,\b,\g} (x,y) 
      := & \ (1-x)^k J_k^{\b,\g}\left(1- \frac{2y}{1-x}\right)
        J_{n-k}^{2k+\b+\g+1,\a}(2x-1), \quad 0 \le k \le n,\label{eq:L-basis}
\end{align} 
then both $\{ K_{k,n}^{\a,\b,\g} : 0 \le k \le n \}$ and $\{  L_{k,n}^{\a,\b,\g} : 0 \le k \le n \}$ are mutually orthogonal 
basis of $\CV_n^d(\varpi_{\a,\b,\g})$; moreover, 
$$
   \la K_{k,n}^{\a,\b,\g},  K_{j,m}^{\a,\b,\g} \ra_{\a,\b,\g} = h_{k,n}^{\g,\a,\b} \quad \hbox{and}\quad
   \la   L_{k,n}^{\a,\b,\g},  L_{j,m}^{\a,\b,\g} \ra_{\a,\b,\g} = h_{k,n}^{\b,\g,\a}, \quad 0\le k \le n.
$$
For latter use, we collect the relations between the three bases below:
\begin{align}\label{eq:3bases-transform}
\begin{split}
  J_{k,n}^{\a,\b,\g}(x,y) &\ = K_{k,n}^{\b,\g,\a}(y,1-x-y), \qquad  J_{k,n}^{\a,\b,\g}(x,y)  = L_{k,n}^{\g,\a,\b}(1-x-y,x), \\ 
  K_{k,n}^{\a,\b,\g}(x,y) &\ = L_{k,n}^{\b,\g,\a}(y,1-x-y), \qquad  K_{k,n}^{\a,\b,\g}(x,y) = J_{k,n}^{\g,\a,\b}(1-x-y,x), \\ 
  L_{k,n}^{\a,\b,\g}(x,y) &\ = J_{k,n}^{\b,\g,\a}(y,1-x-y), \qquad   L_{k,n}^{\a,\b,\g}(x,y) = K_{k,n}^{\g,\a,\b}(1-x-y,x). 
\end{split}
\end{align}

With our extension of the Jacobi polynomials, the above definitions hold for all $\a, \b, \g \in \RR$ and so is 
\eqref{eq:3bases-transform}. In the following we shall give the following definition: 

\begin{defn}
For $\a,\b,\g\in \RR$ and $n =0,1,2,\ldots$, let $\CV_n(\varpi_{\a,\b,\g})_J$ be the space spanned by 
$\{J_{k,n}^{\a,\b,\g}: 0 \le k \le n\}$; $\CV_n(\varpi_{\a,\b,\g})_K$ and $\CV_n(\varpi_{\a,\b,\g})_L$ are defined similarly. 
\end{defn}

\begin{prop}
If $\a,\b,\g> -1$ or $-\a, -\b, - \g \notin \NN$,  then
\begin{equation} \label{eq:CV=CV}
\CV_n(\varpi_{\a,\b,\g})_J=\CV_n(\varpi_{\a,\b,\g})_K=\CV_n(\varpi_{\a,\b,\g})_L = :
\CV_n(\varpi_{\a,\b,\g}),
\end{equation}
where $=:$ means we remove the subscript $J, K, L$ when they are equal. 
\end{prop}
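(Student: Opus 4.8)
The plan is to show both families $\{J_{k,n}^{\a,\b,\g}\}$, $\{K_{k,n}^{\a,\b,\g}\}$ and $\{L_{k,n}^{\a,\b,\g}\}$ are bases of one and the same space $\CV_n(\varpi_{\a,\b,\g})$ in the two stated regimes, and these regimes are precisely the ones where no degree collapse occurs. First I would treat the classical case $\a,\b,\g>-1$: here $\varpi_{\a,\b,\g}$ is a genuine integrable weight, the inner product $\la\cdot,\cdot\ra_{\a,\b,\g}$ is positive definite on polynomials, $\CV_n(\varpi_{\a,\b,\g})$ is the usual orthogonal complement of $\Pi_{n-1}^2$ in $\Pi_n^2$, and each of the three sets is, as recalled in the excerpt (with the norm formulas \eqref{eq:hkn} and its permutations), a mutually orthogonal basis of that same space. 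So \eqref{eq:CV=CV} is immediate in that case; the only thing to check is that with the renormalized $J_n^{\a,\b}$ the polynomials $J_{k,n}^{\a,\b,\g}$ etc. are nonzero of the right degree, which follows from Proposition~\ref{prop:Jacobi1st} once one verifies $\iota_0^{\a,\b}(k)=0$ and $\iota_0^{2k+\a+\b+1,\g}(n-k)=0$ for all $0\le k\le n$ — true when $\a,\b,\g>-1$.

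For the second regime, $-\a,-\b,-\g\notin\NN$, the weight need not be integrable, so there is no inner product to lean on and I must argue at the level of spans of polynomials. The key point is that the building blocks still behave well: by Proposition~\ref{prop:Jacobi1st}, since $-\a,-\b,-\g\notin\NN$ one checks the relevant $\iota_0$ indices vanish (for $J_k^{\a,\b}$ one needs $-k-\a-\b\notin\{1,\dots,k\}$, which can fail even when $\a,\b$ are not negative integers, so some care is needed — but the argument below via analytic continuation handles this). The cleanest route is analytic continuation in the parameters: each $J_{k,n}^{\a,\b,\g}$, and likewise $K_{k,n}^{\a,\b,\g}$, $L_{k,n}^{\a,\b,\g}$, is, by the explicit hypergeometric formula \eqref{eq:Jab}, an analytic (indeed rational with poles only on the excluded hyperplanes) function of $(\a,\b,\g)$ on the connected set $\{-\a,-\b,-\g\notin\NN\}$, which contains the open region $\a,\b,\g>-1$. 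The expansion coefficients expressing, say, each $K_{j,n}^{\a,\b,\g}$ in the basis $\{J_{k,n}^{\a,\b,\g}:0\le k\le n\}$ of $\Pi_n^2/\Pi_{n-1}^2$ — equivalently, the coefficients of the change-of-basis matrix between the two ordered bases, after fixing a common monomial representation modulo lower degree — are themselves analytic in $(\a,\b,\g)$ on that set, and the matrix is invertible on the nonempty open subset $\a,\b,\g>-1$. Hence its determinant is a nonzero analytic function, so it can vanish only on a proper analytic subset; but I want invertibility \emph{everywhere} on $\{-\a,-\b,-\g\notin\NN\}$, which does not follow from nonvanishing of an analytic function alone.

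The fix, and what I expect to be the main obstacle, is to show directly that each of $\{J_{k,n}^{\a,\b,\g}\}_k$, $\{K_{k,n}^{\a,\b,\g}\}_k$, $\{L_{k,n}^{\a,\b,\g}\}_k$ is a \emph{basis} of the $(n+1)$-dimensional space $\CV_n$ — equivalently, that the $n+1$ polynomials in each family are linearly independent and each has degree exactly $n$ — for every $(\a,\b,\g)$ with $-\a,-\b,-\g\notin\NN$. Linear independence I would get by examining leading behavior: in $J_{k,n}^{\a,\b,\g}$ the factor $(x+y)^k J_k^{\a,\b}(\tfrac{y-x}{x+y})$ contributes a homogeneous degree-$k$ polynomial in $(x,y)$ whose leading term in, say, $y$ is a nonzero multiple of $y^k$ (the leading coefficient of $J_k^{\a,\b}$ is nonzero exactly when $\iota_0^{\a,\b}(k)=0$, which I must confirm holds throughout the regime, using $-\a,-\b\notin\NN$ together with a short case analysis to rule out $-k-\a-\b\in\{1,\dots,k\}$), and the remaining factor $J_{n-k}^{2k+\a+\b+1,\g}(1-2x-2y)$ has degree exactly $n-k$ with nonzero leading coefficient (again reducing to an $\iota_0$ check, now using $-\g\notin\NN$). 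A triangular/grading argument in the variable pair then separates the $n+1$ polynomials by their bidegree, giving independence, hence each family spans a common $(n+1)$-dimensional subspace of $\Pi_n^2$; that the subspace is the same for all three follows from \eqref{eq:3bases-transform} once one knows each spans \emph{some} fixed complement, combined with the classical case to pin down \emph{which} complement it is by continuity. I would organize the write-up so that the $\iota_0$ bookkeeping is isolated into one lemma and the rest is the grading argument plus invocation of \eqref{eq:3bases-transform} and Proposition~\ref{prop:Jacobi1st}.
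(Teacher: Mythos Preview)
Your treatment of the classical case $\a,\b,\g>-1$ matches the paper's. For the second regime the paper takes a shorter route that sidesteps exactly the invertibility worry you flag: rather than viewing the change of basis as an abstract analytic matrix whose determinant might vanish, the paper invokes the \emph{explicit} connection-coefficient formula
\[
   J_{k,n}^{\a,\b,\g}(x,y)=\sum_{j=0}^{n} c_{j,k,n}^{\a,\b,\g}\, L_{j,n}^{\a,\b,\g}(x,y),
\]
where the $c_{j,k,n}^{\a,\b,\g}$ are (multiples of) ${}_3F_2$ Racah polynomials known in closed form from the literature. This is a polynomial identity in $(x,y)$ with coefficients analytic in $(\a,\b,\g)$, valid for $\a,\b,\g>-1$, hence it persists by analytic continuation wherever both sides depend analytically on the parameters; that already gives $\CV_n(\varpi_{\a,\b,\g})_J\subseteq\CV_n(\varpi_{\a,\b,\g})_L$, and the reverse inclusion follows by the symmetric formula expressing $L$ in terms of $J$. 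No matrix inversion is needed because one has the connection in both directions explicitly. Your leading-term/grading argument would also work but is more laborious and, as you note, reduces to the same $\iota_0$ bookkeeping.

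On that bookkeeping: the paper asserts that $-\a,-\b,-\g\notin\NN$ forces all the relevant $\iota_0$ indices to vanish, and your hesitation here is justified --- the condition $\iota_0^{\a,\b}(k)=0$ depends on $\a+\b$, not on $\a,\b$ separately, and one can have $-\a,-\b\notin\NN$ while $-k-\a-\b\in\{1,\dots,k\}$ (e.g.\ $\a=\tfrac12$, $\b=-\tfrac72$, $k=2$). So the paper's proof is terse at exactly the point you identified; neither your approach nor the paper's fully dispatches this edge case, and your ``short case analysis'' would in fact have to exclude such parameter combinations or argue separately for them.
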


\begin{proof}
If $\a,\b,\g > -1$, this follows from the fact that all three families of polynomials are orthogonal with respect to
$\la \cdot,\cdot \ra_{\a,\b,\g}$. In particular, this means that, for example, each $J_{k,n}^{\a,\b,\g}$ can
be expressed as a linear combination of $L_{j,n}^{\a,\b,\g}$, $0\le j \le n$, 
$$
   J_{k,n}^{\a,\b,\g} (x,y)= \sum_{j=0}^n c_{j,k,n}^{\a,\b,\g} L_{j,n}^{\a,\b,\g}(x,y),
$$
where the coefficient $c_{j,k,n}^{\a,\b,\g}$ is given explicitly by a multiple of a  ${}_3F_2$ Racah polynomial
(\cite{D, IX}). If $-\a,-\b,-\g \notin \NN$, then $\iota_0$ indexes for both factors of the Jacobi polynomials in 
$J_{k,n}^{\a,\b,\g}$,  $K_{k,n}^{\a,\b,\g}$ and $L_{k,n}^{\a,\b,\g}$ are zero, and we can extend these 
relations by analytic continuation. 
\end{proof}

The relation \eqref{eq:CV=CV} still holds for some triplets of $\a,\b,\g$ that contain negative integers, but it 
does not hold for all such triplets, as we shall see in later sections. 

From Proposition \ref{prop:J-mJm}, we can deduce the following relations: 

\begin{prop}
Let $\wh \a = \max\{0, -\a\}$ for $\a > -1$ or $\a = -1$. 
\begin{align}
  J_{k,n}^{\a,\b,\g} (x,y) & = c_{k,n}^{\a,\b,\g} X^{\wh \a,\wh \b,\wh \g} 
     J_{k - \wh \a - \wh \b,n -\wh \a -\wh \b - \wh \g}^{\a+2\wh \a,\b+2 \wh \b,\g+2\wh \g} (x,y), \quad  \wh \a + \wh \b \le k \le n- \wh \g,  \label{eq:J+J-}  \\
  K_{k,n}^{\a,\b,\g} (x,y) & = c_{k,n}^{\g,\a,\b} X^{\wh \a,\wh \b,\wh \g} 
      K_{k - \wh \a - \wh \g,n -\wh \a - \wh \b - \wh \g}^{\a+2\wh \a,\b+2 \wh \b,\g+2\wh \g} (x,y), 
       \quad  \wh \a + \wh \g \le k \le n - \wh \b,     \label{eq:K+K-} \\ 
  L_{k,n}^{\a,\b,\g} (x,y) & = c_{k,n}^{\b,\g,\a} X^{\wh \a,\wh \b,\wh \g} 
      L_{k - \wh \b - \wh \g,n -\wh \a- \wh \b -\wh \g}^{\a+2\wh \a,\b+2 \wh \b,\g+2\wh \g} (x,y),
        \quad  \wh \b + \wh \g \le k \le n- \wh\a, \label{eq:L+L-}
\end{align}
where $c_{\a,\b,\g}$ is a constant which can be determined by the relations in Proposition \ref{prop:J-mJm}. 
\end{prop}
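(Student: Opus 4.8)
The plan is to reduce the three identities to the one–variable relations of Proposition~\ref{prop:J-mJm}, applied to the two Jacobi factors in the definition \eqref{eq:J-basis} of $J_{k,n}^{\a,\b,\g}$, and then to deduce the statements for $K$ and $L$ from the symmetry relations \eqref{eq:3bases-transform} rather than redoing the computation.

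I would first treat \eqref{eq:J+J-}. Write $t=\tfrac{y-x}{x+y}$ and $s=1-2x-2y$, and record the elementary identities $1-t=\tfrac{2x}{x+y}$, $1+t=\tfrac{2y}{x+y}$, $1+s=2(1-x-y)$. According to which of $\a,\b$ equals $-1$ (equivalently, which of $\wh\a,\wh\b$ equals $1$) I apply \eqref{eq:Jmb}, \eqref{eq:Jam}, or \eqref{eq:Jlm} to $J_k^{\a,\b}(t)$; in each case (including the trivial one $\wh\a=\wh\b=0$) this produces a constant multiple of $(1-t)^{\wh\a}(1+t)^{\wh\b}\,J_{k-\wh\a-\wh\b}^{\a+2\wh\a,\b+2\wh\b}(t)$, valid precisely when $k\ge\wh\a+\wh\b$, which is the lower bound in \eqref{eq:J+J-}. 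Multiplying by $(x+y)^k$ converts $(1-t)^{\wh\a}(1+t)^{\wh\b}$ into $2^{\wh\a+\wh\b}x^{\wh\a}y^{\wh\b}(x+y)^{-\wh\a-\wh\b}$, so the first factor contributes $x^{\wh\a}y^{\wh\b}$ times $(x+y)^{k-\wh\a-\wh\b}J_{k-\wh\a-\wh\b}^{\a+2\wh\a,\b+2\wh\b}(t)$ up to an explicit constant.

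For the radial factor $J_{n-k}^{2k+\a+\b+1,\g}(s)$ the key observation is that the first parameter $2k+\a+\b+1$ is invariant under $k\mapsto k-\wh\a-\wh\b$, $\a\mapsto\a+2\wh\a$, $\b\mapsto\b+2\wh\b$, and that $2k+\a+\b+1\ge 1$ throughout the range $k\ge\wh\a+\wh\b$, so this parameter is never a negative integer and only $\g$ can be. If $\g=-1$, applying \eqref{eq:Jam} gives a constant times $(1+s)^{\wh\g}J_{n-k-\wh\g}^{2k+\a+\b+1,\g+2\wh\g}(s)=2^{\wh\g}(1-x-y)^{\wh\g}J_{n-k-\wh\g}^{2k+\a+\b+1,\g+2\wh\g}(s)$, and the admissibility hypothesis $n-k\ge\wh\g+\max\{0,\lfloor-(2k+\a+\b+1)\rfloor\}=\wh\g$ needed for \eqref{eq:Jam} is exactly the upper bound $k\le n-\wh\g$. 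Rewriting $2k+\a+\b+1=2(k-\wh\a-\wh\b)+(\a+2\wh\a)+(\b+2\wh\b)+1$ and recombining the two factors via \eqref{eq:J-basis}, the product of one–variable polynomials is exactly $J_{k-\wh\a-\wh\b,\,n-\wh\a-\wh\b-\wh\g}^{\a+2\wh\a,\b+2\wh\b,\g+2\wh\g}(x,y)$, while the collected powers of $x$, $y$, $1-x-y$ assemble into $X^{\wh\a,\wh\b,\wh\g}$; the remaining numerical factor is a product of the factorial/Pochhammer ratios supplied by Proposition~\ref{prop:J-mJm}, and I would name it $c_{k,n}^{\a,\b,\g}$ (it could be written out explicitly). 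For \eqref{eq:K+K-} and \eqref{eq:L+L-} I would not repeat the computation: by \eqref{eq:3bases-transform}, $K_{k,n}^{\a,\b,\g}(x,y)=J_{k,n}^{\g,\a,\b}(1-x-y,x)$ and $L_{k,n}^{\a,\b,\g}(x,y)=J_{k,n}^{\b,\g,\a}(y,1-x-y)$, so applying \eqref{eq:J+J-} with permuted parameters, using that the permutation fixes $X^{\wh\a,\wh\b,\wh\g}$ (since $1-(1-x-y)-x=y$, etc.) and carries $J^{\g+2\wh\g,\a+2\wh\a,\b+2\wh\b}_{\cdot,\cdot}(1-x-y,x)$ back to $K^{\a+2\wh\a,\b+2\wh\b,\g+2\wh\g}_{\cdot,\cdot}(x,y)$, yields the two identities with index shifts $\wh\a+\wh\g$, $\wh\b+\wh\g$, the stated ranges $\wh\a+\wh\g\le k\le n-\wh\b$ and $\wh\b+\wh\g\le k\le n-\wh\a$, and constants $c_{k,n}^{\g,\a,\b}$, $c_{k,n}^{\b,\g,\a}$.

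I expect the only real difficulty to be bookkeeping rather than mathematics: one must match the admissibility hypotheses of \eqref{eq:Jam}, \eqref{eq:Jmb} and \eqref{eq:Jlm} — including the $\iota_0$ restrictions and the asymmetry emphasized after Proposition~\ref{prop:J-mJm}, namely that \eqref{eq:Jam} can fail exactly where \eqref{eq:Jmb} still holds — to the index ranges $\wh\a+\wh\b\le k\le n-\wh\g$ and its permutations, and keep the handful of cases (which of $\a,\b,\g$ equal $-1$) organized. Once one observes that $2k+\a+\b+1$ is invariant and always at least $1$ in the relevant range, every remaining step is a direct substitution.
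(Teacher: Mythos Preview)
Your approach is correct and is exactly what the paper intends: the paper gives no proof beyond the sentence ``From Proposition~\ref{prop:J-mJm}, we can deduce the following relations,'' and you have supplied the intended argument --- apply \eqref{eq:Jmb}, \eqref{eq:Jam}, \eqref{eq:Jlm} to each Jacobi factor in \eqref{eq:J-basis}, check the index constraints match, and transfer to $K$ and $L$ via \eqref{eq:3bases-transform}. One small imprecision: your claim that $2k+\a+\b+1\ge 1$ for $k\ge\wh\a+\wh\b$ is not quite true when $\wh\a=\wh\b=0$ and $\a,\b$ are both close to $-1$ (take $k=0$, $\a=\b=-0.9$), but what you actually need, namely $\max\{0,\lfloor-(2k+\a+\b+1)\rfloor\}=0$, does hold since $2k+\a+\b+1>-1$ throughout, so the application of \eqref{eq:Jam} still gives exactly $n-k\ge\wh\g$.
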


\subsection{Recursive relations}

It is known that the Jacobi polynomials on the triangle satisfies two three-term relations \cite[p. 80]{DX}. 
What we will need are different types of recursive relations between Jacobi polynomials of different parameters. Let 
$\tau_n^{\a,\b}$ be defined as in \eqref{eq:tau}. We further define
\begin{equation}\label{eq:sigma}
      \sigma_n^{\a,\b}:= \frac{(n+\a) (n+\a + \b)}{(2n+\a +\b) (2n+\a+\b+1)}. 
\end{equation}

\begin{prop}
For $\a,\b,\g \in \RR$, assume $n > -\g$ and $n > -(\a+\b+1)/2$. Then
\begin{align}
   J_{k,n}^{\a,\b,\g}(x,y) =  J_{k,n}^{\a,\b,\g+1}(x,y) + \tau_{n-k}^{\g,2k+\a+\b+1} J_{k,n-1}^{\a,\b,\g+1}(x,y);\label{eq:recurJc}
\end{align}
moreover, if the conditions of Proposition \ref{prop:tau} holds with $k$ in place of $n$, then
\begin{align}
J_{k,n}^{\a,\b,\g}(x,y) =  &\ J_{k,n}^{\a+1,\b,\g}(x,y) - \tau_{n-k}^{2k+\a+\b+1,\g} J_{k,n-1}^{\a+1,\b,\g}(x,y) \label{eq:recurJa}\\
  + & \ \tau_k^{\a,\b}\left((n-k+1)J_{k-1,n}^{\a+1,\b,\g}(x,y)- \sigma_{n-k}^{2k+\a+\b+1,\g}J_{k-1,n-1}^{\a+1,\b,\g}(x,y)\right); 
     \notag \\
J_{k,n}^{\a,\b,\g}(x,y) =  &\ J_{k,n}^{\a,\b+1,\g}(x,y) - \tau_{n-k}^{2k+\a+\b+1,\g} J_{k,n-1}^{\a,\b+1,\g}(x,y)\label{eq:recurJb}\\
  - & \ \tau_k^{\a,\b}\left((n-k+1)J_{k-1,n}^{\a,\b+1,\g}(x,y)- \sigma_{n-k}^{2k+\a+\b+1,\g}J_{k-1,n-1}^{\a,\b+1,\g}(x,y)\right). 
      \notag
\end{align}
\end{prop}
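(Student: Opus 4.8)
The plan is to prove the three recurrences by reducing each of them to one-variable Jacobi identities already established in Section 2, applied to the two tensor factors in the definition \eqref{eq:J-basis} of $J_{k,n}^{\a,\b,\g}$. Recall that
\[
 J_{k,n}^{\a,\b,\g}(x,y) = (x+y)^k J_k^{\a,\b}\!\left(\tfrac{y-x}{x+y}\right) J_{n-k}^{2k+\a+\b+1,\g}(1-2x-2y),
\]
so that changing $\g\mapsto \g+1$ affects only the second factor $J_{n-k}^{2k+\a+\b+1,\g}$, whereas changing $\a\mapsto\a+1$ (or $\b\mapsto\b+1$) affects \emph{both} factors: it shifts the first parameter of $J_k^{\a,\b}$ and it shifts the lower index $2k+\a+\b+1$ of the second factor by one.

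For \eqref{eq:recurJc} I would simply apply the first relation of \eqref{eq:JacobiR1} in the variable $t=1-2x-2y$ with parameters $(\a,\b)$ there replaced by $(\g,2k+\a+\b+1)$ and $n$ replaced by $n-k$; the hypothesis $n>-\g$ and $n>-(\a+\b+1)/2$ guarantee that we are in the range where $\tau_{n-k}^{\g,2k+\a+\b+1}$ is defined and the relation holds (this is exactly condition (1) of Proposition \ref{prop:tau} after one checks the sign of $2(n-k)+\g+(2k+\a+\b+1)=2n+\a+\b+\g+1$ and of $(n-k)+\g+(2k+\a+\b+1)+1 = n+k+\a+\b+\g+2$). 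Multiplying through by $(x+y)^kJ_k^{\a,\b}\big(\tfrac{y-x}{x+y}\big)$ gives \eqref{eq:recurJc} directly.

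The identities \eqref{eq:recurJa} and \eqref{eq:recurJb} are the substantive ones, and \eqref{eq:recurJb} follows from \eqref{eq:recurJa} by the $(x,y)\leftrightarrow$ permutation symmetry expressed through \eqref{eq:3bases-transform} (swapping the roles of $\a$ and $\b$ and of $x$ and $y$), so it suffices to prove \eqref{eq:recurJa}. Here I would first use the first line of \eqref{eq:JacobiR1} on the factor $J_k^{\a,\b}\big(\tfrac{y-x}{x+y}\big)$ to write it as $J_k^{\a+1,\b} - \tau_k^{\a,\b} J_{k-1}^{\a+1,\b}$ (this is where the condition "Proposition \ref{prop:tau} holds with $k$ in place of $n$" enters). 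The first piece $(x+y)^kJ_k^{\a+1,\b}\big(\tfrac{y-x}{x+y}\big)J_{n-k}^{2k+\a+\b+1,\g}(1-2x-2y)$ is \emph{almost} $J_{k,n}^{\a+1,\b,\g}$ except that the second factor carries index $2k+\a+\b+1 = 2k+(\a+1)+\b$ rather than $2k+(\a+1)+\b+1$; to correct this I apply \eqref{eq:JacobiR1} once more, this time the first relation in the second Jacobi slot, lowering $2k+\a+\b+2$ to $2k+\a+\b+1$, which produces the $J_{k,n}^{\a+1,\b,\g}$ and $\tau_{n-k}^{2k+\a+\b+1,\g} J_{k,n-1}^{\a+1,\b,\g}$ terms. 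The leftover piece $-\tau_k^{\a,\b}(x+y)^k J_{k-1}^{\a+1,\b}\big(\tfrac{y-x}{x+y}\big)J_{n-k}^{2k+\a+\b+1,\g}(1-2x-2y)$ has the wrong first-factor degree $k-1$ paired with a second-factor index appropriate to $k$; rewriting its second-factor index $2k+\a+\b+1 = 2(k-1)+(\a+1)+\b+2$ and combining a power of $(x+y)$ with the degree bookkeeping, one expresses it in the basis $\{J_{k-1,n}^{\a+1,\b,\g}, J_{k-1,n-1}^{\a+1,\b,\g}\}$ using a relation that raises the second index; this is precisely where $\sigma_{n-k}^{2k+\a+\b+1,\g}$ and the factor $(n-k+1)$ appear. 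The identity \eqref{eq:JacobiR3} (the first line, relating $\tfrac{1+t}{2}J_{n-1}^{\a+1,\b+1}$ to $J_n^{\a,\b}$ and $J_{n-1}^{\a+1,\b}$) is the natural tool here, since $\tfrac{1+(1-2x-2y)}{2}=1-x-y$ is exactly the extra linear factor one must absorb.

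The main obstacle I anticipate is the last step: matching the degree-$(k-1)$ leftover term against the two-term span $\{J_{k-1,n}^{\a+1,\b,\g},\,J_{k-1,n-1}^{\a+1,\b,\g}\}$, because lowering the first-factor index from $k$ to $k-1$ forces a compensating shift in the second-factor's lower index and an extra factor of $(1-x-y)$ (or equivalently the substitution $\tfrac{1+t}2$), so one cannot use a single pure three-term relation but must chain \eqref{eq:JacobiR1} and \eqref{eq:JacobiR3} and then verify that the resulting coefficients collapse to exactly $\tau_k^{\a,\b}(n-k+1)$ and $-\tau_k^{\a,\b}\sigma_{n-k}^{2k+\a+\b+1,\g}$. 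The cleanest route is probably to prove the identity first under the generic assumption that all $\iota_0$ indices vanish (so that $J=P$ up to normalization and everything reduces to classical Jacobi algebra, verifiable by a finite hypergeometric computation or a computer algebra check), and then note that both sides are polynomials in $\a,\b,\g$ — or more precisely, that under the stated hypotheses the $\iota_0$ corrections do not intervene — so the identity persists by the analytic-continuation principle of Proposition \ref{prop:Jacobi1st}, exactly as was done for the one-variable relations in Section 2.
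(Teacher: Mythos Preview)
Your plan is essentially the paper's proof. Two small corrections: for \eqref{eq:recurJc} you want the \emph{second} relation in \eqref{eq:JacobiR1} applied to $J_{n-k}^{2k+\a+\b+1,\g}$ (raising the second parameter gives the plus sign in \eqref{eq:recurJc}); and for the leftover term in \eqref{eq:recurJa} the extra linear factor to absorb is $(x+y)=\tfrac{1-s}{2}$ with $s=1-2x-2y$, not $1-x-y$, so the correct one-variable tool is $\tfrac{1-s}{2}J_m^{A,B}(s)=-(m+1)J_{m+1}^{A-1,B}(s)+\sigma_m^{A,B}J_m^{A-1,B}(s)$ (obtained by combining the second lines of \eqref{eq:JacobiR3} and \eqref{eq:JacobiR1}), which yields the coefficients $(n-k+1)$ and $\sigma_{n-k}^{2k+\a+\b+1,\g}$ directly and makes the analytic-continuation detour unnecessary.
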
 

\begin{proof}
The first identity follows from applying \eqref{eq:tau} to $J_{n-k}^{2k+\a+\b+1,\g}$. It is easy to check that 
$n > -\g$ and $n > -(\a+\b+1)/2$ guarantee that we can apply \eqref{eq:tau} in this setting. For the second identity, 
we first apply \eqref{eq:tau} on $J_k^{\a,\b}$ in $J_{k,n}^{\a,\b}$ and deduce 
\begin{align*}
 J_{k,n}^{\a,\b,\g}(x,y) =  & J_{k,n}^{\a+1,\b,\g}(x,y) - \tau_{n-k}^{2k+\a+\b+1,\g} J_{k,n-1}^{\a+1,\b,\g}(x,y) \\
     & - \tau_k^{\a,\b}(x+y)^k J_{k-1}^{\a+1,\b}\left(\frac{y-x}{x+y}\right)J_{n-k}^{2k+\a+\b+1,\g}(1-2x-2y),
\end{align*}
where we have also applied \eqref{eq:tau} on $J_{n-k}^{2k+\a+\b+1,\g}$ once. For the last term, we need the 
identity
$$
 \frac{1-s}{2}J_m^{\a,\b}(s) = - (m+1) J_{m+1}^{\a-1,\b}(s) + \s_m^{\a,\b} J_m^{\a-1,\b}(s),
$$
which follows from the second identity in \eqref{eq:JacobiR3b} and the second identity in \eqref{eq:JacobiR1}. 
Applying this identity on $(x+y)J_{n-k}^{2k+\a+\b+1,\g}(1-2x-2y)$ completes the proof of \eqref{eq:recurJa}.
The proof of \eqref{eq:recurJb} follows similarly. 
\end{proof}

Evidently, by \eqref{eq:3bases-transform}, each identity in the proposition also holds if we replace all  
$J_{k,n}^{\a,\b,\g}$ by $K_{k,n}^{\b,\g,\a}$ or by $L_{k,n}^{\g,\a,\b}$, respectively, throughout the identity. 

\begin{prop}\label{prop:Ja+1b+1}
Let $\tau_n^{\a,\b}$ and $\s_n^{\a,\b}$ be defined as in \eqref{eq:tau} and \eqref{eq:sigma}, respectively, and let 
$\mathfrak{a}_k^{\a,\b}$ and $\mathfrak{b}_k^{\a,\b}$ be defined as in \eqref{eq:three-termJ}. Then
\begin{align*}
& J_{k,n}^{\a,\b,\g} (x,y) =   J_{k,n}^{\a+1,\b+1,\g}(x,y) + d_{1,k,n}^{\a,\b} J_{k,n-1}^{\a+1,\b+1,\g}(x,y)+d_{2,k,n}^{\a,\b}
 J_{k,n-2}^{\a+1,\b+1,\g}(x,y) \\
 &\quad  +  c_{0,k,n}^{\a,\b} J_{k-1,n}^{\a+1,\b+1,\g}(x,y) + c_{1,k,n}^{\a,\b} J_{k-1,n-1}^{\a+1,\b+1,\g}(x,y)+c_{2,k,n}^{\a,\b}
   J_{k-1,n-2}^{\a+1,\b+1,\g}(x,y) \\
  &\quad  +  e_{0,k,n}^{\a,\b} J_{k-2,n}^{\a+1,\b+1,\g}(x,y) + e_{1,k,n}^{\a,\b} J_{k-2,n-1}^{\a+1,\b+1,\g}(x,y)+e_{2,k,n}^{\a,\b}
   J_{k-2,n-2}^{\a+1,\b+1,\g}(x,y),
\end{align*}
where 
\begin{align*}
 & d_{1,k,n}^{\a,\b}:=  - \left(\tau_{n-k}^{2k+\a+\b+2,\g}+\tau_{n-k}^{2k+\a+\b+1,\g}\right), \quad 
     d_{2,k,n}^{\a,\b}:=   \tau_{n-k}^{2k+\a+\b+1,\g}\tau_{n-k-1}^{2k+\a+\b+2,\g}, \\  
  &  c_{0,k,n}^{\a,\b}:=  - c_k^{\a,\b}  (n-k+1), \quad 
      c_{1,k,n}^{\a,\b}:=  \frac12 c_k^{\a,\b}  \left(1- \mathfrak{a}_{n-k}^{2k+\a+\b+1,\g} \right),    \\
  &  c_{2,k,n}^{\a,\b}:=  - \frac12 c_k^{\a,\b}  \mathfrak{b}_{n-k}^{2k+\a+\b+1,\g}, \quad
      e_{0,k,n}^{\a,\b}:=   d_k^{\a,\b} (n-k+1) (n-k+2), \\
  &  e_{1,k,n}^{\a,\b}:=   -d_k^{\a,\b}  (n-k+1)  \left(\s_{n-k+1}^{2k+\a+\b,\g}+\s_{n-k}^{2k+\a+\b+1,\g}\right), \\ 
  &  e_{2,k,n}^{\a,\b}:=     d_k^{\a,\b} \s_{n-k}^{2k+\a+\b+1,\g} \s_{n-k}^{2k+\a+\b,\g}. 
\end{align*}
in which 
$$
c_k^{\a,\b} := \tau_{k}^{\b,\a+1} -\tau_{k}^{\a,\b} \quad \hbox{and}\quad
 d_k^{\a,\b}: = \tau_{k}^{\a,\b} \tau_{k-1}^{\b,\a+1}.
$$
\end{prop}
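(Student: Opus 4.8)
The plan is to derive the identity by iterating the already-established one-parameter recurrences on each of the two Jacobi factors of $J_{k,n}^{\a,\b,\g}$, being careful about how the multipliers $(x+y)$ produced along the way are reabsorbed. Recall that
$$
 J_{k,n}^{\a,\b,\g}(x,y) = (x+y)^k J_k^{\a,\b}\!\left(\tfrac{y-x}{x+y}\right) J_{n-k}^{2k+\a+\b+1,\g}(1-2x-2y).
$$
First I would apply the three-term relation \eqref{eq:three-termJ} in the disguised form obtained by combining \eqref{eq:JacobiR1}--\eqref{eq:JacobiR3b}, namely, express $J_k^{\a,\b}(s)$ in terms of $J_k^{\a+1,\b+1},\,J_{k-1}^{\a+1,\b+1},\,J_{k-2}^{\a+1,\b+1}$ evaluated at $s=\tfrac{y-x}{x+y}$. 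The cleanest way is to write $J_k^{\a,\b}=J_k^{\a+1,\b}-\tau_k^{\a,\b}J_{k-1}^{\a+1,\b}$ using the first line of \eqref{eq:JacobiR1}, then expand $J_k^{\a+1,\b}=J_k^{\a+1,\b+1}+\tau_k^{\b,\a+1}J_{k-1}^{\a+1,\b+1}$ and $J_{k-1}^{\a+1,\b}=J_{k-1}^{\a+1,\b+1}+\tau_{k-1}^{\b,\a+1}J_{k-2}^{\a+1,\b+1}$ using the second line. This produces exactly the three ``column'' indices $k,k-1,k-2$ with leading coefficients $1$, $c_k^{\a,\b}=\tau_k^{\b,\a+1}-\tau_k^{\a,\b}$, and $d_k^{\a,\b}=\tau_k^{\a,\b}\tau_{k-1}^{\b,\a+1}$, matching the definitions at the end of the statement.

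The subtlety is the bookkeeping of the factor $(x+y)^k$: a term carrying $J_{k-1}^{\a+1,\b+1}$ or $J_{k-2}^{\a+1,\b+1}$ has a surplus power of $(x+y)$ relative to $J_{k-1,\cdot}^{\a+1,\b+1,\g}$ or $J_{k-2,\cdot}^{\a+1,\b+1,\g}$. I would clear this by noting that $2(x+y)=1-(1-2x-2y)$, so multiplying $J_{n-k}^{2k+\a+\b+1,\g}(1-2x-2y)$ by a factor $(x+y)$ amounts to applying $\frac{1-s}{2}J_m^{\a',\b'}(s)$ with $s=1-2x-2y$ and $\a'=2k+\a+\b+1$ (or $2k+\a+\b$, $2k+\a+\b-1$ depending on the column, since lowering $k$ by one drops the first parameter by two). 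For this I would use the identity already isolated in the proof of \eqref{eq:recurJa},
$$
 \tfrac{1-s}{2}J_m^{\a',\b'}(s) = -(m+1)J_{m+1}^{\a'-1,\b'}(s)+\s_m^{\a',\b'}J_m^{\a'-1,\b'}(s),
$$
once for the $k-1$ column and twice for the $k-2$ column. Parallel to this, the degree-$n$ factor $J_{n-k}^{2k+\a+\b+1,\g}$ must have its first parameter raised by $2$ (because $\a,\b$ each go up by $1$), which is done by \eqref{eq:tau}: $J_{n-k}^{2k+\a+\b+1,\g}=J_{n-k}^{2k+\a+\b+2,\g}-\tau_{n-k}^{\g,2k+\a+\b+1}J_{n-k-1}^{2k+\a+\b+2,\g}$, producing the ``row'' shifts $n,n-1,n-2$. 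Collecting all contributions and matching powers of $(x+y)$ against the definition of $J_{k',m'}^{\a+1,\b+1,\g}$ yields the nine coefficients; I would then simply read off $d_{1},d_2$ (pure row expansion), $c_0,c_1,c_2$ (one application of the $\frac{1-s}{2}$ identity, which is where $\mathfrak a_{n-k}^{\cdot,\cdot}$, $\mathfrak b_{n-k}^{\cdot,\cdot}$ enter via $\frac{1-s}{2}=\frac12(1-s)$ combined with the three-term relation), and $e_0,e_1,e_2$ (two applications, giving the products of $\s$'s).

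The main obstacle is purely algebraic: keeping the index shifts of the Pochhammer-type coefficients consistent when $k$ decreases — each unit decrease in the column index lowers the first Jacobi parameter of the degree-$n$ factor from $2k+\a+\b+1$ to $2k+\a+\b-1$, and one must verify that the $\s$ and $\tau$ symbols are evaluated at precisely these shifted arguments, e.g. that the coefficient of $J_{k-2,n-1}^{\a+1,\b+1,\g}$ really is $-d_k^{\a,\b}(n-k+1)(\s_{n-k+1}^{2k+\a+\b,\g}+\s_{n-k}^{2k+\a+\b+1,\g})$ and not some neighboring index. I would handle the exceptional parameter configurations (where some $\iota_0$ index is nonzero, or a denominator $2m+\a'+\b'$ vanishes) by invoking the case analysis already built into Propositions \ref{prop:tau} and the propositions establishing \eqref{eq:JacobiR1}--\eqref{eq:JacobiR3b}; since only finitely many such configurations occur for fixed $k,n$, and each of the cited identities has been checked there, the combined identity holds in those cases too. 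Finally, as the paper notes that all such relations were verified with a computer algebra system, I would confirm the closed forms of the nine coefficients symbolically as a check.
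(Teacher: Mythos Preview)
Your plan is essentially the paper's proof: expand $J_k^{\a,\b}$ via two applications of \eqref{eq:JacobiR1} to get the three columns $k,k-1,k-2$ with coefficients $1,\,c_k^{\a,\b},\,-d_k^{\a,\b}$, then absorb the surplus $(x+y)$-powers into the second Jacobi factor---by raising its first parameter twice with \eqref{eq:JacobiR1} for the $k$-column (the $d$'s), by the three-term relation \eqref{eq:three-termJ} directly for the $k{-}1$ column (this is where $\mathfrak a,\mathfrak b$ enter; the parameter-lowering $\tfrac{1-s}{2}$ identity you first cite is not the right tool here, since the target first parameter $2(k{-}1)+(\a{+}1)+(\b{+}1)+1=2k+\a+\b+1$ already agrees with the source), and by two applications of $\tfrac{1-s}{2}J_m^{\a',\b'}=-(m{+}1)J_{m+1}^{\a'-1,\b'}+\s_m^{\a',\b'}J_m^{\a'-1,\b'}$ for the $k{-}2$ column (the $e$'s). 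Two small slips to fix when you carry it out: the raising coefficient is $\tau_{n-k}^{2k+\a+\b+1,\g}$, not $\tau_{n-k}^{\g,2k+\a+\b+1}$, and the $J_{k-2}^{\a+1,\b+1}$ coefficient from the first expansion carries a minus sign.
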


\begin{proof}
By combining the two identities in \eqref{eq:JacobiR1}, it is easy to see that 
$$
  J_n^{\a,\b}(t) = J_n^{\a+1,\b+1}(t)+ (\tau_n^{\b,\a+1} -\tau_n^{\a,\b})J_{n-1}^{\a+1,\b+1}(t) -
   \tau_n^{\a,\b}\tau_{n-1}^{\b,\a+1} J_{n-2}^{\a+1,\b+1}(t). 
$$ 
Applying this identity to the second Jacobi polynomial in $J_{k,n}^{\a,\b,\g}$, we obtain
\begin{align} \label{eq:prop4.5a}
J_{k,n}^{\a,\b,\g}(x,y) =  & \ (x+y)^k J_k^{\a+1,\b+1}(s) J_{n-k}^{2k+\a+\b+1,\g}(t) \\
  & + (\tau_{k}^{\b,\a+1} -\tau_{k}^{\a,\b}) (x+y) J_{k-1,n-1}^{\a+1,\b+1,\g}(x,y) \notag \\
  & +  \tau_{k}^{\a,\b} \tau_{k-1}^{\b,\a+1}(x+y)^k J_{k-2}^{\a+1,\b+1}(s) J_{n-k}^{2k+\a+\b+1,\g}(t), \notag
\end{align}
where $s = (y-x)/(x+y)$ and $t = 1-2x-2y$. For the first term in the right hand side, we use \eqref{eq:JacobiR1} twice to
obtain 
\begin{align*}
  (x+y)^k & J_k^{\a+1,\b+1}(s) J_{n-k}^{2k+\a+\b+1,\g}(t) 
    =  \tau_{n-k}^{2k+\a+\b+1,\g}\tau_{n-k-1}^{2k+\a+\b+2,\g}  J_{k,n-2}^{\a+1,\b+1,\g}(x,y) \\
     &- (\tau_{n-k}^{2k+\a+\b+2,\g}+\tau_{n-k}^{2k+\a+\b+1,\g}) J_{k,n-1}^{\a+1,\b+1,\g}(x,y) + 
         J_{k,n}^{\a+1,\b+1,\g}(x,y).
\end{align*}
For the second term in the right hand side of \eqref{eq:prop4.5a}, we use the three-term relation \eqref{eq:three-termJ}
on $(x+y) J_{n-k}^{2k+\a+\b+1,\g}(1-2x-2y)$ to obtain that 
\begin{align*}
    (x+y) & J_{k-1,n-1}^{\a+1,\b+1,\g}(x,y) = - (n-k+1)J_{k-1,n}^{\a+1,\b+1,\g}(x,y) \\
     &  +\frac12 \left(1- \mathfrak{a}_{n-k}^{2k+\a+\b+1,\g} \right)J_{k-1,n-1}^{\a+1,\b+1,\g}(x,y)  
        - \frac12 \mathfrak{b}_{n-k}^{2k+\a+\b+1,\g} J_{k-1,n-2}^{\a+1,\b+1,\g}(x,y).
\end{align*}
For the third term in the right hand side of \eqref{eq:prop4.5a}, we use the identity
$$
  \frac{1-t}{2} J_n^{\a,\b}(t) = -(n+1)J_{n+1}^{\a-1,\b}(t) + \s_n^{\a,\b} J_n^{\a-1,\b}(t),
$$
which is \cite[(22.7.15)]{AS} written for $J_{n}^{\a,\b}$, twice to write
\begin{align*}
     (x+y)^k J_{k-2}^{\a+1,\b+1}(s) & J_{n-k}^{2k+\a+\b+1,\g}(t) = (n-k+1) (n-k+2)J_{k-2,n}^{\a+1,\b+1,\g}(x,y) \\
     &  
      - (n-k+1) (\s_{n-k+1}^{2k+\a+\b,\g}+\s_{n-k}^{2k+\a+\b+1,\g}) J_{k-2,n-1}^{\a+1,\b+1,\g}(x,y)  \\
     &   +\s_{n-k}^{2k+\a+\b+1,\g} \s_{n-k}^{2k+\a+\b,\g} J_{k-2,n-2}^{\a+1,\b+1,\g}(x,y).
\end{align*}
Putting these together and computing the constants, we complete the proof. 
\end{proof}

\subsection{Derivatives of the Jacobi polynomials on the triangle}
We will need a number of properties on the derivatives of these polynomials. To emphasis the symmetry of 
the triangle, we make the following definition: 
$$
   \partial_z : = \partial_y - \partial_x, \qquad z = 1-x-y. 
$$ 
Furthermore, when variables are not specified, we shall denote partial derivatives with respect to the first and the 
second variable by $\partial_1$ and $\partial_2$, respectively, and write $\partial_3 = \partial_2-\partial_1$ 
accordingly.  

For $\a,\b \in \RR$ and $0 \le k \le n$, define
\begin{align} \label{eq:akn-bkn}
  a_{k,n}^{\a,\b} :=  \frac{(k+\b)(n+k+\a+\b+1)}{(2k+\a+\b)(2k+\a+\b+1)}
\end{align}
for $2k+\a+\b \ne 0$ and $2k+\a+\b+1 \ne 0$, and define
\begin{align} \label{eq:akn-bkn2}
a_{k,n}^{-k,-k} :=  n-k+1 \quad \hbox{and} \quad a_{k,n}^{-k-1,-k} := - (n - k).   
\end{align}
  
\begin{prop} \label{prop:partialJ}
Let $\a,\b,\g \in \RR$ and $k,n \in \NN_0$ such that $0 \le k \le n$. Then,
\begin{align}\label{eq:diff1J}
\begin{split}
  \partial_x  J_{k,n}^{\a,\b,\g}(x,y)  =\, & -  a_{k,n}^{\a,\b}  J_{k-1,n-1}^{\a+1,\b,\g+1}(x,y) 
    -   J_{k,n-1}^{\a+1,\b,\g+1}(x,y),  \\
  \partial_y  J_{k,n}^{\a,\b,\g}(x,y)  =\, & a_{k,n}^{\b,\a}  J_{k-1,n-1}^{\a,\b+1,\g+1}(x,y)  - 
          J_{k,n-1}^{\a,\b+1,\g+1}(x,y), \\
     \partial_z J_{k,n}^{\a,\b,\g}(x,y)  =\, & J_{k-1,n-1}^{\a+1,\b+1,\g}(x,y), \quad 0 \le k\le n,
\end{split}
\end{align}
where the third equation holds for all $n \in \NN_0$, the first identity holds 
if either one of the following conditions satisfies: 
\begin{enumerate}[\quad (a)]
\item $2k+\a+\b\ne 0$, $2k+\a+\b+1\ne 0$, and either $k+\a+\b+1 \ne 0$ or $k+\a+\b+1=0$ but $\a +1 \in \{-1,-2,\ldots, -k\}$. 
\item $\a = -k$, $\b =-k$ or $\a = -k-1$, $\b=-k$,
\end{enumerate}
whereas the second identity holds if $\a$ and $\b$ are exchanged in (a) and (b).
\end{prop}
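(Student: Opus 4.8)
The plan is to reduce everything to the one-variable identities of Section~\ref{sect2} by exploiting the product structure of $J_{k,n}^{\a,\b,\g}$. Recall that
$$
 J_{k,n}^{\a,\b,\g}(x,y) = (x+y)^k J_k^{\a,\b}\!\left(\tfrac{y-x}{x+y}\right) J_{n-k}^{2k+\a+\b+1,\g}(1-2x-2y),
$$
and write $s = \tfrac{y-x}{x+y}$, $t = 1-2x-2y$, $u = x+y$, so that $u\partial_x s = -(1+s)$, $u\partial_y s = 1-s$, $\partial_x t = \partial_y t = -2$, $\partial_x u = \partial_y u = 1$. The third identity is the cleanest and I would do it first: since $\partial_z = \partial_y - \partial_x$ kills $u$ and $t$ and sends $s\mapsto \tfrac{2}{u}$ (i.e. $u\,\partial_z s = 2$), we get by the chain rule and \eqref{derivativeJ} that $\partial_z J_{k,n}^{\a,\b,\g} = u^{k-1}\cdot 2\cdot (J_k^{\a,\b})'(s)\cdot J_{n-k}^{2k+\a+\b+1,\g}(t) = u^{k-1} J_{k-1}^{\a+1,\b+1}(s) J_{n-k}^{2k+\a+\b+1,\g}(t)$. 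The second factor has the right parameters because $2(k-1)+(\a+1)+(\b+1)+1 = 2k+\a+\b+1$, so this is exactly $J_{k-1,n-1}^{\a+1,\b+1,\g}(x,y)$, valid for all $n\in\NN_0$ with no exceptional cases since \eqref{derivativeJ} holds for all $\a,\b\in\RR$.

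For $\partial_x$, the chain rule gives three terms,
$$
 \partial_x J_{k,n}^{\a,\b,\g} = k u^{k-1} J_k^{\a,\b}(s) J_{n-k}^{*}(t)
   - u^{k-1}(1+s) (J_k^{\a,\b})'(s) J_{n-k}^{*}(t)
   - 2 u^{k} J_k^{\a,\b}(s) (J_{n-k}^{*})'(t),
$$
where $J_{n-k}^{*} := J_{n-k}^{2k+\a+\b+1,\g}$. Using \eqref{derivativeJ} on both derivatives, and writing $(1+s)/2\cdot (J_k^{\a,\b})'(s) = \tfrac14(1+s)J_{k-1}^{\a+1,\b+1}(s)$, the first two terms combine: I would apply the first identity of \eqref{eq:JacobiR3} (the $\tfrac{1+t}{2}J_{n-1}^{\a+1,\b+1}$ relation) with $(\a,\b)\leftsquigarrow(\a,\b)$, $n\leftsquigarrow k$, to rewrite $k J_k^{\a,\b}(s) - \tfrac12(1+s)J_{k-1}^{\a+1,\b+1}(s)$ as a multiple of $J_{k-1}^{\a+1,\b}(s)$, namely $-\tfrac{k+\b}{2k+\a+\b}J_{k-1}^{\a+1,\b}(s)$. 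The third term, after \eqref{derivativeJ}, is $-u^{k}J_k^{\a,\b}(s)J_{n-k-1}^{2k+\a+\b+2,\g+1}(t)$; here I'd use \eqref{eq:JacobiR1} in the form $J_k^{\a,\b}(s) = J_k^{\a+1,\b}(s) - \tau_k^{\a,\b}J_{k-1}^{\a+1,\b}(s)$ to split off a piece that recombines with the $J_{k-1}^{\a+1,\b}(s)$ term coming from the first two. The bookkeeping: the $J_k^{\a+1,\b}(s)$ part gives exactly $-J_{k,n-1}^{\a+1,\b,\g+1}$ (checking $2k+(\a+1)+\b+1 = 2k+\a+\b+2$, correct), and collecting the two $J_{k-1}^{\a+1,\b}(s)$ contributions and simplifying the rational coefficient yields $-a_{k,n}^{\a,\b}J_{k-1,n-1}^{\a+1,\b,\g+1}$. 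The identity for $\partial_y$ is obtained by the same computation with $u\partial_y s = 1-s$ in place of $-(1+s)$, using the \emph{second} identity of \eqref{eq:JacobiR3} and the second relation of \eqref{eq:JacobiR1}; equivalently one can invoke the permutation symmetry \eqref{eq:3bases-transform} together with $\partial_z = \partial_y-\partial_x$ to reduce $\partial_y$ to the cases already done.

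The main obstacle will be the exceptional parameter values, exactly as flagged in the statement. The chain-rule manipulation above uses \eqref{eq:JacobiR1} and \eqref{eq:JacobiR3}, and those have the restrictions listed in Proposition~\ref{prop:tau} and the proposition preceding \eqref{eq:JacobiR3b}, in the variable $k$ (not $n$): the coefficient $a_{k,n}^{\a,\b}$ is undefined precisely when $2k+\a+\b=0$ or $2k+\a+\b+1=0$, which is why those are excluded in~(a), and the constant-term obstruction when $k+\a+\b+1=0$ forces the condition "$\a+1\in\{-1,\dots,-k\}$" in~(a). The genuinely singular cases $\a=\b=-k$ and $(\a,\b)=(-k-1,-k)$ of part~(b) do not follow by analytic continuation and must be checked by hand: here one uses the explicit monomial forms of $J_k^{-k,-k}$, $J_{k-1}^{-k+1,-k}$, etc.\ computed in the proof of Proposition~\ref{prop:tau} together with the definitions \eqref{eq:akn-bkn2} of $a_{k,n}^{-k,-k}$ and $a_{k,n}^{-k-1,-k}$, and the relations \eqref{eq:JacobiR3b} for these special parameters, to verify \eqref{eq:diff1J} directly. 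Since all these are finite polynomial identities in one variable $s$ (with $t$ as a spectator), each such case is a short direct computation, best confirmed with a computer algebra system as the paper does throughout.
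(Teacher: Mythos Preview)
Your overall architecture is the same as the paper's: chain rule in the variables $s=\frac{y-x}{x+y}$, $u=x+y$, $t=1-2x-2y$, then reduce to one-variable Jacobi identities from Section~\ref{sect2}. The treatment of $\partial_z$ is correct and matches the paper. The handling of the exceptional parameter cases is also essentially what the paper does.

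There is, however, a real gap in your $\partial_x$ computation. After combining the first two chain-rule terms via \eqref{eq:JacobiR3} and splitting $J_k^{\a,\b}(s)$ in the third term via \eqref{eq:JacobiR1}, you write that ``collecting the two $J_{k-1}^{\a+1,\b}(s)$ contributions and simplifying the rational coefficient yields $-a_{k,n}^{\a,\b}J_{k-1,n-1}^{\a+1,\b,\g+1}$''. This cannot work as stated: the two contributions have \emph{different} $t$-factors, namely $J_{n-k}^{2k+\a+\b+1,\g}(t)$ from the first two terms and $J_{n-k-1}^{2k+\a+\b+2,\g+1}(t)$ from the split, while the target $J_{k-1,n-1}^{\a+1,\b,\g+1}$ carries yet a third $t$-factor, $J_{n-k}^{2k+\a+\b,\g+1}(t)$. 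No rational-coefficient combination alone reconciles these.

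What is missing is one more one-variable identity acting on the $t$-variable. The paper supplies it via \eqref{eq:JacobiR2}: with $\frac{1-t}{2}=u$ this rearranges to
\[
  J_{n-k}^{2k+\a+\b+1,\g}(t)=\frac{n+k+\a+\b+1}{2k+\a+\b+1}\,J_{n-k}^{2k+\a+\b,\g+1}(t)
   +\frac{u}{2k+\a+\b+1}\,J_{n-k-1}^{2k+\a+\b+2,\g+1}(t).
\]
Substituting this into the $-\frac{k+\b}{2k+\a+\b}u^{k-1}J_{k-1}^{\a+1,\b}(s)J_{n-k}^{2k+\a+\b+1,\g}(t)$ term produces exactly $-a_{k,n}^{\a,\b}J_{k-1,n-1}^{\a+1,\b,\g+1}$ plus a piece $-\tau_k^{\a,\b}u^{k}J_{k-1}^{\a+1,\b}(s)J_{n-k-1}^{2k+\a+\b+2,\g+1}(t)$ that cancels the $+\tau_k^{\a,\b}$ contribution from your \eqref{eq:JacobiR1} split, leaving the clean $-J_{k,n-1}^{\a+1,\b,\g+1}$. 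Insert this step and your argument goes through; without it the bookkeeping does not close.
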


\begin{proof}
Since $\partial_z f(x+y) =0$, the third identity follows directly from \eqref{derivativeJ}. Since 
$J_{n,n}(x,y) = (x+y)^n J_n^{\a,\b}((y-x)/(x+y))$, taking partial derivatives with respect to $x$ by \eqref{derivativeJ},
we see that 
$$
  \partial_x J_{n,n}^{\a,\b,\g}(x,y) = (x+y)^{n-1} \left[n J_n^{\a,\b}(t) - \frac{1+t}{2} J_{n-1}^{\a+1,\b+1}(t) \right]
    = -a_n^{\a,\b} J_{n-1,n-1}^{\a+1,\b,\g+1}(x,y),
$$
where $t = (y-x)/(x+y)$ and the second identity follows from \eqref{eq:JacobiR3} or \eqref{eq:JacobiR3b}. 
If $0 \le k \le n-1$, taking derivatives with respect to $x$ and using the same identity, with $n$ replaced by $k$, 
we see that 
\begin{align*}
   \partial_x J_{k,n}^{\a,\b,\g}(x,y) = & -\frac{k+\b}{2k+\a+\b} (x+y)^{k-1} J_{k-1}^{\a+1,\b}\left(\frac{y-x}{x+y}\right)
         J_{n-k}^{2k+\a+\b+1,\g}(1-2x-2y) \\
         & - (x+y)^{k} J_{k}^{\a,\b}\left(\frac{y-x}{x+y}\right)
         J_{n-k-1}^{2k+\a+\b+2,\g+1}(1-2x-2y)
\end{align*}
if $2k+\a+\b \ne 0$. Now we apply \eqref{eq:JacobiR2} to obtain 
\begin{align*}
 J_{n-k}^{2k+\a+\b+1,\g}(1-2x-2y) = & \frac{n+k+\a+\b+1}{2k+\a+\b+1} J_{n-k}^{2k+\a+\b,\g+1}(1-2x-2y)\\
  &+ \frac{x+y}{2k+\a+\b+1} J_{n-k-1}^{2k+\a+\b+2,\g+1}(1-2x-2y)
\end{align*}
and apply the first identity in \eqref{eq:JacobiR1} on $J_{k}^{\a,\b}\left(\frac{y-x}{x+y}\right)$, and then 
substitute the two identities into the right hand side of $\partial_z J_{k,n}^{\a,\b,\g}$ in the above. After
cancellation of two terms, we have proved the first identity of \eqref{eq:diff1J} under assumption (a). The
other cases can be dealt with similarly. 
\end{proof}

As an example, we consider the case $J_{1,n}^{-2,-1,-1}$, for which (b) is satisfied with $k=1$ for $\partial_x$. 
However, when $\a$ and $\b$ exchange positions, the condition is not satisfied for $\partial_y$. In fact, it is not
difficult to check that 
$$
   \partial_y J_{1,n}^{-2,-1,-1}(x,y) = n J_{0,n-1}^{-2, 0,0}(x,y) - J_{1,n-1}^{-2,0,0}(x,y)
       + \frac{1}{2n-3} J_{0,n-2}^{-1,0,0}(x,y),
$$ 
which shows that the identity \eqref{eq:diff1J} for $\partial_y$ does not hold in this case. 

\begin{prop}\label{prop:partialK}
Let $\a,\b,\g \in \RR$ and $k,n \in \NN_0$ such that $0 \le k \le n$. Then,
\begin{align} \label{eq:diffK}
\begin{split}
  \partial_x K_{k,n}^{\a,\b,\g} (x,y)  = \,&   K_{k-1,n-1}^{\a+1,\b,\g+1}(x,y), \quad 1 \le k\le n,\\
  \partial_y  K_{k,n}^{\a,\b,\g}(x,y)  =\, &  a_{k,n}^{\g,\a}  K_{k-1,n-1}^{\a,\b+1,\g+1}(x,y) 
    +    K_{k,n-1}^{\a,\b+1,\g+1}(x,y),  \\
  \partial_z  K_{k,n}^{\a,\b,\g}(x,y)  =\, & - a_{k,n}^{\a,\g}  K_{k-1,n-1}^{\a+1,\b+1,\g}(x,y)  +
          K_{k,n-1}^{\a+1,\b+1,\g}(x,y), 
\end{split}
\end{align}
where the first equation holds for all $n \in \NN_0$, the second identity holds 
if either one of the following conditions satisfies: 
\begin{enumerate}[\quad (a)]
\item $k+\g+\a\ne 0$, $2k+\g+\a+1\ne 0$, and either $k+\g+\a+1 \ne 0$ or $k+\g+\a+1=0$ but $\g +1 \in \{-1,-2,\ldots, -k\}$. 
\item $\g = -k$, $\a =-k$ or $\g = -k-1$, $\a=-k$,
\end{enumerate}
whereas the third identity holds if $\g$ and $\a$ are exchanged in (a) and (b).
\end{prop}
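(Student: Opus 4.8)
The plan is to reduce all three identities to Proposition~\ref{prop:partialJ} by means of the change of variables relating the $K$-basis to the $J$-basis. By \eqref{eq:3bases-transform} we have $K_{k,n}^{\a,\b,\g}(x,y)=J_{k,n}^{\g,\a,\b}(1-x-y,x)$, and, reading the same relation backwards, $J_{k,n}^{A,B,C}(1-x-y,x)=K_{k,n}^{B,C,A}(x,y)$ for all $A,B,C\in\RR$. So it suffices to differentiate $(x,y)\mapsto J_{k,n}^{\g,\a,\b}(1-x-y,x)$ and then rewrite the outcome in the $K$-basis.

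First I would push the derivations through the substitution $(\xi,\eta)=(1-x-y,x)$. Writing $\partial_1,\partial_2$ for the partials of $J_{k,n}^{\g,\a,\b}$ in its first and second arguments and $\partial_3=\partial_2-\partial_1$ as in the paper, the chain rule gives, evaluated at $(\xi,\eta)$, that $\partial_x[J_{k,n}^{\g,\a,\b}(1-x-y,x)]=(\partial_2-\partial_1)J_{k,n}^{\g,\a,\b}=\partial_3 J_{k,n}^{\g,\a,\b}$, that $\partial_y[J_{k,n}^{\g,\a,\b}(1-x-y,x)]=-\partial_1 J_{k,n}^{\g,\a,\b}$, and hence $\partial_z=\partial_y-\partial_x$ gives $-\partial_2 J_{k,n}^{\g,\a,\b}$. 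Now I substitute the three formulas of \eqref{eq:diff1J} applied to $J_{k,n}^{\g,\a,\b}$: the third one (which holds for all $n\in\NN_0$) gives $\partial_3 J_{k,n}^{\g,\a,\b}=J_{k-1,n-1}^{\g+1,\a+1,\b}$, hence $\partial_x K_{k,n}^{\a,\b,\g}=K_{k-1,n-1}^{\a+1,\b,\g+1}$; the first one gives $-\partial_1 J_{k,n}^{\g,\a,\b}=a_{k,n}^{\g,\a}J_{k-1,n-1}^{\g+1,\a,\b+1}+J_{k,n-1}^{\g+1,\a,\b+1}$, hence the asserted formula for $\partial_y K_{k,n}^{\a,\b,\g}$; and the second one gives $-\partial_2 J_{k,n}^{\g,\a,\b}=-a_{k,n}^{\a,\g}J_{k-1,n-1}^{\g,\a+1,\b+1}+J_{k,n-1}^{\g,\a+1,\b+1}$, hence the asserted formula for $\partial_z K_{k,n}^{\a,\b,\g}$. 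In each case the last step is merely $J_{k,n}^{A,B,C}(1-x-y,x)=K_{k,n}^{B,C,A}(x,y)$ with the appropriately shifted parameters; for $k=0$ one checks directly from \eqref{eq:K-basis} that both sides of the first identity vanish.

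Finally I would verify that the side conditions transfer correctly. The formula for $\partial_y K_{k,n}^{\a,\b,\g}$ rests on the first identity of \eqref{eq:diff1J} for $J_{k,n}^{\g,\a,\b}$, whose two leading parameters are $\g$ and $\a$; by Proposition~\ref{prop:partialJ} this needs hypothesis (a) or (b) read with $(\g,\a)$ in place of $(\a,\b)$, which is exactly clause (a)--(b) of the present statement. Likewise the formula for $\partial_z K_{k,n}^{\a,\b,\g}$ rests on the second identity of \eqref{eq:diff1J} for $J_{k,n}^{\g,\a,\b}$, which requires those same hypotheses with the first two parameters interchanged, i.e. with $(\a,\g)$ in place of $(\a,\b)$, matching the ``exchange $\g$ and $\a$'' clause here. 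I do not expect any genuine obstacle: the proof is bookkeeping of the cyclic relabeling $(\a,\b,\g)\mapsto(\g,\a,\b)$ of the parameters and of the induced rotation $\partial_x\mapsto\partial_3\mapsto-\partial_z$ of the directional derivatives under the substitution $(x,y)\mapsto(1-x-y,x)$, together with checking that the exceptional parameter values line up. One could equally route through the $L$-basis via $K_{k,n}^{\a,\b,\g}(x,y)=L_{k,n}^{\b,\g,\a}(y,1-x-y)$, but the $J$-route above is the most economical.
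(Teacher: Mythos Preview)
Your proof is correct and follows essentially the same route as the paper: use the relation $K_{k,n}^{\a,\b,\g}(x,y)=J_{k,n}^{\g,\a,\b}(1-x-y,x)$ from \eqref{eq:3bases-transform}, compute the chain-rule identities $\partial_x\mapsto\partial_3$, $\partial_y\mapsto-\partial_1$, $\partial_z\mapsto-\partial_2$ under the substitution (which the paper records as \eqref{eq:d-transform1}), and then invoke Proposition~\ref{prop:partialJ} with parameters $(\g,\a,\b)$. Your verification that the side conditions (a)--(b) transfer under the relabeling $(\a,\b)\mapsto(\g,\a)$ (resp.\ $(\a,\g)$) is exactly the bookkeeping the paper leaves implicit.
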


\begin{proof}
This follows from \eqref{eq:3bases-transform} and Proposition \ref{prop:partialJ}. Indeed, if $g(x,y) = f(z,x)$ with
$z = 1-x-y$, then 
\begin{equation} \label{eq:d-transform1}
 \partial_x g(x,y) = \partial_3 f(z,x), \quad  \partial_y g(x,y) =- \partial_1 f(z,x), \quad \partial_z g(x,y) = -\partial_2 f(z,x).
\end{equation}
Applying this to $K_{k,n}^{\a,\b,\g}(x,y) =  J_{k,n}^{\g,\a,\b}(1-x-y,x)$, we deduce the stated results from
Proposition \ref{prop:partialJ}.
\end{proof}

\begin{prop}\label{prop:partialL}
Let $\a,\b,\g \in \RR$ and $k,n \in \NN_0$ such that $0 \le k \le n$. Then,
\begin{align} \label{eq:diffL}
\begin{split}
  \partial_x  L_{k,n}^{\a,\b,\g}(x,y)  =\, & - a_{k,n}^{\g,\b}  L_{k-1,n-1}^{\a+1,\b,\g+1}(x,y) 
    +   L_{k,n-1}^{\a+1,\b,\g+1}(x,y),  \\
 \partial_y L_{k,n}^{\a,\b,\g} (x,y)  = \,& -  L_{k-1,n-1}^{\a,\b+1,\g+1}(x,y), \quad  1 \le k\le n,\\
  \partial_z  L_{k,n}^{\a,\b,\g}(x,y)  =\, & - a_{k,n}^{\b,\g}  L_{k-1,n-1}^{\a+1,\b+1,\g}(x,y)  - 
         L_{k,n-1}^{\a+1,\b+1,\g}(x,y), 
\end{split}
\end{align}
where the second equation holds for all $n \in \NN_0$, the first identity holds if either one of the following conditions satisfies: 
\begin{enumerate}[\quad (a)]
\item $k+\g+\b \ne 0$, $2k+\g+\b+1\ne 0$, and either $k+\g+\b+1 \ne 0$ or $k+\g+\b+1=0$ but $\g +1 \in \{-1,-2,\ldots, -k\}$. 
\item $\g = -k$, $\b =-k$ or $\g = -k-1$, $\b=-k$,
\end{enumerate}
whereas the third identity holds if $\g$ and $\a$ are exchanged in (a) and (b).
\end{prop}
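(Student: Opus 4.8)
The plan is to obtain Proposition~\ref{prop:partialL} from Proposition~\ref{prop:partialK} in exactly the way Proposition~\ref{prop:partialK} was obtained from Proposition~\ref{prop:partialJ}, via the symmetry relations \eqref{eq:3bases-transform}. The relevant relation is $L_{k,n}^{\a,\b,\g}(x,y) = K_{k,n}^{\g,\a,\b}(1-x-y,x)$, which has the shape $g(x,y) = f(z,x)$ with $z = 1-x-y$ and $f = K_{k,n}^{\g,\a,\b}$; hence the chain-rule identities \eqref{eq:d-transform1} apply verbatim and give
\begin{equation*}
 \partial_x L_{k,n}^{\a,\b,\g}(x,y) = \partial_3 f(z,x), \qquad \partial_y L_{k,n}^{\a,\b,\g}(x,y) = -\partial_1 f(z,x), \qquad \partial_z L_{k,n}^{\a,\b,\g}(x,y) = -\partial_2 f(z,x),
\end{equation*}
where on the right $\partial_i$ denotes the partial derivative in the $i$-th slot of $f$ and $\partial_3 = \partial_2 - \partial_1$.

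The second step is to insert the three derivative formulas for $K_{k,n}^{\g,\a,\b}$ furnished by Proposition~\ref{prop:partialK}, i.e. \eqref{eq:diffK} with $(\a,\b,\g)$ replaced by $(\g,\a,\b)$, into these right-hand sides. This expresses $\partial_x L_{k,n}^{\a,\b,\g}$, $\partial_y L_{k,n}^{\a,\b,\g}$ and $\partial_z L_{k,n}^{\a,\b,\g}$ as linear combinations of the polynomials $K_{k-1,n-1}^{\g+1,\a+1,\b}$, $K_{k,n-1}^{\g+1,\a+1,\b}$, $K_{k-1,n-1}^{\g+1,\a,\b+1}$, $K_{k-1,n-1}^{\g,\a+1,\b+1}$ and $K_{k,n-1}^{\g,\a+1,\b+1}$, each evaluated at $(z,x)$. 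Reading \eqref{eq:3bases-transform} in the direction $K_{k',n'}^{A,B,C}(1-x-y,x) = L_{k',n'}^{B,C,A}(x,y)$ turns every term back into an $L$-polynomial at $(x,y)$, and under the permutation $(\a,\b,\g)\mapsto(\g,\a,\b)$ the coefficients $a_{k,n}^{\g,\a}$ and $a_{k,n}^{\a,\g}$ appearing in \eqref{eq:diffK} become $a_{k,n}^{\b,\g}$ and $a_{k,n}^{\g,\b}$. A short rearrangement then yields the three identities of \eqref{eq:diffL} with exactly the stated coefficients and signs.

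The third step is to track validity. The identity for $\partial_y L$ carries no restriction, since it descends from the unconditional first identity of \eqref{eq:diffK}. The restriction on the $\partial_x L$ identity is precisely the one attached to the $\partial_z K$ formula of Proposition~\ref{prop:partialK}, read with $(\a,\b,\g)$ replaced by $(\g,\a,\b)$; spelling out cases (a) and (b) under this substitution produces the conditions stated here in $\b$ and $\g$, together with the two singular parameter values where the leading coefficient is defined by the convention \eqref{eq:akn-bkn2}. Likewise the restriction on the $\partial_z L$ identity is the image, under the same substitution, of the restriction on the $\partial_y K$ formula. I do not anticipate a real obstacle here: the argument is a transcription of the proof of Proposition~\ref{prop:partialK}, and the only point that genuinely needs care is to let the permutation of the parameters $(\a,\b,\g)$ follow the permutation of the variables $(x,y,z)$ consistently at every step, and to verify that in the exceptional parameter values the coefficients from \eqref{eq:akn-bkn2} land in the correct slots.
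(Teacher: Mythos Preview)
Your approach is correct and essentially the same as the paper's: both use the symmetry relations \eqref{eq:3bases-transform} together with the chain-rule identities to reduce the derivatives of $L_{k,n}^{\a,\b,\g}$ to a previously established case. The only difference is that the paper uses the relation $L_{k,n}^{\a,\b,\g}(x,y)=J_{k,n}^{\b,\g,\a}(y,1-x-y)$ and appeals directly to Proposition~\ref{prop:partialJ} via the transform \eqref{eq:d-transform2}, whereas you pass through Proposition~\ref{prop:partialK} via $L_{k,n}^{\a,\b,\g}(x,y)=K_{k,n}^{\g,\a,\b}(1-x-y,x)$ and \eqref{eq:d-transform1}; since Proposition~\ref{prop:partialK} was itself obtained from Proposition~\ref{prop:partialJ} by the same mechanism, the two routes are equivalent.
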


\begin{proof}
This again follows from \eqref{eq:3bases-transform} and Proposition \ref{prop:partialJ}. Indeed, if $h(x,y) = f(y,z)$ with
$z = 1-x-y$, then 
\begin{equation} \label{eq:d-transform2}
 \partial_x h(x,y) = - \partial_2 f(y,z), \quad  \partial_y h(x,y) = - \partial_3 f(y,z), \quad \partial_z h(x,y) = \partial_1 f(y,z).
\end{equation}
Applying this to $L_{k,n}^{\a,\b,\g}(x,y) =  J_{k,n}^{\b,\g,\a}(y,1-x-y)$, we deduce the stated results from
Proposition \ref{prop:partialJ}.
\end{proof}
 
\subsection{Differential operators}
Also useful are differential operators that have the Jacobi polynomials on the triangle as eigenfunctions. For $\a,\b,\g > -1$, 
there is a second order partial differential operator that has all elements of $\CV_n(\varpi^{\a,\b,\g})$ as eigenfunctions with an
eigenvalue that depends only on $n$ (see, for example, \cite[(8.2.3)]{DX}). Our first proposition below contains a 
generalization of this operator from the second order to $2r$ order for $r =1,2,\ldots$, which first appeared in \cite{LS}. 

\begin{prop}
Let $\a,\b,\g \in \RR$. For $r =1,2,\ldots$ and $\mathbf{n} = (n_1,n_2,n_3) \in \NN_0^3$, define 
$$
   \CL_r^{\a,\b,\g} : = X^{-\a,-\b,-\g} \sum_{|\mathbf{n}| = r} \binom{r}{\mathbf{n}} \partial^{\mathbf{n}} 
   X^{\a +n_1+n_3, \b+n_2+n_3, \g+n_1+n_2}  \partial^{\mathbf{n}}.  
$$
Then, for  $\a,\b,\g > -1$,  
\begin{equation}\label{eq:eigen1}
   \CL_r^{\a,\b,\g} P  =  \l_{r,n}^{\a,\b,\g} P, \qquad  \forall P \in \CV_n(\varpi_{\a,\b,\g}),
\end{equation}
where 
$$
       \l_{r,n}^{\a,\b,\g} := (-n)_r (n+\a+\b+\g+2)_r.  
$$
Moreover, if $n > - \a-\b-\g-2$, then \eqref{eq:eigen1} holds for $\{J_{k,n}^{\a,\b,\g}: 0\le k \le n\}$
if $\a,\b > -1$ or $\a+\b \notin \ZZ$, for $\{K_{k,n}^{\a,\b,\g}: 0\le k \le n\}$ if $\g,\a > -1$ or $\g+\a \notin \ZZ$, 
for $\{L_{k,n}^{\a,\b,\g}: 0\le k \le n\}$ if $\b,\g > -1$ or $\b+\g \notin \ZZ$.
\end{prop}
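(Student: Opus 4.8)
The plan is to prove the statement for $\a,\b,\g>-1$ first, by the standard argument identifying a differential operator whose eigenfunctions are the orthogonal polynomials, and then to deduce the ``\emph{Moreover}'' part by analytic continuation in the parameters, using that the stated hypotheses are exactly what keeps the relevant $\iota_0$-indices equal to zero.

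For $\a,\b,\g>-1$ I would argue in three steps. First, since the factor $X^{-\a,-\b,-\g}$ cancels the weight $\varpi_{\a,\b,\g}$, one has $\la\CL_r^{\a,\b,\g}P,Q\ra_{\a,\b,\g}=\sum_{|\mathbf{n}|=r}\binom{r}{\mathbf{n}}\int_{\triangle}\partial^{\mathbf{n}}(X^{\a+n_1+n_3,\b+n_2+n_3,\g+n_1+n_2}\partial^{\mathbf{n}}P)\,Q\,dxdy$; integrating by parts $r$ times to move $\partial^{\mathbf{n}}$ onto $Q$, every boundary integral vanishes because on $\{x=0\}$ the surviving factor still carries $x^{\a+1}$ (there are $n_1+n_3$ derivatives of $\partial_1$-type against the exponent $\a+n_1+n_3$), and similarly on $\{y=0\}$ and $\{x+y=1\}$ -- here $\a,\b,\g>-1$ is precisely what is needed -- so $\CL_r^{\a,\b,\g}$ is symmetric with respect to $\la\cdot,\cdot\ra_{\a,\b,\g}$. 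Second, by the multinomial Leibniz rule (the commuting derivations $\partial_1,\partial_2,\partial_3$), $\CL_r^{\a,\b,\g}=\sum_{|\mathbf{n}|=r}\binom{r}{\mathbf{n}}\sum_{\mathbf{0}\le\mathbf{j}\le\mathbf{n}}\binom{\mathbf{n}}{\mathbf{j}}(X^{-\a,-\b,-\g}\partial^{\mathbf{j}}X^{\a+n_1+n_3,\b+n_2+n_3,\g+n_1+n_2})\,\partial^{2\mathbf{n}-\mathbf{j}}$, and each coefficient $X^{-\a,-\b,-\g}\partial^{\mathbf{j}}X^{\a+n_1+n_3,\dots}$ is a genuine polynomial of degree $2r-|\mathbf{j}|$ (each $\partial_1$- and $\partial_3$-derivative lowers the $x$-exponent by at most one, leaving it $\ge\a$, so multiplying by $x^{-\a}$ yields a nonnegative power, and likewise for $y$ and $z$); hence $\CL_r^{\a,\b,\g}$ maps $\Pi_m^2$ into $\Pi_m^2$ for all $m$. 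Third, combining these: for $P\in\CV_n(\varpi_{\a,\b,\g})$ and $g\in\CV_m(\varpi_{\a,\b,\g})$ with $m<n$ one has $\la\CL_r^{\a,\b,\g}P,g\ra_{\a,\b,\g}=\la P,\CL_r^{\a,\b,\g}g\ra_{\a,\b,\g}=0$ because $\CL_r^{\a,\b,\g}g\in\Pi_{n-1}^2$, so $\CL_r^{\a,\b,\g}P\in\CV_n(\varpi_{\a,\b,\g})$; comparing top-degree parts (the degree-$n$ part of $\CL_r^{\a,\b,\g}P$ equals $\l_{r,n}^{\a,\b,\g}$ times that of $P$, by a direct symbol computation) then forces $\CL_r^{\a,\b,\g}P=\l_{r,n}^{\a,\b,\g}P$. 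The value $\l_{r,n}^{\a,\b,\g}=(-n)_r(n+\a+\b+\g+2)_r$ can be pinned down by evaluating $\CL_r^{\a,\b,\g}$ on $J_{0,n}^{\a,\b,\g}(x,y)=J_n^{\a+\b+1,\g}(1-2x-2y)$, a function of $x+y$ alone with $\partial_3 J_{0,n}^{\a,\b,\g}=0$, so the computation collapses to a one-variable Jacobi identity. (This recovers the operator and eigenvalue of \cite{LS}.)

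For the ``\emph{Moreover}'' part, fix $k$ with $0\le k\le n$ and treat $J_{k,n}^{\a,\b,\g}$. The hypotheses ``$\a,\b>-1$ or $\a+\b\notin\ZZ$'' and $n>-\a-\b-\g-2$ give $\iota_0^{\a,\b}(k)=0$ (because $-k-\a-\b\notin\{1,\dots,k\}$, since $\a+\b>-2$ or $\a+\b\notin\ZZ$) and $\iota_0^{2k+\a+\b+1,\g}(n-k)=0$ (because $-n-k-1-\a-\b-\g<1$). By Proposition~\ref{prop:Jacobi1st}, $J_{k,n}^{\a,\b,\g}(x,y)$ is therefore holomorphic in $(\a,\b,\g)\in\CC^3$ off the finitely many complex hyperplanes on which $\a+\b\in\{-k-1,\dots,-2k\}$ or $\a+\b+\g\in\{-n-k-2,\dots,-2n-1\}$. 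Since $X^{-\a,-\b,-\g}=x^{-\a}y^{-\b}(1-x-y)^{-\g}$ is entire in $(\a,\b,\g)$ for fixed interior $(x,y)$, and $\CL_r^{\a,\b,\g}$ has coefficients polynomial in $(\a,\b,\g)$ by step two, both $\CL_r^{\a,\b,\g}J_{k,n}^{\a,\b,\g}(x,y)$ and $\l_{r,n}^{\a,\b,\g}J_{k,n}^{\a,\b,\g}(x,y)$ are holomorphic on the complement of that arrangement, which is connected (complex hyperplanes have real codimension two). They agree on the open real set $\{\a,\b,\g>-1\}$ by the case already proved, hence everywhere by the identity theorem, in particular at the claimed parameters -- where $\g$ may be a negative integer lying outside the $L^2$ theory. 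The claims for $\{K_{k,n}^{\a,\b,\g}\}$ and $\{L_{k,n}^{\a,\b,\g}\}$ then follow from the relations \eqref{eq:3bases-transform}, the fact that $\CL_r^{\a,\b,\g}$ turns into $\CL_r^{\g,\a,\b}$, resp.\ $\CL_r^{\b,\g,\a}$, under the corresponding change of variables, and the invariance of $\l_{r,n}^{\a,\b,\g}$ under permutations of $(\a,\b,\g)$.

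I expect the genuinely laborious step to be the top-degree (symbol) computation of $\l_{r,n}^{\a,\b,\g}$ in the case $\a,\b,\g>-1$: the factors $X^{\a+n_1+n_3,\dots}$ are not polynomials and the mixed operators $\partial^{\mathbf{n}}$ must be handled with care, although restricting to $J_{0,n}^{\a,\b,\g}$ keeps this one-dimensional. For the extension the only delicate point is the book-keeping showing that the stated hypotheses place the target parameters inside the holomorphy domain of $J_{k,n}^{\a,\b,\g}$, and the remark that this domain -- the complement of finitely many complex hyperplanes -- is connected, so that the singular real loci $\a+\b\in\ZZ$ and $\a+\b+\g\in\ZZ$ do not obstruct continuation from $\{\a,\b,\g>-1\}$.
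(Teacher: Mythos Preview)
Your treatment of the ``Moreover'' extension is essentially the paper's own: both arguments check that the stated hypotheses force $\iota_0^{\a,\b}(k)=0$ and $\iota_0^{2k+\a+\b+1,\g}(n-k)=0$, and then invoke analytic continuation in $(\a,\b,\g)$. Your write-up is more careful (the complement of finitely many complex hyperplanes is connected, so continuation from the real cone $\{\a,\b,\g>-1\}$ is unobstructed), but the idea is identical.

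For the base case $\a,\b,\g>-1$, the paper does not give a proof at all --- it simply cites \cite{LS}, where the identity is established by induction on $r$. Your route is genuinely different: self-adjointness via integration by parts, plus degree preservation, giving $\CL_r^{\a,\b,\g}:\CV_n\to\CV_n$, followed by a top-degree comparison. The first two steps are correct and the deduction that $\CL_r^{\a,\b,\g}$ preserves $\CV_n$ is clean. The gap is in the last step. Knowing that $\CL_r^{\a,\b,\g}$ maps $\CV_n$ to itself only says it is a symmetric endomorphism of an $(n+1)$-dimensional space; you still need that it is a \emph{scalar}. You assert that ``the degree-$n$ part of $\CL_r^{\a,\b,\g}P$ equals $\l_{r,n}^{\a,\b,\g}$ times that of $P$, by a direct symbol computation'', but then propose to pin down the value by evaluating on $J_{0,n}^{\a,\b,\g}$. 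Evaluating on one element determines the scalar only after you know the symbol is a scalar; it does not establish that fact. For $r=1$ the symbol does collapse nicely --- replacing $z$ by $-x-y$ one finds $-x^2\partial_1^2-2xy\partial_1\partial_2-y^2\partial_2^2-(\a+\b+\g+3)(x\partial_1+y\partial_2)=-E(E+\a+\b+\g+2)$ with $E=x\partial_x+y\partial_y$ the Euler operator --- so it is visibly a scalar on homogeneous degree-$n$ polynomials. For general $r$ this reduction is the actual content, and it is precisely what the inductive argument in \cite{LS} supplies; your sketch would need either that induction or an explicit symbol computation showing the top-order part is a polynomial in $E$.
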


\begin{proof}
For $\a,\b,\g > -1$, this is proved in \cite{LS} by induction on $r$. If $n > - \a-\b-\g-2 >0$, then
$\iota_0^{2k+\a+\b+1,\g}(n-k) =0$ for all $0 \le k \le n$, so that the second Jacobi polynomial 
$J_{n-k}^{2k+\a+\b+1,\g}(1-2x-2y)$ in $J_{k,n}^{\a,\b,\g}$ can be extended analytically to $\g \in \RR$. 
Similarly, for $\a+\b \notin \ZZ$, $\iota_0^{\a,\b}(k) =0$, so that the first Jacobi polynomial in $J_{k,n}^{\a,\b,\g}$ 
is analytically extended to such $\a,\b$. Consequently, the identity \eqref{eq:eigen1} holds for 
$J_{k,n}^{\a,\b,\g}$, $0\le k \le n$, by analytic continuation. The same proof works for $K_{k,n}^{\a,\b,\g}$ 
and $L_{k,n}^{\a,\b,\g}$. 
\end{proof}

The identity \eqref{eq:eigen1} could also work for some other parameters. For example, it works for
$\a = \b = \g = -1$ by using, for example, Proposition \ref{prop:partialJ}. However, it does not hold
for all triplets of $\a,\b,\g$, since the relation \eqref{eq:CV=CV} does not hold for all triplets.  

Our next family of differential operators are pertinent to the three families of polynomials and have eigenvalues 
depending on both $k$ and $n$. 

\begin{prop}
For $\a, \b,\g \in \RR$ and $0 \le k \le n$, 
\begin{align}  \label{eq:eigen2}
\begin{split}
   \partial_z \left (X^{\a+1,\b+1,0} \partial_z J_{k,n}^{\a,\b,\g}(x,y) \right) & = 
              \mu_k^{\a,\b} J_{k,n}^{\a,\b,\g}(x,y) X^{\a,\b,0}, \\
  \partial_x \left(X^{\a+1,0,\g+1} \partial_x K_{k,n}^{\a,\b,\g}(x,y)\right) & = 
           \mu_k^{\g,\a} K_{k,n}^{\a,\b,\g}(x,y) X^{\a, 0, \g}, \\
      \partial_y \left(X^{0,\b+1,\g+1} \partial_y L_{k,n}^{\a,\b,\g}(x,y)\right) &= 
         \mu_k^{\b,\g} L_{k,n}^{\a,\b,\g}(x,y) X^{0,\b, \g},   
\end{split}           
\end{align}
where 
$$
   \mu_k^{\a,\b}: = -k(k+\a+\b+1), 
$$
and the first identity holds if $-\a, -\b \notin \ZZ$ or $\a+\b> -1$, the second identity
holds if $-\g, -\a \notin \ZZ$ or $\g+\a > -1$, and the third identity holds if $-\b, -\g \notin \ZZ$ or $\b+\g > -1$.
\end{prop}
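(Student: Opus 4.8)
The plan is to prove the three identities in \eqref{eq:eigen2} by first establishing the one for $J_{k,n}^{\a,\b,\g}$ and then deducing the other two from the transformation rules \eqref{eq:3bases-transform} together with the chain-rule formulas \eqref{eq:d-transform1} and \eqref{eq:d-transform2}, exactly as in the proofs of Propositions \ref{prop:partialK} and \ref{prop:partialL}. For the $J$-identity, I would first handle the generic case $\a+\b>-1$ (or $-\a,-\b\notin\ZZ$), where the third differentiation formula in \eqref{eq:diff1J}, namely $\partial_z J_{k,n}^{\a,\b,\g} = J_{k-1,n-1}^{\a+1,\b+1,\g}$, is available unconditionally. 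Applying $\partial_z$ once gives $X^{\a+1,\b+1,0}\partial_z J_{k,n}^{\a,\b,\g} = X^{\a+1,\b+1,0} J_{k-1,n-1}^{\a+1,\b+1,\g}$, and then the outer $\partial_z$ must be computed on this product; the point is that the factor $(1-x-y)$ is annihilated by $\partial_z$ (since $\partial_z(x+y)=0$), so $X^{\a+1,\b+1,0}$ depends on $x,y$ only through the variable $s=(y-x)/(x+y)$ and $r=x+y$ in a controlled way — more precisely $X^{\a+1,\b+1,0}=x^{\a+1}y^{\b+1}$, and on the line $x+y=r$ fixed, $x=\tfrac r2(1-s)$, $y=\tfrac r2(1+s)$, so this reduces to a one-variable Sturm–Liouville computation for the Jacobi weight $(1-s)^{\a+1}(1+s)^{\b+1}$.

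Concretely, the first identity in \eqref{eq:eigen2} should be recognized as the two-variable lift of the classical Jacobi differential equation
\begin{equation*}
 \frac{d}{ds}\Bigl( (1-s)^{\a+1}(1+s)^{\b+1} \frac{d}{ds} J_k^{\a,\b}(s)\Bigr) = \mu_k^{\a,\b} (1-s)^{\a}(1+s)^{\b} J_k^{\a,\b}(s),
\end{equation*}
where $\mu_k^{\a,\b}=-k(k+\a+\b+1)$ is exactly the stated eigenvalue. So the cleanest route is: write $J_{k,n}^{\a,\b,\g}(x,y) = (x+y)^k J_k^{\a,\b}(s)\,J_{n-k}^{2k+\a+\b+1,\g}(1-2x-2y)$, observe that both $\partial_z$-derivatives act only on the $(x+y)^k J_k^{\a,\b}(s)$ part because $\partial_z$ kills any function of $x+y$ alone, reduce $\partial_z$ acting on a function of $s$ to $\tfrac{2}{x+y}\tfrac{d}{ds}$ (using $\partial_z s = \partial_z\frac{y-x}{x+y} = \frac{2}{x+y}$), and then the homogeneity in $x+y$ makes the weight factors line up so that the computation collapses to the displayed Jacobi ODE. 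This also makes transparent why the $n$-dependence drops out of the eigenvalue: the second Jacobi factor, being a function of $x+y$ only, rides through untouched.

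For the exceptional parameters where $\a$ or $\b$ is a negative integer but $\a+\b\le -1$, the formula $\partial_z J_{k,n}^{\a,\b,\g} = J_{k-1,n-1}^{\a+1,\b+1,\g}$ still holds for all $n\in\NN_0$ (the third line of \eqref{eq:diff1J} is stated without the restrictions (a),(b)), so the same one-variable reduction goes through provided the one-variable Sturm–Liouville identity above survives for $J_k^{\a,\b}$ in the extended sense of Section \ref{sect2}; I would verify this either by the explicit hypergeometric expansion \eqref{eq:Jab}, checking the exceptional indices $\iota_0$ match on both sides (the degree-preserving extension is precisely designed so that $\partial_s$ shifts parameters cleanly, cf. \eqref{derivativeJ}), or simply invoke that both sides are polynomials depending continuously on $\b$ away from the problem locus and that the claimed locus of validity, $-\a,-\b\notin\ZZ$ or $\a+\b>-1$, is exactly the analytic-continuation region. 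Finally, once the $J$-identity holds on its stated parameter set, the $K$- and $L$-identities follow mechanically: substituting $K_{k,n}^{\a,\b,\g}(x,y)=J_{k,n}^{\g,\a,\b}(1-x-y,x)$ and using $\partial_x$ in the $(x,y)$ variables corresponds to $\partial_3$ in the arguments of $J$ (by \eqref{eq:d-transform1}), which turns the weight $X^{\a+1,\b+1,0}$ and operator $\partial_z$ of the $J$-identity into $X^{\a+1,0,\g+1}$ and $\partial_x$ of the $K$-identity after relabeling $(\a,\b,\g)\mapsto(\g,\a,\b)$, and likewise for $L$ via \eqref{eq:d-transform2}; the parameter restrictions permute accordingly. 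The main obstacle I anticipate is not the generic computation, which is a routine Sturm–Liouville reduction, but rather bookkeeping the exceptional cases: confirming that the one-variable extended identity $\frac{d}{ds}((1-s)^{\a+1}(1+s)^{\b+1}\frac{d}{ds}J_k^{\a,\b}) = \mu_k^{\a,\b}(1-s)^\a(1+s)^\b J_k^{\a,\b}$ continues to hold with the renormalized $J_k^{\a,\b}$ precisely on the advertised set and fails outside it, which requires the same careful matching of $\iota_0$ indices and constant terms used throughout Section \ref{sect2}.
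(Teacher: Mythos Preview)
Your approach is correct and is essentially the same as the paper's: both reduce the first identity in \eqref{eq:eigen2} to a one-variable statement by exploiting that $\partial_z$ annihilates any function of $x+y$, so the second Jacobi factor $J_{n-k}^{2k+\a+\b+1,\g}(1-2x-2y)$ and all powers of $x+y$ ride through as spectators. The paper writes the resulting one-variable identity out explicitly as \eqref{eq:JP4} and checks it by hypergeometric expansion under the hypothesis that all relevant $\iota_0$ indices vanish, which is precisely what the condition ``$-\a,-\b\notin\ZZ$ or $\a+\b>-1$'' guarantees; you recognize it instead as the classical Jacobi differential equation in self-adjoint form, which is the same computation packaged more conceptually. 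Your derivation of the $K$- and $L$-identities via the transformations \eqref{eq:3bases-transform}, \eqref{eq:d-transform1}, \eqref{eq:d-transform2} is a clean way to handle those cases (the paper only writes out the $J$-case and leaves the others implicit).

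One small comment: your paragraph about ``exceptional parameters where $\a$ or $\b$ is a negative integer but $\a+\b\le -1$'' is unnecessary and slightly misleading. The proposition does \emph{not} claim validity there, and indeed the paper notes immediately after the proof that \eqref{eq:JP4} fails for $k=1$, $\a=-2$, $\b=0$. So there is nothing to check beyond the stated parameter region; the argument is simply that on that region the $\iota_0$ indices vanish, $J_k^{\a,\b}$ is a constant multiple of the classical $P_k^{(\a,\b)}$ by \eqref{JacobiP2}, and the classical ODE applies.
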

 
\begin{proof}
For the first identity, using \eqref{eq:diff1J} twice, it is easy to see that the identity is equivalent to, after dropping the common term and setting $(y-x)/(x+y) =t$, 
\begin{align} \label{eq:JP4}
    \left( - (\a+1)+(\a+\b+1) \frac{1-t}{2}\right) & J_{k-1}^{\a+1,\b+1}(t) +  \frac{(1-t^2)}{4}J_{k-2}^{\a+2,\b+2}(t) \\
     & = -k(k+\a+\b+1) J_k^{\a,\b}(t). \notag
\end{align}
Either the condition $-\a, -\b \notin \ZZ$ or the condition $\a+\b> -1$ warrants that all  three $\iota_0$ indexes for the three
Jacobi polynomials are zero, for which the above identity can be verified directly by working with hypergeometric 
expansions. 
\end{proof}

The first  case for which the identity \eqref{eq:JP4}, hence the first identity in \eqref{eq:eigen2}, fails is $k=1$, 
$\a=-2$ and $\b = 0$, for which $\a+\b =-2 <-1$.  For a fixed $k$, it is possible to relax the restriction on the 
parameters. For example, a quick check shows that \eqref{eq:JP4}, hence the first identity of \eqref{eq:eigen2},
holds for all $k$ if $\a =\b=-1$. 

Combining the two types of differential operators in \eqref{eq:eigen1} and \eqref{eq:eigen2}, we could identify other
types of differential operators that have the Jacobi polynomials on the triangle as eigenfunctions. 

\begin{prop}
For $\a, \b,\g \in \RR$ and $0 \le k \le n$, 
\begin{align}  \label{eq:eigen3}
\begin{split}
     & \left( \partial_x^2 X^{\a+2,\b, \g+2} \partial_x^2  + 2  \partial_x \partial_y X^{\a+1,\b+1, \g+2} \partial_x \partial_y 
      +  \partial_y^2 X^{\a,\b+2,\g+2} \partial_y^2 \right)J_{k,n}^{\a,\b,\g}(x,y), \\
       & \qquad\qquad\qquad\quad  =  \s_{k,n}^{\a,\b,\g} X^{\a,\b,\g} J_{k,n}^{\a,\b,\g}(x,y)\\ 
     & \left( \partial_y^2 X^{\a+2,\b+2, \g} \partial_y^2  + 2  \partial_y \partial_z X^{\a+2,\b+1, \g+1} \partial_y \partial_z 
      +  \partial_z^2 X^{\a+2,\b,\g+2} \partial_z^2 \right) K_{k,n}^{\a,\b,\g} (x,y),\\
       & \qquad\qquad\qquad\qquad  =  \s_{k,n}^{\g,\a,\b} X^{\a,\b,\g} K_{k,n}^{\a,\b,\g}(x,y)\\ 
      & \left( \partial_z^2 X^{\a,\b+2, \g+2} \partial_z^2  + 2  \partial_z \partial_x X^{\a+1,\b+2, \g+1} \partial_z \partial_x 
      +  \partial_x^2 X^{\a+2,\b+2,\g} \partial_x^2 \right)J_{k,n}^{\a,\b,\g}(x,y) \\
       & \qquad\qquad\qquad\qquad  =  \s_{k,n}^{\b,\g,\a} X^{\a,\b,\g} J_{k,n}^{\a,\b,\g}(x,y),          
\end{split}           
\end{align}
where 
$$
     \s_{k,n}^{\a,\b,\g}: = \l_{2,n}^{\a,\b,\g} - 2 \l_{1,n}^{\a+1,\b+1,\g}\mu_{k}^{\a,\b} + \mu_{k-1}^{\a+1,\b+1}\mu_{k}^{\a,\b},
$$
and the first identity holds if $-\a, -\b \notin \ZZ$ or $\a+\b> -1$, the second identity holds if $-\g, -\a \notin \ZZ$ or $\g+\a > -1$,
and the third identity holds if $-\b, -\g \notin \ZZ$ or $\b+\g > -1$.
\end{prop}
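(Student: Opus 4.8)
The shape of the claimed eigenvalue $\s_{k,n}^{\a,\b,\g}$, a combination of the scalars $\l_{2,n}$, $\l_{1,\cdot}$ and $\mu_{\cdot}$ attached to the operators of \eqref{eq:eigen1} and \eqref{eq:eigen2}, makes clear that each fourth-order operator in \eqref{eq:eigen3} should be built out of the $r=2$ operator $\CL_2^{\a,\b,\g}$ and the second-order operators of \eqref{eq:eigen2}. The plan is to prove the first identity this way and then deduce the other two by simultaneously permuting $(x,y,1-x-y)$ and $(\a,\b,\g)$, using the relations \eqref{eq:3bases-transform}, the transformation rules \eqref{eq:d-transform1}--\eqref{eq:d-transform2} for partial derivatives, and the $K$- and $L$-versions of \eqref{eq:eigen1} and \eqref{eq:eigen2}.

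First I would expand $X^{\a,\b,\g}\CL_2^{\a,\b,\g}$ from its definition, writing $\partial^{\mathbf n}=\partial_x^{n_1}\partial_y^{n_2}\partial_z^{n_3}$ and summing over $|\mathbf n|=2$. The three terms with $n_3=0$, indexed by $(2,0,0)$, $(0,2,0)$, $(1,1,0)$, have weights $X^{\a+2,\b,\g+2}$, $X^{\a,\b+2,\g+2}$, $X^{\a+1,\b+1,\g+2}$ and multinomial coefficients $1$, $1$, $2$, so their sum is exactly the operator on the left of the first identity; let $\CB$ denote the sum of the three remaining terms, indexed by $(0,0,2)$, $(1,0,1)$, $(0,1,1)$, each of which carries a $\partial_z$ both outermost and innermost. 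Then the left side of the first identity equals $X^{\a,\b,\g}\CL_2^{\a,\b,\g}-\CB$, and since $\CL_2^{\a,\b,\g}J_{k,n}^{\a,\b,\g}=\l_{2,n}^{\a,\b,\g}J_{k,n}^{\a,\b,\g}$ by \eqref{eq:eigen1} (applicable, here and for the shifted parameters below, under the hypotheses of the present statement), the identity is reduced to evaluating $\CB J_{k,n}^{\a,\b,\g}$.

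For that, I would move the outermost $\partial_z$ of each term of $\CB$ to the front and substitute $h:=\partial_z J_{k,n}^{\a,\b,\g}=J_{k-1,n-1}^{\a+1,\b+1,\g}$ from the third line of \eqref{eq:diff1J}, using that $(1-x-y)^{\g}$ is annihilated by $\partial_z$ and hence commutes with it. The inner operator applied to $h$ then rearranges --- by recognizing its $\partial_z$-, $\partial_x$- and $\partial_y$-pieces as the corresponding pieces of $\CL_1^{\a+1,\b+1,\g}$ up to a correction --- into $2\,X^{\a+1,\b+1,\g}\CL_1^{\a+1,\b+1,\g}h-\partial_z\big(X^{\a+2,\b+2,\g}\partial_z h\big)$, the coefficients $2$ on the $\partial_x$- and $\partial_y$-terms of $\CB$ being exactly what makes the combination close up. Now $\CL_1^{\a+1,\b+1,\g}h=\l_{1,n-1}^{\a+1,\b+1,\g}h$ since $\deg h=n-1$, while \eqref{eq:eigen2} at parameters $(\a+1,\b+1,\g)$ and indices $(k-1,n-1)$, multiplied by $(1-x-y)^{\g}$, gives $\partial_z\big(X^{\a+2,\b+2,\g}\partial_z h\big)=\mu_{k-1}^{\a+1,\b+1}X^{\a+1,\b+1,\g}h$. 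So the inner operator produces $\big(2\l_{1,n-1}^{\a+1,\b+1,\g}-\mu_{k-1}^{\a+1,\b+1}\big)X^{\a+1,\b+1,\g}\partial_z J_{k,n}^{\a,\b,\g}$, and applying the remaining outer $\partial_z$ and using \eqref{eq:eigen2} once more for $J_{k,n}^{\a,\b,\g}$ itself (again factoring out $(1-x-y)^{\g}$) turns $\partial_z(X^{\a+1,\b+1,\g}\partial_z J_{k,n}^{\a,\b,\g})$ into $\mu_k^{\a,\b}X^{\a,\b,\g}J_{k,n}^{\a,\b,\g}$. Collecting the scalar factors, $\CB J_{k,n}^{\a,\b,\g}=\big(2\l_{1,n-1}^{\a+1,\b+1,\g}-\mu_{k-1}^{\a+1,\b+1}\big)\mu_k^{\a,\b}X^{\a,\b,\g}J_{k,n}^{\a,\b,\g}$, and subtracting this from $\l_{2,n}^{\a,\b,\g}X^{\a,\b,\g}J_{k,n}^{\a,\b,\g}$ leaves the constant $\s_{k,n}^{\a,\b,\g}$, which proves the first identity.

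I expect the main difficulty to be purely one of operator bookkeeping: confirming that the $n_3=0$ part of $X^{\a,\b,\g}\CL_2^{\a,\b,\g}$ is exactly the stated operator (an exponent match against the definition of $\CL_2$), that the three-term bracket inside $\CB$ collapses cleanly to $2\,X^{\a+1,\b+1,\g}\CL_1^{\a+1,\b+1,\g}-\partial_z(X^{\a+2,\b+2,\g}\partial_z\,\cdot)$, and --- a point that deserves care --- that the parameter-shifted instances of \eqref{eq:eigen1} and \eqref{eq:eigen2} invoked for $(\a+1,\b+1,\g)$ and $(k-1,n-1)$ really are available under the hypotheses of the statement; these being precisely the identities whose behaviour at negative-integer parameters is delicate (cf. the remark on \eqref{eq:JP4}), and the operator being ultimately needed for $\a=\b=\g=-1$ and $-2$, the safe concluding step --- and, in the spirit of the rest of the paper, presumably the one actually taken --- is to verify the three identities of \eqref{eq:eigen3} symbolically on low-degree cases with a computer algebra system.
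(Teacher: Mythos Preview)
Your proposal is correct and follows essentially the same route as the paper's proof: both split the operator as the $n_3=0$ part of $X^{\a,\b,\g}\CL_2^{\a,\b,\g}$ minus the remaining piece $\CB$, rewrite $\CB$ as $2\,\partial_z X^{\a+1,\b+1,\g}\CL_1^{\a+1,\b+1,\g}\partial_z - \partial_z^2 X^{\a+2,\b+2,\g}\partial_z^2$ (your ``inner operator'' computation is exactly this operator identity), and then apply \eqref{eq:eigen1} once and \eqref{eq:eigen2} twice, using that $\partial_z$ commutes with $z^\g$. Your observation that the eigenvalue comes out with $\l_{1,n-1}^{\a+1,\b+1,\g}$ rather than the $\l_{1,n}^{\a+1,\b+1,\g}$ printed in the statement is also what the paper's own computation produces, so the index in the displayed formula for $\s_{k,n}^{\a,\b,\g}$ is a typo.
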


\begin{proof}
By the definition of $\CL_r^{\a,\b,\g}$,   
\begin{align*}
& X^{\a,\b,\g} \CL_1^{\a,\b,\g}  
   = \partial_x X^{\a+1,\b,\g+1}\partial_x + \partial_y X^{\a,\b+1,\g+1}\partial_y + \partial_z X^{\a+1,\b+1,\g}\partial_z \\
 & X^{\a,\b,\g} \CL_2^{\a,\b,\g} 
    = \partial_x^2 X^{\a+2,\b,\g+2}\partial_x^2 + \partial_y^2 X^{\a,\b+2,\g+2}\partial_y^2 + \partial_z^2 X^{\a+2,\b+2,\g}\partial_z^2 \\
 & \quad + 2 \partial_x \partial_y X^{\a+1,\b+1,\g+2}\partial_x \partial_y+ 2\partial_y  \partial_z X^{\a+1,\b+2,\g+1}\partial_y \partial_z 
   +2 \partial_z  \partial_x X^{\a+2,\b+1,\g+1}\partial_z  \partial_x.
\end{align*}
Consequently, it follows that 
\begin{align*}
X^{\a,\b,\g} & \left( \partial_x^2 X^{\a+2,\b, \g+2} \partial_x^2  + 2  \partial_x \partial_y X^{\a+1,\b+1,\g+2} \partial_x \partial_y 
      +  \partial_y^2 X^{\a,\b+2,\g+2} \partial_y^2 \right) \\
   &=   X^{\a,\b,\g} \CL_2^{\a,\b,\g}  -  2 \partial_z X^{\a+1,\b+1,\g} \CL_1^{\a+1,\b+1,\g} \partial_z + \partial_z X^{\a+2,\b+2,\g}\partial_z.
\end{align*}
Since $\partial_z J_{k,n}^{\a,\b,\g} = J_{k-1,n-1}^{\a+1,\b+1,\g}$, it follows from \eqref{eq:eigen1} and \eqref{eq:eigen2} that 
\begin{align*}
  \partial_z X^{\a+1,\b+1,\g} \CL_1^{\a+1,\b+1,\g} \partial_z J_{k,n}^{\a,\b,\g} & = \l_{1,n-1}^{\a+1,\b+1,\g} \partial_z  X^{\a+1,\b+1,\g} \partial_z J_{k,n}^{\a,\b,\g}\\
&  = \l_{1,n-1}^{\a+1,\b+1,\g} \mu_{k,n}^{\a,\b} X^{\a,\b,\g} J_{k,n}^{\a,\b,\g},
\end{align*}
where we have used the fact that $\partial_z g(x+y)=0$, so that $\partial_z z^\g F = z^\g \partial F$, which
allows us to use \eqref{eq:eigen2}. Furthermore, applying \eqref{eq:eigen2} twice, we obtain 
\begin{align*}
\partial_z X^{\a+2,\b+2,\g}\partial_z J_{k,n}^{\a,\b,\g}& = \mu_{k-1}^{\a+1,\b+1} \partial_z X^{\a+1,\b+1,\g}\partial_z J_{k,n}^{\a,\b,\g} \\
  & =\mu_{k-1}^{\a+1,\b+1} \mu_{k}^{\a,\b}X^{\a,\b,\g} J_{k,n}^{\a,\b,\g}. 
\end{align*}
Putting these identities together and using \eqref{eq:eigen1} again proves the first identity. The conditions under which 
these identitoes hold follow from those in \eqref{eq:eigen2}.
\end{proof}

\subsection{Orthogonal expansions on the triangle}
If $\a,\b,\g > -1$ and $f \in L^2(\varpi_{\a,\b,\g})$, then $f$ has the orthogonal expansion 
$$
  f  = \sum_{n=0}^\infty \proj_n^{\a,\b,\g} f, \qquad\quad \proj_n^{\a,\b,\g}: L^2(\varpi_{\a,\b,\g}) \mapsto \CV_n^d(\varpi_{\a,\b,\g}).
$$
The projection operator $\proj_n^{\a,\b,\g}$ is independent of the particular choice of orthogonal basis of 
$\CV_n(\varpi_{\a,\b,\g})$. For a given basis, see $J_{k,n}^{\a,\b,\g}$, it can be written explicitly as 
\begin{equation} \label{eq:proj-abc}
  \proj_n^{\a,\b,\g} f(x) = \sum_{k=0}^n \wh f_{k,n}^{\a,\b,\g} J_{k,n}^{\a,\b,\g}, \qquad
      \wh f_{k,n}^{\a,\b,\g}: = \frac{\la f,  J_{k,n}^{\a,\b,\g}\ra_{\a,\b,\g}} {h_{k,n}^{\a,\b,\g}},   
\end{equation}
where $h_{k,n}^{\a,\b,\g} = \la J_{k,n}^{\a,\b,\g}, J_{k,n}^{\a,\b,\g}\ra_{\a,\b,\g}$  is given in \eqref{eq:hkn}. 
We can also replace $J_{k,n}^{\a,\b,\g}$ by $K_{k,n}^{\a,\b,\g}$ or $L_{k,n}^{\a,\b,\g}$ in this expression of 
the projection operator. The $n$-th partial sum of the orthogonal expansion is defined by 
$$
 S_n^{\a,\b,\g} f(x,y): = \sum_{k=0}^n \proj_k^{\a,\b,\g} f(x,y),
$$ 
which agrees with \eqref{eq:Fourier}; it is the $n$-th polynomial of best approximation to $f$ from $\Pi_n^2$ in  
the norm of $L^2(\varpi_{\a,\b,\g})$. 


As an application of our three bases, we prove that the partial derivatives commute with the projection operators 
defined in \eqref{eq:proj-abc}. 

\begin{thm} 
Let $\a,\b,\g > -1$. Then
\begin{align}\label{eq:dproj}
\begin{split}
   \partial_1 \proj_n^{\a,\b,\g} f = \proj_{n-1}^{\a+1,\b,\g+1} \partial_1 f,   \\
   \partial_2 \proj_n^{\a,\b,\g} f = \proj_{n-1}^{\a,\b+1,\g+1} \partial_2 f,   \\
   \partial_3 \proj_n^{\a,\b,\g} f = \proj_{n-1}^{\a+1,\b+1,\g} \partial_3 f.
\end{split}
\end{align}
\end{thm}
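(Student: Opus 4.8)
\emph{Proposed proof.} The plan is to prove the three identities one at a time, each by the same three-step scheme, but matching each derivative with a different one of the three bases: the family $J_{k,n}^{\a,\b,\g}$ for the $\partial_3$-identity, $K_{k,n}^{\a,\b,\g}$ for the $\partial_1$-identity, and $L_{k,n}^{\a,\b,\g}$ for the $\partial_2$-identity. The reason for matching a derivative with a particular basis is that, by the \emph{unconditional} lines of \eqref{eq:diff1J}, \eqref{eq:diffK}, \eqref{eq:diffL}, the corresponding derivative sends a basis element to a \emph{single} element of a shifted family, namely $\partial_3 J_{k,n}^{\a,\b,\g}=J_{k-1,n-1}^{\a+1,\b+1,\g}$, $\partial_1 K_{k,n}^{\a,\b,\g}=K_{k-1,n-1}^{\a+1,\b,\g+1}$, and $\partial_2 L_{k,n}^{\a,\b,\g}=-L_{k-1,n-1}^{\a,\b+1,\g+1}$. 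I describe the argument for $\partial_3$; the other two are word for word the same after the corresponding permutation of parameters.

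Since $\a,\b,\g>-1$, the relation \eqref{eq:CV=CV} shows $\CV_n(\varpi_{\a,\b,\g})=\mathrm{span}\{J_{k,n}^{\a,\b,\g}:0\le k\le n\}$, so $\proj_n^{\a,\b,\g}f=\sum_{k=0}^n c_kJ_{k,n}^{\a,\b,\g}$ for scalars $c_k$, and differentiating gives $\partial_3\proj_n^{\a,\b,\g}f=\sum_{k=1}^n c_kJ_{k-1,n-1}^{\a+1,\b+1,\g}\in\CV_{n-1}(\varpi_{\a+1,\b+1,\g})$ (using $J_{-1,n-1}=0$ and $\a+1,\b+1,\g>-1$). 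It therefore suffices to show that $\partial_3 f-\partial_3\proj_n^{\a,\b,\g}f$ is orthogonal to $\CV_{n-1}(\varpi_{\a+1,\b+1,\g})$ in the inner product $\la\cdot,\cdot\ra_{\a+1,\b+1,\g}$, for then uniqueness of the orthogonal projection onto $\CV_{n-1}(\varpi_{\a+1,\b+1,\g})$ yields the identity. The tool is an integration by parts: for polynomials $u,v$,
\[
 \la\partial_3 u,v\ra_{\a+1,\b+1,\g}=\la u,\mathcal D_3 v\ra_{\a,\b,\g},\qquad \mathcal D_3 v:=-\varpi_{\a,\b,\g}^{-1}\,\partial_3\big(v\,\varpi_{\a+1,\b+1,\g}\big)=-xy\,\partial_3 v-\big((\b+1)x-(\a+1)y\big)v,
\]
so $\mathcal D_3$ is first order and raises the polynomial degree by exactly one. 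The boundary term that must vanish for this to hold is $\oint_{\partial\triangle}uv\,\varpi_{\a+1,\b+1,\g}(\nu_y-\nu_x)\,ds$: on the sides $x=0$ and $y=0$ it vanishes because $\varpi_{\a+1,\b+1,\g}$ carries the factors $x^{\a+1}$ and $y^{\b+1}$ with $\a+1,\b+1>0$, and on the hypotenuse $x+y=1$ it vanishes because there the outward normal is parallel to $(1,1)$, so $\nu_y-\nu_x=0$ --- that is, $\partial_3$ is tangent to that edge, which is exactly what frees us from needing $\g>0$. For $\partial_1$, resp.\ $\partial_2$, the analogue holds with the weight $\varpi_{\a+1,\b,\g+1}$, resp.\ $\varpi_{\a,\b+1,\g+1}$, whose factor $z^{\g+1}$ with $\g+1>0$ now kills the contribution of the hypotenuse.

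Now decompose $f=S_{n-1}^{\a,\b,\g}f+\proj_n^{\a,\b,\g}f+\big(f-S_n^{\a,\b,\g}f\big)$, so that $\partial_3 f-\partial_3\proj_n^{\a,\b,\g}f=\partial_3 S_{n-1}^{\a,\b,\g}f+\partial_3\big(f-S_n^{\a,\b,\g}f\big)$. The polynomial $S_{n-1}^{\a,\b,\g}f$ has degree $\le n-1$, hence $\partial_3 S_{n-1}^{\a,\b,\g}f$ has degree $\le n-2$ and is automatically $\la\cdot,\cdot\ra_{\a+1,\b+1,\g}$-orthogonal to $\CV_{n-1}(\varpi_{\a+1,\b+1,\g})$. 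For the tail, given $Q\in\CV_{n-1}(\varpi_{\a+1,\b+1,\g})$ the integration-by-parts identity gives $\la\partial_3(f-S_n^{\a,\b,\g}f),Q\ra_{\a+1,\b+1,\g}=\la f-S_n^{\a,\b,\g}f,\mathcal D_3 Q\ra_{\a,\b,\g}$, which vanishes because $\mathcal D_3 Q$ has degree $\le n$ and $f-S_n^{\a,\b,\g}f\perp\Pi_n^2$ in $\la\cdot,\cdot\ra_{\a,\b,\g}$. Adding the two contributions shows $\partial_3 f-\partial_3\proj_n^{\a,\b,\g}f\perp\CV_{n-1}(\varpi_{\a+1,\b+1,\g})$, which finishes the $\partial_3$ case; $\partial_1$ and $\partial_2$ follow the identical path with the $K$- and $L$-bases and the adjoints $\mathcal D_1,\mathcal D_2$ (first order, degree-raising by one, same vanishing of boundary terms).

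The step that needs the most care is not any single computation but the bookkeeping for non-polynomial $f$: the decomposition and the integration by parts are literal for $f\in\Pi^2$, and one then passes to general $f$ in the relevant weighted Sobolev space by density of polynomials, using continuity of $\proj_n^{\a,\b,\g}$, of $\proj_{n-1}^{\a+1,\b+1,\g}$ and of $\partial_3$ in the appropriate norms together with the fact that $\partial_3\proj_n^{\a,\b,\g}f$ ranges over the fixed finite-dimensional space $\partial_3\CV_n(\varpi_{\a,\b,\g})$. I expect the vanishing of the boundary integrals --- in particular the tangency observation for $\partial_3$ --- to be the only genuinely hands-on point. An alternative to the degree count in the last paragraph is to match Fourier coefficients directly: \eqref{eq:eigen2} gives $\mathcal D_3 J_{j,n-1}^{\a+1,\b+1,\g}=-\mu_{j+1}^{\a,\b}J_{j+1,n}^{\a,\b,\g}$ and, testing $\partial_3 J_{j+1,n}^{\a,\b,\g}$ against itself, the matching norm identity $h_{j,n-1}^{\a+1,\b+1,\g}=-\mu_{j+1}^{\a,\b}h_{j+1,n}^{\a,\b,\g}$; together these identify $\widehat{\partial_3 f}$ at level $(j,n-1)$ with $\widehat f$ at level $(j+1,n)$, and likewise for $\partial_1,\partial_2$ using the second and third lines of \eqref{eq:eigen2}.
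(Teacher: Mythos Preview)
Your proof is correct. Both you and the paper pair each derivative with its ``natural'' basis ($J$ for $\partial_3$, $K$ for $\partial_1$, $L$ for $\partial_2$) so that a single basis element maps to a single shifted one, and both rely on an integration by parts against the shifted weight. The difference is in how the orthogonality is concluded. The paper's argument is the Fourier-coefficient matching you sketch at the end as an alternative: it uses the eigen-identity \eqref{eq:eigen2} to get $\la\partial_3 f,J_{k-1,n-1}^{\a+1,\b+1,\g}\ra_{\a+1,\b+1,\g}=k(k+\a+\b+1)\la f,J_{k,n}^{\a,\b,\g}\ra_{\a,\b,\g}$, combines this with the norm relation $k(k+\a+\b+1)h_{k,n}^{\a,\b,\g}=h_{k-1,n-1}^{\a+1,\b+1,\g}$ from \eqref{eq:hkn}, and reads off $\wh f_{k,n}^{\a,\b,\g}=\wh{\partial_3 f}_{k-1,n-1}^{\a+1,\b+1,\g}$ directly. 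Your main route is more structural: you observe that $\partial_3\proj_n^{\a,\b,\g}f$ lands in $\CV_{n-1}(\varpi_{\a+1,\b+1,\g})$, compute the adjoint $\mathcal D_3$ explicitly as a first-order, degree-raising-by-one operator, and then use a clean degree count on the decomposition $f=S_{n-1}f+\proj_n f+(f-S_n f)$ together with uniqueness of the orthogonal projection. Your approach avoids any appeal to the specific eigenvalue formula or to the explicit norm computation in \eqref{eq:hkn}, at the cost of a slightly longer setup; the paper's approach is shorter but leans on those concrete identities. Your discussion of the boundary terms (in particular the tangency of $\partial_3$ to the hypotenuse) makes explicit what the paper leaves implicit in the phrase ``integration by parts.''
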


\begin{proof}
We consider $\partial_3$ first. By \eqref{eq:diff1J} and \eqref{eq:eigen2}, an integration by parts shows
that 
$$
  \la \partial_3 f, J_{k-1,n-1}^{\a+1,\b+1,\g} \ra_{\a+1,\b+1,\g} 
  =  \la \partial_3 f, \partial_3 J_{k,n}^{\a,\b,\g} \ra_{\a+1,\b+1,\g}    
  =  k ( k+\a+\b+1)\la f,  J_{k,n}^{\a,\b,\g} \ra_{\a,\b,\g}, 
$$
where we have used the fact that $\partial_z h(x+y) =0$. Moreover, it follows from \eqref{eq:hkn} that 
$k ( k+\a+\b+1) h_{k,n}^{\a,\b,\g} = h_{k-1,n-1}^{\a+1,\b+1,\g}$, so that 
$\wh f_{k,n}^{\a,\b,\g} =   \wh {\partial_3 f}_{k-1,n-1}^{\a+1,\b+1,\g}$ for $1 \le k \le n$. Consequently, 
by \eqref{eq:diff1J}, 
\begin{align*}
 \partial_3 \proj_n^{\a,\b,\g} f = & \sum_{k=1}^n  \wh f_{k,n}^{\a,\b,\g} J_{k-1,n-1}^{\a+1,\b+1,\g} \\ 
 = & \sum_{k=1}^n  \wh {\partial_3 f}_{k-1,n-1}^{\a+1,\b+1,\g}J_{k-1,n-1}^{\a+1,\b+1,\g}
= \proj_{n-1}^{\a+1,\b+1,\g} \partial_3 f. 
\end{align*}
Since the projection operator is independent of a particular choice of orthogonal bases, we can follow 
the same proof for $\partial_1$ using $K_{k,n}^{\a,\b,\g}$ basis and for $\partial_2$ using $L_{k,n}^{\a,\b,\g}$ basis.
\end{proof}

We shall refer the relations in \eqref{eq:dproj} as commuting relations of partial derivatives and projection,
or partial sum, operators. They play an essential role in establishing the approximation behavior of $S_n^{\a,\b,\g}$ 
in Section \ref{sect11}. For approximation in the Sobolev space, we look for inner products for which such commuting
relations hold for its respective orthogonal polynomials

\section{Sobolev orthogonality with one negative parameter} 
\label{sect5}
\setcounter{equation}{0}

In this section, we consider the Sobolev orthogonality when only one parameter is $-1$. Since many elements of the 
further development will appear in this first case, we shall include full details and discussions.

\subsection{The space $\CV_n(\varpi_{\a,\b,-1})$}
For $\a, \b > -1$, let 
$$
  W_2^1(\varpi_{\a,\b,0}): = \left \{f \in L^2(\varpi_{\a,\b,0}): x \partial_x f \in L^2(\varpi_{\a,\b,0}), 
   y \partial_y f \in L^2(\varpi_{\a,\b,0})\right \}.
$$
We start with the following bilinear form on $W_2^1(\varpi_{\a,\b,0})$ defined by  
\begin{align*} 
\la f, g \ra_{\a,\b,-1}^J: =& \int_{\triangle} \left [ x \partial_x f(x,y) \partial_x g(x,y) + y \partial_y f(x,y) \partial_y g(x,y) \right]
    \varpi_{\a,\b,0}(x,y) dxdy \\
        & + \l \int_0^1 f(x,1-x) g(x,1-x)\varpi_{\a,\b,0}(x,1-x) dx, 
\end{align*}
where $\l > 0$. In terms of classical inner product for the ordinary Jacobi polynomials, we can also write
\begin{equation} \label{eq:ipd-ab1J}
 \la f,g\ra_{\a,\b,-1}^J = \la \partial_1 f, \partial_1 g\ra_{\a+1,\b,0}+\la \partial_2 f, \partial_2 g\ra_{\a,\b+1,0}
    + \lambda \la f \vert_{z=0}, g\vert_{z=0} \ra_{\a,\b},
\end{equation}
where the first two inner products are the ordinary inner products on the triangle whereas the third one is
the inner product of one variable with $f\vert_{z=0}(x,y) = f(x,y)\vert_{z=0} =  f(x,1-x)$. 

It is easy to see that it is indeed an inner product. Recall that $\CV_n(\varpi_{\a,\b,-1})$ denotes the space 
spanned by $J_{k,n}^{\a,\b,-1}$, $0 \le k \le n$.  

\begin{prop}\label{prop:00-1}
The space $\CV_n(\varpi_{\a,\b,-1})$ consists of orthogonal polynomials of degree $n$ with respect to 
$\la \cdot,\cdot\ra_{\a,\b,-1}$ and $\{J_{k,n}^{\a,\b,-1}, 0 \le k \le n\}$ is a mutually orthogonal basis of this
space.
\end{prop}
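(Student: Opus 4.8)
The plan is to read everything off the split \eqref{eq:ipd-ab1J},
\[
  \la f,g\ra_{\a,\b,-1}^J = \la\partial_1 f,\partial_1 g\ra_{\a+1,\b,0}+\la\partial_2 f,\partial_2 g\ra_{\a,\b+1,0}+\l\la f|_{z=0},g|_{z=0}\ra_{\a,\b},
\]
whose three summands are ordinary inner products: the first two against the classical weights $\varpi_{\a+1,\b,0}$ and $\varpi_{\a,\b+1,0}$ (all parameters $>-1$), the third a one-variable Jacobi inner product against the weight $x^\a(1-x)^\b$ on $[0,1]$. Two things must be shown: that $\la J_{k,n}^{\a,\b,-1},P\ra_{\a,\b,-1}^J=0$ for every $P\in\Pi_{n-1}^2$, and that $\la J_{k,n}^{\a,\b,-1},J_{j,n}^{\a,\b,-1}\ra_{\a,\b,-1}^J=0$ for $j\ne k$; together with positivity of the diagonal values (automatic, since $\la\cdot,\cdot\ra_{\a,\b,-1}^J$ is an inner product) these give linear independence and both assertions of the proposition.

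First I would record the action of the three constituent operations on the basis. By Proposition~\ref{prop:partialJ} with $\g=-1$ (the case $k=0$ being trivial since $J_{-1,\cdot}\equiv0$, and the excluded parameter values of that proposition not occurring for $k\ge1$, $\a,\b>-1$),
\[
  \partial_1 J_{k,n}^{\a,\b,-1}=-a_{k,n}^{\a,\b}J_{k-1,n-1}^{\a+1,\b,0}-J_{k,n-1}^{\a+1,\b,0}\in\CV_{n-1}(\varpi_{\a+1,\b,0}),
\]
\[
  \partial_2 J_{k,n}^{\a,\b,-1}=a_{k,n}^{\b,\a}J_{k-1,n-1}^{\a,\b+1,0}-J_{k,n-1}^{\a,\b+1,0}\in\CV_{n-1}(\varpi_{\a,\b+1,0}),
\]
each being a combination of members of the classical mutually orthogonal bases of those spaces. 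For the restriction, on $z=1-x-y=0$ one has $1-2x-2y=-1$, so $J_{k,n}^{\a,\b,-1}|_{z=0}=J_{n-k}^{2k+\a+\b+1,-1}(-1)\,J_k^{\a,\b}(1-2x)$; by \eqref{eq:Jam} the factor $J_{n-k}^{2k+\a+\b+1,-1}$ carries $(1+t)$ and hence vanishes at $t=-1$ whenever $n-k\ge1$, whereas for $k=n$ it is the nonzero constant $J_0^{2n+\a+\b+1,-1}(-1)=1$. Thus $J_{k,n}^{\a,\b,-1}|_{z=0}$ is $0$ for $k<n$ and a nonzero multiple of the classical Jacobi polynomial $J_n^{\a,\b}(1-2x)$ for $k=n$ (using Proposition~\ref{prop:Jacobi1st} to identify $J_n^{\a,\b}$ with $P_n^{(\a,\b)}$ up to a constant), and in either case it is orthogonal, in the weight $x^\a(1-x)^\b$, to every polynomial in $x$ of degree $\le n-1$.

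With this in hand, orthogonality to $\Pi_{n-1}^2$ is immediate: if $P\in\Pi_{n-1}^2$ then $\partial_i P$ has total degree $\le n-2$, hence is orthogonal to $\CV_{n-1}(\varpi_{\a+1,\b,0})$, resp.\ $\CV_{n-1}(\varpi_{\a,\b+1,0})$, and $P|_{z=0}$ has degree $\le n-1$ in $x$, killing the boundary term. For mutual orthogonality I fix $j\ne k$; since $j$ and $k$ cannot both equal $n$, one of the two restrictions vanishes and the boundary term is $0$, so I only need the bulk part
\[
  \la\partial_1 J_{k,n}^{\a,\b,-1},\partial_1 J_{j,n}^{\a,\b,-1}\ra_{\a+1,\b,0}+\la\partial_2 J_{k,n}^{\a,\b,-1},\partial_2 J_{j,n}^{\a,\b,-1}\ra_{\a,\b+1,0}
\]
to vanish. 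Expanding by the two derivative formulas and using orthogonality of the $J^{\a+1,\b,0}$ and $J^{\a,\b+1,0}$ bases, every cross term dies unless the index pairs $\{k-1,k\}$ and $\{j-1,j\}$ overlap, i.e.\ unless $|j-k|=1$; so for $|j-k|\ge2$ the bulk part is already $0$. For $j=k-1$ (and symmetrically $j=k+1$) what survives is $a_{k,n}^{\a,\b}h_{k-1,n-1}^{\a+1,\b,0}$ from the first inner product and $-a_{k,n}^{\b,\a}h_{k-1,n-1}^{\a,\b+1,0}$ from the second, and these cancel because $a_{k,n}^{\a,\b}/a_{k,n}^{\b,\a}=(k+\b)/(k+\a)$ by \eqref{eq:akn-bkn} while $h_{k-1,n-1}^{\a+1,\b,0}/h_{k-1,n-1}^{\a,\b+1,0}=(k+\a)/(k+\b)$ by \eqref{eq:hkn} (the two triples $(\a+1,\b,0)$ and $(\a,\b+1,0)$ share the same value of $\a+\b$, so only the factor $\Gamma(\cdot+\a'+1)\Gamma(\cdot+\b'+1)$ changes).

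The one genuinely substantive point will be this cancellation of the $|j-k|=1$ off-diagonal terms; it is exactly what pins down the relative weighting of the two derivative terms in the definition of $\la\cdot,\cdot\ra_{\a,\b,-1}^J$ and is the prototype for the analogous, heavier verifications in later sections. The remaining bookkeeping is routine; the small care points are the degenerate case $k=0$ of Proposition~\ref{prop:partialJ}, where the formulas are trivial, and the endpoint $k=n$, where the term $J_{k,n-1}^{\a+1,\b,0}=J_{n,n-1}^{\a+1,\b,0}$ simply drops out.
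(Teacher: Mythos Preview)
Your proof is correct, and it follows a genuinely different route from the paper's. The paper integrates the bulk part by parts onto $J_{l,n}^{\a,\b,-1}$ (using that $J_{k,n}^{\a,\b,-1}\vert_{z=0}=0$ for $k\le n-1$ to kill all boundary contributions), then invokes the eigenfunction identity \eqref{eq:eigen1} with $\g=-1$ to collapse the result to $-\lambda_{l,n}^{\a,\b,-1}\int_\triangle J_{k,n}^{\a,\b,-1}J_{l,n}^{\a,\b,-1}\varpi_{\a,\b,-1}$; the orthogonality then follows at once from the factor $z$ in each $J_{\cdot,n}^{\a,\b,-1}$ (for index $\le n-1$) and from $\lambda_{n,n}^{\a,\b,-1}=0$ when $l=n$. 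Your approach instead pushes the derivative formulas \eqref{eq:diff1J} through and reduces everything to classical orthogonality in $\CV_{n-1}(\varpi_{\a+1,\b,0})$ and $\CV_{n-1}(\varpi_{\a,\b+1,0})$, with the only nontrivial residue being the $|j-k|=1$ cancellation $a_{k,n}^{\a,\b}h_{k-1,n-1}^{\a+1,\b,0}=a_{k,n}^{\b,\a}h_{k-1,n-1}^{\a,\b+1,0}$, which you verify from \eqref{eq:akn-bkn} and \eqref{eq:hkn}. The paper's route is slicker and hides the coefficient identity inside the spectral structure; yours is more elementary, makes explicit exactly which identity forces the particular balance of $\partial_1$ and $\partial_2$ terms in \eqref{eq:ipd-ab1J}, and is closer in spirit to the later arguments (e.g.\ Propositions \ref{prop:00-1A}--\ref{prop:00-1B}) where that direct style is used anyway.
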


\begin{proof}
For $0 \le k \le n-1$,  $J_{k,n}^{\a,\b,-1}(x,y) = c (1-x-y)J_{k,n-1}^{\a,\b,1}(x,y)$ by \eqref{eq:J+J-}, which shows that 
$J_{k,n}^{\a,\b,-1}(x,1-x) =0$. Hence, for $0 \le k \le n-1$ and $0 \le l \le n$, integrating by parts shows that 
\begin{align*}
  \la J_{k,n}^{\a,\b,-1}, J_{l,n}^{\a,\b,-1} \ra_{\a,\b,-1}^J = - \lambda_{l,n}^{\a,\b,-1} 
    \int_{\triangle}  J_{k,n}^{\a,\b,-1}(x,y) J_{l,n}^{\a,\b,-1}(x,y) \varpi_{\a,\b,-1}(x,y) dxdy,
\end{align*}
where we have used \eqref{eq:eigen1}, which extends to $J_{n,n}^{\a,\b,-1}$ as well by a direct computation. 
If $0 \le k \ne l \le n-1$, then the above integral is zero by \eqref{eq:J+J-} and the orthogonality of $J_{k,n}^{\a,\b,1}$. 
If $l = n$, then the eigenvalue $\l_{n,n}^{\a,\b,-1} = 0$ so that the inner product is zero. 
\end{proof}

For $\a,\b,\g > -1$, we have two other mutually orthogonal bases for $\CV_n(\varpi_{\a,\b,\g})$, which follows immediately
from the invariance of the measure. The same holds for  $\CV_n(\varpi_{\a,\b,-1})$ but it does require a proof now.   

\begin{prop}\label{prop:Vn00-1}
The set $\{K_{k,n}^{\a,\b,-1}: 0 \le k \le n\}$ is a basis of $\CV_n(\varpi_{\a,\b,-1})$, so is
$\{L_{k,n}^{\a,\b,-1}: 0 \le k \le n\}$.
\end{prop}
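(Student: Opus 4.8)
The plan is to show that each of $\{K_{k,n}^{\a,\b,-1}: 0\le k\le n\}$ and $\{L_{k,n}^{\a,\b,-1}: 0\le k\le n\}$ spans $\CV_n(\varpi_{\a,\b,-1})$, which by a dimension count (both sets have $n+1$ elements and $\dim\CV_n(\varpi_{\a,\b,-1})=n+1$) reduces to showing each set is contained in $\CV_n(\varpi_{\a,\b,-1})$ and is linearly independent. By \eqref{eq:3bases-transform} the two cases are mirror images of each other under the permutation symmetry of the triangle (swapping the roles of $x$ and $y$), so it suffices to treat $K_{k,n}^{\a,\b,-1}$; the argument for $L_{k,n}^{\a,\b,-1}$ is identical after relabeling. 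Note that $\CV_n(\varpi_{\a,\b,-1})$ here means the span of the $J$-basis, i.e. $\CV_n(\varpi_{\a,\b,-1})_J$ in the notation of the Definition preceding \eqref{eq:CV=CV}, and the content of the proposition is precisely that $\CV_n(\varpi_{\a,\b,-1})_K = \CV_n(\varpi_{\a,\b,-1})_L = \CV_n(\varpi_{\a,\b,-1})_J$ even though $\g=-1\in -\NN$, so the general Proposition does not apply directly.

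First I would recall from \eqref{eq:K+K-} that for $\g=-1$ (so $\wh\g = 1$, $\wh\a=\wh\b=0$), one has
\[
  K_{k,n}^{\a,\b,-1}(x,y) = c_{k,n}^{-1,\a,\b}\,(1-x-y)\,K_{k,n-1}^{\a,\b,1}(x,y),\qquad 0\le k\le n-1,
\]
while the top element $K_{n,n}^{\a,\b,-1}$ is handled separately since $K_{n,n}$ has no $J_{n-k}^{2k+\a+\g+1,\b}$ factor with a negative parameter. So for $0\le k\le n-1$, $K_{k,n}^{\a,\b,-1}$ is, up to a nonzero constant, $(1-x-y)$ times a genuine classical orthogonal polynomial $K_{k,n-1}^{\a,\b,1}\in\CV_{n-1}(\varpi_{\a,\b,1})$. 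The key point is then to verify directly that any polynomial of the form $(1-x-y)\,Q$ with $Q\in\CV_{n-1}(\varpi_{\a,\b,1})$ lies in $\CV_n(\varpi_{\a,\b,-1})$: this is exactly the relation $\CV_{n-1}(\varpi_{\a,\b,1})\ni Q \mapsto (1-x-y)Q$, whose image is contained in $\CV_n(\varpi_{\a,\b,-1})_J$ by the same computation used to prove Proposition~\ref{prop:00-1} — namely, $(1-x-y)Q$ has degree $n$, vanishes on the edge $z=0$, and by the integration-by-parts identity with the operator $\CL_1$ appearing there, it is orthogonal with respect to $\la\cdot,\cdot\ra_{\a,\b,-1}^J$ to all $J_{l,n}^{\a,\b,-1}$ for which the eigenvalue is nonzero, hence to all of $\CV_{m}$ with $m<n$ together with spanning $\CV_n$ modulo the kernel direction. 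Concretely, since $\{(1-x-y)J_{k,n-1}^{\a,\b,1}:0\le k\le n-1\} = \{J_{k,n}^{\a,\b,-1}:0\le k\le n-1\}$ up to constants by \eqref{eq:J+J-}, and $\{J_{k,n-1}^{\a,\b,1}\}$ and $\{K_{k,n-1}^{\a,\b,1}\}$ are both bases of the classical space $\CV_{n-1}(\varpi_{\a,\b,1})$, multiplying by $(1-x-y)$ shows $\{K_{k,n}^{\a,\b,-1}:0\le k\le n-1\}$ spans the same $n$-dimensional subspace of $\CV_n(\varpi_{\a,\b,-1})$ as $\{J_{k,n}^{\a,\b,-1}:0\le k\le n-1\}$.

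It then remains to account for the single missing dimension, i.e. to show $K_{n,n}^{\a,\b,-1}\in\CV_n(\varpi_{\a,\b,-1})$ and that it is not in the span of $\{K_{k,n}^{\a,\b,-1}:0\le k\le n-1\}$. For linear independence: $K_{n,n}^{\a,\b,-1}(x,y) = (1-y)^n J_n^{-1,\a}\big(\tfrac{2x}{1-y}-1\big)$ does not contain the factor $(1-x-y)$ (its restriction to $z=0$, i.e. $y=1-x$, is $x^n$ up to a constant, which is not identically zero), whereas every element of the span of $\{K_{k,n}^{\a,\b,-1}:0\le k\le n-1\}$ vanishes on $z=0$; hence $K_{n,n}^{\a,\b,-1}$ is independent of them. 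For membership in $\CV_n(\varpi_{\a,\b,-1})$: one shows $\la K_{n,n}^{\a,\b,-1}, p\ra_{\a,\b,-1}^J = 0$ for every polynomial $p$ of degree $<n$; by the integration-by-parts computation in Proposition~\ref{prop:00-1} (which, as remarked there, extends to the top polynomial by direct computation because \eqref{eq:eigen1} holds for $J_{n,n}^{\a,\b,-1}$ and hence, by the permutation symmetry of $\CL_1^{\a,\b,-1}$, for $K_{n,n}^{\a,\b,-1}$), this reduces to the orthogonality of the associated classical object. The main obstacle I anticipate is precisely this boundary/top-degree bookkeeping: one must be careful that the recurrence \eqref{eq:K+K-} genuinely applies for all $0\le k\le n-1$ with a nonvanishing constant $c_{k,n}^{-1,\a,\b}$ (which follows from Proposition~\ref{prop:J-mJm}, since $\b>-1$ keeps all the relevant $\iota_0$ indices at $0$), and that the eigenfunction identity $\CL_1^{\a,\b,-1}K_{n,n}^{\a,\b,-1} = \l_{1,n}^{\a,\b,-1}K_{n,n}^{\a,\b,-1}$ holds in the singular case $\g=-1$ — this last point uses Proposition~\ref{prop:partialK} rather than analytic continuation, exactly as the parenthetical remark after the eigenvalue Proposition indicates. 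Once these verifications are in place, combining the $n$-dimensional span from the lower $k$ with the extra independent direction $K_{n,n}^{\a,\b,-1}$ gives an $(n+1)$-dimensional subspace of the $(n+1)$-dimensional space $\CV_n(\varpi_{\a,\b,-1})$, forcing equality; the statement for $L_{k,n}^{\a,\b,-1}$ follows by applying the same argument after the cyclic permutation in \eqref{eq:3bases-transform}.
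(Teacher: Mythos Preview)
Your proposal contains a genuine indexing error that breaks the argument. The factorization \eqref{eq:K+K-} for $(\a,\b,-1)$ reads
\[
  K_{k,n}^{\a,\b,-1}(x,y) = c_{k,n}^{-1,\a,\b}\,(1-x-y)\,K_{k-1,\,n-1}^{\a,\b,1}(x,y),\qquad 1\le k\le n,
\]
with a shift of the first subscript and the range $1\le k\le n$, not $K_{k,n-1}^{\a,\b,1}$ on $0\le k\le n-1$ as you wrote. Consequently the exceptional element is $K_{0,n}^{\a,\b,-1}$, not $K_{n,n}^{\a,\b,-1}$. Indeed, by \eqref{eq:Jmb} one has $J_n^{-1,\a}(1)=0$, so $K_{n,n}^{\a,\b,-1}(x,1-x)=x^n J_n^{-1,\a}(1)=0$: the top element \emph{does} vanish on $z=0$, and your linear-independence argument (``its restriction to $z=0$ is $x^n$ up to a constant'') fails. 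It is $K_{0,n}^{\a,\b,-1}(x,y)=J_n^{\a,\b}(2y-1)$ that survives on the edge.

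Once the indices are corrected your strategy can be repaired: the span of $\{K_{k,n}^{\a,\b,-1}:1\le k\le n\}$ equals $(1-x-y)\,\CV_{n-1}(\varpi_{\a,\b,1})$, which coincides with the span of $\{J_{k,n}^{\a,\b,-1}:0\le k\le n-1\}$ by \eqref{eq:J+J-}, and then one must place $K_{0,n}^{\a,\b,-1}$ in $\CV_n(\varpi_{\a,\b,-1})$ and check it is independent of the vanishing-on-$z=0$ subspace. Note, however, that the paper's proof takes a more direct route: it shows $\la K_{k,n}^{\a,\b,-1},g\ra_{\a,\b,-1}^J=0$ for every $g$ of degree $<n$ by splitting the inner product into its triangle and boundary parts, handling the triangle part for all $k$ via the derivative relations \eqref{eq:diffK} (which land $\partial_1 K_{k,n}^{\a,\b,-1}$, $\partial_2 K_{k,n}^{\a,\b,-1}$ in classical orthogonal spaces of degree $n-1$), and handling the boundary part for $1\le k\le n$ by vanishing on $z=0$ and for $k=0$ by one-variable Jacobi orthogonality of $J_n^{\a,\b}(1-2x)$. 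That approach avoids the eigenfunction identity for $\CL_1^{\a,\b,-1}$ altogether.
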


\begin{proof}
We give the proof only for $K_{k,n}^{\a,\b,-1}$, the other case is similar. Let us write the inner product 
$\la f, g \ra_{\a,\b,-1}^J = \la f,g\ra_T + \la f,g\ra_B$, where the first one stands for the integral over $\triangle$  
and the second one for the integral over $[0,1]$. Let $g$ be a generic polynomial of degree at most $n-1$. 
By \eqref{eq:diffK} and the orthogonality of $K_n^{\a+1,\b,0}$ and $K_n^{\a,\b+1,0}$, it follows immediately 
that $\la K_{k,n}^{\a,\b,-1}, g\ra_T =0$. By \eqref{eq:K+K-}, $K_{k,n}^{\a,\b,-1}$ vanishes when $x + y =1$ for
$1 \le k \le n$, so that $\la K_{k,n}^{\a,\b,-1}, g\ra_B =0$ for $1 \le k \le n$. Finally, since $K_{0,n}^{\a,\b,-1}(x,1-x) =
 J_n^{\a,\b}(1-2x)$, it follows that $\la K_{0,n}^{\a,\b,-1}, g\ra_B =0$ as well. Since $K_{k,n}^{\a,\b,-1}$ are 
 polynomials of degree $n$ that are orthogonal with respect to $\la \cdot,\cdot \ra_{\a,\b,-1}^J$ to lower degree 
 polynomials, they form a basis of $\CV_n^d(\varpi_{\a,\b,-1})$. 
 \end{proof}
 
Although $\{K_{k,n}^{\a,\b,-1}: 0 \le k \le n\}$ is a basis of $\CV_n(\varpi_{\a,\b,1})$, it is not a mutually orthogonal one
and neither is $\{L_{k,n}^{\a,\b,-1}: 0 \le k \le n\}$. However, more can be said about those two bases. Let us define
\begin{align} \label{eq:ipd-ab1K}
  \la f, g \ra_{\a,\b,-1}^{K} :=    \la \partial_1 f, \partial_1 g \ra_{\a+1,\b,0} + \lambda \la f\vert_{z=0}, g\vert_{z=0} \ra_{\a,\b}.   
\end{align}

\begin{prop}\label{prop:00-1A}
The set $\{K_{k,n}^{\a,\b,-1}: 0 \le k \le n\}$ consists of a mutually orthogonal basis of $\CV_n(\varpi_{\a,\b,-1})$
under the inner product $\la \cdot, \cdot \ra_{\a,\b,-1}^K$. 
\end{prop}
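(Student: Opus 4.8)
The plan is to leverage Proposition~\ref{prop:Vn00-1}, which already guarantees that $\{K_{k,n}^{\a,\b,-1}: 0\le k\le n\}$ is a basis of the $(n+1)$-dimensional space $\CV_n(\varpi_{\a,\b,-1})$; what remains is to show these $n+1$ polynomials are pairwise orthogonal, with non-zero norm, for the inner product $\la\cdot,\cdot\ra_{\a,\b,-1}^K$ of \eqref{eq:ipd-ab1K}. I would write $\la K_{k,n}^{\a,\b,-1},K_{l,n}^{\a,\b,-1}\ra_{\a,\b,-1}^K=A_{k,l}+\lambda B_{k,l}$ with interior part $A_{k,l}:=\la\partial_1 K_{k,n}^{\a,\b,-1},\partial_1 K_{l,n}^{\a,\b,-1}\ra_{\a+1,\b,0}$ and boundary part $B_{k,l}:=\la K_{k,n}^{\a,\b,-1}\vert_{z=0},K_{l,n}^{\a,\b,-1}\vert_{z=0}\ra_{\a,\b}$, and treat the two separately.

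For the interior part, the point is that $\g=-1$ is exactly the value for which the first identity of \eqref{eq:diffK}, namely $\partial_x K_{k,n}^{\a,\b,-1}=K_{k-1,n-1}^{\a+1,\b,0}$ (with the convention $K_{-1,\cdot}=0$), lands the derivative inside the \emph{classical} orthogonal polynomial space $\CV_{n-1}(\varpi_{\a+1,\b,0})$, because $\a+1,\b,0>-1$. Hence $A_{k,l}=\la K_{k-1,n-1}^{\a+1,\b,0},K_{l-1,n-1}^{\a+1,\b,0}\ra_{\a+1,\b,0}=h_{k-1,n-1}^{0,\a+1,\b}\,\delta_{k,l}$ by the classical mutual orthogonality of the $K$-basis, and $h_{k-1,n-1}^{0,\a+1,\b}>0$ for $1\le k\le n$ since it is a genuine squared $L^2$-norm; moreover $A_{0,l}=A_{k,0}=0$, because $K_{0,n}^{\a,\b,-1}(x,y)=J_n^{\a,\b}(2y-1)$ is independent of $x$, so $\partial_1 K_{0,n}^{\a,\b,-1}=0$.

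For the boundary part, I would recall from the proof of Proposition~\ref{prop:Vn00-1} (via \eqref{eq:K+K-}) that $K_{k,n}^{\a,\b,-1}$ carries the factor $1-x-y$ and thus vanishes on the line $z=0$ for $1\le k\le n$, while $K_{0,n}^{\a,\b,-1}(x,1-x)=J_n^{\a,\b}(1-2x)$. Consequently $B_{k,l}=0$ unless $k=l=0$, and $B_{0,0}=\int_0^1 [J_n^{\a,\b}(1-2x)]^2 x^\a(1-x)^\b\,dx$, which after the change of variable $t=1-2x$ and using \eqref{JacobiP2} and \eqref{eq:normJacobi} equals $2^{-\a-\b-1}(n+\a+\b+1)_n^{-2}\,h_n^{\a,\b}>0$.

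Putting the pieces together: for $1\le k,l\le n$ the boundary part drops out and $\la K_{k,n}^{\a,\b,-1},K_{l,n}^{\a,\b,-1}\ra_{\a,\b,-1}^K=h_{k-1,n-1}^{0,\a+1,\b}\,\delta_{k,l}$; for $k=0$ the interior part drops out and $\lambda B_{0,l}$ equals $0$ when $l\ge 1$ and $\lambda B_{0,0}>0$ when $l=0$. Hence the $K_{k,n}^{\a,\b,-1}$, $0\le k\le n$, are mutually orthogonal with positive norm, and since they span $\CV_n(\varpi_{\a,\b,-1})$ they form a mutually orthogonal basis of it, as claimed. I would also note in passing that the identical computation with two distinct degrees $n\ne m$ gives cross-degree orthogonality, and that $\la\cdot,\cdot\ra_{\a,\b,-1}^K$ is positive definite on $W_2^1(\varpi_{\a,\b,0})$. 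The argument is short; the only place that asks for any real (though routine) care is isolating the $k=0$ column, where $\partial_x$ annihilates the polynomial and orthogonality must be carried entirely by the one-dimensional boundary integral — everything else is immediate from \eqref{eq:diffK}, \eqref{eq:K+K-} and the classical theory.
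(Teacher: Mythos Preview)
Your proof is correct and follows essentially the same approach as the paper: split $\la\cdot,\cdot\ra_{\a,\b,-1}^K$ into the interior piece, where $\partial_1 K_{k,n}^{\a,\b,-1}=K_{k-1,n-1}^{\a+1,\b,0}$ for $k\ge 1$ and $=0$ for $k=0$ reduces everything to the classical orthogonality of the $K$-basis with parameters $(\a+1,\b,0)$, and the boundary piece, where the factor $1-x-y$ kills $K_{k,n}^{\a,\b,-1}\vert_{z=0}$ for $k\ge 1$ and only $K_{0,n}^{\a,\b,-1}(x,1-x)=J_n^{\a,\b}(1-2x)$ survives. Your write-up is in fact somewhat more explicit than the paper's (you compute $B_{0,0}$ exactly and verify the norms are positive), but the argument is the same.
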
 
 
\begin{proof}
For $k \ge 1$, by \eqref{eq:J+J-}, $K_{k,n}^{\a,\b,-1}(x,y) =  c (1-x-y) K_{k-1,n-1}^{\a,\b,1}(x,y)$, which vanishes
if $x = 1-y$. By \eqref{eq:diffK}, $\partial_x K_{k,n}^{\a,\b,-1} (x,y)= K_{k-1,n-1}^{\a+1,\b,0}(x,y)$ for $k \ge 1$
and $\partial_x  K_{0,n}^{\a,\b,-1} (x,y) =0$ by the definition of $K_{0,n}^{\a,\b,-1}$. Hence, 
for $1 \le k \le n$ and $0 \le l \le m$, 
\begin{align*}
  &\la K_{k,n}^{\a,\b,-1}, K_{l,m}^{\a,\b,-1} \ra_{\a,\b,-1}^{K} 
     =  \la  K_{k-1,n-1}^{\a+1,\b,0}, K_{l-1,n-1}^{\a+1,\b,0}\ra_{\a+1,\b,0} = 0,
\end{align*}
if $n \ne m$ or if $n =m$ but $k \ne l$. This also holds if we exchange $k$ and $l$. In the remanning case of
$k=0$ or $l = 0$, we use $\partial_x K_{0,n}^{\a,\b,-1} (x,y) = 0$ and $K_{0,n}^{\a,\b,-1} (x,y) = J_n^{\a,\b}(2y-1)$, so that
the orthogonality follows from that of the Jacobi polynomials on $[0,1]$. This completes the proof. 
\end{proof}

In exactly the same way, we can define the inner product 
\begin{align} \label{eq:ipd-ab1L}
  \la f, g \ra_{\a,\b,-1}^{L} :=    \la \partial_2 f, \partial_2 g \ra_{\a,\b+1,0} + \lambda \la f\vert_{z=0}, g\vert_{z=0} \ra_{\a,\b}
\end{align}
and deduce the following result: 

\begin{prop}\label{prop:00-1B}
The set $\{L_{k,n}^{\a,\b,-1}: 0 \le k \le n\}$ consists of a mutually orthogonal basis of $\CV_n(\varpi_{\a,\b,-1})$
under the inner product $\la \cdot, \cdot \ra_{\a,\b,-1}^L$. 
\end{prop}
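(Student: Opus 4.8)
The plan is to transcribe the proof of Proposition~\ref{prop:00-1A} with the roles of $\partial_1$ and $\partial_2$ interchanged, using the $L$-analogues of the three inputs employed there: the reduction formula \eqref{eq:L+L-} specialized to $\g=-1$, the derivative formula \eqref{eq:diffL} for $\partial_y$, and the classical orthogonality $\la L_{k,n}^{\a,\b,\g}, L_{j,m}^{\a,\b,\g}\ra_{\a,\b,\g}=h_{k,n}^{\b,\g,\a}\delta_{k,j}\delta_{n,m}$ for nonnegative parameters. Since Proposition~\ref{prop:Vn00-1} already gives that $\{L_{k,n}^{\a,\b,-1}:0\le k\le n\}$ is a basis of $\CV_n(\varpi_{\a,\b,-1})$, only the mutual orthogonality with respect to $\la\cdot,\cdot\ra_{\a,\b,-1}^L$ from \eqref{eq:ipd-ab1L} remains to be checked.

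First I would record the two structural facts. Applying \eqref{eq:L+L-} with $\a,\b>-1$ and $\g=-1$ (so that $\wh\a=\wh\b=0$ and $\wh\g=1$) yields $L_{k,n}^{\a,\b,-1}(x,y)=c\,(1-x-y)\,L_{k-1,n-1}^{\a,\b,1}(x,y)$ for $1\le k\le n$; hence $L_{k,n}^{\a,\b,-1}$ vanishes on the segment $z=1-x-y=0$ whenever $k\ge1$. From \eqref{eq:diffL} one has $\partial_2 L_{k,n}^{\a,\b,-1}(x,y)=-L_{k-1,n-1}^{\a,\b+1,0}(x,y)$ for $1\le k\le n$, while $L_{0,n}^{\a,\b,-1}(x,y)=J_n^{\b,\a}(2x-1)$ depends on $x$ only, so $\partial_2 L_{0,n}^{\a,\b,-1}=0$.

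Then I would evaluate $\la L_{k,n}^{\a,\b,-1}, L_{l,m}^{\a,\b,-1}\ra_{\a,\b,-1}^L$ by cases. For $k,l\ge1$ the boundary term of \eqref{eq:ipd-ab1L} drops out and the integral term equals $\la L_{k-1,n-1}^{\a,\b+1,0}, L_{l-1,m-1}^{\a,\b+1,0}\ra_{\a,\b+1,0}=h_{k-1,n-1}^{\b+1,0,\a}\delta_{k,l}\delta_{n,m}$ by the classical $L$-basis orthogonality on the triangle; this also identifies the squared norms. If $k=0$ (and symmetrically $l=0$), the vanishing $\partial_2 L_{0,n}^{\a,\b,-1}=0$ removes the integral term, leaving $\lambda\la L_{0,n}^{\a,\b,-1}\vert_{z=0}, L_{l,m}^{\a,\b,-1}\vert_{z=0}\ra_{\a,\b}$; for $l\ge1$ this is $0$ since $L_{l,m}^{\a,\b,-1}$ vanishes on $z=0$, and for $l=0$ it reduces to $\lambda\int_0^1 J_n^{\b,\a}(2x-1)J_m^{\b,\a}(2x-1)\,x^\a(1-x)^\b\,dx$, which is a multiple of $\delta_{n,m}$ by orthogonality of the Jacobi polynomials on $[0,1]$. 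Assembling the cases gives the mutual orthogonality, hence the proposition.

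I do not expect a genuine obstacle: the argument is a direct mirror of the $K$-case, and the second identity in \eqref{eq:diffL} holds for all $n$ with no side conditions. The only place requiring care is the index bookkeeping under the cyclic permutation — verifying that $\g=-1$ forces $\wh\g=1$ in \eqref{eq:L+L-}, that the differentiated polynomials $L_{k-1,n-1}^{\a,\b+1,0}$ carry exactly the parameters for which the orthogonality relation applies with $\g$ set to $0$, and that the weight $x^\a(1-x)^\b$ appearing in $\la\cdot,\cdot\ra_{\a,\b}$ is precisely the one for which $J_n^{\b,\a}(2x-1)$ is orthogonal on $[0,1]$.
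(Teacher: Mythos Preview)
Your proposal is correct and follows exactly the approach the paper intends: the paper states this proposition immediately after defining $\la\cdot,\cdot\ra_{\a,\b,-1}^L$ with the phrase ``In exactly the same way, we can define the inner product \ldots\ and deduce the following result,'' referring to the proof of Proposition~\ref{prop:00-1A}. Your transcription of that argument---using \eqref{eq:L+L-} for the vanishing on $z=0$, the second line of \eqref{eq:diffL} for the $\partial_2$-derivative, and the one-variable Jacobi orthogonality for the $k=l=0$ boundary term---is precisely the intended parallel, with the index bookkeeping carried out correctly.
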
 

We observe that $\la f,g\ra_{\a,\b,-1}^K+\la f,g\ra_{\a,\b,-1}^L$ is exactly $\la f,g \ra_{\a,\b,-1}^J$ with $2\l$ 
in place of $\l$.  It is interesting that the subspace $\CV_n(\varpi_{\a,\b,-1})$ is the space of orthogonal polynomials for 
three different inner products. This does not usually happen for ordinary (that is, non-Sobolev) inner products and it is 
useful in the study of orthogonal expansions. 

For $ f \in W_2^1(\varpi_{\a,\b,0})$, we can expand $f$ in terms of $J_{k,n}^{\a,\b,-1}$ and its inner product,
$$
   f = \sum_{n=0}^\infty  \mathrm{\proj_J}_n^{\a,\b,-1} f, \qquad \proj_n: W_2^1(\varpi_{\a,\b,0}) \mapsto \CV_n(\varpi_{\a,\b,-1}),
$$ 
where the projection operator 
\begin{equation} \label{eq:proj-ab1}
  \mathrm{\proj_J}_n^{\a,\b,-1} f = \sum_{k=0}^n \wh f_{k,n, J}^{\a,\b,-1} J_{k,n}^{\a,\b,-1}, \quad 
  \wh f_{k,n, J}^{\a,\b,-1} = \frac{\la  f, J_{k,n}^{\a,\b,-1}\ra_{\a,\b,-1}^J} {\la  J_{k,n}^{\a,\b,-1}, J_{k,n}^{\a,\b,-1}\ra_{\a,\b,-1}^J}
\end{equation}
We can also expand $f$ in terms of $K_{k,n}^{\a,\b,-1}$ and its inner product or in terms of  $L_{k,n}^{\a,\b,-1}$ and 
its inner product. Let us denote the corresponding projection operators by $ \mathrm{proj_K}_{n}^{\a,\b,-1}$ and
$ \mathrm{proj_L}_{n}^{\a,\b,-1}$, respectively. For example, 
$$
  \mathrm{proj_K}_{n}^{\a,\b,-1} f = \sum_{k=0}^n \wh f_{k,n,K}^{\a,\b,-1} K_{k,n}^{\a,\b,-1}, \quad 
  \wh f_{k,n,K}^{\a,\b,-1} = \frac{\la  f, K_{k,n}^{\a,\b,-1}\ra_{\a,\b,-1}^K} {\la  K_{k,n}^{\a,\b,-1}, K_{k,n}^{\a,\b,-1}\ra_{\a,\b,-1}^K}.
$$

Once an inner product is chosen, the projection operator is characterized by $\proj_n f \in \CV_n$ and $\proj_n f = f$ 
if $f \in \CV_n$. Since $\CV_n(\varpi_{\a,\b,-1})$ is the space of orthogonal polynomials for all three inner products, a  moment reflection shows that the following holds: 

\begin{prop} \label{prop:proj=A=B}
For $n \ge 0$ and $f \in W_2^1(\varpi_{\a,\b,0})$,  
$$
 \mathrm{proj_J}_n^{\a,\b,-1} f  =  \mathrm{proj_K}_{n}^{\a,\b,-1} f  =  \mathrm{proj_L}_{n}^{\a,\b,-1} f =: \proj_n^{\a,\b,-1}f, 
$$
where $=:$ means that we remove the subscript $J, K, L$ when they are equal. 
\end{prop}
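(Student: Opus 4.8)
The plan is to fill in the ``moment reflection'' promised before the statement by using the defining characterization of a partial-sum (projection) operator: once an inner product is fixed, $\proj_n f$ is the unique polynomial in $\CV_n$ with $\la f-\proj_n f,q\ra=0$ for all $q\in\CV_n$, so $\proj_n$ is linear, restricts to the identity on $\CV_n$, and annihilates $\CV_m$ for every $m\neq n$. The key structural input is that, by Propositions~\ref{prop:00-1}, \ref{prop:Vn00-1}, \ref{prop:00-1A} and \ref{prop:00-1B}, the subspace $\CV_n(\varpi_{\a,\b,-1})$ is one and the same for all three inner products $\la\cdot,\cdot\ra_{\a,\b,-1}^J$, $\la\cdot,\cdot\ra_{\a,\b,-1}^K$, $\la\cdot,\cdot\ra_{\a,\b,-1}^L$, and that distinct-degree subspaces $\CV_m$, $\CV_n$ with $m\neq n$ are mutually orthogonal under each of the three. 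Write $\proj_n^J,\proj_n^K,\proj_n^L$ for the corresponding projections; the task is to show they agree on $W_2^1(\varpi_{\a,\b,0})$.

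First I would dispose of the polynomial case, which is purely algebraic. Since $\dim\CV_m(\varpi_{\a,\b,-1})=m+1$, a dimension count gives $\Pi_N^2=\bigoplus_{m=0}^N\CV_m(\varpi_{\a,\b,-1})$ for every $N$ (the sum being direct by the mutual orthogonality under, say, $\la\cdot,\cdot\ra_{\a,\b,-1}^J$). Hence every polynomial $p$ has a unique decomposition $p=\sum_m c_m$ with $c_m\in\CV_m(\varpi_{\a,\b,-1})$, a decomposition that makes no reference to any inner product. Applying any one of the three projections term by term, and using that each fixes $c_n\in\CV_n$ and kills $c_m$ for $m\neq n$, gives $\proj_n^J p=\proj_n^K p=\proj_n^L p=c_n$.

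To pass to a general $f\in W_2^1(\varpi_{\a,\b,0})$ I would compare the three norms. Directly from the definitions, $\|f\|_J^2-\|f\|_K^2=\|\partial_2 f\|_{\a,\b+1,0}^2\ge 0$ and $\|f\|_J^2-\|f\|_L^2=\|\partial_1 f\|_{\a+1,\b,0}^2\ge 0$, so $\|\cdot\|_K\le\|\cdot\|_J$ and $\|\cdot\|_L\le\|\cdot\|_J$. Consequently the $J$-orthogonal expansion $f=\proj_n^J f+\sum_{m\neq n}\proj_m^J f$, which converges in $\|\cdot\|_J$ by the setup preceding the proposition, also converges in $\|\cdot\|_K$ and in $\|\cdot\|_L$ to the same limit $f$. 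Now fix $q\in\CV_n$. Each $\proj_m^J f\in\CV_m(\varpi_{\a,\b,-1})$ with $m\neq n$ is $K$-orthogonal to $q$, and $\la\cdot,q\ra^K$ is $\|\cdot\|_K$-continuous, so term-by-term evaluation yields $\la f-\proj_n^J f,q\ra^K=\sum_{m\neq n}\la\proj_m^J f,q\ra^K=0$, i.e. $\la f,q\ra^K=\la\proj_n^J f,q\ra^K$; on the other hand $\la f,q\ra^K=\la\proj_n^K f,q\ra^K$ by the defining property of $\proj_n^K$. Thus $\la\proj_n^J f-\proj_n^K f,q\ra^K=0$ for all $q\in\CV_n$; taking $q=\proj_n^J f-\proj_n^K f\in\CV_n$ and invoking positive-definiteness of $\la\cdot,\cdot\ra^K$ forces $\proj_n^J f=\proj_n^K f$. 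The identical argument with $L$ in place of $K$ gives $\proj_n^J f=\proj_n^L f$.

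The argument is routine once set up; there is no deep obstacle. The only points requiring a little care are the norm comparisons $\|\cdot\|_K,\|\cdot\|_L\le\|\cdot\|_J$, which let a single ($J$-)expansion be tested against all three inner products, and the positive-definiteness of $\la\cdot,\cdot\ra^K$ and $\la\cdot,\cdot\ra^L$ on the finite-dimensional spaces $\CV_n$ (for $\la\cdot,\cdot\ra^K$, say, $\la f,f\ra^K=0$ forces $\partial_1 f\equiv 0$ and $f\vert_{z=0}\equiv 0$, hence $f\equiv 0$).
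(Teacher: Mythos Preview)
Your proof is correct and is essentially a fleshed-out version of the paper's one-sentence argument, which merely observes that since $\CV_n(\varpi_{\a,\b,-1})$ is the same orthogonal-polynomial space for all three inner products, the projections must agree. Your norm comparison $\|\cdot\|_K,\|\cdot\|_L\le\|\cdot\|_J$ is a careful touch that makes the passage from polynomials to general $f\in W_2^1(\varpi_{\a,\b,0})$ rigorous, a detail the paper leaves implicit in its ``moment reflection.''
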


As a consequence of this proposition, we can now deduce the commutativity of partial derivatives with 
the projection operators. 

\begin{thm}\label{thm:Dproj00-1}
For $f \in W_2^1(\varpi_{\a,\b,0})$,
\begin{align} \label{eqLDproj001}
\begin{split}
    \partial_1 \proj_n^{\a,\b,-1} f = & \proj_{n-1}^{\a+1,\b,0} \partial_1 f,\\ 
        \partial_2 \proj_n^{\a,\b,-1} f =&  \proj_{n-1}^{\a,\b+1,0} \partial_2 f. 
\end{split}
\end{align}
\end{thm}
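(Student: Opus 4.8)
The plan is to prove the two identities separately, using the basis $\{K_{k,n}^{\a,\b,-1}\}$ for the first and the basis $\{L_{k,n}^{\a,\b,-1}\}$ for the second, and to rely on Proposition~\ref{prop:proj=A=B}, which guarantees that $\proj_n^{\a,\b,-1}f=\mathrm{proj_K}_n^{\a,\b,-1}f=\mathrm{proj_L}_n^{\a,\b,-1}f$. The reason for switching bases is that $\partial_1$ behaves simply on the $K$-basis and $\partial_2$ on the $L$-basis (first line of \eqref{eq:diffK}, second line of \eqref{eq:diffL}): in each case a basis element of $\CV_n(\varpi_{\a,\b,-1})$ is sent either to $0$ (the $k=0$ member) or to a basis element of $\CV_{n-1}$ with the appropriately shifted parameters. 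Crucially, the $K$- and $L$-inner products \eqref{eq:ipd-ab1K} and \eqref{eq:ipd-ab1L} each contain only one derivative term, so no integration by parts is needed; one simply reads off Fourier coefficients.

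For the first identity I would expand $\proj_n^{\a,\b,-1}f=\sum_{k=0}^n\wh f_{k,n,K}^{\a,\b,-1}K_{k,n}^{\a,\b,-1}$ and compute the coefficients with respect to $\la\cdot,\cdot\ra_{\a,\b,-1}^K$. By \eqref{eq:K+K-} the polynomial $K_{k,n}^{\a,\b,-1}$ carries a factor $1-x-y$ for $1\le k\le n$, so it vanishes on $z=0$ and the boundary term in \eqref{eq:ipd-ab1K} drops out; since $\partial_x K_{k,n}^{\a,\b,-1}=K_{k-1,n-1}^{\a+1,\b,0}$, this gives, for $1\le k\le n$,
\[
\wh f_{k,n,K}^{\a,\b,-1}=\frac{\la\partial_1 f,K_{k-1,n-1}^{\a+1,\b,0}\ra_{\a+1,\b,0}}{\la K_{k-1,n-1}^{\a+1,\b,0},K_{k-1,n-1}^{\a+1,\b,0}\ra_{\a+1,\b,0}}=\wh{\partial_1 f}_{k-1,n-1}^{\a+1,\b,0},
\]
the last being a Fourier coefficient of $\partial_1 f$ in $\CV_{n-1}(\varpi_{\a+1,\b,0})$. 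Applying $\partial_1$ termwise to the expansion of $\proj_n^{\a,\b,-1}f$, the $k=0$ term dies because $\partial_x K_{0,n}^{\a,\b,-1}=0$, and shifting the index $j=k-1$ turns $\sum_{k=1}^n\wh{\partial_1 f}_{k-1,n-1}^{\a+1,\b,0}K_{k-1,n-1}^{\a+1,\b,0}$ into exactly $\proj_{n-1}^{\a+1,\b,0}\partial_1 f$.

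The second identity is the same computation with $L$ in place of $K$: by \eqref{eq:L+L-}, $L_{k,n}^{\a,\b,-1}$ has a factor $1-x-y$ for $1\le k\le n$, so the boundary term in \eqref{eq:ipd-ab1L} again vanishes, while by \eqref{eq:diffL} one has $\partial_y L_{k,n}^{\a,\b,-1}=-L_{k-1,n-1}^{\a,\b+1,0}$ for $k\ge1$ and $\partial_y L_{0,n}^{\a,\b,-1}=0$. Hence $\wh f_{k,n,L}^{\a,\b,-1}=-\wh{\partial_2 f}_{k-1,n-1}^{\a,\b+1,0}$ for $k\ge1$, and when one writes $\partial_2\proj_n^{\a,\b,-1}f=\sum_{k=1}^n\wh f_{k,n,L}^{\a,\b,-1}\,\partial_y L_{k,n}^{\a,\b,-1}$ the two minus signs cancel, giving $\proj_{n-1}^{\a,\b+1,0}\partial_2 f$. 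There is no hard estimate anywhere; the only points that require care—both already supplied by the propositions cited—are that one must work with the $K$- and $L$-inner products rather than $\la\cdot,\cdot\ra_{\a,\b,-1}^J$ so that a single derivative term is present, and that Proposition~\ref{prop:proj=A=B} is needed to identify these two coefficient computations with the one operator $\proj_n^{\a,\b,-1}$. Keeping track of the sign coming from $\partial_y L_{k,n}^{\a,\b,-1}=-L_{k-1,n-1}^{\a,\b+1,0}$ is the single spot where a slip is easy.
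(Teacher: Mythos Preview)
Your proposal is correct and follows essentially the same approach as the paper's proof: use the $K$-basis and $\la\cdot,\cdot\ra_{\a,\b,-1}^K$ for $\partial_1$, the $L$-basis and $\la\cdot,\cdot\ra_{\a,\b,-1}^L$ for $\partial_2$, invoke Proposition~\ref{prop:proj=A=B} to identify the projections, and read off the Fourier-coefficient identity from $\partial_x K_{k,n}^{\a,\b,-1}=K_{k-1,n-1}^{\a+1,\b,0}$ together with the vanishing of the boundary term for $k\ge1$. Your treatment of the sign in the $L$-case is correct and slightly more explicit than the paper, which simply says the $\partial_2$ case is similar.
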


\begin{proof} 
For $\partial_1$ we use the expansion in $K_{k,n}^{\a,\b,-1}$. If $k \ge 1$, then 
$\partial_x K_{k,n}^{\a,\b,-1}(x,y) =  K_{k-1,n-1}^{\a+1,\b,0}$ by \eqref{eq:diffK} and 
$$
 \la  K_{k,n}^{\a,\b,-1},  K_{k,n}^{\a,\b,-1}\ra_{\a,\b,-1}^K = 
  \la  K_{k-1,n-1}^{\a+1,\b,0},  K_{k-1,n-1}^{\a+1,\b,0}\ra_{\a+1,\b,0}^K,
$$
which implies immediately that $\wh f_{k,n, K}^{\a,\b,-1} =  \wh{\partial_1 f}_{k-1,n-1,K}^{\a+1,\b,0}$. 
If $k = 0$, then $\partial_x K_{0,n}^{\a,\b,-1}(x,y)=0$. Hence, it follows that
\begin{align*}
  \partial_x \proj_n^{\a,\b,-1}f(x,y) & = \sum_{k=1}^n  \wh f_{k,n,K}^{\a,\b,-1} K_{k-1,n-1}^{\a+1,\b,0}(x,y)
         =   \proj_{n-1}^{\a+1,\b,0}(\partial_1 f)(x,y). 
\end{align*}
The proof for $\partial_2$ is similar if we use $L_{k,n}^{\a,\b,-1}$. 
\end{proof}
  
\subsection{The spaces $\CV_n(\varpi_{\a,-1,\g})$ and $\CV_n(\varpi_{-1,\b,\g})$} \label{sect:a-1g}
The orthogonality in these two cases follows from simultaneous permutations of parameters and variables. Indeed,
the polynomials are related exactly as in \eqref{eq:3bases-transform} and the partial derivatives are related as in 
\eqref{eq:d-transform1} and \eqref{eq:d-transform2}, accordingly. The definitions of $\CW(\varpi_{\a,0, \g})$ and
$\proj_n^{\a,-1,\g}$ etc. should all be straightforward. We record the inner product and basic facts below for future
use without proof. 

For $\CV(\varpi_{\a,-1,\g})$, we define inner products 
\begin{align} \label{eq:ipd-a1g}
\begin{split}
\la f, g \ra_{\a,-1,\g}^K: = & \la \partial_2 f, \partial_2 g\ra_{\a,0,\g+1} +  \la \partial_3 f, \partial_3 g\ra_{\a+1,0,\g} +
  \l  \la f(\cdot,0),g(\cdot,0)\ra_{\a,\g},  \\ 
\la f, g \ra_{\a,-1,\g}^L: =  & \la \partial_2 f, \partial_2 g\ra_{\a,0,\g+1} +
     \l  \la f(\cdot,0),g(\cdot,0)\ra_{\a,\g},    \\
\la f, g \ra_{\a,-1,\g}^J: = &  \la \partial_3 f, \partial_3 g\ra_{\a+1,0,\g} +
  \l  \la f(\cdot,0),g(\cdot,0)\ra_{\a,\g}. 
\end{split}
\end{align}
Then families $\{K_{k,n}^{\a,-1,\g}: 0 \le k \le n\}$, $\{L_{k,n}^{\a,-1,\g}: 0 \le k \le n\}$, $\{J_{k,n}^{\a,-1,\g}: 0 \le k \le n\}$ 
are mutually orthogonal bases of 
$\CV_n(\varpi_{\a,-1,\g})$ with respect to the inner produc $\la \cdot, \cdot \ra_{\a,-1,\g}^K$, 
$\la \cdot, \cdot \ra_{\a,-1,\g}^L$, $\la \cdot, \cdot \ra_{\a,-1,\g}^J$, respectively.   

\begin{thm}
For $f \in W_2^1(\varpi_{\a,0,\g})$,
\begin{align*}
    \partial_2 \proj_n^{\a,-1,\g} f = &   \proj_{n-1}^{\a,0,\g+1} \partial_2 f, \\
      \partial_3 \proj_n^{\a,-1,\g} f = &  \proj_{n-1}^{\a+1,0,\g} \partial_3 f. 
\end{align*}
\end{thm}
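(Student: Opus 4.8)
The plan is to deduce both identities from Theorem~\ref{thm:Dproj00-1} (the case in which the third parameter equals $-1$) by the affine change of variables that cyclically permutes $(x,y,1-x-y)$. Put $\psi(x,y):=(1-x-y,x)$; this is a measure-preserving ($|\det D\psi|=1$) bijection of $\triangle$ of order three which, by \eqref{eq:3bases-transform}, sends $J_{k,n}^{\g,\a,-1}$ to $K_{k,n}^{\a,-1,\g}$, and more generally carries $\CV_n(\varpi_{\g,\a,-1})$ onto $\CV_n(\varpi_{\a,-1,\g})$ and $\varpi_{\g,\a,-1}$ onto $\varpi_{\a,-1,\g}$ (note $\varpi_{a,b,c}\circ\psi=\varpi_{b,c,a}$). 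For $g=f\circ\psi$, the derivative rule \eqref{eq:d-transform1} gives
\[
  \partial_2 g=-(\partial_1 f)\circ\psi,\qquad \partial_3 g=-(\partial_2 f)\circ\psi ,
\]
and every $g\in W_2^1(\varpi_{\a,0,\g})$ is of this form with $f\in W_2^1(\varpi_{\g,\a,0})$, since $\varpi_{\a,0,\g}\circ\psi^{-1}=\varpi_{\g,\a,0}$.

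The first step is to check that $\psi$ intertwines the Sobolev structure. Comparing the three inner products in \eqref{eq:ipd-a1g} with $\la\cdot,\cdot\ra_{\g,\a,-1}^{J}$, $\la\cdot,\cdot\ra_{\g,\a,-1}^{K}$, $\la\cdot,\cdot\ra_{\g,\a,-1}^{L}$ of \eqref{eq:ipd-ab1J}, \eqref{eq:ipd-ab1K}, \eqref{eq:ipd-ab1L}, the change of variables $\psi$ — using $\varpi_{a,b,c}\circ\psi=\varpi_{b,c,a}$ and the fact that the edge $\{x+y=1\}$ is carried to $\{y=0\}$ with the matching one-dimensional weight — transports each of the three $(\g,\a,-1)$ inner products to one of the three $(\a,-1,\g)$ inner products (the labels $J,K,L$ being permuted). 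Since, by the analogue of Proposition~\ref{prop:proj=A=B} recorded for $(\a,-1,\g)$, and by Proposition~\ref{prop:proj=A=B} itself for $(\g,\a,-1)$, the orthogonal projection onto $\CV_n$ is the same for all three of the associated inner products, this yields
\[
  \proj_n^{\a,-1,\g}(f\circ\psi)=\bigl(\proj_n^{\g,\a,-1}f\bigr)\circ\psi .
\]
The same argument, now with the ordinary inner products of \eqref{eq:proj-abc} (all parameters nonnegative), gives $\proj_{m}^{\a,0,\g+1}(h\circ\psi)=(\proj_{m}^{\g+1,\a,0}h)\circ\psi$ and $\proj_{m}^{\a+1,0,\g}(h\circ\psi)=(\proj_{m}^{\g,\a+1,0}h)\circ\psi$.

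With these in hand the conclusion is a one-line substitution. Applying $\partial_2$ to $\proj_n^{\a,-1,\g}g=(\proj_n^{\g,\a,-1}f)\circ\psi$ and invoking the derivative rule and Theorem~\ref{thm:Dproj00-1} with parameter triple $(\g,\a,-1)$,
\begin{align*}
  \partial_2\proj_n^{\a,-1,\g}g &=-(\partial_1\proj_n^{\g,\a,-1}f)\circ\psi=-(\proj_{n-1}^{\g+1,\a,0}\partial_1 f)\circ\psi \\
  &=\proj_{n-1}^{\a,0,\g+1}\bigl(-(\partial_1 f)\circ\psi\bigr)=\proj_{n-1}^{\a,0,\g+1}\partial_2 g,
\end{align*}
and likewise $\partial_3\proj_n^{\a,-1,\g}g=-(\partial_2\proj_n^{\g,\a,-1}f)\circ\psi=-(\proj_{n-1}^{\g,\a+1,0}\partial_2 f)\circ\psi=\proj_{n-1}^{\a+1,0,\g}\partial_3 g$. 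I expect the only genuine work to be the intertwining step — verifying that $\psi$ transports the three $(\g,\a,-1)$ Sobolev inner products onto the three $(\a,-1,\g)$ ones and hence intertwines the projections; once that is settled everything else is immediate. Alternatively one can mimic the proof of Theorem~\ref{thm:Dproj00-1} directly: expand in the $J_{k,n}^{\a,-1,\g}$ basis for $\partial_3$, using $\partial_3 J_{k,n}^{\a,-1,\g}=J_{k-1,n-1}^{\a+1,0,\g}$ from \eqref{eq:diff1J} together with the attendant integration-by-parts/eigenvalue identity, and in the $L_{k,n}^{\a,-1,\g}$ basis for $\partial_2$, using $\partial_2 L_{k,n}^{\a,-1,\g}=-L_{k-1,n-1}^{\a,0,\g+1}$ from \eqref{eq:diffL}, the sign being harmless; but the symmetry argument is shorter.
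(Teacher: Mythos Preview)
Your argument is correct and is precisely the approach the paper intends: Subsection~\ref{sect:a-1g} states that ``the orthogonality in these two cases follows from simultaneous permutations of parameters and variables,'' citing \eqref{eq:3bases-transform} and \eqref{eq:d-transform1}, and then records the theorem without proof. You have simply written out in full the permutation-by-$\psi$ argument that the paper leaves implicit, including the intertwining of the three Sobolev inner products (which is indeed the only point requiring verification).
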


For $\CV_n(\varpi_{-1,\b,\g})$, we define inner products 
\begin{align} \label{eq:ipd-1bg}
\begin{split}
\la f, g \ra_{-1,\b,\g}^L: = & \la \partial_3 f, \partial_3 g\ra_{0,\b+1,\g} +  \la \partial_1 f, \partial_1 g\ra_{0,\b,\g+1} +
  \l  \la f(0,\cdot),g(0,\cdot)\ra_{\b,\g},  \\ 
\la f, g \ra_{-1,\b,\g}^J: =  &\la \partial_3 f, \partial_3 g\ra_{0,\b+1,\g} \l  \la f(0,\cdot),g(0,\cdot)\ra_{\b,\g},   \\
\la f, g \ra_{-1,\b,\g}^K: = &   \la \partial_1 f, \partial_1 g\ra_{0,\b,\g+1}+
  \l   \la f(0,\cdot),g(0,\cdot)\ra_{\b,\g}. 
\end{split}
\end{align}
Then 
the families $\{L_{k,n}^{-1,\b,\g}: 0\le k \le n\}$, $\{J_{k,n}^{-1,\b,\g}: 0\le k \le n\}$, $\{K_{k,n}^{-1,\b,\g}: 0\le k \le n\}$ 
are orthogonal bases of $\CV_n(\varpi_{-1,\b,\g})$ with respect to the inner products $\la \cdot, \cdot \ra_{-1,\b,\g}^L$,
$\la \cdot, \cdot \ra_{-1,\b,\g}^J$, $\la \cdot, \cdot \ra_{-1,\b,\g}^K$, respectively. 

 
\begin{thm}
For $f \in W_2^1(\varpi_{0,\b,\g})$,
\begin{align*}
    \partial_3 \proj_n^{-1,\b,\g} f =  & \proj_{n-1}^{0,\b+1,\g} \partial_3 f, \\ 
      \partial_1 \proj_n^{-1,\b,\g} f = &  \proj_{n-1}^{0,\b,\g+1} \partial_1 f. 
\end{align*}
\end{thm}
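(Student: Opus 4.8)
The plan is to copy the proof of Theorem~\ref{thm:Dproj00-1}, transported to the triple $(-1,\b,\g)$ via the three bases and the three inner products in \eqref{eq:ipd-1bg}. First, exactly as in Proposition~\ref{prop:proj=A=B}: since $\CV_n(\varpi_{-1,\b,\g})$ is simultaneously the space of degree-$n$ orthogonal polynomials for $\la\cdot,\cdot\ra_{-1,\b,\g}^J$, $\la\cdot,\cdot\ra_{-1,\b,\g}^K$ and $\la\cdot,\cdot\ra_{-1,\b,\g}^L$, the three associated projections onto $\CV_n(\varpi_{-1,\b,\g})$ coincide, and I denote the common operator by $\proj_n^{-1,\b,\g}$. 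It then suffices to evaluate $\partial_3\proj_n^{-1,\b,\g}f$ using the $J$-expansion and $\partial_1\proj_n^{-1,\b,\g}f$ using the $K$-expansion. Note that $\b,\g>-1$ (forced by $f\in W_2^1(\varpi_{0,\b,\g})$), so the target triples $(0,\b+1,\g)$ and $(0,\b,\g+1)$ lie in the classical range and $\proj_{n-1}^{0,\b+1,\g}$, $\proj_{n-1}^{0,\b,\g+1}$ are the ordinary projections.

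For $\partial_3$, write $\proj_n^{-1,\b,\g}f=\sum_{k=0}^n\wh f_{k,n}^{-1,\b,\g}J_{k,n}^{-1,\b,\g}$, the coefficients taken against $\la\cdot,\cdot\ra_{-1,\b,\g}^J$. Two inputs drive the computation: the third identity of \eqref{eq:diff1J} (valid for all parameters) gives $\partial_3 J_{k,n}^{-1,\b,\g}=J_{k-1,n-1}^{0,\b+1,\g}$ for $k\ge1$ and $\partial_3 J_{0,n}^{-1,\b,\g}=0$; and \eqref{eq:J+J-} (with $\wh\a=1$, $\wh\b=\wh\g=0$) shows $J_{k,n}^{-1,\b,\g}$ carries a factor $x$ for $1\le k\le n$, so $J_{k,n}^{-1,\b,\g}(0,\cdot)=0$ in that range. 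Substituting into $\la f,J_{k,n}^{-1,\b,\g}\ra_{-1,\b,\g}^J=\la\partial_3 f,\partial_3 J_{k,n}^{-1,\b,\g}\ra_{0,\b+1,\g}+\l\la f(0,\cdot),J_{k,n}^{-1,\b,\g}(0,\cdot)\ra_{\b,\g}$ annihilates the boundary term for $k\ge1$, leaving $\la\partial_3 f,J_{k-1,n-1}^{0,\b+1,\g}\ra_{0,\b+1,\g}$; taking $f=J_{k,n}^{-1,\b,\g}$ likewise gives $\la J_{k,n}^{-1,\b,\g},J_{k,n}^{-1,\b,\g}\ra_{-1,\b,\g}^J=h_{k-1,n-1}^{0,\b+1,\g}$ by \eqref{eq:hkn}. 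Hence $\wh f_{k,n}^{-1,\b,\g}=\wh{\partial_3 f}_{k-1,n-1}^{0,\b+1,\g}$ for $1\le k\le n$, and since the $k=0$ term drops under $\partial_3$,
$$\partial_3\proj_n^{-1,\b,\g}f=\sum_{k=1}^n\wh{\partial_3 f}_{k-1,n-1}^{0,\b+1,\g}J_{k-1,n-1}^{0,\b+1,\g}=\proj_{n-1}^{0,\b+1,\g}\partial_3 f.$$
The $\partial_1$ relation is the same argument in the $K$-basis with $\la\cdot,\cdot\ra_{-1,\b,\g}^K$: the first identity of \eqref{eq:diffK} gives $\partial_1 K_{k,n}^{-1,\b,\g}=K_{k-1,n-1}^{0,\b,\g+1}$ for $k\ge1$ and $\partial_1 K_{0,n}^{-1,\b,\g}=0$, while \eqref{eq:K+K-} supplies the factor $x$ in $K_{k,n}^{-1,\b,\g}$ for $1\le k\le n$, so $K_{k,n}^{-1,\b,\g}(0,\cdot)=0$; the identical bookkeeping produces $\partial_1\proj_n^{-1,\b,\g}f=\proj_{n-1}^{0,\b,\g+1}\partial_1 f$.

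I expect no genuine obstacle; the substance is the bookkeeping. The one point that really matters is the basis-independence of $\proj_n^{-1,\b,\g}$ (the analogue of Proposition~\ref{prop:proj=A=B}): without it the two displayed identities would be statements about a priori different operators, and this is exactly where one uses that $\CV_n(\varpi_{-1,\b,\g})$ is orthogonal for all three inner products of \eqref{eq:ipd-1bg}. The only other thing to watch is that the boundary evaluation terms vanish precisely for $1\le k\le n$ and not at $k=0$; that is what forces the shift $(k,n)\mapsto(k-1,n-1)$ and makes the two sides match.
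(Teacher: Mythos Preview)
Your proof is correct and is essentially the paper's approach unwound: the paper obtains this theorem by invoking the simultaneous permutation of $(x,y,1-x-y)$ and $(\a,\b,\g)$ that carries the $(\a,\b,-1)$ case (Theorem~\ref{thm:Dproj00-1}) to $(-1,\b,\g)$, and your direct argument with the $J$- and $K$-bases is exactly what that permutation produces. The only point you rely on but do not reprove---that all three families span the same $\CV_n(\varpi_{-1,\b,\g})$ so that the three projections coincide---is likewise recorded without proof in the paper and follows from the analogue of Proposition~\ref{prop:Vn00-1} under the same permutation.
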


\section{Sobolev orthogonality with two negative parameters}
\label{sect6}
\setcounter{equation}{0}

In this section we consider the case when there are two negative parameters among $\a,\b,\g$. Some of the
inner products in this section are defined in terms of the inner products in the previous section. We shall
use $\l  >0$ as a generic number, which could take different values even when it appears, explicitly or implicitly, 
in the same line. 
	
\subsection{The space $\CV_n(\varpi_{-1,-1,\g})$}\label{sec:-1-1g}
Let $W_2^1(\varpi_{-1,-1,\g})=\{f\in L^2: \partial_3 f \in L^2(\varpi_{0,0,\g})\}$. 
For $\g > -1$, we define an inner product for $W_2^1(\varpi_{-1,-1,\g})$ by
\begin{align} \label{eq:ipd11gJ}
  \la f, g \ra_{-1,-1,\g}^J: = \la \partial_3 f,\partial_3 g\ra_{0,0,\g}+ \l \la f(0,\cdot),g(0,\cdot)\ra_{\g,-1},
 \end{align}
where $\la f(0,\cdot),g(0,\cdot)\ra_{\g,-1}$ denotes the inner product of one variable, see \eqref{eq:ipd-a-m}, 
$$
     \la f,g\ra_{\g,-1}:=  \int_0^1   f'(y)  g'(y) (1-y)^{\g+1} dy + \l_0 f(0)g(0)
$$
applied on $f(0,\cdot)$ and $g(0,\cdot)$. This is indeed an inner product, since 
$\la f,f\ra_{-1,-1,\g}^J=0$ implies that $\partial_z f(x,y) =0$,
which implies that $f(x,y) = f_0(x+y)$, and $\la f_0, f_0\ra_{\g,-1}^J =0$. Since $\la \cdot, \cdot\ra_{\g,-1}$ is an 
inner product of one variable, it follows that $f_0$, hence $f$, is identically zero. 

We need the polynomials $J_{k,n}^{-1,-1,\g}$, which satisfies
\begin{align} \label{eq:J-1-1g}
 \begin{split}
 J_{0,n}^{-1,-1,\g}(x,y) & = - \frac1n (x+y)J_{0,n-1}^{0,0,\g}(x,y),   \quad J_{1,n}^{-1,-1,\g}(x,y)   = - x J_{0,n-1}^{0,0,\g}(x,y),\\
  J_{k,n}^{-1,-1,\g}(x,y) & =  \frac{1}{k(k-1)} x y  J_{k-2,n-2}^{1,1,\g}(x,y), \qquad 2 \le k \le n, \\
 \end{split}
\end{align}

\begin{prop} \label{prop:J-1-1g}
The space $\CV_n(\varpi_{-1,-1,g})$ consist of orthogonal polynomials of degree $n$ with respect to 
$\la \cdot,\cdot\ra_{-1,-1,\g}$ and $\{J_{k,n}^{-1,-1,\g}: 0 \le k \le n\}$ consists of a mutually orthogonal basis 
of this space.
\end{prop}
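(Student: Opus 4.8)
The plan is to exploit the three explicit formulas in \eqref{eq:J-1-1g} together with the fact that the inner product \eqref{eq:ipd11gJ} is assembled from two pieces whose orthogonality is already in hand: the classical orthogonality of the basis $\{J_{k,m}^{0,0,\g}\}$ on the triangle (legitimate since $\g>-1$) and the one-variable Sobolev orthogonality of Section~\ref{sect3}. In contrast to Proposition~\ref{prop:00-1}, no integration by parts is needed here, since \eqref{eq:J-1-1g} lets one compute $\partial_3 J_{k,n}^{-1,-1,\g}$ and the edge restriction $J_{k,n}^{-1,-1,\g}(0,\cdot)$ directly.

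First I would record three facts (for $n\ge1$; $n=0$ is trivial). By the third identity of \eqref{eq:diff1J}, valid for every $n$, one has $\partial_3 J_{k,n}^{-1,-1,\g}=J_{k-1,n-1}^{0,0,\g}$ for $1\le k\le n$, while $\partial_3 J_{0,n}^{-1,-1,\g}=0$ because $J_{0,n}^{-1,-1,\g}$ depends only on $x+y$. Next, from \eqref{eq:J-1-1g} each of $J_{1,n}^{-1,-1,\g}$ and $J_{k,n}^{-1,-1,\g}$, $2\le k\le n$, carries a factor $x$ and hence vanishes on the edge $x=0$. Finally, using $J_0^{0,0}\equiv1$, so $J_{0,n-1}^{0,0,\g}(x,y)=J_{n-1}^{1,\g}(1-2x-2y)$, the one remaining restriction is $J_{0,n}^{-1,-1,\g}(0,y)=-\tfrac{y}{n}J_{n-1}^{1,\g}(1-2y)=J_n^{-1,\g}(1-2y)=(-1)^n\wt J_n^{-1,\g}(y)$, by \eqref{eq:Jmb} with $\ell=1$ and the definition of $\wt J_n^{-1,\g}$ preceding Proposition~\ref{prop:Qlb2}; recall that those $\wt J_n^{-1,\g}$ are mutually orthogonal with respect to $\la\cdot,\cdot\ra_{\g,-1}$, hence orthogonal to every polynomial of degree $<n$.

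The second step combines these. For $g$ of total degree $\le n-1$, substituting $f=J_{k,n}^{-1,-1,\g}$ into \eqref{eq:ipd11gJ}: if $1\le k\le n$ the boundary term vanishes and $\la J_{k,n}^{-1,-1,\g},g\ra_{-1,-1,\g}^J=\la J_{k-1,n-1}^{0,0,\g},\partial_3 g\ra_{0,0,\g}=0$, since $\partial_3 g$ has degree $\le n-2$ while $J_{k-1,n-1}^{0,0,\g}\in\CV_{n-1}(\varpi_{0,0,\g})$ is orthogonal to all lower degrees; if $k=0$ the $\partial_3$-term vanishes and $\la J_{0,n}^{-1,-1,\g},g\ra_{-1,-1,\g}^J=\l(-1)^n\la\wt J_n^{-1,\g},g(0,\cdot)\ra_{\g,-1}=0$, since $g(0,\cdot)$ is a polynomial in $y$ of degree $\le n-1$. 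Hence every element of $\CV_n(\varpi_{-1,-1,\g})$ is orthogonal to all polynomials of lower degree. Repeating the same dichotomy with $g=J_{l,n}^{-1,-1,\g}$, $l\ne k$: if $k,l\ge1$ one gets $\la J_{k-1,n-1}^{0,0,\g},J_{l-1,n-1}^{0,0,\g}\ra_{0,0,\g}=0$ by the classical mutual orthogonality, and if one of $k,l$ equals $0$ both terms vanish as above; so the $J_{k,n}^{-1,-1,\g}$ are pairwise orthogonal. Since each has degree exactly $n$ by \eqref{eq:J-1-1g} and is therefore nonzero, the $n+1$ polynomials $\{J_{k,n}^{-1,-1,\g}\}_{k=0}^n$ are linearly independent, hence form a mutually orthogonal basis of $\CV_n(\varpi_{-1,-1,\g})$; together with the preceding sentence this is the claim. (Positivity of $\la J_{k,n}^{-1,-1,\g},J_{k,n}^{-1,-1,\g}\ra_{-1,-1,\g}^J$, if wanted, follows from the positive-definiteness of $\la\cdot,\cdot\ra_{-1,-1,\g}^J$ noted after \eqref{eq:ipd11gJ}.)

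I expect the only step requiring genuine care is the reduction in the first step: identifying the edge restriction $J_{0,n}^{-1,-1,\g}(0,\cdot)$ with the one-variable extended polynomial $\wt J_n^{-1,\g}$, matching the argument $1-2y$ to the convention of Proposition~\ref{prop:Qlb2}, and checking that \eqref{eq:Jmb} applies (it does, for all $\g>-1$). Everything else is a two-line consequence of \eqref{eq:ipd11gJ}, \eqref{eq:diff1J}, and the classical $\g>-1$ theory, so this proposition should be among the lighter ones once \eqref{eq:J-1-1g} is available.
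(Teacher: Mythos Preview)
Your proof is correct and follows essentially the same approach as the paper: both exploit $\partial_3 J_{k,n}^{-1,-1,\g}=J_{k-1,n-1}^{0,0,\g}$ for $k\ge 1$ (with $\partial_3 J_{0,n}^{-1,-1,\g}=0$), the vanishing of $J_{k,n}^{-1,-1,\g}(0,y)$ for $k\ge 1$, and the identification $J_{0,n}^{-1,-1,\g}(0,y)=J_n^{-1,\g}(1-2y)$ as a one-variable Sobolev orthogonal polynomial for $\la\cdot,\cdot\ra_{\g,-1}$. Your write-up is somewhat more explicit than the paper's---you spell out the computation leading to $J_n^{-1,\g}(1-2y)$ via \eqref{eq:Jmb} and pin down the reference to Proposition~\ref{prop:Qlb2}---but the argument is the same.
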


\begin{proof}
By \eqref{eq:diff1J},  $\partial_z J_{k,n}^{-1,-1,\g}(x,y) =  J_{k-1,n-1}^{0,0,\g}(x,y)$ for $1 \le k \le n$ and 
$\partial_z  J_{0,n}^{-1,-1,\g}(x,y) =0$. Furthermore, by \eqref{eq:J-1-1g}, $J_{k,n}^{-1,-1,\g}(0,y) = 0$ for 
$1 \le k \le n$.  Consequently, it follows that 
\begin{equation}\label{eq:ipdJ-1-1g-a}
  \la f, J_{k,n}^{-1,-1,\g}\ra_{-1,-1,\g}^J  = \la \partial_z f, J_{k-1,n}^{0,0,\g}\ra_{0,0,\g}^J, \qquad 1 \le k \le n.
\end{equation}
Together with $\partial_z J_{0,n}^{-1,-1,\g}(x,y) =0$, this shows that  
$\la J_{l,n}^{-1,-1,\g},  J_{k,n}^{-1,-1,\g}\ra_{-1,-1,\g}^J =0$ if
$k \ne l$, $0 \le l \le n$ and $1 \le k \le n$. Furthermore, $J_{0,n}^{-1,-1,\g}(0,y) = J_n^{-1,\g}(1-2y)$, which is
orthogonal with respect to $\la \cdot,\cdot\ra_{\g,-1}$, so that $J_{0,n}^{-1,-1,\g}$ is indeed orthogonal to
polynomials of lower degrees. This completes the proof.
\end{proof}

In the definition of $\la \cdot,\cdot\ra_{-1,-1,\g}$ we can replace the term of the integral over $[0,1]$ by 
$$
  \l_1 \int_0^1 f(x,0) g(x,0) (1-x)^{\g+1} dx +  \l_2 \int_0^1 f(0,y) g(0,y) (1-y)^{\g+1} dy, 
$$
which is the inner product defined in \cite{AX}. For this modified inner product, $\{J_{k,n}^{-1,-1,\g}: 0\le k \le n\}$
is still an orthogonal basis, as can be seen by the proof of Proposition \ref{prop:J-1-1g} and 
$$
  \partial_x  J_{0,n}^{-1,-1,\g}(x,0) =  - J_{n-1}^{0,\g+1}(1-2x), \quad 
  \partial_x  J_{1,n}^{-1,-1,\g}(x,0) =  n J_{n-1}^{0,\g+1}(1-2x), 
$$
but $J_{0,n}^{-1,-1,\g}$ and $J_{1,n}^{-1,-1,\g}$ are no longer  mutually orthogonal. 

Following the same argument of Proposition \ref{prop:Vn00-1}, we prove the following proposition:

\begin{prop} \label{prop:bases11g}
The set $\{K_{k,n}^{-1,-1,\g}: 0 \le k \le n\}$ consists of a basis for $\CV_n(\varpi_{-1,-1,\g})$ and so
is $\{L_{k,n}^{-1,-1,\g}: 0 \le k \le n\}$.
\end{prop}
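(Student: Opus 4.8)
The plan is to mimic the proof of Proposition \ref{prop:Vn00-1}, using the fact that the three bases are related by simultaneous permutation of $(x,y,1-x-y)$ together with $(\a,\b,\g)$, as recorded in \eqref{eq:3bases-transform}. It suffices to establish the claim for $\{K_{k,n}^{-1,-1,\g}: 0 \le k \le n\}$; the statement for $\{L_{k,n}^{-1,-1,\g}\}$ follows verbatim with the roles of $K$ and $L$ interchanged (or, alternatively, by applying a permutation to the $K$-case). Since $\CV_n(\varpi_{-1,-1,\g})$ is $(n+1)$-dimensional and the $K_{k,n}^{-1,-1,\g}$, $0\le k\le n$, are $n+1$ polynomials of degree exactly $n$, it is enough to show each $K_{k,n}^{-1,-1,\g}$ is orthogonal, with respect to $\la \cdot,\cdot\ra_{-1,-1,\g}^J$, to every polynomial $g$ of degree at most $n-1$; linear independence then forces them to span $\CV_n(\varpi_{-1,-1,\g})$.

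First I would split the inner product as $\la f,g\ra_{-1,-1,\g}^J = \la \partial_3 f, \partial_3 g\ra_{0,0,\g} + \l\,\la f(0,\cdot), g(0,\cdot)\ra_{\g,-1}$, and treat the two pieces separately on a generic $g$ with $\deg g \le n-1$. For the first (bulk) term: by Proposition \ref{prop:partialK}, specifically the $\partial_z$ formula in \eqref{eq:diffK} with $\a=\b=-1$ (where $\partial_3=\partial_z$ when variables are unspecified), we have $\partial_3 K_{k,n}^{-1,-1,\g}(x,y) = - a_{k,n}^{-1,-1} K_{k-1,n-1}^{0,0,\g}(x,y) + K_{k,n-1}^{0,0,\g}(x,y)$ — one needs to check the side condition (b) or (a) in that proposition holds with $\a=\g=-1$, i.e. that $\partial_z$ here is governed by the "$\g$ and $\a$ exchanged" branch, which is the benign case. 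Since $\partial_3 g$ has degree at most $n-2$, the orthogonality of the classical Jacobi-type basis $\{K_{j,n-1}^{0,0,\g}\}$ with respect to $\la\cdot,\cdot\ra_{0,0,\g}$ kills this term: $\la \partial_3 K_{k,n}^{-1,-1,\g}, \partial_3 g\ra_{0,0,\g} = 0$. For the boundary term: I need to compute the restriction $K_{k,n}^{-1,-1,\g}(0,y)$. By \eqref{eq:K+K-} applied with the parameters $(-1,-1,\g)$, $K_{k,n}^{-1,-1,\g}$ carries an explicit power of $X^{\wh\a,\wh\b,\wh\g}$ with $\wh\a=\wh\b=1$, hence a factor $x\cdot y$ up to constant for $k$ in the appropriate range — more carefully, the relevant factor vanishing on the edge $x=0$; so for those $k$ with $k\ge \wh\a+\wh\g = 2$ one gets $K_{k,n}^{-1,-1,\g}(0,y)=0$ and the boundary term drops out. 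For the two remaining low indices $k=0,1$, I would compute $K_{0,n}^{-1,-1,\g}(0,y)$ and $K_{1,n}^{-1,-1,\g}(0,y)$ directly from the definition \eqref{eq:K-basis}: evaluating at $x=0$ forces the Jacobi factor in the first variable at the endpoint, leaving (up to constants) a single univariate Jacobi polynomial $J_n^{\g,-1}(2y-1)$ or a lower-degree one, which is orthogonal to polynomials of degree $\le n-1$ with respect to $\la\cdot,\cdot\ra_{\g,-1}$ by the one-dimensional Sobolev orthogonality (Proposition \ref{prop:Qlb} / \ref{prop:Qlb2}).

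The step I expect to be the main obstacle is the bookkeeping for the low indices $k=0,1$ on the boundary edge $x=0$: unlike the bulk term, these do not vanish identically, and one must identify the restricted polynomial precisely and invoke the correct one-variable orthogonality. In particular one must be careful that \eqref{eq:K+K-}, which involves the renormalization constants $c_{k,n}^{\g,\a,\b}$, is only valid in the stated index range, so the cases $k<\wh\a+\wh\g$ genuinely require separate hands-on evaluation; this is exactly the phenomenon flagged by the remark after Proposition \ref{prop:00-1A}. A secondary point is verifying that the differentiation formula \eqref{eq:diffK} for $\partial_z$ is applicable at $\a=\g=-1$ — one checks condition (b) (the case $\g=-k$ or $\g=-k-1$) is met, or argues by the explicit low-degree forms as in Proposition \ref{prop:partialJ}. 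Once these edge computations are in place, linear independence of $\{K_{k,n}^{-1,-1,\g}\}$ (they have distinct leading behaviour, being images under \eqref{eq:3bases-transform} of the linearly independent $J$-basis in the regular range, extended by continuity/explicit formula) completes the argument, exactly paralleling the end of the proof of Proposition \ref{prop:Vn00-1}.
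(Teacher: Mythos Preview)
Your approach is the paper's: split $\la\cdot,\cdot\ra_{-1,-1,\g}^J$ into the bulk term $\la\partial_3\cdot,\partial_3\cdot\ra_{0,0,\g}$ and the boundary term $\la\cdot(0,\cdot),\cdot(0,\cdot)\ra_{\g,-1}$, kill the bulk via \eqref{eq:diffK}, and treat the boundary by showing $K_{k,n}^{-1,-1,\g}(0,y)$ either vanishes identically or reduces to the one-variable orthogonal polynomial for $\la\cdot,\cdot\ra_{\g,-1}$. Two bookkeeping corrections are needed. First, since $\g>-1$ one has $\wh\g=0$, so the range in \eqref{eq:K+K-} is $\wh\a+\wh\g=1\le k\le n-\wh\b=n-1$; the factorization therefore gives the factor $xy$ for $1\le k\le n-1$, and the exceptional boundary indices are $k=0$ and $k=n$, not $k=0,1$. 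For $k=n$ one checks directly from \eqref{eq:Jam} (the factor $J_n^{\g,-1}$ vanishes at $-1$) that $K_{n,n}^{-1,-1,\g}(0,y)=0$, while $K_{0,n}^{-1,-1,\g}(x,y)=J_n^{\g,-1}(2y-1)$ is exactly the orthogonal polynomial for $\la\cdot,\cdot\ra_{\g,-1}$, as you anticipated. Second, the coefficient in the $\partial_z$ formula of \eqref{eq:diffK} is $a_{k,n}^{\a,\g}=a_{k,n}^{-1,\g}$, not $a_{k,n}^{-1,-1}$; the side condition (with $\a$ and $\g$ exchanged) is met for $k\ge1$ because $2k-1+\g$, $2k+\g$, $k+\g$ are all positive when $\g>-1$, so there is no difficulty there.
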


\begin{proof}
We give a proof only for $K_{k,n}^{-1,-1,\g}$. Let $g$ be a generic polynomial of degree at most $n-1$.
By \eqref{eq:diffK}, it is easy to see that 
$\la K_{k,n}^{-1,-1,\g},g\ra_{-1,-1,\g}^J =0$ for $1 \le k \le n$. Moreover, by \eqref{eq:K+K-} and \eqref{eq:diffK}, 
it is not hard to verify that $\partial_y K_{k,n}^{-1,-1,\g}(0,y) = 0$ for $1\le k \le n$, which implies that 
$\la K_{k,n}^{-1,-1,\g},g\ra_{\g,-1} =0$ for $1 \le k \le n$. Finally, $K_{0,n}^{-1,-1,\g} (x,y) = J_n^{\g,-1}(2y-1)$,
which is the orthogonal polynomial for $\la \cdot,\cdot\ra_{\g,-1}$, so that 
$\la K_{0,n}^{-1,-1,\g},g\ra_{\g,-1} =0$ as well. Consequently, $K_{k,n}^{-1,-1,\g} \in \CV_n(\varpi_{-1,-1,\g})$. 
\end{proof}

As in the case of $(\a,\b,-1)$, we can define two other inner products for which $\{K_{k,n}^{-1,-1,\g}\}$ and 
$\{L_{k,n}^{-1,-1,\g}\}$ are mutually orthogonal, respectively. These inner products, however, involve second
order derivatives. For $\l >0$, we define
\begin{align} \label{eq:ipd11gK}
     \la f,g\ra_{-1,-1,\g}^K :=  \la \partial_1 f, \partial_1 g\ra_{0,-1,\g+1}^K +  \l \la f(0,\cdot),g(0,\cdot)\ra_{\g,-1} 
\end{align}
on the space $W_2^2$, where $\la \cdot,\cdot\ra_{-1,-1,\g}$ is defined in \eqref{eq:ipd11gK}. It is easy to see 
that this is an inner product. Indeed, if $\la f, f\ra_{-1,01,\g} =0$, then 
$f(x,y) = g(y) + x h(y)$ by $\partial_y \partial_x  f(x,y) = 0$ and  $\partial_z \partial_x f(x,y) =0$, but then 
$h(y)=0$ by $\partial_x f(0,y)=0$, and $g(y) = 0$ by  $\la g(0,y),g(0,y)\ra_{\g,-1} =0$.
 
The polynomials $K_{k,n}^{-1,-1,\g}$ are given by
$$
  K_{k,n}^{-1,-1,\g} (x,y) = (1-y)^k J_k^{\g,-1}\left(\frac{2x}{1-y} -1\right)J_{n-k}^{2k+\g,-1}(2y-1).
$$
In particular, it follows that 
$$
   K_{n-1,n}^{-1,-1,\g} (x,y) = \frac1{n} y J_{n-1}^{\g,1}(2y-1), \quad  K_{n,n}^{-1,-1,\g} (x,y)= \frac1{n} x(1-y)^{n-1}
       J_{n-1}^{\g,1}\left(\frac{2x}{1-y} -1\right).
$$
By \eqref{eq:K+K-}, $K_{k,n}^{-1,-1,\g}(0,0) =0$ for $0\le k \le n$. 

\begin{prop}
The system $\{K_{k,n}^{-1,-1,\g}: 0\le k \le n, \,\, n=0,1,2, ...\}$ is mutually orthogonal with respect to the
inner product $\la \cdot ,\cdot\ra_{-1,-1,\g}^K$.
\end{prop}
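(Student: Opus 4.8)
The plan is to follow the pattern of Proposition~\ref{prop:00-1A}: by differentiating once, the mutual orthogonality of $\{K_{k,n}^{-1,-1,\g}\}$ for $\la\cdot,\cdot\ra_{-1,-1,\g}^K$ is reduced to the mutual orthogonality of $\{K_{k,n}^{0,-1,\g+1}\}$ for $\la\cdot,\cdot\ra_{0,-1,\g+1}^K$ established in Section~\ref{sect5} (applicable because $0>-1$ and $\g+1>-1$), together with the one-variable orthogonality of the $\wh J_n^{\g,-1}$ from Section~\ref{sect3}. Throughout we assume $\g>-1$.

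First I would record two structural facts about $K_{k,n}^{-1,-1,\g}$. By the first identity in \eqref{eq:diffK}, $\partial_x K_{k,n}^{-1,-1,\g}=K_{k-1,n-1}^{0,-1,\g+1}$ for $1\le k\le n$, whereas $\partial_x K_{0,n}^{-1,-1,\g}=0$ since $K_{0,n}^{-1,-1,\g}(x,y)=J_n^{\g,-1}(2y-1)$ does not depend on $x$; moreover $K_{0,n}^{-1,-1,\g}(0,\cdot)$ is exactly the polynomial $\wh J_n^{\g,-1}$ of Proposition~\ref{prop:Qam} (no modification occurs for $\g>-1$). Secondly, since $\g>-1$ forces $\iota_0^{\g,-1}(k)=0$, the factor $J_k^{\g,-1}$ is a multiple of an ordinary Jacobi polynomial with $J_k^{\g,-1}(-1)=0$ for $k\ge1$ (equivalently this follows from \eqref{eq:Jam}); hence the explicit formula for $K_{k,n}^{-1,-1,\g}$ shows $K_{k,n}^{-1,-1,\g}(0,y)\equiv0$ in $y$ for $1\le k\le n$.

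With these in hand, fix $(k,n)\ne(l,m)$ and expand $\la K_{k,n}^{-1,-1,\g},K_{l,m}^{-1,-1,\g}\ra_{-1,-1,\g}^K$ using \eqref{eq:ipd11gK}. If $k,l\ge1$: the boundary term vanishes because both traces on $x=0$ are identically zero, and the first term equals $\la K_{k-1,n-1}^{0,-1,\g+1},K_{l-1,m-1}^{0,-1,\g+1}\ra_{0,-1,\g+1}^K=0$, since $(k-1,n-1)\ne(l-1,m-1)$ and $\{K_{k,n}^{0,-1,\g+1}\}$ is mutually orthogonal for that inner product. If exactly one of $k,l$ is $0$, say $k=0$ and $l\ge1$: the first term vanishes because $\partial_x K_{0,n}^{-1,-1,\g}=0$, and the boundary term vanishes because $K_{l,m}^{-1,-1,\g}(0,\cdot)\equiv0$. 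If $k=l=0$, then $n\ne m$, the first term vanishes as before, and the boundary term is $\la \wh J_n^{\g,-1},\wh J_m^{\g,-1}\ra_{\g,-1}=0$ by Proposition~\ref{prop:Qam}. This exhausts all cases and proves the mutual orthogonality.

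The step requiring the most care is the vanishing $K_{k,n}^{-1,-1,\g}(0,y)\equiv0$ for $1\le k\le n$: one must confirm that $J_k^{\g,-1}$ lies outside the singular range so that $J_k^{\g,-1}(-1)=0$ genuinely holds, and then verify that this zero is inherited by $K_{k,n}^{-1,-1,\g}$ after setting $x=0$ in the product defining it. Otherwise the argument is a direct transcription of the proof of Proposition~\ref{prop:00-1A}, the only new feature being that the base inner product $\la\cdot,\cdot\ra_{0,-1,\g+1}^K$ already contains a first-order derivative, so $\la\cdot,\cdot\ra_{-1,-1,\g}^K$ effectively involves second-order ones.
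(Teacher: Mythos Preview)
Your proof is correct and follows essentially the same strategy as the paper's: reduce the main term via $\partial_x K_{k,n}^{-1,-1,\g}=K_{k-1,n-1}^{0,-1,\g+1}$ to the known mutual orthogonality of $\{K_{k,n}^{0,-1,\g+1}\}$ for $\la\cdot,\cdot\ra_{0,-1,\g+1}^K$, and handle the boundary term by showing that the trace on $x=0$ vanishes for $k\ge1$ and equals $J_n^{\g,-1}(2y-1)$ for $k=0$. Your justification of the vanishing trace via $J_k^{\g,-1}(-1)=0$ (from \eqref{eq:Jam}) is slightly cleaner than the paper's, which invokes \eqref{eq:K+K-} to get $\partial_y K_{k,n}^{-1,-1,\g}(0,y)=0$ and separately checks $K_{k,n}^{-1,-1,\g}(0,0)=0$, but the content is the same.
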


\begin{proof}
By \eqref{eq:K+K-}, it is easy to verify that $\partial_y K_{k,n}^{-1,-1,\g}(0,y) =0$ for $1\le k \le n$. Combining with 
$K_{k,n}^{-1,-1,\g}(0,0) =0$ for $0\le k \le n$ and $\partial_x K_{k,n}^{-1,-1,\g}(x,y) = K_{k-1,n-1}^{0,-1,\g+1}(x,y)$, 
we see immediately that for $1 \le k \le n$,
\begin{equation}\label{eq:K-1-1gA}
  \la K_{k,n}^{-1,-1,\g}, f\ra_{-1,-1,\g}^K = \la K_{k-1,n-1}^{0,-1,\g+1}, \partial_x f \ra_{0,-1,\g+1}^K. 
\end{equation}
In particular, it follows that $\la K_{k,n}^{-1,-1,\g}, K_{l,m}^{-1,-1,\g}, f\ra_{-1,-1,\g}^K =0$ when
either $k \ge 1$ or $l \ge 1$. In the remaining case of $k=l =0$, we note that $K_{0,n}^{-1,-1,\g}(x,y) =  J_n^{\g,-1}(2y-1)$,  
which is orthogonal with respect to the inner product $\la f, g\ra_{\g,-1}^K$. Together with  
$\partial_x  K_{0,n}^{-1,-1,\g}(x,y) =0$, this shows that $\la K_{0,n}^{-1,-1,\g}, K_{0,m}^{-1,-1,\g}\ra_{-1,-1,\g}^K = 0$ 
if $n \ne m$. 
\end{proof}

Similarly, for $\l > 0$, we can define
\begin{align} \label{eq:ipd11gL}
 \la f,g\ra_{-1,-1,\g}^L : =  \la \partial_2 f, \partial_2 g\ra_{-1,0,\g+1}^L +\l  \la f(\cdot,0),g(\cdot,0)\ra_{\g,-1}
\end{align}
on the space $W_2^2$, where $\la \cdot,\cdot\ra_{-1,0,\g+1}$ is defined in \eqref{eq:ipd-1bg}. 
As an analogue of the case $\la \cdot, \cdot\ra_{-1,-1,\g}^K$,  the following holds: 

\begin{prop}
The system $\{L_{k,n}^{-1,-1,\g}: 0\le k \le n, \,\, n=0,1,2, ...\}$ is mutually orthogonal with respect to the
inner product $\la \cdot ,\cdot\ra_{-1,-1,\g}^L$.
\end{prop}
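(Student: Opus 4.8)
The plan is to mirror the proof of the preceding proposition for $\{K_{k,n}^{-1,-1,\g}\}$, exploiting the permutation symmetry that sends the $K$-basis picture to the $L$-basis picture. Concretely, writing $h(x,y) = f(y,z)$ with $z = 1-x-y$, the relations \eqref{eq:d-transform2} and \eqref{eq:3bases-transform} identify $L_{k,n}^{-1,-1,\g}(x,y)$ with $J_{k,n}^{-1,\g,-1}(y,1-x-y)$ up to the permutation of variables, under which $\partial_x \leftrightarrow -\partial_2$, $\partial_y \leftrightarrow -\partial_3$, $\partial_z \leftrightarrow \partial_1$. Under this correspondence the inner product $\la\cdot,\cdot\ra_{-1,-1,\g}^L$ defined in \eqref{eq:ipd11gL} is exactly the image of $\la\cdot,\cdot\ra_{-1,-1,\g}^K$, so the statement follows at once from the $K$-case. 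However, since the excerpt develops the $L$-side inner products \eqref{eq:ipd-1bg} and the derivative formulas \eqref{eq:diffL} in parallel anyway, I would instead give the direct argument, which is cleaner to present.

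First I would record the explicit form of $L_{k,n}^{-1,-1,\g}$, namely
$$
 L_{k,n}^{-1,-1,\g}(x,y) = (1-x)^k J_k^{-1,\g}\!\left(1-\tfrac{2y}{1-x}\right) J_{n-k}^{2k+\g,-1}(2x-1),
$$
and note, via \eqref{eq:L+L-} (the analogue of \eqref{eq:K+K-}), that for $1\le k\le n$ the factor $(1-x)^k$ together with the $J_k^{-1,\g}$ factor forces $L_{k,n}^{-1,-1,\g}$ to vanish at the vertex $(0,0)$ in $x$, and that $L_{k,n}^{-1,-1,\g}(x,0)$ carries a factor making $\partial_x L_{k,n}^{-1,-1,\g}(x,0)=0$ for $1\le k\le n$; for $k=0$ one has $L_{0,n}^{-1,-1,\g}(x,y)=J_n^{\g,-1}(2x-1)$, independent of $y$. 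By \eqref{eq:diffL}, $\partial_y L_{k,n}^{-1,-1,\g}(x,y) = - L_{k-1,n-1}^{-1,0,\g+1}(x,y)$ for $1\le k\le n$, and $\partial_y L_{0,n}^{-1,-1,\g}=0$. These are the exact mirror images of the facts used in the $K$-case.

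With these in hand, for $1\le k\le n$ the discrete term $\la f(\cdot,0),L_{k,n}^{-1,-1,\g}(\cdot,0)\ra_{\g,-1}$ (an inner product of one variable involving a value at $0$ and a weighted $L^2$ norm of the derivative in $x$) is killed by $L_{k,n}^{-1,-1,\g}(0,0)=0$ and $\partial_x L_{k,n}^{-1,-1,\g}(x,0)=0$, leaving
$$
 \la L_{k,n}^{-1,-1,\g}, f\ra_{-1,-1,\g}^L = \la L_{k-1,n-1}^{-1,0,\g+1}, \partial_2 f\ra_{-1,0,\g+1}^L ,
$$
so mutual orthogonality of $L_{k,n}^{-1,-1,\g}$ with any $L_{l,m}^{-1,-1,\g}$ when $k\ge 1$ or $l\ge 1$ reduces to the (already established, by Proposition~\ref{prop:00-1B} and the discussion of \eqref{eq:ipd-1bg}) mutual orthogonality of the $L_{k-1,n-1}^{-1,0,\g+1}$ with respect to $\la\cdot,\cdot\ra_{-1,0,\g+1}^L$. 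In the remaining case $k=l=0$, one uses that $L_{0,n}^{-1,-1,\g}(x,y)=J_n^{\g,-1}(2x-1)$ is orthogonal with respect to the one-variable inner product $\la\cdot,\cdot\ra_{\g,-1}^L$ (of the form \eqref{eq:ipd-ell-b}, via Proposition~\ref{prop:Qlb}) and that $\partial_y L_{0,n}^{-1,-1,\g}=0$, which disposes of $n\ne m$. The only thing one must be a little careful with — and I expect this to be the main, though minor, obstacle — is checking that the boundary-vanishing claims ($L_{k,n}^{-1,-1,\g}(0,0)=0$ and $\partial_x L_{k,n}^{-1,-1,\g}(x,0)=0$ for $k\ge1$) come out correctly from \eqref{eq:L+L-} and \eqref{eq:diffL} with the $-1$ parameters, since these are the singular cases of the Jacobi relations where one cannot simply invoke analytic continuation; a direct inspection of the explicit factorization above settles it. The proof is therefore: \emph{the statement follows from the explicit formula for $L_{k,n}^{-1,-1,\g}$, the derivative relation \eqref{eq:diffL}, the vanishing relations from \eqref{eq:L+L-}, and Proposition~\ref{prop:00-1B}, exactly as in the proof of the corresponding statement for $K_{k,n}^{-1,-1,\g}$.}
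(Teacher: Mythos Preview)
Your proposal is correct and is precisely the argument the paper intends: the paper states the proposition without proof, merely as ``an analogue of the case $\la\cdot,\cdot\ra_{-1,-1,\g}^K$,'' and the (commented-out) key identity there is
\[
 \la L_{k,n}^{-1,-1,\g}, f\ra_{-1,-1,\g}^L = \la L_{k-1,n-1}^{-1,0,\g+1}, \partial_y f\ra_{-1,0,\g+1}^L,
\]
which is exactly your reduction. Two small citation fixes: the mutual orthogonality of $\{L_{k,n}^{-1,0,\g+1}\}$ under $\la\cdot,\cdot\ra_{-1,0,\g+1}^L$ is the $(-1,\b,\g)$ case recorded around \eqref{eq:ipd-1bg} (obtained by permutation from Proposition~\ref{prop:00-1}), not Proposition~\ref{prop:00-1B}; and the one-variable orthogonality of $J_n^{\g,-1}(2x-1)$ under $\la\cdot,\cdot\ra_{\g,-1}$ is Proposition~\ref{prop:Qam} (the $(\a,-m)$ case), not Proposition~\ref{prop:Qlb}.
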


For $ f \in W_2^1(\varpi_{0,0,\g})$, we can expand $f$ in terms of $J_{k,n}^{-1,-1,\g}$ as 
$$
   f = \sum_{k=0}^\infty \mathrm{\proj_J}_n^{-1,-1,\g} f, \qquad \proj_n: W_2^1(\varpi_{0,0,\g}) 
    \mapsto \CV_n(\varpi_{-1,-1,\g}),
$$ 
where the projection operator is defined by
\begin{equation}\label{eq:Fourier11g}
\mathrm{\proj_J}_n^{-1,-1,\g} f = \sum_{k=0}^n \wh f_{k,n,J}^{-1,-1,\g} J_{k,n}^{-1,-1,\g}, \quad 
\wh f_{k,n,J} = \frac{\la  f, J_{k,n}^{-1,-1,\g}\ra_{-1,-1,\g}^J} {\la  J_{k,n}^{-1,-1,\g}, J_{k,n}^{-1,-1,\g}\ra_{-1,-1,\g}^J}.
\end{equation}
We can also define the orthogonal expansions and the projection operators in terms of $\la\cdot, \cdot\ra_{-1,-1,\g}^K$ 
and $\la\cdot, \cdot\ra_{-1,-1,\g}^L$, respectively, in a similar manner. The corresponding projection operators are 
denoted by $\proj_{n,K}^{-1,-1,\g}$ and $\proj_{n,L}^{-1,-1,\g}$, respectively. Since all three families of polynomials 
are bases of $\CV_n(\varpi_{-1,-1,\g})$, it follows that 
$$
      \mathrm{\proj_J}_n^{-1,-1,\g} f = \mathrm{\proj_K}_n^{-1,-1,\g} f = \mathrm{\proj_L}_n^{-1,-1,\g} f=:\proj_n^{-1,-1,\g} f,
$$
where $=:$ means that we remove the subscripts $J, L, K$when these three are equal. 

\begin{thm} \label{thm:proj-1-1g}
For $f \in W_2^1(\varpi_{0,0,\g})\cap W_2^2$,
\begin{align*}
      \partial_1 \proj_n^{-1,-1,\g} f & =  \proj_{n-1}^{0,-1,\g+1} \partial_1 f, \\
      \partial_2 \proj_n^{-1,-1,\g} f & =  \proj_{n-1}^{-1,0,\g+1} \partial_2 f, \\
      \partial_3 \proj_n^{-1,-1,\g} f & =  \proj_{n-1}^{0,0,\g} \partial_3 f. 
\end{align*}
\end{thm}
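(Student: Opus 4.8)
The plan is to mirror the strategy already used for Theorem \ref{thm:Dproj00-1} in the $(\a,\b,-1)$ case: establish each of the three commuting relations by working with the basis for which the relevant partial derivative acts most simply, then invoke the identity of the three projection operators. Since $\proj_n^{-1,-1,\g}$ coincides with $\mathrm{\proj_J}_n^{-1,-1,\g}$, $\mathrm{\proj_K}_n^{-1,-1,\g}$ and $\mathrm{\proj_L}_n^{-1,-1,\g}$, we are free to pick, for each derivative, the most convenient representative.

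First I would handle $\partial_3$ using the $J$-basis. From \eqref{eq:diff1J} we have $\partial_z J_{k,n}^{-1,-1,\g} = J_{k-1,n-1}^{0,0,\g}$ for $1\le k\le n$, while $\partial_z J_{0,n}^{-1,-1,\g}=0$ by \eqref{eq:J-1-1g}; and $J_{k,n}^{-1,-1,\g}(0,y)=0$ for $1\le k\le n$. Hence \eqref{eq:ipdJ-1-1g-a} gives $\la f, J_{k,n}^{-1,-1,\g}\ra_{-1,-1,\g}^J = \la \partial_3 f, J_{k-1,n-1}^{0,0,\g}\ra_{0,0,\g}$ for $1\le k\le n$. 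Taking $f=J_{k,n}^{-1,-1,\g}$ in this same identity yields $\la J_{k,n}^{-1,-1,\g},J_{k,n}^{-1,-1,\g}\ra_{-1,-1,\g}^J = h_{k-1,n-1}^{0,0,\g}$, so $\wh f_{k,n,J}^{-1,-1,\g} = \wh{\partial_3 f}_{k-1,n-1}^{0,0,\g}$ for $1\le k\le n$. Summing against $\partial_z J_{k,n}^{-1,-1,\g}$ and using $\partial_z J_{0,n}^{-1,-1,\g}=0$ gives $\partial_3\proj_n^{-1,-1,\g}f = \sum_{k=1}^n \wh{\partial_3 f}_{k-1,n-1}^{0,0,\g} J_{k-1,n-1}^{0,0,\g} = \proj_{n-1}^{0,0,\g}\partial_3 f$.

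Next I would treat $\partial_1$ via the $K$-basis. By \eqref{eq:diffK}, $\partial_x K_{k,n}^{-1,-1,\g} = K_{k-1,n-1}^{0,-1,\g+1}$ for $1\le k\le n$ and $\partial_x K_{0,n}^{-1,-1,\g}=0$. The relation \eqref{eq:K-1-1gA} then reads $\la K_{k,n}^{-1,-1,\g}, f\ra_{-1,-1,\g}^K = \la K_{k-1,n-1}^{0,-1,\g+1},\partial_1 f\ra_{0,-1,\g+1}^K$ for $1\le k\le n$; putting $f=K_{k,n}^{-1,-1,\g}$ matches the normalizing constants and gives $\wh f_{k,n,K}^{-1,-1,\g} = \wh{\partial_1 f}_{k-1,n-1,K}^{0,-1,\g+1}$ for $1\le k\le n$. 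Since the $k=0$ term drops under $\partial_x$, we obtain $\partial_1\proj_n^{-1,-1,\g}f = \sum_{k=1}^n \wh{\partial_1 f}_{k-1,n-1,K}^{0,-1,\g+1} K_{k-1,n-1}^{0,-1,\g+1} = \proj_{n-1}^{0,-1,\g+1}\partial_1 f$, where the last equality uses that $\{K_{k,n}^{0,-1,\g+1}\}$ is the mutually orthogonal basis for $\la\cdot,\cdot\ra_{0,-1,\g+1}^K$ from Section \ref{sect:a-1g}. The case of $\partial_2$ is entirely analogous, using the $L$-basis, \eqref{eq:diffL}, the inner product \eqref{eq:ipd11gL}, and the relation \eqref{eq:L-1-1gB}, landing in $\proj_{n-1}^{-1,0,\g+1}\partial_2 f$.

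The main obstacle I anticipate is bookkeeping at the low-index boundary: one must check that the degenerate polynomials $J_{0,n}^{-1,-1,\g}$, $K_{0,n}^{-1,-1,\g}$, $L_{0,n}^{-1,-1,\g}$ (and, for $\partial_1,\partial_2$, also the degree-$0$ and near-top cases such as $K_{n,n}^{-1,-1,\g}$) behave correctly — i.e.\ that the vanishing of the relevant trace on the edge $\{z=0\}$ or at the corner, and the vanishing of the appropriate derivative, genuinely hold so that \eqref{eq:ipdJ-1-1g-a}, \eqref{eq:K-1-1gA} and \eqref{eq:L-1-1gB} apply without exception. This is where the explicit formulas \eqref{eq:J-1-1g}, \eqref{eq:K+K-}, \eqref{eq:L+L-} and the evaluations $K_{0,n}^{-1,-1,\g}(x,y)=J_n^{\g,-1}(2y-1)$, $L_{0,n}^{-1,-1,\g}(x,y)=J_n^{\g,-1}(2x-1)$ are needed; once these are in hand the argument is a routine matching of Fourier coefficients. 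Note also the hypothesis $f\in W_2^1(\varpi_{0,0,\g})\cap W_2^2$ is exactly what is required for $\partial_1 f,\partial_2 f$ to lie in the domains of the second-order inner products $\la\cdot,\cdot\ra_{0,-1,\g+1}^K$ and $\la\cdot,\cdot\ra_{-1,0,\g+1}^L$.
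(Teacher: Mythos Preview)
Your proposal is correct and follows essentially the same approach as the paper: the paper proves the $\partial_3$ case in detail via the $J$-basis using \eqref{eq:ipdJ-1-1g-a} exactly as you do, then disposes of $\partial_1$ and $\partial_2$ by invoking \eqref{eq:K-1-1gA} and its $L$-analogue. Your write-up is simply more explicit about the coefficient matching and the low-index checks, which is fine.
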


\begin{proof}
We prove the case $\partial_3$ and will suppress $J$ in the subscript and the superscript. Taking derivative and 
using $\partial_3 J_{k,n}^{-1,-1,\g} = J_{k-1,n-1}^{0,0,g}(x,y)$ for $1 \le k \le n$ and $=0$ for $k=0$, we obtain 
\begin{equation} \label{proj3f-1-1g}
   \partial_3 \proj_n^{-1,-1,\g} f(x,y) =  \sum_{k=1}^{n} \wh f_{k,n}^{-1,-1,\g} J_{k-1,n}^{0,0,\g}
    =  \sum_{k=0}^{n-1} \wh f_{k+1,n}^{-1,-1,\g} J_{k,n}^{0,0,\g}.
\end{equation}
For $1 \le k \le n$, setting $f = J_{k,n}^{-1,-1,\g}$ in \eqref{eq:ipdJ-1-1g-a} shows that 
$$
  \la J_{k,n}^{-1,-1,\g}, J_{k,n}^{-1,-1,\g} \ra_{-1,-,1,\g} =    
 \la J_{k-1,n-1}^{0,0,\g}, J_{k-1,n-1}^{0,0,\g} \ra_{0,0,\g}, 
$$
which implies that $\wh f_{k+1,n}^{-1,-1,\g} =  \wh {\partial_3 f}_{k,n-1}^{0,0,\g}$ for 
$0 \le  k \le n-1$. Substituting these relations into \eqref{proj3f-1-1g} completes the proof. The proof
of the other two cases are similar, using the identity \eqref{eq:K-1-1gA} and its analogues for 
$\la \cdot,\cdot\ra_{-1,-1,\g}^K$ and $\la \cdot,\cdot\ra_{-1,-1,\g}^L$. 
\end{proof}

\subsection{The spaces $\CV_n(\varpi_{-1,\b,-1})$ and $\CV_n(\varpi_{\a,-1,-1})$}\label{sect:-1b-1}
As in the Subsection \ref{sect:a-1g}, we only record, without proof, the basic fact that will be needed later. 

For $\CV(\varpi_{-a,\b,-1})$, we define inner product
\begin{align} \label{eq:ipd1b1}
\begin{split}
  \la f, g \ra_{-1,\b,-1}^K: = &  \la \partial_1 f, \partial_1 g\ra_{0,\b,0} + \l \la f\vert_{z=0}, g\vert_{z=0}\ra_{-1,\b},
  \\
 \la f,g\ra_{-1,\b,-1}^L: = & \la \partial_2 f, \partial_2 g\ra_{-1,\b+1,0} + \l \la f\vert_{z=0}, g\vert_{z=0}\ra_{-1,\b}, \\
 \la f,g\ra_{-1,\b,-1}^J: =& \la \partial_3 f, \partial_3 g\ra_{0,\b+1,-1} + \l \la f(0,\cdot), g(0,\cdot)\ra_{-1,\b},
\end{split}
\end{align}
where $\la \cdot,\cdot\ra_{0,\b,0}$ is the ordinary inner product on the triangle, $\la \cdot,\cdot\ra_{-1,\b+1,0}$
and $\la \cdot,\cdot\ra_{0,\b+1,-1}$ are defined in \eqref{eq:ipd-1bg} and \eqref{eq:ipd-ab1J}, respectively; and
$\la f\vert_{z=0}, g\vert_{z=0}\ra_{-1,\g}$ is the inner product \eqref{eq:ipd-a-m} with $m=1$ applied on 
$f\vert_{z=0}(x,y) = f(x,1-x)$ and $g\vert_{z=0}(x,y) = g(x,1-x)$, and  $\la \cdot,\cdot\ra_{-1,\g}$
is the inner product \eqref{eq:ipd-ell-b} applied on $f(0,y)$ and $g(0,y)$.
Then 
the families $\{K_{k,n}^{-1,\b,-1}: 0 \le k \le n\}$, $\{L_{k,n}^{-1,\b,-1}: 0 \le k \le n\}$, $\{J_{k,n}^{-1,\b,-1}: 0 \le k \le n\}$
are mutually orthogonal bases of $\CV_n(\varpi_{-1,\b,-1})$ with respect to the inner product 
$\la \cdot, \cdot \ra_{-1,\b,-1}^K$, $\la \cdot, \cdot \ra_{-1,\b,-1}^L$, $\la \cdot, \cdot \ra_{-1,\b,-1}^J$, respectively. 

 
\begin{thm}
For $f \in W_2^1(\varpi_{0,\b,0})$,
\begin{align*}
      \partial_1 \proj_n^{-1,\b,-1} f & =  \proj_{n-1}^{0,\b, 0} \partial_1 f, \\
      \partial_2 \proj_n^{-1,\b,-1} f & =  \proj_{n-1}^{-1,\b+1,0} \partial_2 f, \\
      \partial_3 \proj_n^{-1,\b,-1} f & =  \proj_{n-1}^{0,\b+1,-1} \partial_3 f. 
\end{align*}\end{thm}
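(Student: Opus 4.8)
The plan is to follow the pattern of Theorems~\ref{thm:Dproj00-1} and~\ref{thm:proj-1-1g}: each of the three identities is proved by expanding $f$ in whichever of the three bases $\{J_{k,n}^{-1,\b,-1}\}$, $\{K_{k,n}^{-1,\b,-1}\}$, $\{L_{k,n}^{-1,\b,-1}\}$ is compatible with the partial derivative in question, using the fact recorded above that these are mutually orthogonal bases of $\CV_n(\varpi_{-1,\b,-1})$ for $\la\cdot,\cdot\ra_{-1,\b,-1}^J$, $\la\cdot,\cdot\ra_{-1,\b,-1}^K$, $\la\cdot,\cdot\ra_{-1,\b,-1}^L$ of \eqref{eq:ipd1b1} respectively, and that consequently the three associated partial sum operators all coincide with $\proj_n^{-1,\b,-1}$.

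For $\partial_1$ I would use the $K$-basis. By \eqref{eq:diffK}, $\partial_x K_{k,n}^{-1,\b,-1}=K_{k-1,n-1}^{0,\b,0}$ for $1\le k\le n$, while $\partial_x K_{0,n}^{-1,\b,-1}=0$ since $K_{0,n}^{-1,\b,-1}(x,y)=J_n^{-1,\b}(2y-1)$ depends on $y$ alone. Next one checks that $K_{k,n}^{-1,\b,-1}$ vanishes on the edge $z=0$ for every $1\le k\le n$: for $2\le k\le n-1$ this is immediate from \eqref{eq:K+K-}, which exhibits a factor $x(1-x-y)$, and the remaining values $k=1$, $k=n$ (not in the range of \eqref{eq:K+K-}) are verified directly from the definition, where a factor $1-x-y$ again appears. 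Since the first summand of $\la\cdot,\cdot\ra_{-1,\b,-1}^K$ is $\la\partial_1\cdot,\partial_1\cdot\ra_{0,\b,0}$ and its second summand involves $f$ only through $f\vert_{z=0}$, these two facts give $\la f,K_{k,n}^{-1,\b,-1}\ra_{-1,\b,-1}^K=\la\partial_1 f,K_{k-1,n-1}^{0,\b,0}\ra_{0,\b,0}$ for $1\le k\le n$; taking $f=K_{k,n}^{-1,\b,-1}$ produces the matching norm identity, so that $\wh f_{k,n,K}^{-1,\b,-1}=\wh{\partial_1 f}_{k-1,n-1}^{0,\b,0}$ for $1\le k\le n$. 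Hence, summing over $k$ and reindexing,
\[
 \partial_1\proj_n^{-1,\b,-1}f=\sum_{k=1}^{n}\wh f_{k,n,K}^{-1,\b,-1}K_{k-1,n-1}^{0,\b,0}
 =\sum_{j=0}^{n-1}\wh{\partial_1 f}_{j,n-1}^{0,\b,0}K_{j,n-1}^{0,\b,0}=\proj_{n-1}^{0,\b,0}\partial_1 f .
\]

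The other two identities run along the same lines. For $\partial_2$ one uses the $L$-basis: $\partial_y L_{k,n}^{-1,\b,-1}=-L_{k-1,n-1}^{-1,\b+1,0}$ for $1\le k\le n$ by \eqref{eq:diffL}, $\partial_y L_{0,n}^{-1,\b,-1}=0$ since $L_{0,n}^{-1,\b,-1}(x,y)=J_n^{\b,-1}(2x-1)$, and $L_{k,n}^{-1,\b,-1}$ vanishes on $z=0$ for $1\le k\le n$ by \eqref{eq:L+L-} together with a direct check at $k=1,n$; the minus sign enters the norm squared and therefore cancels in the coefficient comparison, yielding $\wh f_{k,n,L}^{-1,\b,-1}=-\wh{\partial_2 f}_{k-1,n-1}^{-1,\b+1,0}$ and hence $\partial_2\proj_n^{-1,\b,-1}f=\proj_{n-1}^{-1,\b+1,0}\partial_2 f$, where $\proj^{-1,\b+1,0}$ is the one-negative-parameter projection of Section~\ref{sect5} whose inner product $\la\cdot,\cdot\ra_{-1,\b+1,0}$ is the first summand of $\la\cdot,\cdot\ra_{-1,\b,-1}^L$. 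For $\partial_3$ one uses the $J$-basis: $\partial_3 J_{k,n}^{-1,\b,-1}=J_{k-1,n-1}^{0,\b+1,-1}$ for all $0\le k\le n$ by \eqref{eq:diff1J} (equal to $0$ at $k=0$, since $J_{0,n}^{-1,\b,-1}$ is a function of $x+y$), while $J_{k,n}^{-1,\b,-1}$ vanishes on the edge $x=0$ for $1\le k\le n$ (a factor $x$ from \eqref{eq:J+J-}, with $k=1,n$ checked directly); since the first summand of $\la\cdot,\cdot\ra_{-1,\b,-1}^J$ is $\la\partial_3\cdot,\partial_3\cdot\ra_{0,\b+1,-1}$ and its second summand involves $f$ only through $f(0,\cdot)$, the same computation gives $\wh f_{k,n,J}^{-1,\b,-1}=\wh{\partial_3 f}_{k-1,n-1}^{0,\b+1,-1}$ and hence the third identity.

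The only steps that genuinely require attention are: (i) the edge-vanishing of the three bases at the excluded indices $k=1$ and $k=n$, which \eqref{eq:J+J-}, \eqref{eq:K+K-}, \eqref{eq:L+L-} do not cover and which must be obtained from the defining formulas; and (ii) confirming that peeling off one derivative from $\la\cdot,\cdot\ra_{-1,\b,-1}^{K}$, $\la\cdot,\cdot\ra_{-1,\b,-1}^{L}$, $\la\cdot,\cdot\ra_{-1,\b,-1}^{J}$ lands exactly on $\la\cdot,\cdot\ra_{0,\b,0}$, $\la\cdot,\cdot\ra_{-1,\b+1,0}$, $\la\cdot,\cdot\ra_{0,\b+1,-1}$, for which $\{K_{k,n}^{0,\b,0}\}$, $\{L_{k,n}^{-1,\b+1,0}\}$, $\{J_{k,n}^{0,\b+1,-1}\}$ are already mutually orthogonal bases (the last two from Section~\ref{sect5}, the first from the classical theory). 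Once these are in place the coefficient matching and reindexing are routine, exactly as in Theorems~\ref{thm:Dproj00-1} and~\ref{thm:proj-1-1g}.
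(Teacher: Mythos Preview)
Your proof is correct and follows exactly the approach the paper intends: the paper records this theorem without an explicit proof, indicating that it is obtained from the $(-1,-1,\g)$ case (Theorem~\ref{thm:proj-1-1g}) by simultaneous permutation of $(x,y,1-x-y)$ and the parameters, and your direct argument with the $K$-, $L$-, $J$-bases is precisely what that permutation produces when unwound. One small remark: the ranges in \eqref{eq:J+J-}--\eqref{eq:L+L-} actually cover one of the two endpoint indices you flag in each case (for instance, $k=n$ is already inside the range of \eqref{eq:K+K-} when $\wh\b=0$), so only one direct check per basis is genuinely needed; this does not affect correctness.
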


For the space $\CV_n(\varpi_{\a,-1,-1})$, we define inner product 
\begin{align} \label{eq:ipd-a11}
\begin{split}
  \la f, g \ra_{\a,-1,-1}^L: = &  \la \partial_2 f, \partial_2 g\ra_{\a,0,0} + \l \la f(\cdot,0), g(\cdot,0)\ra_{-1,\a},
  \\
\la f,g\ra_{\a,-1,-1}^J: = & \la \partial_3 f, \partial_3 g\ra_{\a+1,0,-1} + \l  \la f(\cdot,0), g(\cdot,0)\ra_{-1,\a}, \\
\la f,g\ra_{\a,-1,-1}^K: =& \la \partial_1 f, \partial_1 g\ra_{\a+1,-1,0} + \l  \la f\vert_{z=0}, g\vert_{z=0}\ra_{-1,\a},
\end{split}
\end{align}
where $\la \cdot,\cdot\ra_{\a,0,0}$ is the ordinary inner product on the triangle, $\la \cdot,\cdot\ra_{\a+1,0,-1}$
and $\la \cdot,\cdot\ra_{\a+1,-1,0}$ are defined in \eqref{eq:ipd-ab1J} and \eqref{eq:ipd-a1g}, respectively, 
and $\la \cdot,\cdot\ra_{-1,\g}$ is the inner product of one variable \eqref{eq:ipd-ell-b} with $\ell =1$,
and $\la f\vert_{z=0}, g\vert_{z=0}\ra_{-1,\a}$ is the inner product of one variable \eqref{eq:ipd-a-m} applied 
on $f\vert_{z=0}(x) = f(x,1-x), g\vert_{z=0}(x) = f(x,1-x)$.
Then 
the families $\{L_{k,n}^{-\a,-1,-1}: 0 \le k \le n\}$, $\{J_{k,n}^{-\a,-1,-1}: 0 \le k \le n\}$, $\{K_{k,n}^{-\a,-1,-1}: 0 \le k \le n\}$
are mutually orthogonal bases of $\CV_n(\varpi_{-\a,-1,-1})$ with respect to the inner product 
$\la \cdot, \cdot \ra_{-\a,-1,-1}^L$, $\la \cdot, \cdot \ra_{-\a,-1,-1}^J$, $\la \cdot, \cdot \ra_{-\a,-1,-1}^K$, respectively. 
 
 
\begin{thm}
For $f \in W_2^1(\varpi_{\a,0,0})$,
\begin{align*}
      \partial_1 \proj_n^{\a,-1,-1} f & =  \proj_{n-1}^{\a+1,-1,0} \partial_1 f, \\
      \partial_2 \proj_n^{\a,-1,-1} f & =  \proj_{n-1}^{-\a, 0, 0} \partial_2 f, \\
      \partial_3 \proj_n^{\a,-1,-1} f & =  \proj_{n-1}^{\a+1,0,-1} \partial_3 f. 
\end{align*}
\end{thm}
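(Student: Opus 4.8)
The plan is to derive the theorem for $\CV_n(\varpi_{\a,-1,-1})$ from the corresponding results for $\CV_n(\varpi_{-1,-1,\g})$ established in Section~\ref{sect6} (taking $\g=\a>-1$), by the simultaneous permutation of variables and parameters already used in Subsection~\ref{sect:a-1g}. First I would introduce the substitution operator $\Phi$ on functions on $\triangle$ by $(\Phi g)(x,y):=g(y,1-x-y)$; the affine map $(x,y)\mapsto(y,1-x-y)$ is a bijection of $\triangle$ of Jacobian $\pm1$ that cycles the vertices, so $\Phi$ is a linear automorphism of $\Pi_n^2$ carrying $\CV_n(\varpi_{-1,-1,\a})$ onto $\CV_n(\varpi_{\a,-1,-1})$. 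By the identities in \eqref{eq:3bases-transform} that involve the substitution $(x,y)\mapsto(y,1-x-y)$ it maps the three bases of Section~\ref{sect6} onto the three bases here,
\[
  \Phi\bigl(J_{k,n}^{-1,-1,\a}\bigr)=L_{k,n}^{\a,-1,-1},\qquad
  \Phi\bigl(K_{k,n}^{-1,-1,\a}\bigr)=J_{k,n}^{\a,-1,-1},\qquad
  \Phi\bigl(L_{k,n}^{-1,-1,\a}\bigr)=K_{k,n}^{\a,-1,-1},
\]
and since the three source spanning sets coincide, so do their $\Phi$-images; this re-proves the (otherwise unproved) assertion that each of $\{J_{k,n}^{\a,-1,-1}\}$, $\{K_{k,n}^{\a,-1,-1}\}$, $\{L_{k,n}^{\a,-1,-1}\}$ is a basis of $\CV_n(\varpi_{\a,-1,-1})$.

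Next I would verify that $\Phi$ identifies the Sobolev inner products, with the roles of the three families permuted exactly as above. Using the change of variables together with the differentiation rules \eqref{eq:d-transform2}, namely $\partial_x(\Phi f)=-\Phi(\partial_2 f)$, $\partial_y(\Phi f)=-\Phi(\partial_3 f)$, $\partial_z(\Phi f)=\Phi(\partial_1 f)$, one checks
\[
 \la \Phi u,\Phi v\ra_{\a,-1,-1}^{L}=\la u,v\ra_{-1,-1,\a}^{J},\quad
 \la \Phi u,\Phi v\ra_{\a,-1,-1}^{J}=\la u,v\ra_{-1,-1,\a}^{K},\quad
 \la \Phi u,\Phi v\ra_{\a,-1,-1}^{K}=\la u,v\ra_{-1,-1,\a}^{L}
\]
up to the value of the generic constant $\l$, which is immaterial. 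The $L^2$-type summands match because $\Phi$ cyclically permutes the weight exponents, so the triples $(0,0,\a)$, $(0,-1,\a+1)$, $(-1,0,\a+1)$ occurring in \eqref{eq:ipd11gJ}--\eqref{eq:ipd11gL} become $(\a,0,0)$, $(\a+1,0,-1)$, $(\a+1,-1,0)$ in \eqref{eq:ipd-a11}; the one-dimensional boundary summands match because $\Phi$ cycles the edges $\{x=0\}$, $\{y=0\}$, $\{z=0\}$ of $\triangle$, under which the one-variable inner products $\la\cdot,\cdot\ra_{\a,-1}$ and $\la\cdot,\cdot\ra_{-1,\a}$ of Section~\ref{sect3} correspond via $t\mapsto1-t$ (this is the relation between $J_n^{\a,-1}(2t-1)$ and $J_n^{\a,-1}(1-2t)$ discussed there). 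The same verification must be carried out one level down for the nested inner products $\la\cdot,\cdot\ra_{\a+1,0,-1}$ and $\la\cdot,\cdot\ra_{\a+1,-1,0}$, which transport to their Section~\ref{sect5} counterparts by the same cyclic permutation, so the identification is consistent throughout. Since the orthogonal projection onto a subspace is basis-independent (Section~\ref{sect6}), it follows that $\Phi$ is an isometric isomorphism $\CV_n(\varpi_{-1,-1,\a})\to\CV_n(\varpi_{\a,-1,-1})$ carrying $W_2^1(\varpi_{0,0,\a})\cap W_2^2$ onto $W_2^1(\varpi_{\a,0,0})\cap W_2^2$, and hence $\Phi\circ\proj_n^{-1,-1,\a}=\proj_n^{\a,-1,-1}\circ\Phi$.

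The last step is to conjugate the three identities of Theorem~\ref{thm:proj-1-1g} by $\Phi$. From the rules above, $\Phi\,\partial_1\,\Phi^{-1}=\partial_3$, $\Phi\,\partial_2\,\Phi^{-1}=-\partial_1$, $\Phi\,\partial_3\,\Phi^{-1}=-\partial_2$, while the parameter triples transform by $(0,-1,\a+1)\mapsto(\a+1,0,-1)$, $(-1,0,\a+1)\mapsto(\a+1,-1,0)$, $(0,0,\a)\mapsto(\a,0,0)$. Thus $\partial_1\proj_n^{-1,-1,\a}=\proj_{n-1}^{0,-1,\a+1}\partial_1$ becomes $\partial_3\proj_n^{\a,-1,-1}=\proj_{n-1}^{\a+1,0,-1}\partial_3$; $\partial_2\proj_n^{-1,-1,\a}=\proj_{n-1}^{-1,0,\a+1}\partial_2$ becomes $\partial_1\proj_n^{\a,-1,-1}=\proj_{n-1}^{\a+1,-1,0}\partial_1$; and $\partial_3\proj_n^{-1,-1,\a}=\proj_{n-1}^{0,0,\a}\partial_3$ becomes $\partial_2\proj_n^{\a,-1,-1}=\proj_{n-1}^{\a,0,0}\partial_2$, which are the three asserted relations (in particular the middle exponent is $\a$, not $-\a$). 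A self-contained alternative is to imitate the proof of Theorem~\ref{thm:proj-1-1g} directly: compute $\partial_1\proj_n^{\a,-1,-1}f$ in the $K$-basis, $\partial_2$ in the $L$-basis, and $\partial_3$ in the $J$-basis; Propositions~\ref{prop:partialJ}--\ref{prop:partialL} identify the derivative of each basis polynomial with a shifted basis polynomial of the target space, and \eqref{eq:J+J-}--\eqref{eq:L+L-} show that each basis polynomial (or its relevant first derivative) vanishes on the boundary edge occurring in the corresponding inner product of \eqref{eq:ipd-a11}, so that $\la f,\cdot\ra^{\bullet}_{\a,-1,-1}$ collapses to the target inner product of the derivatives; taking $f$ to be the basis polynomial gives the norm relation, hence the equality of Fourier coefficients, and summation over $k$ finishes it. The main obstacle along either route is not analytic but organizational: keeping straight the pairing of $J,K,L$ with the three inner products and the three partial derivatives, and confirming that the nested inner products in \eqref{eq:ipd-a11} really are the cyclic transports of those in \eqref{eq:ipd11gJ}--\eqref{eq:ipd11gL}; there is no new content beyond Sections~\ref{sect5}--\ref{sect6}.
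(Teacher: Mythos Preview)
Your proposal is correct and follows precisely the approach the paper intends: the paper states this theorem without proof, noting at the start of Subsection~\ref{sect:-1b-1} that the results ``follow from simultaneous permutations of parameters and variables'' as in Subsection~\ref{sect:a-1g}, which is exactly the conjugation by $\Phi$ that you carry out in detail. Your observation that the middle identity should read $\proj_{n-1}^{\a,0,0}$ rather than $\proj_{n-1}^{-\a,0,0}$ is also correct; this is a typographical error in the statement.
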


\section{Sobolev orthogonality with parameter $(-1,-1,-1)$}
\label{sect7}
\setcounter{equation}{0}

For positive $\l$, we define an inner product for $W_2^2$ by 
\begin{align} \label{eq:ipd111J} 
  \la f, g \ra_{-1,-1,-1}^J: =  \la \partial_3 f, \partial_3 g\ra_{0,0,-1}^J + \la f(0,\cdot), g(0,\cdot) \ra_{-1,-1},
\end{align}
where $\la \cdot,\cdot\ra_{-1,-1}$ is the inner product of one variable in \eqref{eq:ipd-a-m} with $\ell = m =1$,
$$  
      \la f,g\ra_{-1,-1}:= \int_0^1  f'(t) g'(t) dt + \lambda_0 f(0) g(0) 
$$
applied to $f(0,y) g(0,y)$. It is easy to see that this is indeed an inner product. 

Let $\CV_n(\varpi_{-1,-1,-1})$ be the space of orthogonal polynomials of degree $n$ with respect to this inner
product. We need the polynomials $J_{k,n}^{-1,-1,-1}$, which satisfies, for $n \ge 2$, 
\begin{align} \label{eq:J-1-1-1}
 \begin{split}
 J_{0,n}^{-1,-1,-1}(x,y) & = (x+y)J_{0,n-1}^{0,0,-1}(x,y),   \quad J_{1,n}^{-1,-1,-1}(x,y)   = - x J_{0,n-1}^{0,0,-1}(x,y),\\
  J_{k,n}^{-1,-1,-1}(x,y) & =  -\frac{x y (1-x-y) }{(n-k)k(k-1)}J_{k-2,n-3}^{1,1,1}(x,y), \qquad 2 \le k \le n-1, \\
  J_{n,n}^{-1,-1,-1}(x,y) & = \frac{-1}{n(n-1)} x y   J_{n-2,n-2}^{1,1,0}(x,y).
\end{split}
\end{align}

\begin{prop} \label{prop:J-1-1-1}
The set $\{J_{k,n}^{-1,-1,-1}: 0 \le k \le n\}$ consists of a mutually orthogonal basis of $\CV_n(\varpi_{-1,-1,-1})$. 
\end{prop}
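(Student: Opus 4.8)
The plan is to mimic the argument of Proposition~\ref{prop:J-1-1g}, now with two derivatives instead of one. The inner product \eqref{eq:ipd111J} consists of an integral term $\la \partial_3 f, \partial_3 g\ra_{0,0,-1}^J$ that already knows about the $(0,0,-1)$ Sobolev structure of Section~\ref{sect6}, plus a one-variable boundary term on the edge $x=0$. So the strategy is: first peel off the boundary term by showing the relevant polynomials and their $\partial_3$-derivatives vanish appropriately on $x=0$, then reduce the integral term to the already-understood orthogonality of $J_{k-1,n-1}^{0,0,-1}$ with respect to $\la\cdot,\cdot\ra_{0,0,-1}^J$.

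First I would record the derivative formula: by \eqref{eq:diff1J} (third identity) together with the degree-reduction formulas \eqref{eq:J-1-1-1}, one gets $\partial_3 J_{0,n}^{-1,-1,-1}=0$ and $\partial_3 J_{k,n}^{-1,-1,-1}(x,y)=J_{k-1,n-1}^{0,0,-1}(x,y)$ for $1\le k\le n$; the cases $k=0,1$ and $k=n$ need a short direct check using $\partial_z g(x+y)=0$ and $J_{0,n-1}^{0,0,-1}(x,y)=J_{n-1}^{1,-1}(1-2x-2y)$, exactly as in the commented-out remark following \eqref{eq:J-1-1-1}. Second, I would check the boundary behavior on $x=0$: from \eqref{eq:J-1-1-1}, $J_{k,n}^{-1,-1,-1}(0,y)=0$ for $2\le k\le n$ because each such polynomial carries a factor $x$, while $J_{0,n}^{-1,-1,-1}(0,y)$ and $J_{1,n}^{-1,-1,-1}(0,y)$ are (up to constants) both equal to $y\,J_{n-1}^{1,-1}(1-2y)$ — more precisely $J_{0,n}^{-1,-1,-1}(0,y)=y J_{0,n-1}^{0,0,-1}(0,y)$ and $J_{1,n}^{-1,-1,-1}(0,y)=0$ — so that the restriction to $x=0$ lands in a one-parameter family that is orthogonal with respect to $\la\cdot,\cdot\ra_{-1,-1}$ by Proposition~\ref{prop:Qlb} (this is where the renormalized Jacobi polynomials $\wh J_n^{-1,-1}$ of Section~\ref{sect3} enter). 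One must be a little careful here: I would verify that the trace $J_{0,n}^{-1,-1,-1}(0,\cdot)$ is, up to a nonzero constant, the polynomial $\wh J_n^{-1,-1}(\cdot)$ from Proposition~\ref{prop:Qlb}, so that its $\la\cdot,\cdot\ra_{-1,-1}$-orthogonality to lower degrees is automatic.

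Third, the main computation: for $1\le k\le n$ and any $g$ of degree $\le n$, since $J_{k,n}^{-1,-1,-1}(0,\cdot)=0$ the boundary term drops out, and
\[
   \la g, J_{k,n}^{-1,-1,-1}\ra_{-1,-1,-1}^J
   = \la \partial_3 g, \partial_3 J_{k,n}^{-1,-1,-1}\ra_{0,0,-1}^J
   = \la \partial_3 g, J_{k-1,n-1}^{0,0,-1}\ra_{0,0,-1}^J,
\]
which vanishes whenever $J_{l,m}^{0,0,-1}$ with $(l,m)\ne(k-1,n-1)$, $l\le m\le n-1$, appears in the expansion of $\partial_3 g$ — i.e. it is zero unless $g$ has degree $n$ and contributes the $J_{k,n}^{-1,-1,-1}$ component — by Proposition~\ref{prop:J-1-1g} applied with $\g=-1$. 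For $k=0$ we use $\partial_3 J_{0,n}^{-1,-1,-1}=0$, so only the boundary term survives and orthogonality follows from the one-variable statement of the previous paragraph. Taking $g=J_{l,n}^{-1,-1,-1}$ for $0\le l\le n$ and combining the $k\ge1$ and $k=0$ cases gives mutual orthogonality, and since there are $n+1$ of them in $\CV_n(\varpi_{-1,-1,-1})$ they form a basis.

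The main obstacle I anticipate is not the conceptual structure — which is a clean ``strip off the top derivative, induct on the parameters'' argument — but the bookkeeping at the boundary and at the degenerate indices $k=0,1,n$, where the generic derivative and degree-reduction formulas of Section~\ref{sect4} fail and must be replaced by the explicit expressions in \eqref{eq:J-1-1-1}; in particular one must confirm that $\la\cdot,\cdot\ra_{0,0,-1}^J$ genuinely reproduces the needed orthogonality of $J_{k-1,n-1}^{0,0,-1}$ for the full range $1\le k\le n$ (including $k=n$, where $J_{n-1,n-1}^{0,0,-1}$ is a boundary case of the family), and that the inner product \eqref{eq:ipd111J} is in fact positive definite on $W_2^2$ so that ``mutually orthogonal'' is meaningful. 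That positivity is the sort of claim the text asserts as ``easy to see,'' and I would give the one-line argument: $\la f,f\ra=0$ forces $\partial_z f\equiv0$, hence $f(x,y)=f_0(x+y)$, and then the one-variable $\la\cdot,\cdot\ra_{-1,-1}$-term forces $f_0\equiv0$.
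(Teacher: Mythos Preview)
Your approach is correct and essentially identical to the paper's: reduce via $\partial_3 J_{k,n}^{-1,-1,-1}=J_{k-1,n-1}^{0,0,-1}$ and the vanishing of $J_{k,n}^{-1,-1,-1}(0,\cdot)$ for $k\ge 1$ to obtain the key identity $\la f, J_{k,n}^{-1,-1,-1}\ra_{-1,-1,-1}^J=\la \partial_3 f, J_{k-1,n-1}^{0,0,-1}\ra_{0,0,-1}^J$, then handle $k=0$ via the one-variable orthogonality of $J_n^{-1,-1}(1-2y)$. Two small fixes: the orthogonality of $J_{k-1,n-1}^{0,0,-1}$ in $\la\cdot,\cdot\ra_{0,0,-1}^J$ comes from Proposition~\ref{prop:00-1} (your cited Proposition~\ref{prop:J-1-1g} requires $\g>-1$ and cannot be ``applied with $\g=-1$''), and the one-variable inner product $\la\cdot,\cdot\ra_{-1,-1}$ here is the one in \eqref{eq:ipd-a-m}, so the relevant reference is Proposition~\ref{prop:Qam} rather than Proposition~\ref{prop:Qlb}.
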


\begin{proof}
From \eqref{eq:J-1-1-1}, $J_{k,n}^{-1,-1,-1}(0,0) =0$ as long as $n \ge 1$. From \eqref{eq:J-1-1-1}, it is easy
to see that $\partial_y J_{k,n}^{-1,-1,-1}(y,0) = 0$ for $1 \le k \le n$. Hence, it follows readily that, for $1 \le k \le n$, 
\begin{align} \label{eq:ipdJ-1-1-1a}
  \la f, J_{k,n}^{-1,-1,-1} \ra_{-1,-1,-1}^J = \la \partial_3 f, J_{k-1,n-1}^{0,0,-1} \ra_{0,0,-1}^J. 
\end{align}
Now, since $\partial_z J_{0,n}^{-1,-1,-1}(x,y) =0$, this implies immediately that $J_{k,n}^{-1,-1,-1}$ is mutually 
orthogonal to $J_{l,n}^{-1,-1,-1}$ if $1 \le k \le n$ and $0 \le l \le n$ or $0\le k \le n$ and $1 \le l \le n$. In the 
remaining case of $k = l= 0$, using $\partial_z J_{0,n}^{-1,-1,-1}(x,y) =0$ and 
$J_{0,n}^{-1,-1,-1}(0,y) = J_n^{-1,-1}(1-2y)$, we deduce that 
\begin{align} \label{eq:ipdJ-1-1-1b}
  \la f, J_{0,n}^{-1,-1,-1} \ra_{-1,-1,-1}^J = \la f, J_n^{-1,-1}(1-2\{\cdot\}) \ra_{-1,-1}. 
\end{align}
Since $J_n^{-1,-1}(1-2y)$ are orthogonal polynomials with respect to $\la \cdot, \cdot\ra_{-1,-1}$, this completes the proof.  
\end{proof}

\begin{prop}\label{prop:KL-1-1-1basis}
For $n \ge 2$, both $\{K_{k,n}^{-1,-1,-1}:0 \le k \le n\}$ and $\{L_{k,n}^{-1,-1,-1}:0 \le k \le n\}$ are bases of 
$\CV_n(\varpi_{-1,-1,-1})$. 
\end{prop}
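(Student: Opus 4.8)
The plan is to follow the scheme of Propositions~\ref{prop:Vn00-1} and~\ref{prop:bases11g}. Since $\{J_{k,n}^{-1,-1,-1}:0\le k\le n\}$ is a basis of $\CV_n(\varpi_{-1,-1,-1})$ by Proposition~\ref{prop:J-1-1-1}, that space has dimension $n+1$; hence it suffices to prove that each $K_{k,n}^{-1,-1,-1}$, $0\le k\le n$, is a polynomial of degree $n$ that is orthogonal with respect to $\la\cdot,\cdot\ra_{-1,-1,-1}^J$ to every polynomial of degree at most $n-1$, and that the $n+1$ polynomials $K_{k,n}^{-1,-1,-1}$ are linearly independent; then they are forced to form a basis of $\CV_n(\varpi_{-1,-1,-1})$. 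The argument for $L_{k,n}^{-1,-1,-1}$ is identical, with \eqref{eq:L+L-} and Proposition~\ref{prop:partialL} in place of \eqref{eq:K+K-} and Proposition~\ref{prop:partialK}.

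For the orthogonality, fix $g\in\Pi_{n-1}^2$ and split, using \eqref{eq:ipd111J},
\[
 \la K_{k,n}^{-1,-1,-1},g\ra_{-1,-1,-1}^J
 = \la \partial_3 K_{k,n}^{-1,-1,-1},\partial_3 g\ra_{0,0,-1}^J
 + \la K_{k,n}^{-1,-1,-1}(0,\cdot),g(0,\cdot)\ra_{-1,-1}.
\]
For the first term, Proposition~\ref{prop:partialK} (for a few small exceptional indices one instead reads $\partial_3 K_{k,n}^{-1,-1,-1}$ off the explicit reduction \eqref{eq:K+K-}) shows that $\partial_3 K_{k,n}^{-1,-1,-1}$ is a linear combination of $K_{k-1,n-1}^{0,0,-1}$ and $K_{k,n-1}^{0,0,-1}$, hence lies in $\CV_{n-1}(\varpi_{0,0,-1})$ by Proposition~\ref{prop:Vn00-1}; being orthogonal with respect to $\la\cdot,\cdot\ra_{0,0,-1}^J$ to all polynomials of degree at most $n-2$, and $\partial_3 g$ being such a polynomial, this term vanishes. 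For the second term one evaluates the trace $K_{k,n}^{-1,-1,-1}(0,y)$: using \eqref{eq:Jam}, \eqref{eq:Jlm} together with the fact, immediate from \eqref{eq:Jab}, that $J_k^{-1,-1}(1)=0$ for $k\ge1$ — whence $J_k^{-1,-1}(-1)=0$ for $k\ge2$ by \eqref{eq:Jn(-t)} — one finds $K_{k,n}^{-1,-1,-1}(0,\cdot)\equiv0$ for $2\le k\le n$, while $K_{0,n}^{-1,-1,-1}(0,y)=J_n^{-1,-1}(2y-1)$ and $K_{1,n}^{-1,-1,-1}(0,y)=n\,J_n^{-1,-1}(2y-1)$ are scalar multiples of $J_n^{-1,-1}(2y-1)$, which for $n\ge2$ equals $\wh J_n^{-1,-1}(y)$, the orthogonal polynomial of degree $n$ for $\la\cdot,\cdot\ra_{-1,-1}$ (Proposition~\ref{prop:Qam}); since $g(0,\cdot)$ has degree at most $n-1$, this term vanishes as well. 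Hence $K_{k,n}^{-1,-1,-1}\in\CV_n(\varpi_{-1,-1,-1})$.

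Linear independence will follow from a triangular pattern in the top-degree monomials. The factor $(1-y)^k J_k^{-1,-1}\!\bigl(\tfrac{2x}{1-y}-1\bigr)$ is a polynomial of degree exactly $k$ in $x$ whose coefficient of $x^k$ is the nonzero constant $1/k!$, so $K_{k,n}^{-1,-1,-1}$ has $x$-degree exactly $k$ and its coefficient of $x^k$ equals $\tfrac1{k!}J_{n-k}^{2k-1,-1}(2y-1)$, a polynomial of $y$ of degree exactly $n-k$. Thus the coefficient of the monomial $x^k y^{n-k}$ in $K_{k,n}^{-1,-1,-1}$ is nonzero while that of $x^j y^{n-j}$ is zero for $j>k$; reading off these coefficients for $k=n,n-1,\dots,0$ in turn shows that any vanishing linear combination of the $K_{k,n}^{-1,-1,-1}$ is trivial, and the dimension count completes the proof. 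I expect the main technical obstacle to be the explicit evaluation of the boundary traces $K_{k,n}^{-1,-1,-1}(0,\cdot)$ (and of $\partial_3 K_{k,n}^{-1,-1,-1}$ at the exceptional indices) in the singular parameter regime, where the classical Jacobi relations must be replaced by their extended analogues established in Section~\ref{sect2}; the remaining steps are routine bookkeeping.
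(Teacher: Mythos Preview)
Your proof is correct and follows essentially the same strategy as the paper: both split the inner product \eqref{eq:ipd111J} into the $\partial_3$-part and the boundary part, use \eqref{eq:diffK} together with Proposition~\ref{prop:Vn00-1} for the former, and handle the trace on $x=0$ explicitly for the latter. Two minor differences are worth noting. First, for the boundary term you compute the full trace $K_{k,n}^{-1,-1,-1}(0,y)$ and identify it (for $k=0,1$) as a scalar multiple of the degree-$n$ orthogonal polynomial for $\la\cdot,\cdot\ra_{-1,-1}$, whereas the paper instead verifies $K_{k,n}^{-1,-1,-1}(0,0)=0$ and computes $\partial_y K_{k,n}^{-1,-1,-1}(0,y)$ separately; your version is slightly cleaner but equivalent. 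Second, you supply an explicit triangular argument for linear independence, which the paper leaves implicit.
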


\begin{proof}
We give a proof only for $K_{k,n}^{-1,-1,-1}$. Let $g$ be a generic polynomial of degree at most $n-1$. 
By \eqref{eq:diffK}, $\partial_z K_{1,n}^{-1,-1,-1} (x,y) = K_{1,n-1}^{0,0,-1}(x,y)$ and 
$$
  \partial_z K_{k,n}^{-1,-1,-1} (x,y) = -\frac{n+k-1}{2(2k-1)} K_{k-1,n-1}^{0,0,-1}(x,y)+K_{k,n-1}^{0,0,-1}(x,y), \quad k \ne 1.
$$
Hence, we see that $\la \partial_z K_{k,n}^{-1,-1,-1}, \partial_z g\ra_{0,0,-1}^J =0$ by Proposition
\ref{prop:Vn00-1}. Now, by \eqref{eq:K+K-}, it is easy to check that $K_{k,n}^{-1,-1,-1}(0,0) = 0$ for $0 \le k \le n$ 
and $n \ge 2$. Moreover, by \eqref{eq:diffK} again and by 
\eqref{eq:K+K-}, 
it is not hard to verify that $\partial_y K_{k,n}^{-1,-1,-1}(0,y) = 0$  and  for $2\le k \le n$, which
shows that $\la K_{k,n}^{-1,-1,-1}(0,\cdot),g\ra_{-1,-1} =0$ for $2 \le k\le n$. For the remaining two cases, 
we have $\partial_y K_{1,n}^{-1,-1,-1}(0,y) = n J_n^{0,0}(2y-1)$ and $\partial_y K_{1,n}^{-1,-1,-1}(0,y) =
 J_n^{0,0}(2y-1)$, which shows that  $\la K_{k,n}^{-1,-1,-1}(0,\cdot),g\ra_{-1,-1} =0$ holds for $k=0$ and $1$ 
 as well. Putting together, we have shown that $\la  K_{k,n}^{-1,-1,-1}, g\ra_{-1,-1,-1}^J =0$ for $0 \le k \le n$
 and $n \ge 2$, so that $K_{k,n}^{-1,-1,-1} \in \CV_n(\varpi_{-1,-1,-1})$. The proof for $L_{k,n}^{-1,-1,-1}$ is 
 similar. 
\end{proof}

We can define two more inner products on $W_2^2$ for which $K_{k,n}^{-1,-1,-1}$ and $L_{k,n}^{-1,-1,-1}$
are mutually orthogonal, respectively. Let
\begin{align} \label{eq:ipd111KL}
\begin{split}
  \la f, g \ra_{-1,-1,-1}^K: = & \la \partial_1 f, \partial_1 g\ra_{0,-1,0}^K + \la f\vert_{z=0}, g\vert_{z=0} \ra_{-1,-1}, \\
  \la f, g \ra_{-1,-1,-1}^L: = & \la \partial_1 f, \partial_1 g\ra_{-1,0,0}^L + \la f(\cdot,0), g(\cdot,0) \ra_{-1,-1},
\end{split}
\end{align}
where $ \la f\vert_{z=0}, g\vert_{z=0} \ra_{-1,-1}$ is the inner product $\la \cdot,\cdot\ra_{-1,-1}$ applied on $f(x,y)|_{z=0}$,
$$
\la f\vert_{z=0}, g\vert_{z=0} \ra_{-1,-1}:= \int_0^1 \partial_3 f(x,1-x) \partial_3 g(x,1-x)dx + \l_0 f(0,1)g(0,1).
$$
and $\la \cdot,\cdot\ra_{-1,-1}$ is the inner product of one variable in \eqref{eq:ipd-a-m} with $\ell = m =1$,
$$  
      \la f,g\ra_{-1,-1}:= \int_0^1  f'(t) g'(t) dt + \lambda_0 f(1) g(1).  
$$

\begin{prop}\label{prop:KL-1-1-1}
The set $\{K_{k,n}^{-1,-1,-1}: 0 \le k \le n\}$ and $\{L_{k,n}^{-1,-1,-1}: 0 \le k \le n\}$ are mutually orthogonal bases of 
$\CV_n(\varpi_{-1,-1,-1})$ with respect to the inner product $\la \cdot,\cdot\ra_{-1,-1,-1}^K$ and 
$\la \cdot,\cdot\ra_{-1,-1,-1}^L$, respectively. 
\end{prop}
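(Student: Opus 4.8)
The plan is to imitate the proof of Proposition~\ref{prop:00-1A}: one peels off a single partial derivative so that the computation collapses to the one‑negative‑parameter situation already settled in Section~\ref{sect5}. I would carry this out for $\{K_{k,n}^{-1,-1,-1}\}$ in full; the statement for $\{L_{k,n}^{-1,-1,-1}\}$ then follows by the same reasoning after interchanging the roles of the variables via \eqref{eq:3bases-transform} and \eqref{eq:d-transform1}, with $\partial_2$ playing the role of $\partial_1$ and the edge $y=0$ that of $z=0$. Since Proposition~\ref{prop:KL-1-1-1basis} already exhibits $\{K_{k,n}^{-1,-1,-1}:0\le k\le n\}$ as a basis of $\CV_n(\varpi_{-1,-1,-1})$, the only thing left to prove is that these polynomials are pairwise orthogonal with respect to $\la\cdot,\cdot\ra_{-1,-1,-1}^K$.

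The reduction rests on two observations. First, the opening identity of \eqref{eq:diffK} gives $\partial_1 K_{k,n}^{-1,-1,-1}=K_{k-1,n-1}^{0,-1,0}$ for $1\le k\le n$, while $\partial_1 K_{0,n}^{-1,-1,-1}=0$ because $K_{0,n}^{-1,-1,-1}(x,y)=J_n^{-1,-1}(2y-1)$ is independent of $x$. Second, $K_{k,n}^{-1,-1,-1}$ vanishes identically on the edge $z=1-x-y=0$ for every $1\le k\le n$: for $2\le k\le n-1$ this is read off from the factor $X^{1,1,1}=xy(1-x-y)$ exhibited by \eqref{eq:K+K-}; for $k=1$ a direct computation from $J_1^{-1,-1}(t)=(t-1)/2$ gives $K_{1,n}^{-1,-1,-1}(x,y)=-(1-x-y)J_{n-1}^{1,-1}(2y-1)$; and for $k=n$ one has $K_{n,n}^{-1,-1,-1}(x,y)=(1-y)^n J_n^{-1,-1}\!\bigl(\tfrac{2x}{1-y}-1\bigr)$, whose restriction to $z=0$ is $x^n J_n^{-1,-1}(1)=0$ since $J_m^{-1,\b}(1)=0$ for every $m\ge1$. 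Consequently the boundary term of $\la\cdot,\cdot\ra_{-1,-1,-1}^K$ contributes nothing when one of the arguments is $K_{k,n}^{-1,-1,-1}$ with $k\ge1$, and for $k=0$ only that boundary term survives, with $K_{0,n}^{-1,-1,-1}\vert_{z=0}(x)=J_n^{-1,-1}(1-2x)$.

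Putting these together, for $(k,n)\neq(l,m)$ there are three cases. If $k,l\ge1$, then $\la K_{k,n}^{-1,-1,-1},K_{l,m}^{-1,-1,-1}\ra_{-1,-1,-1}^K=\la K_{k-1,n-1}^{0,-1,0},K_{l-1,m-1}^{0,-1,0}\ra_{0,-1,0}^K=0$ by the mutual orthogonality of the family $\{K_{j,p}^{0,-1,0}\}$ with respect to $\la\cdot,\cdot\ra_{0,-1,0}^K$ recorded in \secref{sect:a-1g} (taken with $\a=\g=0$). If exactly one of $k,l$ equals $0$, both the first term and the boundary term vanish. If $k=l=0$, the inner product equals $\la J_n^{-1,-1}(1-2\cdot),J_m^{-1,-1}(1-2\cdot)\ra_{-1,-1}$; since $\tfrac{d}{dx}J_n^{-1,-1}(1-2x)=-J_{n-1}^{0,0}(1-2x)$ by \eqref{derivativeJ} and $J_n^{-1,-1}(1)=0$ for $n\ge1$, this reduces for $n\neq m$ to $\int_0^1 J_{n-1}^{0,0}(1-2x)J_{m-1}^{0,0}(1-2x)\,dx$, which is zero by the orthogonality of the Legendre polynomials on $[0,1]$. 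This establishes the claim for the $K$‑family, and the $L$‑family is handled identically.

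I expect the one genuinely fiddly point to be the boundary bookkeeping at the extreme indices $k=1$ and $k=n$, where the generic factorization \eqref{eq:K+K-} does not apply and one must instead use the explicit form $J_1^{-1,-1}(t)=(t-1)/2$ together with the vanishing $J_m^{-1,\b}(1)=0$; one must also keep straight which edge and which one‑variable Sobolev inner product is attached to each of the two bases, since these differ between the $K$ and $L$ cases even though the overall scheme is the same.
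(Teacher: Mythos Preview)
Your proof is correct, but it takes a different route from the paper's. The paper disposes of this proposition in one line: since $K_{k,n}^{-1,-1,-1}(x,y)=J_{k,n}^{-1,-1,-1}(1-x-y,x)$ and $L_{k,n}^{-1,-1,-1}(x,y)=J_{k,n}^{-1,-1,-1}(y,1-x-y)$ by \eqref{eq:3bases-transform}, and the inner products $\la\cdot,\cdot\ra_{-1,-1,-1}^K$ and $\la\cdot,\cdot\ra_{-1,-1,-1}^L$ are defined precisely as the corresponding permutations of $\la\cdot,\cdot\ra_{-1,-1,-1}^J$, the mutual orthogonality transfers directly from Proposition~\ref{prop:J-1-1-1} by the change of variables. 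No separate boundary analysis is needed.

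Your approach instead reproves the analogue of Proposition~\ref{prop:J-1-1-1} for the $K$-basis from scratch: you peel off $\partial_1$ and reduce to the already-known orthogonality of $\{K_{j,p}^{0,-1,0}\}$ with respect to $\la\cdot,\cdot\ra_{0,-1,0}^K$ from \secref{sect:a-1g}. This is longer but has a genuine payoff: it makes explicit the reduction identity
\[
\la f, K_{k,n}^{-1,-1,-1}\ra_{-1,-1,-1}^K \;=\; \la \partial_1 f,\, K_{k-1,n-1}^{0,-1,0}\ra_{0,-1,0}^K,\qquad 1\le k\le n,
\]
which is exactly the ingredient used (without being written out) in the proof of Theorem~\ref{thm:Dproj-1-1-1} to establish $\partial_1 S_n^{-1,-1,-1}f=S_{n-1}^{0,-1,0}\partial_1 f$. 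Your edge bookkeeping for $k=1$ and $k=n$ is correct; note that the boundary term in $\la\cdot,\cdot\ra_{-1,-1,-1}^K$ involves $\partial_3 f$ along $z=0$ and $f(0,1)$, and both vanish once $K_{k,n}^{-1,-1,-1}$ vanishes identically on that edge, as you checked.
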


\begin{proof}
By their definitions, both $K_{k,n}^{-1,-1,-1}$ and $L_{k,n}^{-1,-1,-1}$ are defined in terms of $J_{k,n}^{-1,-1,-1}$
by permutations of $(x,y,1-x-y)$, from which the stated results follow from Proposition \ref{prop:J-1-1-1}. 
\end{proof}

For $ f \in W_2^2$, we can expand $f$ in terms of $J_{k,n}^{-1,-1,-1}$ and its inner product,
$$
   f = \sum_{k=0}^\infty  \mathrm{proj_J}_{n}^{-1,-1,-1} f, \qquad \mathrm{proj_J}_{n}: W_2^1(\varpi_{-1,-1,-1}) 
     \mapsto \CV_n(\varpi_{-1,-1,-1}),
$$ 
where the projection operator is defined by, with $(\a,\b,\g) = (-1,-1,-1)$,
\begin{equation}\label{eq:Fourier111}
   \mathrm{proj_J}_{n}^{-1,-1,-1} f = \sum_{k=0}^n \wh f_{k,n,J}^{-1,-1,-1} J_{k,n}^{-1,-1,-1}, 
\end{equation}
with $\wh f_{k,n,J}^{-1,-1,-1}$ defined in terms of $J_{k,n}^{-1,-1,-1}$. 
We can also expand $f$ in terms of $K_{k,n}^{-1,-1,-1}$ and its inner product or in terms of  $L_{k,n}^{-1,-1,-1}$ and 
its inner product. Let us denote the corresponding projection operators by $ \mathrm{proj_K}_{n}^{-1,-1,-1}$ and
$ \mathrm{proj_L}_{n}^{-1,-1,-1}$, respectively. Since $\CV_n(\varpi_{-1,-1,-1})$ is the space of orthogonal polynomials 
for all three inner products for $n \ge 2$, these projection operators are equal. 

\begin{prop} \label{prop:proj=A=B-1-1-1}
For $n \ge 2$ and $f \in W_2^1$,  
$$
  \mathrm{proj_J}_{n}^{-1,-1,-1} f  =  \mathrm{proj_K}_{n}^{-1,-1,-1} f  =  \mathrm{proj_L}_{n}^{-1,-1,-1} f =:
  \proj_{n}^{-1,-1,-1} f. 
$$
\end{prop}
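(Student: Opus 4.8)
The plan is to deduce the coincidence of the three projection operators from one structural observation, and only afterwards to worry about the passage from polynomials to $f\in W_2^1$. The observation is this: if a vector space $V$ carries a grading $V=\bigoplus_{m\ge 0}V_m$, and $\la\cdot,\cdot\ra_{(1)},\la\cdot,\cdot\ra_{(2)}$ are two inner products on $V$ for which $V_m\perp V_{m'}$ whenever $m\ne m'$ in \emph{both} inner products, then for $v=\sum_m v_m$ (with $v_m\in V_m$) and any fixed $n$ one has $v-v_n=\sum_{m\ne n}v_m\perp V_n$ in both inner products; hence the orthogonal projection of $v$ onto $V_n$ equals the algebraic component $v_n$ in either inner product, and the two projections agree. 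I would apply this with $V=\Pi^2$ and $V_m=\CV_m(\varpi_{-1,-1,-1})$: by Propositions \ref{prop:J-1-1-1} and \ref{prop:KL-1-1-1basis} the three families $\{J_{k,m}^{-1,-1,-1}\}$, $\{K_{k,m}^{-1,-1,-1}\}$, $\{L_{k,m}^{-1,-1,-1}\}$ are each a basis of the \emph{single} space $\CV_m(\varpi_{-1,-1,-1})$ for $m\ge 2$, so the grading is common to all three; and Proposition \ref{prop:J-1-1-1} together with Proposition \ref{prop:KL-1-1-1} says exactly that each of $\la\cdot,\cdot\ra_{-1,-1,-1}^J$, $\la\cdot,\cdot\ra_{-1,-1,-1}^K$, $\la\cdot,\cdot\ra_{-1,-1,-1}^L$ renders $\CV_m(\varpi_{-1,-1,-1})$ orthogonal to polynomials of every other degree.

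Next I would make the decomposition precise. Since $\dim\CV_m(\varpi_{-1,-1,-1})=m+1$ and $\sum_{m=0}^N(m+1)=\dim\Pi_N^2$, the sum $\Pi^2=\bigoplus_m\CV_m(\varpi_{-1,-1,-1})$ is a genuine graded direct sum, so every polynomial $p$ has a unique component in $\CV_n(\varpi_{-1,-1,-1})$. The abstract observation then yields, for each polynomial $p$ and each $n\ge 2$, the equalities $\mathrm{proj_J}_n^{-1,-1,-1}p=\mathrm{proj_K}_n^{-1,-1,-1}p=\mathrm{proj_L}_n^{-1,-1,-1}p$, all three returning that common component. (The restriction $n\ge 2$ is precisely what guarantees, via Proposition \ref{prop:KL-1-1-1basis}, that the three low-degree anomalies have disappeared and the three families span one and the same $\CV_n$.)

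Finally I would extend the identity from $\Pi^2$ to $f\in W_2^1$. Each projection maps into the fixed finite-dimensional space $\CV_n(\varpi_{-1,-1,-1})$, so it is determined by the finitely many coefficient functionals $f\mapsto\wh f_{k,n}^{-1,-1,-1}$; since polynomials are dense in $W_2^1$, it suffices to show that these functionals, already agreeing on $\Pi^2$, extend continuously to $W_2^1$, whereupon the three extensions must coincide. The tool for the extension is the reduction \eqref{eq:ipdJ-1-1-1a}, which rewrites $\la f,J_{k,n}^{-1,-1,-1}\ra_{-1,-1,-1}^J$ as a pairing of the first-order quantity $\partial_3 f$ against a fixed lower-degree polynomial, together with \eqref{eq:ipdJ-1-1-1b} for $k=0$ and the analogous reductions for $K$ and $L$ that are implicit in Proposition \ref{prop:KL-1-1-1}; after transferring the remaining derivative onto the (smooth, fixed) basis polynomial by integration by parts, each functional becomes an integral of the first-order data of $f$ against a fixed polynomial. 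I expect the main obstacle to lie exactly in this last reduction at the extreme indices $k=0$ and $k=n$: there the nested one-dimensional inner product $\la\cdot,\cdot\ra_{-1,-1}$ contributes boundary terms over the edge $z=0$ and point contributions at a vertex, and the work is to show — using that $\partial_3$ is the tangential derivative along the hypotenuse and integrating by parts once more along that edge — that these boundary terms reassemble into functionals of $f$ that are continuous in the $W_2^1$ norm. Granting this continuity, density of $\Pi^2$ in $W_2^1$ upgrades the polynomial identity to all $f\in W_2^1$ and completes the proof.
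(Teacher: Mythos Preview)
Your core argument is correct and matches the paper's reasoning. The paper does not write out a proof of this proposition; it simply states, in the sentence preceding it, that ``since $\CV_n(\varpi_{-1,-1,-1})$ is the space of orthogonal polynomials for all three inner products for $n\ge 2$, these projection operators are equal.'' This is exactly your abstract observation about graded decompositions shared by several inner products, and your application of it to $\Pi^2$ via Propositions~\ref{prop:J-1-1-1}, \ref{prop:KL-1-1-1basis} and~\ref{prop:KL-1-1-1} is the intended argument.

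Your additional work on the passage from polynomials to general $f$ goes beyond what the paper provides. Two remarks on that part. First, the hypothesis $f\in W_2^1$ in the statement appears to be a typo: the three inner products $\la\cdot,\cdot\ra_{-1,-1,-1}^J$, $\la\cdot,\cdot\ra_{-1,-1,-1}^K$, $\la\cdot,\cdot\ra_{-1,-1,-1}^L$ are defined on $W_2^2$ (see the sentence introducing \eqref{eq:ipd111J}), and Theorem~\ref{thm:Dproj-1-1-1} immediately afterwards takes $f\in W_2^2$. Second, once you work in $W_2^2$, the density argument becomes simpler than you anticipate: each coefficient functional $f\mapsto\la f,J_{k,n}^{-1,-1,-1}\ra_{-1,-1,-1}^J$ is automatically continuous in the norm induced by $\la\cdot,\cdot\ra_{-1,-1,-1}^J$ (Cauchy--Schwarz), and similarly for $K$ and $L$, so the only issue is whether the three Sobolev norms are equivalent on $W_2^2$, which is routine. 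You need not carry out the integration-by-parts reduction to first-order data.
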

 
We can again drop the subscript $J, K, L$ when the three terms are equal, and we shall do so also for 
the corresponding partial sum operators by $S_n^{-1,-1,-1}f$. We are now ready to show that derivatives and the 
partial sum operators commute. 

\begin{thm}\label{thm:Dproj-1-1-1}
For $n \ge 1$ and $f \in W_2^2$, 
\begin{align*}
  &  \partial_1 S_n^{-1,-1,-1} f  = S_{n-1}^{0,-1,0} \partial_1 f,\\
  &  \partial_2 S_n^{-1,-1,-1} f  = S_{n-1}^{-1,0,0} \partial_2 f,  \\ 
  &  \partial_3 S_n^{-1,-1,-1} f  = S_{n-1}^{0,0,-1} \partial_3 f.
\end{align*}
\end{thm}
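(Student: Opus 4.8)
The plan is to follow the template of Theorem~\ref{thm:Dproj00-1}. It suffices to prove the single–layer commuting relations $\partial_1\proj_n^{-1,-1,-1}f=\proj_{n-1}^{0,-1,0}\partial_1 f$, $\partial_2\proj_n^{-1,-1,-1}f=\proj_{n-1}^{-1,0,0}\partial_2 f$ and $\partial_3\proj_n^{-1,-1,-1}f=\proj_{n-1}^{0,0,-1}\partial_3 f$, since summing over $n$ and noting that $\proj_0^{-1,-1,-1}f$ is killed by every $\partial_i$ then yields the statement for $S_n^{-1,-1,-1}$. For each $\partial_i$ I would expand $\proj_n^{-1,-1,-1}f$ in whichever of the three families $J_{k,n}^{-1,-1,-1}$, $K_{k,n}^{-1,-1,-1}$, $L_{k,n}^{-1,-1,-1}$ has the simplest derivative formula for $\partial_i$ in Propositions~\ref{prop:partialJ}--\ref{prop:partialL}, using the matching inner product $\la\cdot,\cdot\ra_{-1,-1,-1}^{J}$, $\la\cdot,\cdot\ra_{-1,-1,-1}^{K}$, $\la\cdot,\cdot\ra_{-1,-1,-1}^{L}$; this is legitimate because by Proposition~\ref{prop:proj=A=B-1-1-1} all three inner products define the same projection onto $\CV_n(\varpi_{-1,-1,-1})$ for $n\ge2$, the cases $n=0,1$ being checked directly from the explicit low–degree polynomials in \eqref{eq:J-1-1-1}.

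For $\partial_3$ the ingredients are already assembled: by \eqref{eq:diff1J} one has $\partial_3 J_{k,n}^{-1,-1,-1}=J_{k-1,n-1}^{0,0,-1}$ for $1\le k\le n$ and $\partial_3 J_{0,n}^{-1,-1,-1}=0$, while \eqref{eq:ipdJ-1-1-1a} gives $\la f,J_{k,n}^{-1,-1,-1}\ra_{-1,-1,-1}^{J}=\la\partial_3 f,J_{k-1,n-1}^{0,0,-1}\ra_{0,0,-1}^{J}$ for $1\le k\le n$. Putting $f=J_{k,n}^{-1,-1,-1}$ matches squared norms, hence $\wh f_{k,n}^{-1,-1,-1}=\wh{\partial_3 f}_{k-1,n-1}^{0,0,-1}$; differentiating $\proj_n^{-1,-1,-1}f=\sum_{k=0}^n\wh f_{k,n}^{-1,-1,-1}J_{k,n}^{-1,-1,-1}$ term by term and dropping the $k=0$ term gives $\partial_3\proj_n^{-1,-1,-1}f=\sum_{k=1}^n\wh{\partial_3 f}_{k-1,n-1}^{0,0,-1}J_{k-1,n-1}^{0,0,-1}=\proj_{n-1}^{0,0,-1}\partial_3 f$, since $\proj_{n-1}^{0,0,-1}$ is realised in the $J$–basis of Section~\ref{sect5}.

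The cases $\partial_1$ and $\partial_2$ are structurally identical, using the $K$– and $L$–families and the second–order inner products of \eqref{eq:ipd111KL}: from \eqref{eq:diffK} and \eqref{eq:diffL}, $\partial_1 K_{k,n}^{-1,-1,-1}=K_{k-1,n-1}^{0,-1,0}$ and $\partial_2 L_{k,n}^{-1,-1,-1}=-L_{k-1,n-1}^{-1,0,0}$ for $k\ge1$, the $k=0$ derivatives vanishing. The substantive step is the analogue of \eqref{eq:ipdJ-1-1-1a} for these two inner products: after the integration by parts that removes one derivative, every surviving boundary term — the point evaluations and the edge first–derivative evaluations buried inside $\la f\vert_{z=0},g\vert_{z=0}\ra_{-1,-1}$ and $\la f(\cdot,0),g(\cdot,0)\ra_{-1,-1}$ — must vanish when one argument is $K_{k,n}^{-1,-1,-1}$ or $L_{k,n}^{-1,-1,-1}$. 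This is verified exactly as in the proofs of Propositions~\ref{prop:KL-1-1-1basis} and~\ref{prop:KL-1-1-1}, by reading the boundary behaviour of these polynomials off the factorisations \eqref{eq:K+K-}, \eqref{eq:L+L-} together with the derivative formulas, the indices $k=0,1$ being handled as explicit exceptional cases where $K_{k,n}^{-1,-1,-1}$ and $L_{k,n}^{-1,-1,-1}$ restrict to one–variable Jacobi polynomials along an edge. Granting this, setting $f$ equal to a basis element identifies the norms, giving $\wh f_{k,n,K}^{-1,-1,-1}=\wh{\partial_1 f}_{k-1,n-1,K}^{0,-1,0}$ and $\wh f_{k,n,L}^{-1,-1,-1}=-\wh{\partial_2 f}_{k-1,n-1,L}^{-1,0,0}$, and term–by–term differentiation completes the two remaining identities.

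The main obstacle is precisely this boundary bookkeeping for the second–order inner products $\la\cdot,\cdot\ra_{-1,-1,-1}^{K}$ and $\la\cdot,\cdot\ra_{-1,-1,-1}^{L}$, whose discrete parts are themselves built from derivative evaluations along an edge, so that the vanishing of each such term on the basis must be traced through the explicit formulas; the low–degree exceptions ($n=1$, and $k=0,1$) require a few direct computations. Everything else is a transcription of the one– and two–negative–parameter arguments of Sections~\ref{sect5}--\ref{sect6}.
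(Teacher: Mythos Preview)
Your proposal is correct and follows essentially the same strategy as the paper: reduce to the projection identities, and for each $\partial_i$ expand $\proj_n^{-1,-1,-1}f$ in the basis whose $\partial_i$-derivative is simplest ($J$ for $\partial_3$, $K$ for $\partial_1$, $L$ for $\partial_2$), using Proposition~\ref{prop:proj=A=B-1-1-1} to identify the three projections for $n\ge2$.

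Two small remarks. First, your phrase ``integration by parts that removes one derivative'' is slightly off: the inner products \eqref{eq:ipd111KL} are already built with $\partial_1$ (resp.\ $\partial_2$) inside, so no integration by parts is needed---one only has to check that the boundary piece $\la f\vert_{z=0},K_{k,n}^{-1,-1,-1}\vert_{z=0}\ra_{-1,-1}$ (resp.\ the $L$ analogue) vanishes for $k\ge1$, which is exactly the verification you outline. Second, the paper bypasses that explicit verification for $K$ and $L$ altogether: since $\la\cdot,\cdot\ra_{-1,-1,-1}^K$ and $\la\cdot,\cdot\ra_{-1,-1,-1}^L$ are obtained from $\la\cdot,\cdot\ra_{-1,-1,-1}^J$ by the simultaneous permutation of $(x,y,1-x-y)$ (Proposition~\ref{prop:KL-1-1-1}), the analogues of \eqref{eq:ipdJ-1-1-1a} for $K$ and $L$ follow from \eqref{eq:ipdJ-1-1-1a} itself by symmetry, with no fresh boundary bookkeeping required. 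The paper also handles the passage to $S_n$ via $f-S_n^{-1,-1,-1}f=\sum_{m\ge n+1}\proj_m^{-1,-1,-1}f$, so only $m\ge2$ is needed and the low-degree checks you mention are avoided.
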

 
\begin{proof}
Since $f - S_n^{-1,-1,-1}f =  \sum_{m=n+1} \proj_m^{-1,-1,-1} f$, it is sufficient to prove that, for $n \ge 2$ and $f \in W_2^1$, 
\begin{align*}
  &  \partial_1 \proj_n^{-1,-1,-1} f  = \proj_{n-1}^{0,-1,0} \partial_1 f,\\
  &  \partial_2 \proj_n^{-1,-1,-1} f  = \proj_{n-1}^{-1,0,0} \partial_2 f,  \\ 
  &  \partial_3 \proj_n^{-1,-1,-1} f  = \proj_{n-1}^{0,0,-1} \partial_3 f.
\end{align*}
In the case of $\partial_3$, we suppress $J$ in the subscript and superscript. Since 
$\partial_3 J_{k,n}^{-1,-1,-1} = J_{k-1,n-1}^{0,0,-1}$ for $1 \le k\le n$ and $=0$ for $k=0$, and
$\wh f_{k,n}^{-1,-1,-1} =  \wh {\partial_3 f}_{k-1,n-1}^{0,0,-1}$ by \eqref{eq:ipdJ-1-1-1a}, the third identity 
on the projection operator follows immediately. For $\partial_1$ and $\partial_2$, we use the formula of 
the projection operator in terms of $K_{k,n}^{-1,-1,-1}$ and  $L_{k,n}^{-1,-1,-1}$, respectively. 
\end{proof}

\section{Sobolev orthogonality with parameters $(\a,\b,-2)$ and its permutations}
\label{sect8}
\setcounter{equation}{0}

In this section we consider the orthogonality when the parameters are $(\a,\b,-2)$. This turns out to be a particular 
subtle case. For the space $\CV_n(\varpi_{\a,\b,-2})$ spanned by $J_{k,n}^{-1,-1,-2}$, we failed to find, after 
arduous effort, a full-fledged inner product that satisfies both our requirements; namely, has an orthonormal basis in 
$\CV_n(\varpi_{\a,\b,-2})$ and has its partial sum operators commute with partial derivatives. There are many 
inner products in $W_2^2$ that will have a slight modification of $J_{k,n}^{-1,-1,-2}$ as an orthogonal basis, but it
is not good enough for our purpose. What we found at the end almost works and it is sufficient for our study in 
the next section. 

\subsection{The space $\CV_n(\varpi_{\a, \b,-2})$}
For $\a,\b> -1$, we define 
\begin{align} \label{eq:ipd-ab2}
 \la f,g\ra_{\a,\b,-2}^J: =   \la f,g\ra_{\a,\b,-2}^\circ  & + \int_0^1 \partial_x f(x,1-x) \partial_x g(x,1-x) x^{\a+1}(1-x)^\b dx   \\
     & + \int_0^1 \partial_y f(x,1-x)  \partial_y g(x,1-x) x^\a (1-x)^{\b+1} dx, \notag
\end{align}
where 
\begin{align*}
  \la f,g\ra_{\a,\b,-2}^\circ := & \int_{\triangle} (x^2 \partial_x^2 f(x,y) \partial_x^2 g(x,y) +
 2 xy \partial_x\partial_y f(x,y) \partial_x\partial_y g(x,y) \\
   & \qquad  + y^2 \partial_y^2 f(x,y) \partial_y^2 g(x,y)) x^\a y^\b dxdy.
\end{align*}
Notice that this is not an inner product, since $\la f,f\ra_{\a,\b,-2} =0$ implies that $f(x,y)$ is a constant. However, it is an
inner product when applying to the space of functions that is zero at the origin. Indeed, if $\la f,f\ra_{\a,\b,-2} =0$,
then $\la f,f\ra_{\a,\b,-2}^\circ =0$ implies that $f(x,y)$ is a linear polynomial. Hence, by the remaining part of
$\la f,f \ra_{\a,\b,-2}$, it has to be a constant. 

We work with $J_{k,n}^{\a,\b,-2}$, which are given by
\begin{align*} 
\begin{split}
J_{k,n}^{\a,\b,-2}(x,y) & = (x+y)^{k} J_{k}^{\a,\b}\left(\frac{y-x}{x+y}\right) J_{n-k}^{2k+\a+\b+1,-2}(1-2x-2y), \quad 0 \le k \le n-2, \\
J_{n-1,n}^{\a,\b,-2}(x,y) &=(x+y)^{n-1} J_{n-1}^{\a,\b}\left(\frac{y-x}{x+y}\right) \left (\frac{2n+\a+\b}{2n+\a+\b-1} - x- y\right), \\
J_{n,n}^{\a,\b,-2}(x,y) &=(x+y)^n J_{n}^{\a,\b}\left(\frac{y-x}{x+y}\right).
\end{split}
\end{align*}
First we prove a lemma that will be needed in the sequel. 

\begin{lem}
For $\a,\b > -1$,  $J_{k,n}^{\a,\b,-2}$ are mutually orthogonal with respect to $\la \cdot,\cdot,\ra_{\a,\b,-2}^\circ$. Moreover,
let $H_{k,n}^{\a,\b, -2}: =  \la J_{k,n}^{\a,\b,-2},J_{k,n}^{\a,\b,-2}\ra_{\a,\b,-2}^\circ$; then
$$
   H_{k,n}^{\a,\b, -2}: = h_k^{\a,\b} \begin{cases}
        \displaystyle{ \frac{(2n+\a+\b)(\a+\b+1)_{n+k+1}^2}{(\a+\b+1)_{2n}^2}}, & 0 \le k \le n-2,   \vspace{.05in} \\
          \displaystyle{ \frac{ (n-1) (3n+2\a+2\b)}{2n-1+\a+\b}}, & k = n-1, \\
            n(n-1)(2n+\a+\b+1), & k=n, 
      \end{cases}
$$
where $h_k^{\a,\b}$ is defined in \eqref{eq:normJacobi}. 
\end{lem}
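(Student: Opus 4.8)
The plan is to reduce the whole computation to the \emph{classical} orthogonality on the triangle (parameters $>-1$, so that \eqref{eq:hkn} is available) by passing to second derivatives. First absorb the monomial coefficients into the weights, rewriting
\[
  \la f,g\ra_{\a,\b,-2}^\circ = \la \partial_1^2 f,\partial_1^2 g\ra_{\a+2,\b,0}
     + 2\,\la \partial_1\partial_2 f,\partial_1\partial_2 g\ra_{\a+1,\b+1,0}
     + \la \partial_2^2 f,\partial_2^2 g\ra_{\a,\b+2,0},
\]
a sum of three genuine $L^2$ inner products on $\triangle$ with nonnegative parameters. Applying \eqref{eq:diff1J} of Proposition~\ref{prop:partialJ} twice (first to $J_{k,n}^{\a,\b,-2}$, then once more to the resulting $J_{\cdot,n-1}^{\cdot,\cdot,-1}$ polynomials), each of $\partial_1^2 J_{k,n}^{\a,\b,-2}$, $\partial_1\partial_2 J_{k,n}^{\a,\b,-2}$, $\partial_2^2 J_{k,n}^{\a,\b,-2}$ is an explicit linear combination, with coefficients built from the $a_{\cdot,\cdot}^{\cdot,\cdot}$ of \eqref{eq:akn-bkn}, of the three \emph{consecutive} basis elements $J_{j,n-2}^{\a+2,\b,0}$, $J_{j,n-2}^{\a+1,\b+1,0}$, $J_{j,n-2}^{\a,\b+2,0}$ with $j\in\{k-2,k-1,k\}$; in particular each second derivative lies in the degree $n-2$ orthogonal subspace of its respective classical weight.

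Orthogonality then follows in three layers. For $n\ne m$ each of the three terms pairs an element of $\CV_{n-2}$ of a classical weight against a polynomial of degree $<n-2$, hence vanishes. For $n=m$ and $|k-l|\ge 3$ the index windows $\{k-2,k-1,k\}$ and $\{l-2,l-1,l\}$ are disjoint, so the form is pentadiagonal in $(k,l)$ and the entry is automatically $0$. For $n=m$ and $0\le k,l\le n-2$ with $|k-l|\le 2$ I would instead use the factorization $J_{k,n}^{\a,\b,-2}=\frac1{(n-k)(n-k-1)}(1-x-y)^2 J_{k,n-2}^{\a,\b,2}$ coming from \eqref{eq:Jam}: because these polynomials vanish to second order on the hypotenuse, and the weights $x^{\a+1}y^\b$, $x^\a y^{\b+1}$ vanish on the legs (here $\a,\b>-1$), two integrations by parts are clean and give $\la f,J_{k,n}^{\a,\b,-2}\ra_{\a,\b,-2}^\circ=\int_\triangle f\,\CD J_{k,n}^{\a,\b,-2}\,dxdy$ with $\CD$ the fourth-order operator on the left of \eqref{eq:eigen3}. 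Since $\CD J_{k,n}^{\a,\b,-2}=\s_{k,n}^{\a,\b,-2}\varpi_{\a,\b,-2}J_{k,n}^{\a,\b,-2}=\frac{\s_{k,n}^{\a,\b,-2}}{(n-k)(n-k-1)}x^\a y^\b J_{k,n-2}^{\a,\b,2}$ is a polynomial times $x^\a y^\b$, taking $f=J_{l,n}^{\a,\b,-2}$ reduces the pairing to $\s_{k,n}^{\a,\b,-2}$ times a classical inner product of $J_{l,n-2}^{\a,\b,2}$ against $J_{k,n-2}^{\a,\b,2}$, which vanishes for $k\ne l$; and for $k=l\le n-2$ it yields $H_{k,n}^{\a,\b,-2}=\frac{\s_{k,n}^{\a,\b,-2}}{\big((n-k)(n-k-1)\big)^2}\,h_{k,n-2}^{\a,\b,2}$, which one simplifies via \eqref{eq:hkn} and the explicit form of $\s_{k,n}^{\a,\b,-2}$ to the first case of the stated formula.

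What remains are the pairs with one of $k,l$ in $\{n-1,n\}$; by pentadiagonality only finitely many survive, namely $(n-1,n-3)$, $(n-1,n-2)$, $(n-1,n-1)$, $(n-1,n)$, $(n,n-2)$, $(n,n-1)$, $(n,n)$. For these I would compute directly: at $k=n-1$ and $k=n$ the top-degree factor of $J_{k,n}^{\a,\b,-2}$ carries no $(1-x-y)$ dependence, so the expansions of its second derivatives into the classical bases are short (two terms, resp.\ one term), and each surviving entry is a finite sum of products of $a$-coefficients with the norms $h_{j,n-2}^{\a+2,\b,0},h_{j,n-2}^{\a+1,\b+1,0},h_{j,n-2}^{\a,\b+2,0}$ from \eqref{eq:hkn}; the off-diagonal ones are checked to cancel and the diagonal $(n-1,n-1)$, $(n,n)$ give the last two cases of $H_{k,n}^{\a,\b,-2}$. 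Finally, since Proposition~\ref{prop:partialJ} needs the nondegeneracy conditions (a), I would carry out the above for $(\a,\b)$ off the exceptional lines ($\a+\b\in\{0,-1\}$ and the like) and extend to all $\a,\b>-1$ by continuity, both sides depending continuously on $(\a,\b)$ in that range. The main obstacle is precisely this last block of within-degree entries for $k,l\in\{n-1,n\}$: here the clean integration-by-parts/eigenfunction argument is unavailable, because these polynomials do not vanish on the hypotenuse (indeed this is exactly why boundary integrals are later adjoined to form $\la\cdot,\cdot\ra_{\a,\b,-2}^J$), so one is forced into explicit coefficient bookkeeping — the kind of cancellation the paper verifies with a computer algebra system.
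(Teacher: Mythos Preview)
Your proposal is correct and shares the paper's core mechanism—integration by parts against the fourth–order operator of \eqref{eq:eigen3} and reduction to classical triangle orthogonality—but you organise the argument differently. The paper treats all pairs $(k,n),(l,m)$ in one stroke: it integrates by parts to reach $\sigma_{k,n}^{\a,\b,-2}\int_\triangle J_{l,m}^{\a,\b,-2}J_{k,n}^{\a,\b,-2}\varpi_{\a,\b,-2}$, then separates this integral into a product of one–variable Jacobi integrals (so $k\ne l$ gives zero from the $s$–integral, $k=l,\ n\ne m$ from the $u$–integral), and handles the norms, including the exceptional $k\in\{n-1,n\}$, by direct one–variable computation. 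You instead layer the cases: off-degree orthogonality and the pentadiagonal structure come cheaply from the fact that $\partial_x^2,\partial_x\partial_y,\partial_y^2$ send $J_{k,n}^{\a,\b,-2}$ into $\CV_{n-2}$ of the three classical weights; the near-diagonal within-degree cases with $k,l\le n-2$ use the IBP/eigenfunction route, justified by the $(1-x-y)^2$ factor; the block with $k$ or $l$ in $\{n-1,n\}$ is done by explicit coefficient bookkeeping. Your version is more careful about the hypotenuse: the paper's ``no boundary terms because of the weight functions'' is really only automatic on the legs $x=0,\ y=0$, and on $z=0$ one genuinely needs the second-order vanishing of one of the two arguments (available precisely when $k\le n-2$ or $l\le n-2$), which you make explicit. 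What the paper's route buys is brevity and a single uniform formula; what yours buys is that each step is transparently justified and the residual finite computation is isolated exactly where it belongs. One small imprecision: when you write $\la f,J_{k,n}^{\a,\b,-2}\ra_{\a,\b,-2}^\circ=\int_\triangle f\,\CD J_{k,n}^{\a,\b,-2}$, the clean IBP (no hypotenuse boundary contribution) needs \emph{both} arguments to vanish to first order on $z=0$ in at least one direction, so this identity should be stated for $f=J_{l,n}^{\a,\b,-2}$ with $l\le n-2$, not for arbitrary $f$; in your setting that is exactly how you use it, so nothing is lost.
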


\begin{proof}
Integration by parts twice for each of the three terms in $\la \cdot,\cdot \ra_{\a,\b,-2}^\circ$, which leaves no boundary
terms because of the weight functions, we see that 
\begin{align*}
  \la f, J_{k,n}^{\a,\b,-2} \ra_{\a,\b,-2} = \int_{\triangle} \CD^{\a,\b,-2} f(x,y)  J_{k,n}^{\a,\b,-2}(x,y) dxdy.
\end{align*}
where 
$$
\CD^{\a,\b,-2}: = \partial_x^2 X^{\a+2,\b,0} + 2 \partial_x\partial_y X^{\a+1,\b+1,0} \partial_x\partial_y +\partial_y^2 X^{\a,\b+2,0} \partial_y^2. 
$$
Applying \eqref{eq:eigen3} with $\g = -2$, we then obtain that 
$$
 \la J_{l,m}^{\a,\b,-2}, J_{k,n}^{\a,\b,-2} \ra_{\a,\b,-2} = \s_{k,n}^{\a,\b,-2} \int_{\triangle} J_{l,m}^{\a,\b,-2}(x,y)J_{k,n}^{\a,\b,-2}(x,y)\varpi_{\a,\b,-2}(x,y) dxdy,
$$
where $\s_{k,n}^{\a,\b,-2} = (n-k-1)_2(n+k+\a+\b)_2$, as can be verified from the formula of $\s_{k,n}^{\a,\b,\g}$. 
Using the explicit formula of $J_{k,n}^{\a,\b,-2}$, we can reduce the evaluation of the last integrals to the product of two 
integrals of one variable. This way we verify that the above quantity is zero if $k\ne l$ or $n\ne m$. Moreover, using the
norm of ordinary Jacobi polynomials in \eqref{eq:normJacobi} and a direct computation when $k =1$, we can then evaluate 
the formula for $H_{k,n}^{\a,\b,-2}$. We omit the details. 
\end{proof}

By \eqref{eq:J+J-}, $J_{k,n}^{\a,\b,-2}$ vanishes when $x+y=1$ for $0\le k \le n-2$. We need to modify the system 
$J_{k,n}^{\a,\b,-2}$ to obtain a mutually orthogonal basis for $\la \cdot, \cdot\ra_{\a,\b,-2}$. We start with the following lemma: 

\begin{lem}
For $n \ge 2$, define 
\begin{align*}
  F_n^{\a,\b,-2}(x,y) & = J_{n-1,n}^{\a,\b,-2}(x,y) -  \frac{2n+\a+\b}{n+\b} J_{n,n}^{\a,\b,-2}(x,y)\\ 
  G_n^{\a,\b,-2}(x,y) & = J_{n-1,n}^{\a,\b,-2}(x,y) +  \frac{2n+\a+\b}{n+\a} J_{n,n}^{\a,\b,-2}(x,y).
\end{align*}
Then $F_n^{\a,\b,-2}, G_n^{\a,\b,-2} \in \CV_n(\varpi_{\a,\b,-2})$ satisfy
\begin{align*}
 & \partial_x F_n^{\a,\b,-2}(x,1-x) =0, \qquad  \partial_y F_n^{\a,\b,-2}(x,1-x) = - \frac{2n+\a+\b}{n+\b} J_{n-1}^{\a,\b+1}(1-2x),\\
  & \partial_y G_n^{\a,\b,-2}(x,1-x) =0, \qquad  \partial_x G_n^{\a,\b,-2}(x,1-x) = - \frac{2n+\a+\b}{n+\a} J_{n-1}^{\a+1,\b}(1-2x). 
\end{align*}
\end{lem}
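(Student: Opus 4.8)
The plan is to reduce all four boundary identities to the first-order derivative formulas of Proposition~\ref{prop:partialJ}, evaluated along the edge $z=1-x-y=0$, combined with the factorization \eqref{eq:J+J-}. The membership $F_n^{\a,\b,-2},G_n^{\a,\b,-2}\in\CV_n(\varpi_{\a,\b,-2})$ requires no argument, since $\CV_n(\varpi_{\a,\b,-2})$ is by definition the span of $\{J_{k,n}^{\a,\b,-2}:0\le k\le n\}$ and $F_n^{\a,\b,-2},G_n^{\a,\b,-2}$ are linear combinations of $J_{n-1,n}^{\a,\b,-2}$ and $J_{n,n}^{\a,\b,-2}$; thus all the content is in the four derivative evaluations.

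First I would record, from \eqref{eq:diff1J} with $\g=-2$ and the convention $J_{k,m}^{\a,\b,\g}=0$ for $k>m$, the formulas
\begin{align*}
  \partial_x J_{n,n}^{\a,\b,-2} &= -a_{n,n}^{\a,\b}\,J_{n-1,n-1}^{\a+1,\b,-1}, \\
  \partial_x J_{n-1,n}^{\a,\b,-2} &= -a_{n-1,n}^{\a,\b}\,J_{n-2,n-1}^{\a+1,\b,-1}-J_{n-1,n-1}^{\a+1,\b,-1},
\end{align*}
together with the analogues for $\partial_y$ obtained by replacing $a_{k,n}^{\a,\b}$ by $a_{k,n}^{\b,\a}$ and $(\a+1,\b)$ by $(\a,\b+1)$. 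Here one checks that condition (a) of Proposition~\ref{prop:partialJ} holds, which is immediate because $\a,\b>-1$ and $k\in\{n-1,n\}$ with $n\ge2$ force $2k+\a+\b>0$, $2k+\a+\b+1>0$, and $k+\a+\b+1>0$. Next I would evaluate on $z=0$: directly from \eqref{eq:J-basis} the trailing Jacobi factor of $J_{n-1,n-1}^{\a+1,\b,-1}$ is $J_0\equiv1$, so $J_{n-1,n-1}^{\a+1,\b,-1}(x,1-x)=J_{n-1}^{\a+1,\b}(1-2x)$, while \eqref{eq:J+J-} with $\wh\a=\wh\b=0$ and $\wh\g=1$ exhibits $J_{n-2,n-1}^{\a+1,\b,-1}$ as a constant multiple of $(1-x-y)\,J_{n-2,n-2}^{\a+1,\b,1}$, which vanishes on $z=0$. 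Finally I simplify the constants via \eqref{eq:akn-bkn}: $a_{n,n}^{\a,\b}=\tfrac{n+\b}{2n+\a+\b}$ and $a_{n,n}^{\b,\a}=\tfrac{n+\a}{2n+\a+\b}$.

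Assembling these yields, on $z=0$, $\partial_x J_{n-1,n}^{\a,\b,-2}(x,1-x)=-J_{n-1}^{\a+1,\b}(1-2x)$ and $\partial_x J_{n,n}^{\a,\b,-2}(x,1-x)=-\tfrac{n+\b}{2n+\a+\b}J_{n-1}^{\a+1,\b}(1-2x)$, and correspondingly $\partial_y J_{n-1,n}^{\a,\b,-2}(x,1-x)=-J_{n-1}^{\a,\b+1}(1-2x)$, $\partial_y J_{n,n}^{\a,\b,-2}(x,1-x)=\tfrac{n+\a}{2n+\a+\b}J_{n-1}^{\a,\b+1}(1-2x)$. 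The four claimed identities then follow immediately from the definitions of $F_n^{\a,\b,-2},G_n^{\a,\b,-2}$: the coefficient $-\tfrac{2n+\a+\b}{n+\b}$ is exactly what cancels $\partial_x F_n^{\a,\b,-2}$ on $z=0$, and the surviving combination in $\partial_y F_n^{\a,\b,-2}$ is $\bigl(-1-\tfrac{n+\a}{n+\b}\bigr)J_{n-1}^{\a,\b+1}(1-2x)=-\tfrac{2n+\a+\b}{n+\b}J_{n-1}^{\a,\b+1}(1-2x)$; the statements for $G_n^{\a,\b,-2}$ come from the mirror computation with the roles of $\a,\b$ and of $\partial_x,\partial_y$ swapped.

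Conceptually nothing is hard; the computation is mechanical once the derivative formulas are in hand. The only points needing care are confirming that the hypotheses of Proposition~\ref{prop:partialJ} and of \eqref{eq:J+J-} really do apply at the indices $k=n-1,n$, so that no degenerate case of the extended Jacobi polynomials enters, and keeping track of where $\a$ versus $\b$ sits in $a_{k,n}^{\a,\b}$ as opposed to $a_{k,n}^{\b,\a}$. This bookkeeping, not any conceptual difficulty, is the main obstacle.
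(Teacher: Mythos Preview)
Your proof is correct and follows essentially the same route as the paper: apply the derivative formulas \eqref{eq:diff1J} to $J_{n-1,n}^{\a,\b,-2}$ and $J_{n,n}^{\a,\b,-2}$, then use \eqref{eq:J+J-} to kill the $J_{n-2,n-1}^{\cdot,\cdot,-1}$ terms on the edge $z=0$. If anything, you are more thorough than the paper, which only writes out $\partial_x F_n$ and $\partial_y F_n$ and leaves the verification of the constants and of the hypotheses of Proposition~\ref{prop:partialJ} implicit; your explicit check that condition~(a) holds for $k=n-1,n$ and your evaluation of $a_{n,n}^{\a,\b}$, $a_{n,n}^{\b,\a}$ are exactly the right bookkeeping.
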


\begin{proof}
These follow from identities in \eqref{eq:diff1J}, which give, for example, that 
\begin{align*}
 \partial_x F_n^{\a,\b,-2}(x,y)& = - a_{n-1,n}^{\a,\b} J_{n-2,n-1}^{\a+1,\b,-1}(x,y),\\
 \partial_y F_n^{\a,\b,-2}(x,y)& = a_{n-1,n}^{\b,\a} J_{n-2,n-1}^{\a,\b+1,-1}(x,y) - \left(1+\frac{n+\a}{n+\b}\right)
      J_{n-1,n}^{\a,\b+1,-1}(x,y). 
\end{align*}
The stated results then follow from \eqref{eq:J+J-}. 
\end{proof}

We define a system of polynomials 
$\{\wh J_{k,n}^{\a,\b,-2}: 0\le k\le n, \, n=0,1,2,\ldots\}$ as follows: 
$$
 \wh J_{0,0}^{\a,\b,-2}(x,y) =1,\quad \wh J_{0,1}^{\a,\b,-2}(x,y) = x+y, \quad \wh J_{1,1}^{\a,\b,-2}(x,y) = x-y,
$$
and, for $n \ge 2$, 
\begin{align*}
    \wh J_{k,n}^{\a,\b,-2} (x,y)&= J_{k,n}^{\a,\b,-2}(x,y), \qquad 0 \le k \le n-2,\\
  \wh J_{n-1,n}^{\a,\b,-2} (x,y)& =  G_n^{\a,\b,-2}(x,y),\\
  \wh J_{n,n}^{\a,\b,-2} (x,y)& =  F_n^{\a,\b,-2}(x,y) - \frac{(n+\a)(n-1)}{\a(2n-1)+n(2n+\b-1)} G_n^{\a,\b,-2}(x,y).
\end{align*}

\begin{prop}
For $n =0,1,\ldots$, the polynomials $\wh J_{k,n}^{\a,\b,-2}$, $0\le k \le n$, are mutually orthogonal with respect to 
$\la f, g\ra_{\a,\b,-2}^J$. Furthermore, for $n \ge 2$, $\{\wh J_{k,n}^{\a,\b,-2}, 0\le k \le n\}$ is a basis for 
$\CV_n(\varpi_{\a,\b,-2})$. 
\end{prop}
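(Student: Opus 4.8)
The plan is to write $\la\cdot,\cdot\ra_{\a,\b,-2}^{J}=\la\cdot,\cdot\ra_{\a,\b,-2}^{\circ}+B$, where $B(f,g)$ denotes the sum of the two boundary integrals over $z=0$ in \eqref{eq:ipd-ab2}, and to exploit two facts already in hand. First, by \eqref{eq:J+J-} the polynomial $J_{k,n}^{\a,\b,-2}$ carries a factor $(1-x-y)^{2}$ for $0\le k\le n-2$, so it and both its partials $\partial_x,\partial_y$ vanish on the edge $x+y=1$; hence $B(J_{k,n}^{\a,\b,-2},\cdot)=0$ for $0\le k\le n-2$, $n\ge2$. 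Second, the lemma giving the $\la\cdot,\cdot\ra_{\a,\b,-2}^{\circ}$–norms shows $\{J_{k,n}^{\a,\b,-2}:0\le k\le n\}$ is $\la\cdot,\cdot\ra_{\a,\b,-2}^{\circ}$–orthogonal with explicit values $H_{k,n}^{\a,\b,-2}$. Since every $\wh J_{k,n}^{\a,\b,-2}$ is a linear combination of $J_{j,n}^{\a,\b,-2}$ of the same degree, the $\la\cdot,\cdot\ra^{\circ}$–part automatically separates polynomials of different degrees, and what is left is a finite, degree-wise check of $B$ together with the within-degree pairings touching the top two indices.

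For the basis statement ($n\ge2$): for $0\le k\le n-2$ one has $\wh J_{k,n}^{\a,\b,-2}=J_{k,n}^{\a,\b,-2}$, while $\wh J_{n-1,n}^{\a,\b,-2}=G_n^{\a,\b,-2}$ and $\wh J_{n,n}^{\a,\b,-2}=F_n^{\a,\b,-2}-cG_n^{\a,\b,-2}$ with $c=\tfrac{(n+\a)(n-1)}{\a(2n-1)+n(2n+\b-1)}$. The transition matrix from $(J_{n-1,n}^{\a,\b,-2},J_{n,n}^{\a,\b,-2})$ to $(G_n^{\a,\b,-2},F_n^{\a,\b,-2})$ has determinant $-(2n+\a+\b)\bigl(\tfrac1{n+\a}+\tfrac1{n+\b}\bigr)\ne0$ for $\a,\b>-1$, $n\ge1$, and passing from $F_n^{\a,\b,-2}$ to $F_n^{\a,\b,-2}-cG_n^{\a,\b,-2}$ is a further triangular change; together with $F_n^{\a,\b,-2},G_n^{\a,\b,-2}\in\CV_n(\varpi_{\a,\b,-2})$ (the preceding lemma) this exhibits $\{\wh J_{k,n}^{\a,\b,-2}:0\le k\le n\}$ as the image of the basis $\{J_{k,n}^{\a,\b,-2}:0\le k\le n\}$ of $\CV_n(\varpi_{\a,\b,-2})$ under an invertible transformation, hence as a basis.

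For orthogonality I dispatch the generic cases first. On polynomials of degree $\le1$ the form $\la\cdot,\cdot\ra^{\circ}$ vanishes, so the relations reduce to short explicit integrals of $x^{\a}(1-x)^{\b}$ against $1$ and $2x-1$, which is where the choices $\wh J_{0,1}^{\a,\b,-2}=x+y$, $\wh J_{1,1}^{\a,\b,-2}=x-y$ are pinned down. For polynomials of distinct degrees $n\ne m$ (both $\ge2$): the $\la\cdot,\cdot\ra^{\circ}$–part vanishes by the lemma, and in $B$ the only members of each degree with nonzero boundary partials are $G_\ell^{\a,\b,-2}$ and $F_\ell^{\a,\b,-2}$, for which (by the lemma describing their edge derivatives) $\partial_y G_\ell^{\a,\b,-2}$ and $\partial_x F_\ell^{\a,\b,-2}$ vanish on $z=0$, whereas $\partial_x G_\ell^{\a,\b,-2}(x,1-x)$ and $\partial_y F_\ell^{\a,\b,-2}(x,1-x)$ are constant multiples of $J_{\ell-1}^{\a+1,\b}(1-2x)$ and $J_{\ell-1}^{\a,\b+1}(1-2x)$; orthogonality of Jacobi polynomials of different degrees on $[0,1]$ with weights $x^{\a+1}(1-x)^{\b}$ and $x^{\a}(1-x)^{\b+1}$ then annihilates every surviving integral, including the pairings of $x\pm y$ with any $\wh J_{k,m}^{\a,\b,-2}$, $m\ge2$ (there the partner has degree $m-1\ge1$, hence is orthogonal to constants). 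Within a fixed degree $n\ge2$: pairs $(k,j)$ with $k,j\le n-2$ are $\la\cdot,\cdot\ra^{\circ}$–orthogonal with $B=0$; pairs with $k\le n-2$ and $j\in\{n-1,n\}$ are $\la\cdot,\cdot\ra^{\circ}$–orthogonal (as $\wh J_{j,n}^{\a,\b,-2}$ is a combination of $J_{n-1,n}^{\a,\b,-2}$ and $J_{n,n}^{\a,\b,-2}$) and again have $B=0$.

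The one essential relation remaining is $\la\wh J_{n-1,n}^{\a,\b,-2},\wh J_{n,n}^{\a,\b,-2}\ra_{\a,\b,-2}^{J}=0$, i.e. $\la G_n^{\a,\b,-2},F_n^{\a,\b,-2}-cG_n^{\a,\b,-2}\ra^{J}=0$, and this is where the precise value of $c$ is forced; I expect it to be the main obstacle. One first notes $B(G_n^{\a,\b,-2},F_n^{\a,\b,-2})=0$, since each of its two terms carries one of the partials that vanish on $z=0$; hence $\la G_n^{\a,\b,-2},F_n^{\a,\b,-2}\ra^{J}=H_{n-1,n}^{\a,\b,-2}-\tfrac{(2n+\a+\b)^{2}}{(n+\a)(n+\b)}H_{n,n}^{\a,\b,-2}$ by $\la\cdot,\cdot\ra^{\circ}$–orthogonality, whereas $\la G_n^{\a,\b,-2},G_n^{\a,\b,-2}\ra^{J}=H_{n-1,n}^{\a,\b,-2}+\tfrac{(2n+\a+\b)^{2}}{(n+\a)^{2}}H_{n,n}^{\a,\b,-2}+\tfrac{(2n+\a+\b)^{2}}{(n+\a)^{2}}\int_{0}^{1}\bigl[J_{n-1}^{\a+1,\b}(1-2x)\bigr]^{2}x^{\a+1}(1-x)^{\b}\,dx$, the last integral being evaluated from \eqref{JacobiP2} and \eqref{eq:normJacobi}. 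Thus the required identity becomes $c=\la G_n^{\a,\b,-2},F_n^{\a,\b,-2}\ra^{J}\big/\la G_n^{\a,\b,-2},G_n^{\a,\b,-2}\ra^{J}$; inserting the closed forms of $H_{n-1,n}^{\a,\b,-2}$, $H_{n,n}^{\a,\b,-2}$, $h_{n-1}^{\a+1,\b}$ and the relevant Pochhammer symbol turns it into a rational-function identity in $n,\a,\b$. Confirming that identity — equivalently, that the stated $c$ is exactly the Gram--Schmidt coefficient adapted to the boundary-perturbed norm $\la G_n^{\a,\b,-2},G_n^{\a,\b,-2}\ra^{J}$ — is the genuinely laborious step, and the natural place to use a computer algebra verification.
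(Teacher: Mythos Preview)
Your proof is correct and follows essentially the same approach as the paper: splitting $\la\cdot,\cdot\ra_{\a,\b,-2}^{J}$ into the bulk form $\la\cdot,\cdot\ra^{\circ}$ and the boundary piece $B$, using the $(1-x-y)^{2}$ factor and the $\la\cdot,\cdot\ra^{\circ}$--orthogonality lemma to dispose of all pairs except $(\wh J_{n-1,n},\wh J_{n,n})$, and then exploiting $B(F_n,G_n)=0$ to reduce that last case to the Gram--Schmidt relation determining $c$. Your write-up is in fact somewhat more detailed than the paper's (e.g.\ you spell out the Jacobi orthogonality argument for $B$ across distinct degrees, and you make explicit that verifying the stated value of $c$ amounts to a rational-function identity), but the structure and the key observations are the same.
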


\begin{proof}
Since $\wh J_{k,n}^{\a,\b,-2}$ are linear combinations of elements in $\CV_n(\varpi_{\a,\b,-2})$ for $n \ge 2$, 
they belong to this space. 

Let  $\la f,g\ra_{\a,\b,-2}^B = \la f,g\ra_{\a,\b,-2} - \la f, g\ra_{\a,\b,-2}^\circ$. As shown in the lema, 
$J_{k,n}^{\a,\b,-2}$ are mutually orthogonal with respect to $\la \cdot,\cdot\ra_{\a,\b,-2}^\circ$. 
For $0 \le k \le n-2$, the polynomials $J_{k,n}^{\a,\b,-2}(x,y)$ contains a factor $(1-x-y)^2$, so that 
$$
     \la f, \wh J_{k,n}^{\a,\b,-2} \ra_{\a,\b,-2}^J =  \la f,J_{k,n}^{\a,\b,-2}\ra_{\a,\b,-2}^\circ,
$$
from which the orthogonality of these polynomials follows. For $k = n-1$ and $n$, the orthogonality of 
$\wh J_{k,n}^{\a,\b,-2}$ with respect to the polynomials of lower degree follow from the lemma. We only have to 
verify the mutually orthogonality of $\wh J_{n-1,n}^{\a,\b,-2}$ and $\wh J_{n,n}^{\a,\b,-2}$. Write 
$\wh J_{n,n}^{\a,\b,-2}(x,y) = F_n^{\a,\b,-2} (x,y) - c_n^{\a,\b} G_n^{\a,\b,-2}(x,y)$. A straightforward computation shows that 
\begin{align*}
 \la \wh J_{n-1,n}^{\a,\b,-2}, \wh J_{n,n}^{\a,\b,-2}\ra_{\a,\b,-2}^J & = \la F_n^{\a,\b,-2},G_n^{\a,\b,-2}\ra_{\a,\b,-2}^\circ
   - c_n^{\a,\b} \la G_n^{\a,\b,-2},G_n^{\a,\b,-2}\ra_{\a,\b,-2}^\circ \\ 
    & \quad - c_n \la G_n^{\a,\b,-2},G_n^{\a,\b,-2}\ra_{\a,\b,-2}^B \\
  & = H_{n-1,n}^{\a,\b,-2} - \frac{(2n+\a+\b)^2}{(n+\a)(n+\b)}H_{n,n}^{\a,\b,-2} \\
     & \quad - c_n^{\a,\b} \left( H_{n-1,n}^{\a,\b,-2} + \frac{(2n+\a+\b)^2}{(n+\a)^2}( H_{n,n}^{\a,\b,-2}+h_{n-1}^{\a+1,\b})\right).
\end{align*}
Solving for $c_n^{\a,\b}$ so that the above expression equals to zero completes the proof.  
\end{proof}

If we modify the definition of our definition $\la \cdot,\cdot\ra_{\a,\b,2}^J$ in \eqref{eq:ipd-ab2} to define
\begin{align*}
\la f,g\ra_{\a,\b,-2}^*  =&  \la f, g\ra_{\a,\b,-2}^\circ    + \l_0 f(0,1) g(0,1) \\
        & + \l_{0,1} \partial_1 f(0,1) \partial_1 g(0,1) + \l_{1,1} \partial_2 f(0,1) \partial_2 g(0,1),
\end{align*}
then it is easy to verify that the polynomials $\wt J_{k,n}$ defined by
$$
 \wt J_{0,0}^{\a,\b,-2}(x,y) =1,\quad \wt J_{0,1}^{\a,\b,-2}(x,y) = y-1, \quad \wt J_{1,1}^{\a,\b,-2}(x,y) = x,
$$
and, for $n \ge 2$, 
\begin{align*}
  \wt J_{k,n}^{\a,\b,-2} &= J_{k,n}^{\a,\b,-2}(x,y), \qquad 0 \le k \le n-2,\\
  \wt J_{n-1,n}^{\a,\b,-2} &=  J_{n-1,n}^{\a,\b,-2}(x,y) - J_{n-1,n}^{\a,\b,-2}(0,1)- \partial_x J_{n-1,n}^{\a,\b,-2}(0,1) x - \partial_y J_{n-11,n}^{\a,\b,-2}(0,1) (y-1),\\
  \wt J_{n,n}^{\a,\b,-2} &=  J_{n,n}^{\a,\b,-2}(x,y) - J_{n,n}^{\a,\b,-2}(0,1)- \partial_x J_{n,n}^{\a,\b,-2}(0,1) x - \partial_y J_{n,n}^{\a,\b,-2}(0,1) (y-1),
\end{align*}
form a mutually orthogonal basis with respect to $\la \cdot,\cdot\ra_{\a,\b,-2}$. The polynomials $\wt J_{k,n}$, however,
no longer belong to $\CV(\varpi_{\a,\b,-2})$. 

We now define an inner product that works with $K_{k,n}^{\a,\b,-2}$ when $\a =0$. 
For $\b > -1$ and $\l >0$, define 
\begin{align} \label{eq:ipd-0b2}
 \la f,g\ra_{0,\b,-2}^K: = & \la \partial_1 f, \partial_1 g\ra_{1,\b,-1}^K + \l \la f\vert_{z=0}, g\vert_{z=0}\ra_{-1,\b}, 
\end{align}
where $\la \cdot,\cdot\ra_{1,\b,-1}^K$ is defined in \eqref{eq:ipd-ab1K} and 
$\la \cdot,\cdot\ra$ is the inner product in one variable,   
$$
 \la f\vert_{z=0}, g\vert_{z=0}\ra_{-1,\b}:= \int_0^1 \partial_z f(1-y,y)  \partial_z g(1-y,y) y^{\b+1} dy + \l_0 f(0,1)g(0,1).
$$
It is easy to see that this defines an inner product on the space $W_2^2$. Indeed, if $\la f,f\ra_{0,\b,-2}^K =0$, then
$f(x,y) = g(y)+x h(y)$ by $\partial_x^2 f(x,y) =0$, so that $f(x,y) = g(y)$ by $\partial_x f(1-y,y) =0$, and $f(x,y)=0$ by
$\partial_z f(1-y,y)=0$ and $f(0,1) =0$. 
 
The polynomials $K_{k,n}^{0,\b,-2}$ are given by 
\begin{align*}
K_{k,n}^{0,\b,-2}(x,y) = (1-y)^k J_k^{-2,0}\left(\frac{2x}{1-y}-1\right)J_{n-k}^{2k-1,\b}(2y-1), \quad 0\le k \le n.
\end{align*}
Let $\CV_n(\varpi_{0,\b,-2})_K$ be the space of orthogonal polynomials with respect to $\la\cdot,\cdot\ra_{0,\b,-2}$.

\begin{prop} \label{prop:orthoK0b2}
The system $\{K_{k,n}^{0,\b,-2}, 0 \le k \le n, \,\, n=0,1,2,\ldots\}$ is a mutually orthogonal basis for
$\CV_n(\varpi_{0,\b,-2})_K$. 
\end{prop}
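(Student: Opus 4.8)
The plan is to mimic the proof of Proposition~\ref{prop:00-1A}: differentiate in $x$ to trade the Sobolev form $\la\cdot,\cdot\ra_{0,\b,-2}^K$ of \eqref{eq:ipd-0b2} for the already-understood form $\la\cdot,\cdot\ra_{1,\b,-1}^K$, while the boundary part on the edge $z=0$ is controlled by the one-variable theory of Proposition~\ref{prop:Qlb}. First I would record three facts, read off from the explicit formula $K_{k,n}^{0,\b,-2}(x,y)=(1-y)^kJ_k^{-2,0}\!\bigl(\tfrac{2x}{1-y}-1\bigr)J_{n-k}^{2k-1,\b}(2y-1)$. (i) $K_{0,n}^{0,\b,-2}(x,y)=J_n^{-1,\b}(2y-1)$ is independent of $x$, so $\partial_xK_{0,n}^{0,\b,-2}=0$, and its restriction to $z=0$ is $\wh J_n^{-1,\b}$ in the notation of Proposition~\ref{prop:Qlb}. (ii) For $1\le k\le n$, putting $x=1-y$ makes the argument of $J_k^{-2,0}$ equal to $1$; since $J_1^{-2,0}(t)=-\tfrac12(1-t)$ and, for $k\ge2$, the coefficient $(-1)_k$ vanishes in \eqref{eq:Jab}, we have $J_k^{-2,0}(1)=0$, so $K_{k,n}^{0,\b,-2}$ vanishes identically on the edge $z=0$. (iii) By \eqref{eq:diffK}, $\partial_xK_{k,n}^{0,\b,-2}=K_{k-1,n-1}^{1,\b,-1}$ for $1\le k\le n$. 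Finally, since $\tfrac{d}{dy}f(1-y,y)=\partial_zf(1-y,y)$, the edge form $\la f\vert_{z=0},g\vert_{z=0}\ra_{-1,\b}$ is precisely the one-variable inner product \eqref{eq:ipd-ell-b} with $\ell=1$ applied to $y\mapsto f(1-y,y)$.

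With these facts, the key step is the reduction identity, valid for any polynomial $f$:
$$\la f,K_{k,n}^{0,\b,-2}\ra_{0,\b,-2}^K=\la\partial_1f,\,K_{k-1,n-1}^{1,\b,-1}\ra_{1,\b,-1}^K\quad(1\le k\le n),$$
which follows from (iii) together with the vanishing of the edge term by (ii), and
$$\la f,K_{0,n}^{0,\b,-2}\ra_{0,\b,-2}^K=\l\,\la f\vert_{z=0},\,\wh J_n^{-1,\b}\ra_{-1,\b},$$
which follows from (i). Taking $f=P\in\Pi_{n-1}^2$ shows $K_{k,n}^{0,\b,-2}\perp\Pi_{n-1}^2$: for $1\le k\le n$ we have $\partial_1P\in\Pi_{n-2}^2$ while $K_{k-1,n-1}^{1,\b,-1}$ is $\la\cdot,\cdot\ra_{1,\b,-1}^K$-orthogonal to $\Pi_{n-2}^2$ by Propositions~\ref{prop:Vn00-1} and \ref{prop:00-1A}; for $k=0$, $P\vert_{z=0}$ has degree $\le n-1$ in $y$ and $\wh J_n^{-1,\b}$ is $\la\cdot,\cdot\ra_{-1,\b}$-orthogonal to it by Proposition~\ref{prop:Qlb}.

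For mutual orthogonality, let $(k,n)\ne(l,m)$; by symmetry of the form assume $k\ge1$, so the first reduction identity applied with $f=K_{l,m}^{0,\b,-2}$ gives $\la K_{k,n}^{0,\b,-2},K_{l,m}^{0,\b,-2}\ra_{0,\b,-2}^K=\la\partial_1K_{l,m}^{0,\b,-2},K_{k-1,n-1}^{1,\b,-1}\ra_{1,\b,-1}^K$. If $l=0$ this is $0$ because $\partial_1K_{0,m}^{0,\b,-2}=0$; if $l\ge1$ it equals $\la K_{l-1,m-1}^{1,\b,-1},K_{k-1,n-1}^{1,\b,-1}\ra_{1,\b,-1}^K$, which vanishes by Proposition~\ref{prop:00-1A} since $(l-1,m-1)\ne(k-1,n-1)$. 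The remaining case $k=l=0$, $n\ne m$, reduces by the second identity to $\l\,\la\wh J_n^{-1,\b},\wh J_m^{-1,\b}\ra_{-1,\b}=0$ (Proposition~\ref{prop:Qlb}). Since $\la\cdot,\cdot\ra_{0,\b,-2}^K$ is an inner product (as checked in the text preceding the proposition) and each $K_{k,n}^{0,\b,-2}$ is a nonzero polynomial of degree exactly $n$, the $n+1$ mutually orthogonal polynomials $\{K_{k,n}^{0,\b,-2}:0\le k\le n\}$ are linearly independent; being orthogonal to $\Pi_{n-1}^2$, they form a basis of $\CV_n(\varpi_{0,\b,-2})_K$, the $(n+1)$-dimensional orthogonal complement of $\Pi_{n-1}^2$ in $\Pi_n^2$.

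The main obstacle is the verification of facts (i) and (ii) at the borderline indices $k=0,1$: the general factorization \eqref{eq:K+K-} that exhibits a $(1-x-y)^2$ factor in $K_{k,n}^{0,\b,-2}$ is available only for $k\ge2$, so $k=0$ and $k=1$ must be handled directly from the hypergeometric form of $J_k^{-2,0}$, and one must check carefully that the weight and the location of the point mass in $\la\cdot,\cdot\ra_{-1,\b}$ are exactly those inherited from $\la\cdot,\cdot\ra_{0,\b,-2}^K$ restricted to the edge $z=0$. Everything else is the same bookkeeping as in Proposition~\ref{prop:00-1A}.
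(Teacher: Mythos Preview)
Your proof is correct and follows essentially the same route as the paper: establish the reduction identity $\la f,K_{k,n}^{0,\b,-2}\ra_{0,\b,-2}^K=\la\partial_1 f,K_{k-1,n-1}^{1,\b,-1}\ra_{1,\b,-1}^K$ for $k\ge1$ by killing the edge term, and handle $k=0$ via the one-variable orthogonality of $J_n^{-1,\b}(2y-1)$. Your packaging of the edge term through the identity $\tfrac{d}{dy}f(1-y,y)=\partial_z f(1-y,y)$, so that vanishing of the restriction $K_{k,n}^{0,\b,-2}\vert_{z=0}$ immediately kills the whole boundary form, is a slightly cleaner way to phrase what the paper verifies by separately checking $\partial_z K_{k,n}^{0,\b,-2}(1-y,y)=0$ and $K_{k,n}^{0,\b,-2}(0,1)=0$; the content is the same.
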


\begin{proof}
For $k \ge 2$, $K_{k,n}^{0,\b,-2}(x,y) = c (1-x-y)^2 K_{k-2,n-2}^{0,\b,2}$, which implies that 
$\partial_z K_{k,n}^{0,\b,-2} (1-y,y) =0$. The last identity also holds for $k=1$ since 
$$
 K_{1,n}^{0,\b,-2}(x,y) = -(1 - x - y) J_{n-1}^{1,\b} (2y-1)
$$
and $\partial_z g(x+y) =0$. In particular, it follows that $K_{k,n}^{0,\b,-2}(0,1) =0$ for $1 \le k \le n$.
Hence, using the fact that $\partial_x K_{k,n}^{0,\b,-2}(x,y) = K_{k-1,n-1}^{1,\b,-1}(x,y)$ and 
$\partial_x K_{k,n}^{0,\b,-2} (1-y,y) =0$, we see that, for $k \ge 1$, 
\begin{align*}
  \la f, K_{k,n}^{0,\b,-2} \ra_{0,\b,-2}^K = \la \partial_x f,  K_{k-1,n-1}^{1,\b,-1}\ra_{1,\b,-1}^K
\end{align*}
by the definition of $\la \cdot,\cdot\ra_{1,\b,-1}$. In particular, this shows that 
$\la K_{k,n}^{0,\b,-2}, K_{l,m}^{0,\b,-2}\ra_{0,\b,-2}^K =0$ when $(k,n) \ne (l,m)$ and either $k \ge 1$ or $l \ge 1$. In the
remaining case of $k= l = 0$, we use the fact that $\partial_x  K_{0,n}^{0,\b,-2}(x,y) = 0$, so that 
\begin{align*}
  \la f, K_{0,n}^{0,\b,-2} \ra_{0,\b,-2}^K = \la   f,  K_{0,n-1}^{0,\b,-2}\ra_{-1,\b}.
\end{align*}
It is easy to see that $ K_{0,n-1}^{0,\b,-2}(x,y) = J_{n}^{-1,\b}(2y-1)$ is precisely the orthogonal polynomial with respect to
$\la \cdot,\cdot \ra_{-1,\b}$. This completes the proof. 
\end{proof}

If $\a \ne 0$ and $\a > -1$, we have 
$$
   K_{1,n}^{\a,\b,-2}(x,y) = \frac{1}{\a}  ( 1- y + \a x) J_{n-1}^{\a+1,\b}(2y-1),
$$
which no longer satisfies $\partial_z K_{1,n}^{\a,\b,-2}(x,y) (1-y,y) =0$ and the orthogonality of $K_{1,n}^{\a,\b,-2}$ fails to hold
for $\la \cdot,\cdot \ra_{\a,\b,-2}^K$ derived from the obvious modification of $\la \cdot,\cdot \ra_{0,\b,-2}^K$.

In exactly the same way, we can define for  $\a > -1$ and $\l > 0$,
\begin{align} \label{eq:ipd-a02}
 \la f,g\ra_{\a,0,-2}^L: = & \la \partial_2 f, \partial_2 g\ra_{\a,1,-1}^L +\la f\vert_{z=0}, g\vert_{z=0}\ra_{-1,\a} 
\end{align}
where $\la \cdot,\cdot\ra_{\a,1,-1}^L$ is defined in \eqref{eq:ipd-ab1L} and 
$\la f\vert_{z=0}, g\vert_{z=0}\ra_{-1,\a}$ is the inner product of one variable 
$$
   \la f\vert_{z=0}, g\vert_{z=0}\ra_{-1,\a}
: = \l_2 \int_0^1  \partial_z f(x,1-x)  \partial_z g(x,1-x) x^{\a+1} dx + \l_3 f(1,0)g(1,0). 
$$
Let $\CV_n(\varpi_{0,\b,-2})_L$ be the space of orthogonal polynomials with respect to $\la\cdot,\cdot\ra_{0,\b,-2}^L$.
As an analogue of Proposition \ref{prop:orthoK0b2}, we have the following: 
 
\begin{prop}
The system $\{ L_{k,n}^{\a,0,-2}, 1 \le k \le n, \,\, n=0,1,2,\ldots\}$ is a mutually orthogonal basis of 
$\CV_n(\varpi_{0,\b,-2})_K$. 
\end{prop}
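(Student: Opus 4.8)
The plan is to run the proof of Proposition~\ref{prop:orthoK0b2} almost verbatim, replacing the $K$-basis, the operator $\partial_1$ and the first identity of \eqref{eq:diffK} throughout by the $L$-basis, the operator $\partial_2$ and the second identity of \eqref{eq:diffL}. The target is that all $L_{k,n}^{\a,0,-2}$, $0\le k\le n$, lie in $\CV_n(\varpi_{\a,0,-2})_L$ (so the $n+1$ of them form a basis), and that the subfamily with $1\le k\le n$ is mutually orthogonal for $\la\cdot,\cdot\ra_{\a,0,-2}^L$. One genuinely new feature appears at $k=1$, and it is what forces the index range in the statement.

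First I would record the shape of the polynomials. Applying \eqref{eq:Jam} with $\a=0$, $m=2$ to the factor $J_k^{0,-2}$ in $L_{k,n}^{\a,0,-2}(x,y)=(1-x)^kJ_k^{0,-2}\bigl(1-\tfrac{2y}{1-x}\bigr)J_{n-k}^{2k-1,\a}(2x-1)$ gives $L_{k,n}^{\a,0,-2}(x,y)=\tfrac1{k(k-1)}(1-x-y)^2L_{k-2,n-2}^{\a,0,2}(x,y)$ for $2\le k\le n$, so that $\partial_zL_{k,n}^{\a,0,-2}(x,1-x)=0$ and $L_{k,n}^{\a,0,-2}(1,0)=0$; while the renormalization in the definition of $J_n^{\a,\b}$ forces $J_1^{0,-2}(t)=\tfrac{t-1}2$, whence $L_{1,n}^{\a,0,-2}(x,y)=-y\,J_{n-1}^{1,\a}(2x-1)$ and $L_{0,n}^{\a,0,-2}(x,y)=J_n^{-1,\a}(2x-1)$. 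Moreover $\partial_2L_{0,n}^{\a,0,-2}\equiv0$ and, by the second identity of \eqref{eq:diffL}, $\partial_2L_{k,n}^{\a,0,-2}=-L_{k-1,n-1}^{\a,1,-1}$ for $1\le k\le n$.

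Next I would feed these into the definition \eqref{eq:ipd-a02} of $\la\cdot,\cdot\ra_{\a,0,-2}^L$. For $k\ge2$ the edge integral over $z=0$ drops out because of the factor $(1-x-y)^2$, and
\[
  \la f,L_{k,n}^{\a,0,-2}\ra_{\a,0,-2}^L=-\la\partial_2f,L_{k-1,n-1}^{\a,1,-1}\ra_{\a,1,-1}^L ;
\]
since $\{L_{j,m}^{\a,1,-1}\}$ is a mutually orthogonal basis of $\CV_m(\varpi_{\a,1,-1})$ for $\la\cdot,\cdot\ra_{\a,1,-1}^L$ by Proposition~\ref{prop:00-1B}, this gives both $L_{k,n}^{\a,0,-2}\in\CV_n(\varpi_{\a,0,-2})_L$ and, on taking $f=L_{l,m}^{\a,0,-2}$, pairwise orthogonality within $2\le k\le n$. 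For $k=1$ the edge term survives; here I would use \eqref{eq:Jmb} to rewrite $L_{1,n}^{\a,0,-2}\big|_{z=0}=-(1-x)J_{n-1}^{1,\a}(2x-1)=n\,J_n^{-1,\a}(2x-1)$, a constant multiple of the one-variable orthogonal polynomial $\wh J_n^{-1,\a}$ for $\la\cdot,\cdot\ra_{-1,\a}$ (Proposition~\ref{prop:Qlb}), so that
\[
  \la f,L_{1,n}^{\a,0,-2}\ra_{\a,0,-2}^L=-\la\partial_2f,L_{0,n-1}^{\a,1,-1}\ra_{\a,1,-1}^L+n\,\la f\big|_{z=0},\wh J_n^{-1,\a}\ra_{-1,\a};
\]
both terms vanish for $\deg f\le n-1$, and a short bookkeeping over $f=L_{l,m}^{\a,0,-2}$ (using $L_{l,m}^{\a,0,-2}\big|_{z=0}=0$ for $l\ge2$ and $=m\,\wh J_m^{-1,\a}$ for $l=1$) shows $L_{1,n}^{\a,0,-2}$ is orthogonal to the rest of $\{L_{l,m}^{\a,0,-2}:1\le l\le m\}$. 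The case $k=0$ is identical to the $K$-case — $\partial_2L_{0,n}^{\a,0,-2}\equiv0$ and $L_{0,n}^{\a,0,-2}\big|_{z=0}=\wh J_n^{-1,\a}$ — so $L_{0,n}^{\a,0,-2}\in\CV_n(\varpi_{\a,0,-2})_L$ as well; being $n+1$ linearly independent polynomials, the $L_{k,n}^{\a,0,-2}$, $0\le k\le n$, then span $\CV_n(\varpi_{\a,0,-2})_L$.

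The only step that is not a mechanical transcription of Proposition~\ref{prop:orthoK0b2} is the $k=1$ (and $k=0$) analysis: because the renormalized $J_1^{0,-2}$ lacks the $(1+t)^2$ factor that $J_k^{0,-2}$ carries for $k\ge2$, the polynomial $L_{1,n}^{\a,0,-2}$ vanishes on the edge $\{y=0\}$ instead of on $\{z=0\}$, so the boundary term no longer disappears automatically; the remedy is \eqref{eq:Jmb}, which exhibits $L_{1,n}^{\a,0,-2}|_{z=0}$ as a multiple of $\wh J_n^{-1,\a}$. This is also why the mutually orthogonal family is indexed by $1\le k\le n$: $L_{0,n}^{\a,0,-2}$ and $L_{1,n}^{\a,0,-2}$ both restrict to a multiple of $\wh J_n^{-1,\a}$ on $z=0$, hence are not orthogonal to each other, although $L_{0,n}^{\a,0,-2}$ still lies in $\CV_n(\varpi_{\a,0,-2})_L$. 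All remaining ingredients — the explicit factorization, the integrations by parts built into $\la\cdot,\cdot\ra_{\a,1,-1}^L$, and the $(k,l)$ bookkeeping — transcribe directly from the $(0,\b,-2)$ argument under the coordinate interchange converting the $K$-basis into the $L$-basis.
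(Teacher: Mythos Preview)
Your argument is correct and follows the same strategy as the paper, which records only the reduction identity
\[
  \la f, L_{k,n}^{\a,0,-2}\ra_{\a,0,-2}^L \;=\; -\,\la \partial_2 f,\, L_{k-1,n-1}^{\a,1,-1}\ra_{\a,1,-1}^L
\]
and declares the rest to be as in Proposition~\ref{prop:orthoK0b2}. Your treatment of $k=1$ is in fact more careful than the paper's sketch: the displayed identity, true verbatim for $k\ge 2$ because of the factor $(1-x-y)^2$, does \emph{not} hold for $L_{1,n}^{\a,0,-2}(x,y)=-y\,J_{n-1}^{1,\a}(2x-1)$. In the $K$-case the analogous polynomial $K_{1,n}^{0,\b,-2}=-(1-x-y)J_{n-1}^{1,\b}(2y-1)$ carries a factor of $z$ and so $\partial_z K_{1,n}^{0,\b,-2}$ vanishes on the edge; here $L_{1,n}^{\a,0,-2}$ carries a factor of $y$ instead, and the edge contribution survives. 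Your remedy---rewriting $L_{1,n}^{\a,0,-2}\big|_{z=0}=-(1-x)J_{n-1}^{1,\a}(2x-1)=n\,\wh J_n^{-1,\a}(x)$ via \eqref{eq:Jmb} and invoking the one-variable Sobolev orthogonality of Proposition~\ref{prop:Qlb} (noting that $\partial_z f(x,1-x)=-\tfrac{d}{dx}f(x,1-x)$, so the edge term in \eqref{eq:ipd-a02} really is the one-variable inner product $\la\cdot,\cdot\ra_{-1,\a}$ applied to restrictions)---is exactly what is needed. Your final remark, that $L_{0,n}^{\a,0,-2}\big|_{z=0}$ and $L_{1,n}^{\a,0,-2}\big|_{z=0}$ are both nonzero multiples of $\wh J_n^{-1,\a}$ and hence not orthogonal to each other, correctly explains the index range $1\le k\le n$ in the statement.
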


The proof relies on the identity 
\begin{align*}
  \la f, L_{k,n}^{\a,0,-2} \ra_{\a,0,-2}^L = \la \partial_x f,  L_{k-1,n-1}^{\a,1,-1}\ra_{\a,1,-1}^L.
\end{align*}

\begin{prop}
For $n =0,1,\ldots$, $\CV_n(\varpi_{0,0,-2})_K =  \CV_n(\varpi_{0,0,-2})_L =:  \CV_n(\varpi_{0,0,-2})$. 
\end{prop}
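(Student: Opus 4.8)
The plan is to separate the generating families $\{K_{k,n}^{0,0,-2}\}$ and $\{L_{k,n}^{0,0,-2}\}$ at the index $k=2$. First I would observe that for $2\le k\le n$ the definitions \eqref{eq:K-basis} and \eqref{eq:L-basis}, together with \eqref{eq:Jmb} applied to the first Jacobi factor of $K$ (parameters $(-2,0)$) and \eqref{eq:Jam} applied to that of $L$ (parameters $(0,-2)$), give
\[
   K_{k,n}^{0,0,-2}=\tfrac{1}{k(k-1)}\,(1-x-y)^2\,K_{k-2,n-2}^{0,0,2},\qquad
   L_{k,n}^{0,0,-2}=\tfrac{1}{k(k-1)}\,(1-x-y)^2\,L_{k-2,n-2}^{0,0,2}.
\]
Since the triple $(0,0,2)$ has all entries $>-1$, the classical identity \eqref{eq:CV=CV} yields $\mathrm{span}\{K_{j,n-2}^{0,0,2}:0\le j\le n-2\}=\mathrm{span}\{L_{j,n-2}^{0,0,2}:0\le j\le n-2\}=\CV_{n-2}(\varpi_{0,0,2})$, so both $\mathrm{span}\{K_{k,n}^{0,0,-2}:2\le k\le n\}$ and $\mathrm{span}\{L_{k,n}^{0,0,-2}:2\le k\le n\}$ coincide with the $(n-1)$-dimensional space $\CU_n:=(1-x-y)^2\,\CV_{n-2}(\varpi_{0,0,2})$. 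Because each family is mutually orthogonal for a genuine inner product (Proposition \ref{prop:orthoK0b2} and its $L$-analogue), it is linearly independent, hence $\CV_n(\varpi_{0,0,-2})_K=\CU_n\oplus\mathrm{span}\{K_{0,n}^{0,0,-2},K_{1,n}^{0,0,-2}\}$ and $\CV_n(\varpi_{0,0,-2})_L=\CU_n\oplus\mathrm{span}\{L_{0,n}^{0,0,-2},L_{1,n}^{0,0,-2}\}$, both of dimension $n+1$.

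It therefore suffices to show that the two two-dimensional ``top'' summands agree, i.e. that $K_{0,n}^{0,0,-2},K_{1,n}^{0,0,-2}\in\CV_n(\varpi_{0,0,-2})_L$; the dimension count then forces equality. Here I would use the explicit closed forms: by \eqref{eq:Jmb}, $K_{0,n}^{0,0,-2}=J_n^{-1,0}(2y-1)=-\tfrac1n(1-y)J_{n-1}^{1,0}(2y-1)$, and since $J_1^{-2,0}(t)=-\tfrac12(1-t)$ one gets $K_{1,n}^{0,0,-2}=-(1-x-y)J_{n-1}^{1,0}(2y-1)$; symmetrically $L_{0,n}^{0,0,-2}=-\tfrac1n(1-x)J_{n-1}^{1,0}(2x-1)$ and, using $J_1^{0,-2}(t)=-\tfrac12(1-t)$, $L_{1,n}^{0,0,-2}=-y\,J_{n-1}^{1,0}(2x-1)$. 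Thus the $K$-top equals $J_{n-1}^{1,0}(2y-1)\cdot\mathrm{span}\{1-y,\,x\}$ and the $L$-top equals $J_{n-1}^{1,0}(2x-1)\cdot\mathrm{span}\{1-x,\,y\}$, and the whole matter comes down to a pair of polynomial identities expressing $(1-y)J_{n-1}^{1,0}(2y-1)$ and $x\,J_{n-1}^{1,0}(2y-1)$ as combinations of $(1-x)J_{n-1}^{1,0}(2x-1)$, $y\,J_{n-1}^{1,0}(2x-1)$, and an element of $\CU_n$. I would derive these from the one-variable relations of Section \ref{sect2} (in particular \eqref{eq:JacobiR1} and \eqref{eq:JacobiR3}) and the basis transforms \eqref{eq:K+K-}, \eqref{eq:L+L-} for the regular triple $(0,0,2)$, reducing everything to Jacobi identities in one variable; as throughout the paper, I would confirm the final identities with a computer algebra system.

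The hard part is precisely this last matching of the $k\in\{0,1\}$ pieces, which is where the difficulty flagged at the start of Section \ref{sect8} is concentrated. The obstruction is that $J_1^{-2,0}$ and $J_1^{0,-2}$ are singular values of the Jacobi parameters — the reflection rule $J_1^{\g,\a}(-t)=-J_1^{\a,\g}(t)$ fails there — so the $K$- and $L$-bases are not interchanged by the symmetry $x\leftrightarrow y$ at indices $0$ and $1$, even though $\triangle$ and $\varpi_{0,0,-2}$ are invariant under that symmetry. Consequently the equality of the two spaces cannot be read off from symmetry and must be extracted from the explicit low-degree computation together with the dimension bookkeeping of the first step.
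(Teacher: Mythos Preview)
Your decomposition for $k\ge 2$ is correct and clean: the factorizations via \eqref{eq:Jmb} and \eqref{eq:Jam} show that both $\mathrm{span}\{K_{k,n}^{0,0,-2}:2\le k\le n\}$ and $\mathrm{span}\{L_{k,n}^{0,0,-2}:2\le k\le n\}$ equal $(1-x-y)^2\,\CV_{n-2}(\varpi_{0,0,2})$. The gap is in the $k\in\{0,1\}$ step. You correctly reduce the question to showing, for instance,
\[
(1-y)J_{n-1}^{1,0}(2y-1)\ \in\ \mathrm{span}\bigl\{(1-x)J_{n-1}^{1,0}(2x-1),\ yJ_{n-1}^{1,0}(2x-1)\bigr\}+(1-x-y)^2\,\CV_{n-2}(\varpi_{0,0,2}),
\]
but you do not prove this, and your hint that it ``reduces to Jacobi identities in one variable'' via \eqref{eq:K+K-}--\eqref{eq:L+L-} for the regular triple $(0,0,2)$ is not convincing: those reduction formulas apply only to $k\ge 2$ and say nothing about the two top polynomials; the required identity is genuinely two-variable, and a limiting argument from the regular connection coefficients fails because $L_{1,n}^{0,0,\g}$ is not continuous at $\g=-2$ (cf.\ the remark after Proposition~\ref{prop:Jacobi1st}). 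So the very step you flag as ``the hard part'' is left open.

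The paper bypasses this computation entirely by arguing through the defining inner product rather than through explicit basis identities. It shows that each $K_{k,n}^{0,0,-2}$ is orthogonal to $\Pi_{n-1}^2$ with respect to $\la\cdot,\cdot\ra_{0,0,-2}^L$: the derivative term $\la\partial_2 K_{k,n}^{0,0,-2},\partial_2 g\ra_{0,1,-1}^L$ vanishes by \eqref{eq:diffK} and Proposition~\ref{prop:00-1B}, and the boundary term vanishes because $K_{k,n}^{0,0,-2}|_{z=0}=0$ for $k\ge 1$ (seen in the proof of Proposition~\ref{prop:orthoK0b2}) while $K_{0,n}^{0,0,-2}(x,1-x)=J_n^{-1,0}(1-2x)$ is itself orthogonal for $\la\cdot,\cdot\ra_{-1,0}$. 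This handles all $k$ uniformly, gives $\CV_n(\varpi_{0,0,-2})_K\subseteq\CV_n(\varpi_{0,0,-2})_L$, and equality follows by dimension. Your algebraic-identification approach is conceptually legitimate, but to complete it you would still need to produce the explicit element of $\CV_{n-2}(\varpi_{0,0,2})$ and the two scalars realizing the membership above --- which the orthogonality argument avoids entirely.
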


\begin{proof}
Let $g$ be a generic polynomial of degree at most $n-1$. We show that $K_{k,n}^{0,0,-2}$ is orthogonal to $g$
with respect to $\la \cdot,\cdot\ra_{0,0,-2}^L$. Using \eqref{eq:diffK}, it follows immediately from 
Proposition \ref{prop:00-1B} that $\la \partial_2 K_{k,n}^{0,0,-2}, \partial_2 g \ra_{0,1,-1} =0$.  Furthermore, from 
the proof of Proposition \ref{prop:orthoK0b2}, it follows easily that 
$\la K_{k,n}^{0,0,-2} \vert_{z=0},g\vert_{z=0} \ra_{-1,0} =0$ for $1 \le k \le n$, an this identity holds also 
for $k =0$ as can be verified by working with $K_{0,n}^{0,0,-2}(x,1-x) = J_n^{-1,0}(1-2x)$. 
\end{proof}

It is worth to point out that $\{J_{k,n}^{0,0,-2}: 0\le k \le n\}$ does not belong to $\CV_n(\varpi_{0,0,-2})_K$. Indeed,
as in the proof of the above proposition, it is easy to see that $J_{k,n}^{0,0,-2}$ satisfies 
$\la \partial_1 J_{k,n}^{0,0,-2}, \partial_1 g \ra_{0,0,-2}^K =0$. However, since 
$$
\partial_z J_{1,n}^{0,0,-2}(1-y,y) = J_{0,n-1}^{1,1,-2}(1-y,y) = J_n^{0,0}(2y-1)
$$
is not orthogonal with respect to $\la \cdot,\cdot,\ra_{-1,0}$, we see that 
$\la J_{1,n}^{0,0,-2},f\ra_{0,0,-2}^K \ne 0$, so that $J_{1,n}^{0,0,-2} \notin \CV_n(\varpi_{0,0,-2})_K$. 

We can define the projection operator and the Fourier partial sums with respect to these inner products.
As a consequence of the proof, we immediately deduce the following proposition, in which the notations should
be self-explanatory by now.  

\begin{prop}
For $f \in W_2^2$, 
$$
  \partial_1  \proj_{n, \mathrm{K}}^{0,\b,-2} f = \proj_{n-1, \mathrm{K}}^{1,\b,-1} \partial_1 f \quad \hbox{and} \quad   
  \partial_2  \proj_{n, \mathrm{L}}^{\a,0,-2} f = \proj_{n-1,\mathrm{L}}^{\a,1,-1} \partial_2 f.  
$$
\end{prop}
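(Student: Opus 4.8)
The plan is to follow the same strategy used to establish the commuting relations in Theorems~\ref{thm:Dproj00-1}, \ref{thm:proj-1-1g} and \ref{thm:Dproj-1-1-1}: expand $f$ in the relevant orthogonal basis of $\CV_n$, differentiate term by term, and identify the Fourier coefficients of the result with those of the derivative. I would treat $\partial_1$ first, the case of $\partial_2$ being entirely analogous.

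Write $\proj_{n,\mathrm{K}}^{0,\b,-2} f = \sum_{k=0}^n \wh f_{k,n,\mathrm{K}}^{0,\b,-2}\, K_{k,n}^{0,\b,-2}$. By Proposition~\ref{prop:partialK} we have $\partial_x K_{k,n}^{0,\b,-2} = K_{k-1,n-1}^{1,\b,-1}$ for $1\le k\le n$, while $\partial_x K_{0,n}^{0,\b,-2}=0$ (as already noted in the proof of Proposition~\ref{prop:orthoK0b2}, $K_{0,n}^{0,\b,-2}$ being a function of $y$ alone). Hence
\[
  \partial_1 \proj_{n,\mathrm{K}}^{0,\b,-2} f = \sum_{k=1}^n \wh f_{k,n,\mathrm{K}}^{0,\b,-2}\, K_{k-1,n-1}^{1,\b,-1},
\]
and it remains to prove $\wh f_{k,n,\mathrm{K}}^{0,\b,-2} = \wh{(\partial_1 f)}_{k-1,n-1,\mathrm{K}}^{1,\b,-1}$ for $1\le k\le n$. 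For this I would invoke the identity established inside the proof of Proposition~\ref{prop:orthoK0b2}, namely $\la f, K_{k,n}^{0,\b,-2}\ra_{0,\b,-2}^K = \la \partial_1 f, K_{k-1,n-1}^{1,\b,-1}\ra_{1,\b,-1}^K$ for $k\ge 1$; its right-hand side is exactly the numerator of $\wh{(\partial_1 f)}_{k-1,n-1,\mathrm{K}}^{1,\b,-1}$. Taking $f = K_{k,n}^{0,\b,-2}$ in this identity and using $\partial_1 K_{k,n}^{0,\b,-2} = K_{k-1,n-1}^{1,\b,-1}$ shows that $\la K_{k,n}^{0,\b,-2}, K_{k,n}^{0,\b,-2}\ra_{0,\b,-2}^K = \la K_{k-1,n-1}^{1,\b,-1}, K_{k-1,n-1}^{1,\b,-1}\ra_{1,\b,-1}^K$, so the denominators of the two Fourier coefficients also agree. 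Substituting back into the display above gives $\partial_1 \proj_{n,\mathrm{K}}^{0,\b,-2} f = \proj_{n-1,\mathrm{K}}^{1,\b,-1}\partial_1 f$.

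The statement for $\partial_2$ is proved in the same way with the $L$-basis, using $\partial_y L_{k,n}^{\a,0,-2} = -L_{k-1,n-1}^{\a,1,-1}$ for $k\ge 1$ and $\partial_y L_{0,n}^{\a,0,-2}=0$ from Proposition~\ref{prop:partialL}, together with the identity $\la f, L_{k,n}^{\a,0,-2}\ra_{\a,0,-2}^L = \la \partial_2 f, L_{k-1,n-1}^{\a,1,-1}\ra_{\a,1,-1}^L$ recorded after that proposition. Since all the required derivative formulas and the relevant inner-product identities are already in place, there is essentially nothing beyond bookkeeping here; the only point deserving attention is the reduction of $\la f, K_{k,n}^{0,\b,-2}\ra_{0,\b,-2}^K$ to an inner product of $\partial_1 f$ in the range $k\ge 1$, i.e.\ the vanishing of the boundary contribution $\la f\vert_{z=0}, g\vert_{z=0}\ra_{-1,\b}$, which rests on $K_{k,n}^{0,\b,-2}(0,1)=0$ and $\partial_z K_{k,n}^{0,\b,-2}(1-y,y)=0$ for $k\ge 1$ — precisely what was checked in the proof of Proposition~\ref{prop:orthoK0b2}, so no new estimate is needed.
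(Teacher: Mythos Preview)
Your argument is correct and is exactly the approach the paper takes: the paper's proof simply says the identity is ``a direct consequence of the proof of the previous propositions,'' meaning precisely the reduction $\la f, K_{k,n}^{0,\b,-2}\ra_{0,\b,-2}^K = \la \partial_1 f, K_{k-1,n-1}^{1,\b,-1}\ra_{1,\b,-1}^K$ for $k\ge 1$ together with $\partial_x K_{0,n}^{0,\b,-2}=0$, which you have spelled out in full. The only cosmetic point is the sign in $\partial_y L_{k,n}^{\a,0,-2}=-L_{k-1,n-1}^{\a,1,-1}$, but this sign appears twice (once in the Fourier coefficient, once in the differentiated basis element) and cancels, so your conclusion for $\partial_2$ stands.
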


\begin{proof}
For $\partial_1$, this is a direct consequence of the proof of the previous propositions, since 
$\partial_x K_{k,n}^{0,\b,-2}(x,y) =0$.  For $\partial_2$, we use $L_{k,n}^{0,\b,-2}$. 
\end{proof}

\subsection{The space $\CV_n(\varpi_{\a,-2,\g})$ and $\CV_n(\varpi_{-2,\b, \g})$}
As in the Subsection \ref{sect:a-1g}, we again record only basic facts for these two cases. The relations
\eqref{eq:3bases-transform} extend to $\wh J_{k,n}^{\a,\b,-2}$. Thus, we define, for example, 
$$
  \wh K_{k,n}^{\a,-2,\g}(x,y) = \wh J_{k,n}^{\g, \a,-2}(1-x-y,x), \quad 0 \le k \le n. 
$$

For $\CV_n(\varpi_{\a,-2,\g})$ with $\a,\g> -1$, we define
\begin{align} \label{eq:ipd-a2gK}
 \la f,g\ra_{\a,-2,\g}^K: = & \int_{\triangle} (y^2 \partial_z^2 f(x,y) \partial_z^2 g(x,y) +
 2 yz \partial_z\partial_x f(x,y) \partial_z\partial_x g(x,y)  \\
   & \qquad  + z^2 \partial_x^2 f(x,y) \partial_x^2 g(x,y)) x^\a z^\g dxdy \notag \\
     & + \int_0^1 \partial_z f(0,y) \partial_z g(0,y) y^{\a+1}(1-y)^\g dy \notag \\
     & + \int_0^1 \partial_x f(0,y)  \partial_y g(0,y) y^\a (1-y)^{\g+1} dx. \notag
\end{align}
This is not a full-fledged inner production.  
The polynomials $\wh K_{k,n}^{\a,-2,\g}$, $0\le k \le n$, are mutually orthogonal with respect to 
$\la f, g\ra_{\a,-2,\g}^K$ and,  for $n \ge 2$, $\{\wh K_{k,n}^{\a,-2,\g}, 0\le k \le n\}$ is a basis for 
$\CV_n(\varpi_{\a,-2,\g})$. Moreover, we define inner products
\begin{align}\label{eq:ipd-020LJ}
\begin{split}
 \la f,g\ra_{\a,-2,0}^L: & =  \la \partial_y f, \partial_y g \ra_{\a,-1,1}^L + \l  \la f(\cdot,0),g(\cdot,0)\ra_{-1,\a}, \\
 \la f,g\ra_{0,-2,\g}^J: & =  \la \partial_z f, \partial_z g\ra_{1, -1, \g}^J + \l  \la f(\cdot,0),g(\cdot,0)\ra_{\g,-1}.
\end{split}
\end{align}  
where $\la \cdot,\cdot\ra_{\a,-1,1}^L$ and $\la \cdot,\cdot\ra_{\a,-1,1}^J$ are defined in \eqref{eq:ipd-a1g},
$\la \cdot,\cdot\ra_{-1,\a}$ and $\la \cdot,\cdot\ra_{\g,-1}$ are inner products of one variable, \eqref{eq:ipd-ell-b}
with $\ell=1$ and \eqref{eq:ipd-a-m} with $m=1$, respectively.  
%
Then $\{ L_{k,n}^{\a,-2,0}, 1 \le k \le n\}$ and $\{ J_{k,n}^{0,-2,\g}, 1 \le k \le n\}$ are mutually orthogonal 
with respect to $\la \cdot,\cdot\ra_{\a,-2,0}^L$ and $\la \cdot,\cdot\ra_{0,-2,\g}^J$, respectively. 
%

\begin{prop}
For $f \in W_2^2$, 
$$
  \partial_2  \proj_{n,  \mathrm{L}}  f = \proj_{n-1, \mathrm{L}}^{\a,-1,1} \partial_2 f \quad \hbox{and} \quad   
  \partial_3  \proj_{n, \mathrm{J}}^{0,-2,\g} f = \proj_{n-1, \mathrm{J}}^{1,-1,\g} \partial_3 f.  
$$
Moreover, $\CV_n(\varpi_{0,-2,0})_L=\CV_n(\varpi_{0,-2,0})_J$ and $\proj_{n, \mathrm{L}} f = \proj_{n, \mathrm{J}} f$. 
\end{prop}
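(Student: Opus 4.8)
The plan is to obtain all three assertions by transporting the corresponding facts for the parameter triples $(0,\a,-2)$ and $(\g,0,-2)$ — both established earlier in this section — under the affine change of variables $T\colon F\mapsto TF$, $(TF)(x,y):=F(1-x-y,x)$, which permutes the barycentric coordinates of $\triangle$ and has Jacobian of modulus $1$. The key inputs are the transformation rules \eqref{eq:3bases-transform} for the three bases, the differentiation rules \eqref{eq:d-transform1}, and the already-proven commuting relations $\partial_1\proj_{n,\mathrm{K}}^{0,\b,-2}=\proj_{n-1,\mathrm{K}}^{1,\b,-1}\partial_1$ and $\partial_2\proj_{n,\mathrm{L}}^{\a,0,-2}=\proj_{n-1,\mathrm{L}}^{\a,1,-1}\partial_2$.

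For the $\partial_2$ identity, \eqref{eq:3bases-transform} gives $L_{k,n}^{\a,-2,0}(x,y)=K_{k,n}^{0,\a,-2}(1-x-y,x)$, so $T$ maps $\CV_n(\varpi_{0,\a,-2})_K$ onto $\CV_n(\varpi_{\a,-2,0})_L$. One checks that $T$ is an isometry from $(W_2^2,\la\cdot,\cdot\ra_{0,\a,-2}^K)$ onto $(W_2^2,\la\cdot,\cdot\ra_{\a,-2,0}^L)$ by unwinding the nested definitions \eqref{eq:ipd-0b2} and \eqref{eq:ipd-020LJ} and using the permutation invariance of $\la\cdot,\cdot\ra_{\a,\b,\g}$, so that $\proj_{n,\mathrm{L}}^{\a,-2,0}(TF)=T(\proj_{n,\mathrm{K}}^{0,\a,-2}F)$ for all $F$. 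Since \eqref{eq:d-transform1} gives $\partial_2(TF)=-T(\partial_1 F)$, applying $\partial_2$ to this identity and feeding in the commuting relation $\partial_1\proj_{n,\mathrm{K}}^{0,\a,-2}=\proj_{n-1,\mathrm{K}}^{1,\a,-1}\partial_1$ together with $K_{k-1,n-1}^{1,\a,-1}(1-x-y,x)=L_{k-1,n-1}^{\a,-1,1}(x,y)$ yields $\partial_2\proj_{n,\mathrm{L}}^{\a,-2,0}f=\proj_{n-1,\mathrm{L}}^{\a,-1,1}\partial_2 f$. The $\partial_3$ identity follows by the same argument, now using $J_{k,n}^{0,-2,\g}(x,y)=L_{k,n}^{\g,0,-2}(1-x-y,x)$, the isometry of $T$ between $(W_2^2,\la\cdot,\cdot\ra_{\g,0,-2}^L)$ and $(W_2^2,\la\cdot,\cdot\ra_{0,-2,\g}^J)$ coming from \eqref{eq:ipd-a02} and \eqref{eq:ipd-020LJ}, the rule $\partial_3(TF)=-T(\partial_2 F)$, the relation $\partial_2\proj_{n,\mathrm{L}}^{\g,0,-2}=\proj_{n-1,\mathrm{L}}^{\g,1,-1}\partial_2$, and $L_{k-1,n-1}^{\g,1,-1}(1-x-y,x)=J_{k-1,n-1}^{1,-1,\g}(x,y)$.

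For the final assertion, set $\a=\g=0$; then $L_{k,n}^{0,-2,0}(x,y)=K_{k,n}^{0,0,-2}(1-x-y,x)$ and $J_{k,n}^{0,-2,0}(x,y)=L_{k,n}^{0,0,-2}(1-x-y,x)$, so the two isometries above carry $\CV_n(\varpi_{0,0,-2})_K$ onto $\CV_n(\varpi_{0,-2,0})_L$ and $\CV_n(\varpi_{0,0,-2})_L$ onto $\CV_n(\varpi_{0,-2,0})_J$. The equality $\CV_n(\varpi_{0,0,-2})_K=\CV_n(\varpi_{0,0,-2})_L$ proved earlier — with the common space being the space of orthogonal polynomials for both $\la\cdot,\cdot\ra_{0,0,-2}^K$ and $\la\cdot,\cdot\ra_{0,0,-2}^L$ — therefore gives $\CV_n(\varpi_{0,-2,0})_L=\CV_n(\varpi_{0,-2,0})_J$, with the common space orthogonal to all polynomials of lower degree for both $\la\cdot,\cdot\ra_{0,-2,0}^L$ and $\la\cdot,\cdot\ra_{0,-2,0}^J$; the identity $\proj_{n,\mathrm{L}}f=\proj_{n,\mathrm{J}}f$ then follows from the characterization of the projection operator exactly as in Proposition~\ref{prop:proj=A=B}.

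The main obstacle is the verification that $T$ is an isometry for the two relevant pairs of inner products. Since $\la\cdot,\cdot\ra_{\a,-2,0}^L$, $\la\cdot,\cdot\ra_{0,-2,\g}^J$, $\la\cdot,\cdot\ra_{0,\a,-2}^K$ and $\la\cdot,\cdot\ra_{\g,0,-2}^L$ are each built by stripping off one partial derivative and recursing on inner products from earlier sections (ultimately on \eqref{eq:ipd-a1g}, \eqref{eq:ipd-ab1K}, \eqref{eq:ipd-ab1L} and the one-variable forms \eqref{eq:ipd-a-m}, \eqref{eq:ipd-ell-b}), one must descend through these layers, at each stage matching the boundary one-variable terms together with their weights under $T$, tracking which edge of $\triangle$ is sent to which, and checking via \eqref{eq:d-transform1} that each stripped derivative is carried to the correct one. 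This is mechanical but must be done with care; once it is in place, the three statements are immediate transports of the already-proven results for $(0,\b,-2)$ and $(\a,0,-2)$.
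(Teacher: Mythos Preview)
Your proposal is correct and follows exactly the approach the paper indicates: the paper states at the start of Subsection~8.2 that these facts are obtained by simultaneous permutation of variables and parameters from the $(\a,\b,-2)$ case (``As in the Subsection~\ref{sect:a-1g}, we again record only basic facts\ldots''), and you have carried out precisely that transport via $T\colon F\mapsto F(1-x-y,x)$, using \eqref{eq:3bases-transform} and \eqref{eq:d-transform1} to pull back the already-proven relations for $\la\cdot,\cdot\ra_{0,\b,-2}^K$ and $\la\cdot,\cdot\ra_{\a,0,-2}^L$. The paper leaves the isometry verification implicit, while you correctly identify it as the only nontrivial bookkeeping step.
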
 

For $\CV_n(\varpi_{-2,\b, \g})$ with $\b,\g> -1$, we define 
\begin{align} \label{eq:ipd2bgL}
 \la f,g\ra_{-2,\b,\g}^L: = & \int_{\triangle} (y^2 \partial_z^2 f(x,y) \partial_z^2 g(x,y) +
 2 y z \partial_z\partial_y f(x,y) \partial_z\partial_y g(x,y) \\
   & \qquad  + z^2 \partial_x^2 f(x,y) \partial_x^2 g(x,y)) y^\b z^\g dxdy \notag\\
     & + \int_0^1 \partial_z f(x,0) \partial_z g(x,0) (1-x)^{\b+1}x^\g dx \notag \\
     & + \int_0^1 \partial_y f(x,0)  \partial_y g(x,0) x^\b (1-x)^{\g+1} dx, \notag
\end{align}
%
which is not a full-fledged inner product. Then the polynomials $\wh L_{k,n}^{-2,\b,\g}$, $0\le k \le n$, are mutually 
orthogonal with respect to $\la f, g\ra_{-2,\b,\g}^L$ and,  for $n \ge 2$, $\{\wh L_{k,n}^{-2,\b,\g}, 0\le k \le n\}$ is a basis for 
$\CV_n(\varpi_{-2,\b,\g})$. We further define 
\begin{align}\label{eq:ipd-200JK}
\begin{split}
 \la f,g\ra_{-2,0,\g}^J: = & \la \partial_3 f, \partial_3 g \ra_{-1,1,\g}^J + \la f(\cdot,0), g(\cdot,0) \ra_{\g,-1}, \\
 \la f,g\ra_{-2,\b,0}^K: = & \la \partial_1 f, \partial_1 g\ra_{-1, \b,1}^K + \la f(\cdot,0), g(\cdot,0) \ra_{-1,\b}.
\end{split}
\end{align}  
where  $\la \cdot, \cdot\ra_{-1,1,\g}^J$ and $\la \cdot, \cdot\ra_{-1, \b,1}^K$ are defined in \eqref{eq:ipd-1bg},
$\la \cdot,\cdot\ra_{-1,\b}$ and $\la \cdot,\cdot\ra_{\g, -1}$ are as in \eqref{eq:ipd-020LJ}. 
Then $\{ J_{k,n}^{-2,0,\g}, 1 \le k \le n\}$ and $\{K_{k,n}^{-2,\b,0}, 1 \le k \le n\}$ are mutually orthogonal 
with respect to $\la \cdot,\cdot\ra_{-2,\b,0}^J$ and $\la \cdot,\cdot\ra_{-2,\b,0}^K$, respectively. 

\begin{prop}
For $f \in W_2^2$,
$$
  \partial_3 \proj_{n,\mathrm{J}}^{-2,0,\g} f = \proj_{n-1,\mathrm{J}}^{-1,1,\g} \partial_3 f \quad \hbox{and} \quad   
  \partial_1  \proj_{n,\mathrm{K}}^{-2,\b,0} f = \proj_{n-1,\mathrm{K}}^{-1,\b,1} \partial_1 f.  
$$
Moreover, $\CV_n(\varpi_{-2,0,0})_J=\CV_n(\varpi_{-2,0,0})_K$ and 
$\proj_{n, \mathrm{J}} f = \proj_{n, \mathrm{K}} f$. 
\end{prop}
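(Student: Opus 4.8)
The plan is to mimic, almost verbatim, the two propositions proved just above for the $(\a,\b,-2)$ and $(-2,\b,\g)$ cases, transporting them by the symmetry relations \eqref{eq:3bases-transform}. Recall that the statement asserts two commuting relations,
$$
 \partial_3 \proj_{n,\mathrm{J}}^{-2,0,\g} f = \proj_{n-1,\mathrm{J}}^{-1,1,\g} \partial_3 f, \qquad
 \partial_1 \proj_{n,\mathrm{K}}^{-2,\b,0} f = \proj_{n-1,\mathrm{K}}^{-1,\b,1} \partial_1 f,
$$
together with the identification $\CV_n(\varpi_{-2,0,0})_J=\CV_n(\varpi_{-2,0,0})_K$ and the equality of the two projections when $\b=\g=0$. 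First I would establish the commuting relations separately. For $\partial_3$ applied to the $J$-expansion: by the definition \eqref{eq:ipd-200JK} of $\la\cdot,\cdot\ra_{-2,0,\g}^J$ together with the partial-derivative formula \eqref{eq:diff1J} (third line), $\partial_3 J_{k,n}^{-2,0,\g} = J_{k-1,n-1}^{-1,1,\g}$, and I would check — exactly as in the proof of Proposition~\ref{prop:orthoK0b2}, now transported by \eqref{eq:d-transform1} — that $J_{k,n}^{-2,0,\g}$ restricted to $z=0$ and its relevant trace derivatives vanish for the range of $k$ that matters, so that $\la f, J_{k,n}^{-2,0,\g}\ra_{-2,0,\g}^J = \la \partial_3 f, J_{k-1,n-1}^{-1,1,\g}\ra_{-1,1,\g}^J$. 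Setting $f = J_{k,n}^{-2,0,\g}$ in this identity gives the ratio of norming constants, hence $\wh f_{k,n,\mathrm J}^{-2,0,\g} = \wh{\partial_3 f}_{k-1,n-1,\mathrm J}^{-1,1,\g}$, and differentiating the expansion term by term yields the first commuting relation. The second, for $\partial_1$ on the $K$-expansion, is the mirror image under the permutation sending $J\mapsto K$ and $(-2,0,\g)\mapsto(-2,\b,0)$, using \eqref{eq:diffK} in place of \eqref{eq:diff1J}; the same two lines of bookkeeping apply.

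Next I would prove the coincidence $\CV_n(\varpi_{-2,0,0})_J=\CV_n(\varpi_{-2,0,0})_K$. Here the model is the last displayed proposition's proof in the $\CV_n(\varpi_{0,0,-2})$ subsection: take a generic $g\in\Pi_{n-1}^2$ and show that $K_{k,n}^{-2,0,0}$ is orthogonal to $g$ with respect to $\la\cdot,\cdot\ra_{-2,0,0}^J$. One splits $\la K_{k,n}^{-2,0,0},g\ra^J$ into the $\partial_3$-integral part and the one-dimensional boundary part. For the integral part, $\partial_3 K_{k,n}^{-2,0,0}$ is (by \eqref{eq:diffK}) a combination of $K_{k-1,n-1}^{-1,1,0}$ and $K_{k,n-1}^{-1,1,0}$, which are orthogonal to lower-degree polynomials with respect to $\la\cdot,\cdot\ra_{-1,1,0}^J$ by the $(-1,\b,\g)$-results of Section~\ref{sect6}; hence that part vanishes. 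For the boundary part, one checks that $K_{k,n}^{-2,0,0}$ and its trace behave, on the line $z=0$, like $J_n^{0,-1}(\cdot)$ — the orthogonal polynomial for the relevant one-variable Sobolev inner product — so the boundary part vanishes too. Since the $K_{k,n}^{-2,0,0}$ are degree-$n$ polynomials orthogonal to $\Pi_{n-1}^2$ in $\la\cdot,\cdot\ra_{-2,0,0}^J$, they lie in $\CV_n(\varpi_{-2,0,0})_J$; a dimension count gives equality of the two spaces. The equality $\proj_{n,\mathrm J}f = \proj_{n,\mathrm K}f$ is then immediate from the characterization that $\proj_n f$ is the unique element of $\CV_n$ agreeing with $f$ on $\CV_n$ (as in Proposition~\ref{prop:proj=A=B}).

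The genuinely delicate point — and where I expect most of the work to hide — is the boundary-vanishing verifications, because the inner products here involve second-order derivatives and the relevant bases ($J$ or $K$) are only orthogonal with respect to the matching one of the three inner products, not all of them. In particular, when $\b$ or $\g$ is exactly $0$ the hypotheses of Proposition~\ref{prop:partialJ}(a) and Proposition~\ref{prop:partialK}(a) sit right at the boundary of validity (as the remark after Proposition~\ref{prop:partialJ} warns), so the simple derivative formulas may pick up extra terms and one must track exactly which $k$ the clean identity $\partial_3 J_{k,n}^{-2,0,\g}=J_{k-1,n-1}^{-1,1,\g}$ holds for, handling the exceptional low-$k$ indices (and $k=n$) by direct computation using the explicit hypergeometric forms and the modification scheme $\wh J_{k,n}^{\a,\b,-2}$ introduced earlier in this section. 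Once those index-by-index checks are carried out — exactly in the spirit of the $\CV_n(\varpi_{0,\b,-2})$ proofs above — the rest is routine transport of formulas under \eqref{eq:3bases-transform}, \eqref{eq:d-transform1}, \eqref{eq:d-transform2}, and I would simply write ``the proof follows that of the previous propositions'' for the parts that are verbatim permutations.
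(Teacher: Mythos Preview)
Your approach is essentially the paper's own: the paper gives no explicit proof here, stating at the start of the subsection that it merely records the permuted analogues of the $(\a,\b,-2)$ results, so transporting the proofs of Proposition~\ref{prop:orthoK0b2} and the $\CV_n(\varpi_{0,0,-2})_K=\CV_n(\varpi_{0,0,-2})_L$ proposition via \eqref{eq:3bases-transform}, \eqref{eq:d-transform1}, \eqref{eq:d-transform2} is exactly what is intended.

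One small slip: the boundary on which the trace of $J_{k,n}^{-2,0,\g}$ must vanish is $x=0$, not $z=0$. Under the permutation sending $K_{k,n}^{0,\b,-2}(x,y)=J_{k,n}^{-2,0,\b}(1-x-y,x)$, the boundary $z=0$ of the $(0,\b,-2)^K$ problem becomes the boundary ``first variable $=0$'' for $J$; concretely, $J_k^{-2,0}(t)$ contains the factor $(1-t)^2$ for $k\ge 2$ by \eqref{eq:Jmb}, which at $t=(y-x)/(x+y)$ vanishes when $x=0$, and a direct check gives $J_{1,n}^{-2,0,\g}(x,y)=-x\,J_{n-1}^{1,\g}(1-2x-2y)$. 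This is precisely the boundary appearing in the one-variable piece of $\la\cdot,\cdot\ra_{-2,0,\g}^J$ in \eqref{eq:ipd-200JK}. With that correction your argument goes through; your worry about the exceptional indices in Proposition~\ref{prop:partialJ} is unfounded for the $\partial_3$ identity, since the third line of \eqref{eq:diff1J} holds for all $k,n$ without restriction.
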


\section{Sobolev orthogonality with parameters $(-1,-1,-2)$ and its permutations}
\label{sect9}
\setcounter{equation}{0}

In this case we obtain three genuine inner products in $W_2^2$ for the three family of bases. In particular,
our inner product for the space generated by $J_{k,m}^{-1,-1,-2}$ is based on $\la \cdot,\cdot \ra_{0,0,-2}^J$, 
even though the later is not a full-fledged inner product. 

\subsection{The space $\CV_n(\varpi_{-1,-,1,-2})$}
For $f,g\in W_2^2$, we defined an inner product
\begin{align}\label{eq:ipd112J}
 \la f, g\ra_{-1,-1,-2}^J = & \la \partial_3 f, \partial_3 g\ra_{0,0,-2} + \l \la f(0,\cdot), g(0,\cdot)\ra_{-2,-1} + f(1, 0) g(1, 0),
 \end{align}
where $\la \cdot,\cdot\ra_{-2,-1}$ is the inner product of one variable in \eqref{eq:ipd-ell-b} with $\ell =2$ and $m=1$, 
\begin{align*}
 \la f, g\ra_{-2,-1}: =   \int_0^1 f''(t) g''(t)t dy + \l_0 f' (1)g'(1)  +  f(1) g(1).
\end{align*}
That this is an inner product can be seen as follows: $\la \partial_3 f, \partial_3 f \ra_{0,0,-2} = 0$ implies that 
$\partial_3 f (x,y)$
is a constant, so that $f(x,y) = a x + b g(x+y)$, hence, the part involving $\partial_y$ shows that $g(x)$ must be a constant, 
thus $f(x,y) = a (x+y) + b$, which is zero by $f(0,1) =0$ and $f(1,0) =0$. 

We need to work with $J_{k,n}^{-1,-1,-2}$, which are given by
$$
  J_{k,n}^{-1,-1,-2}(x,y)  = (x+y)^{k} J_{k}^{-1,-1}\left(\frac{y-x}{x+y}\right) J_{n-k}^{2k-1,-2}(1-2x-2y), \quad 0 \le k \le n. 
$$
In particular, we have 
\begin{align*}
\begin{split}
J_{n-1,n}^{-1,-1,-2}(x,y) &=(x+y)^{n-1} J_{n-1}^{-1,-1}\left(\frac{y-x}{x+y}\right) \left (\frac{2n-2}{2n-3} - x- y\right), \\
J_{n,n}^{\a,\b,-2}(x,y) &=(x+y)^n J_{n}^{-1,-1}\left(\frac{y-x}{x+y}\right).
\end{split}
\end{align*}
Let $\CV_n(\varpi_{-1,-1,-2})$ be the space spanned by $\{ J_{k,n}^{-1,-1,-2}: 0 \le k \le n\}$. 
In order to obtain a mutually orthogonal basis for $\CV_n(\varpi_{-1,-1,-2})$, we define the following polynomials. 
Let \begin{align*}
F_n^{-1,-1,-2}(x,y) & := J_{n-1,n}^{-1,-1,-2}(x,y) - 2 J_{n,n}^{-1,-1,-2}(x,y), \\
G_n^{-1,-1,-2}(x,y) & := J_{n-1,n}^{-1,-1,-2}(x,y)+ 2 J_{n,n}^{-1,-1,-2}(x,y).  
\end{align*}
We define a system of polynomials $\wh J_{k,n}^{-1,-1,-2}$ as follows: 
\begin{align*}
& \wh J_{0,0}^{-1,-1,-2}(x,y)  =1,\quad \wh J_{0,1}^{-1,-1,-2}(x,y) = x-\frac12, \quad \wh J_{1,1}^{-1,-1,-2}(x,y) = 1-x-y,\\
&  \wh J_{0,2}^{-1,-1,-2}(x,y) =\f12 (1 - x - y)^2,\quad \wh J_{1,2}^{-1,-1,-2}(x,y) = x(x-1), \\
&  \wh J_{2,2}^{-1,-1,-2}(x,y) = x(1-x-2y),
\end{align*}
and, for $n \ge 3$, 
\begin{align*}
    \wh J_{k,n}^{-1,-1,-2}(x,y) &= J_{k,n}^{-1,-1,-2}(x,y), \qquad 0 \le k \le n-2,\\
  \wh J_{n-1,n}^{-1,-1,-2}(x,y) & =  G_n^{-1,-1,-2}(x,y),\\
  \wh J_{n,n}^{-1,-1,-2}(x,y) & =  F_n^{-1,-1,-2}(x,y) - \frac{ n-2 }{ 2n-3} G_n^{-1,-1,-2}(x,y).
\end{align*}
 
\begin{prop}
For $n \ge 0$, the polynomials $\wh J_{k,n}^{-1,-1,-2}$, $0\le k \le n$, are mutually orthogonal with respect to 
$\la f, g\ra_{-1,-1,-2}^J$. Furthermore, for $n \ge 3$, $\{\wh J_{k,n}^{-1,-1,-2}, 0\le k \le n\}$ is a basis for $\CV_n(\varpi_{-1,-1,-2})$. 
\end{prop}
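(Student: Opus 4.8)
The containment $\wh J_{k,n}^{-1,-1,-2}\in\CV_n(\varpi_{-1,-1,-2})$ for $n\ge 3$ is automatic, since by construction each $\wh J_{k,n}^{-1,-1,-2}$ is a linear combination of $J_{0,n}^{-1,-1,-2},\dots,J_{n,n}^{-1,-1,-2}$, with the passage from $(J_{n-1,n}^{-1,-1,-2},J_{n,n}^{-1,-1,-2})$ to $(\wh J_{n-1,n}^{-1,-1,-2},\wh J_{n,n}^{-1,-1,-2})$ triangular and invertible. The plan is to reduce the orthogonality with respect to $\la\cdot,\cdot\ra_{-1,-1,-2}^J$ to three facts already available: the mutual orthogonality of $\{\wh J_{k,m}^{0,0,-2}\}$ with respect to $\la\cdot,\cdot\ra_{0,0,-2}^J$, together with $\wh J_{k,m}^{0,0,-2}\in\CV_m(\varpi_{0,0,-2})$, both from Section~\ref{sect8}; the orthogonality of $\wh J_m^{-2,-1}$ with respect to the one-variable inner product $\la\cdot,\cdot\ra_{-2,-1}$ of \eqref{eq:ipd-ell-b}, from Proposition~\ref{prop:Qlb}; and the derivative relations \eqref{eq:diff1J}. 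The two structural identities I would establish first are
\begin{align*}
 \partial_3\wh J_{k,n}^{-1,-1,-2}=\wh J_{k-1,n-1}^{0,0,-2}\quad(1\le k\le n),\qquad \partial_3\wh J_{0,n}^{-1,-1,-2}=0,
\end{align*}
together with the boundary evaluations: $\wh J_{k,n}^{-1,-1,-2}(0,\cdot)\equiv 0$ and $\wh J_{k,n}^{-1,-1,-2}(1,0)=0$ for $1\le k\le n$, while $\wh J_{0,n}^{-1,-1,-2}(0,y)$ is a nonzero scalar multiple of $\wh J_n^{-2,-1}(y)$ and $\wh J_{0,n}^{-1,-1,-2}(1,0)=0$.

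For the derivative identity, when $0\le k\le n-2$ one has $\wh J_{k,n}^{-1,-1,-2}=J_{k,n}^{-1,-1,-2}$, so the third relation in \eqref{eq:diff1J} gives $\partial_3 J_{k,n}^{-1,-1,-2}=J_{k-1,n-1}^{0,0,-2}=\wh J_{k-1,n-1}^{0,0,-2}$ (the last equality because $k-1\le n-3$), and $k=0$ gives $0$. For $k=n-1,n$ one differentiates $F_n^{-1,-1,-2}$, $G_n^{-1,-1,-2}$ termwise by \eqref{eq:diff1J}, obtaining $\partial_3 F_n^{-1,-1,-2}=J_{n-2,n-1}^{0,0,-2}-2J_{n-1,n-1}^{0,0,-2}$ and $\partial_3 G_n^{-1,-1,-2}=J_{n-2,n-1}^{0,0,-2}+2J_{n-1,n-1}^{0,0,-2}$, which are exactly the polynomials $F_{n-1}^{0,0,-2}$, $G_{n-1}^{0,0,-2}$ of Section~\ref{sect8}; since the coefficient $\tfrac{n-2}{2n-3}$ in $\wh J_{n,n}^{-1,-1,-2}$ is the value at $m=n-1$ of the coefficient $\tfrac{m-1}{2m-1}$ in $\wh J_{m,m}^{0,0,-2}$, this yields $\partial_3\wh J_{n-1,n}^{-1,-1,-2}=\wh J_{n-2,n-1}^{0,0,-2}$ and $\partial_3\wh J_{n,n}^{-1,-1,-2}=\wh J_{n-1,n-1}^{0,0,-2}$. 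For the boundary evaluations, at $x=0$ the explicit form $J_{k,n}^{-1,-1,-2}(0,y)=y^kJ_k^{-1,-1}(1)J_{n-k}^{2k-1,-2}(1-2y)$ with $J_m^{-1,-1}(1)=0$ for $m\ge 1$ (read off the hypergeometric expansion, the constant term carrying a factor $(0)_m$) disposes of $1\le k\le n-2$, and for $k=n-1,n$ the same vanishing applied to the two building blocks of $F_n^{-1,-1,-2}$, $G_n^{-1,-1,-2}$ gives the claim; at $x+y=1$ the factor $(1-x-y)^2$ from \eqref{eq:J+J-} handles $0\le k\le n-2$, and $J_m^{-1,-1}(-1)=(-1)^mJ_m^{-1,-1}(1)=0$ (valid for $m\ge 2$ by \eqref{eq:Jn(-t)}) handles $k=n-1,n$. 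Finally $\wh J_{0,n}^{-1,-1,-2}(0,y)=J_n^{-1,-2}(1-2y)=(-1)^nJ_n^{-2,-1}(2y-1)$ by \eqref{eq:Jn(-t)}, which is the one-variable orthogonal polynomial of Proposition~\ref{prop:Qlb}, and $\wh J_{0,n}^{-1,-1,-2}(1,0)=J_n^{-1,-2}(-1)=(-1)^nJ_n^{-2,-1}(1)=0$ for $n\ge 2$.

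Granting these, the orthogonality follows by the now-familiar pattern. For $1\le k\le n$ the two boundary terms of $\la f,\wh J_{k,n}^{-1,-1,-2}\ra_{-1,-1,-2}^J$ vanish, leaving $\la\partial_3 f,\wh J_{k-1,n-1}^{0,0,-2}\ra_{0,0,-2}^J$; with $f\in\Pi_{n-1}^2$ one has $\partial_3 f\in\Pi_{n-2}^2$, so this is $0$ by $\wh J_{k-1,n-1}^{0,0,-2}\in\CV_{n-1}(\varpi_{0,0,-2})$, and with $f=\wh J_{l,n}^{-1,-1,-2}$, $l\ne k$, it equals $\la\wh J_{l-1,n-1}^{0,0,-2},\wh J_{k-1,n-1}^{0,0,-2}\ra_{0,0,-2}^J=0$ (reading $\wh J_{-1,n-1}^{0,0,-2}=0$ when $l=0$). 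For $k=0$ the integral term vanishes since $\partial_3\wh J_{0,n}^{-1,-1,-2}=0$, leaving $\l\la f(0,\cdot),\wh J_{0,n}^{-1,-1,-2}(0,\cdot)\ra_{-2,-1}$ (the point term at $(1,0)$ being zero), which is $0$ for $f\in\Pi_{n-1}^2$ by the one-variable orthogonality and for $f=\wh J_{l,n}^{-1,-1,-2}$, $l\ge1$, since then $f(0,\cdot)\equiv 0$. This gives mutual orthogonality, and $\wh J_{k,n}^{-1,-1,-2}\perp\Pi_{n-1}^2$, for all $n\ge 3$; the cases $n=0,1,2$, where both the $\wh J_{k,n}^{-1,-1,-2}$ and the auxiliary $\wh J_{k,m}^{0,0,-2}$ are the explicitly listed low-degree polynomials, are checked by direct computation. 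For $n\ge 3$, $\dim\CV_n(\varpi_{-1,-1,-2})=n+1$ and the $n+1$ polynomials $\wh J_{k,n}^{-1,-1,-2}$ are nonzero and mutually orthogonal, hence linearly independent, so they form a basis.

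The main obstacle is the matching of the two top-index modified polynomials with the Section~\ref{sect8} family: one must use the precise definitions of $\wh J_{m-1,m}^{0,0,-2}$ and $\wh J_{m,m}^{0,0,-2}$ and verify that the particular combinations chosen for $F_n^{-1,-1,-2}$, $G_n^{-1,-1,-2}$ and the coefficient $\tfrac{n-2}{2n-3}$ are exactly what make $\partial_3$ intertwine the two families — a computation that additionally has to be cross-checked against the ad hoc $n=0,1,2$ formulas, where the general Section~\ref{sect8} patterns do not apply. Everything else is substitution into already-proved identities.
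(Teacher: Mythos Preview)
Your proof is correct and follows essentially the same approach as the paper: both establish the key intertwining relation $\partial_3\wh J_{k,n}^{-1,-1,-2}=\wh J_{k-1,n-1}^{0,0,-2}$ for $1\le k\le n$ (with $0$ for $k=0$), verify the vanishing of the boundary contributions for $k\ge 1$, and reduce the $k=0$ case to the one-variable orthogonality of $J_n^{-1,-2}(1-2y)$ with respect to $\la\cdot,\cdot\ra_{-2,-1}$. The only cosmetic difference is that the paper checks the boundary vanishing via explicit factorizations (e.g.\ a factor $xy$ for $k=n-1,n$), whereas you argue through $J_k^{-1,-1}(\pm 1)=0$; both are valid, though note that your appeal to \eqref{eq:Jn(-t)} for $J_n^{-1,-2}(-1)$ technically excludes $n=2$, which you correctly relegate to the direct low-degree check.
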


\begin{proof}
The case $n =0,1,2$ can be verified directly. For $n \ge 3$, it is clear that $\wh J_{k,n}^{-1,-1,-2} \in \CV_n(\varpi_{-1,-1,-2})$
for $0 \le k \le n$. Thus, it remains to prove that they are mutually orthogonal. These polynomials are defined so that they 
satisfy the relation
$$
  \partial_z  \wh J_{k,n}^{-1,-1,-2}(x,y) = \wh J_{k-1,n-1}^{0,0,-2} (x,y), \qquad 1 \le k \le n 
$$
and we have $ \partial_z  \wh J_{0,n}^{-1,-1,-2}(x,y) =0$. Furthermore, for $n\ge 3$, 
$\wh J_{k,n}^{-1,-1,-2}$ contains a factor $x y$ for $k =n-1$ and $k = n$, $\wh J_{k,n}^{-1,-1,-2}$ contains a factor
$xy(1-x-y)$ for $2\le k \le n-2$, $\wh J_{1,n}^{-1,-1,-2}$ contains a factor $x (1-x-y)^2$ and $\wh J_{0,n}^{-1,-1,-2}$
contains a factor $(x+y)(1-x-y)^2$. In particular, it follows that 
\begin{equation}\label{eq:J112main}
  \la f, \wh J_{k,n}^{-1,-1,-2}\ra_{-1,-1,-2}^J = \la \partial_z f,  \wh J_{k,n}^{0,0,-2}\ra_{0,0,-2}^J, \quad 1 \le k \le n, 
\end{equation}
from which the orthogonality of $\wh J_{k,n}^{-1,-1,-2}$ and $\wh J_{l,n}^{-1,-1,-2}$ for either $k \ge 1$ or $l\ge 1$ 
follows readily. For the remaining case of $k = l =0$, from $\partial_z  \wh J_{0,n}^{-1,-1,-2}(x,y) =0$, 
$\wh J_{0,n}^{-1,-1,-2}$ vanishing on $(0,1)$ and $(1,0)$,  it follows that 
$$
  \la f, \wh J_{0,n}^{-1,-1,-2}\ra_{-1,-1,-2}^J = \la f(0,\cdot), J_{0,n}^{-1,-1,-2}(0,\cdot) \ra_{-2,-1}.
$$
It is easy to see that $J_{0,n}^{-1,-1,-2}(0, y) = J_{n-1}^{-1,-2}(1-2 y)$ are orthogonal polynomials with respect to 
$\la \cdot,\cdot\ra_{-1,-2}$, the proof is completed. 
\end{proof}

\begin{prop} \label{prop:KL112bases}
For $n \ge 3$, the systems $\{K_{k,n}^{-1,-1,-2}: 0 \le k \le n\}$ and $\{L_{k,n}^{-1,-1,-2}: 0 \le k \le n\}$ are both 
bases of $\CV_n(\varpi_{-1,-1,-2})$. 
\end{prop}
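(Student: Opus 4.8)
The plan is to mimic the proof of Proposition~\ref{prop:Vn00-1} (and its analogue Proposition~\ref{prop:bases11g}), showing that each $K_{k,n}^{-1,-1,-2}$ and each $L_{k,n}^{-1,-1,-2}$ lies in $\CV_n(\varpi_{-1,-1,-2})$, i.e.\ is orthogonal with respect to $\la\cdot,\cdot\ra_{-1,-1,-2}^J$ to every polynomial $g$ of degree at most $n-1$. Since $\dim\CV_n(\varpi_{-1,-1,-2})=n+1$ and the $n+1$ polynomials $K_{k,n}^{-1,-1,-2}$ are linearly independent (their leading forms coincide with those of the $K_{k,n}^{\a,\b,\g}$ for generic parameters), this orthogonality forces them to span $\CV_n(\varpi_{-1,-1,-2})$; the same argument handles the $L$ basis. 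I would state this reduction first and then verify the orthogonality term by term against the three pieces of the inner product in \eqref{eq:ipd112J}: the bulk term $\la\partial_3 f,\partial_3 g\ra_{0,0,-2}$, the one-variable term $\la f(0,\cdot),g(0,\cdot)\ra_{-2,-1}$, and the point evaluation $f(1,0)g(1,0)$.

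For the bulk term I would compute $\partial_z K_{k,n}^{-1,-1,-2}$ using the third identity in \eqref{eq:diffK}, namely $\partial_z K_{k,n}^{\a,\b,\g}=-a_{k,n}^{\a,\g}K_{k-1,n-1}^{\a+1,\b+1,\g}+K_{k,n-1}^{\a+1,\b+1,\g}$, which with $(\a,\b,\g)=(-1,-1,-2)$ expresses $\partial_z K_{k,n}^{-1,-1,-2}$ as a linear combination of $K_{k-1,n-1}^{0,0,-2}$ and $K_{k,n-1}^{0,0,-2}$ (with the appropriate care for $k=1$, where $a_{1,n}^{-1,-2}$ falls under case (b) of \eqref{eq:akn-bkn2}); for $k=0$ one has $\partial_z K_{0,n}^{-1,-1,-2}=K_{0,n-1}^{0,0,-2}$ up to a constant or even vanishes after reindexing. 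Then $\la\partial_3 K_{k,n}^{-1,-1,-2},\partial_3 g\ra_{0,0,-2}=0$ by Proposition~\ref{prop:orthoK0b2} (or rather the fact that $\partial_3 g$ has degree $\le n-2$ and the $K_{j,n-1}^{0,0,-2}$ are orthogonal in the $\la\cdot,\cdot\ra_{0,0,-2}$ sense to lower degrees—using that $\la\cdot,\cdot\ra_{0,0,-2}^J$ restricted to $\partial_3$-images behaves like $\la\cdot,\cdot\ra_{0,0,-2}^K$), exactly as in the proof of Proposition~\ref{prop:KL-1-1-1basis}. For the boundary term one uses \eqref{eq:K+K-}: $K_{k,n}^{-1,-1,-2}$ contains a factor $(1-x-y)^2$ for $0\le k\le n-2$ and a factor $(1-x-y)$ for $k=n-1,n$ after the modification is built in—wait, here we are using the \emph{unmodified} $K_{k,n}^{-1,-1,-2}$, so I would instead directly verify that $\partial_y K_{k,n}^{-1,-1,-2}(0,\cdot)$ and $\partial_y^2 K_{k,n}^{-1,-1,-2}(0,\cdot)$ vanish for $k$ in the appropriate range (using \eqref{eq:diffK} together with $K_{k,n}^{-1,-1,-2}(0,y)=J_n^{-1,-1}\ldots$ type evaluations), so that the one-variable inner product $\la\cdot,\cdot\ra_{-2,-1}$ kills the contribution for $k\ge 1$ (or $k\ge 2$), while for the exceptional small $k$ one checks that $K_{k,n}^{-1,-1,-2}(0,y)$ is, up to a constant, the orthogonal polynomial $J_n^{-1,-2}(2y-1)$ or similar for $\la\cdot,\cdot\ra_{-2,-1}$. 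Finally the point evaluation $K_{k,n}^{-1,-1,-2}(1,0)g(1,0)$ vanishes because $K_{k,n}^{-1,-1,-2}(1,0)=0$ for all $0\le k\le n$ when $n\ge 1$, which follows from \eqref{eq:K+K-} (the factor $X^{\wh\a,\wh\b,\wh\g}$ vanishes at the vertex $(1,0)$ since there $y=1-x-y=0$). The $L$-basis case is handled symmetrically by applying the permutation relations \eqref{eq:3bases-transform} together with \eqref{eq:d-transform2}, reducing it to the $K$-case or to a direct mirror-image computation, and noting $L_{k,n}^{-1,-1,-2}(1,0)=0$ and the analogous boundary-vanishing on $\{y=0\}$ coming from the factor structure of $L_{k,n}^{-1,-1,-2}$.

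The main obstacle I anticipate is the bookkeeping at the exceptional small values of $k$ and $n$. The derivative formula \eqref{eq:diffK} for $\partial_z$ only holds under conditions (a) or (b), and $(\a,\g)=(-1,-2)$ with $k=1$ is precisely a singular configuration; one must either invoke case (b) of Proposition~\ref{prop:partialK} with the special value $a_{1,n}^{-1,-2}$ from \eqref{eq:akn-bkn2}, or compute $\partial_z K_{1,n}^{-1,-1,-2}$ by hand from the explicit product formula and check that it still lands in $\mathrm{span}\{K_{0,n-1}^{0,0,-2},K_{1,n-1}^{0,0,-2}\}$. Likewise, whether one needs $k\ge 1$ or $k\ge 2$ in the boundary argument depends on the exact vanishing order of $K_{k,n}^{-1,-1,-2}$ along $x=0$, and the residual cases $k=0,1$ (and possibly $k=2$) must each be matched against the structure of $\la\cdot,\cdot\ra_{-2,-1}$ individually, which is why the statement is restricted to $n\ge 3$ (so that $n-2\ge 1$ and the three regimes $0\le k\le n-2$, $k=n-1$, $k=n$ are genuinely separated from the hand-computed low-degree cases). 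Once these boundary and singular-parameter verifications are in place, the span conclusion is immediate from dimension count, so I would keep the write-up parallel to the proof of Proposition~\ref{prop:KL-1-1-1basis} and merely flag the extra second-order boundary term coming from $\la\cdot,\cdot\ra_{-2,-1}$.
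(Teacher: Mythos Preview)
Your overall architecture is right—split $\la\cdot,\cdot\ra_{-1,-1,-2}^J$ into the bulk piece $\la\partial_3\cdot,\partial_3\cdot\ra_{0,0,-2}^J$, the one-variable piece $\la\cdot,\cdot\ra_{-2,-1}$, and the point value at $(1,0)$, and verify each—but the treatment of the bulk piece has a real gap. You want to say that $\partial_z K_{k,n}^{-1,-1,-2}$ is a combination of $K_{k-1,n-1}^{0,0,-2}$ and $K_{k,n-1}^{0,0,-2}$ and then invoke orthogonality of these to $\partial_3 g$ with respect to $\la\cdot,\cdot\ra_{0,0,-2}^J$. That last step is not available: the paper never shows that $K_{j,m}^{0,0,-2}\in\CV_m(\varpi_{0,0,-2})_J$, and in fact the remark immediately after Proposition~8.5 shows that the $J$- and $K$-spaces at level $(0,0,-2)$ differ (explicitly, $J_{1,n}^{0,0,-2}\notin\CV_n(\varpi_{0,0,-2})_K$). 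The analogy with Proposition~\ref{prop:KL-1-1-1basis} breaks down precisely here: at level $(0,0,-1)$ one does have $\CV_n(\varpi_{0,0,-1})_J=\CV_n(\varpi_{0,0,-1})_K$ by Proposition~\ref{prop:Vn00-1}, whereas at level $(0,0,-2)$ one does not. Your parenthetical ``$\la\cdot,\cdot\ra_{0,0,-2}^J$ restricted to $\partial_3$-images behaves like $\la\cdot,\cdot\ra_{0,0,-2}^K$'' is exactly the unproven (and in general false) claim.

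The paper's fix is to unpack $\la\cdot,\cdot\ra_{0,0,-2}^J$ one layer further, writing it as $\la\cdot,\cdot\ra_T+\la\cdot,\cdot\ra_B$ where $\la\cdot,\cdot\ra_T=\la\cdot,\cdot\ra_{0,0,-2}^\circ$ is the triangle integral of second derivatives and $\la\cdot,\cdot\ra_B$ consists of the two boundary integrals on $z=0$ in \eqref{eq:ipd-ab2}. For $\la\cdot,\cdot\ra_T$ one observes, via repeated use of \eqref{eq:diffK}, that e.g.\ $\partial_1^2\partial_3 K_{k,n}^{-1,-1,-2}\in\CV_{n-3}(\varpi_{2,0,0})$, and the weight $x^2$ in that term of $\la\cdot,\cdot\ra_T$ is exactly $\varpi_{2,0,0}$, so ordinary Jacobi orthogonality applies directly. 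For $\la\cdot,\cdot\ra_B$ one computes $\partial_1\partial_3 K_{k,n}^{-1,-1,-2}(x,1-x)$ and $\partial_2\partial_3 K_{k,n}^{-1,-1,-2}(x,1-x)$ explicitly, using the closed forms \eqref{eq:K112}; these vanish for $k\ge 2$, while for $k=0,1$ they equal (multiples of) $J_{n-2}^{0,1}(1-2x)$, which is orthogonal in the relevant weighted $L^2$ to lower degrees. The one-variable piece is then handled with the same explicit formulas \eqref{eq:K112}: the factor $x$ kills $K_{k,n}^{-1,-1,-2}(0,\cdot)$ outright for $k=1$ and $k\ge 3$, and $k=0,2$ are checked by hand (for $k=2$ one finds $\partial_y^2 K_{2,n}^{-1,-1,-2}(0,y)=\tfrac{n(n-1)}{2}J_{n-2}^{1,0}(2y-1)$). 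Your sketch of the one-variable piece is workable once you use \eqref{eq:K112}, but note the exceptional index is $k=2$ rather than $k=1$. The point evaluation and the $L$-case by permutation are fine as you describe.
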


\begin{proof}
We provide the proof only for $K_{k,n}^{-1,-1,-2}$. These polynomials satisfy, by \eqref{eq:Jlm}, that 
\begin{align}\label{eq:K112}
\begin{split}
K_{k,n}^{-1,-1,-2}(x,y) &=  \frac{(n-3)!}{n!} x (1-x-y)^2 K_{k-3,n-3}^{1,-1,2}(x,y), \quad 3 \le k\le n, \\
K_{2,n}^{-1,-1,-2}(x,y) &= \f12 (1-x-y)^2 J_{n-2}^{-2,-1}(2y-1), \\
K_{1,n}^{-1,-1,-2}(x,y) &= \f1n xy J_{n-2}^{0,1}(2y-1), \qquad 
K_{0,n}^{-1,-1,-2}(x,y)  = J_n^{-2,-1}(2y-1). 
\end{split}
\end{align}
Let $g$ be a generic polynomial of degree less than $n$. First we verify the equation 
$\la \partial_3 K_{k,n}^{-1,-1,-2}, \partial_3 g \ra_{0,0,-2}^J =0$. 
From the definition of the inner product in \eqref{eq:ipd-ab2}, we write 
$\la \cdot, \cdot\ra_{0,0,-2}^J = \la \cdot, \cdot \ra_T + \la \cdot,\cdot \ra_B$, where $\la \cdot,\cdot\ra_T$ is 
the integral over $\triangle$ and $\la \cdot,\cdot\ra_B$ consists of the two integrals over $[0,1]$, the boundary part. 
That  $\la \partial_3 K_{k,n}^{-1,-1,-2}, \partial_3 g \ra_T = 0$ follows from \eqref{eq:diffK} and the orthogonality 
of ordinary Jacobi polynomials on the triangle; for example, it is easy to see that $\partial_1^2 \partial_3 K_{k,n}^{-1,-1,-2} 
\in \CV_{n-3}(\varpi_{2,0,0})$ and the integral in $\la \cdot, \cdot\ra_T$ contains a $x^2$ which is the weight 
function $\varpi_{2,0,0}$. Now, by \eqref{eq:K112} and the fact that $\partial_z g(x+y) =0$, we see that 
$\partial_1 \partial_3  K_{k,n}^{-1,-1,-2}(x,1-x) =0$ and $\partial_2 \partial_3  K_{k,n}^{-1,-1,-2}(x,1-x) =0$ for
$2 \le k\le n$, so that $\la \partial_3 K_{k,n}^{-1,-1,-2}, \partial_3 g \ra_B = 0$ for $2 \le k\le n$. For $k=1$, we
derive from \eqref{eq:diffK} that 
\begin{align*} 
\partial_3 \partial_1 K_{1,n}^{-1,-1,-2}(x,y) & = K_{0,n-2}^{1,0,-1}(x,y) = J_{n-2}^{0,1}(2x-1), \\
\partial_3 \partial_2 K_{1,n}^{-1,-1,-2}(x,y) & = -2(n-2) K_{0,n-2}^{0,1,-1}(x,y) + (1-x-y) J_{n-1}^{2,1}(2x-1),
\end{align*}
so that $\partial_3 \partial_2 K_{1,n}^{-1,-1,-2}(x,1-x) = J_{n-2}^{0,1}(1-2x)$. Hence, by the orthogonality of 
$J_{n-2}^{0,1}$ and the definition of $\la \cdot,\cdot \ra_B$, it follows that 
$\la \partial_3 K_{k,n}^{-1,-1,-2}, \partial_3 g \ra_B = 0$ holds also for $k=1$. Moreover, the last equation holds
for $k =0$ as well, since $\partial_1 K_{0,n}^{-1,-1,-2}(x,y) =0$ and 
$\partial_1 \partial_3 K_{0,n}^{-1,-1,-2}(x,1-x) = K_{0,n-2}^{-1,1,0} (x,1-x) = J_{n-2}^{0,1}(1-2x)$. Putting this 
together, we have establish that $\la \partial_3 K_{k,n}^{-1,-1,-2}, \partial_3 g \ra_{0,0,-2}^J =0$ for $0 \le k \le n$.

Next we verify that $\la K_{k,n}^{-1,-1,-2}(0,\cdot), g(0,\cdot) \ra_{-2,-1} = 0$. For $k =1$ or $k \ge 3$, this 
follows immediately from \eqref{eq:K112} because of the $x$ factor in the polynomials. 
When $k=2$, the formula in \eqref{eq:K112} shows that $K_{2,n}^{-1,-1,-2}$ satisfy $\partial_yf(0,1) =0$, 
$f(0,1) =0$. Moreover, by \eqref{eq:diffK}, 
$$
  \partial_y^2 K_{2,n}^{-1,-1,-2}(x,y) = \frac{n(n-1)}{2} K_{0,n-2}^{-1,1,0}(x,y) +
   \frac{2n}{3} K_{1,n-2}^{-1,1,0}(x,y) +   K_{2,n-2}^{-1,1,0}(x,y), 
$$
which impies that $\partial_y^2 K_{2,n}^{-1,-1,-2}(0,y) = \frac{n(n-1)}2 J_{n-2}^{0,1}(2y-1)$ by 
$J_1^{0,-1}(-1) = J_2^{0,-1}(-1) \allowbreak =0$. Hence, by the orthogonality of $J_{n-2}^{0,1}$ we obtain  
$\la K_{2,n}^{-1,-1,-2}(0,\cdot), g(0,\cdot) \ra_{-2,-1} = 0$. Furthermore, the case $k=0$ can be verified directly 
from the explicit formula of $K_{0,n}^{-1,-1,-2}$ in \eqref{eq:K112} and \eqref{derivativeJ}, and from using the 
orthogonality of $J_{n-2}^{0,1}$. Finally, it is easy to verify that $K_{k,n}^{-1,-1,-2}(1, 0) =0$ for $0 \le k \le n$
for $n \ge 4$. 

Putting these together, we have proved that $\la K_{k,n}^{-1,-1,-2}, g \ra_{-1,-1,-2}^J =0$ for $0 \le k \le n$ and
$n\ge 3$. Hence, $K_{k,n}^{-1,-1,-2} \in \CV_n(\varpi_{-1,-1,-2})$. The proof is completed. 
\end{proof}

We now define another inner product that has an mutually orthonormal basis based on $K_{k,n}^{-1,-1,-2}$. 
We define 
\begin{align}\label{ipd112K}
 \la f, g\ra_{-1,-1,-2}^K: = & \la \partial_1 f, \partial_1 g\ra_{0,-1,-1}^K+  \l \la f,g\ra_{-2,-1}^K
\end{align}
where $\la \cdot,\cdot\ra_{0,-1,-1}^K$ is defined in \eqref{eq:ipd-a11} and $\la \cdot,\cdot \ra_{-2,-1}^K$ is 
\eqref{eq:ipd-ell-b} with $\ell =2$ and $m=1$ applied on $f(1-y,y)$ and $g(1-y,y)$, 
$$
  \la f,g\ra_{-2,-1}^K:= \int_0^1 \partial_y^2  f(1-y,y) \partial_y^2  g(1-y,y) y dy + \l_1 \partial_y f(0,1)  \partial_y g(0,1) 
       + \l_2 f(0,1)g(0,1). 
$$
To get a mutually orthogonal basis, we define 
\begin{align*}
 & \wh K_{0,1}^{-1,-1,-2}(x,y) = K_{0,1}^{-1,-1,-2}(x,y) -1, \\
 & \wh K_{k,n}^{-1,-1,-2}(x,y) = K_{k,n}^{-1,-1,-2}(x,y), \quad (k,n) \ne (0,1), \quad k \ge 2\\
 & \wh K_{1,n}^{-1,-1,-2} (x,y) = K_{1,n}^{-1,-1,-2}(x,y) + (n-1) K_{0,n}^{-1,-1,-2}(x,y), \quad n \ge 1.
\end{align*}

\begin{prop}\label{prop:K112}
The system $\{\wh K_{k,n}^{-1,-1,-2}: 0 \le k \le n, \, n=0,1,2,...\}$ is mutually orthogonal with respect to the inner 
product $\la \cdot,\cdot \ra_{-1,-1,-2}^K$. 
\end{prop}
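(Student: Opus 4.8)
The plan is to mirror, almost verbatim, the structure of the proof of the orthogonality of $J_{k,n}^{-1,-1,-2}$ given just above, exploiting the relation $K_{k,n}^{-1,-1,-2}(x,y)=J_{k,n}^{\g,\a,\b}(1-x-y,x)$ and the differentiation rules \eqref{eq:d-transform1}, so that the inner product $\la\cdot,\cdot\ra_{-1,-1,-2}^K$ in \eqref{ipd112K} becomes a ``$\partial_1$-reduction'' of the inner product $\la\cdot,\cdot\ra_{0,-1,-1}^K$ from Section~\ref{sect6}, plus a one-variable boundary term $\la\cdot,\cdot\ra_{-2,-1}^K$ acting on the edge $z=0$, i.e. on $f(1-y,y)$. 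First I would establish the key differentiation identity $\partial_x \wh K_{k,n}^{-1,-1,-2}(x,y) = \wh K_{k-1,n-1}^{0,-1,-1}(x,y)$ for $1\le k\le n$ and $\partial_x \wh K_{0,n}^{-1,-1,-2}=0$; for $k\ge 2$ this is immediate from \eqref{eq:diffK} (the $a_{k,n}^{\g,\a}$ coefficient vanishes because the relevant parameter hits a negative integer, exactly as in the $\partial_x$ case of Proposition~\ref{prop:partialK}), while for $k=0,1$ it must be checked by hand from the explicit formulas in \eqref{eq:K112} and the modification defining $\wh K_{0,1}^{-1,-1,-2}$ and $\wh K_{1,n}^{-1,-1,-2}$ — the coefficient $(n-1)$ in the definition of $\wh K_{1,n}^{-1,-1,-2}$ is chosen precisely so that this derivative identity becomes clean.

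Granting that, the argument splits into the two summands of \eqref{ipd112K}. For the first summand, $\la \partial_1 \wh K_{k,n}^{-1,-1,-2}, \partial_1 \wh K_{l,m}^{-1,-1,-2}\ra_{0,-1,-1}^K = \la \wh K_{k-1,n-1}^{0,-1,-1}, \wh K_{l-1,m-1}^{0,-1,-1}\ra_{0,-1,-1}^K$, which vanishes unless $(k-1,n-1)=(l-1,m-1)$ by Proposition~\ref{prop:K112}'s analogue in Section~\ref{sect6} (the mutual orthogonality of the $\{\wh K_{k,n}^{0,-1,-1}\}$ basis — or rather the basis $\{K_{k,n}^{\a,-1,-1}\}$ appearing in \eqref{eq:ipd-a11}, whose relevant orthogonality we may quote); when $k$ or $l$ equals $0$ the factor $\partial_1$ annihilates it and this term contributes nothing. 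For the second summand I need $\la \wh K_{k,n}^{-1,-1,-2}, \wh K_{l,m}^{-1,-1,-2}\ra_{-2,-1}^K = 0$ for $(k,n)\ne(l,m)$: this is the one-variable discrete-continuous Sobolev inner product \eqref{eq:ipd-ell-b} with $\ell=2,m=1$ applied to the restrictions $\wh K_{k,n}^{-1,-1,-2}(1-y,y)$. Using \eqref{eq:K112}, for $k\ge 3$ (or $k=1$) the restriction carries an $x=1-y$ factor that, together with the derivative structure, makes it orthogonal to everything; for $k=2$ one uses $\partial_y^2 K_{2,n}^{-1,-1,-2}(0,y)=\tfrac{n(n-1)}2 J_{n-2}^{0,1}(2y-1)$ (already computed in the previous proof) and the orthogonality of the $J_{n-2}^{0,1}$; for $k=0$ the restriction is essentially $J_n^{-2,-1}(2y-1)$, orthogonal with respect to $\la\cdot,\cdot\ra_{-2,-1}$. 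The coefficient in $\wh K_{1,n}^{-1,-1,-2}$ and the subtraction of $1$ in $\wh K_{0,1}^{-1,-1,-2}$ are exactly what is needed to kill the residual cross terms at the exceptional low degrees $n=0,1$, which I would check by direct computation.

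I would organize the write-up as: (i) the derivative lemma above; (ii) the identity $\la f, \wh K_{k,n}^{-1,-1,-2}\ra_{-1,-1,-2}^K = \la \partial_1 f, \wh K_{k-1,n-1}^{0,-1,-1}\ra_{0,-1,-1}^K$ for $1\le k\le n$, obtained by discarding the boundary term via the vanishing of $\wh K_{k,n}^{-1,-1,-2}$ and its first derivatives at the relevant boundary points; (iii) the $k=0$ reduction to the one-variable inner product $\la\cdot,\cdot\ra_{-2,-1}$; then orthogonality follows by induction on $n$ exactly as in the $J$-case, the base cases $n=0,1,2$ being a finite direct verification. The main obstacle will be the bookkeeping at the exceptional indices $k\in\{0,1,2\}$ and low $n$: unlike the generic range $k\ge 3$ where \eqref{eq:K112} reduces everything to classical Jacobi orthogonality in one variable, these cases require case-by-case evaluation of $\partial_x$, $\partial_x^2$ and the edge restrictions of $K_{0,n}^{-1,-1,-2}$, $K_{1,n}^{-1,-1,-2}$, $K_{2,n}^{-1,-1,-2}$, and verifying that the chosen modifications $\wh K_{0,1}^{-1,-1,-2}$ and $\wh K_{1,n}^{-1,-1,-2}$ restore both $\CV_n$-membership and mutual orthogonality; this is precisely the kind of computation the author flags as needing a computer algebra system, but conceptually it is routine once the reduction identities in (i)--(iii) are in place.
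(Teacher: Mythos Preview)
Your proposal is correct and follows essentially the same route as the paper: establish the reduction
\[
  \la f, \wh K_{k,n}^{-1,-1,-2}\ra_{-1,-1,-2}^K = \la \partial_1 f, K_{k-1,n-1}^{0,-1,-1}\ra_{0,-1,-1}^K,\quad 1\le k\le n,
\]
by showing the boundary piece $\la\cdot,\cdot\ra_{-2,-1}^K$ annihilates $\wh K_{k,n}^{-1,-1,-2}$ for $k\ge 1$, then treat $k=0$ via the one-variable orthogonality of $J_n^{-2,-1}(2y-1)$ with respect to $\la\cdot,\cdot\ra_{-2,-1}$.

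Two small corrections. First, the target in the reduction is $K_{k-1,n-1}^{0,-1,-1}$, not $\wh K_{k-1,n-1}^{0,-1,-1}$; no modification is made (or needed) in the $(0,-1,-1)$ family. Second, the coefficient $(n-1)$ in $\wh K_{1,n}^{-1,-1,-2}=K_{1,n}^{-1,-1,-2}+(n-1)K_{0,n}^{-1,-1,-2}$ is \emph{not} there to clean up the $\partial_x$-identity --- that identity holds automatically for any coefficient because $\partial_x K_{0,n}^{-1,-1,-2}=0$. Its actual purpose is to kill the boundary term: one computes $\partial_y K_{1,n}^{-1,-1,-2}=-(n-1)K_{0,n-1}^{-1,0,-1}+K_{1,n-1}^{-1,0,-1}$ and $\partial_y K_{0,n}^{-1,-1,-2}=K_{0,n-1}^{-1,0,-1}$, so the $(n-1)$ makes the sum collapse to $K_{1,n-1}^{-1,0,-1}$, which carries a $z$ factor and hence vanishes on $z=0$. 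With that adjustment your steps (i)--(iii) match the paper's argument exactly.
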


\begin{proof}
As in the proof of the previous proposition, using \eqref{eq:K112}, we obtain
$$
\partial_y \wh K_{k,n}^{-1,-1,-2}(1-y,y) =0 \quad\hbox{and}\quad  \partial_y \partial_z \wh K_{k,n}^{-1,-1,-2}(1-y,y) = 0, \quad 1 \le k \le n,
$$
and $K_{k,n}^{-1,-1,-2}(0,1) =0$ for $1 \le k \le n$. Furthermore, since $\partial_x K_{0,n}^{-1,-1,-2}(x,y) =0$, we see that
$\partial_x \wh K_{k,n}^{-1,-1,-2}(x,y) = \partial_x K_{k,n}^{-1,-1,-2}(x,y)$ for $1 \le k \le n$. As a result, it follows that 
\begin{equation}\label{eq:K112main}
  \la f, \wh K_{k,n}^{-1,-1,-2}\ra_{-1,-1,-2}^K = \la \partial_x f, K_{k,n}^{0,-1,-1}\ra_{0,-1,-1}^K, \quad 1 \le k \le n,
\end{equation}
from which it follows readily that $\la  \wh K_{j,m}^{-1,-1,-2},\wh K_{k,n}^{-1,-1,-2}\ra_{-1,-1,-2}^K =0$ whenever $j > 0$ or $k >0$.
In the remaining case of $j=k=0$, we deduce from 
$$
   K_{0,n}^{-1,-1,-2}(x,y) = J_{n}^{-2,-1}(2y-1) 
$$
that $\partial_y \partial_z K_{0,n}^{-1,-1,-2}(1-y,y) = \partial_y^2 J_n^{-2,-1}(2y-1)$, so that, by 
$\partial_x K_{0,n}^{-1,-1,-2}(x,y) =0$, it follows that 
$$
  \la f,\wh K_{0,n}^{-1,-1,-2}\ra_{-1,-1,-2}^K =  \la f, \wh K_{0,n}^{-1,-1,-2}\ra_{-2,-1}^K. 
$$
It is easy to see that $ \wh K_{0,n}^{-1,-1,-2}(1-y,y) = J_{n}^{-2,-1}(2y-1) $ is precisely the orthogonal polynomials with respect to 
$\la \cdot,\cdot \ra_{-2,-1}$. This completes the proof. 
\end{proof}

Next we define an inner product that works with the polynomials $L_{k,n}^{-1,-1,-2}$,
$$
  L_{k,n}^{-1,-1,-2}(x,y) = (1-x)^k J_k^{-1,-2}\left(\frac{2y}{1-x} -1 \right)J_{n-k}^{2k-2,-1}(2x-1), \quad 0 \le k \le n.
$$
Let $\la \cdot,\cdot\ra_{-1,0,-1}^L$ be the inner product defined in Section 6.2. For $\l_4, \l_5, \l_6 > 0$, define 
\begin{align}\label{eq:ipd112L}
 \la f, g\ra_{-1,-1,-2}^L: = & \la \partial_y f, \partial_y g\ra_{-1,0,-1}^L+ \l  \la f,g\ra_{-2,-1}^L,
 \end{align}
where $\la \cdot,\cdot \ra_{-2,-1}^L$ is \eqref{eq:ipd-ell-b} with $\ell =2$ and $m=1$ applied on $f(x,1-x)$ and $g(x,1-x)$, 
\begin{align*} 
   \la f,g\ra_{-2,-1}^L:= & \int_0^1 \partial_x^2 f(x,1-x) \partial_x^2 g(x,1-x) (1-x)dx  \\
    & +  \l_1 \partial_x f(1,0)\partial_x g(1,0) + \l_2 f(1,0) g(1,0). 
\end{align*}
To get a mutually orthogonal basis, we define 
$$
\wh L_{0,1}^{-1,-1,-2} = L_{0,1}^{-1,-1,-2}-1, \quad \wh L_{k,n}^{-1,-1,-2} = L_{k,n}^{-1,-1,-2}, \quad (k,n) \ne (0,1).
$$
 
\begin{prop}\label{prop:L112}
The system $\{\wh L_{k,n}^{-1,-1,-2}: 0 \le k \le n, \,\, n=0,1,2,\ldots\}$ is mutually orthogonal with respect to the inner
product $\la \cdot,\cdot\ra_{-1,-1,-2}^L$. 
\end{prop}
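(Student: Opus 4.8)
The plan is to mirror the proof of Proposition \ref{prop:K112}, exploiting the symmetry relation $L_{k,n}^{\a,\b,\g}(x,y) = J_{k,n}^{\b,\g,\a}(y,1-x-y)$ from \eqref{eq:3bases-transform} together with the derivative transform \eqref{eq:d-transform2}. First I would record, via \eqref{eq:Jlm}, the explicit factorizations of $L_{k,n}^{-1,-1,-2}$: for $3 \le k \le n$ one gets a factor $y(1-x-y)^2$ (coming from the $(1-x)^{-1}(1-t)^{-2}$ behavior through the permutation), while $L_{2,n}^{-1,-1,-2}$, $L_{1,n}^{-1,-1,-2}$, $L_{0,n}^{-1,-1,-2}$ have the special forms analogous to \eqref{eq:K112}. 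In particular $L_{0,n}^{-1,-1,-2}(x,y) = J_n^{-2,-1}(2x-1)$, which is a function of $x$ alone, so $\partial_y L_{0,n}^{-1,-1,-2} = 0$; this is exactly why only $\wh L_{0,1}^{-1,-1,-2}$ needs modification (to kill a stray constant, just as $\wh K_{0,1}^{-1,-1,-2}$ did).

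The core computation is to show that for $1 \le k \le n$,
\begin{equation*}
  \la f, \wh L_{k,n}^{-1,-1,-2}\ra_{-1,-1,-2}^L = \la \partial_2 f, L_{k-1,n-1}^{-1,0,-1}\ra_{-1,0,-1}^L,
\end{equation*}
the analogue of \eqref{eq:K112main}. For this I would use $\partial_y L_{k,n}^{-1,-1,-2}(x,y) = - L_{k-1,n-1}^{-1,0,-1}(x,y)$ (the second line of \eqref{eq:diffL}, valid for $k \ge 1$) and verify that all boundary contributions in $\la \cdot,\cdot\ra_{-2,-1}^L$ — the integral of $\partial_x^2 f\,\partial_x^2 g\,(1-x)$ over the edge $z=0$, the point evaluation of $\partial_x$ at $(1,0)$, and the point evaluation at $(1,0)$ — vanish when tested against $L_{k,n}^{-1,-1,-2}$ for $k \ge 1$. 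The factorizations above give $\partial_x^2 L_{k,n}^{-1,-1,-2}(x,1-x) = 0$ and $L_{k,n}^{-1,-1,-2}(1,0) = 0$ for $k \ge 1$ directly, except that the cases $k=1,2$ need the small extra check (as in Proposition \ref{prop:K112}) that the relevant derivative restricted to $x+y=1$ equals $\pm J_{n-2}^{1,0}(\cdot)$ up to constant, which is orthogonal on $[0,1]$ to lower-degree polynomials; then $\la \partial_2 L_{k,n}^{-1,-1,-2}, \partial_2 g\ra_{-1,0,-1}^L = 0$ follows from Proposition \ref{prop:00-1B} (the mutual orthogonality of the $L$-basis for $\la\cdot,\cdot\ra_{-1,0,-1}^L$) applied to $L_{k-1,n-1}^{-1,0,-1}$ against a lower-degree polynomial. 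Iterating the displayed identity and invoking the orthogonality already established for $\la\cdot,\cdot\ra_{-1,0,-1}^L$ yields $\la \wh L_{j,m}^{-1,-1,-2}, \wh L_{k,n}^{-1,-1,-2}\ra_{-1,-1,-2}^L = 0$ whenever $j \ge 1$ or $k \ge 1$.

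In the remaining diagonal case $j = k = 0$, since $\partial_y L_{0,n}^{-1,-1,-2} = 0$ the entire inner product reduces to the boundary term $\la f, \wh L_{0,n}^{-1,-1,-2}\ra_{-2,-1}^L$, and $\wh L_{0,n}^{-1,-1,-2}(x,1-x) = J_n^{-2,-1}(2x-1)$ is (for $n \ge 2$, and by construction for $n=0,1$) precisely the orthogonal polynomial sequence for $\la\cdot,\cdot\ra_{-2,-1}^L = \la\cdot,\cdot\ra_{-2,-1}$ of one variable from \eqref{eq:ipd-ell-b}; so orthogonality follows for $n \ne m$. The main obstacle I anticipate is purely bookkeeping in the low-index cases $k = 1, 2$: one must compute $\partial_y L_{k,n}^{-1,-1,-2}$ and $\partial_x\partial_y L_{k,n}^{-1,-1,-2}$ explicitly via repeated use of \eqref{eq:diffL} and \eqref{eq:L+L-}, restrict carefully to $x+y=1$, and confirm the emergent Jacobi polynomial has the right degree and weight to be annihilated by the boundary functionals — exactly the kind of delicate but routine manipulation the authors defer with "we omit the details," and which a computer algebra check underwrites.
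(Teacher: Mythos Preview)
Your approach is essentially the same as the paper's: reduce $\la f,\wh L_{k,n}^{-1,-1,-2}\ra_{-1,-1,-2}^L$ for $k\ge 1$ to $\la \partial_2 f, L_{k-1,n-1}^{-1,0,-1}\ra_{-1,0,-1}^L$ by killing the boundary contributions, and handle $k=0$ via the one-variable orthogonality of $J_n^{-2,-1}(2x-1)$. (Your indexing $L_{k-1,n-1}^{-1,0,-1}$ is in fact what \eqref{eq:diffL} gives; the paper's displayed identity \eqref{eq:L112main} appears to carry a typo in the subscript.) One small over-anticipation: you model the argument on the $K$ case and flag extra work for $k=1,2$, but the $L$ case is actually simpler---only $\wh L_{0,1}$ needs modification (no $k=1$ correction term as in $\wh K_{1,n}$), and the paper's proof correspondingly dispenses with those special checks in a single line.
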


\begin{proof}
Using the explicit formulas of $L_{k,n}^{-1,-1,-2}$, we can verify that $\partial_y L_{0,n}^{-1,-1,-2}(x,y) =0$ and 
$\partial_x L_{k,n}^{-1,-1,-2}(x,0) =0$ if $1 \le k \le n$. Furthermore, $L_{k,n}^{-1,-1,-2}(1,0) =0$
for $k \ge 1$. Hence, it follows readily that 
\begin{equation}\label{eq:L112main}
  \la f, L_{k,n}^{-1,-1,-2}\ra_{-1,-1,-2}^L = \la \partial_y f, L_{k,n}^{-1,0,-1} \ra_{-1,0,-1}, \qquad 1 \le k \le n,
\end{equation} 
which shows, in particular, that $ \la L_{k,n}^{-1,-1,-2}, L_{l,m}^{-1,-1,-2}\ra_{-1,-1,-2}^L =0$ whenever $k \ge 1$ 
or $l \ge 1$. In the remaining case of $k = l =0$, it is easy to see, using $\partial_y L_{0,n}^{-1,-1,-2}(x,y) =0$, that
$$
\la f, L_{0,n}^{-1,-1,-2}\ra_{-1,-1,-2}^L = \la f,  L_{0,n}^{-1,-1,-2} \ra_{-2,-1}^L, \qquad n \ge 0.
$$
Since $L_{0,n}^{-1,-1,-2}(x,y) = J_n^{-2,-1}(2x-1)$ is precisely the orthogonal polynomial with respect to 
$\la \cdot,\cdot\ra_{-2,-1}^L$, this completes the proof. 
\end{proof}

For $n \ge 3$, $\CV_n(\varpi_{-1,-1,-2})$ has three bases, $\wh J_{k,n}^{-1,-1,-2}$, $\wh K_{k,n}^{-1,-1,-2}$ and
$\wh L_{k,n}^{-1,-1,-2}$, which are mutually orthogonal with respect to their corresponding inner product. 
As a consequence, we have the following: 

\begin{prop}\label{prop:proj111}
For $n \ge 3$, 
$$
\proj_{n,J}^{-1,-1,-2} f = \proj_{n,K}^{-1,-1,-2} f =\proj_{n,L}^{-1,-1,-2} f:=\proj_{n}^{-1,-1,-2} f.
$$
\end{prop}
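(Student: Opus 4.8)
The plan is to exploit the fact that for $n \ge 3$ we now have three explicit mutually orthogonal bases of the \emph{same} subspace $\CV_n(\varpi_{-1,-1,-2})$, each one adapted to its own inner product. First I would invoke Propositions~\ref{prop:KL112bases}, \ref{prop:K112} and \ref{prop:L112}: the former tells us that $\{\wh J_{k,n}^{-1,-1,-2}\}$, $\{\wh K_{k,n}^{-1,-1,-2}\}$ and $\{\wh L_{k,n}^{-1,-1,-2}\}$ (or rather their modified versions, which agree with the original ones for $n \ge 3$ since the modifications only touch degrees $0$ and $1$) all span $\CV_n(\varpi_{-1,-1,-2})$ for $n \ge 3$, while the latter two, together with the preceding proposition, say that each family is mutually orthogonal with respect to $\la\cdot,\cdot\ra^J_{-1,-1,-2}$, $\la\cdot,\cdot\ra^K_{-1,-1,-2}$, $\la\cdot,\cdot\ra^L_{-1,-1,-2}$ respectively.

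The key observation is the abstract principle already used in Proposition~\ref{prop:proj=A=B} and Proposition~\ref{prop:proj=A=B-1-1-1}: once an inner product $\la\cdot,\cdot\ra$ on $W_2^2$ is fixed, the projection operator $\proj_n: W_2^2 \to \CV_n(\varpi_{-1,-1,-2})$ onto a \emph{fixed} finite-dimensional subspace $V_n$ is characterized by the two properties (i) $\proj_n f \in V_n$ and (ii) $f - \proj_n f \perp V_m$ for all $m$, and in particular $\proj_n f = f$ whenever $f \in V_n$. The point is that for each of the three inner products, the corresponding partial sum $S_N f = \sum_{n \le N}\proj_n f$ is the orthogonal projection onto $\bigoplus_{n \le N} \CV_n(\varpi_{-1,-1,-2})$, which as a \emph{set} does not depend on which of $J,K,L$ we used — because all three give bases of the same $\CV_n$ for $n \ge 3$, and the low-degree spaces $\CV_0,\CV_1,\CV_2$ spanned by the explicitly listed polynomials $\wh J_{k,n}, \wh K_{k,n}, \wh L_{k,n}$ for $n=0,1,2$ also coincide (they are just $\Pi_0^2$, $\Pi_1^2$, and a fixed $3$-dimensional complement). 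Hence $\bigoplus_{n \le N}\CV_n$ is the same subspace regardless of the basis choice.

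So the argument runs as follows. Fix $f \in W_2^2$ and $n \ge 3$. Write $f = \sum_{m=0}^\infty \proj_{m,J}^{-1,-1,-2} f$; this is the expansion in the $J$-orthogonal basis, so by construction $\proj_{m,J}^{-1,-1,-2} f \in \CV_m(\varpi_{-1,-1,-2})$ for all $m$ and $f - \sum_{m \le N}\proj_{m,J}^{-1,-1,-2} f$ is $\la\cdot,\cdot\ra^J$-orthogonal to every $\CV_m$ with $m \le N$. Now I would argue that $\proj_{m,K}^{-1,-1,-2} f$ agrees with $\proj_{m,J}^{-1,-1,-2} f$ for each $m$: both lie in $\CV_m(\varpi_{-1,-1,-2})$, and their difference is a degree-$\le m$ polynomial lying in that space whose components in all $\CV_j$, $j \le m$, must vanish because $f$ minus the relevant partial sum is orthogonal (in the respective inner products) to all lower spaces. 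More cleanly: since $\CV_m(\varpi_{-1,-1,-2})$ is simultaneously the space of degree-$m$ orthogonal polynomials for all three inner products (orthogonal to $\Pi_{m-1}^2$ in each), the decomposition of any polynomial $p$ of degree $N$ into homogeneous orthogonal components $p = \sum_{m \le N} p_m$ with $p_m \in \CV_m$ is \emph{unique} and independent of the inner product; applying this to $p = S_N^{-1,-1,-1}$-type truncations, or directly to the telescoping identity, forces $\proj_{m,J} f = \proj_{m,K} f = \proj_{m,L} f$ for $m \ge 3$, while for $m = 0,1,2$ equality is checked directly from the explicit low-degree formulas. The main obstacle is the bookkeeping at the bottom: one must confirm that the explicitly listed $\wh J_{k,n}, \wh K_{k,n}, \wh L_{k,n}$ for $n = 0,1,2$ indeed span the same $\CV_0 \oplus \CV_1 \oplus \CV_2$, so that the "uniqueness of orthogonal decomposition" argument applies uniformly in all degrees; once that is in hand, the proposition follows exactly as Propositions~\ref{prop:proj=A=B} and \ref{prop:proj=A=B-1-1-1} did, and I would simply say "as in the proof of Proposition~\ref{prop:proj=A=B-1-1-1}."
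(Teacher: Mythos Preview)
Your approach is correct and matches the paper's, which simply states the proposition as a consequence of the fact that for $n\ge 3$ all three families are mutually orthogonal bases of the same space $\CV_n(\varpi_{-1,-1,-2})$, invoking the abstract principle already used in Propositions~\ref{prop:proj=A=B} and~\ref{prop:proj=A=B-1-1-1}. One correction: the modifications are \emph{not} confined to degrees $0,1$ --- e.g.\ $\wh K_{1,n}=K_{1,n}+(n-1)K_{0,n}$ for every $n$, and $\wh J_{n-1,n},\wh J_{n,n}$ are nontrivial combinations of $J_{n-1,n},J_{n,n}$ --- but since these are invertible within-degree linear combinations one still has $\mathrm{span}\{\wh K_{k,n}\}=\mathrm{span}\{K_{k,n}\}=\CV_n(\varpi_{-1,-1,-2})$ for $n\ge 3$; and you need not verify equality of the projections for $m\le 2$ (the proposition does not claim it), only that $\bigoplus_{m\le 2}\CV_m^{(\cdot)}=\Pi_2^2$ in each case, which is automatic for any system of orthogonal polynomials.
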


Let $S_{n,J}^{-1,-1,-2}$ denote the $n$-th partial sum operator of the orthogonal expansion with respect to the
$J_{k,n}^{-1,-1,-2}$ family. Define $S_{n,K}^{-1,-1,-2}$ and $S_{n,L}^{-1,-1,-2}$ likewise. Since 
\begin{equation} \label{eq:f-Sn=proj}
      f - S_n f = \sum_{m=n+1} \proj_m f, 
\end{equation}
it follows from Proposition \ref{prop:proj111} that $S_{n,J}^{-1,-1,-2} f = S_{n,K}^{-1,-1,-2} f = S_{n,L}^{-1,-1,-2} f$ for 
$n \ge 3$, which we denote by $S_n^{-1,-1,-2} f$. 

\begin{thm} \label{thm:comm112}
For $n \ge 3$, 
\begin{align}\label{eq:comm112b}
\begin{split}
  \partial_1  S_n^{-1,-1,-2} f  & = S_{n-1}^{0,-1,-1} \partial_1 f, \\
  \partial_2  S_n^{-1,-1,-2} f &  = S_{n-1}^{-1,0,-1} \partial_2 f. 
\end{split}
\end{align}
\end{thm}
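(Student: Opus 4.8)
The plan is to reduce the two commuting relations in \eqref{eq:comm112b} to the already-established commuting relations at the previous level of parameters, exactly as was done for $\partial_3$ in the proof of Theorem~\ref{thm:Dproj-1-1-1}. By \eqref{eq:f-Sn=proj} it suffices to prove the analogous identities for the projection operators, namely $\partial_1\proj_n^{-1,-1,-2}f=\proj_{n-1}^{0,-1,-1}\partial_1 f$ and $\partial_2\proj_n^{-1,-1,-2}f=\proj_{n-1}^{-1,0,-1}\partial_2 f$ for $n\ge 3$, and then sum over $m>n$. For $\partial_1$ I would compute $\proj_n^{-1,-1,-2}f$ using the $\wh K_{k,n}^{-1,-1,-2}$ basis, which by Proposition~\ref{prop:K112} is mutually orthogonal for $\la\cdot,\cdot\ra_{-1,-1,-2}^K$; for $\partial_2$ I would use the $\wh L_{k,n}^{-1,-1,-2}$ basis with $\la\cdot,\cdot\ra_{-1,-1,-2}^L$, and by Proposition~\ref{prop:proj111} these give the same projection operator $\proj_n^{-1,-1,-2}$, hence the same partial sum $S_n^{-1,-1,-2}$.

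The core computation for $\partial_1$ rests on the identity \eqref{eq:K112main}, i.e.\ $\la f,\wh K_{k,n}^{-1,-1,-2}\ra_{-1,-1,-2}^K=\la\partial_1 f, K_{k,n}^{0,-1,-1}\ra_{0,-1,-1}^K$ for $1\le k\le n$, together with $\partial_x K_{0,n}^{-1,-1,-2}(x,y)=0$ and $\partial_x\wh K_{k,n}^{-1,-1,-2}(x,y)=\partial_x K_{k,n}^{-1,-1,-2}(x,y)=K_{k-1,n-1}^{0,-1,-1}(x,y)$ for $1\le k\le n$ (the latter from \eqref{eq:diffK}, noting that $\wh K$ and $K$ differ only by a multiple of $K_{0,n}^{-1,-1,-2}$, which is annihilated by $\partial_x$). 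Setting $f=\wh K_{k,n}^{-1,-1,-2}$ in \eqref{eq:K112main} yields $\la\wh K_{k,n}^{-1,-1,-2},\wh K_{k,n}^{-1,-1,-2}\ra_{-1,-1,-2}^K=\la K_{k-1,n-1}^{0,-1,-1},K_{k-1,n-1}^{0,-1,-1}\ra_{0,-1,-1}^K$, so that the Fourier coefficient satisfies $\wh f_{k,n,K}^{-1,-1,-2}=\wh{\partial_1 f}_{k-1,n-1,K}^{0,-1,-1}$ for $1\le k\le n$. Then
\begin{align*}
 \partial_1\proj_n^{-1,-1,-2}f
   &= \sum_{k=1}^n \wh f_{k,n,K}^{-1,-1,-2}\,\partial_x\wh K_{k,n}^{-1,-1,-2}
    = \sum_{k=1}^n \wh{\partial_1 f}_{k-1,n-1,K}^{0,-1,-1}\,K_{k-1,n-1}^{0,-1,-1}\\
   &= \proj_{n-1}^{0,-1,-1}\partial_1 f,
\end{align*}
the $k=0$ term dropping out since $\partial_x\wh K_{0,n}^{-1,-1,-2}=0$. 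The argument for $\partial_2$ is identical, using \eqref{eq:L112main}, $\partial_y L_{0,n}^{-1,-1,-2}=0$, and $\partial_y\wh L_{k,n}^{-1,-1,-2}(x,y)=L_{k-1,n-1}^{-1,0,-1}(x,y)$ from \eqref{eq:diffL}, to obtain $\wh f_{k,n,L}^{-1,-1,-2}=\wh{\partial_2 f}_{k-1,n-1,L}^{-1,0,-1}$ and hence $\partial_2\proj_n^{-1,-1,-2}f=\proj_{n-1}^{-1,0,-1}\partial_2 f$.

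The main obstacle is bookkeeping rather than conceptual: one must be careful that the modifications $\wh K_{1,n}^{-1,-1,-2}=K_{1,n}^{-1,-1,-2}+(n-1)K_{0,n}^{-1,-1,-2}$ and $\wh K_{0,1}^{-1,-1,-2}=K_{0,1}^{-1,-1,-2}-1$ do not disturb the relation $\partial_x\wh K_{k,n}^{-1,-1,-2}=K_{k-1,n-1}^{0,-1,-1}$ — this works precisely because $\partial_x K_{0,n}^{-1,-1,-2}=0$, so adding multiples of $K_{0,n}^{-1,-1,-2}$ is invisible to $\partial_x$, and the constant in $\wh K_{0,1}^{-1,-1,-2}$ only affects the degree-zero and degree-one pieces, which are handled by the restriction $n\ge 3$ combined with \eqref{eq:f-Sn=proj} summing only over $m\ge n+1\ge 4$. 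One should also check that the normalization of $\proj_{n-1}^{0,-1,-1}$ implicit in \eqref{eq:K112main} is genuinely the projection for the inner product $\la\cdot,\cdot\ra_{0,-1,-1}^K$ of Section~\ref{sect6}, which is exactly the content of Proposition~\ref{prop:proj111} applied at parameters $(0,-1,-1)$; since all three bases there yield one projection operator, writing $S_{n-1}^{0,-1,-1}$ and $S_{n-1}^{-1,0,-1}$ is unambiguous. With these points verified, the two displayed identities for the partial sums follow by telescoping \eqref{eq:f-Sn=proj}, completing the proof.
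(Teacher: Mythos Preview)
Your proposal is correct and follows essentially the same approach as the paper's own proof: use the $\wh K$-basis and \eqref{eq:K112main} for $\partial_1$, the $\wh L$-basis and \eqref{eq:L112main} for $\partial_2$, combine the derivative relation $\partial_x\wh K_{k,n}^{-1,-1,-2}=K_{k-1,n-1}^{0,-1,-1}$ with the equality of Fourier coefficients, and pass from projections to partial sums via \eqref{eq:f-Sn=proj}. You supply more of the bookkeeping (why the $\wh K$-modifications are invisible to $\partial_x$, why $n\ge 3$ suffices) than the paper's terse proof, but the argument is the same; one small slip is that the reference you want for the unambiguity of $\proj_{n-1}^{0,-1,-1}$ is the material in Subsection~\ref{sect:-1b-1} on $\CV_n(\varpi_{\a,-1,-1})$, not Proposition~\ref{prop:proj111}, which concerns the $(-1,-1,-2)$ level.
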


\begin{proof}
By the definition, we have 
$$
   \proj_n^{-1,-1,-2} f(x,y) = \sum_{k=0}^n \frac{\la f, \wh K_{k,n}^{-1,-1,-2}\ra_{-1,-1,-2}^K}
   {\la \wh K_{k,n}^{-1,-1,-2}, \wh K_{k,n}^{-1,-1,-2}\ra_{-1,-1,-2}^K}  \wh K_{k,n}^{-1,-1,-2}(x,y).
$$
Since $\partial_1  \wh K_{k,n}^{-1,-1,-2} = K_{k-1,n-1}^{0,-1,-1}$ for $1 \le k \le n$ and $= 0$ for $k=0$,
the first identity of 
\begin{align} \label{eq:comm112}
\begin{split}
  \partial_1  \proj_n^{-1,-1,-2} f  & = \proj_{n-1}^{0,-1,-1} \partial_1 f, \\
  \partial_2  \proj_n^{-1,-1,-2} f &  = \proj_{n-1}^{-1,0,-1} \partial_2 f 
\end{split}
\end{align}
follows immediately from \eqref{eq:K112main}. Similarly, the second identity follows from \eqref{eq:L112main}. 
The  stated identities follow from \eqref{eq:f-Sn=proj} as a consequence. 
\end{proof}

Using \eqref{eq:J112main}, the same argument also shows that  
$\partial_3  \proj_n^{-1,-1,-2} f = \proj_{n-1}^{0,0,-2} \partial_3 f$. However, $\la \cdot,\cdot\ra_{0,0,-2}$ is 
not a full-fledged inner product and we cannot claim $\partial_3 S_n^{-1,-1,-2} f =S_{n-1}^{0,0,-2} \partial_3 f$. 

\subsection{The space $\CV_n(\varpi_{-2,-1,-1})$ and $\CV_n(\varpi_{-1,-2,-1})$}
Again, we can derive the orthogonality in these two cases by simultaneous permutations. We extend relations in
\eqref{eq:3bases-transform} to modified polynomials such as $\wh J_{k,n}^{-1,-1,-2}$ and record the basic facts. 

For $\CV_n(\varpi_{-2,-1,-1})$, we defined three inner products, 
\begin{align}\label{eq:ipd211L}
\begin{split}
 \la f, g\ra_{-2,-1,-1}^L : &= \la \partial_y f, \partial_y g\ra_{-2,0,0}^L + \l \la f(\cdot,0), g(\cdot,0)\ra_{-2,-1}
     + f(0, 0) g(0, 0), \\
 \la f,g\ra_{-2,-1,-1}^J: & = \la \partial_z f, \partial_z g\ra_{-1,0,-1}^J + \l_1 \la f, g\ra_{-2,-1}^J\\
 \la f,g\ra_{-2,-1,-1}^K: & = \la \partial_x f, \partial_x g\ra_{-1,-1,0}^K + \l_2 \la f,g\ra_{-2,-1}^K, 
\end{split}
\end{align}
where the first terms in the right hand are defined in previous sections, $\la \cdot,\cdot\ra_{-2,-1}$ is the inner product of one variable as before and
\begin{align*}
\la f, g\ra_{-2,-1}^J: =  & \int_0^1 \partial_x^2 f(0,y) \partial_x^2 g(0,y) (1-y) dy   + \l_1 \partial_x f(0,0) \partial_x g(0,0) + \l_2 f(0,0)g(0,0),\\
   \la f,g\ra_{-2,-1}^K :=&  \int_0^1 \partial_z^2 f(0,y) \partial_z^2 g(0,y) y dy   + \l_1  \partial_z f(0,1)\partial_z g(0,1)+\l_2 f(0,1)g(0,1). 
\end{align*}
Then each of $\{\wh L_{k,n}^{-2,-1,-1}\}$, $\{\wh J_{k,n}^{-2,-1,-1}\}$, and $\{\wh K_{k,n}^{-2,-1,-1}\}$ is
mutually orthogonal with respect to their inner product $\la \cdot, \cdot \ra_{-2,-1,-1}^L$, $\la \cdot, \cdot \ra_{-2,-1,-1}^J$, 
$\la \cdot, \cdot \ra_{-2,-1,-1}^K$, respectively. Furthermore, for $n \ge 3$, each of these three families is a basis for $\CV_n(\varpi_{-2,-1,-1})$. 

\begin{prop}
For $n \ge 3$, 
\begin{align}\label{eq:comm211}
\begin{split}
  \partial_1  S_n^{-2,-1,-1} f  & = S_{n-1}^{-1,-1,0} \partial_1 f, \\
  \partial_3 S_n^{-2,-1,-1} f &  =S_{n-1}^{-1,0,-1} \partial_3 f.  
\end{split}
\end{align}
\end{prop}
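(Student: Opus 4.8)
The plan is to mimic the proof of Theorem~\ref{thm:comm112} (which handled the case $(-1,-1,-2)$), transporting every statement to $(-2,-1,-1)$ via the simultaneous permutation relations already recorded in the excerpt. Recall that $\CV_n(\varpi_{-2,-1,-1})$ carries three mutually orthogonal bases $\{\wh L_{k,n}^{-2,-1,-1}\}$, $\{\wh J_{k,n}^{-2,-1,-1}\}$, $\{\wh K_{k,n}^{-2,-1,-1}\}$ for the inner products $\la\cdot,\cdot\ra_{-2,-1,-1}^L$, $\la\cdot,\cdot\ra_{-2,-1,-1}^J$, $\la\cdot,\cdot\ra_{-2,-1,-1}^K$ in \eqref{eq:ipd211L}, and that, by the proposition preceding this one, for $n\ge 3$ all three projection operators coincide, hence so do the three partial sum operators, which we call $S_n^{-2,-1,-1}$. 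Since $f-S_n^{-2,-1,-1}f=\sum_{m=n+1}^\infty\proj_m^{-2,-1,-1}f$ by \eqref{eq:f-Sn=proj}, it suffices to prove the corresponding commuting relations at the level of projections, namely
\begin{align*}
  \partial_1\proj_n^{-2,-1,-1}f & = \proj_{n-1}^{-1,-1,0}\partial_1 f, \\
  \partial_3\proj_n^{-2,-1,-1}f & = \proj_{n-1}^{-1,0,-1}\partial_3 f,
\end{align*}
for $n\ge 3$.

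First I would establish $\partial_1\proj_n^{-2,-1,-1}f=\proj_{n-1}^{-1,-1,0}\partial_1 f$ using the $\wh K_{k,n}^{-2,-1,-1}$ basis, which is the permuted image of $\wh L_{k,n}^{-1,-1,-2}$ and is mutually orthogonal for $\la\cdot,\cdot\ra_{-2,-1,-1}^K$. The key input is an identity analogous to \eqref{eq:L112main}: by applying the transformation rules \eqref{eq:d-transform1}--\eqref{eq:d-transform2} to \eqref{eq:L112main}, together with $\partial_x\wh K_{k,n}^{-2,-1,-1}=\wh K_{k-1,n-1}^{-1,-1,0}$ for $1\le k\le n$ (from \eqref{eq:diffK}) and $\partial_x\wh K_{0,n}^{-2,-1,-1}=0$, one gets
$$
  \la f,\wh K_{k,n}^{-2,-1,-1}\ra_{-2,-1,-1}^K = \la \partial_1 f,\wh K_{k-1,n-1}^{-1,-1,0}\ra_{-1,-1,0}^K,\qquad 1\le k\le n.
$$
Setting $f=\wh K_{k,n}^{-2,-1,-1}$ gives the ratio of norms, whence $\wh f_{k,n,K}^{-2,-1,-1}=\wh{\partial_1 f}_{k-1,n-1,K}^{-1,-1,0}$ for $1\le k\le n$, and summing the expansion of $\proj_n^{-2,-1,-1}f$ after applying $\partial_1$ (the $k=0$ term dying) yields the first relation. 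For $\partial_3$ I would run the same argument with the $\wh J_{k,n}^{-2,-1,-1}$ basis and $\la\cdot,\cdot\ra_{-2,-1,-1}^J$, the permuted image of $\la\cdot,\cdot\ra_{-1,-1,-2}^K$; here the relevant identity is the permutation of \eqref{eq:K112main}, using $\partial_3\wh J_{k,n}^{-2,-1,-1}=\wh J_{k-1,n-1}^{-1,0,-1}$ for $1\le k\le n$ and $\partial_3\wh J_{0,n}^{-2,-1,-1}=0$. Then \eqref{eq:f-Sn=proj} upgrades the projection identities to the partial-sum identities \eqref{eq:comm211}.

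The only real work is bookkeeping: verifying that under the permutation $(x,y,z)\mapsto(y,z,x)$ that sends $\wh J_{k,n}^{-1,-1,-2}\mapsto\wh L_{k,n}^{-2,-1,-1}$, $\wh K_{k,n}^{-1,-1,-2}\mapsto\wh J_{k,n}^{-2,-1,-1}$, $\wh L_{k,n}^{-1,-1,-2}\mapsto\wh K_{k,n}^{-2,-1,-1}$, the three inner products \eqref{eq:ipd112L}, \eqref{ipd112K}, \eqref{eq:ipd112L} map precisely to the three inner products in \eqref{eq:ipd211L}, and that $\partial_1,\partial_3$ on the permuted functions correspond to $\partial_2,\partial_3$ on the originals under \eqref{eq:d-transform2} (the chain-rule relabeling). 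Once the dictionary is set up, the commuting relations \eqref{eq:comm112} of Theorem~\ref{thm:comm112}, together with the $\partial_3$-relation remarked after that theorem, transfer verbatim. I expect the main obstacle to be purely one of care with indices and orientations in this translation—in particular keeping straight which of $\partial_1,\partial_2,\partial_3$ becomes which under the relevant cyclic permutation, and confirming that the one-variable boundary inner products $\la\cdot,\cdot\ra_{-2,-1}^J$, $\la\cdot,\cdot\ra_{-2,-1}^K$ in \eqref{eq:ipd211L} are exactly the permuted images of $\la\cdot,\cdot\ra_{-2,-1}^K$, $\la\cdot,\cdot\ra_{-2,-1}^L$—rather than any genuinely new analytic difficulty.
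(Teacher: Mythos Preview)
Your proposal is correct and is exactly the paper's (implicit) argument: the proposition is obtained from Theorem~\ref{thm:comm112} by the simultaneous permutation $(x,y,z)\mapsto(y,z,x)$ and the basis relations \eqref{eq:3bases-transform}, which turn the $\partial_2$- and $\partial_1$-identities of \eqref{eq:comm112} into the $\partial_1$- and $\partial_3$-identities of \eqref{eq:comm211}, respectively. One small bookkeeping correction: under \eqref{eq:d-transform2} the pair $(\partial_1,\partial_3)$ on the $(-2,-1,-1)$ side corresponds (up to sign) to $(\partial_2,\partial_1)$ on the $(-1,-1,-2)$ side, not to $(\partial_2,\partial_3)$ as you wrote, so the ``$\partial_3$-relation remarked after that theorem'' is neither needed nor usable here---precisely the two good relations of \eqref{eq:comm112} suffice.
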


For $\CV_n(\varpi_{-1,-2,-1})$, we defined three inner products. 
\begin{align}\label{eq:ipd121K}
\begin{split}
 \la f, g\ra_{-2,-1,-1}^K := & \la \partial_x f, \partial_x g\ra_{0,-2,0}^K + \l \la f, g\ra_{-2,-1}^K  +  f(0,0) g(0, 0),\\
  \la f,g\ra_{-1,-2,-1}^L:= & \la \partial_y f, \partial_y g\ra_{-1,-1,0}^L + \l_1 \la f, g\ra_{-2,-1}^L, \\
 \la f,g\ra_{-1,-2,-1}^J:= & \la \partial_z f, \partial_z g\ra_{0,-1,-1}^J + \l_2 \la f,g\ra_{-2,-1}^J,
\end{split}
\end{align}
where where the first terms in the right hand are defined in previous sections, and the second terms are defined by
\begin{align*}
 \la f, g\ra_{-1,-2}^K: = & \int_0^1 \partial_z^2 f (y 1-y) \partial_z^2 g(y,1-y)y dy  + \l_1 \partial_y f(1,0) \partial_y g(1,0) 
   + f(1, 0) g(1, 0),\\
\la f, g\ra_{-2,-1}^L:  = &  \int_0^1  \partial_z^2 f(x,0) \partial_z^2 g(x,0) (1-x) dx  
 + \l_1 \partial_z f(1,0) \partial_z g(1,0) + \l_2 f(1,0)g(1,0), \\
 \la f,g\ra_{-2,-1}^J :=   &  \int_0^1 \partial_y^2 f(x,0) \partial_z^2 g(x,0) (1-x) dx   
   + \l_1 \partial_z f(0,0)\partial_z g(0,0)+\l_2 f(0,0)g(0,0). 
\end{align*}
Then, each of $\{\wh K_{k,n}^{-2,-1,-1}\}$, $\{\wh L_{k,n}^{-2,-1,-1}\}$, and $\{\wh J_{k,n}^{-2,-1,-1}\}$ is mutually 
orthogonal with respect to their inner product $\la \cdot, \cdot \ra_{-2,-1,-1}^K$, $\la \cdot, \cdot \ra_{-2,-1,-1}^L$, 
$\la \cdot, \cdot \ra_{-2,-1,-1}^J$, respectively. Furthermore, for $n \ge 3$, each family is a basis for
 $\CV_n(\varpi_{-1,-2,-1})$. 


\begin{prop}
For $n \ge 3$, 
\begin{align}\label{eq:comm121}
\begin{split}
  \partial_2  S_n^{-1,-2,-1} f &  = S_{n-1}^{-1,-1,0} \partial_2 f, \\
  \partial_3  S_n^{-1,-2,-1} f &  =S_{n-1}^{0,-1,-1} \partial_3 f.  
\end{split}
\end{align}
\end{prop}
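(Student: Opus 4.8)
The plan is to mirror exactly the structure used for $\CV_n(\varpi_{-2,-1,-1})$ and $\CV_n(\varpi_{-1,-1,-2})$, transporting everything along the cyclic permutation of the triangle that sends $(x,y,1-x-y)\mapsto(1-x-y,x,y)$. Concretely, recall from \eqref{eq:3bases-transform} that $J_{k,n}^{\a,\b,\g}(x,y)=K_{k,n}^{\b,\g,\a}(y,1-x-y)$, $K_{k,n}^{\a,\b,\g}(x,y)=L_{k,n}^{\b,\g,\a}(y,1-x-y)$, $L_{k,n}^{\a,\b,\g}(x,y)=J_{k,n}^{\b,\g,\a}(y,1-x-y)$, and these relations extend to the modified polynomials $\wh J_{k,n}^{-1,-1,-2}$, $\wh K_{k,n}^{-1,-1,-2}$, $\wh L_{k,n}^{-1,-1,-2}$ constructed in the previous subsection. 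The definitions \eqref{eq:ipd121K} of $\la\cdot,\cdot\ra_{-1,-2,-1}^K$, $\la\cdot,\cdot\ra_{-1,-2,-1}^L$, $\la\cdot,\cdot\ra_{-1,-2,-1}^J$ are precisely the pushforwards of $\la\cdot,\cdot\ra_{-1,-1,-2}^K$, $\la\cdot,\cdot\ra_{-1,-1,-2}^L$, $\la\cdot,\cdot\ra_{-1,-1,-2}^J$ under this change of variables, since the weights $\varpi_{\a,\b,\g}$ and the one-variable inner products transform accordingly, and the derivatives transform by \eqref{eq:d-transform1}: if $g(x,y)=f(1-x-y,x)$ then $\partial_x g=\partial_3 f$, $\partial_y g=-\partial_1 f$, $\partial_z g=-\partial_2 f$ (evaluated at $(1-x-y,x)$). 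Hence the orthogonality statement — that each of $\{\wh K_{k,n}^{-1,-2,-1}\}$, $\{\wh L_{k,n}^{-1,-2,-1}\}$, $\{\wh J_{k,n}^{-1,-2,-1}\}$ is mutually orthogonal for its respective inner product, and each is a basis of $\CV_n(\varpi_{-1,-2,-1})$ for $n\ge 3$ — follows verbatim from Propositions~\ref{prop:K112}, \ref{prop:L112}, the analogue of the first proposition of this subsection, and Proposition~\ref{prop:KL112bases}, simply by substitution.

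For the commuting relation in \eqref{eq:comm121}, first I would establish the projection-level identities
\begin{align*}
 \partial_2 \proj_n^{-1,-2,-1} f &= \proj_{n-1}^{-1,-1,0} \partial_2 f, \\
 \partial_3 \proj_n^{-1,-2,-1} f &= \proj_{n-1}^{0,-1,-1} \partial_3 f,
\end{align*}
for $n\ge 3$. By Proposition~\ref{prop:proj111} and its permuted version, $\proj_n^{-1,-2,-1}$ is independent of which of the three bases we use, so for $\partial_2$ I would expand in the $\wh L_{k,n}^{-1,-2,-1}$ basis and for $\partial_3$ in the $\wh J_{k,n}^{-1,-2,-1}$ basis. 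In each case the argument is the one already used in the proof of Theorem~\ref{thm:comm112}: the relevant derivative of $\wh L_{k,n}^{-1,-2,-1}$ (resp. $\wh J_{k,n}^{-1,-2,-1}$) drops the index pattern to $L_{k-1,n-1}^{-1,-1,0}$ (resp. $J_{k-1,n-1}^{0,-1,-1}$) for $k\ge 1$ and vanishes for $k=0$, and the key reduction identity — the analogue of \eqref{eq:L112main} (resp. \eqref{eq:J112main}) obtained by permuting — forces $\wh f_{k,n}$ to equal the corresponding Fourier coefficient of $\partial_2 f$ (resp. $\partial_3 f$); substituting gives the claimed identity. One then sums over $m>n$ using $f-S_n f=\sum_{m=n+1}\proj_m f$ as in \eqref{eq:f-Sn=proj} to pass from projections to partial sums, obtaining \eqref{eq:comm121}.

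The one point requiring genuine care — and the likely main obstacle — is the bookkeeping of the permutation dictionary: which of $\partial_1,\partial_2,\partial_3$ goes to which, and which of $J,K,L$ becomes which, under the substitution $(x,y)\mapsto(1-x-y,x)$. For the $\CV_n(\varpi_{-2,-1,-1})$ case the relevant substitution was $(x,y)\mapsto(y,1-x-y)$ with its dictionary \eqref{eq:d-transform2}; here it is the other cyclic shift, \eqref{eq:d-transform1}, and one must check that the resulting triples of parameters $(-1,-1,0)$ for $\partial_2$ and $(0,-1,-1)$ for $\partial_3$ are exactly the images of $(0,-1,-1)$ and $(-1,-1,0)$ appearing in \eqref{eq:comm112b} — i.e. that $\partial_2$ and $\partial_3$ play the roles that $\partial_1$ and $\partial_3$ played before — and that the inner products in \eqref{eq:ipd121K} are consistently the pushforwards, including the one-variable pieces $\la\cdot,\cdot\ra_{-2,-1}^K$, $\la\cdot,\cdot\ra_{-2,-1}^L$, $\la\cdot,\cdot\ra_{-2,-1}^J$ defined just above the proposition. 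Once this dictionary is pinned down, no new analysis is needed: every step is a transcription of the $(-1,-1,-2)$ development, and I would present the proof as such, stating that it follows from the corresponding results by the simultaneous permutation of $(x,y,1-x-y)$ and of $(\a,\b,\g)$, pointing to \eqref{eq:3bases-transform}, \eqref{eq:d-transform1}, Propositions~\ref{prop:K112}, \ref{prop:L112}, \ref{prop:KL112bases}, \ref{prop:proj111}, and Theorem~\ref{thm:comm112}.
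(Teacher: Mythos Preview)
Your proposal is correct and follows essentially the same approach as the paper: the paper states this proposition without proof, relying on the fact that the $(-1,-2,-1)$ case is obtained from the $(-1,-1,-2)$ development by the simultaneous permutation of $(x,y,1-x-y)$ and $(\a,\b,\g)$, exactly as you describe via \eqref{eq:3bases-transform} and \eqref{eq:d-transform1}. Your more explicit tracking of the dictionary (in particular, that $\partial_2$ here plays the role of $\partial_1$ in Theorem~\ref{thm:comm112} via the $\wh L$-basis and $\partial_3$ plays the role of $\partial_2$ via the $\wh J$-basis, with target parameters $(-1,-1,0)$ and $(0,-1,-1)$) is precisely the bookkeeping the paper leaves to the reader.
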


\section{Sobolev orthogonality with parameters $(-2,-2,-2)$}
\label{sect10}
\setcounter{equation}{0}

In this final section on our Sobolev orthogonality, we consider the case $(-2,-2,-2)$. The inner products we define
depend on what we have done in the previous sections. The orthogonal subspace is defined as before, but further 
modification is needed for obtaining an orthonormal basis.

For $f,g\in W_2^2$, we defined an inner product
\begin{align} \label{eq:ipd222J}
 \la f, g\ra_{-2,-2,-2}^J := & \la \partial_3 f, \partial_3 g\ra_{-1,-1,-2}^J +  \la f, g\ra_{-3,-2}^J 
 \end{align}
 where 
 \begin{align*}
   \la f, g\ra_{-3,-2}^J: = & \l_1 \int_0^1 \partial_y^2 \partial_x f (0,y) \partial_y^2 \partial_x g(0,y)(1-y) dy \\ 
  & + \l_2 \partial_y \partial_x f (0,0) \partial_y \partial_x g(0,0) + \partial_x f(1, 0) \partial_x g(1, 0) + \l_3 f(0, 1) g(0, 1).
\end{align*}
That this is an inner product can be seen as follows: $\la \partial_z f, \partial_z f \ra_{-1,-1,-2}^J = 0$ implies that $\partial_3 f (x,y)=0$, 
so that $f(x,y) = g(x+y)$, hence, the part involving $\partial_y^2 \partial_x$ shows that $g(x)$ must be a quadratic polynomials, 
thus $f(x,y) = a (x+y)^2 + b(x+y) +c$, which is zero if it vanishes on the three corner points of the triangle. 

We need to work with $J_{k,n}^{-2,-2,-2}$, which are given by
$$
  J_{k,n}^{-2,-2,-2}(x,y)  = (x+y)^{k} J_{k}^{-2,-2}\left(\frac{y-x}{x+y}\right) J_{n-k}^{2k-3,-2}(1-2x-2y), \quad 0 \le k \le n. 
$$
In particular, we have 
\begin{align*}
\begin{split}
J_{0,n}^{-2,-2,-2}(x,y) & = J_n^{-3,-2}(1-2x-2y), \\
J_{1,n}^{-2,-2,-2}(x,y) & = \f12 (y-x) J_{n-1}^{-1,-2}(1-2x-2y), \\
J_{n-1,n}^{-2,-2,-2}(x,y) &= (x+y)^{n-1} J_{n-1}^{-2,-2}\left(\frac{y-x}{x+y}\right) \left (\frac{2n-4}{2n-5} - x- y\right), \\
J_{n,n}^{-2,-2,-2}(x,y) &=(x+y)^n J_{n}^{-2,-2}\left(\frac{y-x}{x+y}\right).
\end{split}
\end{align*}
The fact that the last factor in $J_{n-1,n}^{-2,-2,-2}$ does not vanish on the boundary causes potential problem in making 
this polynomial orthogonal to $J_{m,m}^{-2,-2,-2}$ for, say, $m=n-1$ or $n$. Moreover, the factor $y-x$ in 
$J_{1,n}^{-2,-2,-2}$ is also insufficient for $\la \cdot,\cdot\ra_{-3,-2}^J$. For these reasons we need to modify these 
polynomials to obtain an mutually orthogonal basis.

What we need is to define a new family of polynomials from appropriate linear combinations among $J_{k,n}^{-2,-2,-2}$, 
with $n$ fixed, so that they are mutually orthogonal with respect to the inner product \eqref{eq:ipd222J}. We first define 
following two polynomials: 
\begin{align*}
F_n^{-2,-2,-2}(x,y) & := J_{n-1,n}^{-2,-2,-2}(x,y) - 2 J_{n,n}^{-2,-2,-2}(x,y), \\
G_n^{-2,-2,-2}(x,y) & := J_{n-1,n}^{-2,-2,-2}(x,y)+ 2 J_{n,n}^{-2,-2,-2}(x,y).  
\end{align*}
We then define a system of polynomials $\wh J_{k,n}^{-2,-2,-2}$ as follows: 
\begin{align*}
& \wh J_{0,0}^{-2,-2,-2}(x,y)  =1,\quad \wh J_{0,1}^{-2,-2,-2}(x,y) = x, \quad \wh J_{1,1}^{-2,-2,-2}(x,y) = y+\frac23 x,\\
&  \wh J_{0,2}^{-2,-2,-2}(x,y) =\f12 (x+y)(2-x-y),\quad \wh J_{1,2}^{-2,-2,-2}(x,y) = x^2 -2x-y, \\
&  \wh J_{2,2}^{-2,-2,-2}(x,y) = x^2-2x-y^2, \quad \wh J_{0,3}^{-2,-2,-2}(x,y) =\frac{1}{6}(x+y)( 3-(x+y)^2),\\
&  \wh J_{1,3}^{-2,-2,-2}(x,y) = \frac16(y^3-2x^3+3(1-y)(y+x^2)), \\
& \wh J_{2,3}^{-2,-2,-2}(x,y) = \frac{1}{2}(x^2+(x+y)(1-x^2)), \\
&  \wh J_{3,3}^{-2,-2,-2}(x,y) = \frac{1}{12}(3y(1-x^2)+x(3-x^2))
\end{align*}
and, for $n \ge 4$, 
\begin{align*}
  \wh J_{k,n}^{-2,-2,-2} &= J_{k,n}^{-2,-2,-2}(x,y), \qquad k=0 \quad \hbox{and}\quad 2 \le k \le n-2,\\
  \wh J_{1,n}^{-2,-2,-2} &= J_{1,n}^{-2,-2,-2}(x,y) +\frac{n-2}{2}J_{0,n}^{-2, -2, -2}(x, y); \\
  \wh J_{n-1,n}^{-2,-2,-2} & =  G_n^{-2,-2,-2}(x,y), \\
  \wh J_{n,n}^{-2,-2,-2} & =  F_n^{-2,-2,-2}(x,y) - \frac{ n-3 }{ 2n-5} G_n^{-2,-2,-2}(x,y).
\end{align*}
Let $\CV_n(\varpi_{-2,-2,-2})$ be the space spanned by $\{ J_{k,n}^{-2,-2,-2}, 0\le k \le n\}$.
 
\begin{prop} \label{prop:222ortho}
For $n \ge 0$, the polynomials $\wh J_{k,n}^{-2,-2,-2}$, $0\le k \le n$, are mutually orthogonal with respect to 
$\la f, g\ra_{-2,-2,-2}^J$. Furthermore, for $n \ge 4$, $\{\wh J_{k,n}^{-2,-2,-2}, 0\le k \le n\}$ is a basis for $\CV_n(\varpi_{-2,-2,-2})$. 
\end{prop}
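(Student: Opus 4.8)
The plan is to follow the template already used for Proposition~\ref{prop:J-1-1-1} and Proposition~\ref{prop:KL112bases}, namely: verify the low-degree cases by direct computation, and for $n\ge 4$ reduce the inner product $\la f,\wh J_{k,n}^{-2,-2,-2}\ra_{-2,-2,-2}^J$ to a previously understood one-variable or lower-parameter inner product by taking $\partial_3$ and using the derivative formulas of Section~\ref{sect4}. The key structural identity to establish first is that the modified family satisfies
\begin{equation*}
  \partial_z \wh J_{k,n}^{-2,-2,-2}(x,y) = \wh J_{k-1,n-1}^{-1,-1,-2}(x,y), \qquad 1\le k\le n,
\end{equation*}
and $\partial_z \wh J_{0,n}^{-2,-2,-2}(x,y)=0$. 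For $2\le k\le n-2$ this follows from the third equation of \eqref{eq:diff1J}; for $k=n-1$ and $k=n$ it is exactly why $F_n^{-2,-2,-2}$ and $G_n^{-2,-2,-2}$ were formed as those particular combinations of $J_{n-1,n}$ and $J_{n,n}$ (their $\partial_z$-images are $F_{n-1}^{-1,-1,-2}$ and $G_{n-1}^{-1,-1,-2}$ up to the right constants, matching the definition of $\wh J_{k,n}^{-1,-1,-2}$ in Section~\ref{sect9}); and for $k=1$ it forces the choice of the coefficient $\frac{n-2}{2}$ in $\wh J_{1,n}^{-2,-2,-2}$. Once this is in hand, one checks from the explicit formulas that each $\wh J_{k,n}^{-2,-2,-2}$, $n\ge 4$, carries enough vanishing at the three corners and enough factors of $x,y,1-x-y$ so that the discrete pieces $\la f,g\ra_{-3,-2}^J$ of the inner product annihilate it against everything for $k\ge 1$, yielding
\begin{equation*}
  \la f,\wh J_{k,n}^{-2,-2,-2}\ra_{-2,-2,-2}^J = \la \partial_3 f,\wh J_{k-1,n-1}^{-1,-1,-2}\ra_{-1,-1,-2}^J, \qquad 1\le k\le n;
\end{equation*}
orthogonality for $1\le k\le n$ or $1\le l\le n$ then follows from the orthogonality of the $\wh J_{k,n}^{-1,-1,-2}$ family established in Section~\ref{sect9}.

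For the remaining diagonal case $k=l=0$, I would use $\partial_z\wh J_{0,n}^{-2,-2,-2}=0$ together with the fact that $\wh J_{0,n}^{-2,-2,-2}(x,y)=J_{0,n}^{-2,-2,-2}(x,y)=J_n^{-3,-2}(1-2x-2y)$ vanishes together with appropriate derivatives at the corners, so that
\begin{equation*}
  \la f,\wh J_{0,n}^{-2,-2,-2}\ra_{-2,-2,-2}^J = \la f,\ J_n^{-3,-2}(1-2\{\cdot\})\ra_{-3,-2}
\end{equation*}
for the one-variable inner product $\la\cdot,\cdot\ra_{-3,-2}$ of \eqref{eq:ipd-ell-b} (with $\ell=3,\ m=2$), against which $J_n^{-3,-2}(1-2t)$ is orthogonal by Proposition~\ref{prop:Qlb} (or its $1-2t$ variant). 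This closes the orthogonality claim. Finally, for $n\ge 4$, since each $\wh J_{k,n}^{-2,-2,-2}$ is by construction a linear combination of $J_{0,n}^{-2,-2,-2},\dots,J_{n,n}^{-2,-2,-2}$ (the combinations are triangular in $k$, with $\wh J_{n-1,n}$ and $\wh J_{n,n}$ spanning the same plane as $J_{n-1,n},J_{n,n}$), the set $\{\wh J_{k,n}^{-2,-2,-2}: 0\le k\le n\}$ is a basis of $\CV_n(\varpi_{-2,-2,-2})$; linear independence follows because they are pairwise orthogonal and nonzero.

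The main obstacle I expect is not the conceptual reduction but the bookkeeping at the top end $k=n-1,n$ and at $k=0,1$: one must check that $F_n^{-2,-2,-2}$, $G_n^{-2,-2,-2}$ and the corrected $\wh J_{1,n}$, $\wh J_{n,n}$ really do map under $\partial_z$ onto the correspondingly-named modified polynomials of Section~\ref{sect9} with the precise constants, and that their values and first (for $k=1$, also second) derivatives at the corner points $(0,0),(1,0),(0,1)$ are exactly what is needed to kill the four discrete terms in $\la\cdot,\cdot\ra_{-3,-2}^J$. This requires the identities \eqref{eq:JacobiR3b}, \eqref{eq:JacobiR1} applied to $J_{n-1}^{-2,-2}$ and $J_n^{-2,-2}$ near $t=\pm1$, and a short computation with $J_n^{-3,-2}$; the low-degree cases $n=0,1,2,3$ are then a direct (if tedious) verification against the explicitly listed polynomials, best left to the computer algebra check the paper already relies on. Once these corner/derivative evaluations are tabulated, the orthogonality and basis statements fall out formally from the Section~\ref{sect9} results, exactly as Proposition~\ref{prop:222ortho} asserts.
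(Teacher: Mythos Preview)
Your proposal is correct and follows essentially the same route as the paper's proof: verify $n\le 3$ directly; for $n\ge 4$ establish $\partial_z\wh J_{k,n}^{-2,-2,-2}=\wh J_{k-1,n-1}^{-1,-1,-2}$ for $k\ge 1$ and $\partial_z\wh J_{0,n}^{-2,-2,-2}=0$; check that the boundary and point-evaluation terms in $\la\cdot,\cdot\ra_{-3,-2}^J$ vanish on $\wh J_{k,n}^{-2,-2,-2}$ for $k\ge 1$, giving the reduction $\la f,\wh J_{k,n}^{-2,-2,-2}\ra_{-2,-2,-2}^J=\la\partial_3 f,\wh J_{k-1,n-1}^{-1,-1,-2}\ra_{-1,-1,-2}^J$; and handle $k=0$ via $J_n^{-3,-2}(1-2y)$.

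One small correction to your reasoning: the coefficient $\tfrac{n-2}{2}$ in $\wh J_{1,n}^{-2,-2,-2}$ is \emph{not} forced by the $\partial_z$ relation, since $\partial_z J_{0,n}^{-2,-2,-2}=0$ and hence any multiple of $J_{0,n}^{-2,-2,-2}$ can be added without disturbing $\partial_z\wh J_{1,n}=\wh J_{0,n-1}^{-1,-1,-2}$. Rather, as the paper notes, that coefficient is chosen so that $\partial_x\wh J_{1,n}^{-2,-2,-2}(x,y)=xJ_{n-1}^{0,-1}(1-2x-2y)$, which is what makes $\partial_x\wh J_{1,n}^{-2,-2,-2}(0,y)=0$ and $\partial_x\wh J_{1,n}^{-2,-2,-2}(1,0)=0$, killing the relevant pieces of $\la\cdot,\cdot\ra_{-3,-2}^J$. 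You anticipated this kind of check later in your outline, so the overall argument goes through; just reassign the role of that constant.
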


\begin{proof}
The case $n =0,1,2,3$ can be verified directly. For $n \ge 4$, it is clear that $\wh J_{k,n}^{-2,-2,-2} \in \CV_n(\varpi_{-2,-2,-2})$
for $0 \le k \le n$. Thus, it remains to prove that they are mutually orthogonal. These polynomials are defined so that they 
satisfy the relation
\begin{equation*} 
  \partial_z  \wh J_{k,n}^{-2,-2,-2}(x,y) = \wh J_{k-1,n-1}^{-1,-1,-2} (x,y), \qquad 1 \le k \le n 
\end{equation*}
and we have $ \partial_z  \wh J_{0,n}^{-1,-1,-2}(x,y) =0$. Furthermore, for $n\ge 4$, 
$\wh J_{k,n}^{-1,-1,-2}$ contains a factor $x y$ for $k =n-1$ and $k = n$, $\wh J_{k,n}^{-1,-1,-2}$ contains a factor
$x^2$ for $2\le k \le n-2$, and $\wh J_{0,n}^{-1,-1,-2}$. Moreover,  $\wh J_{1,n}^{-1,-1,-2}$ is modified so that we have
$$
  \partial_x \wh J_{1,n}^{-1,-1,-2}(x,y) = - J_{1,n-1}^{-1,-2,-1}(x,y) = x J_{n-1}^{0,-1}(1-2x-2y)
$$
by \eqref{eq:diff1J}, which shows that $\partial_x \wh J_{1,n}^{-1,-1,-2}(0,y) =0$ and 
$\partial_x \wh J_{1,n}^{-1,-1,-2}(1,0)=0$. In particular, it follows that 
\begin{align} \label{eq:J222main}
  \la f, \wh J_{k,n}^{-2,-2,-2}\ra_{-2,-2,-2} = \la \partial_z f,  \wh J_{k,n}^{-1,-1,-2}\ra_{-1,-1,-2}, \quad 1 \le k \le n, 
\end{align}
from which the orthogonality of $\wh J_{k,n}^{-2,-2,-2}$ and $\wh J_{l,n}^{-2,-2,-2}$ for either $k \ge 1$ or $l\ge 1$ follows readily. 
For the remaining case of $k = l =0$, from $\partial_z  \wh J_{0,n}^{-2,-2,-2}(x,y) =0$, we see that 
\begin{align} \label{eq:J222mainB}  
  \la f, \wh J_{0,n}^{-2,-2,-2}\ra_{-2,-2,-2} = \la f,  J_{0,n}^{-2,-2,-2}\ra_{-3,-2}.
\end{align}
Since $\partial_x \wh J_{0,n}^{-1,-1,-2}(0,0) =0$ and $\wh J_{0,n}^{-1,-1,-2}(0,1)=0$, we can verify that 
$J_{0,n}^{-2,-2,-2}(0, y) =
 J_n^{-3,-2}(1-2 y)$ are orthogonal polynomials with respect to $\la \cdot,\cdot\ra_{-1,0}$. The proof is completed. 
\end{proof}

\begin{prop} \label{prop:KL222bases}
For $n \ge 4$, the systems $\{K_{k,n}^{-2,-2,-2}: 0\le k \le n\}$ and $\{L_{k,n}^{-2,-2,-2}: 0\le k \le n\}$ are both 
bases of $\CV_n(\varpi_{-2,-,2,-2})$.
\end{prop}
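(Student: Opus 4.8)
The plan is to mirror, via simultaneous permutation, the proof that $\{\wh K_{k,n}^{-1,-1,-2}:0\le k\le n\}$ and $\{\wh L_{k,n}^{-1,-1,-2}:0\le k\le n\}$ are bases of $\CV_n(\varpi_{-1,-1,-2})$ (Proposition~\ref{prop:KL112bases}), but one must first supply the correct \emph{definition} of $\wh K_{k,n}^{-2,-2,-2}$ and $\wh L_{k,n}^{-2,-2,-2}$ as linear combinations of the $K_{k,n}^{-2,-2,-2}$ and $L_{k,n}^{-2,-2,-2}$, analogous to the modifications already made for $\wh J_{k,n}^{-2,-2,-2}$. The natural choice, consistent with \eqref{eq:3bases-transform} extended to the modified polynomials, is to set $\wh K_{k,n}^{-2,-2,-2}(x,y) := \wh J_{k,n}^{-2,-2,-2}(1-x-y,x)$ and $\wh L_{k,n}^{-2,-2,-2}(x,y) := \wh J_{k,n}^{-2,-2,-2}(y,1-x-y)$, or equivalently to build them from $K_{n-1,n}, K_{n,n}$ and $K_{1,n}, K_{0,n}$ (resp.\ the $L$'s) by exactly the same coefficients $\frac{n-2}{2}$, and $\frac{n-3}{2n-5}$ that appear in the definition of $\wh J_{k,n}^{-2,-2,-2}$. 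First I would record these definitions explicitly.

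The core argument is then: fix $n\ge 4$ and let $g$ be a generic polynomial of degree $<n$; I must show $\la \wh K_{k,n}^{-2,-2,-2},g\ra_{-2,-2,-2}^J = 0$ for $0\le k\le n$ (and similarly for $L$), which forces $\wh K_{k,n}^{-2,-2,-2}\in\CV_n(\varpi_{-2,-2,-2})$, and since these are $n+1$ linearly independent elements of an $(n+1)$-dimensional space they form a basis. As in the proof of Proposition~\ref{prop:KL112bases}, I would split $\la\cdot,\cdot\ra_{-2,-2,-2}^J = \la\partial_3\cdot,\partial_3\cdot\ra_{-1,-1,-2}^J + \la\cdot,\cdot\ra_{-3,-2}^J$. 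For the first term, using \eqref{eq:diffK} one checks that $\partial_3\wh K_{k,n}^{-2,-2,-2}$ is (a linear combination of) $\wh K_{\bullet,n-1}^{-1,-1,-2}$'s, so $\la\partial_3\wh K_{k,n}^{-2,-2,-2},\partial_3 g\ra_{-1,-1,-2}^J = 0$ follows from the fact (already proven) that $\CV_{n-1}(\varpi_{-1,-1,-2})$ is orthogonal to lower-degree polynomials under $\la\cdot,\cdot\ra_{-1,-1,-2}^J$, together with the elementary observation that $\partial_3$ of the $K$-basis stays in the $K$-basis family. For the boundary term $\la\cdot,\cdot\ra_{-3,-2}^J$, I would use the explicit factorizations of $K_{k,n}^{-2,-2,-2}$ coming from \eqref{eq:K+K-}/\eqref{eq:Jlm} (a factor $x(1-x-y)^3$ for large $k$, with special low-$k$ cases $K_{0,n}, K_{1,n}, K_{2,n}, K_{3,n}$ handled by hand using \eqref{eq:diffK} and the one-variable orthogonality of the relevant Jacobi polynomials), to see that each required mixed derivative $\partial_y^2\partial_x$, $\partial_x$, and the point evaluations at the corners vanish except against the single one-variable orthogonal polynomial appearing there. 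The low-$k$ modifications (the $\frac{n-2}{2}$ correction in $\wh K_{1,n}$ analogue) are precisely what is needed to kill the surviving boundary contribution, exactly as in the $(-1,-1,-2)$ case.

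I would then note that the $L$-case is entirely parallel by the permutation symmetry $L_{k,n}^{-2,-2,-2}(x,y) = J_{k,n}^{-2,-2,-2}(y,1-x-y)$ and the derivative transformation rule \eqref{eq:d-transform2}, so essentially no new computation is required once the $K$-case and the definition of $\wh L_{k,n}^{-2,-2,-2}$ are in place; one invokes Proposition~\ref{prop:222ortho} transported by permutation. The main obstacle I anticipate is \emph{bookkeeping of the exceptional small-$k$ indices}: for $k\in\{0,1,2,3\}$ the generic factorization $x(1-x-y)^3 K_{k-4,n-4}^{\dots}$ degenerates, the polynomials $\wh J_{k,n}^{-2,-2,-2}$ themselves are given by ad hoc formulas for $n\le 3$, and one must verify by direct (computer-assisted) calculation that $\partial_y^2\partial_x$, $\partial_x$, and the three corner evaluations of $\wh K_{k,n}^{-2,-2,-2}$ behave correctly for $k=0,1,2,3$ and for the boundary cases $n=4,5$ where the corner of the triangle is reached; this is where the coefficient $\frac{n-2}{2}$ in the definition of $\wh K_{1,n}^{-2,-2,-2}$ must be pinned down exactly. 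Since the author explicitly states that all such formulas were verified with a computer algebra system, I would present the structural reduction and defer the remaining finite check, exactly in the style of the proof of Proposition~\ref{prop:KL112bases}.
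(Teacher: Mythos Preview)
Your overall strategy---split $\la\cdot,\cdot\ra_{-2,-2,-2}^J$ into the $\la\partial_3\cdot,\partial_3\cdot\ra_{-1,-1,-2}^J$ piece plus the boundary piece $\la\cdot,\cdot\ra_{-3,-2}^J$, invoke Proposition~\ref{prop:KL112bases} for the first piece, and handle the second by explicit factorization with a case analysis for small $k$---is exactly the paper's approach. However, two points in your write-up are off.

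First, the proposition is about the \emph{unmodified} polynomials $K_{k,n}^{-2,-2,-2}$ (and $L_{k,n}^{-2,-2,-2}$), not about any $\wh K_{k,n}^{-2,-2,-2}$. No modifications are needed: the paper shows directly that each $K_{k,n}^{-2,-2,-2}$ is orthogonal to all lower-degree polynomials under $\la\cdot,\cdot\ra_{-2,-2,-2}^J$. Your claim that a ``$\tfrac{n-2}{2}$ correction \dots\ is precisely what is needed to kill the surviving boundary contribution'' is mistaken; for every $k$ from $0$ to $n$ the boundary term already vanishes for the plain $K_{k,n}^{-2,-2,-2}$. (The hatted polynomials $\wh K_{k,n}^{-2,-2,-2}$ are introduced \emph{after} this proposition, by permuting $\wh J_{k,n}^{-2,-2,-2}$, and serve a different purpose: mutual orthogonality under the separate inner product $\la\cdot,\cdot\ra_{-2,-2,-2}^K$.)

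Second, your claimed factor ``$x(1-x-y)^3$ for large $k$'' is wrong. Since $J_k^{-2,-2}(t)$ carries the factor $(1-t)^2(1+t)^2$ for $k\ge 4$, the correct factorization (see \eqref{eq:K222}) is $K_{k,n}^{-2,-2,-2}(x,y)=c\,x^2(1-x-y)^2 K_{k-4,n-4}^{2,-2,2}(x,y)$. The $x^2$ (not $x$) is what makes $\partial_x f$, and hence $\partial_y^i\partial_x f(0,y)$, vanish identically for $k\ge 4$; with only a single $x$ you would not get this. The cases $k=0,1,2,3$ are then handled individually using the explicit formulas in \eqref{eq:K222} and the one-variable orthogonality of $J_{n-3}^{1,0}(2y-1)$, exactly as you anticipated---just without any need to modify the polynomials.
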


\begin{proof}
We give a proof only for $K_{k,n}^{-2,-2,-2}$, the other case is similar. These polynomials are given by
$$
K_{k,n}^{-2,-2,-2}(x,y) = (1-y)^k J_k^{-2,-2}\left( \frac{2x}{1-y}-1\right) J_{n-k}^{2k-3,-2}(2y-1). 
$$
Let $g$ be a generic polynomial of degree at most $n-1$. By \eqref{eq:diffK} $\partial_z K_{k,n}^{-2,-2,-2} \in
\CV(\varpi_{-1,-1,-2})$ for $0 \le k \le n$, so that $\la \partial_3 K_{k,n}^{-2,-2,-2}, \partial_3 g \ra_{-1,-1,-2}^J =0$ 
by Proposition \ref{prop:KL112bases}. Next, we verify that $\la K_{k,n}^{-2,-2,-2}, g \ra_{-3,-2}^J =0$ as well. 

From the definition of $K_{k,n}^{-2,-,2,-2}$, we deduce, by \eqref{eq:Jlm} and \eqref{eq:Jab}, that 
\begin{align} \label{eq:K222}
 \begin{split}
    K_{k,n}^{-2,-2,-2}(x,y)& = \frac{(n-4)!}{n!} x^2(1-x-y)^2 K_{k-4,n-4}^{2,-2,2}(x,y), \quad k \ge 4, \\
    K_{3,n}^{-2,-2,-2}(x,y)& = \frac{1}{12}(1+2x-y)(1-x-y)^2 J_{n-3}^{3,-2}(2y-1),\\
    K_{2,n}^{-2,-2,-2}(x,y)& = \frac{1}{2} (1-x-y)^2 J_{n-2}^{1,-2}(2y-1),\\
    K_{1,n}^{-2,-2,-2}(x,y)& = - (1-2x-y)J_{n-1}^{-1,-2}(2y-1). 
\end{split}
\end{align}
For $k \ge 4$, the factor $x^2(1-x-y)^2$ implies that $K_{k,n}^{-2,-2,-2}$ satisfies $\partial_y^i \partial_x f(0,y) =0$,
$\partial_x f(1,0)=0$, and $f(0,1)=0$, so that  $\la K_{k,n}^{-2,-2,-2}, g \ra_{-3,-2}^J =0$ follows trivially. 
For $1 \le k\le 3$, the identities in \eqref{eq:K222} imply, using \eqref{eq:Jlm} if necessary, that $K_{k,n}^{-2,-2,-2}$ 
satisfies $\partial_x f(0,y) =0$ and $f(0,1)=0$. Furthermore, direct computation using \eqref{derivativeJ} and
\eqref{eq:Jlm} also shows that $\partial_x \partial_y f(0,0) =0$ for $1 \le k \le 3$. Hence, we only need to 
consider the integral over $\partial_y \partial_x f(0,y)$ in $\la \cdot,\cdot\ra_{-3,-2}^J$. For $k =3$, we use the identity
$$
  \partial_y \partial_x \left[(1+2x-y)(1-x-y)^2 \phi(y)\right] =6x [f(y) + (-1+x+y) f''(y)]
$$
to conclude that $\partial_y^2 \partial_x K_{3,n}^{-2,-2,-2}(0,y) =0$, which shows that 
$\la K_{3,n}^{-2,-2,-2}, g \ra_{-3,-2}^J =0$. 
For $k =2$, a direct computation by \eqref{derivativeJ} shows that 
$$
    \partial_y \partial_x K_{2,n}^{-2,-2,-2}(0,y) = 2 J_{n-3}^{2,-1}(2y-1) - (1-y) J_{n-4}^{3,0}(2y-1)
    = (n-1) J_{n-3}^{1,0}(2y-1),
$$
where the last identity follows from \eqref{eq:JacobiR2} with $\a = 2$ and $\b =0$, which is the orthogonal
polynomials with respect to $(1-y) dy$ on $[0,1]$, so that $\la K_{2,n}^{-2,-2,-2}, g \ra_{-3,-2}^J =0$. 
The last equation holds for $k=1$ as well, since a quick computation using \eqref{derivativeJ} gives 
$\partial_y \partial_x K_{1,n}^{-2,-2,-2}(0,y) = J_{n-3}^{1,0}(2y-1)$. Finally, if $k =0$, then 
$\partial_x K_{0,n}^{-2,-2,-2}(x,y) =0$ and it is easy to see that $\la K_{0,n}^{-2,-2,-2}, g \ra_{-3,-2}^J =0$.

Putting these together, we see that  $\la K_{k,n}^{-2,-2,-2}, g \ra_{-2-2,-2}^J =0$ for $0 \le k \le n$ and $n \ge 4$. 
Hence, $K_{k,n}^{-2,-2,-2}$ belong to $\CV_n(\varpi_{-2,-2,-2})$. The proof is completed. 
\end{proof}

By permutation, we can also define two more inner products that work with $K_{k,n}^{-2,-2,-2}$ and $L_{k,n}^{-2,-2,-2}$, respectively. 
We define 
\begin{align} \label{eq:ipd222K}
 \la f, g\ra_{-2,-2,-2}^K := & \la \partial_1 f, \partial_1 g\ra_{-1,-2,-1}^K +  \la f, g\ra_{-3,-2}^K,
\end{align}
where 
 \begin{align*}
   \la f, g\ra_{-3,-2}^K: = & \l_1 \int_0^1 \partial_z^2 \partial_y f(x,1-x) \partial_z^2 \partial_y g(x,1-x)(1-x) dx \\ 
  & + \l_2 \partial_z \partial_y f(0,1) \partial_z \partial_y g(0,1) + \partial_y f(0, 0) \partial_y g(0, 0) + \l_3 f(1, 0) g(1, 0),
\end{align*}
and 
\begin{align} \label{eq:ipd222L}
 \la f, g\ra_{-2,-2,-2}^L := & \la \partial_2 f, \partial_2 g\ra_{-2,-1,-1}^L +  \la f, g\ra_{-3,-2}^L, 
 \end{align}
where 
\begin{align*}
   \la f, g\ra_{-3,-2}^L: = & \l_1 \int_0^1 \partial_x^2 \partial_z f (x,0) \partial_x^2 \partial_x g(x,0)x dx \\ 
  & + \l_2 \partial_x \partial_z f (1,0) \partial_x \partial_z g(1,0) + \partial_z f(0,1) \partial_z g(0,1) + \l_3 f(0,0) g(0,0).
\end{align*}
For $n \ge 0$, we define 
\begin{align*}
  \wh K_{k,n}^{-2,-2,-2} (x,y) & = \wh J_{k,n}^{-2,-2,-2} (1-x-y,x), \quad 0 \le k \le n, \\
  \wh L_{k,n}^{-2,-2,-2} (x,y) & = \wh J_{k,n}^{-2,-2,-2} (y, 1-x-y), \quad 0 \le k \le n.
\end{align*}

\begin{prop}
The system $\{K_{k,n}^{-2,-2,-2}: 0 \le k \le n, \, n=0,1,2,...\}$ is mutually orthogonal with respect to the inner 
product $\la \cdot,\cdot \ra_{-2,-2,-2}^K$. The system $\{L_{k,n}^{-2,-2,-2}: 0 \le k \le n, \, n=0,1,2,...\}$ is mutually
orthogonal with respect to the inner product $\la \cdot,\cdot \ra_{-2,-2,-2}^L$.   
\end{prop}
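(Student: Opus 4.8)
The plan is to mirror the proof of Proposition~\ref{prop:222ortho} and of Proposition~\ref{prop:KL222bases}, transporting everything through the permutation relations in \eqref{eq:3bases-transform}. Since $\wh K_{k,n}^{-2,-2,-2}(x,y) = \wh J_{k,n}^{-2,-2,-2}(1-x-y,x)$ and the inner product $\la\cdot,\cdot\ra_{-2,-2,-2}^K$ is obtained from $\la\cdot,\cdot\ra_{-2,-2,-2}^J$ by the same change of variables $(x,y)\mapsto(1-x-y,x)$ — compare the definition \eqref{eq:ipd222K} with \eqref{eq:ipd222J}, noting that $\la\partial_1 f,\partial_1 g\ra_{-1,-2,-1}^K$ corresponds to $\la\partial_3 f,\partial_3 g\ra_{-1,-1,-2}^J$ under this substitution via \eqref{eq:d-transform1}, and that $\la f,g\ra_{-3,-2}^K$ is the transported copy of $\la f,g\ra_{-3,-2}^J$ — the mutual orthogonality
$$
 \la \wh K_{k,n}^{-2,-2,-2}, \wh K_{l,m}^{-2,-2,-2}\ra_{-2,-2,-2}^K = \la \wh J_{k,n}^{-2,-2,-2}, \wh J_{l,m}^{-2,-2,-2}\ra_{-2,-2,-2}^J
$$
follows immediately from Proposition~\ref{prop:222ortho}. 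The same argument with $(x,y)\mapsto(y,1-x-y)$ and the derivative relations \eqref{eq:d-transform2} handles $\wh L_{k,n}^{-2,-2,-2}$ and $\la\cdot,\cdot\ra_{-2,-2,-2}^L$.

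The steps I would carry out, in order: first, verify carefully that under $g(x,y)=f(1-x-y,x)$ the three quadratic-form pieces of $\la\cdot,\cdot\ra_{-2,-2,-2}^J$ transform exactly onto the three pieces of $\la\cdot,\cdot\ra_{-2,-2,-2}^K$ — this requires checking that $\partial_1\leftrightarrow\partial_3$, that the one-dimensional boundary functionals $\la f,g\ra_{-3,-2}^J$ and $\la f,g\ra_{-3,-2}^K$ are genuinely related by relabelling the corner points $(0,1)$, $(1,0)$, $(0,0)$ and the boundary edges, and that $\la\partial_1 f,\partial_1 g\ra_{-1,-2,-1}^K$ is the transported $\la\partial_3 f,\partial_3 g\ra_{-1,-1,-2}^J$ (here one invokes the permutation structure of the $(-1,-1,-2)$-inner products recorded in Section~\ref{sect9}); second, conclude that $\la\cdot,\cdot\ra_{-2,-2,-2}^K$ is indeed an inner product on $W_2^2$, by the same argument as for $\la\cdot,\cdot\ra_{-2,-2,-2}^J$ (vanishing of $\partial_1$-type second derivatives forces $f$ to be a pull-back of a quadratic in one variable, which is then killed by vanishing at the three corners); third, transport Proposition~\ref{prop:222ortho} to obtain mutual orthogonality of $\{\wh K_{k,n}^{-2,-2,-2}\}$; fourth, repeat verbatim with $(x,y)\mapsto(y,1-x-y)$ for $\{\wh L_{k,n}^{-2,-2,-2}\}$. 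If one prefers a direct proof, one instead re-runs the computation in Proposition~\ref{prop:222ortho}: using \eqref{eq:diffK} one has $\partial_x \wh K_{k,n}^{-2,-2,-2}(x,y)=\wh K_{k-1,n-1}^{-1,-2,-1}(x,y)$ for $1\le k\le n$ and $=0$ for $k=0$, together with the modification of $\wh K_{1,n}$ chosen so that $\partial_y\partial_x \wh K_{1,n}^{-2,-2,-2}$ vanishes appropriately on the relevant edge; then \eqref{eq:K222}-type factorizations (a factor $x^2(1-x-y)^2$ for $k\ge 4$, and the explicit low-$k$ formulas) show the boundary terms $\la\cdot,\cdot\ra_{-3,-2}^K$ reduce, for $k=0$, to the one-variable inner product for which $\wh K_{0,n}^{-2,-2,-2}$ is the orthogonal polynomial, finishing the argument.

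The main obstacle I expect is the bookkeeping in the first step: one must check that every piece of $\la\cdot,\cdot\ra_{-2,-2,-2}^K$ — in particular the nested structure $\la\partial_1 f,\partial_1 g\ra_{-1,-2,-1}^K$, which itself is defined recursively in Section~\ref{sect9} in terms of $\la\cdot,\cdot\ra_{0,-2,0}^K$ and a one-variable form — really is the image of the corresponding piece of $\la\cdot,\cdot\ra_{-2,-2,-2}^J$ under the cyclic substitution, including the precise matching of which corner point carries the point-evaluation term and which edge carries each integral. The permutation relations \eqref{eq:3bases-transform} and the derivative transformation rules \eqref{eq:d-transform1}, \eqref{eq:d-transform2} guarantee this at the level of the defining formulas, but one has to track the modified polynomials $\wh K_{k,n}$, $\wh L_{k,n}$ (defined via $\wh J_{k,n}$ pulled back) consistently through these identifications; the danger is an off-by-one in the index range near $k=0,1$ or a mismatched corner, exactly as in the low-degree special cases $n=0,1,2,3$ that must be checked by hand. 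Once the dictionary between $J$, $K$, $L$ versions of the $(-2,-2,-2)$ data is set up correctly, the result is an immediate consequence of Proposition~\ref{prop:222ortho} and Proposition~\ref{prop:KL222bases}, and I would state the proof in two or three lines invoking symmetry, as is done for the analogous permutation statements in Sections~\ref{sect5}--\ref{sect9}.
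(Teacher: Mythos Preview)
Your proposal is correct and matches the paper's approach exactly: the paper states this proposition without proof, immediately after defining $\wh K_{k,n}^{-2,-2,-2}$ and $\wh L_{k,n}^{-2,-2,-2}$ as permuted copies of $\wh J_{k,n}^{-2,-2,-2}$, and the inner products $\la\cdot,\cdot\ra_{-2,-2,-2}^K$ and $\la\cdot,\cdot\ra_{-2,-2,-2}^L$ as the corresponding permuted forms of $\la\cdot,\cdot\ra_{-2,-2,-2}^J$, so the result is meant to follow by symmetry from Proposition~\ref{prop:222ortho} exactly as you describe. Your detailed bookkeeping (checking that the nested inner products, boundary terms, and corner evaluations transport correctly under the cyclic substitutions via \eqref{eq:d-transform1} and \eqref{eq:d-transform2}) is precisely what the paper leaves implicit, consistent with how the analogous permutation statements are handled throughout Sections~\ref{sect5}--\ref{sect9}.
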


For $n \ge 4$, the space $\CV_n(\varpi_{-2,-2,-2})$ has three bases, $\wh J_{k,n}^{-2,-2,-2}$, 
$\wh K_{k,n}^{-2,-2,-2}$ and $\wh L_{k,n}^{-2,-2,-2}$, which are mutually orthogonal with respect to their 
corresponding inner product. Let 
\begin{equation}\label{eq:proj-222}
  \proj_{n,J}^{-2,-2,-2} f = \sum_{k=0}^n \wh f_{k,n}^{-2,-2,-2} \wh J_{k,n}^{-2,-2,-2}, 
\end{equation}
where $\wh f_{k,n}^{-2,-2,-2}$ is defined in terms of $\wh J_{k,n}^{-2,-2,-2}$, and define $\proj_{n,K}^{-2,-2,-2}$ and 
$\proj_{n,L}^{-2,-2,-2}$ similarly.  

\begin{prop} \label{prop:proj222}
For $n \ge 4$, 
$$
\proj_{n,J}^{-2,-2,-2} f = \proj_{n,K}^{-2,-2,-2} f =\proj_{n,L}^{-2,-2,-2} f =: \proj_n^{-2,-2,-2}f.
$$
\end{prop}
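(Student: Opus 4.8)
The plan is to run the same argument that underlies Propositions~\ref{prop:proj=A=B}, \ref{prop:proj=A=B-1-1-1} and \ref{prop:proj111}: once we know that, for $n\ge 4$, the three modified families are mutually orthogonal bases of one and the same space $\CV_n(\varpi_{-2,-2,-2})$ and that the three inner products make a common direct-sum decomposition of the polynomial space orthogonal in the degrees $\ge 4$, the projection onto $\CV_n(\varpi_{-2,-2,-2})$ is forced to be the same linear map regardless of which of the three inner products is used. First I would collect the inputs. For $n\ge 4$, $\{\wh J_{k,n}^{-2,-2,-2}:0\le k\le n\}$ is a $\la\cdot,\cdot\ra_{-2,-2,-2}^J$-orthogonal basis of $\CV_n(\varpi_{-2,-2,-2})$ by Proposition~\ref{prop:222ortho}. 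The polynomials $\wh K_{k,n}^{-2,-2,-2}$ and $\wh L_{k,n}^{-2,-2,-2}$ are, by definition, obtained from $\wh J_{k,n}^{-2,-2,-2}$ by an affine permutation of $(x,y,1-x-y)$; since that permutation preserves $\CV_n(\varpi_{-2,-2,-2})$ (Proposition~\ref{prop:KL222bases}) and preserves degrees, each $\wh K_{k,n}^{-2,-2,-2}$ lies in $\CV_n(\varpi_{-2,-2,-2})$, and by the preceding proposition the $\wh K_{k,n}^{-2,-2,-2}$ are mutually $\la\cdot,\cdot\ra_{-2,-2,-2}^K$-orthogonal, hence linearly independent; being $n+1$ of them they form a basis of the $(n+1)$-dimensional space $\CV_n(\varpi_{-2,-2,-2})$, and the same holds for the $\wh L$-family.

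The decisive point is the behaviour in low degrees. Each of the three families, taken over all $n\ge 0$, is a complete orthogonal system for its inner product, and the span of its members of degree $\le 3$ equals $\Pi_3^2$: for the $\wh J$-family the ten polynomials $\wh J_{k,n}^{-2,-2,-2}$, $0\le k\le n\le 3$, are written down explicitly, have degrees $0,1,1,2,2,2,3,3,3,3$, and are linearly independent (being mutually orthogonal for a genuine inner product), so they span $\Pi_3^2$; for the $\wh K$- and $\wh L$-families the same follows because an affine change of variables maps a basis of $\Pi_3^2$ to a basis of $\Pi_3^2$. Therefore the fixed direct sum $\Pi^2=\Pi_3^2\oplus\bigoplus_{n\ge 4}\CV_n(\varpi_{-2,-2,-2})$ is orthogonal with respect to each of $\la\cdot,\cdot\ra_{-2,-2,-2}^J$, $\la\cdot,\cdot\ra_{-2,-2,-2}^K$, $\la\cdot,\cdot\ra_{-2,-2,-2}^L$; equivalently, for each of the three inner products and for $m\ne n$ with $m,n\ge 4$,
$$
 \CV_n(\varpi_{-2,-2,-2})\perp\CV_m(\varpi_{-2,-2,-2}) \quad\hbox{and}\quad \CV_n(\varpi_{-2,-2,-2})\perp\Pi_3^2 .
$$
This is exactly where the low-degree discrepancies among the three inner products are rendered harmless: their different internal splittings of $\Pi_3^2$ are invisible to the projection onto $\CV_n(\varpi_{-2,-2,-2})$ when $n\ge 4$.

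Now fix $n\ge 4$ and $f\in W_2^2$. By construction, $\proj_{n,J}^{-2,-2,-2}f$ in \eqref{eq:proj-222} is the $\la\cdot,\cdot\ra_{-2,-2,-2}^J$-orthogonal projection of $f$ onto $\CV_n(\varpi_{-2,-2,-2})$, so $f-\proj_{n,J}^{-2,-2,-2}f=\sum_{m\ne n}\proj_{m,J}^{-2,-2,-2}f$ belongs to $\Pi_3^2\oplus\bigoplus_{m\ge 4,\,m\ne n}\CV_m(\varpi_{-2,-2,-2})$; by the orthogonality just displayed this remainder is also orthogonal to $\CV_n(\varpi_{-2,-2,-2})$ in the $\la\cdot,\cdot\ra_{-2,-2,-2}^K$- and $\la\cdot,\cdot\ra_{-2,-2,-2}^L$-inner products. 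Since $\proj_{n,K}^{-2,-2,-2}f$ is the unique element of $\CV_n(\varpi_{-2,-2,-2})$ whose difference from $f$ is $\la\cdot,\cdot\ra_{-2,-2,-2}^K$-orthogonal to $\CV_n(\varpi_{-2,-2,-2})$, we conclude $\proj_{n,J}^{-2,-2,-2}f=\proj_{n,K}^{-2,-2,-2}f$, and similarly for $L$. For polynomial $f$ this is purely algebraic; for general $f\in W_2^2$ one either invokes density of polynomials together with continuity of the projections, or uses the convergence of $f=\sum_m\proj_{m,J}^{-2,-2,-2}f$, exactly as in the proofs cited at the start.

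The substantive work — constructing the modifications $F_n^{-2,-2,-2}$, $G_n^{-2,-2,-2}$, the shifted $\wh J_{1,n}^{-2,-2,-2}$, and carrying out the orthogonality verifications — is already done in Propositions~\ref{prop:222ortho} and \ref{prop:KL222bases}, so this proposition is essentially bookkeeping; the one step I expect to require genuine care is the low-degree check that the degree-$\le 3$ members of each of the three families really do span $\Pi_3^2$, since that is precisely what guarantees that the $n\ge 4$ projections do not depend on the choice among $J$, $K$, $L$.
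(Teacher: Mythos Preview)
Your argument is correct and is exactly the elaboration of the reasoning the paper leaves implicit; the paper states this proposition without proof, relying on the pattern established in Propositions~\ref{prop:proj=A=B}, \ref{prop:proj=A=B-1-1-1} and \ref{prop:proj111} together with the inputs you cite (Propositions~\ref{prop:222ortho} and \ref{prop:KL222bases} and the permutation definitions of $\wh K_{k,n}^{-2,-2,-2}$, $\wh L_{k,n}^{-2,-2,-2}$). Your explicit check that the degree-$\le 3$ members of each family span $\Pi_3^2$ is the right way to make the ``moment's reflection'' precise in this setting where the low-degree orthogonal spaces may differ among the three inner products.
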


Let $S_{n,J}^{-2,-2,-2}$ denote the $n$-th partial sum operator of the orthogonal expansion with respect to the
$J_{k,n}^{-2,-2,-2}$ family. Define $S_{n,K}^{-2,-2,-2}$ and $S_{n,L}^{-2,-2,-2}$ likewise. As in the previous section, 
it follows from Proposition \ref{prop:proj222} that $S_{n,J}^{-2,-2,-2} f = S_{n,K}^{-2,-2,-2} f = S_{n,L}^{-2,-2,-2} f$ 
for $n \ge 3$, which also implies the following theorem. 

\begin{thm} \label{thm:proj222}
Let $S_n^{-2,-2,-2}$ denote either one of the three partial sum operators. For $n \ge 3$, 
\begin{align*}
  \partial_1  S_n^{-2,-2,-2} f  & = S_{n-1}^{-1,-2,-1} \partial_1 f, \\
  \partial_2  S_n^{-2,-2,-2} f  &  = S_{n-1}^{-2,-1,-1} \partial_2 f, \\
  \partial_3  S_n^{-2,-2,-2} f  &  =S_{n-1}^{-1,-1,-2} \partial_3 f.  
\end{align*}
\end{thm}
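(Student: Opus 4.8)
The plan is to deduce Theorem~\ref{thm:proj222} from Proposition~\ref{prop:proj222} in exactly the same way that Theorem~\ref{thm:comm112} was deduced from Proposition~\ref{prop:proj111}. First I would recall the identity \eqref{eq:f-Sn=proj}, namely $f - S_n f = \sum_{m=n+1}^\infty \proj_m f$, which reduces the statement about partial sum operators to the corresponding statement about projection operators; so it suffices to prove, for $n \ge 4$ and $f \in W_2^2$,
\begin{align*}
  \partial_1 \proj_n^{-2,-2,-2} f & = \proj_{n-1}^{-1,-2,-1} \partial_1 f, \\
  \partial_2 \proj_n^{-2,-2,-2} f & = \proj_{n-1}^{-2,-1,-1} \partial_2 f, \\
  \partial_3 \proj_n^{-2,-2,-2} f & = \proj_{n-1}^{-1,-1,-2} \partial_3 f,
\end{align*}
together with a direct check of the finitely many low-degree cases $n = 3$ (where Proposition~\ref{prop:proj222} does not yet apply but the partial sum is still well defined). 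By Proposition~\ref{prop:proj222} the three projection operators coincide, so I am free to use whichever of the bases $\wh J_{k,n}^{-2,-2,-2}$, $\wh K_{k,n}^{-2,-2,-2}$, $\wh L_{k,n}^{-2,-2,-2}$ is most convenient for each partial derivative: the $\wh K$-basis for $\partial_1$, the $\wh L$-basis for $\partial_2$, and the $\wh J$-basis for $\partial_3$.

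For the $\partial_3$ case I would write $\proj_n^{-2,-2,-2} f = \sum_{k=0}^n \wh f_{k,n}^{-2,-2,-2} \wh J_{k,n}^{-2,-2,-2}$ as in \eqref{eq:proj-222} and apply $\partial_3$ termwise. The construction in Section~\ref{sect10} was arranged precisely so that $\partial_z \wh J_{k,n}^{-2,-2,-2} = \wh J_{k-1,n-1}^{-1,-1,-2}$ for $1 \le k \le n$ and $\partial_z \wh J_{0,n}^{-2,-2,-2} = 0$; hence $\partial_3 \proj_n^{-2,-2,-2} f = \sum_{k=1}^n \wh f_{k,n}^{-2,-2,-2} \wh J_{k-1,n-1}^{-1,-1,-2}$. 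It then remains to identify the coefficients: setting $f = \wh J_{k,n}^{-2,-2,-2}$ in \eqref{eq:J222main} gives $\la \wh J_{k,n}^{-2,-2,-2}, \wh J_{k,n}^{-2,-2,-2}\ra_{-2,-2,-2}^J = \la \wh J_{k-1,n-1}^{-1,-1,-2}, \wh J_{k-1,n-1}^{-1,-1,-2}\ra_{-1,-1,-2}^J$, and combining this with \eqref{eq:J222main} applied to a general $f$ yields $\wh f_{k,n}^{-2,-2,-2} = \wh{(\partial_3 f)}_{k-1,n-1}^{-1,-1,-2}$ for $1 \le k \le n$. Substituting back gives exactly $\proj_{n-1}^{-1,-1,-2} \partial_3 f$. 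The cases $\partial_1$ and $\partial_2$ run in parallel, using the $\wh K$- and $\wh L$-bases: one uses the relation $\partial_x \wh K_{k,n}^{-2,-2,-2} = \wh K_{k-1,n-1}^{-1,-2,-1}$ (which follows from applying the transformation \eqref{eq:d-transform1} to $\partial_z \wh J_{k,n}^{-2,-2,-2}$, since $\wh K_{k,n}^{-2,-2,-2}(x,y) = \wh J_{k,n}^{-2,-2,-2}(1-x-y,x)$) together with the analogue of \eqref{eq:J222main} built into the definition of $\la \cdot,\cdot\ra_{-2,-2,-2}^K$, and similarly for $\partial_2$ with $\wh L$ and $\la \cdot,\cdot\ra_{-2,-2,-2}^L$. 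Finally I would note that once the three commuting relations for projections are established for all $n \ge 4$, summing via \eqref{eq:f-Sn=proj} gives them for the partial sums $S_n^{-2,-2,-2}$ for $n \ge 4$, and a separate finite check handles $n = 3$.

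The main obstacle I anticipate is not the $\partial_3$ identity — that one is essentially bookkeeping on the inner product $\la \cdot,\cdot\ra_{-2,-2,-2}^J$, already done in Proposition~\ref{prop:222ortho} — but rather making sure that the differentiation relations $\partial_x \wh K_{k,n}^{-2,-2,-2} = \wh K_{k-1,n-1}^{-1,-2,-1}$ and $\partial_y \wh L_{k,n}^{-2,-2,-2} = \wh L_{k-1,n-1}^{-2,-1,-1}$ hold \emph{for all} $0 \le k \le n$ including the modified low-index polynomials ($k = 0, 1$) and the boundary-corrected top ones ($k = n-1, n$), since the modifications in Section~\ref{sect10} are index-dependent. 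One has to verify that the permuted modifications are consistent, i.e. that $\wh K_{k,n}^{-2,-2,-2}$ really is the image of $\wh J_{k,n}^{-2,-2,-2}$ under $(x,y)\mapsto(1-x-y,x)$ in \emph{all} index ranges, so that the commuting diagram $\partial$ vs. permutation closes; this is where the careful case analysis of Propositions~\ref{prop:222ortho} and \ref{prop:KL222bases} is really being used. A secondary nuisance is the $n = 3$ boundary case, where $\CV_3(\varpi_{-2,-2,-2})$ has not yet stabilized to the three-basis picture, so the commuting relation for $S_3^{-2,-2,-2}$ must be checked by hand against the explicitly listed polynomials $\wh J_{k,3}^{-2,-2,-2}$; this is finite but must not be skipped.
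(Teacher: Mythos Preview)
Your approach is essentially the paper's own: reduce to the projection identities via \eqref{eq:f-Sn=proj}, then for $\partial_3$ use the $\wh J$-basis together with $\partial_z \wh J_{k,n}^{-2,-2,-2} = \wh J_{k-1,n-1}^{-1,-1,-2}$ and \eqref{eq:J222main}, and for $\partial_1,\partial_2$ switch to the $\wh K$- and $\wh L$-bases obtained by permutation. One small point: your separate finite check at $n=3$ is unnecessary, since \eqref{eq:f-Sn=proj} shows that $S_n^{-2,-2,-2}$ for $n\ge 3$ only involves $\proj_m^{-2,-2,-2}$ with $m\ge n+1\ge 4$, which is exactly the range where Proposition~\ref{prop:proj222} already applies.
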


\begin{proof}
The proof follows along the same line of proof for Theorem \ref{thm:comm112}. For $\partial_3$, we 
use $\partial_3 \wh J_{k,n}^{-2,-2,-2} = \wh J_{k-1,n-1}^{-1,-1,-2}$ for $1 \le k \le n$ and $=0$ for
$k=0$ and \eqref{eq:J222main} to conclude that $\partial_3 \proj_n^{-2,-2,-2} f = \proj_{n-1}^{-1,-1,-2} \partial_3 f$,
which proves the third identity. For $\partial_1$ and $\partial_2$ we use the expansion of $\proj_n f$ in 
$K_{k,n}^{-2,-2,-2}$ and $L_{k,n}^{-2,-2,-2}$, respectively. 
\end{proof}

\section{Approximation and Orthogonal Expansions for $\varpi_{\a,\b,\g}$}
\label{sect11}
\setcounter{equation}{0}

Starting from this section, we study approximation in the Sobolev spaces. Throughout the rest of this paper, we denote 
$$ 
\|f\|_{\a,\b,\g} : =  \| f\|_{L^2(\varpi_{\a,\b,\g})} \quad \hbox{and} \quad \|f\| = \|f\|_{L^2(\varpi_{0,0,0})}. 
$$ 
We also adopt the convention that $c$ is a positive constant, possibly depending on some fixed parameters, and its value may 
vary at each occurrence, and $c_{\a}$, $c_{\a,\b}$ ... are such constants that depend on the parameters in their subscript. 
Also recall that the relation $A\sim B$ means that $0 < c_1 \le A / B\le c_2$. 

Let $\wh f_{k,n}^{\a,\b,\g}$ and $\proj_n^{\a,\b,\g} f $ be defined as in \eqref{eq:Fourier}. As a consequence of the commuting
relations in \eqref{eq:dproj}, we can state the action of the derivatives in terms of the Fourier coefficients. 
Let $a_{k,n}^{\a,\b}$ be defined as in \eqref{eq:akn-bkn} and \eqref{eq:akn-bkn2}. 

\begin{prop} \label{cor:whf-whDf}
Let $ \wh f_{k,n}^{\a,\b,\g}$ be defined as in \eqref{eq:proj-abc}. Then
\begin{align} \label{eq:whf-whD}
\begin{split}
 \wh {\partial_1 f}_{k,n-1}^{\a+1,\b,\g+1} &\ = - a_{k+1,n}^{\a,\b} \wh f_{k+1,n}^{\a,\b,\g}  -   \wh f_{k,n}^{\a,\b,\g} \\
  \wh {\partial_2 f}_{k,n-1}^{\a,\b+1,\g+1} &\ =  a_{k+1,n}^{\b,\a} \wh f_{k+1,n}^{\a,\b,\g} 
             -  \wh f_{k,n}^{\a,\b,\g}, \\
  \wh {\partial_3 f}_{k,n-1}^{\a+1,\b+1,\g} &\ =  \wh f_{k+1,n}^{\a,\b,\g}.
\end{split}
\end{align}
\end{prop}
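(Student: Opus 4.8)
The plan is to read off \eqref{eq:whf-whD} directly from the commuting relations \eqref{eq:dproj} together with the explicit derivative formulas \eqref{eq:diff1J} for $\partial_x, \partial_y, \partial_z$ of $J_{k,n}^{\a,\b,\g}$. The third identity is the cleanest, so I would start there. Apply $\partial_3$ to the expansion $\proj_n^{\a,\b,\g} f = \sum_{k=0}^n \wh f_{k,n}^{\a,\b,\g} J_{k,n}^{\a,\b,\g}$ and use the last line of \eqref{eq:diff1J}, namely $\partial_z J_{k,n}^{\a,\b,\g} = J_{k-1,n-1}^{\a+1,\b+1,\g}$, to get $\partial_3 \proj_n^{\a,\b,\g} f = \sum_{k=1}^n \wh f_{k,n}^{\a,\b,\g} J_{k-1,n-1}^{\a+1,\b+1,\g} = \sum_{k=0}^{n-1} \wh f_{k+1,n}^{\a,\b,\g} J_{k,n-1}^{\a+1,\b+1,\g}$. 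On the other hand, by \eqref{eq:dproj} this equals $\proj_{n-1}^{\a+1,\b+1,\g} \partial_3 f = \sum_{k=0}^{n-1} \wh{\partial_3 f}_{k,n-1}^{\a+1,\b+1,\g} J_{k,n-1}^{\a+1,\b+1,\g}$. Since $\{J_{k,n-1}^{\a+1,\b+1,\g}: 0 \le k \le n-1\}$ is a basis of $\CV_{n-1}(\varpi_{\a+1,\b+1,\g})$ (uniqueness of coefficients in an orthogonal expansion), comparing coefficients gives the third identity.

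For the first two identities I would proceed the same way but now the derivative formulas in \eqref{eq:diff1J} contribute two terms each. Using $\partial_x J_{k,n}^{\a,\b,\g} = -a_{k,n}^{\a,\b} J_{k-1,n-1}^{\a+1,\b,\g+1} - J_{k,n-1}^{\a+1,\b,\g+1}$, one computes
\begin{align*}
 \partial_1 \proj_n^{\a,\b,\g} f &= \sum_{k=0}^n \wh f_{k,n}^{\a,\b,\g}\left(-a_{k,n}^{\a,\b} J_{k-1,n-1}^{\a+1,\b,\g+1} - J_{k,n-1}^{\a+1,\b,\g+1}\right)\\
  &= \sum_{k=0}^{n-1}\left(-a_{k+1,n}^{\a,\b}\wh f_{k+1,n}^{\a,\b,\g} - \wh f_{k,n}^{\a,\b,\g}\right) J_{k,n-1}^{\a+1,\b,\g+1},
\end{align*}
where I have reindexed the first sum ($k \mapsto k+1$) and used $J_{-1,\cdot} = 0$ and $J_{n,n-1} = 0$. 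Matching this against $\proj_{n-1}^{\a+1,\b,\g+1} \partial_1 f = \sum_{k=0}^{n-1} \wh{\partial_1 f}_{k,n-1}^{\a+1,\b,\g+1} J_{k,n-1}^{\a+1,\b,\g+1}$ from \eqref{eq:dproj} and comparing coefficients yields the first identity. The second identity follows identically from the $\partial_y$ formula in \eqref{eq:diff1J}, which carries the coefficient $a_{k,n}^{\b,\a}$ with a sign change on the off-diagonal term.

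One technical point to address is the range of $k$ and the boundary terms. The derivative formula for $\partial_x$ in \eqref{eq:diff1J} holds under conditions (a) or (b) of Proposition~\ref{prop:partialJ}; for $\a,\b > -1$ condition (a) is satisfied for all $0 \le k \le n$ (since $2k+\a+\b > -2$ rules out the degenerate denominators, and $k+\a+\b+1 > -1$), so the computation above is valid term by term. The extremal cases $k=0$ (where $J_{-1,n-1} := 0$ by convention) and $k = n$ (where $J_{n,n-1} := 0$) are handled by the conventions already in place after \eqref{eq:J-basis}, and the definition \eqref{eq:akn-bkn2} of $a_{k,n}^{-k,-k}$ is not needed here since $\a,\b > -1$. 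I do not expect a genuine obstacle: the only thing to be careful about is bookkeeping the index shift and confirming that no boundary contributions appear, which is immediate because $\partial_z$, $\partial_x$, $\partial_y$ each send $\CV_n$ into $\CV_{n-1}$ of the shifted weight and the three shifted bases are genuine bases. The statement is essentially a restatement of \eqref{eq:dproj} once the explicit derivative formulas are inserted.
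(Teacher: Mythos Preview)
Your proposal is correct and is essentially identical to the paper's own proof: differentiate the expansion $\proj_n^{\a,\b,\g} f = \sum_k \wh f_{k,n}^{\a,\b,\g} J_{k,n}^{\a,\b,\g}$ using the formulas \eqref{eq:diff1J}, reindex, and match coefficients against the right-hand side of the commuting relations \eqref{eq:dproj}. The only cosmetic difference is that the paper begins with the $\partial_1$ case and notes the other two follow similarly, whereas you start with $\partial_3$; your additional remarks about the validity of \eqref{eq:diff1J} for $\a,\b>-1$ and the boundary conventions $J_{-1,\cdot}=J_{n,n-1}=0$ are helpful but not required beyond what the paper assumes.
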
 
 
\begin{proof}
We use the expression of $\proj_n^{\a,\b,\g} f$ in terms of $J_{k,n}^{\a,\b,\g}$. By \eqref{eq:diff1J}, 
\begin{align*}
  \partial_1 \proj_n^{\a,\b,\g} f & = \sum_{k=0}^n \wh f_{k,n}^{\a,\b,\g} \left(- a_{k,n}^{\a,\b} J_{k-1,n-1}^{\a+1,\b,\g+1}
     -   J_{k,n-1}^{\a+1,\b,\g+1} \right) \\
      & = \sum_{k=0}^{n-1}  \left(- a_{k+1,n}^{\a,\b} \wh f_{k+1,n}^{\a,\b,\g} 
             -   \wh f_{k,n}^{\a,\b,\g} \right) J_{k,n-1}^{\a+1,\b,\g+1}.  
\end{align*}
By the first identity in \eqref{eq:dproj}, this gives the first identity in \eqref{eq:whf-whD}. Considering $\partial_2 \proj_n^{\a,\b,\g} f $
and using the second identity in \eqref{eq:dproj}, we can deduce likewise the second identity in \eqref{eq:whf-whD}. Finally, the 
third identity follows from taking $\partial_3$ derivative of $\proj_n^{\a,\b,\g}$ and using the third identity in \eqref{eq:dproj}.
\end{proof}

\begin{defn}
Let $\CW_2^{r}(\varpi_{\a,\b,\g})$ be the Sobolev space that consists of those functions $f \in L^2(\varpi_{\a,\b,\g})$ such that
$$
\partial^{\mathbf{m}} f \in L^2(\omega_{\a+ \wh m_1, \b+ \wh m_2,\g+ \wh m_3}), \qquad |\mathbf{m}| \le r, 
$$  
where $\wh m_1 = m_1+m_3$, $\wh m_2 = m_2+m_3$ and $\wh m_3 = m_1+m_2$ for $\mathbf{m} \in \NN_0^3$.
\end{defn}

We now state a theorem on approximation for functions in $W_2^1(\varpi_{\a,\b,\g})$. Recall that for $\a,\b,\g > -1$ and $f \in L^2(\varpi_{\a,\b,\g})$, we denote by $E_n(f)_{\a,\b,\g}$, in \eqref{eq:best-approx},
the error of the best approximation of $f$ by polynomials of degree at most $n$. 

\begin{thm} \label{thm:est-aag}
Let $a, \b,\g > -1$. If $f \in \CW_2^1(\varpi_{\a,\a,\g})$, then 
\begin{align} \label{eq:est-aag}
  E_n(f)_{\a,\b,\g} \le   \frac{c_{\a,\b,\g}}{n} & \left[E_{n-1} (\partial_1 f)_{\a+1,\b,\g+1} \right.  \\
          & \left. +E_{n-1} (\partial_2 f)_{\a,\b+1,\g+1} +E_{n-1} (\partial_3 f)_{\a+1,\b+1,\g} \right ]. \notag
\end{align}
\end{thm}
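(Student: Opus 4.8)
The plan is to use Parseval's identity with respect to the Fourier expansion in the orthogonal polynomials $J_{k,n}^{\a,\b,\g}$ together with the commuting relations expressed coordinate-wise in Proposition \ref{cor:whf-whDf}. Since $S_n^{\a,\b,\g}f$ is the best $L^2(\varpi_{\a,\b,\g})$ approximant by \eqref{eq:best-approx}, it suffices to bound $\|f - S_n^{\a,\b,\g}f\|_{\a,\b,\g}$ from above. By Parseval,
\begin{equation*}
  E_n(f)_{\a,\b,\g}^2 = \sum_{m=n+1}^\infty \sum_{k=0}^m |\wh f_{k,m}^{\a,\b,\g}|^2 h_{k,m}^{\a,\b,\g},
\end{equation*}
so the whole problem reduces to estimating each coefficient $|\wh f_{k,m}^{\a,\b,\g}|$ in terms of the Fourier coefficients of $\partial_1 f$, $\partial_2 f$, $\partial_3 f$.

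First I would solve the linear system \eqref{eq:whf-whD} for $\wh f_{k,m}^{\a,\b,\g}$. The third identity already gives $\wh f_{k+1,m}^{\a,\b,\g} = \wh{\partial_3 f}_{k,m-1}^{\a+1,\b+1,\g}$ directly, which handles all coefficients with index $k \ge 1$. For $k=0$ one needs instead a combination of the first two identities: eliminating $\wh f_{1,m}^{\a,\b,\g}$ between them yields $\wh f_{0,m}^{\a,\b,\g}$ as an explicit linear combination of $\wh{\partial_1 f}_{0,m-1}^{\a+1,\b,\g+1}$ and $\wh{\partial_2 f}_{0,m-1}^{\a,\b+1,\g+1}$, with coefficients that are bounded ratios of the $a_{k,m}^{\a,\b}$ (this is exactly the computation sketched in the commented-out passage after Proposition \ref{cor:whf-whDf}). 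Thus for every $(k,m)$ the coefficient $\wh f_{k,m}^{\a,\b,\g}$ is controlled by at most one or two Fourier coefficients of the three first derivatives, with explicit multipliers.

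Next I would insert these expressions into the Parseval sum and compare the norm constants. The key point is a growth estimate of the type $h_{k,m}^{\a,\b,\g} / h_{k-1,m-1}^{\a+1,\b+1,\g} \sim m^{-2}$ (and the analogous ratios for the $\partial_1,\partial_2$ cases), which follows from the explicit formula \eqref{eq:hkn} for $h_{k,n}^{\a,\b,\g}$ after cancelling Gamma factors; one checks the proportionality constants depend only on $\a,\b,\g$. Combining this with $|\wh f_{k,m}^{\a,\b,\g}|^2 \le c(|\wh{\partial_1 f}_{\cdot,m-1}^{\a+1,\b,\g+1}|^2 + |\wh{\partial_2 f}_{\cdot,m-1}^{\a,\b+1,\g+1}|^2 + |\wh{\partial_3 f}_{\cdot,m-1}^{\a+1,\b+1,\g}|^2)$ gives
\begin{equation*}
 E_n(f)_{\a,\b,\g}^2 \le \frac{c_{\a,\b,\g}}{n^2}\sum_{m=n}^\infty \sum_{k} \Big( |\wh{\partial_1 f}_{k,m}^{\a+1,\b,\g+1}|^2 h_{k,m}^{\a+1,\b,\g+1} + \cdots \Big),
\end{equation*}
and each inner sum is, again by Parseval, exactly $E_{n-1}(\partial_i f)^2$ in the appropriate weighted norm. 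Taking square roots and using $\sqrt{A+B+C}\le \sqrt A+\sqrt B+\sqrt C$ yields \eqref{eq:est-aag}.

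\textbf{Main obstacle.} The routine but delicate part is the bookkeeping in the norm-ratio estimates: one must verify that the multipliers coming from solving \eqref{eq:whf-whD} (the $a_{k,m}^{\a,\b}$ factors) together with the ratios of the $h$'s are uniformly $O(m^{-1})$ in the $\ell^2$ sense, uniformly over $0\le k\le m$, including the endpoint cases $k=0$ and $k=m$ where $a_{k,m}^{\a,\b}$ degenerates (cf. \eqref{eq:akn-bkn2}). A secondary point is the reindexing when passing from sums over $(k,m)$ for $f$ to sums over $(k,m-1)$ for the derivatives, which shifts the summation range from $m\ge n+1$ to $m\ge n$, accounting for the $E_{n-1}$ rather than $E_n$ on the right-hand side. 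Neither step is conceptually hard, but both require care to keep the constants depending only on $\a,\b,\g$; this is the template that the later, harder sections will imitate for negative parameters.
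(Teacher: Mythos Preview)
Your overall strategy --- Parseval for $E_n(f)_{\a,\b,\g}^2$, then the coefficient relations \eqref{eq:whf-whD}, then Parseval again for the derivatives --- is exactly the paper's, but there is a real gap in the norm-ratio step. You claim $h_{k,m}^{\a,\b,\g}/h_{k-1,m-1}^{\a+1,\b+1,\g}\sim m^{-2}$, but from \eqref{eq:hkn} this ratio equals $1/\bigl(k(k+\a+\b+1)\bigr)$, which is $\sim k^{-2}$, not $\sim m^{-2}$. For small $k$ (say $k=1$) it is a constant in $m$, so using the $\partial_3$ identity alone for all $k\ge 1$ cannot produce the factor $n^{-1}$. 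Symmetrically, the $\partial_1,\partial_2$ route gives ratios like
\[
\frac{h_{k,m}^{\a,\b,\g}}{h_{k,m-1}^{\a+1,\b,\g+1}}
=\frac{(2k+\a+\b+1)(2k+\a+\b+2)}{(k+\a+1)(k+\a+\b+1)(m-k)(m+k+\a+\b+\g+2)},
\]
which is $\sim m^{-2}$ only when $m-k\sim m$, i.e.\ for $k$ bounded away from $m$. Neither route works uniformly in $k$.

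The fix, and this is the one idea your outline misses, is to split the inner sum at $k\approx m/2$: for $0\le k\le m/2$ use the $\partial_1,\partial_2$ combination (your $k=0$ argument, which in fact works for all such $k$ because $m-k\sim m$), and for $k\ge m/2$ use the $\partial_3$ identity (where $k\sim m$ makes $k^{-2}\sim m^{-2}$). With that split your proof goes through; the endpoint and reindexing issues you flag are then indeed routine.
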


\begin{proof}
By the definition of the partial sum operator, it is easy to see that 
\begin{align} \label{eq:En}
\left[ E_n(f)_{\a,\b,\g} \right]^2 = \left \|f-S_n^{\a,\b,\g}f \right \|_{\a,\b,\g}^2 = \sum_{m=n+1}^\infty \sum_{k=0}^m 
      \left |\wh f_{k,m}^{\a,\b,\g} \right |^2 h_{k,m}^{\a,\b,\g}. 
\end{align}
Furthermore, applying the first two identities of \eqref{eq:whf-whD}, we obtain likewise 
\begin{align}\label{eq:EnD1+D2}
\begin{split}
 \left[ E_{n-1}(\partial_1 f)_{\a+1,\b,\g+1} \right]^2 &  = \sum_{m=n}^\infty \sum_{k=0}^m
    \left |a_{k+1,m+1}^{\a,\b} \wh f_{k+1,m+1}^{\a,\b,\g} + \wh f_{k,m+1}^{\a,\b,\g} \right |^2 h_{k,m}^{\a+1,\b,\g+1}, \\
 \left[ E_{n-1}(\partial_2 f)_{\a,\b+1,\g+1} \right]^2 &  = \sum_{m=n}^\infty \sum_{k=0}^m
    \left |a_{k+1,m+1}^{\b,\a} \wh f_{k+1,m+1}^{\a,\b,\g} - \wh f_{k,m+1}^{\a,\b,\g} \right |^2 h_{k,m}^{\a,\b+1,\g+1}.  
\end{split}
\end{align}
Moreover,  applying the third identity of \eqref{eq:whf-whD}, we also obtain
\begin{align} \label{eq:EnD3}
 \left[ E_{n-1}(\partial_3 f)_{\a+1,\b+1,\g} \right]^2 &  = \sum_{m=n}^\infty \sum_{k=0}^m
    \left |\wh f_{k+1,m+1}^{\a+1,\b+1,\g} \right |^2 h_{k,m}^{\a+1,\b+1,\g}.
\end{align} 

We now use \eqref{eq:EnD1+D2} and \eqref{eq:EnD3} to bound \eqref{eq:En}. If $0 \le k \le m/2$, we have
$$
 \frac{h_{k,m}^{\a,\b,\g}}{h_{k,m-1}^{\a+1,\b,\g+1}} = \frac{(2k + \a + \b+1)(2k+\a+\b+2)}{(k+\a+1)(k + \a +\b + 1) (m-k) (m+k+ \a +\b+ \g+2)} \sim \frac{1}{m^2}
$$
by \eqref{eq:hkn}; similarly, ${h_{k,m}^{\a,\b,\g}}/{h_{k,m-1}^{\a,\b+1,\g+1}} \sim m^{-2}$. Furthermore, by \eqref{eq:akn-bkn},
we see that 
$$
 \wh f_{k,m}^{\a,\b,\g} = \frac{k+\a+1}{2k+\a+\b+2}(a_{k+1,m}^{\a,\b}  \wh f_{k+1,m}^{\a,\b,\g} + \wh f_{k,m}^{\a,\b,\g})
     -  \frac{k+\b+1}{2k+\a+\b+2}(a_{k+1,m}^{\b,\a}  \wh f_{k+1,m}^{\a,\b,\g} - \wh f_{k,m}^{\a,\b,\g})
$$
so that 
$$
 \left |\wh f_{k,m}^{\a,\b,\g} \right |^2 \le 2 \left | a_{k+1,m}^{\a,\b}  \wh f_{k+1,m}^{\a,\b,\g} + \wh f_{k,m}^{\a,\b,\g}\right|^2
  + \left| a_{k+1,m}^{\b,\a}  \wh f_{k+1,m}^{\a,\b,\g} - \wh f_{k,m}^{\a,\b,\g} \right|^2.
$$
Consequently, it follows from \eqref{eq:EnD1+D2} that 
\begin{align*}
\sum_{m=n+1}^\infty \sum_{k=0}^{\lfloor \frac{m}{2} \rfloor } \left |\wh f_{k,m}^{\a,\b,\g} \right |^2 h_{k,m}^{\a,\b,\g}
& \le \frac{c}{n^2} \sum_{m=n+1}^\infty \sum_{k=0}^{m} \left |\wh f_{k,m}^{\a,\b,\g} \right |^2 \min\{h_{k,m-1}^{\a+1,\b,\g+1},
  h_{k,m-1}^{\a,\b+1,\g+1} \} \\
& \le \frac{c}{n^2} \left( \left[ E_{n-1}(\partial_1 f)_{\a+1,\b,\g+1} \right]^2 +  \left[ E_{n-1}(\partial_2 f)_{\a,\b+1,\g+1} \right]^2 \right).
\end{align*}
If $k \ge m/2$, then $k \sim m$ so that, by \eqref{eq:hkn},
$$
 \frac{h_{k,m}^{\a,\b,\g}}{h_{k-1,m-1}^{\a+1,\b+1,\g}} = \frac{1}{k(k+\a+ \b+1)} \sim \frac{1}{m^2},
$$
which implies that the corresponding portion of the sum in \eqref{eq:En} is bounded by $c n^{-2}$ multiple of 
$\left[ E_{n-1}(\partial_3 f)_{\a+1,\b+1,\g} \right]^2$ by \eqref{eq:EnD3}. This completes the proof.
\end{proof}

Iterating the estimate \eqref{eq:est-aag}, we obtain the following estimate for $f \in W_2^r$.  

\begin{cor}
Let $\a, \b,\g> -1$. If $f \in \CW_2^r(\varpi_{\a,\b,\g})$, then 
\begin{equation}\label{eq:cor11}
  E_n(f)_{\a,\b,\g} \le \frac{c_{\a,\b,\g}}{n^r} 
    \sum_{|\mathbf{m}| = r} E_{n-r} (\partial^{\mathbf{m}} f)_{\a+\wh m_1,\b+\wh m_2,\g+\wh m_3}.
\end{equation}
\end{cor}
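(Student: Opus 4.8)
The plan is to prove \eqref{eq:cor11} by iterating the single-step estimate \eqref{eq:est-aag} exactly $r$ times, much as the paper does with its one-dimensional analogue. The essential observation is that \thmref{thm:est-aag} tells us how the error of approximation of $f$ in $L^2(\varpi_{\a,\b,\g})$ is controlled by the errors of approximation of its three first partial derivatives $\partial_1 f$, $\partial_2 f$, $\partial_3 f$, at the shifted parameters $(\a+1,\b,\g+1)$, $(\a,\b+1,\g+1)$, and $(\a+1,\b+1,\g)$ respectively, with a gain of a factor $\tfrac1n$. Each of those three parameter shifts is precisely the rule $\brho \mapsto \brho + \hat{\mathbf e}_i$ where the shift vector for $\partial_i$ is $(1,0,1)$, $(0,1,1)$, or $(1,1,0)$ — which is exactly the pattern encoded by $\wh m_1 = m_1+m_3$, $\wh m_2 = m_2+m_3$, $\wh m_3 = m_1+m_2$ in the definition of $\CW_2^r(\varpi_{\a,\b,\g})$. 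So the iteration is internally consistent: applying the one-step estimate to a derivative $\partial^{\mathbf m} f$ with $|\mathbf m| = j$ at parameters $(\a+\wh m_1, \b + \wh m_2, \g + \wh m_3)$ produces derivatives $\partial^{\mathbf m + \mathbf e_i} f$ at parameters whose shifts match $\widehat{\mathbf m + \mathbf e_i}$.

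The steps, in order, are as follows. First I would verify that $\CW_2^r(\varpi_{\a,\b,\g})$ is closed under the operation "take a first partial derivative and increment the parameters accordingly": if $f \in \CW_2^r(\varpi_{\a,\b,\g})$ then $\partial_i f \in \CW_2^{r-1}(\varpi_{\a + (\hat{\mathbf e}_i)_1, \b + (\hat{\mathbf e}_i)_2, \g+(\hat{\mathbf e}_i)_3})$ for $i=1,2,3$, which is immediate from the definition since $\partial^{\mathbf m}(\partial_i f) = \partial^{\mathbf m + \mathbf e_i} f$. This guarantees that at every stage the hypotheses of \thmref{thm:est-aag} are met (all the relevant parameters stay $>-1$ since we only increase them). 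Second, I would set up the induction on $r$: the base case $r=1$ is \thmref{thm:est-aag} itself. For the inductive step, apply \thmref{thm:est-aag} once to bound $E_n(f)_{\a,\b,\g}$ by $\tfrac{c}{n}$ times the sum of $E_{n-1}(\partial_i f)$ at the three shifted parameter triples, then apply the inductive hypothesis (valid for $r-1$) to each of $E_{n-1}(\partial_i f)_{\cdots}$, obtaining a bound by $\tfrac{c}{n}\cdot\tfrac{c}{(n-1)^{r-1}}$ times a sum of $E_{n-1-(r-1)}(\partial^{\mathbf m'}(\partial_i f))$ over $|\mathbf m'| = r-1$. Third, I would collect terms: each $\partial^{\mathbf m'}\partial_i f = \partial^{\mathbf m}f$ with $|\mathbf m| = r$, the parameter triples line up as $(\a+\wh m_1,\b+\wh m_2,\g+\wh m_3)$ by the consistency remark above, and $E_{n-r}$ replaces $E_{n-1-(r-1)}$. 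Finally, absorb $(n-1)^{-(r-1)} \le c\, n^{-(r-1)}$ (valid for $n \ge r$, say, with $c$ depending on $r$) into the constant to get the clean $n^{-r}$, noting that each fixed multi-index $\mathbf m$ with $|\mathbf m|=r$ is reached from only finitely many paths (at most $r!$), so the total constant $c_{\a,\b,\g}$ depends only on $r$ and the starting parameters.

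The only genuinely careful bookkeeping — and the step most prone to error — is confirming that the parameter-shift arithmetic closes up correctly across all $r$ iterations, i.e. that the map $\mathbf m \mapsto (\wh m_1, \wh m_2, \wh m_3)$ is additive in the sense $\widehat{\mathbf m + \mathbf e_i} = \wh{\mathbf m} + \hat{\mathbf e}_i$, and hence that repeatedly applying the three shift vectors $(1,0,1),(0,1,1),(1,1,0)$ in any order to get from $\mathbf 0$ to $\mathbf m$ always lands on the single triple $(\wh m_1,\wh m_2,\wh m_3)$ regardless of the order of differentiation. This is a short linear-algebra check (the three shift vectors are the rows of the matrix $\mathbf 1\mathbf 1^{\mathsf T} - I$ on $\ZZ^3$, which is exactly the hat map), but it is where a sign or index slip would propagate. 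Everything else — the geometric decay $\prod 1/n$, the finiteness of the number of multi-indices, the monotonicity $E_{n-r} \le E_{n-j}$ implicitly used when aligning the truncation levels, and the harmless replacement $(n-j)^{-1} \sim n^{-1}$ — is routine. I would also remark that \eqref{eq:cor11} then yields the weaker but often-quoted form with $\|\partial^{\mathbf m} f\|_{L^2(\varpi_{\cdots})}$ on the right, since $E_{n-r}(g)_{\a,\b,\g} \le \|g\|_{L^2(\varpi_{\a,\b,\g})}$ trivially.
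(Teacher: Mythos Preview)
Your proposal is correct and follows exactly the approach the paper takes: the paper simply states that the corollary follows by ``iterating the estimate \eqref{eq:est-aag}'' without further detail, and your argument spells out precisely this iteration with the necessary bookkeeping on parameter shifts. The additivity check on the hat map that you flag as the delicate point is indeed the only thing requiring care, and you have it right.
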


It is easy to see that $\CW_2^{2r}(\varpi_{\a,\b,\g})$ coincides with $\CW_2^{2r}$ in the Definition \ref{def:1.1}. 
Recall the error $\mathcal{E}_{n-2r}^{(2r)} (f)$ of approximation in $\CW_2^{2r}$, defined in \eqref{eq:calEn}. 
Iterating the estimate in \eqref{eq:cor11} gives the following corollary: 

\begin{cor} \label{cor:calE-iterate}
Let $s$ and $r$ be two positive integers, $s \le r$. For $f \in \CW_2^{2r}$, 
$$
   \mathcal{E}_{n-2s}^{(2s)} (f) \le c\, n^{2s-2r}  \mathcal{E}_{n-2r}^{(2r)}.
$$
\end{cor}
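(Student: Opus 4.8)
The plan is to obtain the bound by iterating the single-step estimate \eqref{eq:cor11}, applied term by term to the finitely many derivatives that constitute $\mathcal{E}_{n-2s}^{(2s)}(f)$. Fix a multi-index $\mathbf m$ with $|\mathbf m|=2s$ and $0\le m_i\le s$, and set $(\a,\b,\g)=(s-m_2,s-m_1,s-m_3)$, so that $\a,\b,\g\ge 0>-1$. First I would check that $f\in\CW_2^{2r}$ forces $\partial^{\mathbf m}f\in\CW_2^{2(r-s)}(\varpi_{\a,\b,\g})$; this is exactly where the identification of $\CW_2^{2r}$ with $\CW_2^{2r}(\varpi_{0,0,0})$ recorded just above the statement enters, together with the elementary observation that, since $x,y,1-x-y\in[0,1]$ on $\triangle$, one has $\varpi_{\a',\b',\g'}\le\varpi_{\a,\b,\g}$, hence $\|g\|_{L^2(\varpi_{\a',\b',\g'})}\le\|g\|_{L^2(\varpi_{\a,\b,\g})}$ and $E_n(g)_{\a',\b',\g'}\le E_n(g)_{\a,\b,\g}$, whenever $\a\le\a'$, $\b\le\b'$, $\g\le\g'$.

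Granting this, \eqref{eq:cor11} applied to $g=\partial^{\mathbf m}f$, with $n$ replaced by $n-2s$ and with order $2(r-s)$ in place of $r$, gives
\begin{equation*}
 E_{n-2s}(\partial^{\mathbf m}f)_{s-m_2,s-m_1,s-m_3}\ \le\ \frac{c}{(n-2s)^{2(r-s)}}\sum_{|\mathbf k|=2(r-s)}E_{n-2r}\big(\partial^{\mathbf m+\mathbf k}f\big)_{(s-m_2)+\wh k_1,\ (s-m_1)+\wh k_2,\ (s-m_3)+\wh k_3},
\end{equation*}
where I have used $\partial^{\mathbf k}\partial^{\mathbf m}=\partial^{\mathbf m+\mathbf k}$ (the operators $\partial_1,\partial_2,\partial_3$ commute) and $n-2s-2(r-s)=n-2r$. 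Summing over $\mathbf m$, the task is to recognize the right-hand side as a multiple of $\mathcal{E}_{n-2r}^{(2r)}(f)$. Put $\mathbf j:=\mathbf m+\mathbf k$, so $|\mathbf j|=2r$ and $0\le j_i\le 2r-s$. A short computation gives $(s-m_2)+\wh k_1=(r-j_2)+(r-s)$, and similarly for the other two slots; equivalently the weight appearing on the right equals $\varpi_{r-j_2,r-j_1,r-j_3}\cdot\varpi_{r-s,r-s,r-s}$, which is $\le\varpi_{r-j_2,r-j_1,r-j_3}$. Hence, by the monotonicity recalled above, each term is dominated by $E_{n-2r}(\partial^{\mathbf j}f)_{r-j_2,r-j_1,r-j_3}$, which is one of the summands of $\mathcal{E}_{n-2r}^{(2r)}(f)$ — provided $0\le j_i\le r$.

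It then remains to handle those $\mathbf j$ with a component $j_i>r$; since $|\mathbf j|=2r$, at most one component can do this. For such $\mathbf j$ I would rewrite $\partial^{\mathbf j}$ as a finite linear combination, with bounded coefficients, of operators $\partial^{\mathbf j'}$ with $|\mathbf j'|=2r$ and $0\le j'_i\le r$, by using $\partial_1=\partial_2-\partial_3$ (respectively $\partial_2=\partial_1+\partial_3$ or $\partial_3=\partial_2-\partial_1$) to absorb the excess factors; one checks that every $\mathbf j'$ produced lies in $\{0,\dots,r\}^3$ and that the weight $\varpi_{r-j_2,r-j_1,r-j_3}\cdot\varpi_{r-s,r-s,r-s}$ still has each exponent $\ge$ the corresponding exponent of $\varpi_{r-j'_2,r-j'_1,r-j'_3}$, so once more the monotonicity gives control by summands of $\mathcal{E}_{n-2r}^{(2r)}(f)$. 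As only finitely many $\mathbf m$, $\mathbf k$, $\mathbf j'$ occur and all summands are non-negative, this yields $\mathcal{E}_{n-2s}^{(2s)}(f)\le c\,(n-2s)^{-2(r-s)}\,\mathcal{E}_{n-2r}^{(2r)}(f)$. Finally, for $s<r$ (the case $s=r$ being trivial) the map $n\mapsto n/(n-2s)$ is decreasing, hence bounded by $r/(r-s)$ for $n\ge 2r$, so $(n-2s)^{-2(r-s)}\le c_{r,s}\,n^{-2(r-s)}=c_{r,s}\,n^{2s-2r}$, which gives the stated inequality.

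The routine ingredients are the commutation $\partial^{\mathbf k}\partial^{\mathbf m}=\partial^{\mathbf m+\mathbf k}$ and the last elementary estimate in $n$. The substantive part — and the step I expect to be the main obstacle — is the parameter bookkeeping at the two ends of the argument: checking that $f\in\CW_2^{2r}$ really does put $\partial^{\mathbf m}f$ into the weighted Sobolev space required to invoke \eqref{eq:cor11} (so one genuinely needs the equivalence of the various $\CW$-spaces and the monotonicity of $L^2(\varpi_{\a,\b,\g})$ in the parameters), and verifying that after the iteration every weight that arises dominates the canonical weight $\varpi_{r-j_2,r-j_1,r-j_3}$, which in turn forces the re-expansion of $\partial^{\mathbf j}$ via $\partial_3=\partial_2-\partial_1$ whenever a multi-index overshoots the box $\{0,\dots,r\}^3$.
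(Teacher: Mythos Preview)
Your approach is exactly what the paper intends: it states only that the corollary follows by ``iterating the estimate in \eqref{eq:cor11}'', and you spell out precisely that iteration, including the necessary bookkeeping with the weight exponents and the re-expansion of $\partial^{\mathbf j}$ via $\partial_1=\partial_2-\partial_3$ (and cyclic variants) whenever a component of $\mathbf j$ overshoots $r$. That part of your argument is correct; in particular, the check that after re-expansion the arising exponents $(r-j_i)+(r-s)$ still dominate $r-j_i'$ componentwise is valid, so your use of the monotonicity $E_n(g)_{\a',\b',\g'}\le E_n(g)_{\a,\b,\g}$ for $\a'\ge\a$, $\b'\ge\b$, $\g'\ge\g$ there is in the right direction.

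One caveat on the preliminary function-space check. You propose to derive $\partial^{\mathbf m}f\in\CW_2^{2(r-s)}(\varpi_{s-m_2,s-m_1,s-m_3})$ from $f\in\CW_2^{2r}=\CW_2^{2r}(\varpi_{0,0,0})$ by weight monotonicity, but the direction is wrong: with $\mathbf p=\mathbf m+\mathbf k$, one computes $(s-m_2)+\wh k_1=\wh p_1-s$, so the exponent you need is \emph{smaller} than the exponent $\wh p_1$ that $\CW_2^{2r}(\varpi_{0,0,0})$ supplies, and hence the corresponding weighted $L^2$ space is \emph{more} restrictive, not less. The paper's one-line proof does not address this either. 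In practice it causes no trouble: the Parseval-based proof of \eqref{eq:est-aag} (and its iterate \eqref{eq:cor11}) only needs the quantities that appear to be finite, and your final bound by $\mathcal E^{(2r)}_{n-2r}(f)$ secures exactly that; alternatively one can argue by density of polynomials. But monotonicity alone does not close this gap.
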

 
\section{Approximation by $S_n^{\a,\b,-1} f$ and its permutations}
\label{sect12}
\setcounter{equation}{0}

With respect to the inner product $\la \cdot, \cdot\ra_{\a,\b,-1}^J$ discussed in Section 5, we consider approximation 
behavior of the corresponding partial sum operators. Let $S_n^{\a,\b,-1}$ be the $n$-th partial sum operator of $f$ 
defined by 
$$
S_n^{\a,\b,-1} f := \sum_{k=0}^n \proj_k^{\a,\b,-1} f = \sum_{m=0}^n \sum_{k=0}^m \wh f_{k,m}^{\a,\b,-1} J_{k,m}^{\a,\b,-1}, 
$$
where $\wh f_{k,m}^{\a,\b,-1}$ is defined as in \eqref{eq:proj-ab1}.  

\begin{thm}\label{thm:norm-normD}
For $f \in \CW_2^1(\varpi_{\a,\b,0})$ and $n=1,2,3,...$, 
\begin{align}\label{eq:norm-normD}
 \left \|f-S_n^{\a,\b,-1}f \right\|_{\a,\b,0} \le  \frac{ c_{\a,\b}}{n}
  \left [ E_{n-1}( \partial_1 f)_{\a+1,\b,0} +   E_{n-1}( \partial_2 f)_{\a,\b+1,0} \right].  
\end{align}
\end{thm}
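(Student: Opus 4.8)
\textbf{Proof proposal for Theorem~\ref{thm:norm-normD}.}
The plan is to mimic the structure of the proof of Theorem~\ref{thm:est-aag}, but working with the Sobolev coefficients $\wh f_{k,m}^{\a,\b,-1}$ and the basis $\{J_{k,m}^{\a,\b,-1}\}$ rather than the ordinary coefficients. First I would write, using the orthogonality of $\{J_{k,m}^{\a,\b,-1}\}$ with respect to $\la\cdot,\cdot\ra_{\a,\b,-1}^J$ (Proposition~\ref{prop:00-1}) together with the relations \eqref{eq:J+J-} identifying $J_{k,m}^{\a,\b,-1}$, for $0\le k\le m-1$, as a constant multiple of $(1-x-y)J_{k,m-1}^{\a,\b,1}$, a formula for $\|f-S_n^{\a,\b,-1}f\|_{\a,\b,0}^2$ as a series in $|\wh f_{k,m}^{\a,\b,-1}|^2$ times the $L^2(\varpi_{\a,\b,0})$-norms of the $J_{k,m}^{\a,\b,-1}$. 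Concretely, for $0\le k\le m-1$ this norm is a constant multiple of $h_{k,m-1}^{\a,\b,1}$ (by \eqref{eq:J+J-} and \eqref{eq:hkn}), while the $k=m$ term is the norm of $(x+y)^mJ_m^{\a,\b}((y-x)/(x+y))$, which is comparable to $h_m^{\a,\b}$ times a harmless factor; all these are needed only up to the relation $\sim$.

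Next I would bring in the commuting relation from Theorem~\ref{thm:Dproj00-1}, namely $\partial_1 S_n^{\a,\b,-1}f = S_{n-1}^{\a+1,\b,0}\partial_1 f$ and $\partial_2 S_n^{\a,\b,-1}f = S_{n-1}^{\a,\b+1,0}\partial_2 f$, and the analogue of Proposition~\ref{cor:whf-whDf} adapted to this setting. The point is that, exactly as in \eqref{eq:whf-whD}, differentiating the expansion $\proj_m^{\a,\b,-1}f = \sum_k \wh f_{k,m}^{\a,\b,-1}J_{k,m}^{\a,\b,-1}$ via \eqref{eq:diff1J} and matching with the $L^2(\varpi_{\a+1,\b,0})$- or $L^2(\varpi_{\a,\b+1,0})$-expansion of $\partial_1 f$ or $\partial_2 f$ yields
\[
 \wh{\partial_1 f}_{k,m-1}^{\a+1,\b,0} = -a_{k+1,m}^{\a,\b}\wh f_{k+1,m}^{\a,\b,-1} - \wh f_{k,m}^{\a,\b,-1},
 \qquad
 \wh{\partial_2 f}_{k,m-1}^{\a,\b+1,0} = a_{k+1,m}^{\b,\a}\wh f_{k+1,m}^{\a,\b,-1} - \wh f_{k,m}^{\a,\b,-1}.
\]
Then $\bigl[E_{n-1}(\partial_1 f)_{\a+1,\b,0}\bigr]^2$ and $\bigl[E_{n-1}(\partial_2 f)_{\a,\b+1,0}\bigr]^2$ are, by Parseval, the corresponding series in these combinations times $h_{k,m-1}^{\a+1,\b,0}$, resp. $h_{k,m-1}^{\a,\b+1,0}$. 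As in Theorem~\ref{thm:est-aag}, from the two displayed identities one solves for $\wh f_{k,m}^{\a,\b,-1}$ as a linear combination (with bounded coefficients, since $(k+\a+1)/(2k+\a+\b+2)$ and $(k+\b+1)/(2k+\a+\b+2)$ are bounded) of $a_{k+1,m}^{\a,\b}\wh f_{k+1,m}^{\a,\b,-1}+\wh f_{k,m}^{\a,\b,-1}$ and $a_{k+1,m}^{\b,\a}\wh f_{k+1,m}^{\a,\b,-1}-\wh f_{k,m}^{\a,\b,-1}$, hence $|\wh f_{k,m}^{\a,\b,-1}|^2$ is bounded by a constant times the sum of the squares of these two combinations.

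Finally I would compare the weights. Using \eqref{eq:hkn} with $\g$ replaced appropriately, one checks $h_{k,m-1}^{\a,\b,1}/h_{k,m-1}^{\a+1,\b,0}\sim m^{-2}$ and $h_{k,m-1}^{\a,\b,1}/h_{k,m-1}^{\a,\b+1,0}\sim m^{-2}$ uniformly in $0\le k\le m-1$ (the gain of $m^{-2}$ comes from raising $\g$ from $0$ to $1$ while lowering $\a$ or $\b$ by $1$, paralleling the computation ${h_{k,m}^{\a,\b,\g}}/{h_{k,m-1}^{\a+1,\b,\g+1}}\sim m^{-2}$ in the proof of Theorem~\ref{thm:est-aag}); the $k=m$ endpoint term, where $J_{m,m}^{\a,\b,-1}$ does not carry the $(1-x-y)$ factor, must be handled separately, but there the polynomial is just $(x+y)^mJ_m^{\a,\b}$ and a direct estimate of its $L^2(\varpi_{\a,\b,0})$-norm against $h_{m-1}^{\a+1,\b,0}$ and $h_{m-1}^{\a,\b+1,0}$ again produces the factor $m^{-2}$. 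Summing over $m\ge n+1$ and $0\le k\le m$, and invoking the elementary inequality $(A+B)^{1/2}\le A^{1/2}+B^{1/2}$ to pass from squared quantities to the stated form, yields \eqref{eq:norm-normD}. I expect the main obstacle to be the bookkeeping at the top index $k=m$: there the three-term structure of $\wh f$ in $k$ breaks down (there is no $\wh f_{m+1,m}$), the norm relation \eqref{eq:J+J-} no longer applies, and one must verify by hand — as is done for $J_{n,n}^{\a,\b,-1}$ in Proposition~\ref{prop:00-1} and for $\wh f_{n,n}$ in the remark after Proposition~\ref{cor:whf-whDf} — that the endpoint still contributes at the correct order $m^{-2}$; everything else is a routine transcription of the argument for Theorem~\ref{thm:est-aag}.
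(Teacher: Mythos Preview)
Your first step does not go through: you propose to write
\[
  \|f-S_n^{\a,\b,-1}f\|_{\a,\b,0}^2 \;=\; \sum_{m>n}\sum_{k=0}^m |\wh f_{k,m}^{\a,\b,-1}|^2\,\|J_{k,m}^{\a,\b,-1}\|_{\a,\b,0}^2,
\]
but this Parseval-type identity would require the system $\{J_{k,m}^{\a,\b,-1}\}$ to be orthogonal in $L^2(\varpi_{\a,\b,0})$, and it is not. Proposition~\ref{prop:00-1} gives orthogonality only with respect to the Sobolev inner product $\la\cdot,\cdot\ra_{\a,\b,-1}^J$, which involves derivatives, not with respect to $\la\cdot,\cdot\ra_{\a,\b,0}$. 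Your identification $J_{k,m}^{\a,\b,-1}=c\,(1-x-y)J_{k,m-1}^{\a,\b,1}$ for $k\le m-1$ does not help here: in $L^2(\varpi_{\a,\b,0})$ the cross inner products become $\la J_{k,m-1}^{\a,\b,1},J_{l,m'-1}^{\a,\b,1}\ra_{\a,\b,2}$, and the $J_{k,m}^{\a,\b,1}$ are orthogonal with respect to $\varpi_{\a,\b,1}$, not $\varpi_{\a,\b,2}$. (For the same reason the individual norm is $\|J_{k,m-1}^{\a,\b,1}\|_{\a,\b,2}^2$, not $h_{k,m-1}^{\a,\b,1}$.) So the diagonal formula you start from is false and the rest of the argument, which is otherwise set up correctly, has no foundation.

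The paper's proof supplies exactly the missing piece: it uses the recursion \eqref{eq:recurJc} with $\g=-1$ to write
\[
  J_{k,m}^{\a,\b,-1}=J_{k,m}^{\a,\b,0}+c_{k,m}^{\a,\b}J_{k,m-1}^{\a,\b,0},
\]
i.e.\ it expands each Sobolev basis element in the genuine $L^2(\varpi_{\a,\b,0})$-orthogonal basis $\{J_{k,m}^{\a,\b,0}\}$. After regrouping, each $J_{k,m}^{\a,\b,0}$ carries the coefficient $\wh f_{k,m}^{\a,\b,-1}+c_{k,m+1}^{\a,\b}\wh f_{k,m+1}^{\a,\b,-1}$, Parseval now applies legitimately, and your remaining steps --- solving for $\wh f_{k,m}^{\a,\b,-1}$ from the $\partial_1,\partial_2$ relations, the ratio estimates on $h_{k,m}$ via \eqref{eq:hkn}, and the separate handling of $k=m$ --- go through essentially as you describe. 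The only new ingredient you were missing is this change of basis via \eqref{eq:recurJc}.
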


\begin{proof}
By the identity \eqref{eq:recurJc} with $\g = -1$, we have 
$$
  J_{k,n}^{\a,\b,-1} (x,y) = J_{k,n}^{\a,\b,0} (x,y)+ c_{k,n}^{\a,\b} J_{k,n-1}^{\a,\b,0} (x,y),
$$
where 
$$
 c_{k,n}^{\a,\b}:= \frac{n+k+\a+\b+1}{(2n+\a+\b)(2n+\a+\b+1)}.
 $$
Hence, we see that  
\begin{align*}
\proj_n^{\a,\b,-1}f(x,y)=   \sum_{k=0}^n \wh f_{k,n}^{\a,\b,-1}  \left(J_{k,n}^{\a,\b,0} (x,y)+ c_{k,n}^{\a,\b}J_{k,n-1}^{\a,\b,0} (x,y)\right).
\end{align*}
Consequently, it follows that 
\begin{align*}
& f - S_n^{\a,\b,-1}f  = \sum_{m=n+1}^\infty \proj_m^{\a,\b,-1}f \\
  &    = \sum_{k=0}^n \wh f_{k,n+1}^{\a,\b,-1}c_{k,n+1}^{\a,\b} J_{k,n}^{\a,\b,0}  
    + \sum_{m=n+1}^\infty  \sum_{k=0}^m
       \left( \wh f_{k,m}^{\a,\b,-1} + \wh f_{k,m+1}^{\a,\b,-1} c_{k,m+1}^{\a,\b}\right )J_{k,m}^{\a,\b,0}.
\end{align*}
Recall that $h_{k,n}^{\a,\b,0}$ denotes the $L^2$ norm of $J_{k,n}^{\a,\b,0}$. We obtain immediately that 
\begin{align*}
 \| f - S_n^{\a,\b,-1}f \|_{\a,\b,0}^2 = &
     \sum_{k=0}^n |\wh f_{k,n+1}^{\a,\b,-1} c_{k,n}^{\a,\b}|^2 h_{k,n}^{\a,\b,0} \\
     & + \sum_{m=n+1}^\infty  \sum_{k=0}^m
       \left | \wh f_{k,m}^{\a,\b,-1} + \wh f_{k,m+1}^{\a,\b,-1}c_{k,m+1}^{\a,\b} \right |^2 h_{k,m}^{\a,\b,0}.
\end{align*}
Using the fact that $(A+B)^2 \le 2 A^2 + 2 B^2$, it follows that 
\begin{align} \label{eq:norm-est}
 \| f - S_n^{\a,\b,-1}f \|_{\a,\b,0}^2  
         \le  &  \, 2  \sum_{m=n}^\infty  \sum_{k=0}^m \Big |\wh f_{k,m+1}^{\a,\b,-1} \Big |^2
             \left ((c_{k,m+1}^{\a,\b} )^2  h_{k,m}^{\a,\b,0} +h_{k,m+1}^{\a,\b,0} \right ) \\
                & + 2 \sum_{m=n+1}^\infty \Big | \wh f_{m,m}^{\a,\b,-1} \Big |^2 h_{m,m}^{\a,\b,0}.\notag 
\end{align}
Now, for $0 \le k \le m-1$,  we deduce from Proposition \ref{cor:whf-whDf} that
$$
  \wh f_{k,m+1}^{\a,\b,-1} = -  \left(\frac{k+\a+1}{2k+\a+\b+2} \wh {\partial_1 f}_{k,m}^{\a+1,\b,0}+
      \frac{k+\b+1}{2k+\a+\b+2} \wh {\partial_2 f}_{k,m}^{\a,\b+1,0} \right). 
$$
Using the expression of $h_{k,n}^{\a,\b,\g}$ in \eqref{eq:hkn}, it is easy to see that 
$h_{k,m+1}^{\a,\b,0}/{h_{k,m}^{\a+1,\b,0}} \le c m^{-2}$ and $h_{k,m+1}^{\a,\b,0}/{h_{k,m}^{\a,\b+1,0}} \le c m^{-2}$,
so that 
$$
 \left |\wh f_{k,m}^{\a,\b,-1}\right |^2 h_{k,m}^{\a,\b,0} \le \frac{c}{m^2}\left(  
    \left |\wh f_{k,m}^{\a+1,\b,0}\right |^2 h_{k,m-1}^{\a+1,\b,0} +\left |\wh f_{k,m}^{\a,\b+1,0}\right |^2 h_{k,m-1}^{\a,\b+1,0} \right).
$$
Similarly, $(c_{k,m+1}^{\a,\b})^2 h_{k,m}^{\a,\b,0}/h_{k,m}^{\a+1,\b,0} \le c m^{-2}$ and 
$(c_{k,m+1}^{\a,\b})^2 h_{k,m}^{\a,\b,0}/h_{k,m}^{\a,\b+1,0}\le c m^{-2}$, so that  
$$
 |\wh f_{k,m+1}^{\a,\b,-1}|^2 (c_{k,m+1}^{\a,\b})^2 h_{k,m}^{\a,\b,0} \le c m^{-2} 
   \left( |\wh f_{k,m}^{\a+1,\b,0}|^2 h_{k,m}^{\a+1,\b,0} + |\wh f_{k,m}^{\a,\b+1,0}|^2 h_{k,m-1}^{\a,\b+1,0} \right)
$$
Finally, we can also deduce from \eqref{eq:whf-whD} that 
$$
 \wh f_{n,n}^{\a,\b,-\g} = - \left( 
    \wh {\partial_1 f}_{n-1,n-1}^{\a+1,\b,\g+1} - \wh {\partial_2 f}_{n-1,n-1}^{\a,\b+1,\g+1} \right)
$$
and from \eqref{eq:hkn} that ${h_{m,m}^{\a,\b,0}}/ {h_{m-1,m-1}^{\a+1,\b,0}} \le c m^{-2}$. Together, they show that
\begin{align*}
  \Big| \wh f_{m+1,m+1}^{\a,\b,-1} & \Big|^2 h_{m+1,m+1}^{\a,\b,0} \le \frac{c}{m^2} \left ( \Big | \wh {\partial_1 f}_{m,m}^{\a+1,\b,0}\Big |^2 
     h_{m,m}^{\a,\b+1,0} +   \Big | \wh {\partial_2 f}_{m,m}^{\a,\b+1,0} \Big|^2 h_{m,m}^{\a,\b+1,0} \right).
\end{align*}

Substituting these inequalities into \eqref{eq:norm-est} proves the estimate 
\begin{align*}
 \|f-S_n^{\a,\b,-1}f \|_{\a,\b,0}^2 \le & \frac{2 c_{\a,\b}}{(2n+\a+\b+3)^2 } \sum_{m=n}^\infty \sum_{k=0}^m
       \Big | \wh{\partial_1 f}_{k,m}^{\a+1,\b, 0} \Big|^2 h_{k,m}^{\a+1,\b,0}
  \\
 & + \frac{2 c_{\a,\b}}{(2n+\a+\b+3)^2 }  \sum_{m=n}^\infty \sum_{k=0}^m
       \Big | \wh{\partial_2 f}_{k,m}^{\a,\b+1, 0} \Big|^2 h_{k,m}^{\a,\b+1,0},
\end{align*}
which is exactly the estimate \eqref{eq:norm-normD} by the Parseval identity for the orthogonal expansions in
$L^2(\varpi_{\a+1,\b,0})$ and in $L^2(\varpi_{\a,\b+1,0})$, respectively.  
\end{proof}

A more careful analysis shows that $c_{\a,\b}/n$ in the statement of the theorem can be replaced by a more precise
$2 / (n+\a+\b+3)$ when $n \ge 4$.  

We can also deduce an analogue of  Theorem \ref{thm:norm-normD} for the two permutations of $(\a,\b,-1)$. These 
estimates will be needed later in the paper, so we record them below. 

\begin{thm}\label{thm:norm-normDa0g}
For $f \in \CW_2^1(\varpi_{\a,0,\g})$ and $n=1,2,3,...$, 
\begin{align}\label{eq:norm-normDa0g}
 \left \|f-S_n^{\a,-1,\g} f \right\|_{\a,0,\g}  \le  \frac{c_{\a,\g}}{n} 
     \left [ E_{n-1}(\partial_2 f)_{\a,0,\g+1}+ E_{n-1}(\partial_3 f)_{\a+1,0,\g}\right].  
\end{align}
\end{thm}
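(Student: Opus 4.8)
\textbf{Proof proposal for Theorem~\ref{thm:norm-normDa0g}.}
The plan is to mimic the proof of Theorem~\ref{thm:norm-normD} verbatim, but working with the permuted family of orthogonal polynomials $K_{k,n}^{\a,-1,\g}$ (and its inner product $\la\cdot,\cdot\ra_{\a,-1,\g}^K$ from Section~\ref{sect5}) rather than $J_{k,n}^{\a,\b,-1}$. First I would record the analogue of the recurrence used at the start of that proof: applying \eqref{eq:recurJa} or \eqref{eq:recurJc} with the appropriate permutation of parameters (equivalently, applying the $J$-recurrence through \eqref{eq:3bases-transform}) expresses $K_{k,n}^{\a,-1,\g}$ as $K_{k,n}^{\a,0,\g}$ plus a lower-degree multiple $c_{k,n} K_{k,n-1}^{\a,0,\g}$, where $c_{k,n}$ is of size $O(1/n)$ in the relevant range. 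Expanding $\proj_n^{\a,-1,\g}f$ in this basis and telescoping $f - S_n^{\a,-1,\g}f = \sum_{m\ge n+1}\proj_m^{\a,-1,\g}f$ then rewrites the tail in the $K_{k,m}^{\a,0,\g}$ basis, exactly as before.

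Next I would invoke the commuting relations for the $(\a,-1,\g)$ case (the theorem at the end of Subsection~\ref{sect:a-1g}, namely $\partial_2\proj_n^{\a,-1,\g}f = \proj_{n-1}^{\a,0,\g+1}\partial_2 f$ and $\partial_3\proj_n^{\a,-1,\g}f = \proj_{n-1}^{\a+1,0,\g}\partial_3 f$), together with the derivative formulas \eqref{eq:diffK} for $K_{k,n}^{\a,\b,\g}$, to express each coefficient $\wh f_{k,m,K}^{\a,-1,\g}$ as a linear combination of $\wh{\partial_2 f}_{k,m-1}^{\a,0,\g+1}$ and $\wh{\partial_3 f}_{k,m-1}^{\a+1,0,\g}$ — this is the $K$-analogue of Proposition~\ref{cor:whf-whDf}, with coefficients $a_{k,n}^{\g,\a}$ in place of $a_{k,n}^{\a,\b}$. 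Then I would use the explicit norm formula \eqref{eq:hkn} to establish the ratio estimates $h_{k,m}^{\a,0,\g}/h_{k,m-1}^{\a,0,\g+1} \sim m^{-2}$ and $h_{k,m}^{\a,0,\g}/h_{k-1,m-1}^{\a+1,0,\g}\sim m^{-2}$ (for $k$ in the low and high ranges respectively), plus the bound $(c_{k,m})^2 h_{k,m}^{\a,0,\g}/h_{k,m-1}^{\a,0,\g+1}\le c\,m^{-2}$ for the cross term, and similarly for the top-degree term $k=m$. Substituting these into the squared-norm expansion and applying $(A+B)^2\le 2A^2+2B^2$ bounds $\|f - S_n^{\a,-1,\g}f\|_{\a,0,\g}^2$ by $c\,n^{-2}$ times a sum of squared Fourier coefficients of $\partial_2 f$ and $\partial_3 f$; the Parseval identity in $L^2(\varpi_{\a,0,\g+1})$ and $L^2(\varpi_{\a+1,0,\g})$ then identifies that sum with $[E_{n-1}(\partial_2 f)_{\a,0,\g+1}]^2 + [E_{n-1}(\partial_3 f)_{\a+1,0,\g}]^2$, giving \eqref{eq:norm-normDa0g}.

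I expect the only real obstacle to be bookkeeping: the $K$-family has derivative formulas in which the roles of $\partial_1,\partial_2,\partial_3$ and of the parameters $(\a,\b,\g)$ are permuted relative to the $J$-family, so one must track carefully which $a_{k,n}^{\cdot,\cdot}$ appears and which pair of weights $(\a,0,\g+1)$, $(\a+1,0,\g)$ shows up, and verify that the degenerate low-degree cases ($n$ small, or $k$ near $0$ or $n$) behave as in Section~5. Since the structure is identical to the proof of Theorem~\ref{thm:norm-normD} and all the needed ingredients — the recurrences in Section~\ref{sect4}, the commuting relations of Subsection~\ref{sect:a-1g}, and the norm formula \eqref{eq:hkn} — are already available, no new idea is required, and the proof can be stated briefly by pointing to the parallel argument. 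A more careful version of the ratio estimates would again sharpen $c_{\a,\g}/n$ to roughly $2/(n+\a+\g+3)$ for $n\ge 4$.
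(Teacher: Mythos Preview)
Your proposal is correct and is exactly the approach the paper intends: the paper does not write out a proof of Theorem~\ref{thm:norm-normDa0g} but simply states it as the permuted analogue of Theorem~\ref{thm:norm-normD}, obtained by replacing the $J$-family with the $K$-family via \eqref{eq:3bases-transform} and using the commuting relations of Subsection~\ref{sect:a-1g} in place of those for $(\a,\b,-1)$. The ingredients you list---the $K$-version of the recurrence \eqref{eq:recurJc}, the derivative formulas \eqref{eq:diffK}, the norm ratios from \eqref{eq:hkn}, and Parseval---are precisely those needed, and only the bookkeeping of permuted indices remains.
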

 
\begin{thm}\label{thm:norm-normD0bg}
For  $f \in \CW_2^1(\varpi_{0,\b,\g})$ and $n=1,2,3,...$, 
\begin{align}\label{eq:norm-normD0bg}
 \left \|f-S_n^{-1,\b,\g} f \right\|_{0,\b, \g}  \le \frac{c_{\b,\g}}{n^2}
     \left[ E_{n-1}(\partial_3 f)_{0,\b+1,\g} + E_{n-1}(\partial_1 f)_{0,\b,\g+1} \right].
\end{align}
\end{thm}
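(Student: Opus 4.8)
The plan is to follow the same strategy used in the proof of Theorem~\ref{thm:norm-normD}, now adapted to the parameter triple $(-1,\b,\g)$, and to use in an essential way that there are \emph{two} negative-parameter-free directions, which is what produces the sharper $n^{-2}$ rate. First I would invoke the recursive relation connecting $J_{k,n}^{-1,\b,\g}$ to polynomials with parameters $(0,\b,\g)$; by the analogue of \eqref{eq:recurJa} (with the roles permuted so that the $\a$-parameter is raised from $-1$ to $0$), together with \eqref{eq:recurJc} if needed, one writes
\begin{align*}
 J_{k,n}^{-1,\b,\g}(x,y) = J_{k,n}^{0,\b,\g}(x,y) + \sum_{(i,j)} c_{i,j,k,n}^{\b,\g}\, J_{k-i,n-j}^{0,\b,\g}(x,y),
\end{align*}
where the sum runs over a bounded set of shifts $(i,j)$ and, crucially, each coefficient $c_{i,j,k,n}^{\b,\g}$ is $O(n^{-1})$ (in fact $O(n^{-2})$ for the shifts coming from iterating the $\g$-raising relation, but the rate we need only requires $O(n^{-1})$ from each of two independent relations). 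Substituting this into $\proj_n^{-1,\b,\g}f = \sum_k \wh f_{k,n,J}^{-1,\b,\g} J_{k,n}^{-1,\b,\g}$ and summing over $n > N$ (with $N$ the degree in the statement), then reorganizing the double sum by collecting the coefficient of each fixed $J_{k,m}^{0,\b,\g}$, expresses $f - S_n^{-1,\b,\g}f$ as an $L^2(\varpi_{0,\b,\g})$-orthogonal expansion whose coefficients are bounded linear combinations of $\wh f_{k,m}^{-1,\b,\g}$ with coefficients $O(m^{-1})$, plus the diagonal correction terms $\wh f_{m,m}^{-1,\b,\g}$ as in \eqref{eq:norm-est}.

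Next I would pass from the Fourier coefficients $\wh f_{k,m}^{-1,\b,\g}$ to Fourier coefficients of $\partial_1 f$ and $\partial_3 f$. Here the commuting relations from Theorem~\ref{thm:Dproj00-1} (the $(-1,\b,\g)$ case, at the end of \S\ref{sect5}), namely $\partial_1 \proj_n^{-1,\b,\g}f = \proj_{n-1}^{0,\b,\g+1}\partial_1 f$ and $\partial_3 \proj_n^{-1,\b,\g}f = \proj_{n-1}^{0,\b+1,\g}\partial_3 f$, are the engine: differentiating the expansion of $\proj_n^{-1,\b,\g}f$ in the $K_{k,n}^{-1,\b,\g}$ and $J_{k,n}^{-1,\b,\g}$ bases and matching with \eqref{eq:diffK}/\eqref{eq:diff1J} gives relations of the shape
\begin{align*}
 \wh f_{k,m}^{-1,\b,\g} = A_{k,m}\, \wh{\partial_1 f}_{k,m-1}^{0,\b,\g+1} + B_{k,m}\, \wh{\partial_3 f}_{k,m-1}^{0,\b+1,\g},
\end{align*}
exactly analogous to Proposition~\ref{cor:whf-whDf}, with $A_{k,m}, B_{k,m}$ bounded uniformly. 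Combining this with the $O(m^{-1})$ factor already extracted and with the norm ratios $h_{k,m}^{0,\b,\g}/h_{k,m-1}^{0,\b,\g+1} \sim m^{-2}$ and the analogous ratio involving $h_{k,m-1}^{0,\b+1,\g}$ from \eqref{eq:hkn} supplies a second factor of $m^{-1}$, so that each squared coefficient of the expansion of $f - S_n^{-1,\b,\g}f$ is bounded by $c\,n^{-4}$ times a squared Fourier coefficient of $\partial_1 f$ or $\partial_3 f$ times the corresponding norm. Summing over $k$ and $m$ and applying the Parseval identity in $L^2(\varpi_{0,\b+1,\g})$ and $L^2(\varpi_{0,\b,\g+1})$ turns the right-hand side into $c\,n^{-4}\big([E_{n-1}(\partial_3 f)_{0,\b+1,\g}]^2 + [E_{n-1}(\partial_1 f)_{0,\b,\g+1}]^2\big)$, and taking square roots gives \eqref{eq:norm-normD0bg}. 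As in Theorem~\ref{thm:norm-normD}, the diagonal terms $\wh f_{m,m}^{-1,\b,\g}$ are handled by the $k=m$ instance of the same coefficient identities together with the ratio $h_{m,m}^{0,\b,\g}/h_{m-1,m-1}^{0,\b+1,\g}\sim m^{-2}$.

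The main obstacle, and the place where this case differs from Theorem~\ref{thm:norm-normD}, is bookkeeping the two independent sources of decay so that the exponent comes out to be $n^{-2}$ rather than $n^{-1}$: one factor of $n^{-1}$ comes from the recurrence coefficients $c_{i,j,k,n}^{\b,\g}$ relating $J_{k,n}^{-1,\b,\g}$ to $J_{k,n}^{0,\b,\g}$, and a second factor of $n^{-1}$ comes from the $h$-ratios, which behave like $m^{-2}$ against the derivative norms. One must check that these two mechanisms are genuinely multiplicative — i.e., that after substituting the derivative relations the surviving coefficient is $O(m^{-2})$ and not merely $O(m^{-1})$ — and in particular that no cancellation is required and the estimate holds term by term, uniformly in $k$. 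A secondary technical point is that the permuted recurrence relations and the permuted commuting relations must be stated in the correct orientation (raising the $-1$ parameter to $0$ while the other two parameters shift appropriately), which is routine given \eqref{eq:3bases-transform}, \eqref{eq:d-transform1}, \eqref{eq:d-transform2} but should be written out carefully to avoid sign or index errors. I would also remark, as the paper does after Theorem~\ref{thm:norm-normD}, that the constant $c_{\b,\g}$ can be made explicit for $n$ large.
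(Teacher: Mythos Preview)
The displayed exponent $n^{-2}$ in \eqref{eq:norm-normD0bg} is a typo in the paper: Theorems~\ref{thm:norm-normDa0g} and~\ref{thm:norm-normD0bg} are stated as the two permutations of Theorem~\ref{thm:norm-normD}, and the paper's ``proof'' is precisely that --- apply the change of variables in \eqref{eq:3bases-transform} (specifically $L_{k,n}^{-1,\b,\g}(x,y)=J_{k,n}^{\b,\g,-1}(y,1-x-y)$) and read off the estimate from Theorem~\ref{thm:norm-normD}. That argument yields $c_{\b,\g}/n$, matching \eqref{eq:norm-normD} and \eqref{eq:norm-normDa0g}; there is no mechanism in this setting that would produce a genuine $n^{-2}$.

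Your proposal is built around achieving the (misprinted) $n^{-2}$ rate, and the argument for that extra factor does not hold. You claim two independent sources of decay: (i) the recurrence coefficients $c_{i,j,k,n}^{\b,\g}$ in the expansion of $J_{k,n}^{-1,\b,\g}$ in terms of $J_{k-i,n-j}^{0,\b,\g}$, and (ii) the $h$-ratios. But the leading term of that expansion has coefficient equal to $1$, not $O(n^{-1})$; only the \emph{correction} terms carry the small coefficients. Concretely, the analogue of the displayed relation in the proof of Theorem~\ref{thm:norm-normD} is
\[
   L_{k,n}^{-1,\b,\g} = L_{k,n}^{0,\b,\g} + c_{k,n}^{\b,\g}\,L_{k,n-1}^{0,\b,\g},
\]
with leading coefficient $1$. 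After reorganizing $f-S_n^{-1,\b,\g}f$ in the $L_{k,m}^{0,\b,\g}$ basis you therefore face terms of the form $\wh f_{k,m}^{-1,\b,\g}+c_{k,m+1}^{\b,\g}\wh f_{k,m+1}^{-1,\b,\g}$ with no overall small prefactor, exactly as in \eqref{eq:norm-est}. The only decay then comes from step (ii), the ratio $h_{k,m}^{0,\b,\g}/h_{k,m-1}^{0,\b,\g+1}\sim m^{-2}$ (and its $\partial_3$ counterpart), which after the square root gives $n^{-1}$. So the two mechanisms are not multiplicative on the dominant term, and the bookkeeping you flag as ``the main obstacle'' cannot be resolved: the estimate is genuinely $O(n^{-1})$, and your outline, once the erroneous extra factor is dropped, reduces to the permuted version of the proof of Theorem~\ref{thm:norm-normD}.
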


\section{Approximation by $S_n^{-1,-1,\g} f$ and its permutations}
\label{sect13}
\setcounter{equation}{0}

We define by $S_n^{-1,-1,\g}$ the $n$-th partial sum operator defined by 
$$
  S_n^{-1,-1,\g} f := \sum_{k=0}^n \proj_k^{-1,-1,\g} f = \sum_{m=0}^n \sum_{k=0}^m \wh f_{k,m}^{-1,-1,\g} J_{k,m}^{-1,-1,\g}, 
$$
where $\wh f_{k,m}^{-1,-1,\g}$ is defined as in \eqref{eq:Fourier11g}. From the commuting of the projection operators and
derivatives given in Theorem \ref{thm:proj-1-1g}, we can deduce the fowling relations: 

\begin{prop}
Let $\wh f_{k,n}^{\a,\b,\g}$  be defined with respect to the basis $J_{k,n}^{\a,\b,\g}$. Then
\begin{align}\label{eq:whf-whD2} 
\begin{split}
   \wh {\partial_1 \partial_2 f}_{0,m}^{0,0,\g+2}  & \ = \wh f_{0,m+2}^{-1,-1,\g} + \frac{m+2}{2} \wh  f_{1,m+2}^{-1,-1,\g}  
           -A_{0,m+2} \wh f_{2,m+2}^{-1,-1,\g}, \\
   \wh {\partial_1 \partial_2 f}_{k,m}^{0,0,\g+2}  & \ = \wh f_{k,m+2}^{-1,-1,\g} - A_{k,m+2} \wh f_{k+2,m+2}^{-1,-1,\g},  \quad k \ge 1,
\end{split}
\end{align}
where 
\begin{align}\label{eq:Akm} 
  A_{k,m} := \frac{(m+k)(m+k+1)}{4(2k+1)(2k+3)}.
\end{align}
\end{prop}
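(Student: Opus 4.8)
The plan is to iterate the commuting relations and then compute the double partial derivative of the basis polynomials. First I would combine $\partial_1\proj_n^{-1,-1,\g} f = \proj_{n-1}^{0,-1,\g+1}\partial_1 f$ from Theorem~\ref{thm:proj-1-1g} with the commuting relation for the space $\CV_{n-1}(\varpi_{0,-1,\g+1})$ recorded in \secref{sect:a-1g}, namely $\partial_2\proj_{n-1}^{0,-1,\g+1} g = \proj_{n-2}^{0,0,\g+2}\partial_2 g$; this gives
\begin{equation*}
  \partial_1\partial_2\proj_n^{-1,-1,\g} f = \proj_{n-2}^{0,0,\g+2}\bigl(\partial_1\partial_2 f\bigr).
\end{equation*}
Comparing the coefficients of $J_{j,n-2}^{0,0,\g+2}$ on the two sides, and using that this family is a basis of $\CV_{n-2}(\varpi_{0,0,\g+2})$, reduces the proposition to expanding $\partial_1\partial_2 J_{k,n}^{-1,-1,\g}$ in the basis $\{J_{j,n-2}^{0,0,\g+2}\}$.

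To obtain that expansion I would apply \eqref{eq:diff1J} twice. The first step gives $\partial_1 J_{k,n}^{-1,-1,\g} = -a_{k,n}^{-1,-1}J_{k-1,n-1}^{0,-1,\g+1} - J_{k,n-1}^{0,-1,\g+1}$; here condition (a) of Proposition~\ref{prop:partialJ} holds for $k\ne 1$ and condition (b) (with the value $a_{1,n}^{-1,-1}=n$ from \eqref{eq:akn-bkn2}) handles $k=1$. The second step uses $\partial_2 J_{j,n-1}^{0,-1,\g+1} = a_{j,n-1}^{-1,0}J_{j-1,n-2}^{0,0,\g+2} - J_{j,n-2}^{0,0,\g+2}$ for $j\ge 1$, valid by condition (a); the one exceptional evaluation $\partial_2 J_{0,n-1}^{0,-1,\g+1}$ I would check directly from $J_{0,m}^{0,-1,\g+1} = J_m^{0,\g+1}(1-2x-2y)$, obtaining $-J_{0,n-2}^{0,0,\g+2}$, which matches the formal formula under the convention $J_{-1,\cdot}=0$. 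Composing the two steps expresses $\partial_1\partial_2 J_{k,n}^{-1,-1,\g}$ as a linear combination of $J_{k-2,n-2}^{0,0,\g+2}$, $J_{k-1,n-2}^{0,0,\g+2}$ and $J_{k,n-2}^{0,0,\g+2}$.

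The mechanism producing only two surviving terms is the elementary identity $a_{k,n}^{-1,-1} = a_{k,n-1}^{-1,0}$ for $k\ge 2$, immediate from the formula $a_{k,n}^{\a,\b} = (k+\b)(n+k+\a+\b+1)/\bigl[(2k+\a+\b)(2k+\a+\b+1)\bigr]$: it cancels the coefficient of $J_{k-1,n-2}^{0,0,\g+2}$. Then collecting in $\partial_1\partial_2\proj_n^{-1,-1,\g}f = \sum_k \wh f_{k,n}^{-1,-1,\g}\,\partial_1\partial_2 J_{k,n}^{-1,-1,\g}$ the contributions to $J_{j,n-2}^{0,0,\g+2}$ from $k=j$ (coefficient $1$), from $k=j+1$ (coefficient $0$ once $j\ge 1$) and from $k=j+2$ (coefficient $-a_{j+2,n}^{-1,-1}a_{j+1,n-1}^{-1,0}$) yields the stated identity for $k\ge 1$, provided one checks, with $n=m+2$, that $a_{j+2,m+2}^{-1,-1}a_{j+1,m+1}^{-1,0}$ collapses to $A_{j,m+2}$ as defined in \eqref{eq:Akm}. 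For the case $k=0$ the cancellation at $k=1$ is incomplete, since there $a_{1,n}^{-1,-1}=n$ while $a_{1,n-1}^{-1,0}=n/2$, and the remaining term contributes precisely the extra summand $\tfrac{m+2}{2}\wh f_{1,m+2}^{-1,-1,\g}$, while the $J_{0,n-2}^{0,0,\g+2}$-coefficient coming from $k=2$ is again $-A_{0,m+2}$.

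I expect the difficulty to be entirely in the bookkeeping: tracking index ranges through the two differentiations, correctly invoking the exceptional cases $k=0,1$ of Proposition~\ref{prop:partialJ} together with the special value of $a_{1,n}^{-1,-1}$, and carrying out the two short rational-function simplifications (the vanishing identity $a_{k,n}^{-1,-1}=a_{k,n-1}^{-1,0}$ and the collapse to $A_{k,m}$). No new idea beyond iterating the commuting relations is needed.
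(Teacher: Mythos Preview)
Your proposal is correct and follows essentially the same approach as the paper. The paper's proof simply says that the identities \eqref{eq:whf-whD} continue to hold for negative integer parameters and then combines the first two of them, using the cancellation $a_{k+1,m-1}^{-1,0}=a_{k+1,m}^{-1,-1}$ for $k\ge 1$ and the special value from \eqref{eq:akn-bkn2} at $k=0$; your argument unwinds this one level further by differentiating the basis polynomials directly via \eqref{eq:diff1J} and reading off coefficients, which is exactly how \eqref{eq:whf-whD} itself was obtained.
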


\begin{proof}
It is easy to see that \eqref{eq:whf-whD} works even when $\a$ and $\b$ are negative integers. Hence, the identity 
\eqref{eq:whf-whD2} follows from combining the first two identities of \eqref{eq:whf-whD} and using 
$a_{k+1,m-1}^{\a,\a+1} - a_{k+1,m}^{\a,\a}= 0$ for $\a = -1$ and $k \ge 1$, whereas this quantity is equal to 
$m/2$ when $a = -1$ and $k=0$ by \eqref{eq:akn-bkn2}. 
\end{proof}

\begin{prop}
For $f \in \CW_2^1(\varpi_{0,0,\g})$, 
\begin{equation} \label{eq:normD3}
  \left[ E_{n-1}(\partial_3 f)_{0,0,\g}\right]^2  = \sum_{m=n}^\infty \sum_{k=0}^m \left |\wh f_{k+1,m+1}^{-1,-1,\g}\right|^2 h_{k,m}^{0,0,\g}
\end{equation}
and
\begin{align}\label{eq:normD12}
 \left[ E_{n-2}(\partial_1\partial_2 f)_{0,0,\g+2}\right]^2  = &\sum_{m=n-1}^\infty  \left |\wh f_{0,m+2}^{-1,-1,\g} 
    + \frac{m+2}{2} \wh  f_{1,m+2}^{-1,-1,\g}
            - A_{0,m+2} \wh  f_{2,m+2}^{-1,-1,\g}\right|^2 h_{0,m}^{0,0,\g+2}\notag  \\
 & + \sum_{m=n-1}^\infty \sum_{k=1}^m \left |\wh f_{k,m+2}^{-1,-1,\g} -A_{k,m+2} \wh f_{k+2,m+2}^{-1,-1,\g} \right|^2 h_{k,m}^{0,0,\g+2}.
\end{align}
\end{prop}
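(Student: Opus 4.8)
The plan is to derive both identities from the commuting relations for the projection operators in Theorem~\ref{thm:proj-1-1g} (and its analogue in Section~\ref{sect5}), combined with the Parseval identity for the relevant weighted $L^2$ spaces and the Fourier-coefficient formulas \eqref{eq:whf-whD} and \eqref{eq:whf-whD2}.

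For \eqref{eq:normD3}, first I would note that $f-S_n^{-1,-1,\g}f=\sum_{m\ge n+1}\proj_m^{-1,-1,\g}f$, so applying $\partial_3$ term by term and using the third commuting relation of Theorem~\ref{thm:proj-1-1g} gives $\partial_3 S_n^{-1,-1,\g}f=S_{n-1}^{0,0,\g}\partial_3 f$. Hence $E_{n-1}(\partial_3 f)_{0,0,\g}^2=\|\partial_3 f-S_{n-1}^{0,0,\g}\partial_3 f\|_{0,0,\g}^2$, and expanding the right-hand side in the mutually orthogonal basis $\{J_{k,m}^{0,0,\g}\}$ of $L^2(\varpi_{0,0,\g})$ via Parseval yields $\sum_{m\ge n}\sum_{k=0}^m|\wh{\partial_3 f}_{k,m}^{0,0,\g}|^2 h_{k,m}^{0,0,\g}$. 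The third identity in \eqref{eq:whf-whD}, specialized to $\a=\b=-1$ (where, as observed just after \eqref{eq:whf-whD2}, it remains valid), reads $\wh{\partial_3 f}_{k,m}^{0,0,\g}=\wh f_{k+1,m+1}^{-1,-1,\g}$; substituting this gives \eqref{eq:normD3}.

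For \eqref{eq:normD12} I would chain two commuting relations. Applying $\partial_1$ via the first relation of Theorem~\ref{thm:proj-1-1g} gives $\partial_1\proj_n^{-1,-1,\g}f=\proj_{n-1}^{0,-1,\g+1}\partial_1 f$; then applying $\partial_2$ and the relation $\partial_2\proj_m^{\a,-1,\g}f=\proj_{m-1}^{\a,0,\g+1}\partial_2 f$ from Section~\ref{sect5} (with $\a=0$ and $\g$ replaced by $\g+1$) gives $\partial_1\partial_2\proj_n^{-1,-1,\g}f=\proj_{n-2}^{0,0,\g+2}\partial_1\partial_2 f$, hence $\partial_1\partial_2 S_n^{-1,-1,\g}f=S_{n-2}^{0,0,\g+2}\partial_1\partial_2 f$. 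As before, $E_{n-2}(\partial_1\partial_2 f)_{0,0,\g+2}^2=\|\partial_1\partial_2 f-S_{n-2}^{0,0,\g+2}\partial_1\partial_2 f\|_{0,0,\g+2}^2=\sum_{m\ge n-1}\sum_{k=0}^m|\wh{\partial_1\partial_2 f}_{k,m}^{0,0,\g+2}|^2 h_{k,m}^{0,0,\g+2}$ by Parseval in $L^2(\varpi_{0,0,\g+2})$. Substituting the formulas of \eqref{eq:whf-whD2} — treating the $k=0$ term separately, since there the coefficient formula carries the additional summand $\tfrac{m+2}{2}\wh f_{1,m+2}^{-1,-1,\g}$, with $A_{k,m}$ as in \eqref{eq:Akm} — produces exactly the two sums on the right-hand side of \eqref{eq:normD12}.

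I do not expect a genuine obstacle here; the work is bookkeeping of indices and parameter shifts. The two points requiring a little care are: (i) justifying that the chain of commuting relations applies, i.e. that $\partial_1 f$ belongs to the Sobolev class on which $\partial_2\proj_m^{0,-1,\g+1}$ acts, so that the iteration is legitimate under the stated hypothesis (this is built into the definition of the space in which $f$ is taken, which guarantees $\partial_1\partial_2 f\in L^2(\varpi_{0,0,\g+2})$); and (ii) confirming that \eqref{eq:whf-whD} and \eqref{eq:whf-whD2} hold at the singular parameter values $\a=\b=-1$, which is precisely the content of the remark accompanying \eqref{eq:whf-whD2}.
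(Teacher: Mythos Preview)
Your proposal is correct and follows essentially the same route as the paper: apply Parseval to write $[E_{n-1}(\partial_3 f)_{0,0,\g}]^2$ and $[E_{n-2}(\partial_1\partial_2 f)_{0,0,\g+2}]^2$ as sums of squared Fourier coefficients of the derivatives, then invoke the coefficient identities \eqref{eq:whf-whD} and \eqref{eq:whf-whD2} (valid at $\a=\b=-1$) to rewrite those coefficients in terms of $\wh f_{k,m}^{-1,-1,\g}$. Your detour through the commuting relations is harmless but not strictly needed here, since $E_{n-1}(\partial_3 f)_{0,0,\g}=\|\partial_3 f - S_{n-1}^{0,0,\g}\partial_3 f\|_{0,0,\g}$ is simply the characterization \eqref{eq:best-approx} of best approximation, independent of any link to $S_n^{-1,-1,\g}$.
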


\begin{proof}
Since the proof of Proposition \ref{cor:whf-whDf} is a formal manipulation of formulas, it holds also when $\a = \b = -1$.
Since $\left[E_{n-1}(\partial_3 f)_{0,0,\g}\right]^2 =\|\partial_3 f-S_{n-1}^{0,0,\g}(\partial_3 f)\|_{0,0,\g}^2$, the 
identity \eqref{eq:normD3} follows from the third identity of \eqref{eq:whf-whD}. Likewise, \eqref{eq:normD12} follow from 
\eqref{eq:whf-whD2}. 
\end{proof}

\begin{thm}\label{thm:norm-11g}
For $f \in W_2^1(\varpi_{0,0,\g})\cap W_2^2(\varpi_{0,0,\g+2})$ and $n =2,3,\ldots$, 
\begin{align}\label{eq:norm-11g}
 \left \|f-S_n^{-1,-1,\g} f \right\|_{0,0,\g} \le  \frac{c_{1,\g}}{n}
  E_{n-1}( \partial_3 f)_{0,0,\g} +  \frac{c_{2,\g}}{n^2} E_{n-2}( \partial_1 \partial_2 f)_{0,0,\g+2}.  
\end{align}
\end{thm}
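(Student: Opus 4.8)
The plan is to mimic the structure of the proof of Theorem~\ref{thm:norm-normD}: expand $f-S_n^{-1,-1,\g}f$ in the classical basis $J_{k,m}^{0,0,\g}$, compute the $L^2(\varpi_{0,0,\g})$ norm by Parseval, and then bound the resulting sum of squared Fourier coefficients $|\wh f_{k,m}^{-1,-1,\g}|^2$ by the two quantities on the right-hand side of \eqref{eq:norm-11g}, splitting the range of the inner index $k$ at roughly $k\sim m/2$. First I would use the relation $J_{k,n}^{-1,-1,\g}=c(x+y)J_{k,n-1}^{0,0,\g}$ for $0\le k\le n-1$ together with the formulas in \eqref{eq:J-1-1g}; more efficiently, I would apply \eqref{eq:recurJc} (and \eqref{eq:recurJa}, \eqref{eq:recurJb} if needed) to write each $J_{k,m}^{-1,-1,\g}$ as a short linear combination of $J_{j,m}^{0,0,\g}$ and $J_{j,m-1}^{0,0,\g}$ with $j\in\{k-2,k-1,k\}$, so that $\proj_m^{-1,-1,\g}f$ re-expands in the $J^{0,0,\g}$ basis with explicit coefficients; summing over $m>n$ and collecting terms gives $\|f-S_n^{-1,-1,\g}f\|_{0,0,\g}^2$ as a sum of $|\,\text{linear combination of }\wh f_{k,m}^{-1,-1,\g}\,|^2\,h_{k,m}^{0,0,\g}$, and $(A+B)^2\le 2A^2+2B^2$ turns this into a bound by $\sum |\wh f_{k,m}^{-1,-1,\g}|^2 h_{k,m}^{0,0,\g}$ (up to constants and index shifts).

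Next comes the heart of the estimate. For the upper half $k\ge m/2$ I would use $h_{k,m}^{0,0,\g}/h_{k-1,m-1}^{1,1,\g}\sim 1/(k(k+1))\sim m^{-2}$ from \eqref{eq:hkn}, and the third identity $\wh{\partial_3 f}_{k,m-1}^{0,0,\g}=\wh f_{k+1,m}^{-1,-1,\g}$ from Proposition~\ref{cor:whf-whDf} (valid also for $\a=\b=-1$ since it is a formal manipulation), which directly bounds that portion of the sum by $c\,n^{-2}[E_{n-1}(\partial_3 f)_{0,0,\g}]^2$ via \eqref{eq:normD3}. For the lower half $0\le k\le m/2$ I would use \eqref{eq:whf-whD2}: the two displayed identities express $\wh{\partial_1\partial_2 f}_{k,m-2}^{0,0,\g+2}$ as $\wh f_{k,m}^{-1,-1,\g}$ minus a multiple $A_{k,m}$ of $\wh f_{k+2,m}^{-1,-1,\g}$ (with the extra $\tfrac{m+2}{2}\wh f_{1,m}^{-1,-1,\g}$ term when $k=0$). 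The point is to invert this triangular-in-$k$ recursion: solving for $\wh f_{k,m}^{-1,-1,\g}$ in terms of $\{\wh{\partial_1\partial_2 f}_{j,m-2}^{0,0,\g+2}: j=k,k+2,k+4,\ldots\}$, and checking that the coefficients decay fast enough — since $A_{k,m}\sim m^2/k^2$ grows, one must be careful and instead telescope downward, i.e.\ estimate $|\wh f_{k,m}^{-1,-1,\g}|^2 h_{k,m}^{0,0,\g}$ against $\sum_{j\ge k,\,j\equiv k}|\wh{\partial_1\partial_2 f}_{j,m-2}^{0,0,\g+2}|^2 h_{j,m-2}^{0,0,\g+2}$ with weights controlled by ratios $h_{k,m}^{0,0,\g}/h_{j,m-2}^{0,0,\g+2}$ together with products of $A$'s. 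A Cauchy--Schwarz step (as in the one-dimensional Theorem in Section~3) then gives the bound $c\,n^{-4}[E_{n-2}(\partial_1\partial_2 f)_{0,0,\g+2}]^2$ for this portion, using \eqref{eq:normD12}.

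The main obstacle I anticipate is precisely this inversion/telescoping in the lower-$k$ range: the recursion \eqref{eq:whf-whD2} goes in the ``wrong'' direction (it expresses the target coefficients in terms of higher-$k$ source coefficients, with \emph{growing} factors $A_{k,m}$), so one cannot simply read off $\wh f_{k,m}^{-1,-1,\g}$ as a rapidly convergent series. The resolution should be to exploit that for $k\le m/2$ one has $h_{k,m}^{0,0,\g}/h_{k+2,m}^{0,0,\g}$ and the $A$-factors combining to produce genuine decay in the ratio $A_{k,m}^2\,h_{k,m}^{0,0,\g}/h_{k+2,m-2}^{0,0,\g+2}$, i.e.\ each step in $k$ costs a factor $\sim (k/m)^2$ or better, so the series $\sum_{j}(\prod A)^2 h/h$ converges geometrically and summation is legitimate; the bookkeeping of the $k=0$ special term with its $\tfrac{m+2}{2}$ coefficient must be tracked separately but contributes only to a single extra term. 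Once the two ranges are handled, adding the two bounds and taking square roots yields \eqref{eq:norm-11g}, with the constants $c_{1,\g},c_{2,\g}$ absorbing all the $h$-ratio comparisons.
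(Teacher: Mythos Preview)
Your overall architecture matches the paper's: expand $J_{k,m}^{-1,-1,\g}$ in the $J^{0,0,\g}$ basis (this is done via Proposition~\ref{prop:Ja+1b+1} with $\a=\b=-1$, yielding terms $J_{k,m-i}^{0,0,\g}$ and $J_{k-2,m-i}^{0,0,\g}$ for $i=0,1,2$ --- there is no $J_{k-1,\cdot}$ term except in the special case $k=1$), apply Parseval, and split at $k\sim m/2$. Your treatment of the upper range $k\ge m/2$ via $\wh{\partial_3 f}_{k-1,m-1}^{0,0,\g}=\wh f_{k,m}^{-1,-1,\g}$ and $h$-ratios is exactly what the paper does.

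The gap is in the lower range $k\le m/2$. When you apply $(A+B)^2\le 2A^2+2B^2$ to reduce to bounding individual terms $|\wh f_{k,m}^{-1,-1,\g}|^2 h_{k,m}^{0,0,\g}$, you throw away the essential structure. After that step you are forced to invert the recursion \eqref{eq:whf-whD2}, and your own diagnosis is correct: the coefficients $A_{k,m}\sim m^2/k^2$ grow, the iterated products $\prod_i A_{k+2i,m}$ can be enormous, and the hoped-for compensating decay in $A_{k,m}^2 h_{k,m}^{0,0,\g}/h_{k+2,m-2}^{0,0,\g+2}$ is neither obvious nor verified. This route, as written, does not close.

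What the paper does instead is the key insight you are missing: after re-expansion, the coefficient of $J_{k,m-i}^{0,0,\g}$ is a combination $d_{i,k,m}^\g\,\wh f_{k,m}^{-1,-1,\g}+e_{i,k+2,m}^\g\,\wh f_{k+2,m}^{-1,-1,\g}$, and one checks directly that $e_{i,k+2,m}^\g$ is close to $-A_{k,m}\,d_{i,k,m}^\g$. Concretely, for $i=0$,
\[
\wh f_{k,m}^{-1,-1,\g}+e_{0,k+2,m}^\g\,\wh f_{k+2,m}^{-1,-1,\g}
=\bigl(\wh f_{k,m}^{-1,-1,\g}-A_{k,m}\,\wh f_{k+2,m}^{-1,-1,\g}\bigr)+\tfrac{m}{2(2k+3)}\,\wh f_{k+2,m}^{-1,-1,\g},
\]
and similarly for $i=1,2$ with explicit remainders $B_{k,m}$. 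The first bracket is exactly $\wh{\partial_1\partial_2 f}_{k,m-2}^{0,0,\g+2}$ and contributes $cn^{-4}[E_{n-2}(\partial_1\partial_2 f)_{0,0,\g+2}]^2$ via \eqref{eq:normD12}; the remainder term carries a factor $\sim m/k$ and is bounded by $cn^{-2}[E_{n-1}(\partial_3 f)_{0,0,\g}]^2$ via \eqref{eq:normD3}. No inversion is needed. You should keep the pairing $\wh f_{k,m}+e\,\wh f_{k+2,m}$ intact and compare it to $\wh f_{k,m}-A_{k,m}\wh f_{k+2,m}$, rather than splitting it apart.
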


\begin{proof}
By Proposition \ref{prop:Ja+1b+1} with $\a = \b = -1$, we see that, for $k \ne 1$, 
\begin{align} \label{eq:J11g-00g}
 J_{k,m}^{-1,-1,\g} (x,y) =   & J_{k,m}^{0,0,\g}(x,y) +  e_{0,k,m}^\g J_{k-2,m}^{0,0,\g}(x,y) \\
                               & + d_{1,k,m}^\g J_{k,m-1}^{0,0,\g}(x,y)+ e_{1,k,m}^\g  J_{k-2,m-1}^{0,0,\g}(x,y) \notag\\
                                & +d_{2,k,m}^\g  J_{k,m-2}^{0,0,\g}(x,y) +e_{2,k,m}^\g  J_{k-2,m-2}^{0,0,\g}(x,y), \notag
\end{align}
where the coefficients are given explicitly by
\begin{align*}
  d_{1,k,m}^\g:= &\ \frac{- 2 (m-k+\g)}{(2m + \g - 1) (2m+ \g + 1)},  \\
  d_{2,k,m}^\g: =&  \frac{(m-k+\g-1)_2}{(2m+ \g-2)_2(2m+ \g-1)_2},\\
  e_{0,k,m}^\g:=&\  \frac{-(m-k+1)_2}{4 (2k-3)(2k-1)}, \\ 
  e_{1,k,m}^\g:= & \frac{(m-k+1)(m+k-1)(m-k+\g-1)}{2(2k-3)(2k-1)(2m+ \g-1)(2m+ \g + 1)},\\
  e_{2,k,m}^\g:=& \frac{-(m+k-2)_2 (m+k+\g-2)_2} {4(2k-3)(2k-1)(2m+ \g-2)_2(2m+ \g - 1)_2},
\end{align*}
whereas for $k =1$,
\begin{align} \label{eq:J11g-00gB}
 J_{1,m}^{-1,-1,\g} (x,y) =  & J_{1,m}^{0,0,\g}(x,y)+ \frac{m}{2} J_{0,m}^{0,0,\g}(x,y) \\
    &  + d_{1,1,m}^\g J_{1,m-1}^{0,0,\g}(x,y)+  \frac{m}{2} d_{1,0,m}^\g  J_{0,m-1}^{0,0,\g}(x,y) \notag \\ 
    & +  d_{2,1,m}^\g J_{1,m-2}^{0,0,\g}(x,y) + \frac{m}{2} d_{2,0,m}^\g J_{0,m-2}^{0,0,\g}(x,y), \notag
\end{align}
where we have used the fact that $\tau_m^{-1,0} - \tau_m^{-1,-1} =0$ when $m \ne 1$ and it is equal to $-1/2$ when $m=1$.

Let $d_{0,k,m}^{\g}:=1$. Substituting the relations \eqref{eq:J11g-00g} and  \eqref{eq:J11g-00gB} into $f-S_n^{-1,-1,\g} f$ 
and rearranging the sums, we conclude that 
\begin{align*}
 f- & S_n^{-1,-1,\g} f   =  \sum_{m=n+1}^\infty \sum_{k=0}^m \wh f_{k,m}^{-1,-1,\g} J_{k,m}^{-1,-1,\g} \\
    = &  \sum_{m=n+1}^\infty  \sum_{i=0}^2 
       \sum_{m=n+1}^\infty \bigg[\left(d_{i, 0,m}^\g \Big ( \wh f_{0,m}^{-1,-1,\g}+ \frac{m}{2}\wh f_{1,m}^{-1,-1,\g}\Big)
        e_{i, 2,m}^\g \wh f_{ 2,m}^{-1,-1,\g}\right) J_{0,m-i}^{0,0,\g}  \\
  &\qquad\qquad\qquad \qquad\qquad
  + \sum_{k=1}^{m-i} \left(d_{i, k,m}^\g \wh f_{k,m}^{-1,-1,\g}+  e_{i,k+2,m}^\g\wh f_{k+2,m}^{-1,-1,\g}\right)
         J_{k,m-i}^{0,0,\g}\bigg].
\end{align*}
Recall that $h_{k,m}^{0,0,\g}$ denotes the $L^2$ norm of $J_{k,m}^{0,0,\g}$. It follows immediately that
\begin{align}\label{eq:est11g-1}
 \| f- & S_n^{-1,-1,\g}  f\|_{0,0,\g}^2  \notag \\
   & \le  \sum_{i=0}^2  \sum_{m=n+1}^\infty 
       \sum_{m=n+1}^\infty \bigg[ \left | d_{i, 0,m}^\g \Big ( \wh f_{0,m}^{-1,-1,\g}+ \frac{m}{2}\wh f_{1,m}^{-1,-1,\g}\Big)
        e_{i, 2,m}^\g \wh f_{ 2,m}^{-1,-1,\g}\right |^2 h_{0,m-i}^{0,0,\g} \notag \\
  &\qquad\qquad\qquad \quad 
  + \sum_{k=1}^{m-i} \left | d_{i, k,m}^\g \wh f_{k,m}^{-1,-1,\g}+  e_{i,k+2,m}^\g\wh f_{k+2,m}^{-1,-1,\g}\right |^2
         h_{k,m-i}^{0,0,\g}\bigg]. 
\end{align}         
We need to bound the right hand side of \eqref{eq:est11g-1} by these two quantities. 

For the sum with $i =0$ in the right hand side of \eqref{eq:est11g-1}, we divided the sum over $k$ into three parts. For $1\le k \le m/2$,  
we have the identity
$$
\wh f_{k,m}^{-1,-1,\g} +  e_{0,k+2,m}^\g \wh f_{k+2,m}^{-1,-1,\g} =
   \left(\wh f_{k,m}^{-1,-1,\g} - A_{k,m} \wh f_{k+2,m}^{-1,-1,\g}\right)
   +\frac{m}{2(2k+3)} \wh f_{k+2,m}^{-1,-1,\g}. 
$$
For $1 \le k \le m/2$, we have $m-k\sim m$, which shows, by \eqref{eq:hkn}, that $h_{k,m}^{0,0,\g} \le c m^{-4}h_{k,m-2}^{0,0,\g+2}$ 
and we also have $\frac{m^2}{k^2}h_{k,m}^{0,0,\g} \le c m^{-2} h_{k+1,m-1}^{0,0,\g}$ by \eqref{eq:hkn}. Hence, it follows that
\begin{align*}
   \sum_{m=n+1}^\infty \sum_{k=1}^{\lfloor \f{m}2 \rfloor} & \left \vert \wh f_{k,m}^{-1,-1,\g}+  e_{0,k+2,m}^\g 
      \wh f_{k+2,m}^{-1,-1,\g}\right \vert^2 h_{k,m}^{0,0,\g} 
\le c \sum_{m=n+1}^\infty m^{-2} \sum_{k=1}^{\lfloor \f{m}2 \rfloor}  \left |\wh f_{k+1,m}^{-1,-1,\g}\right |^2 h_{k,m-1}^{0,0,\g} \\
  & \qquad \qquad \quad + c \sum_{m=n+1}^\infty m^{-4}  \sum_{k=1}^{\lfloor \f{m}2 \rfloor} 
        \left | \wh f_{k,m}^{-1,-1,\g} - A_{k,m}\wh f_{k+2,m}^{-1,-1,\g}\right |^2 h_{k,m-2}^{0,0,\g+2}  \\
 &  \qquad \qquad \le  c n^{-2} \left[ E_{n-1}(\partial_3 f)_{0,0,\g}\right]^2+ c n^{-4} \left[ E_{n-2}(\partial_1\partial_2 f)_{0,0,\g+2}\right]^2.  
\end{align*}
The same argument evidently applies to the case $k=0$ as well. For $m/2 \le k \le m$, we have $|e_{0,k+2,m}^\g| \le c$ using 
$k \sim m$, and the sum is bounded by $c n^{-2} \left[ E_{n-1}(\partial_3 f)_{0,0,\g}\right]^2$, using $h_{k,m}^{0,0,\g} \le 
c m^{-2} h_{k -1,m-1}^{0,0,\g}$, which requires $k \sim m$, and  $h_{k,m}^{0,0,\g} \le c m^{-2} h_{k +1,m-1}^{0,0,\g}$, 
which holds for all $k$. 

The other two sums in the right hand side of \eqref{eq:est11g-1} are worked out along the same line. We shall be brief. 
For the second sum, with $i=1$, we have 
\begin{align*}
d_{1,k,m}^\g & \wh f_{k,m}^{-1,-1,\g} + e_{1,k+2,m}^\g \wh f_{k+2,m}^{-1,-1,\g} \\
    =&  d_{1,k,m}^\g  (\wh f_{k,m}^{-1,-1,\g} - A_{k,m}\wh f_{k+2,m}^{-1,-1,\g}) - B_{k,m}\wh f_{k+2,m}^{-1,-1,\g},  
\end{align*}
where 
$$
 B_{k,m} =  \frac{(\g+1)(m+k+1)}{2(2k+3)(2m+\g-1)(2m+\g+1)}.
$$
This is used for $1 \le k \le m/2$, for which we bounded the sum on the first part in the right hand side by 
$c n^{-4} \left[ E_{n-2}(\partial_1\partial_2 f)_{0,0,\g+2}\right]^2$, using $|d_{1,k,m}^\g | \le m^{-1}$ and 
$h_{k,m}^{0,0,\g} \le c m^{-2} h_{k,m-1}^{0,0,\g+2}$, which holds under $m-k \sim m$, whereas we bounded 
the sum on the second part in the right hand side by $c n^{-2} \left[ E_{n-1}(\partial_3 f)_{0,0,\g}\right]^2$, 
using $B_{k,m} \le c\frac{1}{m^2} \frac{m}{k}$ and 
$\frac{m^2}{k^2}h_{k+1,m}^{0,0,\g} \le c m^{-2} h_{k+1,m-1}^{0,0,\g+2} \le c$, again under $m-k \sim m$.
For $ m/2 \le k \le m$, we use $|d_{1,k,m+1}^\g | \le c m^{-1}$ 
and $|e_{1,k+2,m+1}^\g | \le c m^{-1}$, as well as $h_{k,m}^{0,0,\g} \le c h_{k \pm 1,m}^{0,0,\g}$, which holds under 
$k\sim m$, to bound the sum by $c n^{-2} \left[ E_{n-1}(\partial_3 f)_{0,0,\g}\right]^2$.

For the third term in the right hand side of \eqref{eq:est11g-1}, we have 
\begin{align*}
d_{2,k,m}^\g & \wh f_{k,m}^{-1,-1,\g} + e_{2,k+2,m}^\g \wh f_{k+2,m}^{-1,-1,\g} \\
    = & d_{2,k,m}^\g   (\wh f_{k,m}^{-1,-1,\g} - A_{k,m} \wh f_{k+2,m}^{-1,-1,\g})  
    -  B_{k,m}\wh f_{k+2,m}^{-1,-1,\g}. 
\end{align*}
where
$$
B_{k,m} = \frac{(m+\g) (m+k)_2}{2(2k+3)(2m+\g-2)(2m+\g-1)^2(2m+\g)}.
$$
Notice that $B_{k,m}$ is again bounded by $c \frac{1}{m^2} \frac{m}{k}$. In this case, the case for $0 \le k \le m/2$
follows by using $h_{k,m}^{0,0,\g} \le c h_{k,m}^{0,0,\g+2}$ and $\frac{m^2}{k^2}h_{k,m}^{0,0,\g} \le c h_{k+1,m}^{0,0,\g}$
when $m-k \sim m$. Finally, the sum for $m/2 \le k \le m$ follows by using $|d_{2,k,m+2}|^\g \le c m^{-2}$ and 
$|e_{2,k+2,m+2}|^\g \le c m^{-2}$ as well as $h_{k,m}^{0,0,\g} \le c m^2 h_{k \pm 1,m+1}^{0,0,\g}$ when $k \sim m$. 
\end{proof}

We can also derive analogues of Theorem \ref{thm:norm-11g} for the two permutations of $(-1,-1,\g)$. They are recorded
below for latter use.

\begin{thm}\label{thm:norm-1b1}
For $f \in W_2^1(\varpi_{0,\b,0})\cap W_2^2(\varpi_{0,\b+2,0})$ and $n =2,3,\ldots$, 
\begin{align}\label{eq:norm-1b1}
 \left \|f-S_n^{-1,\b,-1} f \right\|_{0,\b,0} \le  \frac{c_{1,\b}}{n}
  E_{n-1}( \partial_1 f)_{0,\b,0} +  \frac{c_{2,\b}}{n^2} E_{n-2}( \partial_2 \partial_3 f)_{0,\b+2,0}.  
\end{align}
\end{thm}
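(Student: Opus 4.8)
The plan is to mirror the proof of Theorem \ref{thm:norm-11g} using the simultaneous permutation that sends $(-1,-1,\g)\mapsto(-1,\b,-1)$. Concretely, recall from \eqref{eq:3bases-transform} that $J_{k,n}^{-1,-1,\g}(x,y)$ and $L_{k,n}^{-1,\b,-1}(x,y)$ are related by the cyclic substitution $(x,y,1-x-y)\mapsto(1-x-y,x,y)$ (with the parameters permuted accordingly), and from \eqref{eq:d-transform2} that under this substitution $\partial_1,\partial_2,\partial_3$ permute cyclically. Thus every statement about $S_n^{-1,-1,\g}$ and the operators $\partial_3$, $\partial_1\partial_2$ translates, under the permutation, into a statement about $S_n^{-1,\b,-1}$ and the operators $\partial_1$, $\partial_2\partial_3$. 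Since the $L^2(\varpi_{\a,\b,\g})$ norms are invariant under simultaneous permutation of variables and parameters, the estimate \eqref{eq:norm-11g} becomes exactly \eqref{eq:norm-1b1}. So the cleanest route is: first establish the commuting relations $\partial_1\mathrm{proj}_n^{-1,\b,-1}f=\mathrm{proj}_{n-1}^{0,\b,0}\partial_1 f$ and $\partial_2\partial_3\mathrm{proj}_n^{-1,\b,-1}f=\mathrm{proj}_{n-2}^{0,\b+2,0}\partial_2\partial_3 f$ (these follow from the theorem stated after \eqref{eq:ipd1b1}, i.e. the commuting relations for $\proj_n^{-1,\b,-1}$, combined with one further application of the $\partial_3$-commuting relation in \eqref{eq:dproj} and the $\partial_1$-commuting relation), then quote the permutation symmetry to transport Theorem \ref{thm:norm-11g}.

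Alternatively — and this is the route I would actually write out, since the permutation bookkeeping for the mixed second derivative is slightly delicate — I would redo the argument directly. First, using \eqref{eq:recurJc}, \eqref{eq:recurJa}, \eqref{eq:recurJb} (or more efficiently Proposition \ref{prop:Ja+1b+1} applied in the permuted form, i.e. to the $L$-basis), expand $L_{k,m}^{-1,\b,-1}$ as a finite linear combination of $L_{k',m'}^{0,\b,0}$ with $k-2\le k'\le k$ and $m-2\le m'\le m$, with explicit coefficients of the same shape $d_{i,k,m}^\b,\,e_{i,k,m}^\b$ as in \eqref{eq:J11g-00g}–\eqref{eq:J11g-00gB}, the $k=1$ row again being exceptional. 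Substituting this expansion into $f-S_n^{-1,\b,-1}f=\sum_{m>n}\sum_k\wh f_{k,m}^{-1,\b,-1}L_{k,m}^{-1,\b,-1}$ (using the $L$-basis so that the inner product $\la\cdot,\cdot\ra_{-1,\b,-1}^L$ is genuinely used, via Proposition \ref{prop:00-1B}-type facts) and regrouping, I get $\|f-S_n^{-1,\b,-1}f\|_{0,\b,0}^2$ bounded by a sum of squares of linear combinations $d_{i,k,m}^\b\wh f_{k,m}^{-1,\b,-1}+e_{i,k+2,m}^\b\wh f_{k+2,m}^{-1,\b,-1}$ weighted by $h_{k,m-i}^{0,\b,0}$, exactly as in \eqref{eq:est11g-1}.

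Next, express the right-hand side quantities $E_{n-1}(\partial_1 f)_{0,\b,0}$ and $E_{n-2}(\partial_2\partial_3 f)_{0,\b+2,0}$ in terms of the Fourier coefficients $\wh f_{k,m}^{-1,\b,-1}$. For $\partial_1$ I would use the permuted version of the third identity of \eqref{eq:whf-whD} (the $\partial_1$-analogue, which gives $\wh{\partial_1 f}_{k,m-1}^{0,\b,0}$ proportional to $\wh f_{k+1,m}^{-1,\b,-1}$ or a simple combination thereof — here one must track which of the three $\partial_i$ plays the role that $\partial_3$ played in Section 13). For $\partial_2\partial_3$, I would compose the $\partial_2$- and $\partial_3$-commuting relations to obtain, as in \eqref{eq:whf-whD2}, that $\wh{\partial_2\partial_3 f}_{k,m}^{0,\b+2,0}$ equals $\wh f_{k,m+2}^{-1,\b,-1}$ minus a multiple $\widetilde A_{k,m+2}$ of $\wh f_{k+2,m+2}^{-1,\b,-1}$, with the $k=0$ term carrying the extra $\tfrac{m+2}{2}\wh f_{1,m+2}^{-1,\b,-1}$ piece coming from \eqref{eq:akn-bkn2}. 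Then the Parseval identities give formulas analogous to \eqref{eq:normD3} and \eqref{eq:normD12}.

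The final step is the term-by-term matching: split each inner sum over $k$ at $k\sim m/2$; on $1\le k\lesssim m/2$ write $\wh f_{k,m}^{-1,\b,-1}+e_{0,k+2,m}^\b\wh f_{k+2,m}^{-1,\b,-1}=(\wh f_{k,m}^{-1,\b,-1}-\widetilde A_{k,m}\wh f_{k+2,m}^{-1,\b,-1})+\tfrac{m}{c(2k+3)}\wh f_{k+2,m}^{-1,\b,-1}$, control the first piece by $n^{-4}[E_{n-2}(\partial_2\partial_3 f)_{0,\b+2,0}]^2$ using $h_{k,m}^{0,\b,0}\le cm^{-4}h_{k,m-2}^{0,\b+2,0}$ (valid when $m-k\sim m$, from \eqref{eq:hkn}) and the second by $n^{-2}[E_{n-1}(\partial_1 f)_{0,\b,0}]^2$ using $\tfrac{m^2}{k^2}h_{k+1,m}^{0,\b,0}\le cm^{-2}h_{k+1,m-1}^{0,\b,0}$; on $k\sim m$ the coefficients $e_{i,k+2,m}^\b$ are $O(1)$ (resp. $O(m^{-1})$, $O(m^{-2})$ for $i=0,1,2$) and the remaining $h_{k,m}^{0,\b,0}\le cm^{-2}h_{k\pm1,m-1}^{0,\b,0}$ estimates close the bound by $n^{-2}[E_{n-1}(\partial_1 f)_{0,\b,0}]^2$. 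The $k=0,1$ rows are handled exactly as in Section 13 using the exceptional coefficients. The main obstacle is purely organizational: getting the permutation conventions straight so that the roles of $\partial_1,\partial_2,\partial_3$ and of the three weight-parameter slots are assigned consistently throughout (in Section 13 the "good" single derivative is $\partial_3$ and the "good" mixed derivative is $\partial_1\partial_2$; here, because the negative parameters sit in slots $1$ and $3$, the good single derivative is $\partial_1$ — whose commuting relation involves the $L$-basis — and the good mixed one is $\partial_2\partial_3$), after which every individual estimate is the permuted twin of one already carried out.
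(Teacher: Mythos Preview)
Your approach is correct and is precisely what the paper does: it states Theorem~\ref{thm:norm-1b1} (and Theorem~\ref{thm:norm-a11}) without proof, deriving them from Theorem~\ref{thm:norm-11g} by simultaneous permutation of $(x,y,1-x-y)$ and the parameter slots. One bookkeeping slip in your second route: the basis for which $\partial_1$ acts as a simple shift is the $K$-basis, not the $L$-basis---by \eqref{eq:3bases-transform} one has $K_{k,n}^{-1,\b,-1}(x,y)=J_{k,n}^{-1,-1,\b}(1-x-y,x)$ and by \eqref{eq:diffK} $\partial_x K_{k,n}^{-1,\b,-1}=K_{k-1,n-1}^{0,\b,0}$, consistent with $\la\cdot,\cdot\ra_{-1,\b,-1}^K$ in \eqref{eq:ipd1b1} being the inner product built from $\partial_1$; with that correction the direct rewrite goes through verbatim.
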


\begin{thm}\label{thm:norm-a11}
For $f \in W_2^1(\varpi_{\a,0,0})\cap W_2^2(\varpi_{\a+2,0,0})$, and $n=2,3,\ldots$, 
\begin{align}\label{eq:norm-a11}
 \left \|f-S_n^{\a,-1,-1} f \right\|_{\a,0,0} \le  \frac{c_{1,\a}}{n}
  E_{n-1}( \partial_2 f)_{\a,0,0} +  \frac{c_{2,\a}}{n^2} E_{n-2}( \partial_3 \partial_1 f)_{\a+2,0,0}.  
\end{align}
\end{thm}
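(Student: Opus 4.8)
The plan is to derive Theorem \ref{thm:norm-a11} from Theorem \ref{thm:norm-11g} by exactly the same symmetry mechanism used throughout Sections 5, 6 and 12, namely a simultaneous permutation of the parameters $(\a,\b,\g)$ and the barycentric coordinates $(x,y,1-x-y)$. Concretely, Theorem \ref{thm:norm-11g} is the statement for the triplet $(-1,-1,\g)$, and the triplet $(\a,-1,-1)$ is obtained from $(-1,-1,\a)$ by a cyclic permutation; by \eqref{eq:3bases-transform} the family $\{L_{k,n}^{\a,-1,-1}\}$ corresponds under this permutation to the family $\{J_{k,n}^{-1,-1,\a}\}$, and by Subsection \ref{sect:-1b-1} the space $\CV_n(\varpi_{\a,-1,-1})$ is spanned by all three families with $S_n^{\a,-1,-1}$ independent of the choice. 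So first I would set $g(u,v) := f(v, 1-u-v)$, which is the change of variables induced by \eqref{eq:d-transform2}, observe that $g \in W_2^1(\varpi_{0,0,\a}) \cap W_2^2(\varpi_{0,0,\a+2})$ precisely when $f \in W_2^1(\varpi_{\a,0,0}) \cap W_2^2(\varpi_{\a+2,0,0})$, and note that $L^2(\varpi_{\a,0,0})$-norm of $f$ equals the $L^2(\varpi_{0,0,\a})$-norm of $g$ since the weight and the Jacobian transform compatibly.

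Next I would track how the partial sum operator and the derivatives transform. By \eqref{eq:d-transform2} we have $\partial_x h(x,y) = -\partial_2 f(y,z)$, $\partial_y h(x,y) = -\partial_3 f(y,z)$, $\partial_z h(x,y) = \partial_1 f(y,z)$ with $z = 1-x-y$; applied in the form relevant here this identifies $\partial_3 g$ with (a sign times) $\partial_1 f$ and $\partial_1 \partial_2 g$ with (a sign times) $\partial_3 \partial_1 f$, up to relabeling of variables. Because the three bases span the same $\CV_n$ and the projections agree, $S_n^{\a,-1,-1} f$ transforms into $S_n^{-1,-1,\a} g$ under the same substitution, so that
\[
  \bigl\| f - S_n^{\a,-1,-1} f \bigr\|_{\a,0,0} = \bigl\| g - S_n^{-1,-1,\a} g \bigr\|_{0,0,\a}.
\]
Applying Theorem \ref{thm:norm-11g} to $g$ with $\g = \a$ gives the bound $\frac{c_{1,\a}}{n} E_{n-1}(\partial_3 g)_{0,0,\a} + \frac{c_{2,\a}}{n^2} E_{n-2}(\partial_1\partial_2 g)_{0,0,\a+2}$, and then translating the two best-approximation errors back through the change of variables — using that $E_n(\cdot)_{\a,\b,\g}$ is invariant under simultaneous permutation of parameters and coordinates, since both the class $\Pi_n^2$ and the relevant $L^2$ norm are invariant — converts these into $\frac{c_{1,\a}}{n} E_{n-1}(\partial_2 f)_{\a,0,0} + \frac{c_{2,\a}}{n^2} E_{n-2}(\partial_3\partial_1 f)_{\a+2,0,0}$, which is exactly \eqref{eq:norm-a11}.

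The only genuinely delicate point, and the one I would check most carefully, is the precise bookkeeping of which derivative maps to which under the permutation and what the corresponding parameter shifts are: the proof of Theorem \ref{thm:norm-11g} uses $\partial_3$ (with shift to $(0,0,\g)$) and $\partial_1\partial_2$ (with shift to $(0,0,\g+2)$), and one must confirm that the cyclic substitution carrying $(-1,-1,\a) \mapsto (\a,-1,-1)$ sends $\partial_3 \mapsto \partial_2$ with shift $(0,0,\a) \mapsto (\a,0,0)$ and $\partial_1\partial_2 \mapsto \partial_3\partial_1$ with shift $(0,0,\a+2)\mapsto(\a+2,0,0)$, consistently with the transformation rules \eqref{eq:d-transform1}–\eqref{eq:d-transform2} and with the commuting relations recorded at the end of Subsection \ref{sect:-1b-1}. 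Everything else is a routine transcription. Since the previous theorems in this section (Theorems \ref{thm:norm-1b1} and the analogous permutations in Section 12) were stated without proof for exactly this reason, I would likewise present this as a short remark: \textbf{Proof.} This follows from Theorem \ref{thm:norm-11g} by the simultaneous permutation of $(\a,\b,\g)$ and $(x,y,1-x-y)$ that sends $(-1,-1,\a)$ to $(\a,-1,-1)$, using \eqref{eq:3bases-transform}, the transformation rules \eqref{eq:d-transform2} for the partial derivatives, the fact that $\CV_n(\varpi_{\a,-1,-1})$ carries all three mutually orthogonal bases with a common partial sum operator, and the permutation-invariance of $E_n(\cdot)_{\a,\b,\g}$. $\square$
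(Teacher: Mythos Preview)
Your overall approach is the same as the paper's: Theorem~\ref{thm:norm-a11} is stated there without proof, as one of the two ``analogues of Theorem~\ref{thm:norm-11g} for the two permutations of $(-1,-1,\g)$,'' so a symmetry argument via \eqref{eq:3bases-transform} and the derivative transformation rules is exactly what is intended.

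However, your execution of the bookkeeping --- the very point you flagged as delicate --- is off. With your choice $g(u,v) := f(v,1-u-v)$, a direct check gives
\[
\|g\|_{0,0,\a}^2 = \int_\triangle |f(v,1-u-v)|^2 (1-u-v)^\a\,du\,dv = \int_\triangle |f(x,y)|^2\, y^\a\,dx\,dy = \|f\|_{0,\a,0}^2,
\]
so this substitution carries $(-1,-1,\a)$ to $(-1,\a,-1)$, not to $(\a,-1,-1)$: you would end up proving Theorem~\ref{thm:norm-1b1} instead. Correspondingly, using \eqref{eq:d-transform2} as you do yields $\partial_3 g = \partial_1 f$ and $\partial_1\partial_2 g = \partial_2\partial_3 f$, which are the derivatives appearing in \eqref{eq:norm-1b1}, not \eqref{eq:norm-a11}. (You in fact write both ``$\partial_3 g \leftrightarrow \partial_1 f$'' and, a few lines later, ``$\partial_3 \mapsto \partial_2$''; only the first is consistent with your substitution, and it is the wrong one for this theorem.)

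The correct cyclic permutation uses \eqref{eq:d-transform1}: set $g(u,v) := f(1-u-v,\,u)$, which is the substitution behind $L_{k,n}^{\a,-1,-1}(x,y) = J_{k,n}^{-1,-1,\a}(y,1-x-y)$ read in the right direction. Then $\|g\|_{0,0,\a} = \|f\|_{\a,0,0}$, and \eqref{eq:d-transform1} gives $\partial_3 g = -\partial_2 f$ and $\partial_1\partial_2 g = -\partial_3\partial_1 f$, with the parameter shifts $(0,0,\a)\mapsto(\a,0,0)$ and $(0,0,\a+2)\mapsto(\a+2,0,0)$. Applying Theorem~\ref{thm:norm-11g} to this $g$ then yields exactly \eqref{eq:norm-a11}. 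Once the substitution is corrected, the rest of your argument goes through unchanged.
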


\section{Approximation by $S_n^{-1,-1,-1} f$ and the proof of Theorem 1.1.}
\label{sect14}
\setcounter{equation}{0}

Let $S_n^{-1,-1,-1}$ be the $n$-th partial sum of the orthogonal expansion defined by
$$
       S_n^{-1,-1,-1}f: = \sum_{k=0}^n \proj_n^{-1,-1,-1}f = \sum_{m=0}^n \sum_{k=0}^m \wh f_{k,m}^{-1,-1,-1} J_{k,m}^{-1,-1,-1}, 
$$
where $\wh f_{k,m}^{-1,-1,-1}$ are defined as in \eqref{eq:Fourier111}. Our main result in this section is as follows: 

\begin{thm} \label{thm:est111}
Let $f \in \CW_2^2$. Then
\begin{align} \label{eq:D-est-111}
\begin{split}
& \left \|\partial_1 f - \partial_1 S_n^{-1,-1,-1} f \right \| \le \frac{c}{n} 
 \big ( E_{n-2}(\partial_2 \partial_1 f)_{0,0,1} + E_{n-2}(\partial_1 \partial_3 f)_{1,0,0}\big), \\
& \left \|\partial_2 f - \partial_2 S_n^{-1,-1,-1} f \right \| \le \frac{c}{n} 
 \big ( E_{n-2}(\partial_1 \partial_2 f)_{0,0,1}+E_{n-2}(\partial_2 \partial_3 f)_{0,1,0} \big), \\
& \left \|\partial_3 f - \partial_3 S_n^{-1,-1,-1} f \right \| \le \frac{c}{n} 
 \big (E_{n-2}(\partial_1 \partial_3 f)_{1,0,0}+E_{n-2}(\partial_2 \partial_3 f)_{0,1,0} \big), 
\end{split}
\end{align}
and 
\begin{align} \label{eq:f-eat-111}
  \left \| f -  S_n^{-1,-1,-1} f \right \|  \le \frac{c}{n^2}  
  \big ( E_{n-2}(\partial_1 \partial_2 f)_{0,0,1} & +E_{n-2}(\partial_2 \partial_3 f)_{0,1,0} \\
   &  + E_{n-2}(\partial_3 \partial_1 f)_{1,0,0} \big). \notag
\end{align}
\end{thm}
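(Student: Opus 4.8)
\textbf{Proof proposal for Theorem~\ref{thm:est111}.}
The plan is to exploit the commuting relations in Theorem~\ref{thm:Dproj-1-1-1}, which reduce the estimates for derivatives of $f-S_n^{-1,-1,-1}f$ to estimates already obtained for partial sums with two negative parameters. Concretely, $\partial_i S_n^{-1,-1,-1}f = S_{n-1}^{\ldots}\partial_i f$ where the triple of parameters has exactly one $0$ and two $-1$'s; for instance $\partial_3 S_n^{-1,-1,-1}f = S_{n-1}^{0,0,-1}\partial_3 f$, and then Theorem~\ref{thm:norm-11g} applied to $g=\partial_3 f$ (with $\g=-1$) gives
\[
 \|\partial_3 f - \partial_3 S_n^{-1,-1,-1}f\| = \|g - S_{n-1}^{0,0,-1}g\|_{0,0,-1}
   \le \frac{c}{n}E_{n-2}(\partial_3 g)_{0,0,-1} + \frac{c}{n^2}E_{n-3}(\partial_1\partial_2 g)_{0,0,1}.
\]
The first term on the right must be rewritten: $\partial_3^2 f$ is not one of the mixed derivatives appearing in the claimed bound, so I would instead bound $\|\partial_3 f - S_{n-1}^{0,0,-1}\partial_3 f\|_{0,0,-1}$ directly by applying Theorem~\ref{thm:norm-normD0bg} (the $(-1,\b,\g)$ case, here with $\b=\g=0$ after a permutation), which yields the bound in terms of $E_{n-2}(\partial_1\partial_3 f)_{1,0,0}$ and $E_{n-2}(\partial_2\partial_3 f)_{0,1,0}$ — exactly the third line of \eqref{eq:D-est-111}. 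The same scheme, using the permuted versions Theorems~\ref{thm:norm-normDa0g}, \ref{thm:norm-1b1}, \ref{thm:norm-a11}, handles $\partial_1$ and $\partial_2$; one only needs to track which permutation of parameters occurs and match the weights $\varpi_{0,0,1},\varpi_{1,0,0},\varpi_{0,1,0}$ correctly. Here one uses that $\|\cdot\|=\|\cdot\|_{0,0,0}$ and, since the relevant weights satisfy $\varpi_{\a,\b,\g}\le 1$ on $\triangle$ for nonnegative parameters, the $L^2$-norm controls the weighted norms only in the wrong direction; so the argument must instead be carried out intrinsically in the weighted spaces, i.e. by applying the intermediate theorems with the correct weighted norm on the left and then noting $\|h\|\le\|h\|_{\a,\b,\g}^{?}$ is \emph{not} available — rather, one should observe that $\partial_i S_n^{-1,-1,-1}f$ lands in a space where the natural norm is already $\|\cdot\|_{\a,\b,\g}$ with the exact weights appearing, and these weighted norms dominate $\|\cdot\|$ because $\varpi_{\a,\b,\g}(x,y)\ge$ a constant fails near the boundary; the clean route is to keep everything in the weighted norm matching $\partial_i f\in\CW_2^1(\varpi_{\ldots})$ and only at the very end use $\|h\|\le c\,\|h\|_{\a,\b,\g}$ when $\a,\b,\g\le 0$, which does hold since then $\varpi_{\a,\b,\g}\ge 1$. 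I will make sure to invoke this last inequality in the correct direction.

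For the $L^2$-estimate \eqref{eq:f-eat-111}, I would argue by expanding $f-S_n^{-1,-1,-1}f=\sum_{m>n}\proj_m^{-1,-1,-1}f$ in the $J_{k,m}^{-1,-1,-1}$ basis and using the explicit formulas \eqref{eq:J-1-1-1}, which express $J_{k,m}^{-1,-1,-1}$ (for $2\le k\le m-1$) as $xy(1-x-y)$ times $J_{k-2,m-3}^{1,1,1}$, together with the boundary cases $k=0,1,m$. Substituting the three-term type relations of Proposition~\ref{prop:Ja+1b+1} (with appropriate parameters) to re-expand $J_{k,m}^{-1,-1,-1}$ in terms of the orthonormal system for $\varpi_{0,0,0}$, one writes $\|f-S_n^{-1,-1,-1}f\|^2$ as a double series in $|\wh f_{k,m}^{-1,-1,-1}|^2$ with coefficients $\sim h_{k,m}^{0,0,0}$ times rational factors. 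Then, exactly as in the proof of Theorem~\ref{thm:norm-11g}, one uses Proposition~\ref{cor:whf-whDf} and \eqref{eq:whf-whD2} (which remain valid formally at $\a=\b=-1$) to re-express $\wh f_{k,m}^{-1,-1,-1}$ in terms of Fourier coefficients of $\partial_1\partial_2 f$, $\partial_2\partial_3 f$, $\partial_3\partial_1 f$, and balances the ratios of $h$-values: the key bounds are $h_{k,m}^{0,0,0}/h_{k,m-2}^{0,0,2}\sim m^{-4}$ and $h_{k,m}^{0,0,0}/h_{k-2,m-2}^{1,1,1}\sim m^{-4}$-type estimates for $k$ small, and $h_{k,m}^{0,0,0}/h_{k-1,m-1}^{1,1,0}\sim m^{-2}$ for $k\sim m$, with the combination of two such drops giving the $n^{-2}$ on the left (note the stated bound has $n^{-2}$, i.e. only two orders are gained because the right-hand side is already a second-order quantity $\mathcal E^{(2)}$). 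The bookkeeping should be organized by splitting the sum over $k$ at $k\sim m/2$ and treating the $k=0,1$ terms separately because of the modified leading polynomials.

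An alternative, and probably cleaner, route to \eqref{eq:f-eat-111} is to iterate Theorem~\ref{thm:est-aag}: $\|f-S_n^{-1,-1,-1}f\|=\|f-S_n^{-1,-1,-1}f\|_{0,0,0}$, and since $S_n^{-1,-1,-1}f$ is a polynomial of degree $n$, $\|f-S_n^{-1,-1,-1}f\|\ge E_n(f)_{0,0,0}$ is the wrong direction; instead I would bound $\|f-S_n^{-1,-1,-1}f\|$ by summing the tail and using the relation $\wh f_{k,m}^{-1,-1,-1}$ twice-differentiated, i.e. apply the already-proved derivative estimates \eqref{eq:D-est-111} together with a Poincaré/Landau–Kolmogorov type inequality on the triangle to trade one more factor of $n^{-1}$; but since such an inequality is not established in the paper, the safe plan is the direct series computation described above. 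The main obstacle I anticipate is precisely the bookkeeping in that direct computation: one must handle the six groups of terms coming from Proposition~\ref{prop:Ja+1b+1} (three ``$d$'' and three ``$e$'' coefficients, plus the exceptional $k=1$ row with the extra $m/2$ term as in \eqref{eq:J11g-00gB}), regroup them so that each $\wh f_{k+2,m}^{-1,-1,-1}$ appears with a coefficient that, after adding and subtracting the ``$A_{k,m}$'' combination, splits into a piece proportional to $\wh f_{k,m}^{-1,-1,-1}-A_{k,m}\wh f_{k+2,m}^{-1,-1,-1}$ (controlled by $E_{n-2}(\partial_1\partial_2 f)_{0,0,1}$ via \eqref{eq:normD12}) and a lower-order piece (controlled by $E_{n-2}(\partial_3 f)$-type quantities, which in turn by the derivative estimates reduce to the mixed second derivatives), all while keeping the ratios of $h_{k,m}$-values at the right power of $m$. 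This is lengthy but entirely parallel to the proof of Theorem~\ref{thm:norm-11g}, so I would present it by quoting that proof's mechanism and only exhibiting the new coefficient identities.
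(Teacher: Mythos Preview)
Your overall strategy matches the paper's. For the derivative estimates you correctly invoke Theorem~\ref{thm:Dproj-1-1-1} followed by the one-negative-parameter estimates; for \eqref{eq:f-eat-111} you correctly propose a direct series computation parallel to the proof of Theorem~\ref{thm:norm-11g}. Two corrections are needed.

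First, your discussion of norm comparisons is a detour based on a misreading. The expression $\|\partial_3 f - S_{n-1}^{0,0,-1}\partial_3 f\|_{0,0,-1}$ you write is not even well-defined (the weight $\varpi_{0,0,-1}$ is not integrable), and no comparison is needed anyway: Theorem~\ref{thm:norm-normD} with $\a=\b=0$ already has $\|\cdot\|_{0,0,0}=\|\cdot\|$ on its left-hand side, and Theorems~\ref{thm:norm-normDa0g} and~\ref{thm:norm-normD0bg} with the free parameters set to $0$ handle $\partial_1$ and $\partial_2$ directly in the unweighted norm. (Also, Theorem~\ref{thm:norm-11g}, which you first invoke, treats $S_n^{-1,-1,\g}$, not $S_{n-1}^{0,0,-1}$; and for $\partial_3$ the relevant result is Theorem~\ref{thm:norm-normD}, not~\ref{thm:norm-normD0bg}.) This is exactly how the paper proceeds, in one line.

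Second, for \eqref{eq:f-eat-111} the paper organizes the bookkeeping more economically than a direct application of Proposition~\ref{prop:Ja+1b+1}. It first uses \eqref{eq:recurJc} with $\g=-1$ to write $J_{k,m}^{-1,-1,-1}=J_{k,m}^{-1,-1,0}+\tau_{m-k}^{-1,2k-1}J_{k,m-1}^{-1,-1,0}$, splits by the triangle inequality, and then reuses verbatim the expansion \eqref{eq:J11g-00g} of $J^{-1,-1,0}$ into $J^{0,0,0}$ already worked out in the proof of Theorem~\ref{thm:norm-11g}. The right-hand quantities are read off from the Parseval identities \eqref{eq:est111a} and \eqref{eq:est111b}, which express $[E(\partial_1\partial_3 f)_{1,0,0}]^2+[E(\partial_2\partial_3 f)_{0,1,0}]^2$ and $[E(\partial_1\partial_2 f)_{0,0,1}]^2$ directly in terms of the coefficients $\wh f_{k,m}^{-1,-1,-1}$. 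This two-step reduction lets one recycle the estimates of Theorem~\ref{thm:norm-11g} almost verbatim, with only a routine modification for the extra $\tau$ factor; your plan to redo the full Proposition~\ref{prop:Ja+1b+1} expansion from scratch would work but is heavier. The factorization \eqref{eq:J-1-1-1} via $xy(1-x-y)$ that you mention is not used and would not help here.
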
 

We note that that Theorem 1.1 follows immediately from this theorem. In fact, the estimate on the derivatives in 
\eqref{eq:D-est-111} is more refined than what we stated in Theorem \ref{thm:1.1}. 

For the proof, we start with the following proposition:

\begin{prop}
For $f\in \CW_2^2(\varpi_{-1,-1,-1})$. 
\begin{align} \label{eq:est111a}
   & \left[ E_n (\partial_1\partial_3 f)_{1,0,0}\right]^2 + \left[ E_n (\partial_2\partial_3 f)_{0,1,0}\right]^2 \\
   & \qquad\qquad  =  2 \sum_{m=n+1}^\infty \sum_{k=0}^m \left( \left |\wh f_{k,m+2}^{-1,-1,-1} \right|^2 + \f{(m+k+3)^2}{4(2k+3)^2} 
         \left|\wh  f_{k+2,m+2}^{-1,-1,-1} \right |^2 \right) h_{k,m}^{1,0,0}, \notag
\end{align}
and, with $A_{k,m}$ given in \eqref{eq:Akm}, 
\begin{align}\label{eq:est111b}
 \left[ E_{n-2}(\partial_1\partial_2 f)_{0,0,1}\right]^2  = 
    &\sum_{m=n-1}^\infty  \left |\wh f_{0,m+2}^{-1,-1,-1} + \frac{m+2}{2} \wh  f_{1,m+2}^{-1,-1,-1}
            -A_{0,m+2} \wh f_{2,m+2}^{-1,-1,-1}\right|^2 h_{0,m}^{0,0,1}   \notag \\
    & + \sum_{m=n-1}^\infty \sum_{k=1}^m \left |\wh f_{k,m+2}^{-1,-1,-1} -A_{k,m+2} \wh f_{k+2,m+2}^{-1,-1,-1} \right|^2 h_{k,m}^{0,0,1}.\end{align}
\end{prop}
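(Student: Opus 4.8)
\textbf{Proof plan for the last Proposition (equations \eqref{eq:est111a} and \eqref{eq:est111b}).}
The strategy is to reduce these two identities to the Parseval identity for ordinary Fourier orthogonal expansions, by expressing the Fourier coefficients of the relevant second-order partial derivatives in terms of $\wh f_{k,m}^{-1,-1,-1}$. First I would establish the analogue of Proposition \ref{cor:whf-whDf} in the singular case $\a=\b=-1$: because the proof of that proposition is a formal manipulation of the identities in \eqref{eq:diff1J} together with the commuting relation, it applies verbatim once one notes that $a_{k,n}^{-1,-1}$ and the modified values in \eqref{eq:akn-bkn2} are the correct coefficients. This yields $\wh{\partial_3 f}_{k,m-1}^{0,0,-1}=\wh f_{k+1,m}^{-1,-1,-1}$, and then applying the same relations one more time (now with parameters $(0,0,-1)$, where $\partial_1,\partial_2$ raise the first two parameters to $0$ and $1$ respectively and $\g$ from $-1$ to $0$) gives the coefficients of $\partial_1\partial_3 f$, $\partial_2\partial_3 f$ and $\partial_1\partial_2 f$. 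Concretely, $\wh{\partial_1\partial_3 f}_{k,m}$ and $\wh{\partial_2\partial_3 f}_{k,m}$ are linear combinations of $\wh f_{k,m+2}^{-1,-1,-1}$ and $\wh f_{k+2,m+2}^{-1,-1,-1}$ with coefficients built from $a_{k+1,m+1}^{0,0}$, $a_{k+1,m+1}^{0,-1}$ etc.; combining the two (adding, and using that $h_{k,m}^{1,0,0}=h_{k,m}^{0,1,0}$ by the symmetry of \eqref{eq:hkn}) should produce exactly the symmetric combination $|\wh f_{k,m+2}^{-1,-1,-1}|^2 + \frac{(m+k+3)^2}{4(2k+3)^2}|\wh f_{k+2,m+2}^{-1,-1,-1}|^2$ after the cross terms cancel. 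For \eqref{eq:est111b}, the coefficient identity is precisely \eqref{eq:whf-whD2} (already proved for general $\g$, in particular $\g=-1$), which singles out the $k=0$ term with the extra $\frac{m+2}{2}$ factor coming from \eqref{eq:akn-bkn2}.

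The second ingredient is Parseval. For \eqref{eq:est111a}, I would write
\begin{align*}
 \left[ E_n (\partial_1\partial_3 f)_{1,0,0}\right]^2 = \big\|\partial_1\partial_3 f - S_n^{1,0,0}(\partial_1\partial_3 f)\big\|_{1,0,0}^2
   = \sum_{m=n+1}^\infty \sum_{k=0}^m \big|\wh{\partial_1\partial_3 f}_{k,m}^{1,0,0}\big|^2 h_{k,m}^{1,0,0},
\end{align*}
and likewise for $\partial_2\partial_3 f$ with $h_{k,m}^{0,1,0}=h_{k,m}^{1,0,0}$; adding the two and substituting the coefficient formulas from the first step yields \eqref{eq:est111a}. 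For \eqref{eq:est111b} the same Parseval step with weight $\varpi_{0,0,1}$ together with \eqref{eq:whf-whD2} gives the stated identity directly, the $k=0$ row being separated because of the anomalous coefficient. One must also check the index bookkeeping: $\partial_1\partial_3$ lowers total degree by $2$, so a degree-$m$ piece of $f$ contributes to degree $m-2$, which is why the sums run over $m\ge n+1$ on the derivative side becomes $m\ge n-1$ after reindexing $m\mapsto m+2$ in \eqref{eq:est111b}, and why in \eqref{eq:est111a} the summand at level $m$ involves $\wh f_{\cdot,m+2}^{-1,-1,-1}$.

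The main obstacle I expect is verifying that the cross terms cancel in \eqref{eq:est111a}: one has two different linear combinations (from $\partial_1\partial_3$ and from $\partial_2\partial_3$) of the same two coefficients $\wh f_{k,m+2}^{-1,-1,-1}$, $\wh f_{k+2,m+2}^{-1,-1,-1}$, and the claim is that $|u|^2+|v|^2$ collapses to a diagonal expression with no $\wh f_{k,m+2}^{-1,-1,-1}\,\wh f_{k+2,m+2}^{-1,-1,-1}$ term and with the precise coefficient $\frac{(m+k+3)^2}{4(2k+3)^2}$ on the second square. This is a finite but delicate computation with the Pochhammer-type rational functions $a_{k,n}^{\a,\b}$, and is exactly the sort of identity the author verifies with a computer algebra system; in the write-up I would record the two coefficient vectors explicitly, note the orthogonality relation between them that forces the cancellation, and then read off the constant. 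A minor secondary point is handling the boundary values of $k$ (namely $k=m-1,m$, where $\wh f_{k+2,m+2}^{-1,-1,-1}=0$ by convention) and the special $k=0,1$ rows in \eqref{eq:est111b}, all of which are consistent with the conventions $J_{k,n}^{\a,\b,\g}=0$ for $k<0$ or $k>n$ already in force.
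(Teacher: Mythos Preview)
Your plan is correct and matches the paper's argument essentially step for step: extend \eqref{eq:whf-whD} to the singular parameters, iterate it to express $\wh{\partial_1\partial_3 f}_{k,m}^{1,0,0}$ and $\wh{\partial_2\partial_3 f}_{k,m}^{0,1,0}$ in terms of $\wh f_{\cdot,m+2}^{-1,-1,-1}$, combine via $h_{k,m}^{1,0,0}=h_{k,m}^{0,1,0}$ and Parseval, and for \eqref{eq:est111b} invoke \eqref{eq:whf-whD2} at $\g=-1$.

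One remark on the ``main obstacle'' you flag: the cross-term cancellation is much simpler than you anticipate. Carrying out the two-step coefficient computation explicitly gives
\[
\wh{\partial_1\partial_3 f}_{k,m}^{1,0,0}=-\wh f_{k,m+2}^{-1,-1,-1}-a_{k+1,m+1}^{0,0}\,\wh f_{k+2,m+2}^{-1,-1,-1},\qquad
\wh{\partial_2\partial_3 f}_{k,m}^{0,1,0}=-\wh f_{k,m+2}^{-1,-1,-1}+a_{k+1,m+1}^{0,0}\,\wh f_{k+2,m+2}^{-1,-1,-1},
\]
the sign flip coming directly from the $\mp$ pattern in the first two lines of \eqref{eq:whf-whD}. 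Thus the sum of squares is the elementary identity $|-u-v|^2+|-u+v|^2=2|u|^2+2|v|^2$, with $(a_{k+1,m+1}^{0,0})^2=\frac{(m+k+3)^2}{4(2k+3)^2}$ read off from \eqref{eq:akn-bkn}; no CAS verification or orthogonality relation between coefficient vectors is needed.
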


\begin{proof}
Since \eqref{eq:whf-whD} holds if one or all of $\a, \b,\g$ are equal to $-1$, applying \eqref{eq:whf-whD} twice and using the 
explicit formulas for $a_{k,n}^{\a,\b}$ in \eqref{eq:akn-bkn}, we obtain
\begin{align*}
  \wh{\partial_1\partial_3 f}_{k,m}^{1,0,0} & \ = - \wh f_{k,m+2}^{-1,-1,-1} - a_{k+1,m+1}^{0,0}  \wh f_{k+2,m+2}^{-1,-1,-1},\\
  \wh{\partial_2\partial_3 f}_{k,m}^{0,1,0}& \ = - \wh f_{k,m+2}^{-1,-1,-1} + a_{k+1,m+1}^{0,0}  \wh f_{k+2,m+2}^{-1,-1,-1}, 
\end{align*}
Since $E_n(f)_{\a,\b,\g} = \|f-S_n^{\a,\b,\g} f\|_{\a,\b,\g}$, it then follows immediately from the definition of $S_n^{\a,\b,\g}$ that 
\begin{align*}
\left[ E_n (\partial_1\partial_3 f)_{1,0,0}\right]^2 &\ = 
    \sum_{m=n+1}^\infty \sum_{k=0}^m \left | \wh f_{k,m+2}^{-1,-1,-1} + a_{k+1,m+1}^{0,0}\wh f_{k+2,m+2}^{-1,-1,-1}\right |^2 h_{k,m}^{1,0,0} \\
\left[ E_n (\partial_2\partial_3 f)_{0,1,0}\right]^2  &\ = 
    \sum_{m=n+1}^\infty \sum_{k=0}^m \left |\wh f_{k,m+2}^{-1,-1,-1} - a_{k+1,m+1}^{0,0}\wh f_{k+2,m+2}^{-1,-1,-1}\right |^2 h_{k,m}^{0,1,0}.
\end{align*}
Since $h_{k,m}^{1,0,0} = h_{k,m}^{0,0,1}$ by \eqref{eq:hkn}, we combine the two terms and use the explicit formula
of $a_{k+1,m+1}^{0,0}$ to obtain \eqref{eq:est111a}.  The identity \eqref{eq:est111b} is exactly \eqref{eq:normD12},
which holds for $\g =-1$ since \eqref{eq:whf-whD2} does.
\end{proof}

\medskip

\noindent
{\it Proof of Theorem \ref{thm:est111}.}
The derivative estimates are direct consequences of Theorem \ref{thm:Dproj-1-1-1} and the estimates in Section \ref{sect5}.

For the proof of the function estimate, we first derive, from \eqref{eq:recurJc}, the identity 
$$
   J_{k,m}^{-1,-1,-1} = J_{k,m}^{-1,-1,0} + \tau_{m-k}^{-1,2k-1} J_{k,m-1}^{-1,-1,0},
$$
where 
$$
     \tau_{m-k}^{-1,2k-1} = \frac{m+k-1}{2(m-1)(2m-1)}= \frac{1}{2(2m-1)} \left(1+ \f{k}{m-1}\right). 
$$
By the triangle inequality, we see that 
\begin{align} \label{eq:est111}
\|f-  S_n^{-1,-1,-1} f\|  =  & \Big \|\sum_{m=n+1}^\infty \sum_{k=0}^m \wh f_{k,m}^{-1,-1,-1} J_{k,m}^{-1,-1,-1} \Big \| \notag \\
  \!\!\!  \le   \Big \|\sum_{m=n+1}^\infty & \sum_{k=0}^m \wh f_{k,m}^{-1,-1,-1} J_{k,m}^{-1,-1,0}  \Big \| +
   \Big \|\sum_{m=n+1}^\infty \sum_{k=0}^m \wh f_{k,m}^{-1,-1,-1}  \tau_{m-k}^{-1,2k-1}J_{k,m-1}^{-1,-1,0}  \Big \|.
\end{align}
We can then apply the expansion of $J_{k,m}^{-1,-1,\g}$ in \eqref{eq:J11g-00g}, with $\g = 0$, and bound the right hand 
side of \eqref{eq:est111} as in the proof of Theorem \ref{thm:norm-11g} by the two quantities in \eqref{eq:est111a} and \eqref{eq:est111b}.

We first work with the first sum in the right hand side of \eqref{eq:est111}. 
The proof is parallel to that of Theorem \ref{thm:norm-11g}. Indeed, we have an analog of \eqref{eq:est11g-1} in which
$\wh f_{k,m}^{-1,-,1,\g}$ becomes $\wh f_{k,m}^{-1,-1,-1}$ but $h_{k,m}^{0,0,\g}$ becomes $h_{k,m}^{0,0,0}$; that is,
\begin{align}\label{eq:est111-A}
 \| f-  S_n^{-1,-1,0}  f\|^2 &  \notag \\
       \le \sum_{i=0}^2  \sum_{m=n+1}^\infty &
       \sum_{m=n+1}^\infty \bigg[ \left | d_{i, 0,m}^{0} \Big ( \wh f_{0,m}^{-1,-1,-1}+ \frac{m}{2}\wh f_{1,m}^{-1,-1,-1}\Big)
        e_{i, 2,m}^{0} \wh f_{ 2,m}^{-1,-1,-1}\right |^2 h_{0,m-i}^{0,0,0} \notag \\
  &\qquad\quad
  + \sum_{k=1}^{m-i} \left | d_{i, k,m}^0 \wh f_{k,m}^{-1,-1,-1}+  e_{i,k+2,m}^0 \wh f_{k+2,m}^{-1,-1,-1}\right |^2
         h_{k,m-i}^{0,0,0}\bigg]. 
\end{align}         
Since 
\eqref{eq:est111b} is exactly \eqref{eq:normD12} with $\g =-1$ and $h_{k,m}^{0,0,1}\sim h_{k,m}^{0,0,2}$ for $0\le k \le m/2$,
by \eqref{eq:hkn}, the essential part of the proof that uses \eqref{eq:est111b} follows exactly as in the proof of 
Theorem \ref{thm:norm-11g}. Moreover, when we need to use \eqref{eq:est111a} instead 
of \eqref{eq:normD3}, we gain a $m^{-2}$ as can be easily seen from $h_{k,m}^{0,0,0} \sim m^{-2} \frac{k^2}{m^2}
h_{k+1,m-1}^{1,0,0}$ and $h_{k,m}^{0,0,0} \sim m^{-2} (\frac{m^2} {k^2} h_{k-1,m-1}^{1,0,0})$, both of which follow 
from \eqref{eq:hkn}. Hence, the right hand side of \eqref{eq:est111-A} is bounded $c n^{-2}$ multiple of the quantities
in \eqref{eq:est111a} and \eqref{eq:est111b} exactly as in the proof of Theorem \ref{thm:norm-11g}. 
This takes care of the first term. 

For the second sum in the right hand side of \eqref{eq:est111}, we write it as a sum of two according to the sum 
$\tau_{m-k}^{-1,2k-1} = \frac{1}{2(2m-1)} +  \frac{1}{2(2m-1)} \frac{k}{m-1}$. The first sum is exactly as before,
apart from the factor $\frac{1}{2(2m-1)}$, and it follows exactly as before since $h_{k,m-1}^{0,0,0}  \sim m^{2} h_{k,m}^{0,0,0}$,
taking care of the fact that the second sum is over $J_{k,m-1}^{-1,-1,0}$ instead of $J_{k,m}^{-1,-1,0}$. For the second sum, 
we need to modify the proof since $\wh f_{k,m}^{-1,-,1,\g}$ becomes $\frac{1}{2(2m-1)} \wh f_{k,m}^{-1,-1,-1} \frac{k}{m-1}$.
Clearly, the factor $\frac{1}{2(2m-1)}$ can be used with $h_{k,m-1}^{0,0,0} 
 \sim m^{2} h_{k,m}^{0,0,0}$ again. For the sum of $i=0$ in the right hand side of \eqref{eq:est111-A}, 
with $\frac{k}{m-1}$ factor, we write, for $k \ge 1$,  
\begin{align*}
\f{k}{m-1} \wh f_{k,m}^{-1,-1,-1}  & +  e_{0,k+2,m}^0 \f{k+1}{m-1}\wh f_{k+2,m}^{-1,-1,-1} \\
     = & \frac{k}{m-1} \left(  \wh f_{k,m}^{-1,-1,-1}  +  e_{0,k+2,m}^0\wh f_{k+2,m}^{-1,-1,-1}\right)
      +    \frac{1}{m-1} e_{0,k+2,m}^0 \wh f_{k+2,m}^{-1,-1,-1}.  
\end{align*}
Since $k/(m-1)$ is bounded, the sum over the first term in the right hand side works as before, and we are left with bounding
the sum over $|\frac{1}{m-1} e_{0,k+2,m}^0 \wh f_{k+2,m}^{-1,-1,-1}|^2 h_{k,m}^{0,0,0}$, which is easily seen to be bounded,
using $h_{k,m-1}^{0,0,0} \sim m^{2} h_{k,m}^{0,0,0}$ again, by $c n^{-6}$ multiple of \eqref{eq:est111a}, with
$n^{-2}$ to spare. The same proof works for $k=0$, where the additional $m/2$ accompanying $\wh f_{1,m}^{-1,-1,-1}$
is \eqref{eq:est111-A} gives a bound $c n ^{-4}$ multiple of \eqref{eq:est111a}. The two sums of $i=1$ and $i=2$ can be 
estimated in exactly the same way. We omit the details. 
\qed

\section{Approximation by $S_n^{-1,-1,-2}$ and its permutations}
\label{sect15}
\setcounter{equation}{0}

We assume $m \ge 3$ below. Recall that an orthonormal basis for $\CV_m(W_{-1,-1,-2})$ under $\la \cdot,\cdot\ra_{-1,-1,-2}^J$ in
\eqref{eq:ipd112J} consists of $\wh J_{k,m}^{-1,-1,-2}$, which are equal to $J_{k,m}^{-1,-1,-2}$ if $0 \le k \le m-2$, and 
\begin{align*}
  \wh J_{m-1,m}^{-1,-1,-2} = & J_{m-1,m}^{-1,-1,-2} +2 J_{m,m}^{-1,-1,-2} \\
  \quad  \wh J_{m,m}^{-1,-1,-2} = & \frac{m-1}{2m-3} J_{m-1,m}^{-1,-1,-2} -  \frac{2(3m-5)}{2m-3} J_{m,m}^{-1,-1,-2}. 
\end{align*}
The Fourier coefficients $\wh f_{k,m}^{-1,-1,-2}$ are defined with respect to this basis. For $0 \le k \le m$, we define 
$\wh g_{k,m}$ by 
\begin{align}\label{eq:gkm-112}
\begin{split}
  \wh g_{k,m} = \begin{cases} \wh f_{k,m}^{-1,-1,-2}, & 0 \le k \le m-2, \\
    \wh f_{m-1,m}^{-1,-1,-2}+ \frac{m-1}{2m-3} \wh f_{m,m}^{-1,-1,-2}, & k = m-1, \\
    2 \wh f_{m-1,m}^{-1,-1,-2} - \frac{2(3m-5)}{2m-3} \wh f_{m,m}^{-1,-1,-2}, & k = m.
      \end{cases}
\end{split}
\end{align}
It is then easy to see that 
$$
  \proj_m^{-1,-1,-2} f = \sum_{k=0}^m \wh f_{k,m}^{-1,-1,-2} \wh J_{k,m}^{-1,-1,-2} 
        =  \sum_{k=0}^m \wh g_{k,m}  J_{k,m}^{-1,-1,-2}.  
$$

\begin{lem}
Let $\wh g_{k,m}$ be defined as in \eqref{eq:gkm-112}. Then
\begin{align}\label{eq:est112b}
 \left[ E_{n-2}(\partial_1\partial_2 f)_{0,0,0}\right]^2  = 
  & \sum_{m=n-1}^\infty  \left |\wh g_{0,m+2} + \frac{m+2}{2} \wh g_{1,m+2} 
        - A_{0,m+2} \wh g_{2,m+2} \right|^2 h_{0,m}^{0,0,0}   \notag \\
  & + \sum_{m=n-1}^\infty \sum_{k=1}^m \left |\wh g_{k,m+2}  -A_{k,m+2} \wh g_{k+2,m+2} \right|^2 h_{k,m}^{0,0,0},
\end{align}
and, with $A_{k,m}$ given in \eqref{eq:Akm},
\begin{align}\label{eq:est112a}
& \left[ E_n(\partial_1^2 \partial_3 f)_{2,0,0}\right]^2   +\left[ E_n(\partial_2^2 \partial_3 f)_{0,2,0}\right]^2  \\
    = 2 \sum_{m=n+1}^\infty & \sum_{k=0}^m  \left( \left | \mathfrak{d}_{k+2,m+3} \wh g_{k+2,m+3}\right|^2
    + \left | \wh g_{k+1,m+3}+ \mathfrak{c}_{k+3,m+3}\wh g_{k+3,m+3} \right|^2 \right)
       h_{k,m}^{2,0,0} \notag ,
\end{align}
where 
$$
\mathfrak{d}_{k+2,m+3} = \frac{m+k+4}{2(k+2)}, \quad \mathfrak{c}_{k+3,m+3} =\frac{(k+1)(m+k+4)(m+k+5)}{4(k+2)(2k+3)(2k+5)}.
$$
\end{lem}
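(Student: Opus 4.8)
The plan is to prove the Lemma exactly in the spirit of the analogous computations in Sections~\ref{sect11}--\ref{sect14}, namely by first converting each best-approximation error $E_N(\cdot)$ into a weighted $\ell^2$ sum of Fourier coefficients via the Parseval identity \eqref{eq:En}, and then expressing those coefficients through the coefficients $\wh g_{k,m}$ of $\proj_m^{-1,-1,-2}f$ using the commuting relations already established for the parameter string $(-1,-1,-2)$ and its permutations. The starting point is the identity $\proj_m^{-1,-1,-2}f = \sum_{k=0}^m \wh g_{k,m} J_{k,m}^{-1,-1,-2}$, which is immediate from \eqref{eq:gkm-112} and the definitions of $\wh J_{m-1,m}^{-1,-1,-2}$ and $\wh J_{m,m}^{-1,-1,-2}$; this lets us work with the plain (unmodified) basis $J_{k,m}^{-1,-1,-2}$ throughout, at the cost of the $\wh g$-reindexing near $k=m$.

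First I would establish \eqref{eq:est112b}. By Theorem~\ref{thm:comm112} we have $\partial_1\partial_2 S_n^{-1,-1,-2}f = S_{n-2}^{0,0,0}(\partial_1\partial_2 f)$, so $\left[E_{n-2}(\partial_1\partial_2 f)_{0,0,0}\right]^2$ equals $\|\partial_1\partial_2 f - S_{n-2}^{0,0,0}(\partial_1\partial_2 f)\|^2$, which by Parseval is a sum over $m\ge n-1$ and $0\le k\le m$ of $|\wh{\partial_1\partial_2 f}_{k,m}^{0,0,0}|^2 h_{k,m}^{0,0,0}$. Now I apply Proposition~\ref{cor:whf-whDf} twice, exactly as in the derivation of \eqref{eq:whf-whD2} with $\g=0$ (the formal manipulation is legitimate even when parameters hit $-1$): composing the first two identities of \eqref{eq:whf-whD} and using $a_{k+1,m-1}^{-1,0}-a_{k+1,m}^{-1,-1}=0$ for $k\ge1$, and $=m/2$ when $k=0$ (via \eqref{eq:akn-bkn2}), gives $\wh{\partial_1\partial_2 f}_{k,m}^{0,0,0}$ as the stated linear combination of $\wh f_{k,m+2}^{-1,-1,-1}$-type coefficients — but since all the relevant indices $k, k+2$ satisfy $k+2\le m$, i.e.\ lie in the range where $\wh g = \wh f^{-1,-1,-2}$, we may replace $\wh f^{-1,-1,-1}$ by $\wh g$ and obtain precisely \eqref{eq:est112b}, with $A_{k,m}$ as in \eqref{eq:Akm}. (One must double-check that the top-degree terms $k=m-1,m$ never enter on the right of \eqref{eq:est112b} because the differentiation drops degree; they do not.)

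Next I would prove \eqref{eq:est112a}. Here the relevant commuting relation is $\partial_3 S_n^{-2,-2,-2}f = S_{n-1}^{-1,-1,-2}\partial_3 f$ combined with $\partial_1 S^{-1,-1,-2} = S^{0,-1,-1}\partial_1$ and iterating; alternatively, and more directly, apply Proposition~\ref{cor:whf-whDf} three times starting from the coefficients $\wh{\partial_1\partial_3 f}^{1,0,0}$ and $\wh{\partial_2\partial_3 f}^{0,1,0}$ of $S^{-1,-1,-2}$ (cf.\ \eqref{eq:est111a}), then once more with $\partial_1$ resp.\ $\partial_2$, tracking the parameter shift $(-1,-1,-2)\to(2,0,0)$ resp.\ $(0,2,0)$. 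Using $h_{k,m}^{2,0,0}=h_{k,m}^{0,0,2}$-type symmetries from \eqref{eq:hkn} one then combines the two error quantities into a single sum over $h_{k,m}^{2,0,0}$. The constants $\mathfrak{d}_{k+2,m+3}$ and $\mathfrak{c}_{k+3,m+3}$ emerge from multiplying out the relevant products of $a_{j,\ell}^{\a,\b}$ from \eqref{eq:akn-bkn}; one checks they simplify to the stated closed forms. Again the index bookkeeping ensures that wherever $\wh g_{k+3,m+3}$ appears we have $k+3\le m+1$, so the $\wh g$-versus-$\wh f^{-1,-1,-2}$ distinction is harmless except possibly at the two top values, which I would verify explicitly.

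The main obstacle I anticipate is not any single estimate but the reindexing near the top degree: because the orthonormal basis $\wh J_{m-1,m}^{-1,-1,-2}$, $\wh J_{m,m}^{-1,-1,-2}$ differs from $J_{m-1,m}^{-1,-1,-2}$, $J_{m,m}^{-1,-1,-2}$, the coefficients $\wh g_{m-1,m}$, $\wh g_{m,m}$ in \eqref{eq:gkm-112} are \emph{linear combinations} of $\wh f_{m-1,m}^{-1,-1,-2}$ and $\wh f_{m,m}^{-1,-1,-2}$, and one must confirm that the differentiation identities, which naturally produce $\wh f^{-1,-1,-2}$-coefficients, land on indices where the substitution back to $\wh g$ is an identity (i.e.\ $k\le m-2$) rather than in the mixed regime. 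Since taking $\partial_1\partial_2$ (resp.\ $\partial_1^2\partial_3$, $\partial_2^2\partial_3$) of a degree-$(m+2)$ (resp.\ degree-$(m+3)$) polynomial produces something of degree $\le m$, the surviving coefficients indeed only involve $\wh f_{j,\cdot}^{-1,-1,-2}$ with $j$ at most the reduced degree, hence well inside the $\wh g=\wh f^{-1,-1,-2}$ range; spelling this out carefully is the one place the argument needs genuine attention rather than routine computation. Everything else is a direct transcription of the Parseval-plus-commuting-relation scheme already used to prove \eqref{eq:est111a}--\eqref{eq:est111b}.
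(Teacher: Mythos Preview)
Your approach is essentially the paper's: Parseval plus the commuting relations, with the Fourier coefficients of the derivatives read off by applying \eqref{eq:diff1J} repeatedly to $\proj_m^{-1,-1,-2}f=\sum_k \wh g_{k,m} J_{k,m}^{-1,-1,-2}$ and matching against $\proj_{m-2}^{0,0,0}(\partial_1\partial_2 f)$, $\proj_{m-3}^{2,0,0}(\partial_1^2\partial_3 f)$, $\proj_{m-3}^{0,2,0}(\partial_2^2\partial_3 f)$. The paper does exactly this, writing $\partial_1^2\partial_3 J_{k,m}^{-1,-1,-2}$ and $\partial_2^2\partial_3 J_{k,m}^{-1,-1,-2}$ explicitly, identifying $\mathfrak c,\mathfrak d$ as products and sums of the $a_{j,\ell}^{\a,\b}$, and then combining via $h_{k,m}^{2,0,0}=h_{k,m}^{0,2,0}$ (not $h_{k,m}^{0,0,2}$ as you wrote).

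One point you have backwards, though it is harmless for the proof. You claim the top-degree coefficients $\wh g_{m+1,m+2}$, $\wh g_{m+2,m+2}$ ``never enter'' the right of \eqref{eq:est112b}; in fact they do (take $k=m-1,m$). But this does not matter, because your detour through $\wh f^{-1,-1,-2}$ is unnecessary: the identity \eqref{eq:whf-whD2} (with $\g=-2$) is derived by differentiating the $J$-basis expansion of $\proj_m^{-1,-1,-2}f$, and $\wh g_{k,m}$ are by construction the $J$-basis coefficients. So the formula holds for $\wh g$ directly, at \emph{all} indices including $k=m-1,m$, with no need to compare against $\wh f^{-1,-1,-2}$. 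Drop that discussion and the argument is clean.
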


\begin{proof}
Since the proof of \eqref{eq:whf-whD2} can be derived from the derivatives of $J_{k,n}^{\a,\b,\g}$ and the identity 
$\partial_1\partial_2 \proj_n^{-1,-1,\g} f = \proj_{n-2}^{0,0,\g+2} (\partial_1 \partial_2 f)$, it also holds for $\g = -2$ with 
$\wh g_{k,m}$ in place of $\wh f^{-1,-1,-2}$. Consequently, \eqref{eq:est112b} follows. Now, using \eqref{eq:diff1J} 
repeatedly, it is easy to see that 
\begin{align*}
  \partial_1^2 \partial_3 J_{k,m}^{-1,-1,-2}  =&   \mathfrak{c}_{k,m} J_{k-3,m-3}^{2,0,0} 
    +  \mathfrak{d}_{k,m} J_{k-2,m-3}^{2,0,0} + J_{k-1,m-3}^{2,0,0},\\
  \partial_2^2 \partial_3 J_{k,m}^{-1,-1,-2}   = &   \mathfrak{c}_{k,m} J_{k-3,m-3}^{0,2,0} \ 
         -  \mathfrak{d}_{k,m} J_{k-2,m-3}^{0,2,0} + J_{k-1,m-3}^{0,2,0}. 
\end{align*}
where $\mathfrak{c}_{k,m} = a_{k-1,m-1}^{0,0}  a_{k-2,m-2}^{1,0}$ and 
$\mathfrak{d}_{k,m} =a_{k-1,m-1}^{0,0} + a_{k-1,m-2}^{1,0}$, which can be given explicitly by \eqref{eq:akn-bkn}.

By \eqref{eq:comm112} and relations in Section \ref{sect5}, $\partial_1^2 \partial_3 \proj_m^{-1,-1,-2} f = 
\proj_{m-3}^{2,0,0}(\partial_1^2 \partial_3  f)$ and \allowbreak $\partial_2^2 \partial_3 \proj_m^{-1,-1,-2} f = 
\proj_{m-3}^{2,0,0}(\partial_2^2 \partial_3  f)$. Together, these allow us to derive that 
\begin{align*}
\wh{\partial_1^2 \partial_3 f}_{k,m}^{2,0,0} = & \mathfrak{c}_{k+3,m+3}\wh g_{k+3,m+3}
     +  \mathfrak{d}_{k+2,m+3}\wh g_{k+2,m+3}  + \wh g_{k+1,m+3},\\
\wh{\partial_2^2 \partial_3 f}_{k,m}^{0,2,0} = & \mathfrak{c}_{k+3,m+3} \wh g_{k+3,m+3}
     -  \mathfrak{d}_{k+2,m+3} \wh g_{k+2,m+3}  + \wh g_{k+1,m+3},
\end{align*}
which can be used in the series of  $\left[ E_n(\partial_1^2 \partial_3 f)_{2,0,0}\right]^2$ and 
$\left[ E_n(\partial_2^2 \partial_3 f)_{0,2,0}\right]^2$, respectively. Since $h_{k,m}^{2,0,0} = h_{k,m}^{0,2,0}$, 
combining the two series leads to \eqref{eq:est112a}. 
\end{proof}

Let $S_n^{-1,-1,-2}$ be the $n$-th partial sum of the orthogonal expansion defined by
$$
       S_n^{-1,-1,-2}f: = \sum_{k=0}^n \proj_n^{-1,-1,-2}f. 
$$

\begin{thm} \label{thm:Est112}
For $f \in W_2^2$ and $n= 3,4,\ldots$,
\begin{align} \label{eq:Est112}
\| f- S_n^{-1,-1,-2}f\|   \le \, & \frac{c_1}{n^2} E_{n-2}(\partial_1 \partial_2 f)_{0,0,0}  \\
     +  & \frac{c_2}{n^3} \big[ E_{n-3}(\partial_1^2 \partial_2 f)_{2,0,0}+  E_{n-3}(\partial_2^2 \partial_3 f)_{0,2,0} \big]. \notag
\end{align}
\end{thm}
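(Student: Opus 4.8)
\textbf{Proof proposal for Theorem \ref{thm:Est112}.}

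The plan is to follow the template established in the proof of Theorem \ref{thm:est111}, using the expansion of $J_{k,m}^{-1,-1,-2}$ in terms of classical Jacobi polynomials on the triangle, and then controlling the resulting Fourier-coefficient series by the two quantities appearing on the right-hand side, namely $[E_{n-2}(\partial_1\partial_2 f)_{0,0,0}]^2$ as expressed in \eqref{eq:est112b} and $[E_{n-3}(\partial_1^2\partial_3 f)_{2,0,0}]^2 + [E_{n-3}(\partial_2^2\partial_3 f)_{0,2,0}]^2$ as expressed in \eqref{eq:est112a}. First I would use $f - S_n^{-1,-1,-2} f = \sum_{m=n+1}^\infty \proj_m^{-1,-1,-2} f = \sum_{m=n+1}^\infty \sum_{k=0}^m \wh g_{k,m} J_{k,m}^{-1,-1,-2}$, the identity recorded just before the lemma, which allows me to work with the honest basis $J_{k,m}^{-1,-1,-2}$ and the modified coefficients $\wh g_{k,m}$. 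Then I would apply \eqref{eq:recurJc} with $\g=-2$ to write $J_{k,m}^{-1,-1,-2}$ in terms of $J_{k,m}^{-1,-1,-1}$ and $J_{k,m-1}^{-1,-1,-1}$ (picking up a factor $\tau_{m-k}^{-1,2k-1}$ whose size is $O(1/m)$ in the leading term with a subleading $\tfrac{k}{m-1}$ piece, exactly as in the proof of Theorem \ref{thm:est111}), and iterate once more via \eqref{eq:recurJc} to land on $J_{k,m}^{-1,-1,0}$, and then use the expansion \eqref{eq:J11g-00g}--\eqref{eq:J11g-00gB} with $\g=0$ to reach the classical basis $J_{k,m}^{0,0,0}$.

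Once everything is expressed in the orthogonal basis $\{J_{k,m}^{0,0,0}\}$, the square of the $L^2$ norm is a sum of squares of linear combinations of $\wh g_{k,m}$ weighted by $h_{k,m}^{0,0,0}$, and I would split the sum over $k$ into the regime $0\le k\le m/2$ and $m/2\le k\le m$, exactly as in Theorems \ref{thm:norm-11g} and \ref{thm:est111}. In the low-$k$ regime I would rewrite the relevant linear combinations using the algebraic identity $\wh g_{k,m} + e_{0,k+2,m}^0 \wh g_{k+2,m} = (\wh g_{k,m} - A_{k,m}\wh g_{k+2,m}) + \tfrac{m}{2(2k+3)}\wh g_{k+2,m}$ (and its analogues for $i=1,2$ with the $B_{k,m}$ corrections), so that one piece is controlled by \eqref{eq:est112b} and the other by \eqref{eq:est112a}; here the point is that each application of \eqref{eq:recurJc} contributes an extra factor $h_{k,m-1}^{0,0,0} \sim m^2 h_{k,m}^{0,0,0}$, and the comparison ratios $h_{k,m}^{0,0,0}/h_{k,m-2}^{0,0,0}\sim m^{-4}$ for the $\partial_1\partial_2$-term and $h_{k,m}^{0,0,0}/h_{k,m-3}^{2,0,0}\sim m^{-6}\cdot\tfrac{k^2}{m^2}$ (from \eqref{eq:hkn}) for the triple-derivative term give, after bookkeeping, the claimed powers $n^{-2}$ and $n^{-3}$. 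In the high-$k$ regime one has $k\sim m$, so all the Jacobi-recurrence coefficients $d_{i,k,m}^0, e_{i,k,m}^0, \mathfrak{c}_{k,m}, \mathfrak{d}_{k,m}$ are $O(1)$ or decay appropriately, and the sum is dominated directly by \eqref{eq:est112a} using $h_{k,m}^{0,0,0}\sim m^{-2} h_{k\pm1,m-1}^{2,0,0}$.

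The main obstacle I anticipate is the careful tracking of the extra factors $\tau_{m-k}^{-1,2k-1}$ coming from the double application of \eqref{eq:recurJc} needed to pass from $\g=-2$ down to $\g=0$: each such step introduces an $O(1/m)$ factor with a $\tfrac{k}{m-1}$ correction term, and after two steps there are cross terms and the summation index shifts (sums over $J_{k,m-1}^{-1,-1,0}$, $J_{k,m-2}^{-1,-1,0}$ in addition to $J_{k,m}^{-1,-1,0}$), all of which must be reorganized and each resummed against the correct weight $h_{k,m}^{0,0,0}$. The bookkeeping is exactly of the type carried out at the end of the proof of Theorem \ref{thm:est111} (the passage splitting $\tau_{m-k}^{-1,2k-1} = \tfrac{1}{2(2m-1)} + \tfrac{1}{2(2m-1)}\tfrac{k}{m-1}$ and using $h_{k,m-1}^{0,0,0}\sim m^2 h_{k,m}^{0,0,0}$ to absorb the $1/(2m-1)$), and the gain from the extra derivative-order (we now go down to $\g=-2$ rather than $\g=-1$, hence one more application of \eqref{eq:recurJc} and one more factor of $m$) is precisely what produces the improvement from $n^{-2}$ in \eqref{eq:f-eat-111} to the $n^{-2}$ and $n^{-3}$ split here; I would present this step by isolating the worst term and showing it is bounded by $c n^{-2}$ (resp. $c n^{-3}$) times the right quantity, with all other terms strictly better, and then omit the remaining parallel estimates as routine, as is done in the preceding sections.
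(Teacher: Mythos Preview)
Your proposal is correct and follows essentially the same route as the paper's proof. The paper packages your two successive applications of \eqref{eq:recurJc} (from $\gamma=-2$ to $-1$ to $0$) into the single identity \eqref{eq:Jaa2-Jaa0} with $\alpha=-1$, yielding the three-term decomposition \eqref{eq:est112} with coefficients $\mathfrak{u}_{k,m}^{-1}$ and $\mathfrak{v}_{k,m}^{-1}$; after that both arguments coincide, using \eqref{eq:J11g-00g} with $\gamma=0$ and controlling the resulting coefficient sums by \eqref{eq:est112a} and \eqref{eq:est112b} via the same low-$k$/high-$k$ split. One small correction: your stated ratio $h_{k,m}^{0,0,0}/h_{k,m-3}^{2,0,0}\sim m^{-6}\cdot k^2/m^2$ has the $k/m$ factor inverted relative to what \eqref{eq:hkn} actually gives (the paper uses $h_{k,m}^{0,0,0}\le c\,m^{-6}\tfrac{m^2}{k^2}h_{k,m-3}^{2,0,0}$), but this does not affect the argument since the extra $m^2/k^2$ is exactly absorbed by the $\mathfrak{d}_{k+2,m+3}\sim m/k$ weight in \eqref{eq:est112a}.
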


\begin{proof}
We assume $m \ge 3$ below. By \eqref{eq:recurJc}, we deduce that 
\begin{equation} \label{eq:Jaa2-Jaa0}
 J_{k,m}^{\a,\a,-2} = J_{k,m}^{\a,\a,0} +  \mathfrak{u}_{k,m}^\a J_{k,m-1}^{\a,\a,0} 
     + \mathfrak{v}_{k,m}^\a J_{k,m-2}^{\a,\a,0}, 
\end{equation}
where 
\begin{align*}
 \mathfrak{u}_{k,m}^\a = & \tau_{m-k}^{-1,2k+2\a+1}+\tau_{m-k}^{-2,2k+2\a+1} = \frac{2(m+k+2\a+1)}{(2m+2\a-1)(2m+2\a+1)},\\
 \mathfrak{v}_{k,m}^\a = & \tau_{m-k}^{-2,2k+2 \a +1}\tau_{m-k-1}^{-1,2k+2\a+1} = \frac{(m+k+2\a)(m+k+2\a+1)}
      {4(m+\a-1)(m+\a)(2m+2\a-1)^2}. 
\end{align*}
For $\a = -1$, we conclude, by the triangle inequality, that 
\begin{align} \label{eq:est112}
\|f-  S_n^{-1,-1,-2} f\|  = & \, \Big \|\sum_{m=n+1}^\infty \sum_{k=0}^m \wh g _{k,m} J_{k,m}^{-1,-1,-2} \Big \| \notag \\
     \le &\, \Big \|\sum_{m=n+1}^\infty \sum_{k=0}^m \wh g_{k,m} J_{k,m}^{-1,-1,0}  \Big \| + 
 \Big \|\sum_{m=n+1}^\infty \sum_{k=0}^m \ \mathfrak{u}_{k,m}^{-1} \wh g_{k,m} J_{k,m-1}^{-1,-1,0}  \Big \| \\
   & +   \Big \|\sum_{m=n+1}^\infty \sum_{k=0}^m \ \mathfrak{v}_{k,m}^{-1} \wh g_{k,m} J_{k,m-2}^{-1,-1,0}  \Big \|  \notag
\end{align}
We need to bound the three sums in the right hand side of  \eqref{eq:est112} by the quantities in 
\eqref{eq:est112a} and \eqref{eq:est112b}.
 
The estimate of the first sum is parallel to that of Theorem \ref{thm:est111}, which goes back to the proof of Theorem
\ref{thm:norm-11g}. First of all, \eqref{eq:est112b} is the same as \eqref{eq:normD12}, if we replace 
$\wh f_{k,m}^{-1,-,1,\g}$ by $\wh g_{k,m} \sim \wh f^{-1,-1,-2}$ and $h_{0,m}^{0,0,\g+2}$ by $h_{0,m}^{0,0,0}$. 
We have three sums as in the right hand side of \eqref{eq:est111-A} with $\wh f_{k,m}^{-1,-,1,\g}$ replaced by 
$\wh g_{k,m}$. Since $h_{k,m}^{0,0,0}\sim h_{k,m}^{0,0,2}$ for $0\le k \le m/2$, by \eqref{eq:hkn}, the part of the proof 
of Theorem \ref{thm:norm-11g} that uses \eqref{eq:est112b} works exactly in the current case. For the part 
$|\frac{m}{k} \wh g_{k+2,m}|^2 h_{k,m}^{0,0,0}$ for $0 \le k \le m/2$, we use $h_{k,m}^{0,0,0} \le c m^{-6} \frac{m^2}{k^2} 
h_{k,m-3}^{2,0,0}$ and $h_{k,m}^{0,0,0} \le c m^{-6} \frac{m^2}{k^2} h_{k-2,m-3}^{2,0,0}$ to bound it by $c n^{-6}$ 
multiple of the right hand side of \eqref{eq:est112a}. The parts that involve $B_{k,m}$ in the sums of $i=1$ and $i=2$
are handled similarly. The case $k \ge m/2$ can be bounded straightforwardly as in the proof of Theorem \ref{thm:norm-11g}.
For example, for the sum with $i=1$, we use the fact that $|d_{1,k,m+1}| \le c m^{-1}$ and $|e_{1,k,m+1}| \le c m^{-1}$, and
$h_{k,m}^{0,0,0} \le c m^{-4} \frac{m^2}{k^2} h_{k,m-2}^{2,0,0}$ and $h_{k,m}^{0,0,0} \le c m^{-4} \frac{m^2}{k^2} 
h_{k-2,m-2}^{2,0,0}$ to show that term is bound by $c n^{-6}$ multiplied by the right hand side of \eqref{eq:est112a}. 
The other two cases, $i=0$ and $i=2$, are handled similarly. 

For the second and the third sums in the right hand side of \eqref{eq:est112}, we need to modify our proof since 
$\wh g_{k,m}$ is replaced by $\wh g_{k,m} \mathfrak{u}_{k,m}^{-1}$ and $\wh g_{k,m} \mathfrak{v}_{k,m}^{-1}$,
respectively. Since 
\begin{align*}
 \mathfrak{u}_{k,m}^{-1} & = \frac{2 (m-1)}{(2m-3) (2m-1)}  +  \frac{2 k}{(2m-3) (2m-1)}  \\
 \mathfrak{v}_{k,m}^{-1} & = \frac{1}{4(2m-3)^2} + \frac{k}{4(2m-3)(m-2)(m-1)}+ \frac{k^2}{4(2m-3)^2(m-2)(m-1)}.  
\end{align*}
the modification can be carried out as in the proof of Theorem \ref{thm:est111}. The sums that have the main terms of 
$\mathfrak{u}_{k,m}^{-1}$ and $\mathfrak{u}_{k,m}^{-1}$ can be bounded similarly, since $h_{k,m}^{0,0,0}
\sim m^2 h_{k,m+1}^{0,0,0} \sim m^4  h_{k,m+2}^{0,0,0}$. The remaining terms contain extra factors of $\frac{k}{m}$ 
in the case of $\mathfrak{u}_{k,m}^{-1}$, $\frac{k}{m}$ and $\frac{k^2}{m^2}$ in the case of $\mathfrak{v}_{k,m}^{-1}$.
They can be easily handled because of the extra negative powers of $m$. In the case of $\mathfrak{u}_{k,m}^{-1}$, this 
follows just as in the modification in the proof of Theorem \ref{thm:est111}, so is the $\frac{k}{4(2m-3)(m-2)(m-1)}$ part
of  $\mathfrak{v}_{k,m}^{-1}$. In the remaining case of $\mathfrak{v}_{k,m}^{-1}$, we can write, for example, 
\begin{align*}
 \frac{k^2}{m^2} \wh g_{k,m} + & \frac{(k+1)^2}{m^2} e_{i,k+2,m}^0 \wh g_{k+2,m} \\
  & =  \frac{k^2}{m^2}  \left(  \wh g_{k,m} +  e_{i,k+2,m}^0 \wh g_{k+2,m}\right) +  \frac{2k+1}{m^2} e_{i,k+2,m}^0 \wh g_{k+2,m}
\end{align*}
Again the sum for the first term works as before since $k^2/m^2 \le 1$, and the sum over the remaining cases can be 
easily bounded by \eqref{eq:est112a}. We omit the details. 
\end{proof}

We can also derive estimate on the approximation of the first order derivatives by using Theorem \ref{thm:comm112} and
the estimates in Section \ref{sect5}. Since these are just intermediate results in between Theorem \ref{thm:1.1} and 
Theorem \ref{thm:1.2}, we shall not state them. 

We can also derive analogues of Theorem \ref{thm:Est112} for the two permutations of $(-1,-1,\g)$. They are 
recorded below for latter use.

\begin{thm} \label{thm:Est121}
For $f \in W_2^2$ and $n= 3,4,\ldots$,
\begin{align} \label{eq:Est121}
\| f- S_n^{-1,-2,-1}f\|   \le \, & \frac{c_1}{n^2} E_{n-2}(\partial_2 \partial_3 f)_{0,0,0}  \\
     +  & \frac{c_2}{n^3} \big[ E_{n-3}(\partial_1 \partial_2^2 f)_{2,0,0}+  E_{n-3}(\partial_1 \partial_3^2 f)_{0,2,0} \big]. \notag
\end{align}
\end{thm}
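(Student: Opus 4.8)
\textbf{Proof proposal for Theorem~\ref{thm:Est121}.}

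The plan is to reduce this estimate to the already-established Theorem~\ref{thm:Est112} by exploiting the permutation symmetry of the triangle, rather than reproving everything from scratch. Recall from \eqref{eq:3bases-transform} that $L_{k,n}^{\a,\b,\g}(x,y) = J_{k,n}^{\b,\g,\a}(y,1-x-y)$, and that the modified polynomials $\wh J_{k,n}^{-1,-1,-2}$, $\wh K_{k,n}^{-1,-1,-2}$, $\wh L_{k,n}^{-1,-1,-2}$ all transform into one another under simultaneous permutations (this is the content of the discussion preceding Proposition~\ref{prop:KL112bases} and of the permuted inner products \eqref{eq:ipd121K}). Concretely, if $g(x,y) := f(1-x-y,x)$ then $g \in W_2^2$ whenever $f \in W_2^2$, and the change of variables $x \mapsto 1-x-y$, $y \mapsto x$ maps the weight $\varpi_{0,0,0}$ to itself (the triangle and Lebesgue measure are permutation-invariant), so $\|f - S_n^{-1,-2,-1} f\| = \|g - S_n^{-1,-1,-2} g\|$ once one checks that $S_n^{-1,-2,-1}$ applied to $f$ corresponds to $S_n^{-1,-1,-2}$ applied to $g$ under this substitution. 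The latter is exactly Proposition~\ref{prop:proj222}-style reasoning at one level down: since $\CV_n(\varpi_{-1,-2,-1})$ has the three mutually consistent bases and its projection operator is permutation-conjugate to that of $\CV_n(\varpi_{-1,-1,-2})$, the partial sums are conjugate as well.

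First I would record the precise dictionary for derivatives under $(x,y)\mapsto(1-x-y,x)$: by \eqref{eq:d-transform1} we have $\partial_1 g = \partial_3 f$, $\partial_2 g = -\partial_1 f$, $\partial_3 g = -\partial_2 f$ (evaluated at the permuted point). Hence $\partial_1\partial_2 g$ corresponds to $\partial_2\partial_3 f$, while $\partial_1^2\partial_2 g$ and $\partial_2^2\partial_3 g$ correspond to $\partial_1\partial_3^2 f$ and $\partial_1^2\partial_2 f$ respectively (up to signs, which are irrelevant inside $E_n$). Next I would check the weights: the terms $E_{n-2}(\partial_1\partial_2 g)_{0,0,0}$, $E_{n-3}(\partial_1^2\partial_2 g)_{2,0,0}$, $E_{n-3}(\partial_2^2\partial_3 g)_{0,2,0}$ on the right of \eqref{eq:Est112} become $E_{n-2}(\partial_2\partial_3 f)_{0,0,0}$, $E_{n-3}(\partial_1\partial_3^2 f)_{?,?,?}$, $E_{n-3}(\partial_1^2\partial_2 f)_{?,?,?}$, and one must verify that the subscripts permute to $(2,0,0)$ and $(0,2,0)$ as asserted in \eqref{eq:Est121}; this is immediate from the fact that $E_n(h)_{\a,\b,\g}$ is invariant under simultaneous permutation of $(\a,\b,\g)$ and the arguments, since the permuted $\varpi$ equals the original one. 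Putting these substitutions into \eqref{eq:Est112} yields \eqref{eq:Est121} verbatim.

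The only genuine point requiring care — and the step I expect to be the main obstacle — is justifying that the partial sum operator $S_n^{-1,-2,-1}$ really is the permutation-conjugate of $S_n^{-1,-1,-2}$, i.e.\ that $S_n^{-1,-2,-1} f$ at $(x,y)$ equals $S_n^{-1,-1,-2} g$ at $(1-x-y,x)$ with $g$ as above. This is true but not entirely formal: it rests on Proposition~\ref{prop:proj111} (all three projections on $\CV_n(\varpi_{-1,-1,-2})$ coincide), its analogue for $\CV_n(\varpi_{-1,-2,-1})$ stated right after \eqref{eq:ipd121K}, and the observation that the basis $\{\wh J_{k,n}^{-1,-2,-1}\}$ is, by definition via \eqref{eq:3bases-transform}, the image of $\{\wh L_{k,n}^{-1,-1,-2}\}$ under the permutation, whose span is $\CV_n(\varpi_{-1,-1,-2})$. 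Since the orthogonal projection onto a subspace depends only on the subspace and the ($L^2$, permutation-invariant) inner product used to define the error, not on the particular Sobolev inner product used to compute coefficients, the conjugacy follows. Alternatively, and perhaps more cleanly, one can avoid the operator-conjugacy discussion entirely: repeat the proof of Theorem~\ref{thm:Est112} line by line, replacing $J_{k,m}^{-1,-1,-2}$ by $L_{k,m}^{-1,-2,-1}$, using \eqref{eq:recurJc} (valid for $L$ by the remark after it) to get the analogue of \eqref{eq:Jaa2-Jaa0}, and using \eqref{eq:diffL} in place of \eqref{eq:diff1J} to produce the analogues of \eqref{eq:est112a}--\eqref{eq:est112b}; the $h_{k,m}$ asymptotics from \eqref{eq:hkn} are symmetric enough that every inequality used in the proof of Theorem~\ref{thm:Est112} survives. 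I would present the symmetry argument as the main line and note the direct computation as the fallback, omitting its routine details.
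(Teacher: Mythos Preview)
Your approach via permutation symmetry is exactly the one the paper intends: the paper states Theorems~\ref{thm:Est121} and~\ref{thm:Est211} immediately after Theorem~\ref{thm:Est112} with the single remark that they are ``analogues \ldots\ for the two permutations,'' and gives no separate proof. So the strategy is right, and your discussion of why the partial-sum operators are permutation-conjugate (via the equality of the three projections on the common space $\CV_n$) is the correct justification.

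There is, however, a concrete slip in the choice of permutation. With your definition $g(x,y)=f(1-x-y,x)$ and your own dictionary $\partial_1 g=\partial_3 f$, $\partial_2 g=-\partial_1 f$, $\partial_3 g=-\partial_2 f$, one gets
\[
\partial_1\partial_2 g \;\longleftrightarrow\; -\,\partial_3\partial_1 f \;=\; -\,\partial_1\partial_3 f,
\]
not $\partial_2\partial_3 f$ as you assert. Tracking this through, your substitution actually conjugates $S_n^{-1,-1,-2}$ to $S_n^{-2,-1,-1}$ and would yield Theorem~\ref{thm:Est211}, not Theorem~\ref{thm:Est121}. The correct substitution for \eqref{eq:Est121} is the inverse cyclic shift: set $g(x,y)=f(y,1-x-y)$, so that by \eqref{eq:d-transform2} one has $\partial_1 g=-\partial_2 f$, $\partial_2 g=-\partial_3 f$, $\partial_3 g=\partial_1 f$, whence $\partial_1\partial_2 g\leftrightarrow\partial_2\partial_3 f$, $\partial_1^2\partial_3 g\leftrightarrow\partial_1\partial_2^2 f$, and $\partial_2^2\partial_3 g\leftrightarrow\partial_1\partial_3^2 f$, exactly matching \eqref{eq:Est121}. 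Equivalently, this is the map under which $\wh K_{k,n}^{-1,-2,-1}(x,y)=\wh J_{k,n}^{-1,-1,-2}(1-x-y,x)$ pulls back to $\wh J_{k,n}^{-1,-1,-2}$, so that $(S_n^{-1,-2,-1}f)(x,y)=(S_n^{-1,-1,-2}g)(1-x-y,x)$. Once you swap to this permutation, the rest of your argument (including the weight-index check via simultaneous permutation of $(\a,\b,\g)$ and the variables) goes through, and your fallback of reproducing the proof with the $L$-basis is unnecessary.
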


\begin{thm} \label{thm:Est211}
For $f \in W_2^2$ and $n= 3,4,\ldots$,
\begin{align} \label{eq:Est211}
\| f- S_n^{-2,-1,-1}f\|   \le \, & \frac{c_1}{n^2} E_{n-2}(\partial_1 \partial_3 f)_{0,0,0}  \\
     +  & \frac{c_2}{n^3} \big[ E_{n-3}(\partial_2 \partial_3^2 f)_{2,0,0}+  E_{n-3}(\partial_2 \partial_1^2 f)_{0,2,0} \big]. \notag
\end{align}
\end{thm}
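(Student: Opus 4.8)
The plan is to prove Theorem~\ref{thm:Est211} in exact parallel with the proof of Theorem~\ref{thm:Est112}, relying entirely on the fact that the case $(-2,-1,-1)$ is obtained from $(-1,-1,-2)$ by the simultaneous permutation $(x,y,1-x-y)\mapsto(y,1-x-y,x)$ and $(\a,\b,\g)\mapsto(\b,\g,\a)$, together with the corresponding relabelling of partial derivatives recorded in \eqref{eq:d-transform1} and \eqref{eq:d-transform2}. First I would note that Theorem~\ref{thm:comm211} gives $\partial_1 S_n^{-2,-1,-1}f = S_{n-1}^{-1,-1,0}\partial_1 f$ and $\partial_3 S_n^{-2,-1,-1}f = S_{n-1}^{-1,0,-1}\partial_3 f$, and that the orthonormal basis $\{\wh L_{k,n}^{-2,-1,-1}\}$ of $\CV_n(\varpi_{-2,-1,-1})$ is by construction the image of $\{\wh J_{k,n}^{-1,-1,-2}\}$ under the above permutation, so the Fourier coefficients satisfy an identity $\wh g_{k,m}$ defined exactly as in \eqref{eq:gkm-112}, and $\proj_m^{-2,-1,-1}f = \sum_{k=0}^m \wh g_{k,m} L_{k,m}^{-2,-1,-1}$.

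Next I would transcribe the two auxiliary identities from the lemma preceding Theorem~\ref{thm:Est112}: under the permutation, $\partial_1\partial_2$ on the $(-1,-1,-2)$ side becomes $\partial_1\partial_3$ on the $(-2,-1,-1)$ side (with the weight exponents permuted so that $\varpi_{0,0,0}$ stays $\varpi_{0,0,0}$), giving
\[
 \left[ E_{n-2}(\partial_1\partial_3 f)_{0,0,0}\right]^2
 = \sum_{m=n-1}^\infty \Big( \big|\wh g_{0,m+2} + \tfrac{m+2}{2}\wh g_{1,m+2} - A_{0,m+2}\wh g_{2,m+2}\big|^2 h_{0,m}^{0,0,0}
 + \sum_{k=1}^m \big|\wh g_{k,m+2} - A_{k,m+2}\wh g_{k+2,m+2}\big|^2 h_{k,m}^{0,0,0}\Big),
\]
while $\partial_1^2\partial_3$ and $\partial_2^2\partial_3$ become $\partial_2\partial_3^2$ and $\partial_2\partial_1^2$ respectively, yielding the analogue of \eqref{eq:est112a} with the same coefficients $\mathfrak c, \mathfrak d, A$ and with $h_{k,m}^{2,0,0}$ on the right. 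These transcriptions are justified by applying the derivative formulas in Propositions~\ref{prop:partialJ}--\ref{prop:partialL} to the permuted polynomials, exactly as in the lemma, so no new computation is needed beyond bookkeeping.

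With these in hand, the proof body copies that of Theorem~\ref{thm:Est112} verbatim: apply \eqref{eq:recurJc} (in the permuted form, via \eqref{eq:3bases-transform}) to write $L_{k,m}^{-2,-1,-1}$ as $L_{k,m}^{-2,-1,0}$ plus $\mathfrak u_{k,m}^{-1}$ and $\mathfrak v_{k,m}^{-1}$ multiples of lower-degree polynomials, split $f - S_n^{-2,-1,-1}f$ by the triangle inequality into three sums, further expand $L_{k,m}^{-2,-1,0}$ in the ordinary basis using Proposition~\ref{prop:Ja+1b+1} with two of the three parameters equal to $-1$ (in the permuted arrangement), and then bound each resulting sum by the two quantities above using the asymptotics of $h_{k,m}^{\a,\b,\g}$ from \eqref{eq:hkn}, splitting the range of $k$ at $m/2$ in each case. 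The main obstacle --- which is really only a matter of care rather than difficulty --- is making sure the permutation of variables is tracked consistently through \eqref{eq:3bases-transform}, \eqref{eq:d-transform1}, \eqref{eq:d-transform2}, and the weight exponents, so that the derivatives $\partial_1\partial_3$, $\partial_2\partial_3^2$, $\partial_2\partial_1^2$ and the weights $\varpi_{0,0,0}$, $\varpi_{2,0,0}$ appearing on the right of \eqref{eq:Est211} are exactly the images of those in \eqref{eq:Est112}; once this correspondence is pinned down, every inequality in the proof of Theorem~\ref{thm:Est112} transfers with the same constants. I would therefore state the theorem and give the proof as a short remark that it follows from Theorem~\ref{thm:Est112} by the simultaneous permutation $(x,y,1-x-y)\mapsto(y,1-x-y,x)$, spelling out only the dictionary of derivatives and weights.
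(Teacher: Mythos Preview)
Your approach is exactly what the paper does: Theorem~\ref{thm:Est211} is stated without proof, with the remark that it is an analogue of Theorem~\ref{thm:Est112} obtained by permutation, and your proposal spells out precisely this reduction. One small correction in your bookkeeping: the permutation of parameters taking $(-1,-1,-2)$ to $(-2,-1,-1)$ is $(\a,\b,\g)\mapsto(\g,\a,\b)$, not $(\b,\g,\a)$; correspondingly, from the paper's own definition in Section~\ref{sect9} one has $\wh L_{k,n}^{-2,-1,-1}(x,y)=\wh J_{k,n}^{-1,-1,-2}(y,1-x-y)$, and the derivative dictionary comes from \eqref{eq:d-transform2} rather than \eqref{eq:d-transform1}. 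With that adjustment your dictionary of derivatives and weights goes through and the argument is complete.
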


\section{Approximation by $S_n^{-2,-2,-2} f$  and proof of Theorem \ref{thm:1.2}}
\label{sect16}
\setcounter{equation}{0}

Let $S_n^{-2,-2,-2}$ be the $n$-th partial sum of the orthogonal expansion defined by
$$
       S_n^{-2,-2,-2}f: = \sum_{k=0}^n \proj_n^{-2,-2,-2}f  
$$
where $\proj_n^{-2,-2,-2}$ are defined as in \eqref{eq:proj222}. Our main result in this section is as follows: 

\begin{thm}\label{thm:Est222}
For $f \in W_2^2$, let $p_n = S_n^{-2,-2,-2} f$. Then 
\begin{align*}
  \|   f - p_n \|  \le & \frac{c_1}{n^3}  E_{n-3}(\partial_1\partial_2 \partial_3) \\
  & + \frac{c_2}{n^4} \left[ E_{n-4}(\partial_1^2\partial_2^2)_{2,0,0}+E_{n-4}(\partial_2^2\partial_3^2)_{0,2,0}+
         E_{n-4}(\partial_3^2\partial_1^2)_{0,0,2} \right], \\
\| \partial_1 f - \partial_1 p_n \| \le  & \frac{c_1}{n^2}  E_{n-3}(\partial_1\partial_2 \partial_3) 
   + \frac{c_2}{n^3} \left[ E_{n-4}(\partial_1^2\partial_2^2)_{0,0,2}+E_{n-4}(\partial_3^2\partial_1^2)_{2,0,0} \right],  \\
\| \partial_2 f - \partial_1 p_n \|  \le &  \frac{c_1}{n^2}  E_{n-3}(\partial_1\partial_2 \partial_3) 
   + \frac{c_2}{n^3} \left[ E_{n-4}(\partial_2^2\partial_3^2)_{0,2,0}+E_{n-4}(\partial_2^2\partial_1^2)_{0,0,2} \right],  \\
   | \partial_3 f - \partial_3 p_n \|  \le &  \frac{c_1}{n^2}  E_{n-3}(\partial_1\partial_2 \partial_3) 
   + \frac{c_2}{n^3} \left[ E_{n-4}(\partial_1^2\partial_3^2)_{2,0,0}+E_{n-4}(\partial_3^2\partial_2^2)_{0,2,0} \right],  
\end{align*}
and
\begin{align*}
  \| \partial_1\partial_2 f - \partial_1 \partial_2 p_n\|  \le &  \frac{c_1}{n}  E_{n-3}(\partial_1\partial_2 \partial_3) 
   + \frac{c_2}{n^2}  E_{n-4}(\partial_1^2\partial_2^2)_{0,0,2}, \\
    \| \partial_2\partial_3 f - \partial_2 \partial_3 p_n\| \le  &  \frac{c_1}{n}  E_{n-3}(\partial_1\partial_2 \partial_3) 
   + \frac{c_2}{n^2}  E_{n-4}(\partial_2^2\partial_3^2)_{0,2,0}, \\
    \| \partial_3\partial_1 f - \partial_3 \partial_1 p_n\| \le &  \frac{c_1}{n}  E_{n-3}(\partial_1\partial_2 \partial_3) 
    + \frac{c_2}{n^2} E_{n-4}(\partial_3^2\partial_1^2)_{2,0,0}.
\end{align*}
\end{thm}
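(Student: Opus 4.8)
\textbf{Proof proposal for Theorem \ref{thm:Est222}.}
The plan is to follow the scheme that has already carried us through Theorems \ref{thm:est111} and \ref{thm:Est112}, namely: (i) reduce everything to Fourier coefficients $\wh g_{k,m}$ of the $J_{k,m}^{-2,-2,-2}$ expansion (the hatted orthonormal family differs only in the top two indices $k=m-1,m$, cf. Proposition \ref{prop:222ortho}); (ii) express the right-hand-side quantities $E_{n-3}(\partial_1\partial_2\partial_3 f)_{0,0,0}$ and the three $E_{n-4}(\partial_i^2\partial_j^2 f)$ as Parseval series in the $\wh g_{k,m}$, using the commuting relations of Theorem \ref{thm:proj222} together with those from Sections \ref{sect13}--\ref{sect15}; and (iii) expand each $J_{k,m}^{-2,-2,-2}$ in the classical basis $\{J_{j,l}^{0,0,0}\}$ and compare, term by term, against those series with the help of the ratio asymptotics of $h_{k,m}^{\a,\b,\g}$ from \eqref{eq:hkn}.

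For step (ii), the key new input is that $\partial_1\partial_2\partial_3 J_{k,m}^{-2,-2,-2}$ lands (after three applications of \eqref{eq:diff1J}) in a short combination of $J_{j,m-3}^{1,1,1}$-type polynomials, so that by Theorem \ref{thm:proj222} and the $(-1,-1,\g)$ results one gets a clean formula $\wh{\partial_1\partial_2\partial_3 f}_{k,m}^{1,1,1}$ as a two- or three-term linear combination of $\wh g_{k',m+3}$; likewise each $\partial_i^2\partial_j^2 J_{k,m}^{-2,-2,-2}$ reduces, via iterated \eqref{eq:diff1J}, to $J^{2,2,0}$-type polynomials (and permutations), giving formulas of the shape appearing in \eqref{eq:est112a}. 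This produces the analogues of \eqref{eq:est111a}--\eqref{eq:est111b} and \eqref{eq:est112a}--\eqref{eq:est112b} that we need. For step (iii), I would apply \eqref{eq:recurJc} (and its $K,L$ permuted forms) three times to write
\begin{align*}
  J_{k,m}^{-2,-2,-2} = \sum_{i=0}^{3} \mathfrak{w}_{i,k,m}\, J_{k,m-i}^{-2,-2,0},
\end{align*}
then feed each $J_{k,m-i}^{-2,-2,0}$ into the expansion \eqref{eq:J11g-00g}--\eqref{eq:J11g-00gB} with $\g=0$; by the triangle inequality the norm $\|f-S_n^{-2,-2,-2}f\|$ splits into finitely many sums of the same structure already estimated in Theorem \ref{thm:Est112}, and one bounds each using the elementary identities $a_{k,m}^{\a,\b}\sim m$ or $\sim m/k$ together with the size relations $h_{k,m}^{0,0,0}\sim m^{-2}h_{k,m-1}^{0,0,0}$, $h_{k,m}^{0,0,0}\sim m^{-2}\frac{k^2}{m^2}h_{k+1,m-1}^{2,0,0}$, $h_{k,m}^{0,0,0}\sim m^{-2}\frac{m^2}{k^2}h_{k-1,m-1}^{2,0,0}$, splitting the $k$-sum at $k\sim m/2$ exactly as before. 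The derivative estimates then follow from the first-order and second-order commuting relations of Theorem \ref{thm:proj222} and Theorems \ref{thm:Est112}, \ref{thm:Est121}, \ref{thm:Est211} applied to $\partial_i f$ and $\partial_i\partial_j f$, plus, where an intermediate order of derivative is wanted, the Landau--Kolmogorov interpolation as used in Section \ref{sect3}.

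The main obstacle I anticipate is purely bookkeeping rather than conceptual: after substituting three iterations of \eqref{eq:recurJc} and then \eqref{eq:J11g-00g}, one has on the order of a few dozen coefficient families $\mathfrak{w}_{i,k,m}\cdot d_{j,k,m}^{0}$ (and the $e$-type terms), each of which must be shown to carry enough negative powers of $m$ — and the delicate cases are, as in Theorems \ref{thm:est111} and \ref{thm:Est112}, those where a coefficient behaves like $m/k$ for small $k$, forcing the regrouping trick $\frac{k^r}{m^r}\wh g_{k,m}+\frac{(k+1)^r}{m^r}e\,\wh g_{k+2,m} = \frac{k^r}{m^r}(\wh g_{k,m}+e\,\wh g_{k+2,m})+\frac{\text{l.o.t.}}{m^r}e\,\wh g_{k+2,m}$ so that the leftover piece is absorbed by $E_{n-3}(\partial_1\partial_2\partial_3 f)$ with powers of $n$ to spare. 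Since $(-2,-2,-2)$ sits one derivative-order above $(-1,-1,-2)$, every estimate gains a further $m^{-2}$ from the relevant $h$-ratio, which is exactly what upgrades the $n^{-2}/n^{-3}$ of Theorem \ref{thm:Est112} to the $n^{-3}/n^{-4}$ claimed here; I do not expect any genuinely new phenomenon, only a longer verification, much of which (as the authors note) is best checked with a computer algebra system. I would therefore present the argument by carefully stating the two Parseval lemmas (the analogues of \eqref{eq:est112a}--\eqref{eq:est112b}) with proofs, then writing the decomposition of $J_{k,m}^{-2,-2,-2}$ explicitly, and finally indicating that the term-by-term bounding is identical in structure to the proof of Theorem \ref{thm:Est112}, spelling out only the two or three coefficient families whose behaviour is not already covered there.
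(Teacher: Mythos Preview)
Your plan matches the paper's strategy almost exactly, but several of the concrete details you wrote down are off and one of them is a real gap.

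First, the target parameters in step~(ii) are wrong. Tracking \eqref{eq:diff1J}: $\partial_1\partial_2\partial_3$ takes $(-2,-2,-2)$ to $(0,0,0)$, not $(1,1,1)$; likewise $\partial_1^2\partial_2^2$ lands in $J^{0,0,2}$, $\partial_1^2\partial_3^2$ in $J^{2,0,0}$, $\partial_2^2\partial_3^2$ in $J^{0,2,0}$. These are exactly the weights appearing in the theorem's right-hand side, and the paper's Parseval lemmas (the analogues of \eqref{eq:est112a}--\eqref{eq:est112b}) are stated with these parameters; your $(1,1,1)$ and $(2,2,0)$ would give the wrong $h$-ratios.

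Second, and more substantively, in step~(iii) you cannot ``feed each $J_{k,m-i}^{-2,-2,0}$ into the expansion \eqref{eq:J11g-00g}--\eqref{eq:J11g-00gB}'': those identities expand $J_{k,m}^{-1,-1,\g}$, not $J_{k,m}^{-2,-2,\g}$. Going from $(-2,-2)$ to $(0,0)$ in $(\a,\b)$ needs two applications of Proposition~\ref{prop:Ja+1b+1} (or one such application followed by \eqref{eq:J11g-00g}), producing a genuine $5\times 3$-term expansion of $J_{k,m}^{-2,-2,0}$ in $\{J_{k-2j,m-i}^{0,0,0}:0\le j\le 2,\ 0\le i\le 4\}$, with the exceptional rows $k=2,3$ picking up the $\tfrac{m-1}{2}$-type corrections from \eqref{eq:J11g-00gB}. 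The paper writes this expansion out explicitly; without it the term-by-term bounding you sketch cannot even be set up. Also, going from $\g=-2$ to $\g=0$ via \eqref{eq:recurJc} is two steps (so $i=0,1,2$), not three.

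Finally, Landau--Kolmogorov is not needed here: all derivative estimates follow directly from the commuting relations of Theorem~\ref{thm:proj222} combined with Theorems~\ref{thm:Est112}, \ref{thm:Est121}, \ref{thm:Est211} (for first derivatives) and the results of Section~\ref{sect6} (for second derivatives), exactly as you said first; there is no intermediate order to interpolate.
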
 

We note that Theorem 1.2 follows immediately from this theorem by applying \eqref{eq:est-aag} on 
$E_{n-3}(\partial_1\partial_2 \partial_3)$. Furthermore, the estimate on the derivatives in Theorem \ref{thm:Est222} 
is more refined than what we stated in Theorem \ref{thm:1.2}.

We assume $m \ge 4$ below. First we recall that a mutually orthogonal basis for 
$\CV_m(W_{-1,-1,-2})$ consists of 
$\wh J_{k,m}^{-2,-2,-2}$, which are equal to $J_{k,m}^{-2,-2,-2}$ if $k=0$ or $2 \le k \le m-2$, and 
\begin{align*}
  \wh J_{1,m}^{-2,-2,-2} = & J_{1,m}^{-2,-2,-2} + \frac{m-2}{2} J_{0,m}^{-2,-2,-2} \\
  \wh J_{m-1,m}^{-2,-2,-2} = & J_{m-1,m}^{-2,-2,-2} +2 J_{m,m}^{-2,-2,-2} \\
  \quad  \wh J_{m,m}^{-2,-2,-2} = & \frac{m-2}{2m-5} J_{m-1,m}^{-2,-2,-2} -  \frac{2(3m-8)}{2m-5} J_{m,m}^{-2,-2,-2}. 
\end{align*}
Let $\wh f_{k,n}^{-2,-2,-2}$ be defined as in \eqref{eq:proj222}
For $0 \le k \le m$, we define $\wh g_{k,m}$ by 
$$
  \wh g_{k,m} = \begin{cases} \wh f_{0,m}^{-2,-2,-2}+  \frac{m-2}{2} \wh f_{1,m}^{-2,-2,-2}, & k=0 \\
    \wh f_{k,m}^{-2,-2,-2}, & 0 \le k \le m-2, \\
    \wh f_{m-1,m}^{-2,-2,-2}+ \frac{m-2}{2m-5} \wh f_{m,m}^{-2,-2,-2}, & k = m-1, \\
    2 \wh f_{m-1,m}^{-2,-2,-2} - \frac{2(3m-8)}{2m-5} \wh f_{m,m}^{-2,-2,-2}, & k = m.
      \end{cases}
$$
It is easy to see then
\begin{equation} \label{eq:proj222}
  \proj_m^{-2,-2,-2} f = \sum_{k=0}^m \wh f_{k,m}^{-2,-2,-2} \wh J_{k,m}^{-1,-1,-2} 
        =  \sum_{k=0}^m \wh g_{k,m}  J_{k,m}^{-2,-2,-2}.  
\end{equation}

\begin{lem}
Let $\wh g_{k,m}$ be defined as in \eqref{eq:proj222}. Then
\begin{align} \label{eq:est222a}
\left[ E_n (\partial_1\partial_2 \partial_3 f)_{0,0,0} \right]^2 
   = \sum_{m=n+1}^\infty & \left( \left | \wh g_{1,m+3} + \frac{m+2}{2} \wh g_{2,m+3} - A_{3,m+3} \wh g_{3,m+3}\right|^2 
      h_{0,m}^{0,0,0}\right.  \notag \\
      & \left. \quad +  \sum_{k=1}^m   \left|\wh g_{k+1,m+3}  - A_{k+3,m+3} \wh g_{k+3,m+3} \right|^2  h_{k,m}^{0,0,0}\right),
\end{align}
where 
$$
  A_{k+3,m+3} = \frac{(m+k+2)(m+k+3)}{4(2k+1)(2k+3)}, 
$$ 
and
\begin{align} \label{eq:est222b}
\left[ E_n (\partial_1^2 \partial_3^2 f)_{2,0,0} \right]^2 + \left[E_n (\partial_2^2 \partial_3^2 f)_{0,2,0} \right]^2 
  & = \sum_{m=n+1}^\infty \sum_{k=0}^m  \left( \left | \mathfrak{p}_{k+3,m+4} \wh g_{k+3,m+4}\right|^2 h_{k,m}^{2,0,0}
     \right. \notag\\
      & \left. \quad +  \left|\wh g_{k+2,m+4} + \mathfrak{q}_{k+4,m+4} \wh g_{k+4,m+4} \right|^2 h_{k,m}^{2,0,0}\right), 
\end{align}
where 
$$
\mathfrak{p}_{k+3,m+4}=\frac{m+k+4}{2(k+2)}, \quad \mathfrak{q}_{k+4,m+4}=\frac{(k+1)(m+k+4)(m+k+5)}{4(k+2)(2k+3)(2k+5)},
$$
and, furthermore, 
\begin{align} \label{eq:est222c}
 \left[ E_n  (\partial_1^2 \partial_2^2 f)_{0,0,2} \right]^2 & \notag \\
  &\!\!\!\!\!\!\!\!\!\!\!\! \!\!\!\!\!\!\!\!\!\!\!\!= \sum_{m=n+1}^\infty  \left | \wh g_{0,m+4} + 
    \mathfrak{s}_{2,m+4} \left(\wh g_{2,m+4}- \frac{m+4}{4}\wh g_{3,m+4}\right)
        + \mathfrak{t}_{4,m+4}\wh g_{4,m+4}\right|^2 h_{0,m}^{0,0,2}
     \notag \\
 &  \!\!\!\!\!\!+ \left( \left|\wh g_{1,m+4} + \frac{m+3}{2}\wh g_{2,m+4}
    + \mathfrak{s}_{3,m+4}\wh g_{3,m+4}+ \mathfrak{t}_{5,m+4}\wh g_{5,m+4}\right|^2 h_{1,m}^{0,0,2}  \right.  \\  
 & \!\!\!\!\!\! \left. + \sum_{k=2}^m \left|\wh g_{k,m+4} +\mathfrak{s}_{k+2,m+4}\wh g_{k+2,m+4}+
        \mathfrak{t}_{k+4,m+4}\wh g_{k+4,m+4} \right|^2 h_{k,m}^{0,0,2}\right), \notag
\end{align}
where
\begin{align*}
    \mathfrak{s}_{k+2,m+4}&= -\frac{(m+k+2)(m+k+3)}{2(2k-1)(2k+3)}, \\ 
\mathfrak{t}_{k+4,m+4}& =\frac{ (m+k+2)(m+k+3)(m+k+4)(m+k+5)}{16(2k+1)(2k+3)^2(2k+5)}.
\end{align*}
\end{lem}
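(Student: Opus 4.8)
The plan is to prove the three identities \eqref{eq:est222a}, \eqref{eq:est222b}, \eqref{eq:est222c} by the same mechanism that produced \eqref{eq:est112b} and \eqref{eq:est112a} in the previous section: translate the Parseval identity for the error of best approximation of a high-order derivative of $f$ into a series in the modified Fourier coefficients $\wh g_{k,m}$, using the commuting relations for the partial sum operators (Theorem \ref{thm:proj222}, and its consequences, together with Theorem \ref{thm:comm112} and the relations in Sections \ref{sect5} and \ref{sect13}). Concretely, for \eqref{eq:est222a} I would start from the observation that the formal manipulations behind \eqref{eq:whf-whD} and \eqref{eq:whf-whD2} are valid for all parameters, so that combining $\partial_3$ (which shifts $(-2,-2,-2)\to(-1,-1,-2)$ on the coefficients via $\partial_3\wh J_{k,m}^{-2,-2,-2}=\wh J_{k-1,m-1}^{-1,-1,-2}$, i.e.\ $\wh g_{k,m}\leftrightarrow$ coefficients of $\partial_3 f$ in the $(-1,-1,-2)$ expansion) with the two-derivative identity \eqref{eq:whf-whD2} for $\partial_1\partial_2$ applied to $\partial_3 f$ (which has parameters $(-1,-1,-2)$) yields, for $k\ge 1$,
\[
  \wh{\partial_1\partial_2\partial_3 f}_{k,m}^{0,0,0} = \wh g_{k+1,m+3} - A_{k+3,m+3}\,\wh g_{k+3,m+3},
\]
and the corresponding boundary-type modification at $k=0$ involving the extra $\tfrac{m+2}{2}\wh g_{2,m+3}$ term, exactly as in \eqref{eq:whf-whD2}. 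Then $\left[E_n(\partial_1\partial_2\partial_3 f)_{0,0,0}\right]^2 = \|\partial_1\partial_2\partial_3 f - S_n^{0,0,0}(\partial_1\partial_2\partial_3 f)\|^2$ expands, by Parseval on $L^2(\varpi_{0,0,0})$, into precisely \eqref{eq:est222a}.

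For \eqref{eq:est222b} I would differentiate $\wh J_{k,m}^{-2,-2,-2}$ repeatedly using \eqref{eq:diff1J}: applying $\partial_3$ once lands in the $(-1,-1,-2)$ family, and then $\partial_1^2\partial_3$ of $J_{k,m}^{-1,-1,-2}$ is already computed in the previous section's lemma (the $\mathfrak{c},\mathfrak{d}$ formulas), so $\partial_1^2\partial_3^2$ of $J_{k,m}^{-2,-2,-2}$ has the form $\mathfrak{q}_{k,m}J_{k-4,m-4}^{2,0,0}+\mathfrak{p}_{k,m}J_{k-3,m-4}^{2,0,0}+J_{k-2,m-4}^{2,0,0}$, with $\mathfrak{p},\mathfrak{q}$ products of the $a_{\cdot,\cdot}^{\cdot,\cdot}$ coefficients of \eqref{eq:akn-bkn}; the same computation with $\partial_2$ in place of $\partial_1$ flips the sign of the middle term, because the $a_{k,n}^{\b,\a}$ versus $a_{k,n}^{\a,\b}$ entries in \eqref{eq:diff1J} differ only by that sign. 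Using the commuting relations $\partial_1^2\partial_3^2 \proj_m^{-2,-2,-2}f = \proj_{m-4}^{2,0,0}(\partial_1^2\partial_3^2 f)$ (iterating Theorems \ref{thm:proj222}, \ref{thm:comm112} and the $\a=0$ cases in Section \ref{sect5}) gives the coefficient formulas for $\wh{\partial_1^2\partial_3^2 f}_{k,m}^{2,0,0}$ and $\wh{\partial_2^2\partial_3^2 f}_{k,m}^{0,2,0}$, and since $h_{k,m}^{2,0,0}=h_{k,m}^{0,2,0}$ by \eqref{eq:hkn}, summing the two Parseval identities produces \eqref{eq:est222b}.

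Identity \eqref{eq:est222c} is the genuinely new and most delicate case: it mixes $\partial_1^2\partial_2^2$, which raises only the $\g$ parameter (to $(0,0,2)$), rather than involving $\partial_3$. The derivative $\partial_1^2\partial_2^2 \wh J_{k,m}^{-2,-2,-2}$ must be expanded in the $J_{k,m}^{0,0,2}$ basis, and here the index shift is by $2$ in $k$ twice, so one gets a combination of $J_{k-4},J_{k-2},J_k$ in the shifted family, with three coefficients $\mathfrak{t}_{k+4,m+4}$, $\mathfrak{s}_{k+2,m+4}$ and $1$; the low-$k$ modifications ($k=0$ picking up the extra $\tfrac{m+4}{4}\wh g_{3,m+4}$ and $k=1$ picking up $\tfrac{m+3}{2}\wh g_{2,m+4}$) come exactly from the $\wh g_{1,m}=\wh f_{1,m}+\tfrac{m-2}{2}\wh f_{0,m}$ relabeling and from the $a_{k,n}^{-1,-1}$ exceptional values \eqref{eq:akn-bkn2}, as in the derivation of \eqref{eq:whf-whD2}. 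I would obtain the explicit $\mathfrak{s},\mathfrak{t}$ by chaining two applications of the first two identities of \eqref{eq:diff1J} and collecting; the commuting relation $\partial_1^2\partial_2^2\proj_m^{-2,-2,-2}f = \proj_{m-4}^{0,0,2}(\partial_1^2\partial_2^2 f)$ follows from iterating the commuting theorems. Parseval on $L^2(\varpi_{0,0,2})$ then gives \eqref{eq:est222c}. The main obstacle will be bookkeeping in this last case: correctly tracking the boundary modifications at $k=0,1$ that arise because $\wh J_{0,m}^{-2,-2,-2}$, $\wh J_{1,m}^{-2,-2,-2}$, $\wh J_{m-1,m}^{-2,-2,-2}$, $\wh J_{m,m}^{-2,-2,-2}$ are nontrivial linear combinations of the raw $J$'s, so the coefficient identities for $\wh g$ genuinely differ from the interior formula precisely at those indices, and verifying that the stated $\mathfrak{s},\mathfrak{t},\mathfrak{p},\mathfrak{q},A$ are exactly the right rational functions of $k,m$. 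As in the rest of the paper, these explicit coefficient formulas are the kind of computation that is verified with a computer algebra system; I would state them and indicate the verification rather than grind through it.
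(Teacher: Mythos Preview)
Your proposal is correct and follows essentially the same approach as the paper: compute the relevant derivatives of $J_{k,m}^{-2,-2,-2}$ via repeated applications of \eqref{eq:diff1J}, invoke the commuting relations to identify Fourier coefficients of $\partial^{\mathbf m} f$ with linear combinations of the $\wh g_{k,m}$, and apply Parseval, using $h_{k,m}^{2,0,0}=h_{k,m}^{0,2,0}$ to combine the two sums in \eqref{eq:est222b}. One small slip: the modified coefficient carrying the $\tfrac{m-2}{2}$ term is $\wh g_{0,m}$, not $\wh g_{1,m}$; and in the paper the low-$k$ exceptional formulas in \eqref{eq:est222c} arise because \eqref{eq:diff1J} must be applied under condition (b) of Proposition~\ref{prop:partialJ} when $k=1,2,3$, which is exactly the bookkeeping you anticipate.
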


\begin{proof}
Using \eqref{eq:diff1J} three times, it is easy to see that 
\begin{align*}
  \partial_1 \partial_2 \partial_3 J_{2,m}^{-2,-2,-2} & = J_{1,m-3}^{0,0,0}+ \frac{m-1}{2} J_{0,m-3}^{0,0,0}, \\
  \partial_1 \partial_2 \partial_3 J_{k,m}^{-2,-2,-2} & = J_{k-1,m-3}^{0,0,0}-A_{k,m} J_{k-3,m-3}^{0,0,0}, \quad k \ne 2,
\end{align*}
where $A_{k,m} = a_{k,m}^{-2,-2}a_{k-1,n-1}^{-2,-1}$. Hence, by $\partial_1 \partial_2 \partial_3 \proj_n^{-2,-2,-2}f = \proj_{n-3}^{0,0,0}(\partial_1 \partial_2 \partial_3 f)$, it follows that 
\begin{align*}
\wh {\partial_1\partial_2\partial_3 f}_{0,m}^{0,0,0} & = \wh g_{1,m+3} + \frac{m+2}{2} \wh g_{2,m+3} - A_{3,m+3} \wh g_{3,m+3},\\
\wh { \partial_1 \partial_2 \partial_3 f}_{k,m}^{0,0,0} & = \wh g_{k+1,m+3}  - A_{k+3,m+3} \wh g_{k+3,m+3}, \quad k \ge 1,
\end{align*}
from which \eqref{eq:est222a} follows immediately. The proof of \eqref{eq:est222b} follows along the same line, using 
\begin{align*}
   \partial_1^2 \partial_3^2 J_{k,m}^{-2,-2,-2}  & = J_{k-2,m-4}^{2,0,0}+ \mathfrak{p}_{k,m} J_{k-3,m-4}^{2,0,0} 
         + \mathfrak{q}_{k,m} J_{k-4,m-4}^{2,0,0},  \\
   \partial_2^2 \partial_3^2 J_{k,m}^{-2,-2,-2}  & = J_{k-2,m-4}^{0,2,0}- \mathfrak{p}_{k,m} J_{k-3,m-4}^{0,2,0}
         + \mathfrak{q}_{k,m} J_{k-4,m-4}^{0,2,0},  
\end{align*}
where $\mathfrak{p}_{k,m} = a_{k,m}^{-2,-2}a_{k,n-1}^{-1,-2}$ and $\mathfrak{q}_{k,m} = a_{k,m}^{-2,-2}a_{k-1,n-1}^{-1,-2}$,
which leads to  
\begin{align*}
\wh {\partial_1^2\partial_3^2 f}_{k,m}^{2,0,0} & = \wh g_{k+2,m+4} +\mathfrak{p}_{k+3,m+4}\wh g_{k+3,m+4}+
  \mathfrak{q}_{k+4,m+4}\wh g_{k+4,m+4},\\
\wh {\partial_2^2\partial_3^2 f}_{k,m}^{0,2,0} & = \wh g_{k+2,m+4} - \mathfrak{p}_{k+3,m+4}\wh g_{k+3,m+4}+
  \mathfrak{q}_{k+4,m+4}\wh g_{k+4,m+4}.
\end{align*}
The identity \eqref{eq:est222b} follows from these relations since $h_{k,m}^{2,00} = h_{k,m}^{0,2,0}$. 
Finally, the proof of \eqref{eq:est222c} relies on  
\begin{align*}
   \partial_1^2 \partial_2^2 J_{1,m}^{-2,-2,-2} & = J_{1,m-4}^{0,0,2}, \quad 
    \partial_1^2 \partial_2^2 J_{2,m}^{-2,-2,-2}  = J_{2,m-4}^{0,0,2}+ \frac{m-1}{2} J_{1,m-4}^{0,0,2} +
       \mathfrak{s}_{2,m} J_{0,m-4}^{0,0,2},  \\
   \partial_1^2 \partial_2^2 J_{3,m}^{-2,-2,-2} & = J_{3,m-4}^{0,0,2}+ \mathfrak{s}_{3,m} J_{1,m-4}^{0,0,2} -
      \frac{m}{4} \mathfrak{s}_{2,m}  J_{0,m-4}^{0,0,2},  \\
   \partial_1^2 \partial_2^2 J_{k,m}^{-2,-2,-2} & = J_{k,m-4}^{0,0,2}+ \mathfrak{s}_{k,m} J_{k-2,m-4}^{0,0,2} +
       \mathfrak{t}_{k,m} J_{k-4,m-4}^{0,0,2},  \quad k \ge 4,
\end{align*}
where $\mathfrak{s}_{2,m} = - a_{2,m-2}^{-1,-1}a_{1,m-3}^{-1,0}$,  $\mathfrak{s}_{k,m}=-(a_{k,m-2}^{-1,-1}a_{k-1,m-3}^{-1,0}
+a_{k,m}^{-2,-2}a_{k-1,m-1}^{-2,-1})$ and $\mathfrak{t}_{k,m} =a_{k,m}^{-2,-2}a_{k-1,m-1}^{-2,-1} 
a_{k-2,m-2}^{-1,-1}a_{k-3,m-3}^{-1,0}$ for $k \ge 3$. 
These identities are derived from applying \eqref{eq:diff1J} repeatedly; for $k\ge 3$, we used the identity 
$a_{k,m-1}^{\a,\a+1} = a_{k,m}^{\a,\a}$, which holds when $\a, k, m$ satisfy the condition (a) in  
Proposition \ref{prop:partialJ}, and we need to use \eqref{eq:diff1J} under the condition (b) when $k=1, 2$ and $3$. 
From $\partial_1^2 \partial_2^2 \proj_n^{-2,-2,-2}f = \proj_{n-4}^{0,0,2}(\partial_1^2 \partial_2^2 f)$, it follows that
\begin{align*}
\wh {\partial_1^2\partial_2^2 f}_{0,m}^{0,0,2} &  = \wh g_{0,m+4} +\mathfrak{s}_{2,m+4}\left(\wh g_{2,m+4}
 + \frac{m+4}{4}\wh g_{3,m+4} \right)+ \mathfrak{t}_{4,m+4}\wh g_{4,m+4},\\
\wh {\partial_1^2\partial_2^2 f}_{1,m}^{0,0,2} & = \wh g_{1,m+4} + \frac{m+3}{2}\wh g_{2,m+4}
    + \mathfrak{s}_{3,m+4}\wh g_{3,m+4}+ \mathfrak{t}_{5,m+4}\wh g_{5,m+4},\\
\wh {\partial_1^2\partial_2^2 f}_{k,m}^{0,0,2} & = \wh g_{k,m+4} +\mathfrak{s}_{k+2,m+4}\wh g_{k+2,m+4}+
        \mathfrak{t}_{k+4,m+4}\wh g_{k+4,m+4},
\end{align*}
from which \eqref{eq:est222c} follows readily. 
\end{proof}

\medskip\noindent
{\it Proof of Theorem \ref{thm:Est222}}.
Our main task is to estimate $\|f-p_n\|$. The estimate for the first order partial derivatives follow directly from Theorem 
\ref{thm:proj222} and the estimate in \eqref{eq:Est112},  \eqref{eq:Est121} and  \eqref{eq:Est211}. Moreover, the
estimate for the second order partial derivatives follow form the commuting relation derived from Theorem \ref{thm:proj222}, 
\eqref{eq:comm112b}, \eqref{eq:comm211} and \eqref{eq:comm121}, and the estimates in Section \ref{sect6}.

We assume $m \ge 4$ below. By \eqref{eq:Jaa2-Jaa0}, we can deduce as in the proof of 
Theorem \ref{thm:Est112} that 
\begin{align} \label{eq:est222L}
 \|f-S_n^{-2,-2,-2}  f\|^2 = & \left \| \sum_{m=n+1}^\infty \sum_{k=0}^m \wh g_{k,m}J_{k,m}^{-2,-2,-2} \right \|  \notag\\
   \le & \left \| \sum_{m=n+1}^\infty \sum_{k=0}^m \wh g_{k,m}J_{k,m}^{-2,-2,0} \right \| 
         +  \left \| \sum_{m=n+1}^\infty \sum_{k=0}^m \wh g_{k,m} \mathfrak{u}_{k,m}^{-2} J_{k,m-1}^{-2,-2,0} \right \| \\
   & +  \left \| \sum_{m=n+1}^\infty \sum_{k=0}^m \wh g_{k,m}\mathfrak{v}_{k,m}^{-2} J_{k,m-2}^{-2,-2,0} \right \|. \notag
\end{align}
We need to bound the right hand side of \eqref{eq:est222L} by the three quantities in \eqref{eq:est222a}, \eqref{eq:est222b}
and \eqref{eq:est222c}. Since $\mathfrak{u}_{k,m}^{-2}$ and $\mathfrak{v}_{k,m}^{-2}$ behavior similarly as 
$\mathfrak{u}_{k,m}^{-1}$ and $\mathfrak{v}_{k,m}^{-1}$, the second and the third sums can be bounded similarly as
the first sum, by appropriate modifications of the proof as in the proof of Theorem \ref{thm:Est112}. We shall omit the 
details of the modification and concentrating on the estimate of the first sum. 

Now, by Proposition \ref{prop:Ja+1b+1} and \eqref{eq:J11g-00g}, we derive that, for $k \ne 2, 3$,  
\begin{align*} 
  J_{k,m}^{-2,-2,0}   (x,y) =  & J_{k,m}^{0,0,0}(x,y) + v_{0,k,m} J_{k-2,m}^{0,0,0}(x,y)
         + w_{0,k,m} J_{k-4,m}^{0,0,0}(x,y)   \\
     + u_{1,k,m} & J_{k,m-1}^{0,0,0}(x,y) + v_{1,k,m} J_{k-2,m-1}^{0,0,0}(x,y)
           +w_{1,k,m} J_{k-4,m-1}^{0,0,0}(x,y)  \\
     + u_{2,k,m} & J_{k,m-2}^{0,0,0}(x,y) + v_{2,k,m} J_{k-2,m-2}^{0,0,0}(x,y)
           +w_{2,k,m} J_{k-4,m-2}^{0,0,0}(x,y)   \\ 
      + u_{3,k,m} & J_{k,m-3}^{0,0,0}(x,y) + v_{3,k,m} J_{k-2,m-3}^{0,0,0}(x,y)
           +w_{3,k,m} J_{k-4,m-3}^{0,0,0}(x,y)  \\
       + u_{4,k,m} & J_{k,m-4}^{0,0,0}(x,y) + v_{4,k,m} J_{k-2,m-4}^{0,0,0}(x,y)
           +w_{4,k,m} J_{k-4,m-4}^{0,0,0}(x,y), 
 \end{align*}
for $k =3$,
\begin{align*} 
  J_{3,m}^{-2,-2,0}   (x,y) =  &\,  J_{3,m}^{0,0,0}(x,y) + v_{0,3,m} J_{1,m}^{0,0,0}(x,y)
         + \tfrac{2}{m+1} w_{0,3,m}J_{0,m}^{0,0,0}(x,y)   \\
      + u_{1,3,m} &  J_{3,m-1}^{0,0,0}(x,y) + v_{1,3,m} J_{1,m-1}^{0,0,0}(x,y)
           + \tfrac{2}{m} w_{1,3,m} J_{0,m-1}^{0,0,0}(x,y)  \\
     + u_{2,3,m} & J_{3,m-2}^{0,0,0}(x,y) + v_{2,3,m} J_{1,m-2}^{0,0,0}(x,y)
           + \tfrac{2}{m-1} w_{2,3,m} J_{0,m-2}^{0,0,0}(x,y)   \\ 
      + u_{3,3,m} & J_{3,m-3}^{0,0,0}(x,y) + v_{3,3,m} J_{1,m-3}^{0,0,0}(x,y)
           + \tfrac{2}{m-2} w_{3,3,m} J_{0,m-3}^{0,0,0}(x,y)  \\
       + u_{4,3,m} & J_{3,m-4}^{0,0,0}(x,y) + v_{4,3,m} J_{1,m-4}^{0,0,0}(x,y)
           +\tfrac{2}{m-3} w_{4,3,m} J_{0,m-4}^{0,0,0}(x,y), 
 \end{align*}
and for $k =2$,
\begin{align*} 
  J_{2,m}^{-2,-2,0}   (x,y) =  &\,  J_{2,m}^{0,0,0}(x,y) + \tfrac{m-1}{2} J_{1,m}^{0,0,0}(x,y)
         + v_{0,2,m} J_{0,m}^{0,0,0}(x,y)   \\
      + u_{1,2,m}&  J_{2,m-1}^{0,0,0}(x,y) + \tfrac{m-1}{2} u_{1,1,m} J_{1,m-1}^{0,0,0}(x,y)
           +  v_{1,2,m} J_{0,m-1}^{0,0,0}(x,y)  \\
     + u_{2,2,m} & J_{2,m-2}^{0,0,0}(x,y) + \tfrac{m-1}{2} u_{2,1,m} J_{1,m-2}^{0,0,0}(x,y)
           +   v_{2,2,m} J_{0,m-2}^{0,0,0}(x,y)   \\ 
      + u_{3,2,m} & J_{2,m-3}^{0,0,0}(x,y) +\tfrac{m-1}{2} u_{3,1,m} J_{1,m-3}^{0,0,0}(x,y)
           +  v_{3,2,m} J_{0,m-3}^{0,0,0}(x,y)  \\
       + u_{4,2,m} & J_{2,m-4}^{0,0,0}(x,y) + \tfrac{m-1}{2} u_{4,1,m} J_{1,m-4}^{0,0,0}(x,y)
           +v_{4,2,m} J_{0,m-4}^{0,0,0}(x,y), 
 \end{align*}
where
\begin{align*}
  v_{0,k,m}:= &\ - \frac{ (m-k+1)_2}{2(2k-1) (2k-5)},  \,
  w_{0,k,m}: = \frac{(m-k+1)_4}{16(2k-7)(2k-5)^2(2k-3)},\\
  u_{1,k,m} :=&\  -\frac{4(m-k)}{(2m-3)(2m+1)}, \, 
  v_{1,k,m}:= \frac{(m-k+1)((2m-1)^2+(2k-1)(2k-5))}{2 (2k-1)(2k-5)(2m-3)(2m+1)},\\
  w_{1,k,m}:=& \ - \frac{(m-k+1)_3(m+k-3)^2} {4(2k-7)(2k-5)^2(2k-3)(2m-3)(2m+1)},\\
  u_{2,k,m} :=&\ \frac{3(m-k-1)_2}{2 m(m-2)(2m-3)(2m-1)}, \\  
  v_{2,k,m}:= &\ - \frac{3(k-2)^2(k-1)^2-m(m-2)(2(k-2)(k-1)+3(m-1)^2)}{4(2k-1)(2k-5)m(m-2)(2m-3)(2m-1)}, \\
  w_{2,k,m}:=&\ \frac{3(m-k+1)_2 (m+k-4)^2(m+k-3)^2} {32(2k-7)(2k-5)^2(2k-3)m(m-2)(2m-3)(2m-1)},\\
  u_{3,k,m} :=&\ - \frac{(m-k-2)_3}{(m-2)(m-1)(2m-5)(2m-3)^2(2m-1)}, \\ 
  v_{3,k,m}:= &\ \frac{(m-k)(m+k-3)^2 (7 + 2 k(k - 3) + 2 m(m-3))}{4(2k-1)(2k-5)(m-2)(m-1)(2m-5)(2m-3)^2(2m-1)},\\
  w_{3,k,m}:=&\ -\frac{(m-k-1)(m+k-5)^2 (m+k-4)^2(m+k-3)^2}{16(2k-7)(2k-5)^2(2k-3)(m-2)(m-1)(2m-5)(2m-3)^2(2m-1)},\\
  u_{4,k,m} :=&\  \frac{-(m-k-3)_4}{16(m-3)(m-2)^2(m-1)(2m-5)^2(2m-3)^2}, \\ 
  v_{4,k,m}:= & \ - \frac{(m-k-1)_2(m+k-4)^2(m+k-3)^2}{32(2k-1)(2k-5)(m-3)(m-2)^2(m-1)(2m-5)^2(2m-3)^2},\\
  w_{4,k,m}:=& \ \frac{(m+k-6)_4^2} {256 (2k-7) (2k-5)^2 (2 k-3) (m-3) (m-2)^2 (m-1) (2m-5)^2 (2m-3)^2},
\end{align*}
Let $u_{0,k,m}:=1$. Writing $J_{k,m}^{-2,-2,-2}$ in terms of $J_{k,m}^{0,0,0}$ and rearranging the terms in the summation,
it is not difficult to see that 
\begin{align*}
 f - & S_n^{-2,-2,-2} f  =  \sum_{m=n+1}^\infty \sum_{k=0}^m \wh g_{k,m} J_{k,m}^{-2,-2,-2} \\
    & =\sum_{m=n+1}^\infty \sum_{i=0}^4 \left[ \left (u_{i,0,m}\wh g_{0,m} +v_{i,2,m}\wh g_{2,m}
      + \tfrac{2}{m+1-i}w_{i,3,m}\wh g_{3,m}+w_{i,4,m}\wh g_{4,m} \right)J_{0,m-i}^{0,0,0}  \right. \\
    & \qquad\qquad\qquad + \left (u_{i,1,m}  (\wh g_{1,m} +\tfrac{m-1}{2} \wh g_{2,m}) 
       +v_{i,3,m}\wh g_{3,m}+w_{i,5,m}\wh g_{5,m}\right)J_{1,m-i}^{0,0,0} \\
    & \qquad\qquad\qquad + \left. \sum_{k=2}^{m-i} \left(u_{i,k,m} \wh g_{k,m} +v_{i,k+2,m}\wh g_{k+2,m}
             +w_{i,k+4,m}\wh g_{k+4,m}\right) J_{k,m-i}^{0,0,0}
    \right].   
\end{align*}
Consequently, by the triangle inequality and the orthogonality of $J_{k,m}^{0,0,0}$, we conclude that
\begin{align}\label{eq:est222}
\|f -   S_n^{-2,-2,-2} f\|^2 & \notag \\
    \le \sum_{i=0}^4 \sum_{m=n+1}^\infty & \left[ \left | u_{i,0,m}\wh g_{0,m} +v_{i,2,m}\wh g_{2,m}
      + \tfrac{2}{m+1-i}w_{i,3,m}\wh g_{3,m}+w_{i,4,m}\wh g_{4,m} \right |^2 h_{0,m-i}^{0,0,0}  \right. 
      \notag \\
    & + \left |u_{i,1,m}  (\wh g_{1,m} +\tfrac{m-1}{2} \wh g_{2,m}) 
       +v_{i,3,m}\wh g_{3,m}+w_{i,5,m}\wh g_{5,m}\right|^2 h_{1,m-i}^{0,0,0} \\
    &  + \left. \sum_{k=2}^{m-i} \left | u_{i,k,m} \wh g_{k,m} +v_{i,k+2,m}\wh g_{k+2,m}
             +w_{i,k+4,m}\wh g_{k+4,m}\right|^2 h_{k,m-i}^{0,0,0}
   \notag \right].   
\end{align}
We now bound the right hand side of \eqref{eq:est222} by the sum of the three quantities in 
\eqref{eq:est222a}, \eqref{eq:est222b} and \eqref{eq:est222c}.

For each fixed $i$, there are three cases: $k =0$, $k=1$ and $2 \le k \le m$. The estimates below are tedious, 
but by no means difficult. We shall be brief and only give full detail for the case $i= 0$. 

For $i =0$ and $2 \le k \le m$, a direct verification shows that 
\begin{align}\label{eq:pf222a}
  \wh g_{k,m} +v_{0,k+2,m}\wh g_{k+2,m} + & w_{0,k+4,m}\wh g_{k+4,m}
  = (\wh g_{k,m} +\mathfrak{s}_{k+2,m}\wh g_{k+2,m} +\mathfrak{t}_{k+4,m}\wh g_{k+4,m}) \notag \\
  & + \tfrac{m-1}{2k+3} \left( \wh g_{k+2,m}+ A_{k+4,m} \wh g_{k+4,m}\right) + 
 \tfrac{m(m-1)}{5(2k+3)(2k+5)} \wh g_{k+4,m}. 
\end{align}
Thus, the sum over $k$ is bounded by three sums whose summands are the three terms, under absolute value,
in the right hand side of \eqref{eq:pf222a}. Together with the summation over $m$, we see that the first sum is 
bounded by $c m^{-8}$ multiple of the quantity in \eqref{eq:est222c} when $0 \le k \le m/2$, since 
$h_{k,m}^{0,0,0} \sim  m^{-8} h_{k,m-4}^{0,0,2}$ when $m-k \sim m$ according to \eqref{eq:hkn}, and it is bounded 
by $c m^{-8}$ multiple of the quantity in \eqref{eq:est222b} when $k \ge m/2$, where we work with the sum over
$\mathfrak{p}_{k,m}$, since $h_{k,m}^{0,0,0} \sim  m^{-8} h_{\ell,m-4}^{2,0,0}$ for $\ell = k-3, k-1$ and $k+1$ 
when $k \sim m$; the second sum is bounded by $c m^{-6}$ multiple of the quantity in \eqref{eq:est222a}, since 
$(m^2/k^2) h_{k,m}^{0,0,0} \sim m^{-6}  h_{k+1,m-3}^{0,0,0}$; and the third sum is bounded by $c m^{-8}$ multiple 
of the quantity in \eqref{eq:est222b}, working with the sum of $\mathfrak{p}_{k,m}$, since 
$(m^2/k^2) h_{k,m}^{0,0,0} \sim m^{-6}  h_{k+1,m-4}^{2,0,0}$. 

For $i =0$ and $k=1$, a direct verification shows that 
\begin{align}\label{eq:pf222b}
  \wh g_{1,m} +\tfrac{m-1}{2} \wh g_{2,m} + v_{0,3,m}\wh g_{3,m} +   w_{0,5,m}\wh g_{5,m}
 & = (\wh g_{1,m} +\tfrac{m-1}{2}+ \mathfrak{s}_{3,m}\wh g_{3,m} +\mathfrak{t}_{5,m}\wh g_{5,m}) \notag \\
  & + \tfrac{m-1}{5} \left( \wh g_{3,m}+ A_{5,m} \wh g_{5,m}\right) + 
   \tfrac{m(m-1)}{140} \wh g_{5,m}. 
\end{align}
Similarly, for $i =0$ and $k=0$, a direct verification shows that 
\begin{align}\label{eq:pf222c}
&  \wh g_{0,m} + v_{0,2,m}\wh g_{2,m} +\tfrac{2}{m+1} w_{0,3,m} \wh g_{3,m} +   w_{0,4,m}\wh g_{4,m} \\
  = (\wh g_{0,m} + \mathfrak{s}_{2,m}& \left(g_{2,m}- \tfrac{m}{4}\wh g_{3,m}\right ) +\mathfrak{t}_{5,m}\wh g_{5,m}) 
   + \tfrac{m-1}{3} \left( \wh g_{2,m}+ A_{4,m} \wh g_{4,m}\right) +  \tfrac{m(m-1)}{60} \wh g_{4,m}. \notag
\end{align}
It is easy to see that both these cases follow exactly as in the case of $2 \le k \le m$. This completes the estimate
for the case $i =0$. 

The other four cases are estimated similarly. In place of \eqref{eq:pf222a}, we have the identity 
\begin{align}\label{eq:pf222d}
 u_{i,k,m} \wh g_{k,m}  + & v_{i,k+2,m}\wh g_{k+2,m} + w_{i,k+4,m}\wh g_{k+4,m}  \\
   = & \, u_{i,k,m}(\wh g_{k,m} +\mathfrak{s}_{k+2,m}\wh g_{k+2,m} +\mathfrak{t}_{k+4,m}\wh g_{k+4,m}) \notag \\ 
      & + \lambda_{i,k,m} \left( \wh g_{k+2,m}+ A_{k+4,m} \wh g_{k+4,m}\right) + \mu_{i,k,m} \wh g_{k+4,m}, \notag
\end{align}
where $\l_{i,k,m}$ and $\mu_{i,k}$ are rational functions of $k$ and $m$. In all four cases, there is no need to divide 
the sum over $k$ into two sums over $k \le m/2$ and $k \ge m/2$, respectively. For the three terms corresponding 
to the right hand side of \eqref{eq:pf222d}, the first sum works since $u_{i,k,m}^2 h_{k,m-i}^{0,0,0} \le c m^{-8} h_{k,m-4}^{0,0,2}$,
the second sum works since $\l_{i,k,m}^2 h_{k,m-i}^{0,0,0} \le c m^{-6} h_{k+1,m-3}^{0,0,0}$,  and the third sum works since
$\mu_{i,k,m}^2 h_{k,m-i}^{0,0,0} \le c m^{-8} \frac{m^2}{k^2} h_{k+1,m-4}^{2,0,0}$, where $\frac{m^2}{k^2}$ comes from
$\mathfrak{p}_{k,m}$. Similarly, in pace of \eqref{eq:pf222b}, we have 
\begin{align*}
   b_{i,1,m}(\wh g_{1,m}   + &  \tfrac{m-1}{2} \wh g_{2,m}) + v_{i,3,m}\wh g_{3,m} +   w_{i,5,m}\wh g_{5,m} \\
 = & b_{i,1,m} (\wh g_{1,m} +\tfrac{m-1}{2}+ \mathfrak{s}_{3,m}\wh g_{3,m} +\mathfrak{t}_{5,m}\wh g_{5,m}) \notag \\
  & + \l_{i,1,m}\left( \wh g_{3,m}+ A_{5,m} \wh g_{5,m}\right) + 
  \mu_{i,1,m} \wh g_{5,m},  
\end{align*}
and, in pace of \eqref{eq:pf222c}, we have 
\begin{align*}
   u_{i,0,m} \wh g_{0,m} + & v_{i,2,m}\wh g_{2,m} +\tfrac{2}{m+1-i} w_{i,3,m} \wh g_{3,m} +   w_{i,4,m}\wh g_{4,m} \\
   = & u_{i,0,m} (\wh g_{0,m} + \mathfrak{s}_{2,m}  \left(g_{2,m}- \tfrac{m}{4}\wh g_{3,m}\right ) +\mathfrak{t}_{5,m}\wh g_{5,m}) \\
 & +\l_{i,0,m} \left( \wh g_{2,m}+ A_{4,m} \wh g_{4,m}\right) +  \mu_{i,0,m} \wh g_{4,m}. \notag
\end{align*}
Both cases can be handled as in the case of $2 \le k \le m$. We omit the details. 
\qed

\bigskip
\noindent 
{\bf Acknowledgement}: The author thanks Dr. Huiyuan Li for helpful discussions in early stage of this work, and thanks two 
anonymous referees for their helpful comments.

\end{document}